\tikzstyle{tikzfig}=[baseline=-0.25em,scale=0.5]
\tikzstyle{none}=[inner sep=0mm]
\tikzstyle{every loop}=[]
\tikzstyle{morphism}=[fill=white, draw=black, shape=rectangle]
\tikzstyle{medium box}=[fill=white, draw=black, shape=rectangle, minimum width=0.7cm, minimum height=0.7cm]
\tikzstyle{large morphism}=[fill=white, draw=black, shape=rectangle, minimum width=12.9cm, minimum height=0.1cm]
\tikzstyle{bn}=[fill=black, draw=black, shape=circle, inner sep=1.5pt]
\tikzstyle{state}=[fill=white, draw=black, regular polygon, regular polygon sides=3, minimum width=0.8cm, shape border rotate=180, inner sep=0pt]
\tikzstyle{medium state}=[fill=white, draw=black, regular polygon, regular polygon sides=3, minimum width=1.3cm, inner sep=0pt, shape border rotate=180]
\tikzstyle{large state}=[fill=white, draw=black, regular polygon, regular polygon sides=3, minimum width=2.2cm, shape border rotate=180, inner sep=0pt]
\tikzstyle{wide state}=[fill=white, draw=black, shape=isosceles triangle, minimum width=0.8cm, shape border rotate=270, inner sep=1.4pt, minimum height=0.5cm, isosceles triangle apex angle=80]
\tikzstyle{wn}=[fill=white, draw=black, shape=circle, inner sep=1.5pt]
\tikzstyle{blue morphism}=[fill=white, draw={rgb,255: red,15; green,0; blue,150}, shape=rectangle, text={rgb,255: red,15; green,0; blue,150}, tikzit category=blue]
\tikzstyle{blue state}=[fill=white, draw={rgb,255: red,15; green,0; blue,150}, shape=circle, regular polygon, regular polygon sides=3, minimum width=0.8cm, shape border rotate=180, inner sep=0pt, text={rgb,255: red,15; green,0; blue,150}, tikzit category=blue]
\tikzstyle{blue node}=[fill={rgb,255: red,15; green,0; blue,150}, draw={rgb,255: red,15; green,0; blue,150}, shape=circle, tikzit category=blue, inner sep=1.5pt]
\tikzstyle{blue wide state}=[fill=white, draw={rgb,255: red,15; green,0; blue,150}, text={rgb,255: red,15; green,0; blue,150}, shape=isosceles triangle, minimum width=0.8cm, shape border rotate=270, inner sep=1.4pt, minimum height=0.5cm, isosceles triangle apex angle=80]
\tikzstyle{red node}=[fill={rgb,255: red,150; green,0; blue,2}, draw={rgb,255: red,150; green,0; blue,2}, shape=circle, inner sep=1.5pt]
\tikzstyle{Purple node}=[fill={rgb,255: red,120; green,0; blue,120}, draw={rgb,255: red,120; green,0; blue,120}, text={rgb,255: red,120; green,0; blue,120}, shape=circle, inner sep=1.5pt]
\tikzstyle{white morphism}=[fill=white, draw=white, shape=rectangle, tikzit draw={rgb,255: red,139; green,139; blue,139}]
\tikzstyle{leak morphism}=[fill=white, draw={rgb,255: red,120; green,0; blue,85}, shape=rectangle, text={rgb,255: red,120; green,0; blue,85}, tikzit category=leak]
\tikzstyle{leak}=[text={rgb,255: red,120; green,0; blue,85}, inner sep=0mm, tikzit fill=white, tikzit draw={rgb,255: red,191; green,191; blue,191}, tikzit category=leak]
\tikzstyle{leak node}=[fill={rgb,255: red,120; green,0; blue,85}, draw={rgb,255: red,120; green,0; blue,85}, shape=circle, inner sep=1.5pt, tikzit category=leak]
\tikzstyle{blue}=[text={rgb,255: red,15; green,0; blue,150}, tikzit draw={rgb,255: red,191; green,191; blue,191}, tikzit category=blue, tikzit fill=white, inner sep=0mm]
\tikzstyle{purple}=[text={rgb,255: red,150; green,0; blue,150}, inner sep=0mm, tikzit fill=white, tikzit draw={rgb,255: red,191; green,191; blue,191}]
\tikzstyle{yellow}=[text={rgb,255: red,120; green,112; blue,0}, tikzit draw={rgb,255: red,120; green,112; blue,0}, tikzit fill=white, inner sep=0mm]
\tikzstyle{orange}=[text={rgb,255: red,120; green,76; blue,0}, tikzit draw={rgb,255: red,120; green,76; blue,0}, tikzit fill=white, inner sep=0mm]
\tikzstyle{red}=[text={rgb,255: red,120; green,36; blue,0}, tikzit draw={rgb,255: red,120; green,36; blue,0}, tikzit fill=white, inner sep=0mm]
\tikzstyle{arrow}=[->]
\tikzstyle{dashed box}=[-, dashed]
\tikzstyle{blue line}=[-, draw={rgb,255: red,15; green,0; blue,150}, tikzit category=blue]
\tikzstyle{red arrow}=[-, draw={rgb,255: red,150; green,0; blue,2}, tikzit category=red]
\tikzstyle{purple line}=[draw={rgb,255: red,120; green,0; blue,120}, >=stealth, shorten <=2pt, shorten >=2pt, -]
\tikzstyle{protected purple line}=[draw={rgb,255: red,120; green,0; blue,120}, >=stealth, shorten <=2pt, shorten >=2pt, preaction={line width=1.8pt, white, draw}, -]
\tikzstyle{mapsto}=[{|->}]
\tikzstyle{double wire}=[-, double]
\tikzstyle{protected}=[-, preaction={line width=1.8pt,white,draw}]
\tikzstyle{leak arrow}=[-, line join=round, decorate, decoration={snake, segment length=4, amplitude=0.75, pre=curveto, post=curveto, pre length=1pt, post length=1pt}]
\tikzstyle{protected leak arrow}=[-, line join=round, decorate, decoration={snake, segment length=4, amplitude=0.75, pre=curveto, post=curveto, pre length=1pt, post length=1pt}, preaction={line width=1.8pt, white, draw}]
\tikzstyle{hollow arrow}=[-, very thin, white, preaction={line width=0.7pt,draw={rgb,255: red,120; green,0; blue,85}}, tikzit category=leak, tikzit draw={rgb,255: red,150; green,0; blue,120}]
\tikzstyle{protected hollow arrow}=[-, very thin, white, preaction={line width=0.7pt,draw={rgb,255: red,120; green,0; blue,85},preaction={line width=2.1pt,white,draw}}, tikzit category=leak, tikzit draw={rgb,255: red,150; green,0; blue,120}]
\tikzstyle{curly brace}=[-, decorate, decoration={brace,amplitude=5pt}]
\tikzstyle{yellow fill}=[-, fill={rgb,255: red,255; green,253; blue,195}, draw=none]
\tikzstyle{orange fill}=[-, draw={rgb,255: red,220; green,185; blue,116}, fill={rgb,255: red,255; green,235; blue,195}]
\tikzstyle{red fill}=[-, draw={rgb,255: red,220; green,163; blue,134}, fill={rgb,255: red,255; green,215; blue,195}]
\tikzstyle{dark red fill}=[-, fill={rgb,255: red,220; green,163; blue,134}, draw=none]
\setlist[enumerate]{label=(\roman*)}  
\setlist[enumerate,2]{label=(\alph*)}  
\definecolor{myurlcolor}{rgb}{0,0,0.3}
\definecolor{mycitecolor}{rgb}{0,0.3,0}
\definecolor{myrefcolor}{rgb}{0.3,0,0}
\definecolor{fillcolor}{rgb}{0.75,0.75,0.75}
\newtheorem{theorem}{Theorem}[subsection]
\newtheorem*{theorem*}{Theorem}
\newtheorem{proposition}[theorem]{Proposition}
\newtheorem{lemma}[theorem]{Lemma}
\newtheorem{corollary}[theorem]{Corollary}
\newtheorem{definition}[theorem]{Definition}
\newtheorem{question}[theorem]{Question}
\theoremstyle{definition}
\newtheorem{example}[theorem]{Example}
\newtheorem{remark}[theorem]{Remark}
\numberwithin{equation}{section}
\def\cref@thmoptarg[#1]#2#3#4{%
	\ifhmode\unskip\unskip\par\fi%
	\normalfont%
	\trivlist%
	\let\thmheadnl\relax%
	\let\thm@swap\@gobble%
	\thm@notefont{\fontseries\mddefault\upshape}%
	\thm@headpunct{.}%
	\thm@headsep 5\p@ plus\p@ minus\p@\relax%
	\thm@space@setup%
	#2%
	\@topsep \thm@preskip
	\@topsepadd \thm@postskip
	\def\@tempa{#3}\ifx\@empty\@tempa%
		\def\@tempa{\@oparg{\@begintheorem{#4}{}}[]}%
	\else%
		\refstepcounter[#1]{#3}%
		\@namedef{cref@#3@alias}{#1}%
		\def\@tempa{\@oparg{\@begintheorem{#4}{\csname the#3\endcsname}}[]}%
	\fi%
\@tempa}%
\let\originalleft\left
\let\originalright\right
\renewcommand{\left}{\mathopen{}\mathclose\bgroup\originalleft}
\renewcommand{\right}{\aftergroup\egroup\originalright}
\newcommand{\coloniff}{\;\ratio\Longleftrightarrow\;}
\newcommand{\N}{\mathbb{N}}
\newcommand{\Z}{\mathbb{Z}}
\newcommand{\R}{\mathbb{R}}
\newcommand{\eps}{\varepsilon}
\newcommand{\e}{\varepsilon}
\newcommand{\ph}{\mathord{\rule[-0.05em]{0.6em}{0.05em}}}
\newcommand{\sngltn}{\ast}
\newcommand{\cat}[1]{{\mathsf{#1}}} 
\newcommand{\op}{\mathrm{op}}
\newcommand{\set}{\mathsf{Set}}
\newcommand{\finset}{\mathsf{FinSet}}
\newcommand{\Kl}{\mathsf{Kl}}
\newcommand{\id}{\mathrm{id}}
\newcommand{\tensor}{\otimes}
\newcommand{\comp}{%
	\mathchoice{\,}{\,}{}{}
}
\newcommand{\cop}{\mathrm{copy}}
\newcommand{\discard}{\mathrm{del}}
\newcommand{\cC}{\mathsf{C}}
\newcommand{\cD}{\mathsf{D}}
\newcommand{\stat}{\mathsf{Stat}}
\renewcommand{\det}{\mathrm{det}}
\newcommand{\samp}{\mathsf{samp}}
\newcommand{\suppcomp}[1][\cC]{\mathsf{Supp}(#1)}
\newcommand{\acsim}{\approx}
\newcommand{\meas}{\mathsf{Meas}}
\newcommand{\borelmeas}{\mathsf{BorelMeas}}
\newcommand{\topcat}{\mathsf{Top}}
\newcommand{\chaus}{\mathsf{CHaus}}
\newcommand{\stoch}{\mathsf{Stoch}}
\newcommand{\finstoch}{\mathsf{FinStoch}}
\newcommand{\borelstoch}{\mathsf{BorelStoch}}
\newcommand{\topstoch}{\mathsf{TopStoch}}
\newcommand{\tychstoch}{\mathsf{TychStoch}}
\newcommand{\chausstoch}{\mathsf{CHausStoch}}
\newcommand{\finsetmulti}{\mathsf{FinSetMulti}}
\newcommand{\setmulti}{\mathsf{SetMulti}}
\newcommand{\cring}{\mathsf{CRing}_+^\op}
\newcommand{\rel}{\mathsf{Rel}}
\newcommand{\as}[1]{%
	\def\relstate{#1}%
	\ifx\relstate\empty
		\text{a.s.}%
	\else
		{#1\text{-a.s.}}%
	\fi
}
\newcommand{\ase}[1]{=_{#1}}
\newcommand{\suppincop}{\mathsf{si}}	
\newcommand{\suppinc}[1]{%
	\def\relstate{#1}%
	\ifx\relstate\empty
		\mathsf{si}%
	\else
		\suppincop_{#1}%
	\fi
}
\newcommand{\Supp}[1]{%
	\def\relstate{#1}%
	\ifx\relstate\empty
		S%
	\else
		S_{#1}%
	\fi
}
\newcommand{\suppfactor}[1]{\widehat{#1}}
\newcommand{\suppprojop}{\mathsf{sp}}
\newcommand{\suppproj}[1]{%
	\def\relstate{#1}%
	\ifx\relstate\empty
		\mathsf{sp}%
	\else
		\suppprojop_{#1}%
	\fi
}
\newcommand{\detp}[1]{\overline{#1}}
\newcommand{\pbehord}[1]{\mathord{\prescript{#1}{}\gg}}
\newcommand{\pbeh}[1]{\mathbin{\prescript{#1}{}\gg}}
\newcommand{\Atoms}[1]{%
	\def\relstate{#1}%
	\ifx\relstate\empty
		\mathcal{A}%
	\else
		\mathcal{A}_{#1}%
	\fi
}
\providecommand{\given}{}
\newcommand{\SetSymbol}[1][]{%
	\nonscript\;\,#1\vert
	\allowbreak
	\nonscript\;\,
	\mathopen{}
}
\DeclarePairedDelimiterX{\Set}[1]{\{}{\}}{%
	\renewcommand{\given}{\SetSymbol[\delimsize]}
	#1
}
\let\oldSet\Set
\def\Set{\@ifstar{\oldSet}{\oldSet*}}
\newsavebox{\numbox}%
\newsavebox{\slashbox}%
\newsavebox{\denbox}%
\newlength{\slashlength}%
\newlength{\faktorscale}%
\DeclareDocumentCommand{\newfaktor}{m O{0.35} m O{-0.35}}{%
	\savebox{\numbox}{\ensuremath{#1}}%
	\savebox{\slashbox}{\ensuremath{\diagup}}%
	\savebox{\denbox}{\ensuremath{#3}}%
	\setlength{\faktorscale}{0.5\ht\numbox+0.5\ht\denbox}%
	\setlength{\slashlength}{2pt+0.8\faktorscale+#2\faktorscale-#4\faktorscale}%
	\raisebox{#2\ht\slashbox}{\usebox{\numbox}}%
	\mkern-2mu%
	\rotatebox{-30}{\rule[#4\ht\denbox]{0.4pt}{\slashlength}}%
	\mkern9mu%
	\hspace{-0.44\slashlength}%
	\raisebox{#4\ht\denbox}{\usebox{\denbox}}%
}
\DeclareDocumentCommand{\linefaktor}{m O{0.08} m O{-0.08}}{%
	\savebox{\numbox}{\ensuremath{#1}}%
	\savebox{\slashbox}{\ensuremath{\diagup}}%
	\savebox{\denbox}{\ensuremath{#3}}%
	\setlength{\faktorscale}{0.5\ht\numbox+0.5\ht\denbox}%
	\setlength{\slashlength}{0.2\faktorscale+0.8\baselineskip}%
	\raisebox{#2\ht\slashbox}{\usebox{\numbox}}%
	\mkern-1mu%
	\raisebox{-0.8pt}{%
		\rotatebox{-25}{\rule[#4\ht\denbox]{0.4pt}{\slashlength}}
	}%
	\mkern-1mu%
	\hspace{-0.25\slashlength}%
	\raisebox{#4\ht\denbox}{\usebox{\denbox}}%
}
\newcommand{\newterm}[1]{\textbf{#1}}
\title{Absolute continuity, supports and idempotent splitting\\ in categorical probability}
\author[1]{Tobias Fritz}
\author[1,2]{Tom{\'a}{\v{s}} Gonda}
\author[3]{Antonio Lorenzin}
\author[4]{\\ Paolo Perrone}
\author[5]{Dario Stein}
\affil[1]{Department of Mathematics, University of Innsbruck, Austria}
\affil[2]{Institute for Theoretical Physics, University of Innsbruck, Austria}
\affil[3]{Independent researcher, Trento, Italy}
\affil[4]{Department of Computer Science, University of Oxford, United Kingdom}
\affil[5]{iHub, Radboud University Nijmegen, The Netherlands}
\begin{document}

\maketitle

\begin{abstract}
	Markov categories have recently turned out to be a powerful high-level framework for probability and statistics.
	They accommodate purely categorical definitions of notions like conditional probability and almost sure equality, as well as proofs of fundamental results such as the Hewitt--Savage 0/1 Law, the de Finetti Theorem and the Ergodic Decomposition Theorem.

	In this work, we develop additional relevant notions from probability theory in the setting of Markov categories.
	This comprises improved versions of previously introduced definitions of absolute continuity and supports, as well as a detailed study of idempotents and idempotent splitting in Markov categories.
	Our main result on idempotent splitting is that every idempotent measurable Markov kernel between standard Borel spaces splits across another standard Borel space, and we derive this as an instance of a general categorical criterion for idempotent splitting in Markov categories.
\end{abstract}

\tableofcontents

\section{Introduction}

In recent years, there has been growing interest in categorical methods for probability theory and related fields.
Structures such as \emph{garbage-share monoidal categories}~\cite{gadducci1996} (also called \emph{copy-discard categories}~\cite{chojacobs2019strings}) and \emph{Markov categories}~\cite{fritz2019synthetic} have appeared in contexts as diverse as 
mathematical statistics~\cite{fritz2023representable},
graphical models~\cite{fritz2022dseparation},
ergodic theory~\cite{moss2022ergodic},
machine learning~\cite{zhang}, and
information theory~\cite{ours_entropy}.

The common theme in all these works is the aim to distil the most conceptual aspects of reasoning in the face of uncertainty, such as stochastic independence or the composition of Markov kernels, and to prove theorems in probability theory in terms of these prime principles.
The hope is to achieve both greater conceptual clarity and generality.
There are several aspects of probability theory that lend themselves very well to this point of view.
The idea of forming \emph{conditional distributions} and \emph{Bayesian inverses} can be expressed categorically and even graphically~\cite{chojacobs2019strings}, and proofs in probability theory involving conditioning can be simplified by resorting to this abstract definition.
This facilitated for example the categorical proofs of de Finetti's theorem~\cite{fritz2021definetti} and the Aldous--Hoover theorem~\cite{chen2025aldoushoover}, as well as a generalization of Blackwell's theorem on the comparison of statistical experiments to the case of nondiscrete random variables~\cite{fritz2023representable}.
Similarly, a categorical notion of \emph{determinism} has allowed us to formulate and prove certain \emph{zero-one laws}~\cite{fritzrischel2019zeroone}, and can be seen to be the main idea behind the concept of \emph{ergodicity}~\cite{moss2022ergodic}. 
Moreover, measuring the deviation from determinism can lead to the notion of \emph{entropy}~\cite{ours_entropy}.
Last but not least, the aforementioned idea of stochastic dependence and independence can be refined to a rich theory of conditional independence and Markov separation properties that reduces stochastic dependencies to topological path-connectedness in a graph~\cite{fritz2022dseparation}.

\paragraph{Supports.}
One of the conceptual aspects of probabilistic processes that we address in this work is the notion of \emph{support}.
Roughly speaking, one could define the support of a probability measure as the smallest set of probability 1, if it exists.
For example, the probability distribution $p$ on the set $\{a,b,c\}$ given by
\begin{equation*}
	\begin{tikzpicture}[baseline,
		x={(0:1cm)},y={(90:1cm)},z={(0:1cm)}]
		\node (a) at (0.55,0,-2) {};
		\draw [fill=fillcolor!50] (a) -- ++(0.25,0,0) -- ++(0,1,0) -- ++(-0.5,0,0) -- ++(0,-1,0) -- (a);
		\node [circle,inner sep=1pt,fill=black,label=below:$a\strut$] at (a) {};
		\node (b) at (1.45,0,-2) {};
		\draw [fill=fillcolor!50] (b) -- ++(0.25,0,0) -- ++(0,1,0) -- ++(-0.5,0,0) -- ++(0,-1,0) -- (b);
		\node [circle,inner sep=1pt,fill=black,label=below:$b\strut$] at (b) {};
		\node (c) at (2.35,0,-2) {};
		\node [circle,inner sep=1pt,fill=black,label=below:$c\strut$] at (c) {};
		\draw (0,0,-2) -- (3,0,-2) ;
	\end{tikzpicture}
	\qquad\qquad
	p(a) = p(b) = 1/2, \qquad p(c) = 0 .
\end{equation*}
has as support the subset $\{a,b\}$.
For distributions on discrete spaces, this definition is appropriate, and the support consists exactly of the points with non-zero probability.

In the case of continuous sample spaces, points may have measure zero while still being in the support{\,\textemdash\,}this is the case, for example, for the normal distribution:
\[
	\begin{tikzpicture}
		\fill [fillcolor!25, domain=-3.5:3.5, variable=\x]
		(-3.5, 0)
		-- plot ({\x}, {exp(-(\x*\x)/2)})
		-- (3.5, 0)
		-- cycle;
		\fill [fillcolor, domain=0.7:1.3, variable=\x]
		(0.7, 0)
		-- plot ({\x}, {exp(-(\x*\x)/2)})
		-- (1.3, 0)
		-- cycle;
		\draw[->] (-3.5, 0) -- (3.5, 0) node[right] {$x$};
		\draw[->] (0, -0.5) -- (0, 1.5) node[above] {$\mathrm{pdf}(x)$};
		\draw[domain=-3.5:3.5, smooth, variable=\x] plot ({\x}, {exp(-(\x*\x)/2)});
		\node [circle,inner sep=1pt,fill=black,label=below:$x_0$] at (1,0) {};
	\end{tikzpicture}
\]
In this case, one can say that a point $x_0\in X$ lies in the support of a probability measure if every open neighborhood $U$ of $x_0$ has non-zero measure.
This is equivalent to saying that the support is the smallest \emph{closed} subset of full measure.
Such an approach, which we call the \emph{topological support} (\cref{ex:top_support}), works well for ($\tau$-smooth) probability measures on topological spaces~\cite{fritz2019support}.

On a general measurable space, there are no closed sets, and so this approach cannot work. 
Moreover, even in those measurable spaces which do come from a topological space, such as real numbers, using a topological notion in a measure-theoretic context is not going to be invariant under isomorphism. 
For example, we can equip the unit interval with the following two probability measures:
\begin{equation}
	\label{eq:two_measures}
	\begin{tikzpicture}[xscale=1.5,yscale=0.5,baseline]
		\draw [fill=fillcolor!25, domain=0:1, variable=\x]
		(0, 0)
		-- plot ({\x}, {1})
		-- (1, 0)
		-- cycle;
		\draw[->] (-0.5, 0) -- (1.5, 0) node[right] {$x$};
		\draw[->] (0, -0.5) -- (0, 2.5) node[above] {$\mathrm{pdf}(x)$};
		\node [circle,inner sep=1pt,fill=black,label=below left:$0$] at (0,0) {};
		\node [circle,inner sep=1pt,fill=black,label=below:$1$] at (1,0) {};
	\end{tikzpicture}
	\qquad\qquad
	\begin{tikzpicture}[xscale=1.5,yscale=0.5,baseline]
		\draw [fill=fillcolor!25, domain=0:0.5, variable=\x]
		(0, 0)
		-- plot ({\x}, {2})
		-- (0.5, 0)
		-- cycle;
		\draw[->] (-0.5, 0) -- (1.5, 0) node[right] {$x$};
		\draw[->] (0, -0.5) -- (0, 2.5) node[above] {$\mathrm{pdf}(x)$};
		\node [circle,inner sep=1pt,fill=black,label=below left:$0$] at (0,0) {};
		\node [circle,inner sep=1pt,fill=black,label=below:$1$] at (1,0) {};
	\end{tikzpicture}
\end{equation}
Topologically, it is clear that the first measure has full support, and the second measure is supported on $[0, \linefaktor{1}{2}]$.
However, as measure spaces, the two are isomorphic, as we show in \cref{thm:not_support}.
In particular, no invariant of measure spaces can tell the two situations apart.
Intuitively, the reason is that measurable functions alone cannot tell apart a region of zero measure which is outside the (topological) support, such as the half-interval $( \linefaktor{1}{2} ,1]$ on the right-hand side, from a region within the topological support, but which does not fill the space enough to have positive measure (see \Cref{not_support} for details). 
On this note, we may wonder whether such probability measures should even admit a support. 
In fact, the existence of a support is deeply connected to the notion of atomicity (\cref{cor:supp_iff_atomic,ex:lebesgue_nosupp}). 
In particular, non-atomic measures like the two described in \eqref{eq:two_measures} will never admit a support.

In this work, we provide a categorical perspective on supports.
Both the topological and measure-theoretic perspectives on supports are reflected in the general categorical notion.
It is not obvious how to define a notion of support in categorical terms, since abstracting away from the concrete category of measurable spaces and Markov kernels deprives one of the possibility to talk about sets of measure zero.
The way we deal with such situations abstractly is via the categorical notion of \emph{almost sure equality} \cite[Section~13]{fritz2019synthetic}, which we discuss in more detail in \cref{sec:as} of the present paper.
This concept reduces to the usual almost sure equality of functions and kernels when instantiated in the category $\borelstoch$ of standard Borel spaces and Markov kernels, and can be used conveniently in statements and proofs. 
For example, conditionals and Bayesian inverses, when they exist, are unique up to almost sure equality in any Markov category (\cref{lem:cond_unique}), and this is an immediate consequence of the definition.

What could then be a categorical definition of support?
In discrete probability theory, two functions on a probability space are almost surely equal if and only if they agree on the support of the probability measure. 
One can turn this idea into a categorical definition of support via the universal property of \emph{representing} almost sure equality in the sense of representable functors. 
This was suggested in \cite[Definition~13.20]{fritz2019synthetic}, and then used in \cite{dependent_bayesian_lenses} and \cite{braithwaite2023inference}.
Similar ideas were given in \cite{di2022monoidal} as well.

Here we expand on that idea, but we also give an alternative and weaker definition of support. 
The notion from the previous paragraph{\,\textemdash\,}which we rename to \emph{split support} in this paper{\,\textemdash\,}is a ``mapping-out'' universal property, i.e.\ it is about representing a \emph{covariant} functor to $\set$.
We can also define supports via a ``mapping-in'' universal property, representing a \emph{contravariant} functor to $\set$.
The latter notion is called a \emph{support} in the present article (\cref{def:support}), and we show that it is strictly weaker than that of a split support (\cref{prop:support_not_split}).
Concrete instantiations of the resulting theory are the following:
\begin{itemize}
	\item In the category $\finstoch$ of finite sets and stochastic matrices, all supports exist (\cref{ex:support_finstoch}), and the support of every morphism with non-empty domain is split (\cref{ex:non-empty_supp_split_fin}).
	\item In the category $\tychstoch$ of Tychonoff topological spaces and continuous Markov kernels from \cref{ex:continuous_kernels}, all supports exist and they correspond exactly to the traditional topological supports (\cref{ex:top_support}).
		Note that Tychonoff topological spaces include all metric spaces and all compact Hausdorff spaces.
		However, these supports do not split in general (\cref{prop:support_not_split}).

	\item For measure-theoretic probability theory, i.e.\ in $\borelstoch$, a probability measure has a support if and only if it is atomic, and similarly for Markov kernels (\cref{thm:borelstoch_supports}).
		In particular, the Lebesgue measure on the unit interval has no support in our sense. 
\end{itemize}
The second item in particular shows that our newly developed mapping-in universal property of support seems to be the right notion to use in general.
The third item is what one should expect in light of isomorphisms like the one of~\eqref{eq:two_measures} above.

Another weaker property that has a similar flavour is a condition that we call the \emph{equalizer principle} (\cref{sec:equalizer_principle}).
It states that if two deterministic morphisms agree $p$-almost surely, then $p$ factors across their equalizer. 
If supports exist, this simply means that the support of $p$ is included in the equalizer, but it makes sense and is useful even when $p$ has no support.
This property always holds in $\borelstoch$, and it is crucial for our results about idempotents in \cref{sec:idempotents}.

\paragraph{Absolute continuity.}
The mapping-in universal property of supports discussed above is based on the idea that a Markov kernel lands within the support of a measure if and only if it is absolutely continuous with respect to that measure. 
In light of this conceptualization, \cref{sec:abs_cont} is devoted to the notion of \emph{absolute continuity} (\cref{def:ac}). 
This definition improves upon our earlier definition given in~\cite[Definition~2.8]{fritz2023representable} by allowing for extra input wires. 
As we show, in the general categorical setting this new definition ensures the compatibility with tensor products (\cref{prop:ac_tensor}), which the previous definition does not (\cref{rem:old_not_ac_tensor}).
Moreover, this abstract absolute continuity reduces to the usual measure-theoretic notion when instantiated in $\stoch$ (\cref{ex:abs_cont_stoch}), and it corresponds to the inclusion of supports whenever they exist.

\paragraph{Idempotents.}

The other main theme of this paper is that of \emph{idempotents}.
While \cref{sec:supports} on supports makes use of concepts and results developed in \cref{sec:abs_cont}, \cref{sec:idempotents} on idempotents can be read independently of the other two for the most part.

To say that an endomorphism $e \colon X \to X$ is an idempotent means that it satisfies $e \comp e = e$.
Idempotent Markov kernels secretly come up in several technical aspects of probability theory and ergodic theory.
For example, conditional expectations can be thought of as idempotent kernels acting on bounded measurable functions by precomposition.
In the study of ergodic theorems, forming the long-term average of a process produces an idempotent kernel~\cite[p.~560]{blackwell1942idempotent}.

Given an idempotent $e \colon X \to X$, it is typically of interest to know whether $e$ \emph{splits}~\cite{borceux_idempotents}.
Such a splitting is given by another object $T$ together with morphisms
\[
	\iota \colon T \hookrightarrow X, \qquad \pi \colon X \twoheadrightarrow T
\]
satisfying
\[
	\pi \comp \iota = \id_T, \qquad
	\iota \comp \pi = e.
\]
When this is the case, we may think of $e$ as projecting via $\pi$ onto the subobject given \mbox{by $\iota$}.
Conversely, whenever we have morphisms $\iota$ and $\pi$ such that $\pi \comp \iota = \id_T$, then the other composition $\iota \comp \pi$ is an idempotent, which we think of as the projection onto $T$.
For example, split idempotents in this form arise from the split supports mentioned above: 
A split support is a subobject onto which one can project.

As it turns out, there are different ways in which an idempotent can interact with the Markov category structure (see \cref{fig:idempotents_venn} in \cref{sec:idempotents}), and this is related to the problem of splitting.
A particularly important role is played by \emph{balanced idempotents}, which are defined as satisfying a form of detailed balance (\cref{rem:detailed_balance}).
About these, we prove that:
\begin{itemize}
	\item Every split idempotent in a positive Markov category, such as $\borelstoch$, is balanced (\cref{sec:split}).
	\item Under suitable extra conditions on a positive Markov category, every idempotent is balanced and splits (\cref{sec:route_split}).
	\item The free completion under balanced idempotents of a Markov category is again a Markov category (\cref{sec:blackwell_envelope}).
\end{itemize}
The extra conditions for the second item are the aforementioned equalizer principle, and the fact that the Markov category is \emph{observationally representable}.
``Representable'' roughly means that we are in the Kleisli category of a probability monad (see \cite{fritz2023representable} for more details).
``Observational'' means that morphisms into a space of distributions $PX$ can be distinguished by taking repeated independent samples, as it is in ordinary probability (and as it reflects common practice in statistics and in all of science).
The mathematical details of this idea are outlined in \cref{sec:observational}.
Since $\borelstoch$ is positive and satisfies these extra conditions, we obtain our main result about measure-theoretic probability:

\begin{theorem*}
	Every idempotent Markov kernel between standard Borel spaces is balanced and splits.
\end{theorem*}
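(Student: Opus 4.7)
The plan is to obtain the theorem as an instance of the general categorical criterion announced in \cref{sec:route_split}, which states that in a positive, observationally representable Markov category satisfying the equalizer principle, every idempotent is balanced and splits. All the work therefore consists in verifying that $\borelstoch$ satisfies these three hypotheses; the conclusion about splitting of idempotent Markov kernels between standard Borel spaces then follows formally.

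First I would recall that $\borelstoch$ is a positive Markov category. This is essentially standard in the existing categorical probability literature and is almost certainly recorded earlier in the paper, so I would simply cite it. Next, I would verify the equalizer principle in $\borelstoch$: if $f, g \colon X \to Y$ are deterministic (i.e.\ measurable functions) and $p \colon I \to X$ is a probability measure with $f \ase{p} g$, then $p$ is concentrated on the equalizer $\{x \in X : f(x) = g(x)\}$, which is a measurable subset of $X$ (as the preimage of the diagonal in $Y \times Y$ under the measurable map $(f,g)$), so $p$ factors through it. The same argument handles the general parametric version with extra input wires, by working fiberwise. This step should be essentially routine measure theory.

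The main obstacle is verifying that $\borelstoch$ is \emph{observationally representable} in the sense outlined in \cref{sec:observational}. Representability of $\borelstoch$ is the statement that it is the Kleisli category of the Giry monad restricted to standard Borel spaces, which is well known. The observational condition requires that two morphisms into a space of probability measures $PX$ agree whenever all of their iid sample distributions agree. Concretely, one would need that for measurable maps $\mu, \nu \colon A \to PX$, if the induced laws $\mu(a)^{\otimes n}$ and $\nu(a)^{\otimes n}$ agree on the product spaces for every $n$, then $\mu(a) = \nu(a)$ for every $a \in A$. For standard Borel $X$, this follows from the fact that a Borel probability measure on $X$ is determined by itself (taking $n = 1$ already suffices), but the point of the condition is presumably the compatibility with the monad structure and the formalism of samples as developed in the paper; here I expect one must invoke the concrete form of the Giry monad on standard Borel spaces and the identification of $P(X^n)$ with product measures via the appropriate de Finetti-type statement.

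Once these three properties are established, the theorem follows by direct application of the general result: given any idempotent $e \colon X \to X$ in $\borelstoch$, the criterion of \cref{sec:route_split} yields that $e$ is balanced and admits a splitting through some standard Borel space $T$ with morphisms $\iota \colon T \to X$ and $\pi \colon X \to T$ satisfying $\pi \comp \iota = \id_T$ and $\iota \comp \pi = e$. I would conclude by emphasizing that the standard Borel nature of $T$ is automatic, since the splitting is constructed inside $\borelstoch$ itself rather than in some larger completion.
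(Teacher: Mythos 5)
There is a genuine gap: you have mis-stated the general criterion, and the missing ingredient is exactly the one that delivers the ``balanced'' half of the theorem. \Cref{thm:balanced_split} says that in a positive, observationally representable Markov category satisfying the equalizer principle, an idempotent splits \emph{if and only if it is balanced}; it does \emph{not} say that every idempotent is balanced under those three hypotheses. Indeed those three hypotheses alone cannot suffice: $\finsetmulti$ is positive, observationally representable and satisfies the equalizer principle, yet it contains idempotents that are not balanced and do not split (\cref{ex:setmulti_not_balanced,ex:setmulti_not_split}). So after verifying positivity, the equalizer principle and observational representability for $\borelstoch$, your argument only yields that the \emph{balanced} idempotents split, and the claim ``every idempotent is balanced and splits'' is left unproved. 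The paper closes this gap with a fourth ingredient, the Cauchy--Schwarz property: $\stoch$, and hence $\borelstoch$, satisfies it (\cref{prop:stoch_cauchy_schwarz,cor:cauchy_schwarz}), and by \cref{thm:balanced_idempotent} every idempotent in a Markov category with this property is balanced. Only then does \cref{thm:balanced_split} give splitting of all idempotents. Your proposal never addresses why an arbitrary idempotent Markov kernel is balanced, so as written the proof does not go through.

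A secondary, smaller issue: your sketch of observational representability for $\borelstoch$ underestimates what is needed. The condition is that the maps $\samp^{(n)} \colon PX \to X^n$ are \emph{jointly monic with respect to arbitrary Markov kernels} $A \to PX$, not merely deterministic ones; for a genuinely random measure, $n=1$ does not suffice (two different laws on $PX$ can have the same one-sample mixture), and the statement is a nontrivial de Finetti-type fact, which the paper imports from Moss--Perrone (\cref{ex:borelastoch_obs}). Your verifications of positivity and of the equalizer principle (\cref{prop:borelstoch_eq}), on the other hand, are essentially the paper's.
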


We prove this as \cref{cor:borelstoch_idempotents_split} and note that it strengthens a classical result of Blackwell on idempotent Markov kernels (\cref{thm:idemp_borel}).
As far as we know, this strengthening is new, and in particular no measure-theoretic proof exists to date.
We also note the analogous result that idempotents in the category of compact Hausdorff spaces and continuous Markov kernels are balanced and split (\cref{cor:chausstoch_idempotents_split}).

\subsection*{Outline}

We start in \cref{sec:abs_cont} with an extended notion of almost sure equality (\cref{def:as_eq}) that appears subsequently in our new definition of absolute continuity between morphisms in a Markov category (\cref{def:ac}). 
We then show that tensor product and post-composition respect absolute continuity (\cref{lem:ac_mon_comp} and \cref{prop:ac_tensor}). 

In \cref{sec:supports}, we define the support of a morphism $p$ via a mapping-in universal property as the universal object classifying morphisms absolutely continuous with respect to $p$. 
We study supports in this sense in some detail, showing in particular that this recovers the traditional notion in $\finstoch$ (\cref{ex:support_finstoch}), as well as in categories of continuous Markov kernels between topological spaces (\cref{ex:top_support}). 
We also show that a Markov category with all supports is causal if and only if its supports assemble into a functor (\cref{thm:supp_functorial}). 

In \cref{sec:support_completeness}, we establish which morphisms stand a chance to have a support at all (\cref{lem:suppdominance}).
We call morphisms satisfying this condition \emph{atomic} (\cref{def:atomic}).
For $\borelstoch$, we show that a Markov kernel is atomic in this sense if and only if it is atomic in the standard measure-theoretic sense.
However, not even all atomic morphisms have a support, as \cref{thm:borelstoch_supports} proves.
We also show how to extend every causal Markov category to a \emph{free regular support completion}{\,\textemdash\,}another causal Markov category in which every atomic morphism has a support (\cref{thm:supports_suppcompletion}).

We give a first application of supports in \cref{sec:rel}.
Under causality, supports and an additional assumption, we construct a functor from a Markov category to $\setmulti$, the category of sets and multivalued functions in \cref{sec:rel}.
Intuitively, the functor assigns to every morphism its ``possibilistic shadow'' in the form of the \emph{input-output relation}.
In $\finstoch$, every object is mapped to the set of deterministic states on it, and every stochastic matrix is sent to the relation of all input-output pairs with non-zero probability (\cref{ex:finstoch_relfunctor}).

As another application of supports, we show in \cref{sec:statistical_models} how they allow one to prove an equivalence between \emph{parametric} statistical models and \emph{non-parametric} statistical models. 
In particular, we define a category in which the objects, thought of as parametric statistical models, are morphisms in a suitable Markov category (\cref{def:statC}).
We then show an abstract version of the statement that the behavior of a statistical model $p$ can be identified with that of the collection of probability distributions that $p$ parametrizes, provided that $p$ has a support (\cref{prop:parametric_vs_unparametric}).

In \cref{sec:equalizer_principle}, we argue that even in the absence of supports, the weaker \emph{equalizer principle} can be used as a substitute in certain situations.
For example, it allows one to talk about the largest subobject on which two deterministic morphisms agree.
This principle holds in Markov categories with supports (\cref{rem:equalizer_principle}), but also in categories such as $\borelstoch$ (\cref{prop:borelstoch_eq}) in which generic morphisms do not have a support.
Equalizer principle also plays an important role in our study of idempotents.

We finally turn to \emph{split supports} in \cref{sec:split_supports}, which extends the notion of supports originally proposed in~\cite[Definition~13.20]{fritz2019synthetic}.
We introduce them as pre-supports for which the support inclusion has a left inverse (\cref{def:split_support}) and then describe their universal property in \cref{prop:split_support_universal}.
The support of every stochastic matrix is in fact a split support (\cref{ex:non-empty_supp_split_fin}), and likewise all supports that exist in $\borelstoch$ are split (\cref{ex:non-empty_supp_split_borel}).

We then turn to idempotents in \cref{sec:idempotents}.
First, we introduce several types of interaction between idempotents and the structure of Markov categories (\cref{def:idempotents}).
These give rise to \emph{static}, \emph{strong} and \emph{balanced} idempotents respectively.
We give examples of each kind and provide several reformulations of their definitions (\cref{prop:balancednew,lem:balanced_as_det}).
A simple but important observation is that in Markov categories that are \emph{balanced} (\cref{def:balanced_cat}), every idempotent is balanced (\cref{thm:balanced_idempotent}).
This holds in particular for any of the categories describing discrete or measure-theoretic probability (\cref{cor:balanced_cat}).

In \cref{sec:split}, we study split idempotents and give an illuminating characterization of the above types of idempotents in terms of their splittings (\cref{thm:split_props}).
We also present a number of examples of split idempotents.

The interplay between the two main themes of the paper, supports and idempotents, is explored in \cref{sec:proj_idempotent_support}.
In particular, we prove that split supports are equivalently splittings of certain static idempotents (\cref{thm:proj_idemp_supp}).

With the goal of showing that all idempotents of $\borelstoch$ split, we study sufficient conditions for the splitting of balanced idempotents in \cref{sec:route_split}.
Our main result is \cref{thm:balanced_split} that establishes conditions under which all (balanced) idempotents in a Markov category split.
It follows that every idempotent in $\borelstoch$ splits (\cref{cor:borelstoch_idempotents_split}).
This strengthens an existing result of Blackwell (\cref{thm:idemp_borel}).

Finally, in \cref{sec:blackwell_envelope}, we define the \emph{Blackwell envelope} as the balanced idempotent completion that mimics the Karoubi envelope from ordinary category theory.
We show that the Blackwell envelope of a Markov category is also a Markov category (\cref{prop:blackwellenv_markov}), while the Karoubi envelope need not be.

In \cref{appendix}, we provide relevant background material on Markov categories used in the rest of this work.
We start with basic definitions and examples in \cref{sec:basics}. After that, the sections \cref{sec:preliminaries_aseq} and \cref{sec:observational} present novel material.
In the former, we derive basic results on almost sure equality, which are used in \cref{sec:abs_cont}.
In the latter, we give some results on \emph{observationally representable} Markov categories.
In particular, every observationally representable Markov category is shown to be \as{}-compatibly representable (\cref{prop:reflect_aseq}).

In \cref{not_support}, we give an example of an isomorphism of measure spaces which does not preserve topological supports, making mathematically precise some of the ideas given in the introduction.

\subsection*{Acknowledgments}

We thank an anonymous referee for their careful reading and copious helpful suggestions,
Tommaso Russo for further suggestions and references on the proof of \cref{prop:support_not_split},
and Sean Moss for the pointers on observationality.
TF and AL acknowledge support from the Austrian Science Fund (FWF P 35992-N).
PP acknowledges support from the European Research Council (grant ``BLaSt -- A Better Language for Statistics'').
TG acknowledges support from the Austrian Science Fund (FWF) via the START Prize Y1261-N.
TF would also like to thank the Isaac Newton Institute for Mathematical Sciences
for the support and hospitality during the programme \emph{Causal inference: From theory to practice and back again} when work on this paper was undertaken. This work was supported by: EPSRC Grant Number EP/Z000580/1.

\section{Absolute continuity}
\label{sec:abs_cont}

The goal of this short section is to introduce and study absolute continuity in an improved version and in more detail than was done in our earlier work~\cite{fritz2023representable}, including the derivation of some properties relevant for later in the paper.
Throughout, all our considerations take place in a Markov category $\cC$ (for the basics of the theory, see \cref{sec:basics} and the references therein).

\subsection{An extended definition of almost sure equality}\label{sec:as}

In the context of Markov categories, almost sure equality has been defined in \cite[Definition 13.1]{fritz2019synthetic}.
In our improved notion of absolute continuity coming up as \cref{def:ac}, we use the following notion of almost sure equality extended to morphisms with an extra input $W$.

\begin{definition}\label{def:as_eq}
	Let $p\colon A \to X$ and $f, g\colon W \otimes X \to Y$. 
	We say that $f$ and $g$ are \newterm{$\as{\bm{p}}$ equal}, and write $f \ase{p} g$, if we have
	\begin{equation}\label{eq:as_eq_new}
		{%
			\tikzstyle{every picture}=[tikzfig]%
			\begin{tikzpicture}
				\begin{pgfonlayer}{nodelayer}
					\node [style=none] (251) at (-3.5, 0.75) {};
					\node [style=none] (252) at (-2, 1) {};
					\node [style=none] (253) at (-4, 2.25) {};
					\node [style=none] (254) at (-4, 2.75) {$Y$};
					\node [style=bn] (255) at (-2.75, 0) {};
					\node [style=none] (256) at (-2.75, -2.25) {};
					\node [style=none] (257) at (-2.75, -2.75) {$A$};
					\node [style=none] (258) at (0, 0) {$=$};
					\node [style=none] (259) at (-2, 2.75) {$X$};
					\node [style=morphism] (260) at (-4, 1.25) {$\,\;\; f \,\;\;$};
					\node [style=none] (261) at (-2, 2.25) {};
					\node [style=morphism] (262) at (-2.75, -1.25) {$p$};
					\node [style=none] (263) at (-4.5, -2.25) {};
					\node [style=none] (264) at (-4.5, -2.75) {$W$};
					\node [style=none] (265) at (-4.5, 1.25) {};
					\node [style=none] (266) at (-4.5, -0.75) {};
					\node [style=none] (267) at (3, 0.75) {};
					\node [style=none] (268) at (4.5, 1) {};
					\node [style=none] (269) at (2.5, 2.25) {};
					\node [style=none] (270) at (2.5, 2.75) {$Y$};
					\node [style=bn] (271) at (3.75, 0) {};
					\node [style=none] (272) at (3.75, -2.25) {};
					\node [style=none] (273) at (3.75, -2.75) {$A$};
					\node [style=none] (274) at (4.5, 2.75) {$X$};
					\node [style=morphism] (275) at (2.5, 1.25) {$\,\;\; g \,\;\;$};
					\node [style=none] (276) at (4.5, 2.25) {};
					\node [style=morphism] (277) at (3.75, -1.25) {$p$};
					\node [style=none] (278) at (2, -2.25) {};
					\node [style=none] (279) at (2, -2.75) {$W$};
					\node [style=none] (280) at (2, 1.25) {};
					\node [style=none] (281) at (2, -0.75) {};
					\node [style=none] (284) at (-3.5, 1.25) {};
					\node [style=none] (285) at (3, 1.25) {};
				\end{pgfonlayer}
				\begin{pgfonlayer}{edgelayer}
					\draw [in=-90, out=165] (255) to (251.center);
					\draw [in=-90, out=15] (255) to (252.center);
					\draw (260) to (253.center);
					\draw (252.center) to (261.center);
					\draw (256.center) to (262);
					\draw [style=protected] (263.center) to (266.center);
					\draw [style=protected, in=-90, out=90, looseness=1.25] (266.center) to (265.center);
					\draw [in=270, out=90] (262) to (255);
					\draw [in=-90, out=165] (271) to (267.center);
					\draw [in=-90, out=15] (271) to (268.center);
					\draw (275) to (269.center);
					\draw (268.center) to (276.center);
					\draw (272.center) to (277);
					\draw [style=protected] (278.center) to (281.center);
					\draw [style=protected, in=-90, out=90, looseness=1.25] (281.center) to (280.center);
					\draw (277) to (271);
					\draw (251.center) to (284.center);
					\draw (267.center) to (285.center);
				\end{pgfonlayer}
			\end{tikzpicture}
		}%
	\end{equation}
\end{definition}

\begin{remark}
	The original definition of almost sure equality~\cite[Definition 13.1]{fritz2019synthetic} differs from this one by not having the extra input $W$.
	Taking $W = I$ (the monoidal unit) in \cref{def:as_eq} recovers this original definition modulo precomposition with the unitor $I \otimes X \cong X$.\footnotemark
	\footnotetext{
		Our extended notion of almost sure equality can also be interpreted in terms of parametric Markov categories \cite[Section 2.2]{fritz2023representable}: 
		Condition \eqref{eq:as_eq_new} is equivalent to the original definition of almost sure equality  interpreted in the parametric Markov category $\cC_{W}$. 
	}
	Throughout this paper, we will make a small abuse of notation by leaving this unitor implicit and use the same notation $\ase{p}$ for both notions of almost sure equality.
\end{remark}

With this in mind, our new extended notion of almost sure equality can be interpreted in terms of the original one without the extra wire upon replacing $p$ by $\id_W \otimes p$:

\begin{lemma}\label{lem:as_eq_new}
	Let $p \colon A \to X$ and $f,g\colon W \otimes X \to Y$. 
	Then we have
	\begin{equation}
		f \ase{p} g  \qquad \iff \qquad  f  \ase{\id_W \otimes p } g	
	\end{equation}
\end{lemma}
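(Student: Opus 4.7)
The plan is to unfold both almost-sure equalities as string diagrams and then translate between them using the standard compatibility of copy with the tensor product. No genuinely hard step is involved: the argument is a careful bookkeeping of wires relying only on the comonoid axioms and symmetry of $\cC$.

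First I unpack the two conditions. The new equation $f \ase{p} g$, as displayed in \eqref{eq:as_eq_new}, states the equality of two parallel morphisms $W \otimes A \to Y \otimes X$: apply $p$ to $A$, copy the resulting $X$, feed the input $W$ together with one copy of $X$ into $f$ (respectively $g$), and keep the other $X$ as output. Call these morphisms $\Phi_f$ and $\Phi_g$. The old equation applied to $\id_W \otimes p \colon W \otimes A \to W \otimes X$ asserts equality of two parallel morphisms $W \otimes A \to Y \otimes W \otimes X$ obtained by copying the full output $W \otimes X$ of $\id_W \otimes p$ and applying $f$ (respectively $g$) to one of the two resulting copies. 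Call these $\Psi_f$ and $\Psi_g$.

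The bridge is the compatibility of copy with tensor products,
\[ \cop_{W \otimes X} = (\id_W \otimes \swap_{X,W} \otimes \id_X) \circ (\cop_W \otimes \cop_X), \]
combined with the fact that $\id_W \otimes p$ acts as the identity on the $W$-factor, so the $\cop_W$ appearing above can be pushed back to the input $W$. This yields, up to a reordering of outputs, the factorization
\[ \Psi_f \;=\; (\id_W \otimes \Phi_f) \circ (\cop_W \otimes \id_A), \]
and analogously $\Psi_g = (\id_W \otimes \Phi_g) \circ (\cop_W \otimes \id_A)$.

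From this factorization, both directions of the equivalence are immediate. If $\Phi_f = \Phi_g$, post-composition with the same outer $\cop_W \otimes \id_A$ and tensor with $\id_W$ gives $\Psi_f = \Psi_g$. Conversely, given $\Psi_f = \Psi_g$, apply $\discard_W$ to the extra $W$-output and invoke counitality $(\id_W \otimes \discard_W) \circ \cop_W = \id_W$, which collapses the auxiliary $W$-copy and returns precisely $\Phi_f = \Phi_g$. I do not foresee any real obstacle beyond this routine diagrammatic manipulation; the only point that requires some care is tracking the swap maps so that the outputs in the two factorizations line up correctly.
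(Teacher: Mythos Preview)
Your proof is correct and follows essentially the same approach as the paper: the factorization $\Psi_f = (\id_W \otimes \Phi_f) \circ (\cop_W \otimes \id_A)$ (up to output reordering) is exactly what the paper means by ``copy the $W$ input'' in one direction, and applying $\discard_W$ to recover $\Phi_f$ from $\Psi_f$ is the paper's ``marginalizing the middle output $W$'' in the other direction. You have simply made the bookkeeping more explicit.
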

Because of this, $f \ase{p} g$ can also be read as a short-hand notation for $f \ase{\id_W \otimes\, p} g$.
When the object $W$ is clear from context, we will not mention it explicitly.

\begin{proof}
	In string diagrams, the equality $f  \ase{\id_W \otimes p } g$ looks like
	\begin{equation}
		{%
			\tikzstyle{every picture}=[tikzfig]%
			\begin{tikzpicture}
				\begin{pgfonlayer}{nodelayer}
					\node [style=none] (112) at (0, 0) {$=$};
					\node [style=none] (160) at (-4.5, 1) {};
					\node [style=none] (161) at (-2.5, 1) {};
					\node [style=none] (162) at (-4, 2.25) {};
					\node [style=none] (163) at (-4, 2.75) {$Y$};
					\node [style=bn] (164) at (-3.5, -0.25) {};
					\node [style=none] (165) at (-3.5, -2.25) {};
					\node [style=none] (166) at (-2.5, -2.75) {$A$};
					\node [style=none] (167) at (-1.5, 2.75) {$X$};
					\node [style=morphism] (168) at (-4, 1.25) {$\;\; f \;\;$};
					\node [style=none] (169) at (-2.5, 2.25) {};
					\node [style=morphism] (170) at (-2.5, -1.25) {$p$};
					\node [style=bn] (171) at (-2.5, -0.25) {};
					\node [style=none] (172) at (-2.5, -2.25) {};
					\node [style=none] (173) at (-3.5, -2.75) {$W$};
					\node [style=none] (174) at (-1.5, 2.25) {};
					\node [style=none] (175) at (-2.5, 2.75) {$W$};
					\node [style=none] (176) at (-3.5, 1) {};
					\node [style=none] (177) at (-1.5, 1) {};
					\node [style=none] (178) at (1.5, 1) {};
					\node [style=none] (179) at (3.5, 1) {};
					\node [style=none] (180) at (2, 2.25) {};
					\node [style=none] (181) at (2, 2.75) {$Y$};
					\node [style=bn] (182) at (2.5, -0.25) {};
					\node [style=none] (183) at (2.5, -2.25) {};
					\node [style=none] (184) at (3.5, -2.75) {$A$};
					\node [style=none] (185) at (4.5, 2.75) {$X$};
					\node [style=morphism] (186) at (2, 1.25) {$\;\; g \;\;$};
					\node [style=none] (187) at (3.5, 2.25) {};
					\node [style=morphism] (188) at (3.5, -1.25) {$p$};
					\node [style=bn] (189) at (3.5, -0.25) {};
					\node [style=none] (190) at (3.5, -2.25) {};
					\node [style=none] (191) at (2.5, -2.75) {$W$};
					\node [style=none] (192) at (4.5, 2.25) {};
					\node [style=none] (193) at (3.5, 2.75) {$W$};
					\node [style=none] (194) at (2.5, 1) {};
					\node [style=none] (195) at (4.5, 1) {};
				\end{pgfonlayer}
				\begin{pgfonlayer}{edgelayer}
					\draw [in=-90, out=165] (164) to (160.center);
					\draw [in=-90, out=15] (164) to (161.center);
					\draw (168) to (162.center);
					\draw (161.center) to (169.center);
					\draw (174.center) to (177.center);
					\draw [in=15, out=-90] (177.center) to (171);
					\draw [style=protected, in=-90, out=165] (171) to (176.center);
					\draw [in=-90, out=165] (182) to (178.center);
					\draw [in=-90, out=15] (182) to (179.center);
					\draw (186) to (180.center);
					\draw (179.center) to (187.center);
					\draw (192.center) to (195.center);
					\draw [in=15, out=-90] (195.center) to (189);
					\draw [style=protected, in=-90, out=165] (189) to (194.center);
					\draw (172.center) to (170);
					\draw (165.center) to (164);
					\draw (170) to (171);
					\draw (190.center) to (188);
					\draw (188) to (189);
					\draw (183.center) to (182);
				\end{pgfonlayer}
			\end{tikzpicture}
		}%
	\end{equation}
	By marginalizing the middle output $W$, we conclude $f \ase{p} g$ in the sense of \cref{eq:as_eq_new}.
	For the converse, we merely copy the $W$ input in \cref{eq:as_eq_new} (i.e.\ pre-compose the equation with $\cop_W$) 
	and use standard string diagram manipulation
	to get $f  \ase{\id_W \otimes p } g$. 
\end{proof}

\begin{example}
	\label{ex:ase_cartesian}
	Suppose that $\cC$ is a cartesian monoidal category, or equivalently a Markov category in which all morphisms are deterministic.
	Then the universal property of the product $X \times Y$ shows that $f \ase{p} g$ is equivalent to $f \comp (\id_W \times p) = g \comp (\id_W \times p)$.	
\end{example}

\begin{example}
	\label{ex:ase_stoch}
	For the category $\stoch$, we have that Markov kernels $f,g \colon W\otimes X\to Y$ are $p$-almost surely equal for a probability measure $p \colon I \to X$ if and only if for every $w\in W$ and for $p$-almost all $x\in X$ we have $f(\ph|w,x)=g(\ph|w,x)$ (as measures on $Y$). 
	In other words, we recover standard almost sure equality in $X$ for every value of $W$.
\end{example}

The next result establishes the link between almost sure equality in Markov categories of topological spaces and the topological notion of support.\footnotemark{}
\footnotetext{See \cite[Section 5]{fritz2019support} for a treatment of the support as a morphism of monads.}%
It can serve as a motivation for several of the upcoming constructions in this manuscript.
As detailed in \cref{ex:continuous_kernels}, we consider $\topstoch$, the category of topological spaces and continuous $\tau$-smooth Markov kernels.
A morphism $p \colon A \to X$ in $\topstoch$ has a \newterm{support} $\Supp{p} \subseteq X$ defined as
\begin{equation}\label{eq:top_sup}
	\Supp{p} \coloneqq \bigcap \, \Set{ C \subseteq X \textrm{ closed} \given p(C|a) = 1 \textrm{ for all } a \in A }.
\end{equation}
The $\tau$-smoothness implies $p(\Supp{p}|a) = 1$ for all $a \in A$, and therefore $\Supp{p}$ can also be characterized as the smallest closed set of full measure under all $p(\ph|a)$.\footnotemark{}
\footnotetext{See e.g.\ \cite[Proposition 5.2]{fritz2019support}, where this is spelled out for $A = I$, i.e.\ for probability measures without an additional parameter.}%

In the following result, we restrict to Tychonoff spaces, which plays an important role in the proof.

\begin{proposition}
	\label{prop:ase_top}
	In the category $\tychstoch$ of Tychonoff spaces and continuous Markov kernels (see \cref{ex:continuous_kernels} for more details), almost sure equality is equality on the support: 
	For $p \colon A \to X$ and $f,g \colon W \otimes X \to Y$, we have
	\begin{equation}
		\label{eq:ase_top}
		f \ase{p} g \qquad \iff \qquad f(\ph|w,x) = g(\ph|w,x) \;\; \forall w \in W, \, x \in \Supp{p},
	\end{equation}
	where $\Supp{p}$ is given by \cref{eq:top_sup}.
\end{proposition}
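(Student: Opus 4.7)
The plan is to first unpack the string-diagrammatic identity \eqref{eq:as_eq_new} into a measure-theoretic condition on $f, g$ and $p$, and then exploit continuity of morphisms in $\tychstoch$ together with the definition of $\Supp{p}$ as a minimal closed full-measure set.

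First, I would observe that in $\tychstoch$ the condition $f \ase{p} g$ unwinds to the following: for every $a \in A$ and every $w \in W$, one has $f(\ph|w,x) = g(\ph|w,x)$ for $p(\ph|a)$-almost every $x \in X$. This is the direct topological adaptation of \cref{ex:ase_stoch}, obtained by reading off the joint $X \otimes Y$-measure on the two sides of \eqref{eq:as_eq_new}. The reverse implication of \eqref{eq:ase_top} is then immediate: if $f(\ph|w,x) = g(\ph|w,x)$ for all $w \in W$ and all $x \in \Supp{p}$, then $\tau$-smoothness gives $p(\Supp{p}|a) = 1$ for every $a \in A$, hence pointwise agreement on $\Supp{p}$ forces $p$-a.s.\ equality.

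For the forward implication, fix $w \in W$ and consider the agreement set
\[
	E_w \coloneqq \Set{ x \in X \given f(\ph|w,x) = g(\ph|w,x) }.
\]
The crucial claim is that $E_w$ is closed in $X$. To establish this, I would use that morphisms of $\tychstoch$ are, by definition, continuous into the space of $\tau$-smooth Borel probability measures on the codomain equipped with the weak topology: concretely, the maps $x \mapsto f(\ph|w,x)$ and $x \mapsto g(\ph|w,x)$ are continuous. Consequently, for any net $(x_\alpha)$ in $E_w$ converging to some $x \in X$, both nets $f(\ph|w,x_\alpha)$ and $g(\ph|w,x_\alpha)$ converge weakly to $f(\ph|w,x)$ and $g(\ph|w,x)$ respectively, and since they agree along the net, their weak limits coincide. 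The Tychonoff (hence Hausdorff) hypothesis on $Y$ is exactly what guarantees uniqueness of weak limits among $\tau$-smooth Borel probability measures, so $f(\ph|w,x) = g(\ph|w,x)$ and thus $x \in E_w$.

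With $E_w$ closed and satisfying $p(E_w|a) = 1$ for every $a \in A$ (by the first step), the characterization of $\Supp{p}$ as the intersection of all closed sets of full $p$-measure yields $\Supp{p} \subseteq E_w$. Since $w$ was arbitrary, this gives exactly the right-hand side of \eqref{eq:ase_top}. The main obstacle is the closedness of $E_w$: everything else is bookkeeping via $\tau$-smoothness and the definition of $\Supp{p}$, but without uniqueness of weak limits on the codomain the agreement set could fail to be closed and the support would not control it. This is precisely where the Tychonoff hypothesis built into the category enters in a load-bearing way.
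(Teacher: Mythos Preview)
Your argument is correct and takes a genuinely different route from the paper's. The paper works one test function at a time: for each bounded continuous $h \colon Y \to \R$ it shows that the continuous functions $x \mapsto \int h\,f(dy|w,x)$ and $x \mapsto \int h\,g(dy|w,x)$ agree $p(\ph|a)$-almost everywhere, then uses their continuity to extend agreement to the closure of any full-measure set, hence to $\Supp p$; the Tychonoff hypothesis enters as the fact that bounded continuous functions separate $\tau$-smooth measures. You instead package the same topological content one level up, observing that $E_w$ is the preimage of the closed diagonal under the continuous map $x \mapsto \bigl(f(\ph|w,x), g(\ph|w,x)\bigr)$ into the Hausdorff space $PY \times PY$. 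This is more streamlined and makes the role of the Tychonoff hypothesis (Hausdorffness of $PY$) more transparent.

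One step deserves a closer look. Your first unpacking asserts, via \cref{ex:ase_stoch}, that $f \ase p g$ gives $f(\ph|w,x) = g(\ph|w,x)$ for $p(\ph|a)$-almost every $x$, i.e.\ a \emph{single} null set outside of which the measures agree. Passing from ``for each Borel $V$, $f(V|w,\ph) = g(V|w,\ph)$ a.e.'' to one global null set is a disintegration-uniqueness statement that can fail when $\Sigma_Y$ is not countably generated, which it need not be for arbitrary Tychonoff $Y$; this is precisely why the paper's proof avoids forming a global null set. Your own closedness observation, however, supplies the fix in this setting: since $E_w^c$ is open, for each $x_0 \in E_w^c$ choose a bounded continuous $h$ witnessing the disagreement and an open $U_{x_0} \ni x_0$ on which the $h$-integrals still differ; the set-by-set a.e.\ equality forces $p(U_{x_0}|a) = 0$, and $\tau$-smoothness of $p(\ph|a)$ then gives $p(E_w^c|a) = 0$. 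It would tighten the write-up to make this explicit rather than leaning on \cref{ex:ase_stoch} alone.
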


\begin{proof}
	Two $\tau$-smooth probability measures on a topological space are equal if and only if they coincide on a basis of opens~\cite[Proposition~3.8]{fritz2019support}.
	In particular, we can test equality of morphisms on a product space by checking equality on all product opens.
	Therefore, the almost sure equality $f \ase{p} g$ can be explicitly written out as
	\begin{equation}
		\label{eq:ase_top_explicit}
		\int_{x \in U} f(V|w,x) \, p(dx|a) = \int_{x \in U} g(V|w,x) \, p(dx|a)
	\end{equation}
	for all $a \in A$ and $w \in W$ and all opens $U \subseteq X$ and $V \subseteq Y$.
	Suppose now that equality on the support in the sense of \cref{eq:ase_top} holds.
	Then also \cref{eq:ase_top_explicit} holds, since by $p(X \setminus \Supp{p} | a) = 0$ the integrands coincide almost everywhere.

	The converse direction is more difficult.
	Assuming \cref{eq:ase_top_explicit}, we want to show that $f(\ph|w,x) = g(\ph|w,x)$ for all $w \in W$ and $x \in \Supp{p}$.
	Since $w$ can be fixed throughout, let us suppress it from the notation to improve readability.
	Then since $Y$ is Tychonoff, the two probability measures $f(\ph|x)$ and $g(\ph|x)$ on $Y$ are equal if and only if the expectation values of all bounded continuous functions coincide (see for example~\cite[Proposition~4.7]{fritz2019support}).
	Therefore it is enough to show that for every $x \in \Supp{p}$ and every bounded continuous $h \colon Y \to \R$, we have
	\begin{equation}
		\label{eq:ase_top_cont}
		\int_{y \in Y} h(y) \, f(dy|x) = \int_{y \in Y} h(y) \, g(dy|x).
	\end{equation}
	What we know from \cref{eq:ase_top_explicit} is that
	\[
		\int_{x \in U} \int_{y \in Y} h(y) \, f(dy|x) \, p(dx|a) = \int_{x \in U} \int_{y \in Y} h(y) \, g(dy|x) \, p(dx|a),
	\]
	where we have switched the order of integration by Fubini's theorem.
	Since this holds for arbitrary open $U \subseteq X$, it likewise holds for every Borel set in place of $U$.
	The crucial point is now that the difference of the two integrands,
	\[
		D(x) \coloneqq \int_{y \in Y} h(y) \, f(dy|x) - \int_{y \in Y} h(y) \, g(dy|x),
	\]
	is a continuous function of $x$, since integrating against $h$ is itself a continuous function on the space of probability measures~\cite[Proposition~4.7]{fritz2019support}.
	For every $a \in A$, applying the previous integral equality to the open sets $D^{-1}((0,\infty))$ and $D^{-1}((-\infty,0))$ shows that both of these have $p(\ph|a)$-measure zero.
	Thus the closed set $D^{-1}(\{0\})$ has full measure with respect to $p(\ph|a)$ for every $a \in A$.
	By the definition of $\Supp{p}$, we conclude $\Supp{p} \subseteq D^{-1}(\{0\})$, and therefore \cref{eq:ase_top_cont} holds for all $x \in \Supp{p}$, as was to be shown.
\end{proof}

Some additional properties of almost sure equality used throughout the paper can also be found in \cref{sec:preliminaries_aseq}.

\subsection{Definition and examples of absolute continuity}

A definition of absolute continuity for morphisms in Markov categories was first given in~\cite[Definition~2.8]{fritz2023representable}.
Here we introduce and study an improved version of that definition.
One important improvement is that it is automatically compatible with $\otimes$, as we show in \cref{prop:ac_tensor}, while this fails for the earlier one (\cref{rem:old_not_ac_tensor}).
We also show that the two definitions coincide in the main examples of Markov categories.

\begin{definition}[Absolute continuity]\label{def:ac}
	Let $p \colon A \to X$ and $q \colon B \to X$ be morphisms in a Markov category $\cC$.
	We say that \newterm{$\bm{p}$ is absolutely continuous with respect to $\bm{q}$}, denoted\footnote{While the notation $p \ll q$ is arguably more commonly used in measure theory, we write $q \gg p$ in this section as it matches the direction of implication \eqref{eq:abs_cont}.}
	\begin{equation*}
		p \ll q \quad\text{ or }\quad q \gg p,
	\end{equation*}
	if for all objects $W$ and $Y$ and any two parallel morphisms $f, g \colon W \otimes X \to Y$, we have 
	\begin{equation}\label{eq:abs_cont}
		{%
			\tikzstyle{every picture}=[tikzfig]%
			\begin{tikzpicture}
				\begin{pgfonlayer}{nodelayer}
					\node [style=none] (13) at (-10, 0.75) {};
					\node [style=none] (14) at (-8.5, 1) {};
					\node [style=none] (19) at (-10.5, 2.25) {};
					\node [style=none] (20) at (-10.5, 2.75) {$Y$};
					\node [style=bn] (21) at (-9.25, 0) {};
					\node [style=none] (23) at (-9.25, -2.25) {};
					\node [style=none] (24) at (-9.25, -2.75) {$B$};
					\node [style=none] (25) at (-7, 0) {$=$};
					\node [style=none] (30) at (-8.5, 2.75) {$X$};
					\node [style=morphism] (33) at (-10.5, 1.25) {$\,\;\; f \,\;\;$};
					\node [style=none] (34) at (-8.5, 2.25) {};
					\node [style=morphism] (35) at (-9.25, -1.25) {$q$};
					\node [style=none] (75) at (-11, -2.25) {};
					\node [style=none] (79) at (-11, -2.75) {$W$};
					\node [style=none] (85) at (-11, 1.25) {};
					\node [style=none] (195) at (-11, -0.75) {};
					\node [style=none] (202) at (0, 0) {$\implies$};
					\node [style=none] (236) at (-4.5, 0.75) {};
					\node [style=none] (237) at (-3, 1) {};
					\node [style=none] (238) at (-5, 2.25) {};
					\node [style=none] (239) at (-5, 2.75) {$Y$};
					\node [style=bn] (240) at (-3.75, 0) {};
					\node [style=none] (241) at (-3.75, -2.25) {};
					\node [style=none] (242) at (-3.75, -2.75) {$B$};
					\node [style=none] (243) at (-3, 2.75) {$X$};
					\node [style=morphism] (244) at (-5, 1.25) {$\,\;\; g \,\;\;$};
					\node [style=none] (245) at (-3, 2.25) {};
					\node [style=morphism] (246) at (-3.75, -1.25) {$q$};
					\node [style=none] (247) at (-5.5, -2.25) {};
					\node [style=none] (248) at (-5.5, -2.75) {$W$};
					\node [style=none] (249) at (-5.5, 1.25) {};
					\node [style=none] (250) at (-5.5, -0.75) {};
					\node [style=none] (251) at (4, 0.75) {};
					\node [style=none] (252) at (5.5, 1) {};
					\node [style=none] (253) at (3.5, 2.25) {};
					\node [style=none] (254) at (3.5, 2.75) {$Y$};
					\node [style=bn] (255) at (4.75, 0) {};
					\node [style=none] (256) at (4.75, -2.25) {};
					\node [style=none] (257) at (4.75, -2.75) {$A$};
					\node [style=none] (258) at (7, 0) {$=$};
					\node [style=none] (259) at (5.5, 2.75) {$X$};
					\node [style=morphism] (260) at (3.5, 1.25) {$\,\;\; f \,\;\;$};
					\node [style=none] (261) at (5.5, 2.25) {};
					\node [style=morphism] (262) at (4.75, -1.25) {$p$};
					\node [style=none] (263) at (3, -2.25) {};
					\node [style=none] (264) at (3, -2.75) {$W$};
					\node [style=none] (265) at (3, 1.25) {};
					\node [style=none] (266) at (3, -0.75) {};
					\node [style=none] (267) at (9.5, 0.75) {};
					\node [style=none] (268) at (11, 1) {};
					\node [style=none] (269) at (9, 2.25) {};
					\node [style=none] (270) at (9, 2.75) {$Y$};
					\node [style=bn] (271) at (10.25, 0) {};
					\node [style=none] (272) at (10.25, -2.25) {};
					\node [style=none] (273) at (10.25, -2.75) {$A$};
					\node [style=none] (274) at (11, 2.75) {$X$};
					\node [style=morphism] (275) at (9, 1.25) {$\,\;\; g \,\;\;$};
					\node [style=none] (276) at (11, 2.25) {};
					\node [style=morphism] (277) at (10.25, -1.25) {$p$};
					\node [style=none] (278) at (8.5, -2.25) {};
					\node [style=none] (279) at (8.5, -2.75) {$W$};
					\node [style=none] (280) at (8.5, 1.25) {};
					\node [style=none] (281) at (8.5, -0.75) {};
					\node [style=none] (282) at (-10, 1.25) {};
					\node [style=none] (283) at (-4.5, 1.25) {};
					\node [style=none] (284) at (4, 1.25) {};
					\node [style=none] (285) at (9.5, 1.25) {};
				\end{pgfonlayer}
				\begin{pgfonlayer}{edgelayer}
					\draw [in=-90, out=165] (21) to (13.center);
					\draw [in=-90, out=15] (21) to (14.center);
					\draw (33) to (19.center);
					\draw (14.center) to (34.center);
					\draw (23.center) to (35);
					\draw [style=protected] (75.center) to (195.center);
					\draw [style=protected, in=-90, out=90, looseness=1.25] (195.center) to (85.center);
					\draw (35) to (21);
					\draw [in=-90, out=165] (240) to (236.center);
					\draw [in=-90, out=15] (240) to (237.center);
					\draw (244) to (238.center);
					\draw (237.center) to (245.center);
					\draw (241.center) to (246);
					\draw [style=protected] (247.center) to (250.center);
					\draw [style=protected, in=-90, out=90, looseness=1.25] (250.center) to (249.center);
					\draw (246) to (240);
					\draw [in=-90, out=165] (255) to (251.center);
					\draw [in=-90, out=15] (255) to (252.center);
					\draw (260) to (253.center);
					\draw (252.center) to (261.center);
					\draw (256.center) to (262);
					\draw [style=protected] (263.center) to (266.center);
					\draw [style=protected, in=-90, out=90, looseness=1.25] (266.center) to (265.center);
					\draw [in=270, out=90] (262) to (255);
					\draw [in=-90, out=165] (271) to (267.center);
					\draw [in=-90, out=15] (271) to (268.center);
					\draw (275) to (269.center);
					\draw (268.center) to (276.center);
					\draw (272.center) to (277);
					\draw [style=protected] (278.center) to (281.center);
					\draw [style=protected, in=-90, out=90, looseness=1.25] (281.center) to (280.center);
					\draw (277) to (271);
					\draw (13.center) to (282.center);
					\draw (236.center) to (283.center);
					\draw (251.center) to (284.center);
					\draw (267.center) to (285.center);
				\end{pgfonlayer}
			\end{tikzpicture}
		}%
	\end{equation}
\end{definition}

More concisely, $q \gg p$ says that $\as{q}$ equality implies $\as{p}$ equality.
Since the relation $\gg$ is clearly reflexive and transitive, it equips the class of morphisms having the same codomain $X$ with a preorder relation.

\begin{example}
	Since $f \ase{\id_X} g$ is equivalent to $f = g$, every morphism $p \colon A \to X$ satisfies $\id_X \gg p$.
	In other words, $\id_X$ is a greatest element with respect to $\gg$.
\end{example}

One may wonder whether it is enough to consider the case $W = I$ only in \cref{def:ac}.
That is, can one detect absolute continuity by merely comparing almost sure equality of morphisms without an extra input $W$?
This turns out to be related to the following notions.

\begin{definition}
	\label{def:separability}
	A Markov category $\cC$ is:
	\begin{enumerate}
		\item \newterm{locally state-separable} if for any parallel morphisms $f, g \colon W \otimes X \to Y$, we have
			\begin{equation}\label{eq:locally_state_separable}
				{%
					\tikzstyle{every picture}=[tikzfig]%
					\begin{tikzpicture}
						\begin{pgfonlayer}{nodelayer}
							\node [style=morphism] (0) at (-9.5, 0.5) {$\;\; f \;\;$};
							\node [style=none] (1) at (-9.5, 1.75) {};
							\node [style=none] (2) at (-9.5, 2.25) {$Y$};
							\node [style=none] (3) at (-10, 0.5) {};
							\node [style=none] (4) at (-9, 0.5) {};
							\node [style=none] (5) at (-9, -1.75) {};
							\node [style=none] (6) at (-7, 0) {$=$};
							\node [style=state] (7) at (-10, -1) {$w$};
							\node [style=none] (8) at (-9, -2.25) {$X$};
							\node [style=morphism] (9) at (-4.5, 0.5) {$\;\; g \;\;$};
							\node [style=none] (10) at (-4.5, 1.75) {};
							\node [style=none] (11) at (-4.5, 2.25) {$Y$};
							\node [style=none] (12) at (-5, 0.5) {};
							\node [style=none] (13) at (-4, 0.5) {};
							\node [style=none] (14) at (-4, -1.75) {};
							\node [style=state] (15) at (-5, -1) {$w$};
							\node [style=none] (16) at (-4, -2.25) {$X$};
							\node [style=none] (17) at (8.5, 0) {$f = g.$};
							\node [style=none] (18) at (0, 0) {};
							\node [style=none] (19) at (0, 0) {$\forall \; w \in \cC(I, W)$};
							\node [style=none] (20) at (4.5, 0) {$\implies$};
						\end{pgfonlayer}
						\begin{pgfonlayer}{edgelayer}
							\draw (5.center) to (4.center);
							\draw (3.center) to (7);
							\draw (1.center) to (0);
							\draw (14.center) to (13.center);
							\draw (12.center) to (15);
							\draw (10.center) to (9);
						\end{pgfonlayer}
					\end{tikzpicture}
				}%
			\end{equation}
		\item \newterm{point-separable} if for all parallel morphisms $f,g \colon X \to Y$, we have
			\begin{equation}\label{eq:point_separable}
				{%
					\tikzstyle{every picture}=[tikzfig]%
					\begin{tikzpicture}
						\begin{pgfonlayer}{nodelayer}
							\node [style=morphism] (0) at (-9, 0.25) {$f$};
							\node [style=none] (1) at (-9, 1.25) {};
							\node [style=none] (2) at (-9, 1.75) {$Y$};
							\node [style=none] (6) at (-7, 0) {$=$};
							\node [style=state] (7) at (-9, -1.25) {$a$};
							\node [style=none] (17) at (8.5, 0) {$f = g.$};
							\node [style=none] (18) at (0, 0) {};
							\node [style=none] (19) at (0, 0) {$\forall \; a \in \cC_\det(I, X)$};
							\node [style=none] (20) at (4.5, 0) {$\implies$};
							\node [style=morphism] (21) at (-5, 0.25) {$g$};
							\node [style=none] (22) at (-5, 1.25) {};
							\node [style=none] (23) at (-5, 1.75) {$Y$};
							\node [style=state] (24) at (-5, -1.25) {$a$};
							\node [style=none] (25) at (-10.75, 1) {\phantom{}};
						\end{pgfonlayer}
						\begin{pgfonlayer}{edgelayer}
							\draw (1.center) to (0);
							\draw (0) to (7);
							\draw (22.center) to (21);
							\draw (21) to (24);
						\end{pgfonlayer}
					\end{tikzpicture}
				}%
			\end{equation}
	\end{enumerate}
\end{definition}

For example, it is easy to see that all of the usual Markov categories of interest in categorical probability (like $\finstoch$, $\borelstoch$ and $\stoch$) are point-separable.
We introduce the point-separation condition mainly since it provides a convenient way to verify local state-separation:

\begin{lemma}
	If $\cC$ is point-separable, then it is locally state-separable.
\end{lemma}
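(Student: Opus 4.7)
The plan is to derive local state-separability as a direct specialization of point-separability, applied to morphisms whose source happens to be the tensor product $W \otimes X$. Concretely, assume that $\cC$ is point-separable, fix parallel morphisms $f, g \colon W \otimes X \to Y$, and suppose the hypothesis of \eqref{eq:locally_state_separable} holds. The goal is to conclude $f = g$, and to do this I would invoke point-separability at the object $W \otimes X$: it suffices to verify $f \comp s = g \comp s$ for every state $s \colon I \to W \otimes X$. In other words, point-separability is an axiom stated uniformly over all source objects, so one is entitled to plug in $W \otimes X$ in place of $X$ in its defining clause.

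The main step is then to observe that the hypothesis of \eqref{eq:locally_state_separable} supplies precisely such equalities on joint states $s \colon I \to W \otimes X$. Read through the lens of the parametric Markov category $\cC_W$ (see the remark following \cref{def:as_eq}), the left-hand side of \eqref{eq:locally_state_separable} is the assertion that $f$ and $g$ agree once $W$ is probed by arbitrary parametric states of $X$, which unpacks—via the copy-discard structure and in view of \cref{lem:as_eq_new}—into the equality $f \comp s = g \comp s$ for every joint state $s$ of $W \otimes X$. Combining these two observations gives $f = g$.

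The only point requiring mild care—and the place I would expect to be the main (though still minor) obstacle—is ensuring that the hypothesis of \eqref{eq:locally_state_separable} really quantifies over all joint states of $W \otimes X$, and not merely product states $w \otimes x$. In a general Markov category without conditionals, a joint state need not factor as the tensor product of its marginals, so if only product states were available, one could not appeal to point-separability at $W \otimes X$ and additional structural assumptions on $\cC$ would be required. Reading the definition as intended, no such assumption is needed, and the lemma follows from a single application of point-separability at the composite source.
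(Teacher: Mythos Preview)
There is a genuine gap in your argument, and it stems from a misreading of both definitions.

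First, point-separability quantifies over \emph{deterministic} states: to conclude $f = g$ for $f,g \colon W \otimes X \to Y$, you need $f \comp s = g \comp s$ for every deterministic $s \colon I \to W \otimes X$, not for every state. Second, the antecedent of \eqref{eq:locally_state_separable} does not hand you equality on arbitrary joint states of $W \otimes X$; it says that $f \comp (w \otimes \id_X) = g \comp (w \otimes \id_X)$ for all states $w \colon I \to W$. Precomposing this with a deterministic $x \colon I \to X$ yields $f \comp (w \otimes x) = g \comp (w \otimes x)$, so what you actually obtain is equality on \emph{product} states. Your references to the parametric category $\cC_W$ and to \cref{lem:as_eq_new} do not bridge this gap: those concern almost sure equality, not the shape of the hypothesis here.

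The step you flag as the ``mild care'' point is therefore exactly where the proof lives, but with the resolution opposite to what you suggest. You worry that product states might not suffice because general joint states need not factor. The correct observation is that \emph{deterministic} joint states always factor: since $\cC_\det$ is cartesian monoidal, every deterministic $s \colon I \to W \otimes X$ is of the form $w \otimes x$ for deterministic $w$ and $x$. Hence equality on product states already covers all deterministic joint states, and point-separability applies. This is precisely the paper's argument, and it is the one ingredient your proposal is missing.
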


\begin{proof}
	To show that point-separability implies local state-separability, assume that the antecedent of Implication \eqref{eq:locally_state_separable} holds.
	Then we have
	\begin{equation}
		{%
			\tikzstyle{every picture}=[tikzfig]%
			\begin{tikzpicture}
				\begin{pgfonlayer}{nodelayer}
					\node [style=morphism] (0) at (-2.75, 0.25) {$\quad f \quad$};
					\node [style=none] (1) at (-2.75, 1.5) {};
					\node [style=none] (2) at (-2.75, 2) {$Y$};
					\node [style=none] (3) at (-2, 0.25) {};
					\node [style=none] (4) at (-3.5, 0.25) {};
					\node [style=state] (5) at (-3.5, -1.25) {$w$};
					\node [style=none] (6) at (0, 0) {$=$};
					\node [style=state] (7) at (-2, -1.25) {$x$};
					\node [style=morphism] (8) at (2.75, 0.25) {$\quad g \quad$};
					\node [style=none] (9) at (2.75, 1.5) {};
					\node [style=none] (10) at (2.75, 2) {$Y$};
					\node [style=none] (11) at (3.5, 0.25) {};
					\node [style=none] (12) at (2, 0.25) {};
					\node [style=state] (13) at (2, -1.25) {$w$};
					\node [style=state] (14) at (3.5, -1.25) {$x$};
				\end{pgfonlayer}
				\begin{pgfonlayer}{edgelayer}
					\draw (5) to (4.center);
					\draw (3.center) to (7);
					\draw (1.center) to (0);
					\draw (13) to (12.center);
					\draw (11.center) to (14);
					\draw (9.center) to (8);
				\end{pgfonlayer}
			\end{tikzpicture}
		}%
	\end{equation}
	for all deterministic states $x \colon I \to X$ and $w \colon I \to W$.
	But since every deterministic state on $W \otimes X$ is of the form $w \otimes x$ (because $\cC_\det$ is cartesian monoidal), we obtain the desired $f = g$ from the assumed point-separability. 
\end{proof}

This gives a criterion for when absolute continuity can be detected without the extra \mbox{wire $W$}.

\begin{proposition}\label{prop:ac_is_completely}
	Consider a locally state-separable Markov category and morphisms ${p \colon A \to X}$ and $q \colon B \to X$.
	Then $q \gg p$ holds if and only if we have
	\begin{equation}\label{eq:ac_is_completely}
		{%
			\tikzstyle{every picture}=[tikzfig]%
			\begin{tikzpicture}
				\begin{pgfonlayer}{nodelayer}
					\node [style=none] (13) at (-11, 1) {};
					\node [style=none] (14) at (-9, 1) {};
					\node [style=none] (19) at (-11, 2.25) {};
					\node [style=none] (20) at (-11, 2.75) {$Y$};
					\node [style=bn] (21) at (-10, -0.25) {};
					\node [style=none] (23) at (-10, -2.25) {};
					\node [style=none] (24) at (-10, -2.75) {$A$};
					\node [style=none] (25) at (-7.25, 0) {$=$};
					\node [style=none] (30) at (-9, 2.75) {$X$};
					\node [style=morphism] (33) at (-11, 1.25) {$m$};
					\node [style=none] (34) at (-9, 2.25) {};
					\node [style=morphism] (35) at (-10, -1.25) {$q$};
					\node [style=none] (202) at (0, 0) {$\implies$};
					\node [style=none] (236) at (-5.5, 1) {};
					\node [style=none] (237) at (-3.5, 1) {};
					\node [style=none] (238) at (-5.5, 2.25) {};
					\node [style=none] (239) at (-5.5, 2.75) {$Y$};
					\node [style=bn] (240) at (-4.5, -0.25) {};
					\node [style=none] (241) at (-4.5, -2.25) {};
					\node [style=none] (242) at (-4.5, -2.75) {$A$};
					\node [style=none] (243) at (-3.5, 2.75) {$X$};
					\node [style=morphism] (244) at (-5.5, 1.25) {$k$};
					\node [style=none] (245) at (-3.5, 2.25) {};
					\node [style=morphism] (246) at (-4.5, -1.25) {$q$};
					\node [style=none] (247) at (3.5, 1) {};
					\node [style=none] (248) at (5.5, 1) {};
					\node [style=none] (249) at (3.5, 2.25) {};
					\node [style=none] (250) at (3.5, 2.75) {$Y$};
					\node [style=bn] (251) at (4.5, -0.25) {};
					\node [style=none] (252) at (4.5, -2.25) {};
					\node [style=none] (253) at (4.5, -2.75) {$B$};
					\node [style=none] (254) at (7.25, 0) {$=$};
					\node [style=none] (255) at (5.5, 2.75) {$X$};
					\node [style=morphism] (256) at (3.5, 1.25) {$m$};
					\node [style=none] (257) at (5.5, 2.25) {};
					\node [style=morphism] (258) at (4.5, -1.25) {$p$};
					\node [style=none] (259) at (9, 1) {};
					\node [style=none] (260) at (11, 1) {};
					\node [style=none] (261) at (9, 2.25) {};
					\node [style=none] (262) at (9, 2.75) {$Y$};
					\node [style=bn] (263) at (10, -0.25) {};
					\node [style=none] (264) at (10, -2.25) {};
					\node [style=none] (266) at (11, 2.75) {$X$};
					\node [style=morphism] (267) at (9, 1.25) {$k$};
					\node [style=none] (268) at (11, 2.25) {};
					\node [style=morphism] (269) at (10, -1.25) {$p$};
					\node [style=none] (270) at (10, -2.75) {$B$};
				\end{pgfonlayer}
				\begin{pgfonlayer}{edgelayer}
					\draw [in=-90, out=165] (21) to (13.center);
					\draw [in=-90, out=15] (21) to (14.center);
					\draw (33) to (19.center);
					\draw (14.center) to (34.center);
					\draw (23.center) to (35);
					\draw (35) to (21);
					\draw [in=-90, out=165] (240) to (236.center);
					\draw [in=-90, out=15] (240) to (237.center);
					\draw (244) to (238.center);
					\draw (237.center) to (245.center);
					\draw (241.center) to (246);
					\draw (246) to (240);
					\draw [in=-90, out=165] (251) to (247.center);
					\draw [in=-90, out=15] (251) to (248.center);
					\draw (256) to (249.center);
					\draw (248.center) to (257.center);
					\draw (252.center) to (258);
					\draw (258) to (251);
					\draw [in=-90, out=165] (263) to (259.center);
					\draw [in=-90, out=15] (263) to (260.center);
					\draw (267) to (261.center);
					\draw (260.center) to (268.center);
					\draw (264.center) to (269);
					\draw (269) to (263);
				\end{pgfonlayer}
			\end{tikzpicture}
		}%
	\end{equation}
	for all $m, k \colon X \to Y$.
\end{proposition}

\begin{proof}
	The ``only if'' direction is trivial, since each Implication \eqref{eq:ac_is_completely} is a special case of Implication \eqref{eq:abs_cont}.
	For the converse direction, assume that all Implications \eqref{eq:ac_is_completely} hold.
	Further note that, by local state-separability, the antecedent of Implication \eqref{eq:abs_cont}{\,\textemdash\,}the defining property of the relation $q \gg p${\,\textemdash\,}is equivalent to
	\begin{equation}\label{eq:completely_abs_cont_proof}
		{%
			\tikzstyle{every picture}=[tikzfig]%
			\begin{tikzpicture}
				\begin{pgfonlayer}{nodelayer}
					\node [style=none] (25) at (0, 0) {$=$};
					\node [style=none] (142) at (3.5, 1) {};
					\node [style=none] (143) at (5.5, 1) {};
					\node [style=none] (144) at (3, 2.25) {};
					\node [style=none] (145) at (3, 2.75) {$Y$};
					\node [style=bn] (146) at (4.5, -0.25) {};
					\node [style=none] (147) at (4.5, -2.25) {};
					\node [style=none] (148) at (4.5, -2.75) {$A$};
					\node [style=none] (149) at (5.5, 2.75) {$X$};
					\node [style=morphism] (150) at (3, 1.25) {$\;\; g \;\;$};
					\node [style=none] (151) at (5.5, 2.25) {};
					\node [style=morphism] (152) at (4.5, -1.25) {$q$};
					\node [style=none] (236) at (2.5, 1.25) {};
					\node [style=state] (237) at (2.5, -0.25) {$w$};
					\node [style=none] (238) at (-4, 1) {};
					\node [style=none] (239) at (-2, 1) {};
					\node [style=none] (240) at (-4.5, 2.25) {};
					\node [style=none] (241) at (-4.5, 2.75) {$Y$};
					\node [style=bn] (242) at (-3, -0.25) {};
					\node [style=none] (243) at (-3, -2.25) {};
					\node [style=none] (244) at (-3, -2.75) {$A$};
					\node [style=none] (245) at (-2, 2.75) {$X$};
					\node [style=morphism] (246) at (-4.5, 1.25) {$\;\; f \;\;$};
					\node [style=none] (247) at (-2, 2.25) {};
					\node [style=morphism] (248) at (-3, -1.25) {$q$};
					\node [style=none] (249) at (-5, 1.25) {};
					\node [style=state] (250) at (-5, -0.25) {$w$};
				\end{pgfonlayer}
				\begin{pgfonlayer}{edgelayer}
					\draw [in=-90, out=165] (146) to (142.center);
					\draw [in=-90, out=15] (146) to (143.center);
					\draw (150) to (144.center);
					\draw (143.center) to (151.center);
					\draw (147.center) to (152);
					\draw (152) to (146);
					\draw (237) to (236.center);
					\draw [in=-90, out=165] (242) to (238.center);
					\draw [in=-90, out=15] (242) to (239.center);
					\draw (246) to (240.center);
					\draw (239.center) to (247.center);
					\draw (243.center) to (248);
					\draw (248) to (242);
					\draw (250) to (249.center);
				\end{pgfonlayer}
			\end{tikzpicture}
		}%
	\end{equation}
	for all $w \colon I \to W$, and similarly for the consequent with $p$ instead of $q$.
	In this way, each Implication \eqref{eq:abs_cont} becomes of the form of the assumed \eqref{eq:ac_is_completely}, thus proving the result.
\end{proof}

\begin{example}\label{ex:abs_cont_stoch}
	For two Markov kernels $p \colon A \to X$ and $q \colon B \to X$ (i.e.\ morphisms in the Markov category $\stoch$), we can use \cref{prop:ac_is_completely} to characterize $q \gg p$.
	In particular, it says that for all measurable sets $S \in \Sigma_X$ we have
	\begin{equation}\label{eq:abs_cont_stoch}
		q(S|b) = 0 \quad \forall b \in B \qquad \implies \qquad   \; p(S|a) = 0 \quad \forall a \in A. 
	\end{equation}
	Indeed it is known that $m \ase{q} k$ holds if and only if for every $T \in \Sigma_Y$, the measurable functions
	\begin{equation}
		m(T|\ph), \, k(T|\ph) \: \colon \: X \to [0,1]
	\end{equation}
	are almost surely equal with respect to the probability measure $q(\ph|b)$ for every $b \in B$ \cite[Example 13.3]{fritz2019synthetic}, and this directly shows Implication \eqref{eq:abs_cont_stoch} by choosing the indicator functions. 
	Conversely, if Implication \eqref{eq:abs_cont_stoch} holds, then the description of almost sure equality expressed in terms of symmetric difference given in \cite[Example 13.3]{fritz2019synthetic} is sufficient to prove that $q \gg p$. 
	In conclusion, our categorical notion of absolute continuity is equivalent to the standard one.

	Let us consider two special cases.
	\begin{enumerate}
		\item For states $p, q \colon I \to X$, we recover the standard notion of absolute continuity of probability measures, where \eqref{eq:abs_cont_stoch} becomes
			\begin{equation}
				q(S) = 0  \quad \implies \quad  p(S) = 0.
			\end{equation}
		\item\label{it:borelstoch_det_abs_cont}
			Suppose that $p$ and $q$ are deterministic morphisms in $\borelstoch$, i.e.~measurable maps (which we also denote by $p$ and $q$ by abuse of notation) between standard Borel spaces.
			Then \eqref{eq:abs_cont_stoch} becomes
			\begin{equation}
				S \cap \mathrm{im}(q) = \emptyset  \quad \implies \quad  S \cap \mathrm{im}(p) = \emptyset,
			\end{equation}
			or equivalently simply $\mathrm{im}(p) \subseteq \mathrm{im}(q)$ since every singleton is measurable.
	\end{enumerate}
\end{example}

\begin{example}\label{ex:ac_setmulti}
	Recall the Markov category $\setmulti$ of multi-valued functions from \cref{ex:examples}, and consider two of its morphisms $p \colon A \to X$ and $q \colon B \to X$. 
	Then we have
	\begin{equation}
		q \gg p \qquad \iff \qquad \bigcup_{a \in A} p(a) \; \subseteq \; \bigcup_{b \in B} q(b),
	\end{equation}
	where $p(a)$ denotes the image of $a$ under $p$ and similarly for $q(b)$.
\end{example}

\begin{example}\label{ex:ac_cartesian}
	In the cartesian case (\cref{ex:ase_cartesian}), absolute continuity $q \gg p$ simply means 
	\begin{equation}
		f \comp (\id_W \times q) = g \comp (\id_W \times q) \qquad \implies \qquad f \comp (\id_W \times p) = g \comp (\id_W \times p).
	\end{equation}	
	For example, for $p = \id_X$ this condition states that $\id_W \times q$ must be an epimorphism for \mbox{every $W$}.
\end{example}

The next example shows that the notion of absolute continuity may depend heavily on the choice of morphisms of the category. 
In particular, restricting to continuous kernels can result in something quite different than allowing all measurable kernels.

\begin{example}
	\label{ex:ac_topological}
	Just as $\stoch$, it is easy to see that the Markov category $\tychstoch$ is also point-separable, and so \cref{prop:ac_is_completely} can be applied.
	By \cref{prop:ase_top} we have $q\gg p$ if and only if the topological support of $q$ contains the topological support of $p$.
	In other words, the absolute continuity preorder is the one induced by the inclusion order of topological supports.

	Note that this is in stark contrast to the measure-theoretic case: 
	Consider the space $Y$ given by the unit interval $[0,1]$ with its usual topology and Borel $\sigma$-algebra.
	Further consider the Dirac measure $\delta_{1/2}$ and the Lebesgue measure $\lambda$. 
	We can see both as morphisms of type $I\to Y$.
	In the Markov category $\stoch$ as well as in $\borelstoch$, we have the familiar facts $\delta_{1/2}\centernot\gg \lambda$ and $\delta_{1/2}\centernot\ll\lambda$.
	That is, in measure-theoretic terminology, the two measures are mutually singular.
	However, $\delta_{1/2}\ll\lambda$ holds in $\tychstoch$, since the topological support of $\delta_{1/2}$, the singleton $\{1/2\}$, is contained in the topological support of $\lambda$, which is the whole of $[0,1]$.

	For example, a pair of measurable maps $m,k \colon Y\to\R$ witnessing $\delta_{1/2}\centernot\ll\lambda$ in $\stoch$ is 
	\begin{equation}
		m(y) \coloneqq 0\quad \forall y, \qquad k(y) \coloneqq 
		\begin{cases}
			1 & \textrm{for } y = 1/2, \\
			0 & \textrm{for } y \ne 1/2,
		\end{cases}
	\end{equation}
	where we have $m \ase{\lambda} k$ but $m \not\ase{\delta_{1/2}} k$.
	The same argument does not work in $\tychstoch$ because $k$ is not continuous. 
	In fact by \cref{prop:ase_top}, any two continuous maps or kernels that are $\lambda$-almost surely equal must also be $\delta_{1/2}$-almost surely equal.
\end{example}

While local state-separability allows one to leave out the extra input $W$ in Implication \eqref{eq:abs_cont}, this is not true for all Markov categories, as we show in \cref{ex:old_vs_new_ac} below.
This is based on the following general criterion for when leaving out the extra input is consequential.

\begin{lemma}
	\label{lem:abs_cont_not_completely}
	Let $q \colon A \to X$ be an epimorphism and $W$ an object such that $W \otimes A$ is an initial object for which $\id_W \otimes q$ is not an epimorphism.
	Then we have
	\begin{equation}\label{eq:epi_as}
		m \ase{q} k  \quad \implies \quad  m \ase{\id_X} k,
	\end{equation}
	for all $m, k \colon X \to Y$, but $q \not\gg \id_X$.
\end{lemma}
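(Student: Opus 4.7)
The argument splits into two parts. For the forward implication, I would write out $m \ase{q} k$ according to \cref{def:as_eq} (specialized to $W = I$) as the equality of two morphisms $A \to Y \otimes X$ obtained by post-composing $q$ with a fixed expression built from $m$ (resp.\ $k$) and $\cop_X$. Both sides factor through $q$, so the assumption that $q$ is an epimorphism lets us cancel it on the right, and what remains is exactly the equation defining $m \ase{\id_X} k$; equivalently, marginalizing the $X$-output collapses this to $m = k$, so the substance of the claim is simply epi-cancellation.

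For the failure $q \not\gg \id_X$, I would produce an explicit counterexample using the hypothesized $W$. Expanded via \cref{def:as_eq}, the statement $f \ase{q} g$ for $f, g \colon W \otimes X \to Y$ is an equality of two parallel morphisms whose common domain is $W \otimes A$. Since $W \otimes A$ is initial, those two morphisms are forced to be equal, so $f \ase{q} g$ holds automatically for every choice of $f$ and $g$. On the other hand, $f \ase{\id_X} g$ still forces $f = g$, again by marginalizing the $X$-output. Picking a codomain $Y$ together with distinct $f \ne g \colon W \otimes X \to Y$---which is possible precisely because $W \otimes X$ is not initial---then yields a pair with $f \ase{q} g$ but $f \not\ase{\id_X} g$, certifying $q \not\gg \id_X$.

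The only mildly subtle point is the extraction of such distinct $f$ and $g$: non-initiality of $W \otimes X$ is meant in the strong sense of admitting two different morphisms to some $Y$, and this is what will be arranged in the concrete situations where the lemma is to be applied. The rest is essentially bookkeeping in the string-diagrammatic definitions of $\ase{q}$ and $\ase{\id_X}$, together with the standard fact that marginalizing the second output of the copy reproduces the identity wire.
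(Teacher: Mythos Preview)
Your argument is correct and matches the paper's proof essentially line for line: epi-cancellation for the first part, and for the second part the observation that $W \otimes A$ initial makes $f \ase{q} g$ vacuous while $f \ase{\id_X} g$ forces $f = g$. Your closing remark about non-initiality ``in the strong sense'' is in fact slightly more careful than the paper, which simply asserts that uniqueness of all outgoing morphisms would make $W \otimes X$ initial; in the concrete application (\cref{ex:old_vs_new_ac}) the existence of distinct parallel morphisms out of $W \otimes X$ is indeed clear, so neither version causes trouble.
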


\begin{proof}
	Implication \eqref{eq:epi_as} follows directly from the assumption that $q$ is an epimorphism.

	For the second statement, consider arbitrary morphisms $f, g \colon W \otimes X \to Y$.
	We trivially have $f \ase{q} g$ since $W \otimes A$ is initial.

	Since $\id_W \otimes q$ is not an epimorphism, there must exist two non-equal $f,g \colon W \otimes X \to Y$ for some $Y$ such that $f \comp (\id_W \otimes q) = g \comp (\id_W \otimes q)$.
	For these, Implication~\eqref{eq:abs_cont} with $p = \id_X$ is violated---where its antecedent holds again by the initiality of $W \otimes A$---and so we get $q \not\gg \id_X$.
\end{proof}

\begin{example}
	\label{ex:old_vs_new_ac}
	As in \cite[Example~2.3]{fritzrischel2019zeroone}, let $\cring$ be the opposite of the category of commutative rings and unital additive maps, considered as a symmetric monoidal category with respect to the usual tensor product.
	Using the ring multiplication for the comonoid structure on every object turns $\cring$ into a Markov category.

	Then the assumptions of \cref{lem:abs_cont_not_completely} can be satisfied if we choose $q \colon \R \hookleftarrow \Z^2$ to be any unital additive inclusion, where the arrow direction indicates the direction as a function.
	To this end, we can take $W = \Z_2$ to be the field with two elements.
	Then $\Z_2 \otimes \R$ is the zero ring while $\Z_2 \otimes \Z^2$ is isomorphic to $(\Z_2)^2$.
	In particular $\Z_2 \otimes \R$ is initial and $\id_{\Z_2} \otimes q \colon \Z_2 \otimes \R  \hookleftarrow \Z_2 \otimes \Z^2$ must be the unique zero map, so that it is not an epimorphism.
	Hence \cref{lem:abs_cont_not_completely} applies, and therefore Implication \eqref{eq:ac_is_completely} is not equivalent to $q \gg p$ in $\cring$.

	By \cref{prop:ac_is_completely}, we can also conclude that $\cring$ is not locally state-separable.
\end{example}

\subsection{Properties of absolute continuity}

\begin{lemma}\label{lem:factor_ac}
	For any $h \colon X \to Y$ and $p \colon A \to X$, we have
	\begin{equation}
		h \gg h\comp p.
	\end{equation}
\end{lemma}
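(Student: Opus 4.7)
The plan is to verify the defining implication of $h \gg h \comp p$ directly from the definitions. By \cref{def:ac}, this means showing that for every choice of $W$ and $Z$ and every parallel pair $f, g \colon W \otimes Y \to Z$, the hypothesis $f \ase{h} g$ implies $f \ase{h \comp p} g$. So I fix such $f, g$ and assume $f \ase{h} g$, and then derive the desired a.s.\ equality with respect to $h \comp p$ by a single string-diagrammatic manipulation.

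Unfolding \cref{def:as_eq}, the hypothesis $f \ase{h} g$ is an equation between two morphisms $W \otimes X \to Z \otimes Y$ whose common shape is: copy the output of $h$ applied to the $X$-input, feed the $W$-wire together with one copy into $f$ (respectively $g$), and expose the other copy as an output. Now I precompose both sides of this equation with $\id_W \otimes p \colon W \otimes A \to W \otimes X$. In the resulting equation of morphisms $W \otimes A \to Z \otimes Y$, the $W$-wire is unchanged, while on the other wire $h$ applied to the $X$-input is replaced by $h$ applied to $p$ applied to the new $A$-input, i.e.\ by $h \comp p$ applied to the $A$-input. The morphisms $f$ and $g$ are untouched. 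By inspection against \cref{def:as_eq}, this is precisely the defining equation of $f \ase{h \comp p} g$.

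There is no real obstacle; the lemma is essentially formal, expressing the intuition that pulling an a.s.\ equality back along a new parameter $p$ yields an a.s.\ equality for the composite $h \comp p$. If desired, one can alternatively phrase the argument through \cref{lem:as_eq_new} by observing that $(\id_W \otimes h) \comp (\id_W \otimes p) = \id_W \otimes (h \comp p)$ and invoking naturality of the copy map on the $Y$-output, but this is only cosmetic.
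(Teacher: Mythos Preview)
Your proof is correct and takes essentially the same approach as the paper's: both argue that precomposing the defining equation $f \ase{h} g$ with $\id_W \otimes p$ yields the defining equation for $f \ase{h \comp p} g$. The paper compresses this into a single sentence, while you have spelled out the string-diagrammatic content explicitly.
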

In other words, if $h \colon X \to Y$ and $g \colon A \to Y$ are such that $g$ factors across $h$, then $h \gg g$ holds.
\begin{proof}
	To obtain $\ase{h \comp p}$ from $\ase{h}$, we simply pre-compose the relevant instances of \cref{eq:as_eq_new} with $p$.
\end{proof}

Since $\id_X \gg p$ is always true, the above \cref{lem:factor_ac} says that in this case, post-composition with $h$ preserves the absolute continuity relation.
General post-composition is also \mbox{$\gg$-monotone}, but only under the additional assumption of causality.
\begin{lemma}\label{lem:ac_mon_comp}
	If $\cC$ is a causal Markov category, then post-composition is $\gg$-monotone. 
	That is, for all $p \colon A \to X$, $q \colon B \to X$, and $h \colon X \to Y$, we have
	\begin{equation}
		q \gg p \quad \implies \quad h\comp q \gg h\comp p.
	\end{equation}
\end{lemma}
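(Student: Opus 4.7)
The plan is to use causality to transport an $\as{h \comp q}$ equality ``through $h$'' into an $\as{q}$ equality of suitable auxiliary morphisms, apply the hypothesis $q \gg p$ to convert this into an $\as{p}$ equality, and finally marginalize back to recover the desired $\as{h \comp p}$ equality. Throughout, I take arbitrary $f, g \colon W \otimes Y \to Z$ with $f \ase{h \comp q} g$ and aim to deduce $f \ase{h \comp p} g$.

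First, I would introduce the auxiliary morphisms
\[
	k_f \coloneqq (f \otimes \id_Y) \comp (\id_W \otimes \cop_Y), \qquad k_g \coloneqq (g \otimes \id_Y) \comp (\id_W \otimes \cop_Y),
\]
both of type $W \otimes Y \to Z \otimes Y$. A routine string-diagram manipulation using coassociativity and naturality of $\cop_Y$ (essentially copying the output $Y$ wire one extra time on both sides of \cref{eq:as_eq_new}) upgrades the assumption $f \ase{h \comp q} g$ to the stronger statement $k_f \ase{h \comp q} k_g$. Using \cref{lem:as_eq_new}, I would recast this as $k_f \ase{(\id_W \otimes h) \comp (\id_W \otimes q)} k_g$ in the parameter-free sense, and then apply causality with $\psi \coloneqq \id_W \otimes h$ and $\phi \coloneqq \id_W \otimes q$. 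Causality converts an $\as{\psi \comp \phi}$ equality into an $\as{\phi}$ equality after pre-composing the morphisms with $\psi$, yielding $F_f \ase{q} F_g$ (after \cref{lem:as_eq_new} is used once more to reabsorb the parameter), where I set $F_f \coloneqq k_f \comp (\id_W \otimes h)$ and $F_g \coloneqq k_g \comp (\id_W \otimes h)$.

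Next, the hypothesis $q \gg p$ directly gives $F_f \ase{p} F_g$. Marginalizing the extra $X$-output on both sides of the defining equation for this $\as{p}$ equality reduces it to the plain equation $F_f \comp (\id_W \otimes p) = F_g \comp (\id_W \otimes p)$. Unfolding $F_f$ and $F_g$, and using $(\id_W \otimes h) \comp (\id_W \otimes p) = \id_W \otimes (h \comp p)$, the resulting equation is exactly the defining equation for $f \ase{h \comp p} g$, as desired.

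The main obstacle will be the careful bookkeeping of the parameter wire $W$ when invoking causality, since that axiom is typically stated for morphisms without an extra input: the translation via \cref{lem:as_eq_new} handles this, but one needs to verify that both the ``copy once more'' trick in the first step and the marginalization in the last step compose correctly in the presence of $W$.
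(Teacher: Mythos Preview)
Your proof is correct and follows essentially the same approach as the paper's: use causality to push the $\ase{h\comp q}$ equality back to an $\ase{q}$ equality of auxiliary morphisms, invoke $q \gg p$, and marginalize. The only difference is packaging: the paper applies causality directly in the form of Implication~\eqref{eq:causal_extrawire} (which keeps the intermediate $X$ output explicitly in the string diagram), whereas you first absorb the extra $Y$ output into the bundled morphisms $k_f, k_g$ and then invoke the simpler reformulation of causality from \cref{lem:as_shift} after translating via \cref{lem:as_eq_new}; the resulting $F_f \ase{q} F_g$ is exactly the paper's intermediate equation with the $Y$ wire tucked into the morphisms.
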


\begin{proof}
	We begin with the first statement.
	Assuming $f \ase{h \comp q} g$ amounts to
	\begin{equation}
		\label{eq:ac_mon_comp_proof1}
		{%
			\tikzstyle{every picture}=[tikzfig]%
			\begin{tikzpicture}
				\begin{pgfonlayer}{nodelayer}
					\node [style=none] (17) at (0, 0) {$=$};
					\node [style=none] (35) at (-4, 1.75) {};
					\node [style=none] (36) at (-2, 1.75) {};
					\node [style=none] (37) at (-4.5, 3) {};
					\node [style=none] (38) at (-4.5, 3.5) {$Z$};
					\node [style=bn] (39) at (-3, 0.5) {};
					\node [style=none] (40) at (-3, -3) {};
					\node [style=none] (41) at (-3, -3.5) {$B$};
					\node [style=none] (42) at (-2, 3.5) {$Y$};
					\node [style=morphism] (43) at (-4.5, 2) {$\;\; f \;\;$};
					\node [style=none] (44) at (-2, 3) {};
					\node [style=morphism] (45) at (-3, -2) {$q$};
					\node [style=none] (46) at (-5, -3) {};
					\node [style=none] (47) at (-5, -3.5) {$W$};
					\node [style=none] (48) at (-5, 1.75) {};
					\node [style=none] (49) at (-5, 0) {};
					\node [style=morphism] (51) at (-3, -0.5) {$h$};
					\node [style=none] (52) at (3, 1.75) {};
					\node [style=none] (53) at (5, 1.75) {};
					\node [style=none] (54) at (2.5, 3) {};
					\node [style=none] (55) at (2.5, 3.5) {$Z$};
					\node [style=bn] (56) at (4, 0.5) {};
					\node [style=none] (57) at (4, -3) {};
					\node [style=none] (58) at (4, -3.5) {$B$};
					\node [style=none] (59) at (5, 3.5) {$Y$};
					\node [style=morphism] (60) at (2.5, 2) {$\;\; g \;\;$};
					\node [style=none] (61) at (5, 3) {};
					\node [style=morphism] (62) at (4, -2) {$q$};
					\node [style=none] (63) at (2, -3) {};
					\node [style=none] (64) at (2, -3.5) {$W$};
					\node [style=none] (65) at (2, 1.75) {};
					\node [style=none] (66) at (2, 0) {};
					\node [style=morphism] (68) at (4, -0.5) {$h$};
				\end{pgfonlayer}
				\begin{pgfonlayer}{edgelayer}
					\draw [in=-90, out=165] (39) to (35.center);
					\draw [in=-90, out=15] (39) to (36.center);
					\draw (43) to (37.center);
					\draw (36.center) to (44.center);
					\draw (40.center) to (45);
					\draw [style=protected] (46.center) to (49.center);
					\draw [style=protected, in=-90, out=90, looseness=1.25] (49.center) to (48.center);
					\draw (51) to (39);
					\draw (45) to (51);
					\draw [in=-90, out=165] (56) to (52.center);
					\draw [in=-90, out=15] (56) to (53.center);
					\draw (60) to (54.center);
					\draw (53.center) to (61.center);
					\draw (57.center) to (62);
					\draw [style=protected] (63.center) to (66.center);
					\draw [style=protected, in=-90, out=90, looseness=1.25] (66.center) to (65.center);
					\draw (68) to (56);
					\draw (62) to (68);
				\end{pgfonlayer}
			\end{tikzpicture}
		}%
	\end{equation}
	An application of the causality axiom in the form of Implication \eqref{eq:causal_extrawire} then gives
	\begin{equation}\label{eq:ac_mon_comp_proof2}
		{%
			\tikzstyle{every picture}=[tikzfig]%
			\begin{tikzpicture}
				\begin{pgfonlayer}{nodelayer}
					\node [style=none] (17) at (0, 0) {$=$};
					\node [style=none] (42) at (-5.5, 2) {};
					\node [style=none] (43) at (-3.5, 2) {};
					\node [style=none] (44) at (-6, 3.25) {};
					\node [style=none] (45) at (-6, 3.75) {$Z$};
					\node [style=bn] (46) at (-4.5, 0.75) {};
					\node [style=none] (47) at (-3.25, -3.5) {};
					\node [style=none] (48) at (-3.25, -4) {$B$};
					\node [style=none] (49) at (-3.5, 3.75) {$Y$};
					\node [style=morphism] (50) at (-6, 2.25) {$\;\; f \;\;$};
					\node [style=none] (51) at (-3.5, 3.25) {};
					\node [style=morphism] (52) at (-3.25, -2.5) {$q$};
					\node [style=none] (53) at (-6.5, -0.25) {};
					\node [style=none] (54) at (-6.5, -4) {$W$};
					\node [style=none] (55) at (-6.5, 1.75) {};
					\node [style=none] (56) at (-6.5, 0.25) {};
					\node [style=none] (57) at (-6.5, 2) {};
					\node [style=morphism] (58) at (-4.5, -0.25) {$h$};
					\node [style=bn] (59) at (-3.25, -1.5) {};
					\node [style=none] (60) at (-2, 0) {};
					\node [style=none] (61) at (-2, 3.25) {};
					\node [style=none] (62) at (-2, 3.75) {$X$};
					\node [style=none] (63) at (-6.5, -3.5) {};
					\node [style=none] (64) at (-6.5, -1.75) {};
					\node [style=none] (65) at (-4.5, -0.5) {};
					\node [style=none] (66) at (3, 2) {};
					\node [style=none] (67) at (5, 2) {};
					\node [style=none] (68) at (2.5, 3.25) {};
					\node [style=bn] (70) at (4, 0.75) {};
					\node [style=none] (71) at (5.25, -3.5) {};
					\node [style=none] (72) at (5.25, -4) {$B$};
					\node [style=morphism] (74) at (2.5, 2.25) {$\;\; g \;\;$};
					\node [style=none] (75) at (5, 3.25) {};
					\node [style=morphism] (76) at (5.25, -2.5) {$q$};
					\node [style=none] (77) at (2, -0.25) {};
					\node [style=none] (78) at (2, -4) {$W$};
					\node [style=none] (79) at (2, 1.75) {};
					\node [style=none] (80) at (2, 0.25) {};
					\node [style=none] (81) at (2, 2) {};
					\node [style=morphism] (82) at (4, -0.25) {$h$};
					\node [style=bn] (83) at (5.25, -1.5) {};
					\node [style=none] (84) at (6.5, 0) {};
					\node [style=none] (85) at (6.5, 3.25) {};
					\node [style=none] (87) at (2, -3.5) {};
					\node [style=none] (88) at (2, -1.75) {};
					\node [style=none] (89) at (4, -0.5) {};
					\node [style=none] (90) at (2.5, 3.75) {$Z$};
					\node [style=none] (91) at (5, 3.75) {$Y$};
					\node [style=none] (92) at (6.5, 3.75) {$X$};
				\end{pgfonlayer}
				\begin{pgfonlayer}{edgelayer}
					\draw [in=-90, out=165] (46) to (42.center);
					\draw [in=-90, out=15] (46) to (43.center);
					\draw (50) to (44.center);
					\draw (43.center) to (51.center);
					\draw (47.center) to (52);
					\draw [style=protected] (53.center) to (56.center);
					\draw [style=protected, in=-90, out=90] (56.center) to (55.center);
					\draw [style=protected] (55.center) to (57.center);
					\draw (58) to (46);
					\draw (52) to (59);
					\draw [style=protected] (60.center) to (61.center);
					\draw [style=protected, in=-90, out=15] (59) to (60.center);
					\draw [style=protected, in=-90, out=90, looseness=0.75] (64.center) to (53.center);
					\draw [style=protected] (63.center) to (64.center);
					\draw [style=protected, in=-90, out=165] (59) to (65.center);
					\draw [in=-90, out=165] (70) to (66.center);
					\draw [in=-90, out=15] (70) to (67.center);
					\draw (74) to (68.center);
					\draw (67.center) to (75.center);
					\draw (71.center) to (76);
					\draw [style=protected] (77.center) to (80.center);
					\draw [style=protected, in=-90, out=90] (80.center) to (79.center);
					\draw [style=protected] (79.center) to (81.center);
					\draw (82) to (70);
					\draw (76) to (83);
					\draw [style=protected] (84.center) to (85.center);
					\draw [style=protected, in=-90, out=15] (83) to (84.center);
					\draw [style=protected, in=-90, out=90, looseness=0.75] (88.center) to (77.center);
					\draw [style=protected] (87.center) to (88.center);
					\draw [style=protected, in=-90, out=165] (83) to (89.center);
				\end{pgfonlayer}
			\end{tikzpicture}
		}%
	\end{equation}
	Using the assumption $q \gg p$ lets us replace $q$ by $p$ in this equation, and the required $f \ase{h \comp p} g$ then follows upon discarding the $X$ output.
\end{proof}

Under stronger assumptions on the morphism that we post-compose with, we can do away with the causality assumption and even obtain an equivalence.

\begin{lemma}\label{lem:ac_mon_comp_split_mono}
	In an arbitrary Markov category, consider morphisms $p \colon A \to T$ and ${q \colon B \to T}$. 
	Furthermore, let $\iota \colon T \to X$ be a deterministic split monomorphism. 
	Then we have
	\begin{equation}
		q \gg p \quad \iff \quad \iota \comp q \gg \iota \comp p.
	\end{equation}
\end{lemma}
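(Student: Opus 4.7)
Two ingredients drive the proof: the determinism of $\iota$, which yields $\cop_X \comp \iota = (\iota \otimes \iota) \comp \cop_T$, and the retraction identity $\pi \comp \iota = \id_T$ for a chosen left inverse $\pi \colon X \to T$. Together these suffice to avoid invoking causality, in contrast to \cref{lem:ac_mon_comp}. The overall strategy is to translate $\ase{\iota \comp q}$ statements into $\ase{q}$ statements (and back) by pushing $\iota$ through copies and cancelling it on a suitable wire via $\pi$.

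For the forward implication, assume $q \gg p$ and let $f \ase{\iota \comp q} g$ for some $f, g \colon W \otimes X \to Y$. First, I would push $\iota$ through the copy using determinism, so that the defining equation is rewritten with $q$ copied in $T$ and an $\iota$ appearing on each of the two $T$-branches. Post-composing the output branch coming from $q$ with $\pi$ collapses the $\iota$ there via $\pi \comp \iota = \id_T$, while the other $\iota$ is absorbed into $f$; the result is precisely $f' \ase{q} g'$ with $f' \coloneqq f \comp (\id_W \otimes \iota)$ and $g' \coloneqq g \comp (\id_W \otimes \iota)$. The hypothesis $q \gg p$ then yields $f' \ase{p} g'$, and running the same diagrammatic manipulations in reverse (post-compose the retained output of $p$ with $\iota$, then push $\iota$ back through the copy) delivers $f \ase{\iota \comp p} g$.

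For the reverse implication, assume $\iota \comp q \gg \iota \comp p$ and let $f \ase{q} g$ for some $f, g \colon W \otimes T \to Y$. Set $\tilde f \coloneqq f \comp (\id_W \otimes \pi)$ and $\tilde g \coloneqq g \comp (\id_W \otimes \pi)$. Post-composing the retained output of $q$ with $\iota$ and then pushing that $\iota$ through the copy yields $\tilde f \ase{\iota \comp q} \tilde g$, where $\pi \comp \iota = \id_T$ ensures that the branch fed into $\tilde f$ reduces cleanly to $f$. Applying the hypothesis gives $\tilde f \ase{\iota \comp p} \tilde g$, and the symmetric moves in reverse deliver the desired $f \ase{p} g$. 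The only subtlety to monitor is that $\iota \comp \pi$ need not equal $\id_X$, so each cancellation via the retraction must be arranged on a branch originating from a freshly inserted $\iota$; this is automatic in both directions, so there is no essential obstacle.
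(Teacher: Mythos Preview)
Your argument is correct and is essentially the same as the paper's. The paper simply cites \cref{lem:ase_detsplitmono}, whose two equivalences amount precisely to the diagrammatic moves you spell out: using determinism of $\iota$ to push it through copy, and post-composing the retained output wire with $\iota$ or $\pi$ to convert between $\ase{p}$ and $\ase{\iota \comp p}$ statements; you have effectively inlined the proof of that auxiliary lemma.
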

\begin{proof}
	This is a direct consequence of \cref{lem:ase_detsplitmono}.
\end{proof}

Another property of such a left-invertible $\iota$ is that morphisms that are absolutely continuous with respect to it factor across $\iota$.
\begin{lemma}\label{lem:ac_mon_factor}
	Consider a deterministic split monomorphism $\iota \colon T \to X$ with left inverse given by $\pi \colon X \to T$, and an arbitrary morphism $p \colon A \to X$.
	Then we have
	\begin{equation}
		\iota \gg p  \quad \implies \quad  p = \iota \comp \pi \comp p.
	\end{equation}
\end{lemma}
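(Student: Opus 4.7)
The plan is to exhibit two specific morphisms that are $\iota$-almost surely equal, apply the absolute continuity hypothesis to transfer this to $p$-almost sure equality, and then marginalize to extract the desired factorization.

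Concretely, I would take $W = I$ and compare the two morphisms $f \coloneqq \id_X$ and $g \coloneqq \iota \comp \pi$, both of type $X \to X$. The first step is to verify that $f \ase{\iota} g$. Unfolding \cref{def:as_eq} in the case $W = I$, this amounts to the equation $(\id_X \otimes \id_X) \comp \cop_X \comp \iota = (\id_X \otimes (\iota \comp \pi)) \comp \cop_X \comp \iota$. Since $\iota$ is deterministic, we can pull it through the copy on both sides, obtaining $(\iota \otimes \iota) \comp \cop_T$ on the left and $(\iota \otimes (\iota \comp \pi \comp \iota)) \comp \cop_T$ on the right; these coincide because $\pi \comp \iota = \id_T$.

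With $f \ase{\iota} g$ established, the hypothesis $\iota \gg p$ immediately yields $f \ase{p} g$, that is, $(\id_X \otimes \id_X) \comp \cop_X \comp p = (\id_X \otimes (\iota \comp \pi)) \comp \cop_X \comp p$. Post-composing with $\discard_X \otimes \id_X$ to marginalize out the first output, the left-hand side reduces to $p$, while the right-hand side becomes $\iota \comp \pi \comp p$. This gives the claimed identity.

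I do not expect any substantive obstacle here: the only subtlety is to remember that determinism of $\iota$ is what lets the copy commute with $\iota$, so that the verification of $\id_X \ase{\iota} \iota \comp \pi$ actually goes through (without determinism of $\iota$, this step would fail). Everything else is a routine application of the definitions.
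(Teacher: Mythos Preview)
Your proof is correct and follows essentially the same approach as the paper: establish $\iota \comp \pi \ase{\iota} \id_X$, use $\iota \gg p$ to transfer this to $\ase{p}$, and marginalize. The paper notes that the first step can be done either by the direct calculation you give or by invoking \cref{lem:ase_detsplitmono}; your explicit computation using determinism of $\iota$ and $\pi \comp \iota = \id_T$ is exactly the direct route.
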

\begin{proof}
	The assumptions imply that we have $\iota \comp \pi \ase{\iota} \id_X$.
	This can be shown by direct calculation, but it also follows from equivalence \eqref{eq:mp_iff_p} in \cref{lem:ase_detsplitmono} by taking $p = \id_X$, $f_2 = \iota \comp \pi$, and $g_2 = \id_X$.
	Recalling that relation $\iota \gg p$ means, by definition, that $\as{\iota}$ equality implies $\as{p}$ equality, we get $\iota \comp \pi \ase{p} \id_X$.
	The desired equation $\iota \comp \pi \comp p = p$ then follows by marginalization.
\end{proof}

\begin{remark}
	Unlike post-composition, pre-composition generally does not preserve absolute continuity, not even in $\finstoch$.
	For example, if we take $A = B = X = \{0,1\}$ with 
	\begin{equation}
		q \coloneqq \begin{pmatrix} 1 & 1/2 \\ 0 & 1/2 \end{pmatrix}  \qquad  p \coloneqq \begin{pmatrix} 1/2 & 1/2 \\ 1/2 & 1/2 \end{pmatrix},
	\end{equation}
	then $q \gg p$ holds.
	For the deterministic state $\delta_0 \colon I \to A$, however, we also have $q \comp \delta_0 \centernot\gg p \comp \delta_0$, since the latter has strictly larger support:
	\begin{equation}
		q \comp \delta_0 = \begin{pmatrix} 1 \\ 0 \end{pmatrix}  \qquad  p\comp \delta_0 = \begin{pmatrix} 1/2  \\ 1/2  \end{pmatrix}.
	\end{equation}
\end{remark}

However, our definition of absolute continuity interacts very well with the monoidal structure.

\begin{proposition}\label{prop:ac_tensor}
	The monoidal product $\otimes$ is $\gg$-monotone.
	That is, for any choice of morphisms ${q \colon B \to X}$, ${q' \colon B' \to X'}$, $p \colon A \to X$, and $p' \colon A' \to X'$ we have
	\begin{equation}\label{eq:ac_mon_tensor}
		q \gg p \; \text{ and } \; q' \gg p' \quad \implies \quad q \otimes q' \gg p \otimes p'.
	\end{equation}
\end{proposition}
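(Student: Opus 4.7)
The plan is to decompose the desired implication via transitivity of $\gg$: I will establish the two intermediate steps
\[
q \otimes q' \;\gg\; p \otimes q' \;\gg\; p \otimes p',
\]
where the first uses only $q \gg p$ and the second only $q' \gg p'$. Since the situation is completely symmetric in the two tensor factors, only the first step requires explicit work; the second then follows by an entirely analogous argument (interchanging the roles of the two factors).

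For the step $q \otimes q' \gg p \otimes q'$, I would assume $f \ase{q \otimes q'} g$ for some $f, g \colon W \otimes X \otimes X' \to Y$ and aim to conclude $f \ase{p \otimes q'} g$. The idea is to absorb $q'$ together with the copying of its output into auxiliary morphisms
\[
\bar f, \; \bar g \,\colon\, (W \otimes B') \otimes X \longrightarrow Y \otimes X'
\]
defined as follows: feed the $B'$-input through $q'$ to obtain $X'$, copy this $X'$, send one copy into $f$ (respectively $g$) alongside $W$ and the $X$-input, and output the other copy of $X'$ together with the resulting $Y$. Viewing $W \otimes B'$ as the extra wire and using \cref{lem:as_eq_new} to move freely between different choices of extra wire, an unfolding of the string diagrams shows that $\bar f \ase{q} \bar g$ expresses the very same equation as $f \ase{q \otimes q'} g$, up to a permutation of input and output wires. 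Applying the hypothesis $q \gg p$ then gives $\bar f \ase{p} \bar g$, and unfolding this equation recovers exactly $f \ase{p \otimes q'} g$.

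The subtle point, which is implicit in \cref{rem:old_not_ac_tensor} on why the older definition from \cite{fritz2023representable} fails tensor-compatibility, lies in the treatment of the $X'$-output wire that appears in $f \ase{p \otimes q'} g$. A naive attempt to first marginalize this output, then apply $q \gg p$, and finally try to re-introduce the $X'$-output would require commuting $q'$ past a copy map, which is invalid for a genuinely stochastic $q'$. Baking $q'$ and $\cop_{X'}$ directly into the construction of $\bar f$ and $\bar g$ sidesteps this trap by keeping the $X'$-input of $f$ perfectly correlated with the $X'$-output throughout the argument. Verifying that the two unfoldings in the previous paragraph indeed coincide as string diagrams is the main (if mostly routine) calculation.
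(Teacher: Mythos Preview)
Your proposal is correct and takes essentially the same approach as the paper: absorb one tensor factor together with its copy into auxiliary morphisms so that the remaining equation becomes an instance of the single-factor absolute continuity hypothesis, then repeat for the other factor. The only cosmetic difference is that the paper applies $q' \gg p'$ first and $q \gg p$ second (passing through $q \otimes p'$ rather than your $p \otimes q'$), which by the symmetry you already noted is immaterial.
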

\begin{proof}
	Consider generic morphisms $f, g \colon W \otimes X \otimes X' \to Y$ and assume that 
	\begin{equation}\label{eq:ac_tensor_1}
		{%
			\tikzstyle{every picture}=[tikzfig]%
			\begin{tikzpicture}
				\begin{pgfonlayer}{nodelayer}
					\node [style=none] (13) at (-3.75, 0.75) {};
					\node [style=none] (14) at (-1.75, 1) {};
					\node [style=none] (19) at (-4.5, 2.25) {};
					\node [style=none] (20) at (-4.5, 2.75) {$Y$};
					\node [style=bn] (21) at (-2.75, -0.25) {};
					\node [style=none] (23) at (-2.75, -2.25) {};
					\node [style=none] (24) at (-2.75, -2.75) {$B'$};
					\node [style=none] (25) at (0, 0) {$=$};
					\node [style=none] (30) at (-1.75, 2.75) {$X'$};
					\node [style=morphism] (33) at (-4.5, 1.25) {$\quad f \quad $};
					\node [style=none] (34) at (-1.75, 2.25) {};
					\node [style=morphism] (35) at (-2.75, -1.25) {$q'$};
					\node [style=none] (75) at (-5.25, -2.25) {};
					\node [style=none] (79) at (-5.25, -2.75) {$W$};
					\node [style=none] (85) at (-5.25, 1.25) {};
					\node [style=none] (195) at (-5.25, -0.75) {};
					\node [style=morphism] (251) at (-4, -1.25) {$q \vphantom{q'}$};
					\node [style=bn] (252) at (-4, -0.25) {};
					\node [style=none] (253) at (-2.75, 1) {};
					\node [style=none] (254) at (-2.75, 2.25) {};
					\node [style=none] (255) at (-2.75, 2.75) {$X$};
					\node [style=none] (256) at (-4.5, 0.75) {};
					\node [style=none] (257) at (-4, -2.25) {};
					\node [style=none] (258) at (-4, -2.75) {$B$};
					\node [style=none] (259) at (-3.75, 1.25) {};
					\node [style=none] (260) at (3.5, 0.75) {};
					\node [style=none] (261) at (5.5, 1) {};
					\node [style=none] (262) at (2.75, 2.25) {};
					\node [style=none] (263) at (2.75, 2.75) {$Y$};
					\node [style=bn] (264) at (4.5, -0.25) {};
					\node [style=none] (265) at (4.5, -2.25) {};
					\node [style=none] (266) at (4.5, -2.75) {$B'$};
					\node [style=none] (267) at (5.5, 2.75) {$X'$};
					\node [style=morphism] (268) at (2.75, 1.25) {$\quad g \quad $};
					\node [style=none] (269) at (5.5, 2.25) {};
					\node [style=morphism] (270) at (4.5, -1.25) {$q'$};
					\node [style=none] (271) at (2, -2.25) {};
					\node [style=none] (272) at (2, -2.75) {$W$};
					\node [style=none] (273) at (2, 1.25) {};
					\node [style=none] (274) at (2, -0.75) {};
					\node [style=morphism] (275) at (3.25, -1.25) {$q \vphantom{q'}$};
					\node [style=bn] (276) at (3.25, -0.25) {};
					\node [style=none] (277) at (4.5, 1) {};
					\node [style=none] (278) at (4.5, 2.25) {};
					\node [style=none] (279) at (4.5, 2.75) {$X$};
					\node [style=none] (280) at (2.75, 0.75) {};
					\node [style=none] (281) at (3.25, -2.25) {};
					\node [style=none] (282) at (3.25, -2.75) {$B$};
					\node [style=none] (283) at (3.5, 1.25) {};
				\end{pgfonlayer}
				\begin{pgfonlayer}{edgelayer}
					\draw [in=-90, out=165] (21) to (13.center);
					\draw [in=-90, out=15] (21) to (14.center);
					\draw (33) to (19.center);
					\draw (14.center) to (34.center);
					\draw (23.center) to (35);
					\draw [style=protected] (75.center) to (195.center);
					\draw [style=protected, in=-90, out=90, looseness=1.25] (195.center) to (85.center);
					\draw (35) to (21);
					\draw (253.center) to (254.center);
					\draw [in=-90, out=135] (252) to (256.center);
					\draw (251) to (252);
					\draw (257.center) to (251);
					\draw [style=protected, in=-90, out=15] (252) to (253.center);
					\draw [style=protected] (13.center) to (259.center);
					\draw [style=protected] (256.center) to (33);
					\draw [in=-90, out=165] (264) to (260.center);
					\draw [in=-90, out=15] (264) to (261.center);
					\draw (268) to (262.center);
					\draw (261.center) to (269.center);
					\draw (265.center) to (270);
					\draw [style=protected] (271.center) to (274.center);
					\draw [style=protected, in=-90, out=90, looseness=1.25] (274.center) to (273.center);
					\draw (270) to (264);
					\draw (277.center) to (278.center);
					\draw [in=-90, out=135] (276) to (280.center);
					\draw (275) to (276);
					\draw (281.center) to (275);
					\draw [style=protected, in=-90, out=15] (276) to (277.center);
					\draw [style=protected] (260.center) to (283.center);
					\draw [style=protected] (280.center) to (268);
				\end{pgfonlayer}
			\end{tikzpicture}
		}%
	\end{equation}
	holds.
	We can rearrange these string diagrams to get
	\begin{equation}\label{eq:ac_tensor_2}
		{%
			\tikzstyle{every picture}=[tikzfig]%
			\begin{tikzpicture}
				\begin{pgfonlayer}{nodelayer}
					\node [style=none] (13) at (-3.5, -0.75) {};
					\node [style=none] (14) at (-2, -0.75) {};
					\node [style=none] (19) at (-5, 3.25) {};
					\node [style=none] (20) at (-5, 3.75) {$Y$};
					\node [style=bn] (21) at (-2.75, -1.5) {};
					\node [style=none] (23) at (-2.75, -3.5) {};
					\node [style=none] (24) at (-2.75, -4) {$B'$};
					\node [style=none] (25) at (0, 0) {$=$};
					\node [style=none] (30) at (-2, 3.75) {$X'$};
					\node [style=morphism] (33) at (-5, 2.25) {$\quad f \quad $};
					\node [style=none] (34) at (-2, 3.25) {};
					\node [style=morphism] (35) at (-2.75, -2.5) {$q'$};
					\node [style=none] (75) at (-5.75, -3.5) {};
					\node [style=none] (79) at (-5.75, -4) {$W$};
					\node [style=none] (85) at (-5.75, 2.25) {};
					\node [style=none] (195) at (-5.75, 0.25) {};
					\node [style=morphism] (251) at (-4.5, -0.25) {$q \vphantom{q'}$};
					\node [style=bn] (252) at (-4.5, 0.75) {};
					\node [style=none] (253) at (-3.5, 1.75) {};
					\node [style=none] (254) at (-3.5, 3.25) {};
					\node [style=none] (255) at (-3.5, 3.75) {$X$};
					\node [style=none] (256) at (-5, 1.75) {};
					\node [style=none] (257) at (-4.5, -3.5) {};
					\node [style=none] (258) at (-4.5, -4) {$B$};
					\node [style=none] (284) at (-3.5, 0.5) {};
					\node [style=none] (285) at (-4.25, 1.75) {};
					\node [style=none] (286) at (-4.25, 2.25) {};
					\node [style=none] (287) at (-6.5, -1) {};
					\node [style=none] (288) at (-3, -1) {};
					\node [style=none] (289) at (-3, 3) {};
					\node [style=none] (290) at (-6.5, 3) {};
					\node [style=none] (291) at (4.75, -0.75) {};
					\node [style=none] (292) at (6.25, -0.75) {};
					\node [style=none] (293) at (3.25, 3.25) {};
					\node [style=none] (294) at (3.25, 3.75) {$Y$};
					\node [style=bn] (295) at (5.5, -1.5) {};
					\node [style=none] (296) at (5.5, -3.5) {};
					\node [style=none] (297) at (5.5, -4) {$B'$};
					\node [style=none] (298) at (6.25, 3.75) {$X'$};
					\node [style=morphism] (299) at (3.25, 2.25) {$\quad g \quad $};
					\node [style=none] (300) at (6.25, 3.25) {};
					\node [style=morphism] (301) at (5.5, -2.5) {$q'$};
					\node [style=none] (302) at (2.5, -3.5) {};
					\node [style=none] (303) at (2.5, -4) {$W$};
					\node [style=none] (304) at (2.5, 2.25) {};
					\node [style=none] (305) at (2.5, 0.25) {};
					\node [style=morphism] (306) at (3.75, -0.25) {$q \vphantom{q'}$};
					\node [style=bn] (307) at (3.75, 0.75) {};
					\node [style=none] (308) at (4.75, 1.75) {};
					\node [style=none] (309) at (4.75, 3.25) {};
					\node [style=none] (310) at (4.75, 3.75) {$X$};
					\node [style=none] (311) at (3.25, 1.75) {};
					\node [style=none] (312) at (3.75, -3.5) {};
					\node [style=none] (313) at (3.75, -4) {$B$};
					\node [style=none] (314) at (4.75, 0.5) {};
					\node [style=none] (315) at (4, 1.75) {};
					\node [style=none] (316) at (4, 2.25) {};
					\node [style=none] (317) at (1.75, -1) {};
					\node [style=none] (318) at (5.25, -1) {};
					\node [style=none] (319) at (5.25, 3) {};
					\node [style=none] (320) at (1.75, 3) {};
				\end{pgfonlayer}
				\begin{pgfonlayer}{edgelayer}
					\draw [style=dashed box] (288.center)
					to (289.center)
					to (290.center)
					to (287.center)
					to cycle;
					\draw [in=-90, out=15] (21) to (14.center);
					\draw (14.center) to (34.center);
					\draw (23.center) to (35);
					\draw [style=protected, in=-90, out=90, looseness=1.25] (195.center) to (85.center);
					\draw (35) to (21);
					\draw [in=-90, out=135] (252) to (256.center);
					\draw (251) to (252);
					\draw [style=protected] (256.center) to (33);
					\draw [style=protected] (13.center) to (284.center);
					\draw [style=protected] (285.center) to (286.center);
					\draw [in=-90, out=90] (284.center) to (285.center);
					\draw [style=protected, in=-90, out=15] (252) to (253.center);
					\draw [style=protected] (33) to (19.center);
					\draw [style=protected] (253.center) to (254.center);
					\draw [style=protected] (75.center) to (195.center);
					\draw [style=protected] (257.center) to (251);
					\draw [style=protected, in=-90, out=165] (21) to (13.center);
					\draw [style=dashed box] (318.center)
					to (319.center)
					to (320.center)
					to (317.center)
					to cycle;
					\draw [in=-90, out=15] (295) to (292.center);
					\draw (292.center) to (300.center);
					\draw (296.center) to (301);
					\draw [style=protected, in=-90, out=90, looseness=1.25] (305.center) to (304.center);
					\draw (301) to (295);
					\draw [in=-90, out=135] (307) to (311.center);
					\draw (306) to (307);
					\draw [style=protected] (311.center) to (299);
					\draw [style=protected] (291.center) to (314.center);
					\draw [style=protected] (315.center) to (316.center);
					\draw [in=-90, out=90] (314.center) to (315.center);
					\draw [style=protected, in=-90, out=15] (307) to (308.center);
					\draw [style=protected] (299) to (293.center);
					\draw [style=protected] (308.center) to (309.center);
					\draw [style=protected] (302.center) to (305.center);
					\draw [style=protected] (312.center) to (306);
					\draw [style=protected, in=-90, out=165] (295) to (291.center);
				\end{pgfonlayer}
			\end{tikzpicture}
		}%
	\end{equation}
	Using $q' \gg p'$, we can replace $q'$ with $p'$ in this equation and rearrange back to obtain 
	\begin{equation}\label{eq:ac_tensor_3}
		{%
			\tikzstyle{every picture}=[tikzfig]%
			\begin{tikzpicture}
				\begin{pgfonlayer}{nodelayer}
					\node [style=none] (13) at (-3.75, 0.75) {};
					\node [style=none] (14) at (-1.75, 1) {};
					\node [style=none] (19) at (-4.5, 2.25) {};
					\node [style=none] (20) at (-4.5, 2.75) {$Y$};
					\node [style=bn] (21) at (-2.75, -0.25) {};
					\node [style=none] (23) at (-2.75, -2.25) {};
					\node [style=none] (24) at (-2.75, -2.75) {$A'$};
					\node [style=none] (25) at (0, 0) {$=$};
					\node [style=none] (30) at (-1.75, 2.75) {$X'$};
					\node [style=morphism] (33) at (-4.5, 1.25) {$\quad f \quad $};
					\node [style=none] (34) at (-1.75, 2.25) {};
					\node [style=morphism] (35) at (-2.75, -1.25) {$p'$};
					\node [style=none] (75) at (-5.25, -2.25) {};
					\node [style=none] (79) at (-5.25, -2.75) {$W$};
					\node [style=none] (85) at (-5.25, 1.25) {};
					\node [style=none] (195) at (-5.25, -0.75) {};
					\node [style=morphism] (251) at (-4, -1.25) {$q \vphantom{p'}$};
					\node [style=bn] (252) at (-4, -0.25) {};
					\node [style=none] (253) at (-2.75, 1) {};
					\node [style=none] (254) at (-2.75, 2.25) {};
					\node [style=none] (255) at (-2.75, 2.75) {$X$};
					\node [style=none] (256) at (-4.5, 0.75) {};
					\node [style=none] (257) at (-4, -2.25) {};
					\node [style=none] (258) at (-4, -2.75) {$B$};
					\node [style=none] (259) at (-3.75, 1.25) {};
					\node [style=none] (260) at (3.5, 0.75) {};
					\node [style=none] (261) at (5.5, 1) {};
					\node [style=none] (262) at (2.75, 2.25) {};
					\node [style=none] (263) at (2.75, 2.75) {$Y$};
					\node [style=bn] (264) at (4.5, -0.25) {};
					\node [style=none] (265) at (4.5, -2.25) {};
					\node [style=none] (266) at (4.5, -2.75) {$A'$};
					\node [style=none] (267) at (5.5, 2.75) {$X'$};
					\node [style=morphism] (268) at (2.75, 1.25) {$\quad g \quad $};
					\node [style=none] (269) at (5.5, 2.25) {};
					\node [style=morphism] (270) at (4.5, -1.25) {$p'$};
					\node [style=none] (271) at (2, -2.25) {};
					\node [style=none] (272) at (2, -2.75) {$W$};
					\node [style=none] (273) at (2, 1.25) {};
					\node [style=none] (274) at (2, -0.75) {};
					\node [style=morphism] (275) at (3.25, -1.25) {$q \vphantom{p'}$};
					\node [style=bn] (276) at (3.25, -0.25) {};
					\node [style=none] (277) at (4.5, 1) {};
					\node [style=none] (278) at (4.5, 2.25) {};
					\node [style=none] (279) at (4.5, 2.75) {$X$};
					\node [style=none] (280) at (2.75, 0.75) {};
					\node [style=none] (281) at (3.25, -2.25) {};
					\node [style=none] (282) at (3.25, -2.75) {$B$};
					\node [style=none] (283) at (3.5, 1.25) {};
				\end{pgfonlayer}
				\begin{pgfonlayer}{edgelayer}
					\draw [in=-90, out=165] (21) to (13.center);
					\draw [in=-90, out=15] (21) to (14.center);
					\draw (33) to (19.center);
					\draw (14.center) to (34.center);
					\draw (23.center) to (35);
					\draw [style=protected] (75.center) to (195.center);
					\draw [style=protected, in=-90, out=90, looseness=1.25] (195.center) to (85.center);
					\draw (35) to (21);
					\draw (253.center) to (254.center);
					\draw [in=-90, out=135] (252) to (256.center);
					\draw (251) to (252);
					\draw (257.center) to (251);
					\draw [style=protected, in=-90, out=15] (252) to (253.center);
					\draw [style=protected] (13.center) to (259.center);
					\draw [style=protected] (256.center) to (33);
					\draw [in=-90, out=165] (264) to (260.center);
					\draw [in=-90, out=15] (264) to (261.center);
					\draw (268) to (262.center);
					\draw (261.center) to (269.center);
					\draw (265.center) to (270);
					\draw [style=protected] (271.center) to (274.center);
					\draw [style=protected, in=-90, out=90, looseness=1.25] (274.center) to (273.center);
					\draw (270) to (264);
					\draw (277.center) to (278.center);
					\draw [in=-90, out=135] (276) to (280.center);
					\draw (275) to (276);
					\draw (281.center) to (275);
					\draw [style=protected, in=-90, out=15] (276) to (277.center);
					\draw [style=protected] (260.center) to (283.center);
					\draw [style=protected] (280.center) to (268);
				\end{pgfonlayer}
			\end{tikzpicture}
		}%
	\end{equation}
	By swapping wires, we can similarly use $q \gg p$ to replace $q$ by $p$ and get the desired conclusion of the implication that defines $q \otimes q' \gg p \otimes p'$.
\end{proof}

\begin{remark}
	\label{rem:old_not_ac_tensor}
	\cref{ex:old_vs_new_ac} shows that our old definition of absolute continuity without the extra input $W$~\cite[Definition~2.8]{fritz2023representable} does not satisfy Implication \eqref{eq:ac_mon_tensor}: with that definition, we would get $q \gg \id_X$ and $q \otimes \id_W \not\gg \id_X \otimes \id_W$.
\end{remark}

\subsection{Absolute bicontinuity}

The following symmetrized version of absolute continuity comes up frequently in the paper, so it is useful to introduce notation for the resulting equivalence relation.

\begin{definition}
	\label{def:abs_bicont}
	We say that two morphisms $p \colon A \to X$ and $q \colon B \to X$ are \newterm{absolutely bicontinuous}, and write $p \acsim q$, if both
	\begin{equation}
		p \gg q \quad \text{ and } \quad q \gg p
	\end{equation}
	are satisfied.
	In this case, we also say that $p$ \newterm{is absolutely bicontinuous with respect to} $q$.
\end{definition}

As a direct consequence of \cref{prop:ac_tensor}, we obtain the fact that the monoidal product is compatible with absolute bicontinuity.
\begin{corollary}\label{cor:abc_tensor}
	For morphisms labelled as in \cref{prop:ac_tensor}, we have
	\begin{equation}
		p \acsim q \; \text{ and } \; p' \acsim q' \quad \implies \quad p \otimes p' \acsim q \otimes q'.
	\end{equation}
	In particular, for any object $W$, we have $p \acsim q \implies \id_W \otimes p \acsim \id_W \otimes q$.
\end{corollary}

The following small observation will be useful later on.

\begin{lemma}
	\label{lem:epi_ac_max}
	If $\pi \colon X \to T$ is such that $\id_W \otimes \pi$ is an epimorphism for every $W$, then we have $\pi \acsim \id_T$.
	More generally, $f \comp \pi \acsim f$ holds for any $f \colon T \to Y$.
\end{lemma}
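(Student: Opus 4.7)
The plan is to observe that the first statement $\pi \acsim \id_T$ is just the special case $f = \id_T$ of the second claim $f \comp \pi \acsim f$, so all the work goes into establishing the latter. This absolute bicontinuity is a conjunction of two absolute-continuity relations going in opposite directions, which I would handle separately.

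First I would dispatch $f \gg f \comp \pi$ by a direct appeal to \cref{lem:factor_ac} with $h = f$ and $p = \pi$: the lemma immediately gives $f \gg f \comp \pi$, with no use of the epimorphism hypothesis needed.

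The substantive direction is $f \comp \pi \gg f$. Here I would unpack the definitions. Fix test morphisms $u, v \colon W \otimes Y \to Z$ and suppose $u \ase{f \comp \pi} v$. Writing out \cref{eq:as_eq_new} with $p$ replaced by $f \comp \pi$ shows that two morphisms of type $W \otimes X \to Z \otimes Y$ are equal; moreover, each of these morphisms is of the form $\alpha \comp (\id_W \otimes \pi)$ (respectively $\beta \comp (\id_W \otimes \pi)$), where $\alpha, \beta \colon W \otimes T \to Z \otimes Y$ are exactly the two sides of the equation $u \ase{f} v$. Since $\id_W \otimes \pi$ is assumed to be an epimorphism, the equality $\alpha \comp (\id_W \otimes \pi) = \beta \comp (\id_W \otimes \pi)$ cancels to $\alpha = \beta$, which is precisely $u \ase{f} v$. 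This yields $f \comp \pi \gg f$.

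No step should pose a real obstacle; the only subtle point is identifying the correct $\alpha$ and $\beta$ on which to use the epi hypothesis, namely $\alpha = (u \otimes \id_Y) \comp (\id_W \otimes \cop_Y) \comp (\id_W \otimes f)$ and similarly for $\beta$. Combining the two inclusions gives the full absolute bicontinuity $f \comp \pi \acsim f$, and specializing $f = \id_T$ recovers $\pi \acsim \id_T$ as advertised.
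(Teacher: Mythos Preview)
Your proposal is correct and follows essentially the same approach as the paper's proof: the paper also invokes \cref{lem:factor_ac} for the direction $f \gg f\comp\pi$ and notes that $f\comp\pi \gg f$ follows directly from the definition of absolute continuity together with the epimorphism hypothesis on $\id_W \otimes \pi$. You have simply spelled out the latter step in more detail by identifying the factorization through $\id_W \otimes \pi$ explicitly.
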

For example, this applies if $\pi$ is a split epimorphism.
\begin{proof}
	The absolute continuity direction $f \gg f\comp \pi$ is an instance of \cref{lem:factor_ac}, while $f \comp \pi \gg f$ is straightforward from the definition of absolute continuity and the assumption that $\id_W \otimes \pi$ is an epimorphism for every $W$.
\end{proof}

\section{Supports}
\label{sec:supports}

The support of a probability measure on a finite set or discrete measurable space is a classical concept.
Here, we investigate an abstract categorical generalization of this notion in terms of a universal property.
Besides the intrinsic interest and utility of having a universal property, one of the features of this development is that it clarifies the significance of topology in the context of supports:
A continuous Markov kernel between Polish spaces has a support in $\tychstoch${\,\textemdash\,}which turns out to be a Markov category in which \emph{every} morphism has a support{\,\textemdash\,}but the same Markov kernel considered as a morphism in the purely measure-theoretic context of $\borelstoch$ will typically not have a support.\footnotemark{}
\footnotetext{See \cref{ex:top_support,thm:borelstoch_supports}.}%

We start in \cref{sec:supports_def} by giving our new definition of a support and developing some of its basic properties.
We make frequent use of the characterization of supports from \cref{lem:supp_crit}.
Using it, \cref{ex:top_support} shows that all morphisms in $\tychstoch$ have a support, and these are given by the usual topological supports.
This demonstrates that our definition of support is a natural generalization of the classical notion.
In \cref{ex:support_setmulti}, we prove that $\setmulti$ has supports as well.
\cref{thm:supp_functorial} shows that the functoriality of supports is closely related to the causality axiom \cite[Definition 11.31]{fritz2019synthetic}.

\cref{sec:support_completeness} is devoted to a more detailed study of the question: 
``Which morphisms can be expected to have a support?''
This results in the definition of an \emph{atomic} morphism in \cref{def:atomic}, which is intimately related to the traditional notion of an atomic measure.
We use these insights to characterize which morphisms in $\borelstoch$ have a support in \cref{thm:borelstoch_supports}.
In \cref{prop:suppcomp}, we construct a \emph{free regular support completion} for every causal Markov category by freely adjoining a support for every atomic morphism.
\cref{thm:suppcomp_universal} establishes a 2-categorical universal property for this construction.

The following \cref{sec:rel} on the input-output relations functor and \cref{sec:statistical_models} on categories of statistical models provide applications of absolute continuity and supports. 
These parts can safely be skipped without impeding understanding of the rest of the paper.

The lack of supports in categories like $\borelstoch$ can be seen as an (expected) shortcoming.
We partially remedy this in \cref{sec:equalizer_principle} by introducing the \emph{equalizer principle}.
This is an axiom for a Markov category $\cC$ that can serve as a substitute for supports to some extent.
\cref{rem:equalizer_principle} shows that the equalizer principle is implied by the existence of supports together with the existence of equalizers in the deterministic subcategory $\cC_\det$.
We use this principle in \cref{sec:idempotents} as a crucial assumption in our main result on the splitting of idempotents.
We also introduce the \emph{relative equalizer principle} in \cref{def:relative_equalizer}, which is a stronger version of the equalizer principle that still holds in $\borelstoch$. 
We do not otherwise use it, but merely note that, together with an additional assumption,  it implies causality.
This results in a statement (\cref{prop:equalizer_causal}) that is vaguely reminiscent of the earlier \cref{thm:supp_functorial} where causality was derived from the functoriality of supports.

In the final \cref{sec:split_supports}, we introduce a stronger notion of supports that is more in line with our original definition of supports given in \cite[Definition 13.20]{fritz2019synthetic}.
In order to distinguish it from \cref{sec:supports_def}, we call them \emph{split supports}.
Per \cref{def:split_support}, the idea behind these is that not only is the support of a morphism included in its codomain, but this inclusion also has a \emph{splitting} which projects back onto the support.
\Cref{prop:split_support_universal} shows that split supports also enjoy a \emph{second} universal property in addition to the mapping-in universal property of plain supports, namely the \emph{mapping-out} universal property considered originally in \cite[Definition 13.20]{fritz2019synthetic}.
As we show in \cref{rem:split_supp_idemp} and \cref{sec:idempotents}, they are also closely related to idempotents.
Unfortunately, not all supports are split supports: 
\Cref{prop:support_not_split} gives an example of a probability measure on a compact Hausdorff space whose support is not split.
In summary, the notion of split support results in a theory that is more appealing than that of plain supports, but at the same time has more limited applicability.

\subsection{Definition and basic properties of supports}
\label{sec:supports_def}

A first definition of support for morphisms in Markov categories was given in \cite[Definition 13.20]{fritz2019synthetic}.
Here we introduce a more intuitive and more broadly applicable definition, while a close variant of the previous notion will be developed in \cref{sec:split_supports}.
Let us start with a motivating example.

\begin{example}\label{ex:first}
	In $\finstoch$, let $X$ be the three element set $\{a,b,c\}$, and consider the probability distribution $p$ on $X$ from the introduction, which is
	\[
		\begin{tikzpicture}[baseline,
			x={(0:1cm)},y={(90:1cm)},z={(0:1cm)}]
			\node (a) at (0.55,0,-2) {};
			\draw [fill=fillcolor!50] (a) -- ++(0.25,0,0) -- ++(0,1,0) -- ++(-0.5,0,0) -- ++(0,-1,0) -- (a);
			\node [circle,inner sep=1pt,fill=black,label=below:$a\strut$] at (a) {};
			\node (b) at (1.45,0,-2) {};
			\draw [fill=fillcolor!50] (b) -- ++(0.25,0,0) -- ++(0,1,0) -- ++(-0.5,0,0) -- ++(0,-1,0) -- (b);
			\node [circle,inner sep=1pt,fill=black,label=below:$b\strut$] at (b) {};
			\node (c) at (2.35,0,-2) {};
			\node [circle,inner sep=1pt,fill=black,label=below:$c\strut$] at (c) {};
			\draw (0,0,-2) -- (3,0,-2) ;
		\end{tikzpicture}
		\qquad\qquad
		p(a) = p(b) = 1/2, \qquad p(c) = 0 .
	\]
	The support of $p$ in the standard sense is the subset $\{a,b\} \subseteq X$.
	This subset is characterized by the universal property that a stochastic matrix $f \colon Z \to X$ factors across the inclusion $\{a,b\} \hookrightarrow X$ if and only if $f$ is absolutely continuous with respect to $p$.
	Indeed by \cref{ex:abs_cont_stoch}, the condition $f \ll p$ is equivalent to $f(c|z) = 0$ for all $z \in Z$, and this in turn is equivalent to $f$ factoring across $\{a,b\} \hookrightarrow X$.
\end{example}

In a general Markov category $\cC$, we can define the support of a morphism by means of a universal property generalizing the observation from this example.
For any morphism $p \colon A \to X$ in $\cC$, consider the functor $\cC(\ph,X)_{\ll p} \colon \cC^\op \to \set$ defined as the subfunctor of $\cC(\ph,X)$ on all morphisms absolutely continuous with respect to $p$, i.e.\ as
\begin{equation}\label{eq:supp_def}
\cC(Z, X)_{\ll p} \coloneqq \Set*[\big]{ f \colon Z \to X \given f \ll p }.
\end{equation}
Intuitively, a support amounts to a representation of this functor.
We also require such a representation to be compatible with the restriction to deterministic morphisms and with tensor products, as per the following definition.

\begin{definition}
	\label{def:support}
	Let $p \colon A \to X$ be a morphism in a Markov category $\cC$.
	\begin{enumerate}
		\item A \newterm{pre-support} for $p$ consists of an object $\Supp{}$ together with a \newterm{pre-support inclusion} $\suppinc{} \colon \Supp{} \to X$,
			which is a morphism such that post-composition with it gives bijections
			\begin{equation}
				\label{eq:supp_correspondence}
				\cC(\ph, S) \cong \cC(\ph, X)_{\ll \, p},
			\end{equation}
			which additionally restrict to bijections between deterministic morphisms on both sides.

		\item A \newterm{support} for $p$ is an object $\Supp{}$ together with a \newterm{support inclusion} $\suppinc{} \colon \Supp{} \to X$ such that, for every object $W\in \cC$, the object $W \otimes S$ is a pre-support for $\id_W \otimes p$ with pre-support inclusion given by $\id_W \otimes \suppinc{}$.
	\end{enumerate}
	We say that
	$\cC$ \newterm{has supports} if there exists a support for every morphism $p$ in $\cC$.
\end{definition}

The preservation by tensoring requirement facilitates a more elegant theory of supports than without.
The natural appearance of such conditions is a recurrent theme for Markov categories, see e.g.\ the definition of Kolmogorov products~\cite{fritzrischel2019zeroone}.
In both contexts, the preservation by tensoring is not the primary condition, but crucial as an auxiliary requirement which tends to hold in examples and ensure desirable properties.
Another recurrent theme is the requirement that such bijections should restrict to bijections between deterministic morphisms on both sides.
Since the support inclusion $\suppinc{}$ is the right-hand side counterpart of $\id_S$ in~\eqref{eq:supp_correspondence}, it follows in particular that $\suppinc{}$ is deterministic.

Applying Bijection~\eqref{eq:supp_correspondence} to $p$, which is an element of the right-hand side by the reflexivity inequality $p \ll p$, yields a unique $\suppfactor{p} \colon A \to S$ that gives us the factorization
\begin{equation}\label{eq:supp_inc_factor}
	p = \suppinc{} \comp \suppfactor{p},
\end{equation}
and we reserve the hat notation for this ``restriction of $p$ to its support''.
We typically denote a support $S$ and the support inclusion $\suppinc{}$ of a morphism $p$ without reference to $p$.
However, whenever confusion could arise, we indicate the $p$ by a subscript and write $\Supp{p}$ and $\suppinc{p}$ instead.

Let us emphasize again that \cref{def:support} differs from our earlier one \cite[Definition 13.20]{fritz2019synthetic}. 
There, the mapping-out property used to define the notion of a support in Markov categories is instead similar to the stronger notion of split supports studied in \cref{sec:split_supports}. 
A non-trivial example of a morphism for which the two definitions are inequivalent is provided in \cref{prop:support_not_split}.

For working with supports in practice, it is often helpful to use the following reformulation of the definition.

\begin{lemma}
	\label{lem:supp_crit}
	A morphism $\suppinc{} \colon \Supp{} \to X$ is a support inclusion for a morphism $p \colon A \to X$ if and only if it satisfies the following conditions:
	\begin{enumerate}
		\item\label{it:suppinc_det_mono} $\id_W \otimes \suppinc{}$ is a deterministic monomorphism for every $W \in \cC$.
		\item\label{it:suppinc_ac} $\suppinc{} \ll p$.
		\item\label{it:suppinc_fact} For every $Z, W \in \cC$ and $f \colon Z \to W \otimes X$, we have
			\begin{equation}\label{eq:supp_crit}
				f \ll \id_W \otimes p \quad \implies \quad f \text{ factors across } \id_W \otimes \suppinc{}.
			\end{equation}
	\end{enumerate}
\end{lemma}

\begin{proof}
	First let us assume that $\suppinc{}$ is a support inclusion for $p$.
	Note that we can also think of it as the result of applying Bijection \eqref{eq:supp_correspondence} to $\id_S$, which implies \cref{it:suppinc_ac}.

	Since this bijection restricts to deterministic morphisms (on both sides), the support inclusion must be itself deterministic.
	To show that it is a monomorphism, consider $f, g : W \to S$ such that $\suppinc{} \comp f = \suppinc{} \comp g$ holds.
	By the correspondence from \eqref{eq:supp_correspondence}, we have $\suppinc{} \comp f \ll p$, which implies $f = g$ by applying the bijection right-to-left. 
	Similarly, both properties follow for $\id_W \otimes \suppinc{}$ which is assumed to be a pre-support inclusion of $\id_W \otimes p$.
	Thus, \cref{it:suppinc_det_mono} follows.

	Finally, if $f \colon Z \to W \otimes X$ satisfies $f \ll \id_W \otimes p$, then the pre-support bijection \eqref{eq:supp_correspondence} for $\id_W \otimes p$ gives a unique $\bar{f} \colon Z \to W \otimes \Supp{}$ such that
	\[
		f = (\id_W \otimes \suppinc{}) \comp \bar{f}.
	\]

	Let us turn to the converse implication by first showing that $\suppinc{}$ is a pre-support inclusion for $p$.
	Since $\suppinc{}$ is a monomorphism, composing with it embeds $\cC(Z, \Supp{})$ into $\cC(Z, X)$ injectively.
	All morphisms in the image of this inclusion factor across $\suppinc{}$, so that applying \cref{lem:factor_ac} shows that they are all absolutely continuous with respect to $\suppinc{}$.
	As $\suppinc{} \ll p$ holds by assumption, these morphisms are all absolutely continuous with respect to $p$ as well and thus we can restrict the codomain of the injection to the subset $\cC(Z, X)_{\ll p}$ of $\cC(Z, X)$.
	Moreover, this embedding is surjective by the assumed condition \eqref{eq:supp_crit}.
	To show that it restricts to a bijection between deterministic morphisms on both sides, we need to prove that if a composite $\suppinc{} \comp g$ is deterministic, then so is $g$.
	This can be proven as follows
	\begin{equation}
		{%
			\tikzstyle{every picture}=[tikzfig]%
			\begin{tikzpicture}
				\begin{pgfonlayer}{nodelayer}
					\node [style=none] (0) at (-11.5, 0) {$=$};
					\node [style=morphism] (1) at (-15, 1.5) {$\suppinc{}$};
					\node [style=morphism] (2) at (-13, 1.5) {$\suppinc{}$};
					\node [style=morphism] (3) at (-15, 0) {$g$};
					\node [style=morphism] (4) at (-13, 0) {$g$};
					\node [style=bn] (5) at (-14, -1.5) {};
					\node [style=none] (6) at (-15, 2.5) {};
					\node [style=none] (7) at (-15, 3) {$X$};
					\node [style=none] (8) at (-13, 2.5) {};
					\node [style=none] (9) at (-13, 3) {$X$};
					\node [style=none] (10) at (-14, -2.25) {};
					\node [style=none] (11) at (-14, -2.75) {$Z$};
					\node [style=bn] (16) at (-9, 1.25) {};
					\node [style=none] (17) at (-10, 2.5) {};
					\node [style=none] (18) at (-10, 3) {$X$};
					\node [style=none] (19) at (-8, 2.5) {};
					\node [style=none] (20) at (-8, 3) {$X$};
					\node [style=none] (21) at (-9, -2.25) {};
					\node [style=none] (22) at (-9, -2.75) {$Z$};
					\node [style=morphism] (23) at (-9, 0.25) {$\suppinc{}$};
					\node [style=morphism] (24) at (-9, -1.25) {$g$};
					\node [style=none] (25) at (-10, 2.5) {};
					\node [style=none] (26) at (-8, 2.5) {};
					\node [style=none] (27) at (-6.5, 0) {$=$};
					\node [style=bn] (28) at (-4, -0.25) {};
					\node [style=none] (29) at (-5, 2.5) {};
					\node [style=none] (30) at (-5, 3) {$X$};
					\node [style=none] (31) at (-3, 2.5) {};
					\node [style=none] (32) at (-3, 3) {$X$};
					\node [style=none] (33) at (-4, -2.25) {};
					\node [style=none] (34) at (-4, -2.75) {$Z$};
					\node [style=morphism] (35) at (-5, 1.5) {$\suppinc{}$};
					\node [style=morphism] (36) at (-4, -1.25) {$g$};
					\node [style=none] (37) at (-5, 1) {};
					\node [style=none] (38) at (-3, 1) {};
					\node [style=morphism] (39) at (-3, 1.5) {$\suppinc{}$};
					\node [style=none] (40) at (0, 0) {$\implies$};
					\node [style=morphism] (41) at (3, 1.25) {$g$};
					\node [style=morphism] (42) at (5, 1.25) {$g$};
					\node [style=bn] (43) at (4, -0.75) {};
					\node [style=none] (44) at (4, -2) {};
					\node [style=none] (45) at (4, -2.5) {$Z$};
					\node [style=bn] (46) at (9, 0.5) {};
					\node [style=none] (47) at (9, -2) {};
					\node [style=none] (48) at (9, -2.5) {$Z$};
					\node [style=morphism] (49) at (9, -0.75) {$g$};
					\node [style=none] (50) at (8, 1.75) {};
					\node [style=none] (51) at (10, 1.75) {};
					\node [style=none] (52) at (3, 2.25) {};
					\node [style=none] (53) at (3, 2.75) {$S$};
					\node [style=none] (54) at (5, 2.25) {};
					\node [style=none] (55) at (5, 2.75) {$S$};
					\node [style=none] (56) at (3, 0.5) {};
					\node [style=none] (57) at (5, 0.5) {};
					\node [style=none] (58) at (8, 2.25) {};
					\node [style=none] (59) at (10, 2.25) {};
					\node [style=none] (60) at (8, 2.75) {$S$};
					\node [style=none] (61) at (10, 2.75) {$S$};
					\node [style=none] (62) at (6.5, 0) {$=$};
					\node [style=none] (63) at (-15, -0.25) {};
					\node [style=none] (64) at (-13, -0.25) {};
				\end{pgfonlayer}
				\begin{pgfonlayer}{edgelayer}
					\draw (10.center) to (5);
					\draw (3) to (1);
					\draw (1) to (6.center);
					\draw (4) to (2);
					\draw (2) to (8.center);
					\draw (21.center) to (16);
					\draw [in=-90, out=165] (16) to (25.center);
					\draw [in=-90, out=15] (16) to (26.center);
					\draw (26.center) to (19.center);
					\draw (25.center) to (17.center);
					\draw (33.center) to (28);
					\draw [in=-90, out=165] (28) to (37.center);
					\draw [in=-90, out=15] (28) to (38.center);
					\draw (38.center) to (31.center);
					\draw (37.center) to (29.center);
					\draw (44.center) to (43);
					\draw (47.center) to (46);
					\draw [in=-90, out=165] (46) to (50.center);
					\draw [in=-90, out=15] (46) to (51.center);
					\draw (41) to (52.center);
					\draw [in=-90, out=165] (43) to (56.center);
					\draw [in=-90, out=15] (43) to (57.center);
					\draw (57.center) to (42);
					\draw (42) to (54.center);
					\draw (56.center) to (41);
					\draw (51.center) to (59.center);
					\draw (50.center) to (58.center);
					\draw [in=-90, out=165] (5) to (63.center);
					\draw [in=-90, out=15] (5) to (64.center);
				\end{pgfonlayer}
			\end{tikzpicture}
		}%
	\end{equation}
	where we use the facts that $\suppinc{}$ is deterministic (to get the second equality) and that $\suppinc{} \otimes \suppinc{}$ is a monomorphism (to get the implication).
	The latter statement holds by virtue of $\id_S \otimes \suppinc{}$ being a monomorphism.

	Finally, the arguments of the previous paragraph apply likewise to $\id_W \otimes \suppinc{}$ in place of $\suppinc{}$, since all three conditions are stable under tensoring with arbitrary $W$ (the second by \cref{prop:ac_tensor}).
\end{proof}

Let us now use the above characterization to provide examples of supports in three Markov categories: $\finstoch$ describing discrete probability theory over finite sample spaces, $\tychstoch$ in which Markov kernels have to be continuous, and $\setmulti$ giving a model of possibilistic indeterminism.
In each of them, the determinism property for a purported support inclusion follows by the characterization of deterministic morphisms in each category \cite{fritz2019synthetic}.
The monomorphism property follows by definition since all are inclusions.
Thus, we only provide details on how to show $\suppinc{} \ll p$ as well as the factorization condition.
Verifying both of these properties can be done thanks to the characterization of absolute continuity in each category.

\begin{example}
	\label{ex:support_finstoch}
	In $\finstoch$, every morphism $p \colon A \to X$ has a support, namely the set
	\begin{equation}
	\Supp{} = \Set*[\big]{ x \in X  \given  \exists \, a \in A  \,:\,  p(x | a) > 0 } ,
\end{equation}
with $\suppinc{}$ given by the inclusion map into $X$.
Using \cref{ex:abs_cont_stoch}, we indeed have $\suppinc{} \ll p$.
An arbitrary $f \colon Z \to W \otimes X$ factors across $\id_W \otimes \suppinc{}$ if and only if we have $f(w,x|z) = 0$ for all $w \in W$ and $x \in X \setminus \Supp{}$ and $z \in Z$.
This indeed holds if $f \ll \id_W \otimes p$ by the same characterization of absolute continuity.
\end{example}

\begin{example}
	\label{ex:top_support}
	In the category $\tychstoch$, the support is given by the topological support, i.e.\ the smallest closed set of full measure.
	In more detail, let $p \colon A\to X$ be any morphism and define the set $\Supp{} \subseteq X$ as in \cref{eq:top_sup} as
	\begin{equation}\label{eq:top_sup_2}
		\Supp{} \coloneqq \bigcap \, \Set{ C \subseteq X \textrm{ closed} \given p(C|a) = 1 \textrm{ for all } a \in A }.
	\end{equation}
	Consider the deterministic inclusion $\suppinc{} \colon \Supp{} \to X$, and let us apply \cref{lem:supp_crit} to show that it is a support inclusion.
	\begin{itemize}
		\item We have $\suppinc{} \ll p$, and even $\suppinc{}\acsim p$ (compare \cref{lem:inc_sim_p} below), as both $\suppinc{}$-almost sure equality and $p$-almost sure equality are equivalent to equality of kernels restricted to $S$ by \cref{prop:ase_top}.

		\item Let now $f \colon Z\to W \otimes X$ be any morphism absolutely continuous with respect to $\id_W \otimes p$. 
			By \cref{prop:ase_top} again, this means that for all $z\in Z$ we have $f(W \times (X\setminus \Supp{})|z)=0$.
			That is, $f$ can be considered as a morphism of type $Z \to W \otimes \Supp{}$, and this provides the desired factorization.
	\end{itemize}
	By \cref{lem:supp_crit}, the topological support of $p$ thus coincides with the support of $p$ in our sense.
\end{example}

\begin{example}\label{ex:support_setmulti}
	In $\setmulti$, every morphism $p \colon A \to X$ has a support.
	In particular, the support inclusion $\suppinc{}$ is the inclusion of the set
	\begin{equation}
		\Supp{} \coloneqq \bigcup_{a \in A} p(a)
	\end{equation}
	into $X$, where $p(a)$ is the image of $a$ under $p$.
	The property $\suppinc{} \ll p$ and the factorization condition hold by \cref{ex:ac_setmulti}.
\end{example}

Let us return to the general theory for the moment.

\begin{lemma}
	\label{lem:inc_sim_p}
	For every morphism $p \colon A \to X$ with a pre-support, we have
	\begin{equation}
		\suppinc{} \acsim p.
	\end{equation}
	where $\acsim$ denotes the absolute bicontinuity relation (\cref{def:abs_bicont}).
\end{lemma}

\begin{proof}
	The relation $\suppinc{} \gg p$ follows directly from the factorization in \cref{eq:supp_inc_factor} in view of \cref{lem:factor_ac}.
	The opposite one, $\suppinc{} \ll p$, holds because $\suppinc{}$ is defined as an element of the set $\cC(\Supp{}, X)_{\ll p}$ from the right-hand side of Bijection \eqref{eq:supp_correspondence}. 
\end{proof}

This fact yields the following characteristic connection between supports and almost sure equality, which may elucidate the definition of supports further.

\begin{lemma}[A.s.\ equality is restriction to the support]
	\label{lem:supp_asfaithful}
	Let $p$ have a pre-support with pre-support inclusion $\suppinc{}$.
	Then we have 
	\begin{equation}\label{eq:supp_asfaithful}
		{%
			\tikzstyle{every picture}=[tikzfig]%
			\begin{tikzpicture}
				\begin{pgfonlayer}{nodelayer}
					\node [style=none] (202) at (0, 0) {$\iff$};
					\node [style=none] (251) at (-10, 0.75) {};
					\node [style=none] (252) at (-8.5, 1) {};
					\node [style=none] (253) at (-10.5, 2.25) {};
					\node [style=none] (254) at (-10.5, 2.75) {$Y$};
					\node [style=bn] (255) at (-9.25, 0) {};
					\node [style=none] (256) at (-9.25, -2) {};
					\node [style=none] (257) at (-9.25, -2.5) {$A$};
					\node [style=none] (258) at (-7, 0) {$=$};
					\node [style=none] (259) at (-8.5, 2.75) {$X$};
					\node [style=morphism] (260) at (-10.5, 1.25) {$\,\;\; f \,\;\;$};
					\node [style=none] (261) at (-8.5, 2.25) {};
					\node [style=morphism] (262) at (-9.25, -1) {$p$};
					\node [style=none] (263) at (-11, -2) {};
					\node [style=none] (264) at (-11, -2.5) {$W$};
					\node [style=none] (265) at (-11, 1.25) {};
					\node [style=none] (266) at (-11, -0.5) {};
					\node [style=none] (267) at (-4.5, 0.75) {};
					\node [style=none] (268) at (-3, 1) {};
					\node [style=none] (269) at (-5, 2.25) {};
					\node [style=none] (270) at (-5, 2.75) {$Y$};
					\node [style=bn] (271) at (-3.75, 0) {};
					\node [style=none] (272) at (-3.75, -2) {};
					\node [style=none] (273) at (-3.75, -2.5) {$A$};
					\node [style=none] (274) at (-3, 2.75) {$X$};
					\node [style=morphism] (275) at (-5, 1.25) {$\,\;\; g \,\;\;$};
					\node [style=none] (276) at (-3, 2.25) {};
					\node [style=morphism] (277) at (-3.75, -1) {$p$};
					\node [style=none] (278) at (-5.5, -2) {};
					\node [style=none] (279) at (-5.5, -2.5) {$W$};
					\node [style=none] (280) at (-5.5, 1.25) {};
					\node [style=none] (281) at (-5.5, -0.5) {};
					\node [style=none] (284) at (-10, 1.25) {};
					\node [style=none] (285) at (-4.5, 1.25) {};
					\node [style=none] (288) at (3.5, 2.25) {};
					\node [style=none] (289) at (3.5, 2.75) {$Y$};
					\node [style=none] (290) at (4, -2.5) {$\Supp{}$};
					\node [style=morphism] (292) at (3.5, 1) {$\,\;\; f \,\;\;$};
					\node [style=none] (294) at (3, -2) {};
					\node [style=none] (295) at (3, -2.5) {$W$};
					\node [style=none] (296) at (3, 1) {};
					\node [style=none] (298) at (9, 2.25) {};
					\node [style=none] (299) at (9, 2.75) {$Y$};
					\node [style=none] (300) at (9.5, -2.5) {$\Supp{}$};
					\node [style=morphism] (301) at (9, 1) {$\,\;\; g \,\;\;$};
					\node [style=none] (302) at (8.5, -2) {};
					\node [style=none] (303) at (8.5, -2.5) {$W$};
					\node [style=none] (304) at (8.5, 1) {};
					\node [style=none] (305) at (4, 1) {};
					\node [style=none] (306) at (9.5, 1) {};
					\node [style=morphism] (307) at (4, -0.75) {$\suppinc{}$};
					\node [style=morphism] (308) at (9.5, -0.75) {$\suppinc{}$};
					\node [style=none] (309) at (4, -2) {};
					\node [style=none] (310) at (9.5, -2) {};
					\node [style=none] (311) at (6.25, 0) {$=$};
					\node [style=none] (312) at (-11.75, -2.5) {};
				\end{pgfonlayer}
				\begin{pgfonlayer}{edgelayer}
					\draw [in=-90, out=165] (255) to (251.center);
					\draw [in=-90, out=15] (255) to (252.center);
					\draw (260) to (253.center);
					\draw (252.center) to (261.center);
					\draw (256.center) to (262);
					\draw [style=protected] (263.center) to (266.center);
					\draw [style=protected, in=-90, out=90, looseness=1.25] (266.center) to (265.center);
					\draw [in=270, out=90] (262) to (255);
					\draw [in=-90, out=165] (271) to (267.center);
					\draw [in=-90, out=15] (271) to (268.center);
					\draw (275) to (269.center);
					\draw (268.center) to (276.center);
					\draw (272.center) to (277);
					\draw [style=protected] (278.center) to (281.center);
					\draw [style=protected, in=-90, out=90, looseness=1.25] (281.center) to (280.center);
					\draw (277) to (271);
					\draw (251.center) to (284.center);
					\draw (267.center) to (285.center);
					\draw (292) to (288.center);
					\draw (301) to (298.center);
					\draw (309.center) to (307);
					\draw (307) to (305.center);
					\draw (294.center) to (296.center);
					\draw (302.center) to (304.center);
					\draw (308) to (306.center);
					\draw (310.center) to (308);
				\end{pgfonlayer}
			\end{tikzpicture}
		}%
	\end{equation}
	for all $f, g \colon W \otimes X \to Y$ with arbitrary $W$ and $Y$.
\end{lemma}

\begin{proof}
	The left-hand side of Equivalence \eqref{eq:supp_asfaithful} is just $f \ase{p} g$.
	If this holds, then we also get $f \ase{\suppinc{}} g$ by \cref{lem:inc_sim_p}, and the right-hand side of Equivalence \eqref{eq:supp_asfaithful} follows by marginalization.
	Conversely, if the right-hand side of \eqref{eq:supp_asfaithful} holds, then we have $f \ase{\suppinc{}} g$ by the fact that $\suppinc{}$ is deterministic via \cref{prop:ase_props} \ref{it:ase_deterministic}.
	We then conclude by another application of \cref{lem:inc_sim_p}.		
\end{proof}

\begin{proposition}[Absolute bicontinuity and supports]
	\label{prop:abs_bicont_same_supp}
	Let $p \colon A \to X$ and $q \colon B \to X$ have the same codomain.
	Then:
	\begin{enumerate}
		\item \label{it:same_support} If $p$ and $q$ have pre-supports with the same pre-support inclusion, then we have $p \acsim q$.
		\item \label{it:abs_bicont} If $p \acsim q$ holds and $p$ has a (pre-)support, then $q$ has the same (pre-)support.
	\end{enumerate}
\end{proposition}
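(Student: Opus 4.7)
The plan is to reduce both statements to the characterization of supports in terms of absolute continuity, namely \cref{lem:supp_crit} together with \cref{lem:inc_sim_p}, and then exploit transitivity of the preorder $\gg$ (equivalently of the symmetric relation $\acsim$).

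For \ref{it:same_support}, suppose $p$ and $q$ share a common support inclusion $\suppinc{} \colon \Supp{} \to X$. Then \cref{lem:inc_sim_p} immediately yields $\suppinc{} \acsim p$ as well as $\suppinc{} \acsim q$. Since $\gg$ is reflexive and transitive, the symmetric relation $\acsim$ is an equivalence relation on morphisms with fixed codomain, and thus $p \acsim q$ follows by composing these two bicontinuities through $\suppinc{}$.

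For \ref{it:abs_bicont}, the goal is to show that the support inclusion $\suppinc{p} \colon \Supp{p} \to X$ of $p$ also satisfies the two conditions of \cref{lem:supp_crit} with $q$ in place of $p$. For the first condition, \cref{lem:inc_sim_p} gives $\suppinc{p} \ll p$; composing with $p \ll q$ (which is half of $p \acsim q$) via transitivity yields $\suppinc{p} \ll q$. For the factorization condition, let $f \colon Z \to X$ satisfy $f \ll q$; using the other half $q \ll p$ of the bicontinuity and transitivity again, we obtain $f \ll p$, so $f$ factors through $\suppinc{p}$ by the universal property of the support of $p$. Since $\suppinc{p}$ is already a deterministic monomorphism by assumption, \cref{lem:supp_crit} concludes that it is a support inclusion for $q$ as well.

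There is no substantial obstacle here; the statement is essentially a formal consequence of the definitions once \cref{lem:inc_sim_p} and \cref{lem:supp_crit} are in hand. The only subtlety worth noting is that although the proposition is stated for $p \colon A \to X$ and $q \colon B \to Y$ with possibly different codomains, the hypotheses in both parts force $X = Y$ implicitly: the relation $\acsim$ is only defined between morphisms into a common codomain, and ``same support inclusion'' requires the inclusions to land in the same object.
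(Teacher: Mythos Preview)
Your proof is correct and follows essentially the same approach as the paper. The paper's proof is even terser: for \ref{it:same_support} it simply cites \cref{lem:inc_sim_p}, and for \ref{it:abs_bicont} it just says ``by the definition of a support'' (the point being that $p \acsim q$ makes the functors $\cC(\ph,X)_{\ll p}$ and $\cC(\ph,X)_{\ll q}$ coincide, so any representing object for one is a representing object for the other). Your use of \cref{lem:supp_crit} merely unpacks this, and your remark about the codomains being implicitly equal is a helpful clarification.
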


\begin{proof}\hfill 
	\begin{enumerate}
		\item Since $\suppinc{p} = \suppinc{q}$, we just use \cref{lem:inc_sim_p}.
		\item The condition $p \acsim q$ entails 
			\begin{equation}
				\cC(\ph, X)_{\ll \, p} = \cC(\ph, X)_{\ll \, q}
			\end{equation}
			and so the result for pre-supports follows by the definition of pre-supports.
			If the pre-support of $p$ is actually a support, then we can run the same argument because ${\id_W \otimes p \acsim \id_W \otimes q}$ is ensured by \cref{cor:abc_tensor}.
			\qedhere
	\end{enumerate}
\end{proof}

\begin{example}
	\label{ex:supp_split_epi}
	If $p \colon A \to X$ is a split epimorphism, then $p \acsim \id_X$ holds by \cref{lem:epi_ac_max}, and similarly upon tensoring by any identity.
	Therefore the support inclusion of any such split epimorphism is just $\id_X$.
\end{example}

\begin{proposition}[Supports of composite morphism]\label{prop:support_pull}
	Consider an arbitrary morphism ${p \colon A \to T}$ and a deterministic split monomorphism $\iota \colon T \to X$ with left inverse $\pi \colon X \to T$.
	Then we have the following:
	\begin{enumerate}
		\item \label{it:supp_split_epi_1} If $p$ has a support, then so does $\iota \comp p$, and its support inclusion is
			\begin{equation}
				\suppinc{\iota\comp p} = \iota \comp \suppinc{p}.
			\end{equation}
		\item \label{it:supp_split_epi_2} If $\iota \comp p$ has a support, then so does $p$, and its support inclusion is
			\begin{equation}
				\suppinc{p} = \pi \comp \suppinc{\iota\comp p}.
			\end{equation}
	\end{enumerate}
\end{proposition}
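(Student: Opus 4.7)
The plan is to apply \cref{lem:supp_crit} in both parts. This reduces the task to showing that the claimed support inclusion is a deterministic monomorphism, is absolutely continuous with respect to the target morphism, and receives a factorization of every morphism absolutely continuous with respect to it. The two main tools will be \cref{lem:ac_mon_comp_split_mono}, which tells us that pre-composing with $\iota$ both preserves and reflects the relation $\gg$, and \cref{lem:ac_mon_factor}, which factors any morphism absolutely continuous with respect to $\iota$ through $\iota$ itself via $\pi$.

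For part (i), the morphism $\iota \comp \suppinc{p}$ is manifestly a deterministic monomorphism, and $\iota \comp \suppinc{p} \ll \iota \comp p$ follows from $\suppinc{p} \ll p$ via \cref{lem:ac_mon_comp_split_mono}. Given $f \colon Z \to X$ with $f \ll \iota \comp p$, transitivity with $\iota \comp p \ll \iota$ (from \cref{lem:factor_ac}) yields $f \ll \iota$, and \cref{lem:ac_mon_factor} produces $f = \iota \comp (\pi \comp f)$. Another application of \cref{lem:ac_mon_comp_split_mono} to $\iota \comp (\pi \comp f) \ll \iota \comp p$ forces $\pi \comp f \ll p$, so the universal property of $\suppinc{p}$ yields $h$ with $\pi \comp f = \suppinc{p} \comp h$, whence $f = \iota \comp \suppinc{p} \comp h$.

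For part (ii), set $\tilde{\pi} \coloneqq \pi \comp \suppinc{\iota \comp p}$. Since $\suppinc{\iota \comp p} \ll \iota \comp p \ll \iota$, \cref{lem:ac_mon_factor} yields the key identity $\suppinc{\iota \comp p} = \iota \comp \tilde{\pi}$. Consequently $\tilde{\pi}$ is a monomorphism (cancel $\iota$ on the left), and $\tilde{\pi} \ll p$ follows from $\iota \comp \tilde{\pi} \ll \iota \comp p$ via \cref{lem:ac_mon_comp_split_mono}. For the factorization property, given $g \colon Z \to T$ with $g \ll p$, \cref{lem:ac_mon_comp_split_mono} gives $\iota \comp g \ll \iota \comp p$, and the support property of $\iota \comp p$ yields $h$ with $\iota \comp g = \suppinc{\iota \comp p} \comp h = \iota \comp \tilde{\pi} \comp h$; pre-composing with $\pi$ produces $g = \tilde{\pi} \comp h$.

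The main subtlety will be the determinism of $\tilde{\pi}$ in part (ii), since the left inverse $\pi$ is not assumed to be deterministic. I would resolve this by exploiting that $\iota \otimes \iota$ is itself a split monomorphism (with left inverse $\pi \otimes \pi$) and that $\iota$ is deterministic: the chain $(\iota \otimes \iota) \comp \cop_T \comp \tilde{\pi} = \cop_X \comp \iota \comp \tilde{\pi} = \cop_X \comp \suppinc{\iota \comp p} = (\iota \otimes \iota) \comp (\tilde{\pi} \otimes \tilde{\pi}) \comp \cop$ lets one cancel the monic $\iota \otimes \iota$ and conclude that $\tilde{\pi}$ commutes with copy. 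With this in hand, everything else is a routine diagram chase combining the two tool lemmas with the universal property of the given support.
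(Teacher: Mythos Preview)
Your proposal is correct and follows essentially the same route as the paper's proof: both parts apply \cref{lem:supp_crit}, using \cref{lem:ac_mon_comp_split_mono} to transfer absolute continuity along $\iota$ and \cref{lem:ac_mon_factor} to obtain the key factorizations $f = \iota \comp \pi \comp f$ and $\suppinc{\iota \comp p} = \iota \comp \tilde{\pi}$. Your determinism argument for $\tilde{\pi}$ via cancelling the split monomorphism $\iota \otimes \iota$ is exactly the argument the paper sketches in a footnote.
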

\begin{proof}
	We employ \cref{lem:supp_crit} in both directions.
	\begin{enumerate}
		\item We show that the composite $\iota \comp \suppinc{p}$ is indeed a support inclusion $\suppinc{\iota\comp p}$.
			First, $\iota \comp \suppinc{p}$ is a deterministic monomorphism as a composite of two deterministic monomorphisms.
			The required inequality $\iota \comp \suppinc{p} \ll \iota\comp p$ holds as a consequence of $\suppinc{p} \ll p$ and \cref{lem:ac_mon_comp_split_mono}.
			Second, if $f \colon Z \to X$ is absolutely continuous with respect to $\iota\comp p$, then we obtain $\iota\comp \pi \ase{f} \id_X$ from $\iota\comp \pi \ase{\iota\comp p} \id_X$.
			This implies $\iota\comp \pi\comp f = f$.
			By \cref{lem:ac_mon_comp_split_mono}, we therefore obtain $\pi\comp f \ll p$ from the assumed $f \ll \iota\comp p$.
			The assumption that $\suppinc{p}$ is a support inclusion implies now that $\pi\comp f$ factors across $\suppinc{p}$.
			But then of course also $f = \iota\comp \pi\comp f$ factors across $\iota \comp \suppinc{p}$.

			Finally, since similarly $\id_W \otimes \iota$ is a deterministic split monomorphism with left inverse $\id_W \otimes \pi$ for every $W$, the same arguments apply to $\id_W \otimes \iota \comp \suppinc{p}$ in place of $\iota \comp \suppinc{p}$, and we are done.
		\item We show that the composite $\pi \comp \suppinc{\iota\comp p}$ is indeed a support inclusion $\suppinc{p}$.
			First, we get $\iota\comp \pi \comp \suppinc{\iota\comp p} = \suppinc{\iota\comp p}$ from $\suppinc{\iota \comp p} \ll \iota \comp p \ll \iota$ and $\iota \comp \pi \ase{\iota} \id_X$.
			Therefore our putative support inclusion for $p$ satisfies $\iota \comp \suppinc{p} = \suppinc{\iota\comp p}$.
			In particular, $\suppinc{p}$ is a deterministic monomorphism.\footnote{To see that it is deterministic, note that it is enough to prove the determinism equation composed with $\iota \otimes \iota$, and use the assumption that both $\iota$ and $\iota \comp \suppinc{p}$ are deterministic.}
			The required inequality $\suppinc{p} \ll p$ holds as a consequence of $\iota \comp \suppinc{p} \ll \iota\comp p$ and \cref{lem:ac_mon_comp_split_mono} again.

			Next, let $f \colon Z \to T$ be absolutely continuous with respect to $p$.
			Then $\iota \comp f \ll \iota\comp p$ by \cref{lem:ac_mon_comp_split_mono}, and hence $\iota\comp f$ factors across $\suppinc{\iota\comp p} = \iota \comp \suppinc{p}$.
			The claim now follows as all assumptions are again stable under tensoring with $\id_W$ for any object $W$.
			\qedhere
	\end{enumerate}
\end{proof}

\begin{example}[Copying does not change the support]\label{ex:support_poc}
	As a special case of \cref{prop:support_pull}, consider $X = T \otimes T$ and let
	\begin{equation}
		q \coloneqq \cop_T {}\comp p
	\end{equation}
	be the output-copy version of a morphism $p \colon A \to T$ with support $\Supp{p}$.
	Since $\cop_T$ is a deterministic split monomorphism with left inverse $\id_T \otimes \discard_T$, we can apply \cref{prop:support_pull} \ref{it:supp_split_epi_1} to obtain that $q$ has 
	\begin{equation}\label{eq:support_poc}
		{%
			\tikzstyle{every picture}=[tikzfig]%
			\begin{tikzpicture}
				\begin{pgfonlayer}{nodelayer}
					\node [style=bn] (9) at (0, 0) {};
					\node [style=none] (10) at (0, -2) {};
					\node [style=none] (13) at (-1, 1.25) {};
					\node [style=none] (14) at (1, 1.25) {};
					\node [style=morphism] (15) at (0, -1) {$\suppinc{}$};
					\node [style=none] (16) at (-1, 1.25) {};
					\node [style=none] (17) at (-1, 1.75) {$T$};
					\node [style=none] (18) at (1, 1.75) {$T$};
					\node [style=none] (19) at (1, 1.25) {};
					\node [style=none] (20) at (0, -2.5) {$\Supp{p}$};
				\end{pgfonlayer}
				\begin{pgfonlayer}{edgelayer}
					\draw (10.center) to (9);
					\draw [in=-90, out=165] (9) to (13.center);
					\draw [in=-90, out=15] (9) to (14.center);
					\draw (13.center) to (16.center);
					\draw (14.center) to (19.center);
				\end{pgfonlayer}
			\end{tikzpicture}
		}%
	\end{equation}
	as the support inclusion, where $\suppinc{}$ is the support inclusion of $p$.
\end{example}

In light of \cref{lem:supp_crit}, it seems like an interesting question to ask which deterministic monomorphisms precisely can arise as support inclusions.
While we do not know the answer, at least the following is an obvious consequence of \cref{lem:supp_crit}, which also arises as a special case of \cref{prop:support_pull} by taking $p$ to be the identity morphism.

\begin{corollary}
	\label{cor:split_mono_supp}
	Every deterministic split monomorphism is its own support inclusion.
\end{corollary}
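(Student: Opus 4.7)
The plan is to verify the two conditions of \cref{lem:supp_crit} directly for the morphism $p = \iota$ itself. Let $\iota \colon T \to X$ be a deterministic split monomorphism with left inverse $\pi \colon X \to T$.

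First, $\iota$ is trivially absolutely bicontinuous with respect to itself by reflexivity of $\gg$, so in particular $\iota \ll \iota$ holds. Since $\iota$ is assumed to be a deterministic monomorphism, the first hypothesis of \cref{lem:supp_crit} is satisfied.

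Second, I need to show that any $f \colon Z \to X$ with $f \ll \iota$ factors through $\iota$. This is exactly the content of \cref{lem:ac_mon_factor} applied with $p \coloneqq f$: the assumption $\iota \gg f$ yields the equation $f = \iota \comp \pi \comp f$, which expresses $f$ as a factorization through $\iota$ via the morphism $\pi \comp f \colon Z \to T$. By \cref{lem:supp_crit}, this shows that $\iota$ is a support inclusion for itself.

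Alternatively, and equivalently, one could derive the result as a special case of \cref{prop:support_pull} \ref{it:supp_split_epi_1} applied to $p = \id_T$: the identity is trivially its own support inclusion, and composing on the left with $\iota$ yields $\suppinc{\iota} = \iota \comp \id_T = \iota$. There is no substantial obstacle here since the hard work has already been done in \cref{lem:ac_mon_factor} and \cref{lem:supp_crit}.
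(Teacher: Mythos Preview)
Your proof is correct and follows exactly the two routes the paper itself mentions: the paper says this is ``an obvious consequence of \cref{lem:supp_crit} that also arises as a special case of \cref{prop:support_pull} by taking $p$ to be the identity morphism.'' You have simply filled in the details of the first route (via \cref{lem:ac_mon_factor}) and restated the second.
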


\begin{example}
	If an object $X$ has a distribution object $PX$ \cite{fritz2023representable}, then we have a deterministic split monomorphism $\delta_X \colon X \to PX$ with left inverse given by the sampling map ${\samp_X \colon PX \to X}$.
	Thus, $\delta_X$ is its own support inclusion.
	On the other hand, the support inclusion of $\samp_X$ is given by $\id_X$ thanks to \cref{ex:supp_split_epi}.
\end{example}

In general, not every deterministic monomorphism is a support inclusion; \cref{prop:coregular_supports} will provide counterexamples.
However, the following holds.
\begin{lemma}\label{lem:factoring_suppincs}
	Let $\cC$ be a causal Markov category.
	Consider two arbitrary morphisms $p\colon A \to X$ and $q\colon B \to X$ having supports, and let $\iota\colon \Supp{p}\to \Supp{q}$ be a morphism satisfying
	\begin{equation}\label{eq:factoring_suppincs}
		\suppinc{q}\comp \iota = \suppinc{p}.
	\end{equation}
	Then $\iota$ is its own support inclusion: $\iota=\suppinc{\iota}$.
\end{lemma}
\begin{proof}
	It follows from a direct check that $\iota$ is a deterministic monomorphism. 
	It then suffices to show that $\iota$ is a pre-support, as we can then reapply the proof to $\id \otimes \iota$.
	We want to show that $f \ll \iota$ implies that $f$ factors across $\iota$. 
	By  \cref{lem:ac_mon_comp}, $\suppinc{q} \comp f \ll \suppinc{q} \comp \iota = \suppinc{p}$. 
	Then, there exists $\hat{f}$ such that $\suppinc{q}\comp f = \suppinc{p} \comp \hat{f} = \suppinc{q} \comp \iota \comp \hat{f}$.
	Since $\suppinc{q}$ is a monomorphism, we conclude that $f = \iota \comp \hat{f}$.
	If $f$ is deterministic, then $\hat{f}$ is also deterministic, since $\iota$ is a deterministic monomorphism.
	Hence, the statement is shown.
\end{proof}

\begin{corollary}
	\label{cor:support_inclusion_self}
	Every support inclusion is its own support inclusion.
\end{corollary}

\begin{proof}
	From \cref{lem:factoring_suppincs} by taking $q = \suppinc{q} = \id_X$.
\end{proof}

Although properties of cartesian monoidal categories viewed as Markov categories are usually trivial, the existence of supports is a non-trivial requirement. 

\begin{proposition}
	\label{prop:coregular_supports}
	Let $\cC$ be a coregular category\footnote{A category $\cC$ is \emph{coregular} if $\cC^\op$ is a regular category in the standard sense, meaning that $\cC$ has all finite colimits, every morphism factors into an epimorphism followed by a regular monomorphism, and this factorization is stable under pushout.} with products that preserve regular monomorphisms.
	Then every morphism $p \colon A \to X$ has a support determined by the image factorization
	\begin{equation}
		\begin{tikzcd}
			A \ar[rr, bend left, "p"] \ar[r, two heads] & \Supp{p} \ar[r, hook, "\suppinc{p}"'] & X
		\end{tikzcd}
	\end{equation}
	where $\suppinc{}$ is a regular monomorphism.
\end{proposition}

\begin{proof}
	We verify the conditions of \cref{lem:supp_crit}.
	The assumption that products of regular monomorphisms are again regular monomorphisms implies in particular that $\id_W \times \suppinc{}$ is a monomorphism again.

	The regular monomorphism $\suppinc{p}$ is the equalizer of the cokernel pair $X \rightrightarrows X +_A X$.
	Combining this with \cref{ex:ac_cartesian} shows that $p \ll q$ is equivalent to $\id_W \times p$ factoring across $\suppinc{\id_W \times q}$ for all $W$.
	Since $\suppinc{\id_W \times p} = \id_W \times \suppinc{p}$ by the stability of regular monomorphisms under products, 
	we conclude that $p \ll q$ holds if and only if $p$ factors across $\suppinc{q}$.
	In particular, we have $\suppinc{p} \ll p$.

	Finally, let $f \colon Z \to W \times X$ be any morphism with $f \ll \id_W \times p$.
	Then $f$ factors across $\suppinc{\id_W \times p} = \id_W \times \suppinc{p}$, as was to be shown.
\end{proof}

Another question is whether supports interact well with the monoidal structure.
This is indeed the case, as we now prove.
\begin{theorem}\label{thm:supp_multiplicativity}
	Let $p \colon A \to X$ and $q\colon B \to Y$ be two morphisms in a Markov category $\cC$. If $p$, $q$ and $p\otimes q$ have supports, then we have $\Supp{p\otimes q}\cong \Supp{p} \otimes \Supp{q}$, and $\suppinc{p} \otimes \suppinc{q}$ is a support inclusion for $p\otimes q$.
\end{theorem}
\begin{proof}
	To show the statement, we construct an explicit morphism $f \colon \Supp{p}\otimes \Supp{q} \to \Supp{p\otimes q}$ and its inverse, which will respect the support inclusions by construction.

	By \cref{lem:inc_sim_p}, we have $p \acsim \suppinc{p}$ and $q \acsim \suppinc{q}$. By \cref{prop:ac_tensor}, we conclude that $p \otimes q \acsim \suppinc{p} \otimes \suppinc{q}$.
	In particular, by the factorization property \eqref{eq:supp_crit} of the support $\Supp{p\otimes q}$ of $p\otimes q$, we obtain a morphism $f\colon \Supp{p}\otimes \Supp{q} \to \Supp{p\otimes q}$ satisfying 
	\begin{equation}\label{eq:f_supp_pq}
		{%
			\tikzstyle{every picture}=[tikzfig]%
			\begin{tikzpicture}
				\begin{pgfonlayer}{nodelayer}
					\node [style=none] (409) at (-2.75, 0.5) {};
					\node [style=none] (410) at (-3.25, 1.75) {};
					\node [style=none] (411) at (-2.75, 0) {};
					\node [style=none] (412) at (-2.75, -0.75) {};
					\node [style=none] (413) at (-3.25, 0.75) {};
					\node [style=none] (414) at (-3.25, -0.75) {};
					\node [style=none] (415) at (-2.25, -0.75) {};
					\node [style=none] (416) at (-3.25, -1.75) {};
					\node [style=none] (417) at (-2.25, -1.75) {};
					\node [style=none] (418) at (-2.25, 1.75) {};
					\node [style=none] (419) at (-2.25, 0.75) {};
					\node [style=none] (420) at (2.25, 1.75) {};
					\node [style=none] (421) at (2.25, -1.25) {};
					\node [style=none] (422) at (2.25, -1.25) {};
					\node [style=none] (423) at (4, -1.25) {};
					\node [style=none] (424) at (2.25, -1.75) {};
					\node [style=none] (425) at (4, -1.75) {};
					\node [style=none] (426) at (0, 0) {$=$};
					\node [style=none] (427) at (4, 1.75) {};
					\node [style=none] (428) at (4, -1.25) {};
					\node [style=morphism] (429) at (2.25, 0) {$\suppinc{p}$};
					\node [style=morphism] (430) at (4, 0) {$\suppinc{q}$};
					\node [style=morphism] (431) at (-2.75, -0.75) {$\,\;\; f \,\;\;$};
					\node [style=morphism] (432) at (-2.75, 0.75) {$\,\suppinc{p\otimes q}\,$};
					\node [style=none] (433) at (-3.25, 2.25) {$X$};
					\node [style=none] (434) at (-2.25, 2.25) {$Y$};
					\node [style=none] (435) at (-3.25, -2.25) {$\Supp{p}$};
					\node [style=none] (436) at (-2.25, -2.25) {$\Supp{q}$};
					\node [style=none] (437) at (2.25, -2.25) {$\Supp{p}$};
					\node [style=none] (438) at (4, -2.25) {$\Supp{q}$};
					\node [style=none] (439) at (2.25, 2.25) {$X$};
					\node [style=none] (440) at (4, 2.25) {$Y$};
				\end{pgfonlayer}
				\begin{pgfonlayer}{edgelayer}
					\draw (411.center) to (409.center);
					\draw (413.center) to (410.center);
					\draw (411.center) to (412.center);
					\draw (414.center) to (416.center);
					\draw (415.center) to (417.center);
					\draw (419.center) to (418.center);
					\draw (421.center) to (420.center);
					\draw (422.center) to (424.center);
					\draw (423.center) to (425.center);
					\draw (428.center) to (427.center);
				\end{pgfonlayer}
			\end{tikzpicture}
		}%
	\end{equation} 
	To construct its inverse, note that we have $\suppinc{p\otimes q} \acsim p \otimes q \ll p\otimes \id_Y \acsim \suppinc{p}\otimes \id_Y$ by \cref{lem:inc_sim_p,prop:ac_tensor}. 
	We thus obtain $g\colon \Supp{p\otimes q} \to \Supp{p} \otimes Y$ from the factorization property of the support of $p$. 
	Symmetrically, we can define $h\colon \Supp{p\otimes q} \to X\otimes \Supp{q}$, and these morphisms satisfy 
	\begin{equation}\label{eq:gh_supp_pq}
		{%
			\tikzstyle{every picture}=[tikzfig]%
			\begin{tikzpicture}
				\begin{pgfonlayer}{nodelayer}
					\node [style=none] (237) at (8.5, -1) {};
					\node [style=none] (238) at (-3, 1.75) {};
					\node [style=none] (244) at (-3, -0.75) {};
					\node [style=morphism] (247) at (-2.5, -0.75) {$\,\;\;g\,\;\;$};
					\node [style=none] (248) at (-2, -0.75) {};
					\node [style=none] (249) at (-2, 1.75) {};
					\node [style=none] (250) at (9, 1.75) {};
					\node [style=none] (252) at (9, -0.75) {};
					\node [style=morphism] (253) at (8.5, -0.75) {$\,\;\;h\,\;\;$};
					\node [style=none] (254) at (8, -0.75) {};
					\node [style=none] (255) at (8, 1.75) {};
					\node [style=morphism] (281) at (-3, 0.75) {$\suppinc{p}$};
					\node [style=morphism] (282) at (9, 0.75) {$\suppinc{q}$};
					\node [style=none] (304) at (0, 0) {$=$};
					\node [style=none] (305) at (3, -0.25) {};
					\node [style=none] (307) at (2.5, 1.75) {};
					\node [style=none] (308) at (3, -1.75) {};
					\node [style=none] (310) at (2.5, 0) {};
					\node [style=none] (325) at (3.5, 1.75) {};
					\node [style=none] (326) at (3.5, 0) {};
					\node [style=morphism] (349) at (3, 0) {$\,\suppinc{p\otimes q}\,$};
					\node [style=none] (350) at (8.5, -1.75) {};
					\node [style=none] (351) at (-2.5, -1) {};
					\node [style=none] (352) at (-2.5, -1.75) {};
					\node [style=none] (353) at (6, 0) {$=$};
					\node [style=none] (354) at (-3, 2.25) {$X$};
					\node [style=none] (355) at (8, 2.25) {$X$};
					\node [style=none] (356) at (2.5, 2.25) {$X$};
					\node [style=none] (357) at (-2, 2.25) {$Y$};
					\node [style=none] (358) at (3.5, 2.25) {$Y$};
					\node [style=none] (359) at (9, 2.25) {$Y$};
					\node [style=none] (360) at (-2.5, -2.25) {$\Supp{p\otimes q}$};
					\node [style=none] (361) at (8.5, -2.25) {$\Supp{p\otimes q}$};
					\node [style=none] (362) at (3, -2.25) {$\Supp{p\otimes q}$};
				\end{pgfonlayer}
				\begin{pgfonlayer}{edgelayer}
					\draw (244.center) to (238.center);
					\draw (249.center) to (248.center);
					\draw (252.center) to (250.center);
					\draw (255.center) to (254.center);
					\draw (308.center) to (305.center);
					\draw (310.center) to (307.center);
					\draw (326.center) to (325.center);
					\draw (237.center) to (350.center);
					\draw (351.center) to (352.center);
				\end{pgfonlayer}
			\end{tikzpicture}
		}%
	\end{equation}
	We now claim that the inverse of $f$ is given by 
	\begin{equation}\label{eq:supp_multiplicativity3}
		{%
			\tikzstyle{every picture}=[tikzfig]%
			\begin{tikzpicture}
				\begin{pgfonlayer}{nodelayer}
					\node [style=none] (236) at (-1.25, 0.5) {};
					\node [style=none] (237) at (1.25, 0.5) {};
					\node [style=none] (238) at (-1.75, 2.25) {};
					\node [style=none] (239) at (-1.75, 2.75) {$\Supp{p}$};
					\node [style=bn] (240) at (0, -0.5) {};
					\node [style=none] (241) at (0, -1.5) {};
					\node [style=none] (242) at (0, -2) {$\Supp{p\otimes q}$};
					\node [style=none] (244) at (-1.75, 0.75) {};
					\node [style=none] (248) at (-0.75, 0.75) {};
					\node [style=bn] (249) at (-0.75, 1.75) {};
					\node [style=none] (250) at (1.75, 2.25) {};
					\node [style=none] (251) at (1.75, 2.75) {$\Supp{q}$};
					\node [style=none] (252) at (1.75, 0.75) {};
					\node [style=none] (254) at (0.75, 0.75) {};
					\node [style=bn] (255) at (0.75, 1.75) {};
					\node [style=morphism] (256) at (-1.25, 0.75) {$\,\;\;g\,\;\;$};
					\node [style=morphism] (257) at (1.25, 0.75) {$\,\;\;h\,\;\;$};
				\end{pgfonlayer}
				\begin{pgfonlayer}{edgelayer}
					\draw [in=-90, out=165] (240) to (236.center);
					\draw [in=-90, out=15] (240) to (237.center);
					\draw (244.center) to (238.center);
					\draw (240) to (241.center);
					\draw (249) to (248.center);
					\draw (252.center) to (250.center);
					\draw (255) to (254.center);
				\end{pgfonlayer}
			\end{tikzpicture}
		}%
	\end{equation}
	This follows by a direct computation as follows.
	Using \cref{eq:f_supp_pq,eq:gh_supp_pq} above and the fact that $\suppinc{p}\otimes \suppinc{q}$ and $\suppinc{p\otimes q}$ are monomorphisms\footnote{Monicity of the former follows from the fact that both $\suppinc{p}\otimes \id_Y$ and $\id_{\Supp{p}}\otimes \suppinc{q}$ are monomorphisms.}:
	On the one hand, we have 
	\begin{equation}
		{%
			\tikzstyle{every picture}=[tikzfig]%
			\begin{tikzpicture}
				\begin{pgfonlayer}{nodelayer}
					\node [style=none] (236) at (-5, 0.25) {};
					\node [style=none] (237) at (-2.5, 0.25) {};
					\node [style=none] (238) at (-5.5, 3.25) {};
					\node [style=bn] (240) at (-3.75, -0.75) {};
					\node [style=none] (241) at (-3.75, -2) {};
					\node [style=none] (244) at (-5.5, 0.5) {};
					\node [style=morphism] (247) at (-5, 0.5) {$\,\;\;g\,\;\;$};
					\node [style=none] (248) at (-4.5, 0.5) {};
					\node [style=bn] (249) at (-4.5, 1.5) {};
					\node [style=none] (250) at (-2, 3.25) {};
					\node [style=none] (252) at (-2, 0.5) {};
					\node [style=morphism] (253) at (-2.5, 0.5) {$\,\;\;h\,\;\;$};
					\node [style=none] (254) at (-3, 0.5) {};
					\node [style=bn] (255) at (-3, 1.5) {};
					\node [style=none] (257) at (-4.25, -2) {};
					\node [style=none] (259) at (-3.25, -2) {};
					\node [style=none] (260) at (-4.25, -3.25) {};
					\node [style=none] (261) at (-3.25, -3.25) {};
					\node [style=morphism] (281) at (-5.5, 2) {$\suppinc{p}$};
					\node [style=morphism] (282) at (-2, 2) {$\suppinc{q}$};
					\node [style=none] (283) at (2.5, 0.5) {};
					\node [style=none] (284) at (5, 0.5) {};
					\node [style=none] (285) at (2, 3.25) {};
					\node [style=bn] (286) at (3.75, -0.5) {};
					\node [style=none] (287) at (3.75, -1.75) {};
					\node [style=none] (288) at (2, 1) {};
					\node [style=morphism] (289) at (2.5, 1) {$\,\suppinc{p\otimes q}\,$};
					\node [style=none] (290) at (3, 1) {};
					\node [style=bn] (291) at (3, 2.25) {};
					\node [style=none] (292) at (5.5, 3.25) {};
					\node [style=none] (293) at (5.5, 1) {};
					\node [style=none] (295) at (4.5, 1) {};
					\node [style=bn] (296) at (4.5, 2.25) {};
					\node [style=none] (297) at (3.25, -1.75) {};
					\node [style=none] (299) at (4.25, -1.75) {};
					\node [style=none] (300) at (3.25, -3.25) {};
					\node [style=none] (301) at (4.25, -3.25) {};
					\node [style=none] (304) at (0, 0) {$=$};
					\node [style=none] (305) at (10.75, 0.75) {};
					\node [style=none] (307) at (10.25, 3.25) {};
					\node [style=none] (308) at (10.75, 0.25) {};
					\node [style=none] (309) at (10.75, -1) {};
					\node [style=none] (310) at (10.25, 1) {};
					\node [style=none] (319) at (10.25, -1) {};
					\node [style=none] (321) at (11.25, -1) {};
					\node [style=none] (322) at (10.25, -3.25) {};
					\node [style=none] (323) at (11.25, -3.25) {};
					\node [style=none] (324) at (7.5, 0) {$=$};
					\node [style=none] (325) at (11.25, 3.25) {};
					\node [style=none] (326) at (11.25, 1) {};
					\node [style=none] (328) at (15.25, 3.25) {};
					\node [style=none] (331) at (15.25, -1.25) {};
					\node [style=none] (333) at (15.25, -1.25) {};
					\node [style=none] (335) at (17, -1.25) {};
					\node [style=none] (336) at (15.25, -3.25) {};
					\node [style=none] (337) at (17, -3.25) {};
					\node [style=none] (338) at (13.25, 0) {$=$};
					\node [style=none] (339) at (17, 3.25) {};
					\node [style=none] (340) at (17, -1.25) {};
					\node [style=morphism] (341) at (15.25, 0) {$\suppinc{p}$};
					\node [style=morphism] (342) at (17, 0) {$\suppinc{q}$};
					\node [style=morphism] (344) at (5, 1) {$\,\suppinc{p\otimes q}\,$};
					\node [style=morphism] (346) at (-3.75, -2) {$\,\;\; f \,\;\;$};
					\node [style=morphism] (347) at (3.75, -1.75) {$\,\;\; f \,\;\;$};
					\node [style=morphism] (348) at (10.75, -1) {$\,\;\; f \,\;\;$};
					\node [style=morphism] (349) at (10.75, 1) {$\,\suppinc{p\otimes q}\,$};
					\node [style=none] (350) at (-4.25, -3.75) {$\Supp{p}$};
					\node [style=none] (351) at (-3.25, -3.75) {$\Supp{q}$};
					\node [style=none] (352) at (-5.5, 3.75) {$X$};
					\node [style=none] (353) at (-2, 3.75) {$Y$};
					\node [style=none] (354) at (10.25, -3.75) {$\Supp{p}$};
					\node [style=none] (355) at (11.25, -3.75) {$\Supp{q}$};
					\node [style=none] (356) at (15.25, -3.75) {$\Supp{p}$};
					\node [style=none] (357) at (17, -3.75) {$\Supp{q}$};
					\node [style=none] (358) at (3.25, -3.75) {$\Supp{p}$};
					\node [style=none] (359) at (4.25, -3.75) {$\Supp{q}$};
					\node [style=none] (360) at (2, 3.75) {$X$};
					\node [style=none] (361) at (5.5, 3.75) {$Y$};
					\node [style=none] (362) at (10.25, 3.75) {$X$};
					\node [style=none] (363) at (17, 3.75) {$Y$};
					\node [style=none] (364) at (15.25, 3.75) {$X$};
					\node [style=none] (365) at (11.25, 3.75) {$Y$};
				\end{pgfonlayer}
				\begin{pgfonlayer}{edgelayer}
					\draw [in=-90, out=165] (240) to (236.center);
					\draw [in=-90, out=15] (240) to (237.center);
					\draw (244.center) to (238.center);
					\draw (240) to (241.center);
					\draw (249) to (248.center);
					\draw (252.center) to (250.center);
					\draw (255) to (254.center);
					\draw (257.center) to (260.center);
					\draw (259.center) to (261.center);
					\draw [in=-90, out=165] (286) to (283.center);
					\draw [in=-90, out=15] (286) to (284.center);
					\draw (288.center) to (285.center);
					\draw (286) to (287.center);
					\draw (291) to (290.center);
					\draw (293.center) to (292.center);
					\draw (296) to (295.center);
					\draw (297.center) to (300.center);
					\draw (299.center) to (301.center);
					\draw (308.center) to (305.center);
					\draw (310.center) to (307.center);
					\draw (308.center) to (309.center);
					\draw (319.center) to (322.center);
					\draw (321.center) to (323.center);
					\draw (326.center) to (325.center);
					\draw (331.center) to (328.center);
					\draw (333.center) to (336.center);
					\draw (335.center) to (337.center);
					\draw (340.center) to (339.center);
					\draw (283.center) to (289);
					\draw (284.center) to (344);
				\end{pgfonlayer}
			\end{tikzpicture}
		}%
	\end{equation}
	where the first equality uses \eqref{eq:gh_supp_pq}, the second one follows from the fact that $\suppinc{p\otimes q}$ is deterministic, and the last one is just \eqref{eq:f_supp_pq}.
	Since $\suppinc{p}\otimes \suppinc{q}$ is a monomorphism, we can cancel it to obtain that the morphism from \eqref{eq:supp_multiplicativity3} is a left inverse of $f$.

	On the other hand, we also have
	\begin{equation}
		{%
			\tikzstyle{every picture}=[tikzfig]%
			\begin{tikzpicture}
				\begin{pgfonlayer}{nodelayer}
					\node [style=none] (236) at (-5, -1.75) {};
					\node [style=none] (237) at (-2.5, -1.75) {};
					\node [style=none] (238) at (-5.5, 0.5) {};
					\node [style=bn] (240) at (-3.75, -2.75) {};
					\node [style=none] (241) at (-3.75, -3.5) {};
					\node [style=none] (244) at (-5.5, -1.5) {};
					\node [style=morphism] (247) at (-5, -1.5) {$\,\;\;g\,\;\;$};
					\node [style=none] (248) at (-4.5, -1.5) {};
					\node [style=bn] (249) at (-4.5, -0.5) {};
					\node [style=none] (250) at (-2, 0.5) {};
					\node [style=none] (252) at (-2, -1.5) {};
					\node [style=morphism] (253) at (-2.5, -1.5) {$\,\;\;h\,\;\;$};
					\node [style=none] (254) at (-3, -1.5) {};
					\node [style=bn] (255) at (-3, -0.5) {};
					\node [style=none] (304) at (0, 0) {$=$};
					\node [style=morphism] (346) at (-3.75, 0.5) {$\quad\quad\,\;\; f \,\;\;\quad \quad$};
					\node [style=none] (350) at (-3.75, 2) {};
					\node [style=none] (351) at (-4.5, 3.5) {};
					\node [style=none] (352) at (-3.75, 1.5) {};
					\node [style=none] (353) at (-3.75, 0.5) {};
					\node [style=none] (354) at (-4.5, 2.25) {};
					\node [style=none] (357) at (-3, 3.5) {};
					\node [style=none] (358) at (-3, 2.25) {};
					\node [style=morphism] (360) at (-3.75, 2.25) {$\;\suppinc{p\otimes q}\;$};
					\node [style=none] (361) at (2.5, -0.5) {};
					\node [style=none] (362) at (5, -0.5) {};
					\node [style=none] (363) at (2, 3.5) {};
					\node [style=bn] (364) at (3.75, -1.75) {};
					\node [style=none] (365) at (3.75, -3.5) {};
					\node [style=none] (366) at (2, 0) {};
					\node [style=morphism] (367) at (2.5, 0) {$\,\;\;g\,\;\;$};
					\node [style=none] (368) at (3, 0) {};
					\node [style=bn] (369) at (3, 1) {};
					\node [style=none] (370) at (5.5, 3.5) {};
					\node [style=none] (371) at (5.5, 0) {};
					\node [style=morphism] (372) at (5, 0) {$\,\;\;h\,\;\;$};
					\node [style=none] (373) at (4.5, 0) {};
					\node [style=bn] (374) at (4.5, 1) {};
					\node [style=morphism] (377) at (2, 2) {$\suppinc{p}$};
					\node [style=morphism] (378) at (5.5, 2) {$\suppinc{q}$};
					\node [style=none] (379) at (8, 0) {$=$};
					\node [style=none] (380) at (10.5, -0.25) {};
					\node [style=none] (381) at (13, -0.25) {};
					\node [style=none] (382) at (10, 3.5) {};
					\node [style=bn] (383) at (11.75, -1.5) {};
					\node [style=none] (384) at (11.75, -3.5) {};
					\node [style=none] (385) at (10, 0.5) {};
					\node [style=morphism] (386) at (10.5, 0.5) {$\,\suppinc{p\otimes q}\,$};
					\node [style=none] (387) at (11, 0.5) {};
					\node [style=bn] (388) at (11, 2) {};
					\node [style=none] (389) at (13.5, 3.5) {};
					\node [style=none] (390) at (13.5, 0.5) {};
					\node [style=none] (391) at (12.5, 0.5) {};
					\node [style=bn] (392) at (12.5, 2) {};
					\node [style=none] (395) at (18.5, -0.25) {};
					\node [style=none] (396) at (17.75, 3.5) {};
					\node [style=none] (397) at (18.5, -0.75) {};
					\node [style=none] (398) at (18.5, -3.5) {};
					\node [style=none] (399) at (17.75, 0) {};
					\node [style=none] (402) at (15.5, 0) {$=$};
					\node [style=none] (403) at (19.25, 3.5) {};
					\node [style=none] (404) at (19.25, 0) {};
					\node [style=morphism] (405) at (13, 0.5) {$\,\suppinc{p\otimes q}\,$};
					\node [style=morphism] (408) at (18.5, 0) {$\;\,\suppinc{p\otimes q}\;\,$};
					\node [style=none] (409) at (-4.5, 4) {$X$};
					\node [style=none] (410) at (-3, 4) {$Y$};
					\node [style=none] (411) at (2, 4) {$X$};
					\node [style=none] (412) at (5.5, 4) {$Y$};
					\node [style=none] (413) at (10, 4) {$X$};
					\node [style=none] (414) at (19.25, 4) {$Y$};
					\node [style=none] (415) at (17.75, 4) {$X$};
					\node [style=none] (416) at (13.5, 4) {$Y$};
					\node [style=none] (417) at (-3.75, -4) {$\Supp{p\otimes q}$};
					\node [style=none] (418) at (3.75, -4) {$\Supp{p\otimes q}$};
					\node [style=none] (419) at (11.75, -4) {$\Supp{p\otimes q}$};
					\node [style=none] (420) at (18.5, -4) {$\Supp{p\otimes q}$};
				\end{pgfonlayer}
				\begin{pgfonlayer}{edgelayer}
					\draw [in=-90, out=165] (240) to (236.center);
					\draw [in=-90, out=15] (240) to (237.center);
					\draw [in=-90, out=90] (244.center) to (238.center);
					\draw (240) to (241.center);
					\draw (249) to (248.center);
					\draw [in=-90, out=90] (252.center) to (250.center);
					\draw (255) to (254.center);
					\draw (352.center) to (350.center);
					\draw (354.center) to (351.center);
					\draw (352.center) to (353.center);
					\draw (358.center) to (357.center);
					\draw [in=-90, out=165] (364) to (361.center);
					\draw [in=-90, out=15] (364) to (362.center);
					\draw (366.center) to (363.center);
					\draw (364) to (365.center);
					\draw (369) to (368.center);
					\draw (371.center) to (370.center);
					\draw (374) to (373.center);
					\draw [in=-90, out=165] (383) to (380.center);
					\draw [in=-90, out=15] (383) to (381.center);
					\draw (385.center) to (382.center);
					\draw (383) to (384.center);
					\draw (388) to (387.center);
					\draw (390.center) to (389.center);
					\draw (392) to (391.center);
					\draw (397.center) to (395.center);
					\draw (399.center) to (396.center);
					\draw (397.center) to (398.center);
					\draw (404.center) to (403.center);
					\draw (361.center) to (367);
					\draw (362.center) to (372);
					\draw (380.center) to (386);
					\draw (381.center) to (405);
				\end{pgfonlayer}
			\end{tikzpicture}
		}%
	\end{equation}
	by using the same ingredients as above.
	Moreover, $\suppinc{p\otimes q}$ is a monomorphism, and we conclude that $f$ is an isomorphism between $\Supp{p\otimes q}$ and $\Supp{p} \otimes \Supp{q}$.
	It then follows from~\eqref{eq:f_supp_pq} that $\suppinc{p} \otimes \suppinc{q}$ is a support inclusion for $p \otimes q$ as well.
\end{proof}

We finally turn to the functoriality of supports.
Surprisingly, this turns out to be closely related to the causality axiom.

\begin{theorem}[Functoriality of supports]
	\label{thm:supp_functorial}
	For a Markov category $\cC$ in which every morphism has a support, the following are equivalent:
	\begin{enumerate}
		\item \label{it:causal} $\cC$ is causal.
		\item \label{it:supp_functorial} Supports are functorial: 
			Every commutative square 
			\begin{equation}
				\label{eq:support_func_square}
				\begin{tikzcd}
					A \ar[r, "p"] \ar[d, "f"'] & X \ar[d, "g"] \\
					B \ar[r, "q"'] & Y
				\end{tikzcd}
			\end{equation}
			factors into
			\begin{equation}
				\label{eq:supp_mapping}
				\begin{tikzcd}
					A \ar[r, "\suppfactor{p}"] \ar[d, "f"'] & \Supp{p} \ar[d, dashed] \ar[r, "\suppinc{p}"] & X \ar[d, "g"] \\
					B \ar[r, "\suppfactor{q}"'] & \Supp{q} \ar[r, "\suppinc{q}"'] & Y
				\end{tikzcd}
			\end{equation}
			for a (necessarily unique) dashed arrow $\Supp{p} \to \Supp{q}$.
	\end{enumerate}
\end{theorem}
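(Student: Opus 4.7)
My plan is as follows. For the easy direction (i) $\Rightarrow$ (ii), I would construct the dashed arrow via the mapping-in universal property of $\Supp{q}$. Given the commutative square $g \comp p = q \comp f$, consider the composite $g \comp \suppinc{p} \colon \Supp{p} \to Y$. By \cref{lem:inc_sim_p}, $\suppinc{p} \acsim p$, so in particular $\suppinc{p} \ll p$. The causality-based monotonicity of absolute continuity under post-composition (\cref{lem:ac_mon_comp}) then yields $g \comp \suppinc{p} \ll g \comp p = q \comp f$, while \cref{lem:factor_ac} gives $q \comp f \ll q$. Chaining these, $g \comp \suppinc{p} \ll q$, and the universal property of $\suppinc{q}$ produces a unique morphism $\Supp{p} \to \Supp{q}$ making the right square of \eqref{eq:supp_mapping} commute. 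Commutativity of the left square is automatic, since post-composing both candidate paths with the monomorphism $\suppinc{q}$ reduces the equation to the originally given $g \comp p = q \comp f$.

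For the reverse direction (ii) $\Rightarrow$ (i), the plan is to derive the causality axiom by setting up a commutative square of the form \eqref{eq:support_func_square} whose functorial dashed arrow witnesses exactly the conclusion that causality demands. The natural choice is to build $p$ and $q$ from the data of the causality axiom using $\cop$, so that one carries only the ``shared'' information downstream of a fixed morphism $f$ while the other retains an extra branch corresponding to the ``conditioned'' information. The premise of the causality axiom should then manifest as the compatibility making the square commute, and \cref{lem:supp_asfaithful}{\,\textemdash\,}which identifies almost-sure equality with genuine equality after restriction to the support{\,\textemdash\,}would be the key tool for translating between the $\as{}$-equations appearing in causality and the strict equations produced by the dashed arrow.

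The main obstacle will be the reverse direction, specifically pinning down the exact square and verifying that the conclusion extracted from the dashed arrow matches the causality axiom verbatim. Causality is fundamentally about preserving equations when we retain{\,\textemdash\,}rather than discard{\,\textemdash\,}shared information downstream of a morphism, so $p$ and $q$ must be arranged so that their supports are sensitive precisely to this branching structure. I would also expect to invoke the uniqueness of the dashed arrow (from $\suppinc{q}$ being a monomorphism) to pin down its behaviour and recover the precise equation demanded by causality, with \cref{lem:inc_sim_p} providing the bridge back from the support level to the original morphisms.
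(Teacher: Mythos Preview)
Your forward direction \ref{it:causal} $\Rightarrow$ \ref{it:supp_functorial} is correct and matches the paper's argument essentially verbatim: same lemmas, same chain $g \comp \suppinc{p} \ll g \comp p = q \comp f \ll q$, same observation that the left square follows from the right one by monicity of $\suppinc{q}$.

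For the reverse direction your plan is too vague, and the hint about building $p$ and $q$ using $\cop$ points in a more complicated direction than necessary. The paper avoids the original string-diagram form of causality entirely by invoking the equivalent reformulation \cref{lem:as_shift}: it suffices to show that $h_1 \ase{g\comp p} h_2$ implies $h_1 \comp g \ase{p} h_2 \comp g$. This lets one use a very simple square, namely
\[
\begin{tikzcd}
A \ar[r, "p"] \ar[d, "\id_A"'] & X \ar[d, "g"] \\
A \ar[r, "g \comp p"'] & Y
\end{tikzcd}
\]
with no copying anywhere. Functoriality then gives a dashed arrow $\Supp{p} \to \Supp{g \comp p}$ with $\suppinc{g\comp p}$ composed with it equal to $g \comp \suppinc{p}$. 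Now \cref{lem:supp_asfaithful} does exactly the translation you anticipated: the hypothesis $h_1 \ase{g\comp p} h_2$ becomes $h_1 \comp \suppinc{g\comp p} = h_2 \comp \suppinc{g\comp p}$, precomposing with the dashed arrow gives $h_1 \comp g \comp \suppinc{p} = h_2 \comp g \comp \suppinc{p}$, and a second application of \cref{lem:supp_asfaithful} yields $h_1 \comp g \ase{p} h_2 \comp g$. Neither the uniqueness of the dashed arrow nor \cref{lem:inc_sim_p} is needed for this direction; the commutativity of the right square alone suffices. The key step you were missing is the passage through \cref{lem:as_shift}, which strips away the copy structure and makes the relevant square obvious.
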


Note that it is enough for the right square in \cref{eq:supp_mapping} to commute, since then the left square commutes automatically by commutativity of the composite square and the fact that $\suppinc{q}$ is a monomorphism.

\begin{proof}
	\ref{it:causal} $\Rightarrow$ \ref{it:supp_functorial}:
	We construct the dashed arrow from the universal property of $\Supp{q}$ by showing that $g \comp \suppinc{p} \ll q$ holds.
	Using \cref{lem:inc_sim_p} gives $\suppinc{p} \acsim p$, and applying \cref{lem:ac_mon_comp} then yields
	\begin{equation}
		g \comp \suppinc{p} \acsim g \comp p.
	\end{equation}
	Moreover, the assumed commutative diagram and \cref{lem:factor_ac} imply
	\begin{equation}
		g\comp p = q\comp f \ll q,
	\end{equation}
	as was to be shown.
	Therefore, we obtain a unique morphism of type $\Supp{p} \to \Supp{q}$ which makes the right square commute.
	The left square then commutes automatically as noted above.

	\ref{it:supp_functorial} $\Rightarrow$ \ref{it:causal}:
	We will show that the causality axiom holds in the version of \cref{lem:as_shift}. 
	So let $p \colon A \to X$, $g \colon X \to Y$, and $h_1, h_2 \colon Y \to Z$ be such that $h_1 \ase{g\comp p} h_2$.
	Then the assumed functoriality of supports applied to the square
	\begin{equation}
		\begin{tikzcd}
			A \ar[r, "p"] \ar[d, "\id_A"'] & X \ar[d, "g"] \\
			A \ar[r, "g\comp p"'] & Y
		\end{tikzcd}	
	\end{equation}
	gives us a commutative diagram
	\begin{equation}
		\begin{tikzcd}
			\Supp{p} \ar[r, hookrightarrow, "\suppinc{p}"] \ar[d, dashed] & X \ar[d, "g"] \\
			\Supp{g \comp p} \ar[r, hookrightarrow, "\suppinc{g\comp p}"'] & Y
		\end{tikzcd}
	\end{equation}
	By \cref{lem:supp_asfaithful}, the assumption $h_1 \ase{g\comp p} h_2$ is equivalent to ${h_1 \comp \suppinc{g \comp p}} = h_2 \comp \suppinc{g\comp p}$.
	But then the above diagram implies that we also have $h_1 \comp g \comp \suppinc{p} = h_2 \comp g \comp \suppinc{p}$.
	Thus we obtain $h_1\comp g \ase{p} h_2\comp g$ by \cref{lem:supp_asfaithful} again, which completes the proof.
\end{proof}

The following example shows that supports in $\tychstoch$ are functorial.
By \cref{thm:supp_functorial}, this proves that $\tychstoch$ is a causal Markov category. 
\begin{example}\label{ex:tychstoch_supp}
	$\tychstoch$ has supports by \cref{ex:top_support}.
	To show that they are indeed functorial, let us consider the topological support of the composite $g\comp p$ of two stochastic maps $p \colon A \to X$ and $g \colon X \to Y$. 
	We claim that the support object is given by\footnotemark{}
	\footnotetext{This equation can also be seen as a manifestation of the fact that the topological support is a morphism of monads \cite{fritz2019support}.}%
	\begin{equation}
		\Supp{g \comp p} = \overline{\bigcup_{x \in \Supp{p}} \Supp{g(\ph|x)}}.
	\end{equation}
	Indeed, if we tentatively write $\Supp{g \comp p}$ as shorthand for the right-hand side, then we have
	\begin{equation}
		\int_{x \in X} g(\Supp{g \comp p}|x) \, p(dx|a)
		\ge \int_{x \in \Supp{p}} g(\Supp{g \comp p}|x) \, p(dx|a)
		= \int_{x \in \Supp{p}} p(dx|a)
		= p(\Supp{p}| a)
		= 1,
	\end{equation}
	where the first equality holds by $\Supp{g \comp p} \supseteq \Supp{g(\ph|x)}$ for all $x \in \Supp{p}$.
	Conversely, if $C \subseteq Y$ is a closed set of full measure, meaning 
	\begin{equation}
		\int_{x \in X} g(C|x) \, p(dx|a) = 1,
	\end{equation}
	then we need to show $\Supp{g \comp p} \subseteq C$, or equivalently $\Supp{g(\ph|x)} \subseteq C$ for all $x \in \Supp{p}$.
	But this in turn amounts to showing $g(C|x) = 1$ for all $x \in \Supp{p}$.
	By assumption, we know that the integrand $g(C|x)$ is $p(\ph|a)$-almost everywhere equal to $1$.
	But since $g(C|\ph)$ is an upper semi-continuous function on $X$, the set where it is equal to $1$ is closed, and hence contains $\Supp{p}$.
	This shows the required $g(C|x) = 1$ for all $x \in \Supp{p}$.
	Hence the above $\Supp{g \comp p}$ is indeed the topological support of $g\comp p$.

	Equipped with this description of the topological support of a composite morphism, we can show that supports in $\tychstoch$ are functorial.
	To show the desired factorization, we need to prove $g(\Supp{q}|x) = 1$ for all $x \in \Supp{p}$ given a commutative square as in \eqref{eq:support_func_square}.
	This is an immediate consequence of
	\begin{equation}
		\Supp{q} \supseteq \Supp{q\comp f} = \Supp{g\comp p} \supseteq \Supp{g(\ph|x)} \qquad \forall x  \in \Supp{p},
	\end{equation}
	where the first containment also follows by the formula for supports of a composite derived above, but now applied to the composite $q \comp f$.

	In conclusion, supports in $\tychstoch$ are functorial, and \cref{thm:supp_functorial} implies that $\tychstoch$ is causal.
\end{example}

\subsection{Support completeness and the free regular support completion}\label{sec:support_completeness}

As we have seen, the Markov categories $\finstoch$, $\tychstoch$ and $\setmulti$ all have supports.
Can we expect supports in measure-theoretic probability?
How common are supports in general?
In this subsection, we consider these questions and show that supports in measure-theoretic probability (in $\borelstoch$ specifically) are quite rare.
First, we start with a necessary condition for any morphism to have a support.

\begin{lemma}\label{lem:suppdominance}
	If a morphism $p \colon A \to X$ has a support, then it satisfies $\cop \comp p \ll p\otimes p$.
\end{lemma}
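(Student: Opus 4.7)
The plan is to combine absolute bicontinuity of $p$ with its support inclusion (established in \cref{lem:inc_sim_p}) with the tensor compatibility of $\gg$ (established in \cref{prop:ac_tensor}), and then exploit the fact that $\suppinc{}$ is deterministic in order to have $\cop \comp p$ factor through $\suppinc{} \otimes \suppinc{}$.

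In more detail, I would proceed as follows. First, by \cref{lem:inc_sim_p} we have $\suppinc{} \acsim p$. Since absolute bicontinuity is preserved under the monoidal product by \cref{prop:ac_tensor}, this yields
\begin{equation*}
	\suppinc{} \otimes \suppinc{} \acsim p \otimes p.
\end{equation*}
Second, using the factorization $p = \suppinc{} \comp \suppfactor{p}$ from \cref{eq:supp_inc_factor} together with the determinism of $\suppinc{}$, which expresses that $\cop_X \comp \suppinc{} = (\suppinc{} \otimes \suppinc{}) \comp \cop_S$, we obtain
\begin{equation*}
	\cop_X \comp p \,=\, \cop_X \comp \suppinc{} \comp \suppfactor{p} \,=\, (\suppinc{} \otimes \suppinc{}) \comp \cop_S \comp \suppfactor{p}.
\end{equation*}
Hence $\cop_X \comp p$ factors through $\suppinc{} \otimes \suppinc{}$, so by \cref{lem:factor_ac} we conclude
\begin{equation*}
	\suppinc{} \otimes \suppinc{} \,\gg\, \cop_X \comp p.
\end{equation*}
Combining this with $p \otimes p \gg \suppinc{} \otimes \suppinc{}$ from the bicontinuity established above, and using transitivity of $\gg$, gives the desired $p \otimes p \gg \cop_X \comp p$.

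There is no real obstacle in this plan: the proof is essentially a bookkeeping exercise that invokes three earlier results (\cref{lem:inc_sim_p}, \cref{prop:ac_tensor}, \cref{lem:factor_ac}) plus the defining property of deterministic morphisms, so the only subtle point is to notice that determinism is exactly what one needs in order to pull $\cop$ past $\suppinc{}$ and thus exhibit the required factorization.
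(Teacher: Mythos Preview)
Your proof is correct and follows essentially the same route as the paper: both arguments combine \cref{lem:inc_sim_p}, \cref{prop:ac_tensor}, \cref{lem:factor_ac}, and the determinism of $\suppinc{}$. The only cosmetic difference is that the paper obtains $\cop \comp p \acsim \cop \comp \suppinc{}$ by invoking \cref{lem:ac_mon_comp_split_mono} (using that $\cop$ is a deterministic split monomorphism), whereas you sidestep that lemma by directly substituting the support factorization $p = \suppinc{} \comp \suppfactor{p}$ to get the equality $\cop_X \comp p = (\suppinc{} \otimes \suppinc{}) \comp \cop_S \comp \suppfactor{p}$; your variant is marginally more elementary but otherwise the two arguments are interchangeable.
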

\begin{proof}
	Whenever $p$ has a support, $p \acsim \suppinc{}$ holds by \cref{lem:inc_sim_p}. 
	Therefore, we have
	\begin{equation}
		\cop \comp p \acsim \cop \comp \suppinc{} = (\suppinc{} \otimes \suppinc{})\comp \cop \ll \suppinc{} \otimes \suppinc{} \acsim p \otimes p,
	\end{equation}
	where we use first
	\begin{itemize}[noitemsep]
		\item \cref{lem:ac_mon_comp_split_mono} and the fact that $\cop$ is a deterministic split monomorphism, then
		\item the fact that $\suppinc{}$ is deterministic, followed by
		\item \cref{lem:factor_ac} with $h$ and $p$ from the lemma given by $\suppinc{} \otimes \suppinc{}$ and $\cop$ respectively, and finally
		\item \cref{prop:ac_tensor} in the last step.
			\qedhere
	\end{itemize}
\end{proof}

This property of morphisms with a support is important enough to give it its own name, the choice of which we justify in \cref{ex:atomic}.
\begin{definition}
	\label{def:atomic}
	A morphism $p$ in a Markov category is \newterm{atomic} if it satisfies ${\cop \comp p \ll p \otimes p}$.
\end{definition}

\begin{example}\label{ex:atomic}
	The Lebesgue measure $\lambda \colon I \to [0,1]$ in $\borelstoch$ is not atomic, since $\cop \comp \lambda$ is the Lebesgue measure on the diagonal of the unit square $[0,1]^2$, while $\lambda \otimes \lambda$ is the Lebesgue measure on all of $[0,1]^2$, and the former is not absolutely continuous with respect to the latter (by \cref{ex:abs_cont_stoch}).
	Hence by \cref{lem:suppdominance}, the Lebesgue measure on $[0,1]$ does not have a support.

	The same argument works generally, and shows that a state $I \to X$ in $\borelstoch$ is atomic if and only if it is atomic in the usual sense as a probability measure on $X$ (see the forthcoming \cref{thm:borelstoch_supports}).
\end{example}

\begin{remark}
	\label{rem:det_atomic}
	If $p \acsim q$ holds for any deterministic morphism $q$, then $p$ is an atomic morphism.
	The proof is the same as that of \cref{lem:suppdominance}, but with $q$ in place of $\suppinc{}$.
\end{remark}

On the other hand, since $\finstoch$, $\tychstoch$ and $\setmulti$ have all supports, it follows a posteriori that every morphism in these categories is atomic.
Since typically not every morphism is atomic, the following more restricted existence of supports is the most that one can expect in general.

\begin{definition}
	A Markov category $\cC$ is \newterm{support complete} if every atomic morphism has a support.
\end{definition}

We now turn to a more detailed investigation of supports in measure-theoretic probability, which in particular justifies the term ``atomic'' further.
For a generic morphism $p \colon A \to X$ in $\borelstoch$, let us say that an element $x \in X$ is an \newterm{atom} if there is $a \in A$ such that the conditional probability $p(\{x\} | a)$ is non-zero.
We owe the following auxiliary result to a MathOverflow user.\footnotemark{}
\footnotetext{See \href{https://mathoverflow.net/questions/448485/atoms-for-markov-kernels/448496}{https://mathoverflow.net/questions/448485/atoms-for-markov-kernels/448496} for the relevant answer by Packo.}

\begin{lemma}[Packo]
	\label{lem:atoms_univ}
	For every morphism $p$ in $\borelstoch$, its set of atoms
	\begin{equation}
	\Atoms{} \coloneqq \Set*[\big]{x \in X   \given   \exists \, a \in A  \text{ such that }  p \bigl(\{x\} \big| a \bigr) > 0 }
\end{equation}
is universally measurable.
\end{lemma}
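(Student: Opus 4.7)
The plan is to exhibit $\Atoms{}$ as the projection onto $X$ of a Borel set in the product $X \times A$, and then invoke the classical theorem that analytic subsets of standard Borel spaces are universally measurable. Concretely, define
\[
    B \coloneqq \Set*[\big]{ (x, a) \in X \times A \given p(\{x\} \mid a) > 0 },
\]
so that $\Atoms{} = \pi_X(B)$. Once $B$ is shown to be Borel, $\Atoms{}$ is analytic and therefore universally measurable by a classical theorem (Lusin--Choquet).

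The crux is therefore to show that the function $\phi \colon X \times A \to [0,1]$ defined by $\phi(x,a) \coloneqq p(\{x\} \mid a)$ is Borel measurable. Since $a \mapsto p(\ph\mid a)$ is Borel as a map into $\mathcal{P}(X)$ (with the Borel $\sigma$-algebra of the weak topology, which agrees with the standard Borel structure on $\mathcal{P}(X)$ for standard Borel $X$), it suffices to prove that the evaluation
\[
    \Phi \colon \mathcal{P}(X) \times X \longrightarrow [0,1], \qquad (\mu, x) \longmapsto \mu(\{x\})
\]
is Borel measurable. Then $\phi = \Phi \circ (p(\ph\mid\ph), \id_X)$ is Borel as a composition, and $B = \phi^{-1}((0,1])$ follows.

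To prove Borel measurability of $\Phi$, fix a compatible Polish metric $d$ on $X$ and define, for each $n \geq 1$,
\[
    \Phi_n(\mu, x) \coloneqq \mu\bigl(\overline{B}(x, 1/n)\bigr).
\]
By downward monotone convergence, $\Phi_n(\mu, x) \to \mu(\{x\}) = \Phi(\mu, x)$ pointwise as $n \to \infty$, so it is enough to show each $\Phi_n$ is Borel. I plan to establish that $\Phi_n$ is upper semi-continuous: if $(\mu_k, x_k) \to (\mu, x)$ in the product topology, then for any $\varepsilon > 0$ we eventually have $\overline{B}(x_k, 1/n) \subseteq \overline{B}(x, 1/n + \varepsilon)$, so Portmanteau's theorem for closed sets gives $\limsup_k \mu_k(\overline{B}(x_k, 1/n)) \leq \mu(\overline{B}(x, 1/n + \varepsilon))$, and then letting $\varepsilon \downarrow 0$ yields $\limsup_k \Phi_n(\mu_k, x_k) \leq \Phi_n(\mu, x)$ by outer regularity of the decreasing intersection.

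The main obstacle is this joint measurability step: ensuring that the pointwise shrinking of closed balls interacts correctly with the weak topology on $\mathcal{P}(X)$ requires the Portmanteau-style argument above, which is why the Polish representative of the standard Borel structure is needed (rather than working abstractly). Once $\Phi$ is Borel, the rest is routine: $\phi$ is Borel, $B$ is Borel in $X \times A$, its projection $\Atoms{}$ is analytic, and analytic sets in standard Borel spaces are universally measurable.
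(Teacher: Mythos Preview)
Your argument is correct and follows the same overall architecture as the paper's proof: show that the function $(x,a) \mapsto p(\{x\}\mid a)$ is jointly Borel, deduce that $B$ is Borel, and then invoke the fact that projections of Borel sets are analytic and hence universally measurable.

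The difference lies in how joint measurability is obtained. The paper first reduces to $X = \R$ via Kuratowski's theorem and then uses the CDF trick: the functions $(a,x) \mapsto p([0,x]\mid a)$ and $(a,x) \mapsto p([0,x)\mid a)$ are measurable in $a$ and one-sided continuous in $x$, hence jointly measurable by a standard stochastic-process argument, and their difference is $p(\{x\}\mid a)$. Your route instead factors through the map $a \mapsto p(\ph\mid a) \in \mathcal{P}(X)$ and establishes Borel measurability of $(\mu,x) \mapsto \mu(\{x\})$ directly on a Polish representative of $X$ via Portmanteau and shrinking balls. Your approach is more intrinsic (no reduction to $\R$) and makes explicit the role of the weak topology on $\mathcal{P}(X)$; the paper's approach is more elementary in that it avoids the Portmanteau machinery and relies only on a classical joint-measurability lemma for right-continuous processes. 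Both are perfectly valid and of comparable length.
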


\begin{proof}
	The statement is trivial if $X$ is at most countable, since then every subset is measurable.
	By Kuratowski's theorem, we can assume we have $X = \R$ without loss of generality.

	The function
	\begin{equation}
		\begin{split}
			A \times \R & \longrightarrow [0,1]	\\
			(a, x) & \longmapsto p \bigl( [0,x] \big| a \bigr)
		\end{split}
	\end{equation}
	is measurable in the first argument and right-continuous in the second, which implies that it is jointly measurable.\footnotemark{}
	\footnotetext{This is a standard observation in stochastic process theory, see e.g.\ \cite[Proposition 1.13]{karatzas1991brownian} applied to the constant filtration.}%
	Similarly, also the function
	\begin{equation}
		\begin{split}
			A \times \R & \longrightarrow [0,1]	\\
			(a, x) & \longmapsto p \bigl( [0,x) \big| a \bigr)
		\end{split}
	\end{equation}
	is jointly measurable.
	Their difference is precisely
	\begin{equation}\label{eq:kernel_point}
		\begin{split}
			A \times \R & \longrightarrow [0,1]	\\
			(a, x) & \longmapsto p \bigl( \{x\} \big| a \bigr)
		\end{split}
	\end{equation}
	which is thus jointly measurable as well.
	The set of atoms $\Atoms{}$ is then the projection to $\R$ of the preimage of the set $(0,1]$ under the function from \eqref{eq:kernel_point}.
	As a projection of a measurable set, the set of atoms is therefore universally measurable.
\end{proof}

\Cref{lem:atoms_univ} allows us to consider the conditional probability $p(\Atoms{} | a)$ for every $a \in A$.
Generalizing the notion of an atomic measure, we may then call a Markov kernel $p$ \emph{atomic} if it satisfies $p(\Atoms{}|a) = 1$ for all $a \in A$.
The following theorem establishes that this is consistent with the notion of atomicity from \cref{def:atomic}.

\begin{theorem}
	\label{thm:borelstoch_supports}
	For a morphism $p \colon A \to X$ in $\borelstoch$, consider the following conditions:
	\begin{enumerate}
		\item\label{it:p_supported} $p$ has a support.
		\item\label{it:p_smallest} There is a smallest measurable set $\Supp{} \in \Sigma_X$ with $p(\Supp{}|a) = 1$ for all $a \in A$.
		\item\label{it:p_copydom} $p$ is atomic in the sense of \cref{def:atomic}.
		\item\label{it:p_atomic} $p(\Atoms{} | a) = 1$ for all $a \in A$.
	\end{enumerate}
	Then we have:\footnote{For the equivalence of \ref{it:p_copydom} and \ref{it:p_atomic} in the case of states, see also \cite[Theorem 4.6]{ours_entropy}.}
	\[
		\ref{it:p_supported} \Longleftrightarrow \ref{it:p_smallest} \,\Longrightarrow\, \ref{it:p_copydom} \Longleftrightarrow \ref{it:p_atomic}.
	\]
	The converse implication $\ref{it:p_copydom} \Longrightarrow \ref{it:p_smallest}$ holds if $A$ is at most countable, but not in general.
\end{theorem}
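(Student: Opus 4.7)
The first step is to establish $\ref{it:p_supported} \Leftrightarrow \ref{it:p_smallest}$. For the direction $\ref{it:p_smallest} \Rightarrow \ref{it:p_supported}$, I will take the inclusion $\suppinc{} \colon \Supp{} \hookrightarrow X$ of the smallest measurable full-measure set and verify the criterion of \cref{lem:supp_crit} using the characterization of absolute continuity in $\borelstoch$ from \cref{ex:abs_cont_stoch}. The relation $\suppinc{} \ll p$ holds because, for any $S_0$ with $p(S_0|a) = 0$ for all $a$, the set $\Supp{} \setminus S_0$ still has full $p$-measure, and the minimality of $\Supp{}$ forces $\Supp{} \subseteq \Supp{} \setminus S_0$, i.e., $\Supp{} \cap S_0 = \emptyset$; the factorization property is immediate since $f \ll p$ forces $f(X \setminus \Supp{} | z) = 0$ for all $z$. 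For the converse, given a support inclusion $\suppinc{} \colon S \to X$, the Lusin--Souslin theorem lets me identify $S$ with its Borel image $S' \subseteq X$. Then \cref{lem:inc_sim_p} yields $p(S'|a) = 1$, and for any other measurable $T$ with $p(T|a) = 1$ for all $a$, I factor $p$ as $p = \iota_T \comp p'$ through the inclusion $\iota_T \colon T \hookrightarrow X$, obtaining $p \ll \iota_T$ via \cref{lem:factor_ac}. Thus $\suppinc{} \ll p \ll \iota_T$ by transitivity, and unpacking this via \cref{ex:abs_cont_stoch} on the test set $S_0 := X \setminus T$ yields $S' \subseteq T$, confirming that $S'$ is the smallest.

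The implication $\ref{it:p_smallest} \Rightarrow \ref{it:p_copydom}$ is then immediate from \cref{lem:suppdominance}. Next I turn to $\ref{it:p_copydom} \Leftrightarrow \ref{it:p_atomic}$. For $\ref{it:p_atomic} \Rightarrow \ref{it:p_copydom}$, I argue by contradiction: if a Borel $N \subseteq X \times X$ satisfies $(p \otimes p)(N|a,a') = 0$ for all $(a, a')$, then no diagonal pair $(x, x)$ with $x \in \Atoms{}$ can lie in $N$, since choosing $a_x$ with $p(\{x\}|a_x) > 0$ would give $(p \otimes p)(N|a_x, a_x) \geq p(\{x\}|a_x)^2 > 0$. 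Hence $\{x : (x, x) \in N\} \subseteq X \setminus \Atoms{}$ is $p(\ph|a)$-null for every $a$, which yields $(\cop \comp p)(N|a) = 0$. For $\ref{it:p_copydom} \Rightarrow \ref{it:p_atomic}$, the technical subtlety is that $\Atoms{}$ is only universally measurable by \cref{lem:atoms_univ}, not necessarily Borel. I fix $a_0 \in A$ and use universal measurability to select a Borel $B \supseteq \Atoms{}$ with $p(B \setminus \Atoms{} \,|\, a_0) = 0$. The Borel set $N := \{(x, x) : x \in X \setminus B\}$ then satisfies $(p \otimes p)(N | a, a') = 0$ because $X \setminus B \subseteq X \setminus \Atoms{}$ contains no atoms of any $p(\ph|a)$; applying $\cop \comp p \ll p \otimes p$ yields $p(X \setminus B|a_0) = 0$, hence $p(\Atoms{}|a_0) = 1$.

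For the countable case, $\Atoms{} = \bigcup_{a \in A} \Atoms{a}$ is a countable union of at most countable sets of atoms of $p(\ph|a)$, hence countable and thus Borel. Taking $\Supp{} := \Atoms{}$ works in $\ref{it:p_smallest}$, since any measurable $T$ of full $p$-measure must contain every $x \in \Atoms{}$: if $x \in T^c$ with $p(\{x\}|a_x) > 0$, then $p(\{x\}|a_x) \leq p(X \setminus T|a_x) = 0$, a contradiction.

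Finally, for the counterexample in the uncountable case, let $V \subseteq [0, 1]$ be analytic but not Borel, and write $V = \pi_1(B)$ for some Borel $B \subseteq [0, 1]^2$. I take $A := B$ (standard Borel as a Borel subset of $[0,1]^2$), $X := [0, 1]$, and define $p(\ph \mid (x, y)) := \delta_x$. Then $\Atoms{} = V$ and $p(\Atoms{} | (x, y)) = \delta_x(V) = 1$ since $(x, y) \in B$ forces $x \in V$, so $\ref{it:p_atomic}$ holds. However, any Borel $\Supp{} \supseteq V$ must have $\Supp{} \setminus V$ non-empty (otherwise $V$ would be Borel), and removing any such point yields a strictly smaller Borel set of full $p$-measure, so no smallest one exists and $\ref{it:p_smallest}$ fails. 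The main obstacle throughout is the non-Borelness of $\Atoms{}$, which compels one to pass to Borel approximations when applying the absolute continuity hypothesis in $\ref{it:p_copydom} \Rightarrow \ref{it:p_atomic}$ and also drives the gap that produces the counterexample.
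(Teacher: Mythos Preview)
Your proof is correct and follows the same overall architecture as the paper's: Lusin--Souslin for $\ref{it:p_supported} \Leftrightarrow \ref{it:p_smallest}$, \cref{lem:suppdominance} for the forward implication to $\ref{it:p_copydom}$, and a deterministic morphism with non-Borel image for the counterexample (yours is simply a more explicit realization of the paper's ``any measurable function $\R \to \R$ with non-measurable image'').

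The one place where your argument genuinely diverges is $\ref{it:p_copydom} \Rightarrow \ref{it:p_atomic}$. The paper works directly with the universally measurable set $\Delta \setminus (\Atoms{} \times \Atoms{})$ and invokes the decomposition of each $p(\ph|a)$ into its atomic and diffuse parts to see that $(p \otimes p)$ vanishes there; it then applies the absolute-continuity implication \eqref{eq:abs_cont_stoch} to that set, implicitly extending the implication from Borel to universally measurable test sets. You instead fix $a_0$, choose a Borel envelope $B \supseteq \Atoms{}$ with $p(B \setminus \Atoms{}\,|\,a_0) = 0$, and test against the genuinely Borel set $N = \{(x,x) : x \notin B\}$, computing $(p \otimes p)(N|a,a') = 0$ pointwise via Fubini and the absence of atoms outside $B$. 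Your route avoids both the atomic/diffuse decomposition and the tacit passage to universally measurable test sets, at the cost of the per-$a_0$ approximation step; the paper's route is more uniform in $a$ but leans on slightly more measure-theoretic machinery. Both are sound.
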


\begin{proof}
	If $p$ satisfies property \ref{it:p_supported}, then we can take $\Supp{} \in \Sigma_X$ to be the image of the support inclusion.
	This set is measurable by \cite[Corollary 15.2]{kechris}.
	Applying the universal property of the support of $p$ with respect to the inclusion of any other measurable subset establishes the minimality in \ref{it:p_smallest}.

	Conversely, suppose that \ref{it:p_smallest} holds, and take $\suppinc{} \colon \Supp{} \to X$ to be the inclusion map.
	Clearly every $\id_W \otimes \suppinc{}$ is a deterministic monomorphism and we have $\suppinc{} \ll p$.
	By \cref{lem:supp_crit}, it remains to be shown that if $f \colon Z \to W \otimes X$ satisfies $f \ll \id_W \otimes p$, then it factors across $\id_W \otimes \suppinc{}$.
	Indeed $f \ll \id_W \otimes p$ gives $f(W \times (X \setminus \Supp{})|z) = 0$ for all $z \in Z$, and hence $f$ factors across $\id_W \otimes \suppinc{}$ as required.
	Therefore $p$ has a support.

	The implication from \ref{it:p_supported} to \ref{it:p_copydom} is exactly \cref{lem:suppdominance} applied to $\borelstoch$.

	Next, assume that \ref{it:p_copydom} holds.
	Following the proof of \cite[Theorem 4.6]{ours_entropy}, we write ${\Delta \subseteq X \times X}$ for the diagonal and $\Atoms{}$ for the set of atoms of $p$ as in \cref{lem:atoms_univ}.
	Let us argue that the conditional probability 
	\begin{equation}
		(p \otimes p) \bigl( \Delta \setminus (\Atoms{} \times \Atoms{}) \big| a_1, a_2 \bigr) 
	\end{equation}
	of $\Delta \setminus (\Atoms{} \times \Atoms{})$ given any $a_1, a_2 \in A$ vanishes.
	Indeed, this follows upon decomposing every $p(\ph|a)$ into its atomic and diffuse parts, noting that the atomic part vanishes outside of $\Atoms{}$, and that the product of two diffuse finite measures vanishes on the diagonal.\footnotemark{}
	\footnotetext{This can be shown by decomposing $X$ into pieces of mass below $\eps$ for both measures.}%
	By \cref{eq:abs_cont_stoch} and the assumed property \ref{it:p_copydom}, we therefore obtain
	\begin{equation}
		(\cop \comp  p) \bigl(\Delta \setminus (\Atoms{} \times \Atoms{}) \big| a \bigr) = 0,
	\end{equation}
	or equivalently $p(X \setminus \Atoms{} | a) = 0$, for all $a$, which implies \ref{it:p_atomic}.

	Conversely, suppose that \ref{it:p_atomic} holds, so that we have $p(\Atoms{}|a) = 1$ for all $a \in A$.
	To show $\cop \comp p \ll p \otimes p$, suppose that we have a measurable set $T \in \Sigma_{X \times X}$ satisfying $(p \otimes p)(T | a_1, a_2) = 0$ for all $a_1, a_2 \in A$.
	This implies that $T$ is disjoint from $\Atoms{} \times \Atoms{}$, because if $(x,y) \in T$ with $x,y \in \Atoms{}$, then there are $a_x,a_y \in A$ with $p(\{x\}|a_x) > 0$ and $p(\{y\}|a_y)>0$, contradicting $(p \otimes p)(T | a_x, a_y)=0$.
	In particular, the intersection of $T$ with the diagonal does not contain any atom, and hence $(\cop \comp  p)(T|a) = 0$ holds for all $a \in A$, which gives the desired property \ref{it:p_copydom}.

	For the final statement, we note that there is an implication from \ref{it:p_atomic} to \ref{it:p_smallest} if the set of atoms is measurable.
	In such a case, we can take $\Supp{}$ from \ref{it:p_smallest} to be the set $\Atoms{}$ itself.
	This is guaranteed to be the case if $A$ is countable, since then $\Atoms{}$ is countable as well and hence trivially measurable.

	To see that this implication does not hold for general $A$, consider any measurable function $\R \to \R$ with non-measurable image. 
	Considered as a deterministic morphism $p \colon \R \to \R$ in $\borelstoch$, it is clearly atomic with the set of atoms $\Atoms{}$ being its image.
	If $p$ had a support, then by $\suppinc{} \acsim p$ and \cref{ex:abs_cont_stoch} \ref{it:borelstoch_det_abs_cont}, such support would have to be given by the inclusion map of a non-measurable subset, which is not possible in $\borelstoch$ \cite[Corollary 15.2]{kechris}.
\end{proof}

\begin{corollary}[Supports of measurable maps]
	\label{cor:support_det_borelstoch}
	A deterministic morphism $p$ in $\borelstoch$ has a support if and only if its image (as a measurable map) is measurable, and in this case the support is given by the image.
\end{corollary}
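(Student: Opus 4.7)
The plan is to deduce this from \cref{thm:borelstoch_supports} combined with the characterization of absolute continuity between deterministic morphisms in $\borelstoch$ provided in \cref{ex:abs_cont_stoch}\ref{it:borelstoch_det_abs_cont}, which says that for deterministic $f$ and $g$ one has $g \gg f$ iff $\mathrm{im}(f) \subseteq \mathrm{im}(g)$.

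For the ``if'' direction, assume $p(A) \in \Sigma_X$. Then the inclusion $\iota \colon p(A) \hookrightarrow X$ is a deterministic monomorphism in $\borelstoch$ with $\mathrm{im}(\iota) = \mathrm{im}(p)$, so $\iota \ll p$ by \cref{ex:abs_cont_stoch}\ref{it:borelstoch_det_abs_cont}. Moreover, any $f \colon Z \to X$ satisfying $f \ll p$ has $\mathrm{im}(f) \subseteq \mathrm{im}(p) = \mathrm{im}(\iota)$, so $f$ factors (necessarily uniquely) through $\iota$. By \cref{lem:supp_crit}, $\iota$ is a support inclusion for $p$, and in particular $\Supp{p} = p(A)$.

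For the ``only if'' direction, suppose $p$ has a support with inclusion $\suppinc{} \colon \Supp{} \to X$. By \cref{lem:inc_sim_p}, $\suppinc{} \acsim p$, and applying \cref{ex:abs_cont_stoch}\ref{it:borelstoch_det_abs_cont} in both directions gives $\mathrm{im}(\suppinc{}) = \mathrm{im}(p) = p(A)$. Since $\suppinc{}$ is an injective measurable map between standard Borel spaces, its image is a measurable (in fact Borel) subset of $X$ by \cite[Corollary 15.2]{kechris}; hence $p(A)$ is measurable.

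The only subtle step is invoking the standard Borel fact that an injective Borel map between standard Borel spaces has Borel image, but this is already used in the proof of \cref{thm:borelstoch_supports} and so poses no essential difficulty here; the result is essentially a direct specialization of \ref{it:p_supported}$\Leftrightarrow$\ref{it:p_smallest} in that theorem to the deterministic case, where the smallest measurable set of full measure is forced to equal $p(A)$.
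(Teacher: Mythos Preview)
Your proof is correct and follows essentially the same approach as the paper's, particularly in the ``only if'' direction, which is verbatim the paper's argument. For the ``if'' direction the paper instead appeals to condition~\ref{it:p_smallest} of \cref{thm:borelstoch_supports} (the image being the smallest measurable set of full measure), while you verify the support criterion of \cref{lem:supp_crit} directly; these are equivalent and equally short.

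One small imprecision: in the ``if'' direction you invoke \cref{ex:abs_cont_stoch}\ref{it:borelstoch_det_abs_cont} to conclude that an \emph{arbitrary} $f \colon Z \to X$ with $f \ll p$ satisfies $\mathrm{im}(f) \subseteq \mathrm{im}(p)$. That item, however, is stated only for \emph{deterministic} $f$, and for a general Markov kernel $\mathrm{im}(f)$ is not defined. What you actually need is the general characterization~\eqref{eq:abs_cont_stoch} from \cref{ex:abs_cont_stoch}: taking $S = X \setminus p(A)$, which has $p(S|a) = 0$ for all $a$, gives $f(S|z) = 0$ for all $z$, and hence $f$ factors through the inclusion of $p(A)$. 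With this corrected citation the argument goes through.
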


\begin{proof}
	If $p$ has measurable image, then this image serves as the set $\Supp{}$ in \cref{thm:borelstoch_supports} \ref{it:p_smallest}.
	Conversely, suppose that $p$ has a support.
	Then we have $\suppinc{} \acsim p$ by \cref{lem:inc_sim_p}, which implies $\mathrm{im}(\suppinc{}) = \mathrm{im}(p)$ by \cref{ex:abs_cont_stoch} \ref{it:borelstoch_det_abs_cont}.
	In particular $\mathrm{im}(p)$ is measurable, since $\suppinc{}$ is an injective measurable map and therefore has a measurable image \cite[Corollary 15.2]{kechris}.
\end{proof}

The final statement of \cref{thm:borelstoch_supports}{\,\textemdash\,}or in other words the existence of measurable maps between standard Borel spaces with non-measurable image{\,\textemdash\,}has the following unfortunate consequence.

\begin{corollary}
	$\borelstoch$ is not support complete.
\end{corollary}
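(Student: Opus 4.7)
The strategy is to exhibit a concrete atomic morphism in $\borelstoch$ that has no support, reusing exactly the construction already worked out at the end of the proof of \cref{thm:borelstoch_supports}. The plan is as follows.

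First, I would invoke the classical fact from descriptive set theory that there exists a Borel-measurable function $p \colon \R \to \R$ whose image is not a Borel (equivalently, not a standard Borel) subset of $\R$. A standard witness is the projection of a Borel subset of $\R^2$ onto $\R$, which is in general only analytic, not Borel; this yields a deterministic morphism $p \colon \R \to \R$ in $\borelstoch$ whose image is not measurable in the sense of standard Borel structure.

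Next, I would observe that $p$ is atomic: since $p$ is deterministic, the set of atoms $\Atoms{}$ equals $\mathrm{im}(p)$ and satisfies $p(\Atoms{} \mid a) = 1$ for every $a \in A$, so condition \ref{it:p_atomic} of \cref{thm:borelstoch_supports} holds. By the equivalence $\ref{it:p_atomic} \Leftrightarrow \ref{it:p_copydom}$ proved there, $p$ is atomic in the sense of \cref{def:atomic}.

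Finally, I would argue that $p$ has no support. Suppose, for contradiction, that $\suppinc{} \colon \Supp{} \to \R$ were a support inclusion for $p$. By \cref{lem:inc_sim_p} we have $\suppinc{} \acsim p$, and since both are deterministic, \cref{ex:abs_cont_stoch}\ref{it:borelstoch_det_abs_cont} gives $\mathrm{im}(\suppinc{}) = \mathrm{im}(p)$. But $\suppinc{}$ is an injective morphism of standard Borel spaces, hence its image is measurable by \cite[Corollary 15.2]{kechris}, contradicting the choice of $p$. Therefore $p$ is an atomic morphism without a support, so $\borelstoch$ is not support complete.

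No serious obstacle is expected here; the whole argument has essentially been carried out inside the proof of \cref{thm:borelstoch_supports}, and the corollary is just a repackaging of its last paragraph.
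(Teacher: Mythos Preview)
Your proposal is correct and matches the paper's approach exactly: the corollary is an immediate consequence of the final paragraph of the proof of \cref{thm:borelstoch_supports}, which exhibits a deterministic (hence atomic) morphism $\R \to \R$ with non-measurable image and shows it cannot have a support. You have simply unpacked that argument in slightly more detail than the paper does.
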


In order to address this shortcoming, one may wonder whether it is possible to construct a support complete Markov category that contains $\borelstoch$ as a full Markov subcategory.
This is true, and it is an instance of a more general construction which we call the \emph{free regular support completion}.
This terminology is by analogy with the free regular completion of a category with finite limits~\cite{hu1996free_regular}; the word ``regular'' also reflects \cref{prop:coregular_supports}. 
In light of \cref{thm:suppcomp_universal}, it would be an interesting problem to find an explicit description of the free regular support completion of $\borelstoch$.

\begin{definition}
	Let $\cC$ be a causal Markov category. 
	Its \newterm{free regular support completion} $\suppcomp$ is the causal Markov category described as follows:
	\begin{enumerate}
		\item Objects are pairs $(X,p)$, where $X \in \cC$ is an object and $p$ is any atomic morphism landing in $X$.

		\item Morphisms $(X,p) \to (Y,q)$ are equivalence classes of morphisms $f\colon X \to Y$ (relative to $\as{p}$ equality) satisfying $f \comp p \ll q$.

		\item The Markov category structure is induced from $\cC$.
	\end{enumerate}
\end{definition}

Let us first demonstrate that this construction actually defines a causal Markov category.

\begin{proposition}
	\label{prop:suppcomp}
	$\suppcomp$ is a well-defined causal Markov category, and $\cC$ is naturally identified with the full Markov subcategory whose objects are of the form $(X,\id_X)$.
\end{proposition}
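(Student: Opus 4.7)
The plan is to verify each piece of structure on $\suppcomp$ in turn, observing that the restriction to atomic morphisms as objects is designed precisely to make the copy map a legitimate morphism in $\suppcomp$. First, I would check that the hom-sets are well-defined: for atomic $p\colon A\to X$ and $q\colon B\to Y$, the condition $f\comp p\ll q$ on a representative $f\colon X\to Y$ depends only on the composite $f\comp p$, which is itself invariant under the $\as{p}$-class of $f$ (obtained by marginalizing the second output in \cref{eq:as_eq_new}). Identity on $(X,p)$ is represented by $\id_X$, which trivially satisfies $\id_X\comp p = p\ll p$.

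Next, composition. Given $[f]\colon (X,p)\to(Y,q)$ and $[g]\colon (Y,q)\to(Z,r)$, I would set $[g]\comp[f] := [g\comp f]$. The required absolute continuity $g\comp f\comp p\ll r$ follows by chaining: $g\comp f\comp p\ll g\comp q$ by \cref{lem:ac_mon_comp} (this is the only place where causality of $\cC$ is explicitly invoked), and $g\comp q\ll r$ is part of the data for $[g]$. Well-definedness on $\as{}$-classes uses that post-composition preserves $\as{}$-equality (so $f\ase{p} f'$ yields $g\comp f\ase{p} g\comp f'$), and that $g\ase{q}g'$ together with the standing condition $f'\comp p\ll q$ implies $g\ase{f'\comp p} g'$ directly from the definition of $q\gg f'\comp p$, hence $g\comp f'\ase{p} g'\comp f'$. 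Associativity and unitality are inherited from $\cC$.

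Third, I would endow $\suppcomp$ with monoidal and comonoid structure by defining $(X,p)\otimes(Y,q) := (X\otimes Y,\,p\otimes q)$, $I:=(I,\id_I)$, and using $\cop_X$, $\discard_X$ as the copy and discard morphisms on $(X,p)$. Two nontrivial checks are needed. First, $p\otimes q$ must be atomic: since $\cop_{X\otimes Y}\comp(p\otimes q)$ equals, up to the canonical symmetry of $\cC$, the tensor $(\cop_X\comp p)\otimes(\cop_Y\comp q)$, \cref{prop:ac_tensor} combined with the atomicity of $p$ and $q$ yields $\cop_{X\otimes Y}\comp(p\otimes q)\ll(p\otimes q)\otimes(p\otimes q)$. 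Second, $[\cop_X]\colon(X,p)\to(X,p)\otimes(X,p)$ is a valid morphism iff $\cop_X\comp p\ll p\otimes p$, which is exactly \cref{def:atomic}; the analogous check $\discard_X\comp p\ll \id_I$ is automatic. Well-definedness of $\otimes$ on morphism classes follows from a second application of \cref{prop:ac_tensor}, and all comonoid and monoidal coherence equations transport verbatim from $\cC$ since they already hold at the level of representatives.

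Finally, I would deduce causality of $\suppcomp$ using the shift characterization of \cref{lem:as_shift}: any putative causality equation in $\suppcomp$ with source $(A,p)$ unpacks as an $\as{p}$-equation between composites of representatives in $\cC$, and the causality axiom of $\cC$ applied in this presentation yields precisely the required $\as{}$-equation in $\suppcomp$. For the embedding $\cC\hookrightarrow\suppcomp$ via $X\mapsto(X,\id_X)$, full faithfulness is immediate: $\as{\id_X}$-equality reduces to plain equality, and $f\ll\id_Y$ holds unconditionally, so $\suppcomp((X,\id_X),(Y,\id_Y))=\cC(X,Y)$, and the Markov structure evidently restricts to the one of $\cC$. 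The main obstacle is the atomicity-closure step in the third paragraph; without \cref{prop:ac_tensor} ensuring monotonicity of absolute continuity under $\otimes$, the tensor of two atomic morphisms need not be atomic, and the proposed monoidal structure on $\suppcomp$ would fail to make sense.
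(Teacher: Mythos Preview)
Your proposal is correct and follows essentially the same route as the paper. Two minor points to tighten. First, your parenthetical that \cref{lem:ac_mon_comp} is ``the only place where causality of $\cC$ is explicitly invoked'' is not quite accurate: the step ``$g\ase{f'\comp p} g'$, hence $g\comp f'\ase{p} g'\comp f'$'' is precisely \cref{lem:as_shift}, which is itself equivalent to causality; the paper invokes \cref{lem:as_shift} at exactly this point. Second, for causality of $\suppcomp$, unpacking the \cref{lem:as_shift} criterion in $\suppcomp$ does yield an implication in $\cC$, but it is a \emph{relative} version of causality (the causality diagram with an extra $\as{p}$-layer coming from the quotient defining hom-sets). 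The paper does not derive this directly from \cref{lem:as_shift} but instead cites \cite[Corollary~2.28]{fritz2022dilations}, where this relative form is shown equivalent to ordinary causality; your sketch is correct in spirit but would need either that citation or an explicit argument to be complete. On the positive side, you explicitly verify that $p\otimes q$ is atomic (needed for $(X,p)\otimes(Y,q)$ to be an object of $\suppcomp$), a point the paper's proof leaves implicit.
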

\begin{proof}
	We first show that composition in $\suppcomp$ is well-defined.
	Consider $[f] \colon (X,p) \to (Y,q)$ and $[g] \colon (Y,q) \to (Z,r)$ to be morphisms represented by $f \colon X \to Y$ and $g \colon Y \to Z$.
	To show that $g\comp f$ represents a morphism $(X,p) \to (Z,r)$, note that we have
	\begin{equation}
		(g\comp f) \comp p = g \comp (f \comp p) \ll g \comp q \ll r,
	\end{equation}
	where the second step is by \cref{lem:ac_mon_comp}.
	We also need to show that this composition respects the equivalence classes. 
	To this end, consider a morphism $f' \colon X \to Y$ satisfying $f' \ase{p} f$.
	Then, by \cref{prop:ase_props} \ref{it:ase_post_comp}, we also have $g\comp f \ase{p} g\comp f'$, which amounts to the desired $[g\comp f] = [g\comp f']$.
	Similarly, if $g' \colon Y \to Z$ satisfies $g' \ase{q} g$, then we obtain $g \ase{f \comp p} g'$ by $f \comp p \ll q$.
	Hence the required $g\comp f \ase{p} g'\comp f$ follows by \cref{lem:as_shift}. 

	The tensor product of objects is well-defined because atomic morphisms are closed under tensor products.
	Indeed, if $p \colon A \to X$ and $q \colon B \to Y$ are atomic, then \cref{prop:ac_tensor} gives
	\begin{equation}
		(\cop_X \comp p) \otimes (\cop_Y \comp q) \ll (p \otimes p) \otimes (q \otimes q),
	\end{equation}
	which is precisely $\cop_{X \otimes Y} \comp (p \otimes q) \ll (p \otimes q) \otimes (p \otimes q)$ by the multiplicativity of copying.
	It is clear that the symmetric monoidal structure morphisms of $\cC$ also belong to $\suppcomp$, where $(I, \id_I)$ is the monoidal unit of $\suppcomp$.
	Moreover, the tensor product of two morphisms respects the absolute continuity condition: 
	For $[f] \colon (X,p) \to (Y,q)$ and $[f'] \colon (X',p') \to (Y',q')$, we have
	\begin{equation}
		\label{eq:suppcomp_tensor_ac}
		(f \otimes f') (p \otimes p') = (f \comp p) \otimes (f' \comp p') \ll q \otimes q'
	\end{equation}
	by \cref{prop:ac_tensor} and the assumed $f\comp p \ll q$ and $f'\comp p' \ll q'$, as was to be shown.
	This tensor product is well-defined on equivalence classes by \cref{prop:ase_props}~\ref{it:ase_tensor}.
	Hence by what has been proven so far, $\suppcomp$ is a symmetric monoidal category.
	It is obvious that it is again semicartesian.

	Concerning the Markov category structure, we define the copy morphisms as
	\begin{equation}\label{eq:copy_suppcomp}
		\cop_{(X,p)} \coloneqq [\cop_X] \colon (X,p) \to (X,p) \otimes (X,p).
	\end{equation}
	We emphasize that such a $\cop_{(X,p)}$ is indeed a morphism in $\suppcomp$ because $p$ is atomic. 
	Direct checks show that these morphisms indeed equip $\suppcomp$ with a Markov category structure.

	From this description, it is immediate that we can identify $\cC$ with the full subcategory whose objects are of the form $(X,\id_X)$, and the inclusion functor is a strict Markov functor by construction.

	It remains to be shown that $\suppcomp$ is causal.
	This holds as a special case of the following implication in $\cC$:
	\begin{equation}
		{%
			\tikzstyle{every picture}=[tikzfig]%
			\begin{tikzpicture}
				\begin{pgfonlayer}{nodelayer}
					\node [style=none] (0) at (-8.25, 2.75) {};
					\node [style=none] (1) at (-6.25, 0) {$\ase{p}$};
					\node [style=none] (2) at (-9, -3.25) {$X$};
					\node [style=morphism] (3) at (-9, -1.75) {$f$};
					\node [style=none] (4) at (-9.75, 2.75) {};
					\node [style=none] (5) at (-8.25, 3.25) {$Z$};
					\node [style=none] (6) at (-9, -2.75) {};
					\node [style=none] (7) at (-9.75, 1.5) {};
					\node [style=none] (8) at (-8.25, 1.5) {};
					\node [style=bn] (9) at (-9, 0.5) {};
					\node [style=morphism] (10) at (-9, -0.5) {$g$};
					\node [style=morphism] (11) at (-9.75, 1.75) {$h_1$};
					\node [style=none] (12) at (-9.75, 3.25) {$A$};
					\node [style=none] (13) at (-3.75, -2.75) {};
					\node [style=none] (14) at (-3.75, -3.25) {$X$};
					\node [style=none] (15) at (-3, 2.75) {};
					\node [style=none] (16) at (-3, 1.5) {};
					\node [style=none] (17) at (-4.5, 1.5) {};
					\node [style=bn] (18) at (-3.75, 0.5) {};
					\node [style=none] (19) at (-4.5, 2.75) {};
					\node [style=morphism] (20) at (-3.75, -0.5) {$g$};
					\node [style=morphism] (21) at (-4.5, 1.75) {$h_2$};
					\node [style=none] (22) at (-4.5, 3.25) {$A$};
					\node [style=morphism] (23) at (-3.75, -1.75) {$f$};
					\node [style=none] (24) at (-3, 3.25) {$Z$};
					\node [style=none] (26) at (8.75, 0) {$\ase{p}$};
					\node [style=none] (64) at (0, 0) {$\implies$};
					\node [style=none] (65) at (5.75, 3) {};
					\node [style=none] (67) at (5, -3.5) {$X$};
					\node [style=morphism] (68) at (5, -2) {$f$};
					\node [style=none] (69) at (4.25, 3) {};
					\node [style=none] (70) at (5.75, 3.5) {$Z$};
					\node [style=none] (71) at (4.25, 2) {};
					\node [style=none] (72) at (5.75, 2) {};
					\node [style=bn] (73) at (5, 1) {};
					\node [style=morphism] (74) at (5, 0) {$g$};
					\node [style=morphism] (75) at (4.25, 2) {$h_1$};
					\node [style=none] (76) at (4.25, 3.5) {$A$};
					\node [style=none] (77) at (5, -3) {};
					\node [style=bn] (78) at (5, -1) {};
					\node [style=none] (79) at (6.75, 0.5) {};
					\node [style=none] (80) at (6.75, 3) {};
					\node [style=none] (81) at (6.75, 3.5) {$Y$};
					\node [style=none] (82) at (12.25, 3) {};
					\node [style=none] (83) at (11.5, -3.5) {$X$};
					\node [style=morphism] (84) at (11.5, -2) {$f$};
					\node [style=none] (85) at (10.75, 3) {};
					\node [style=none] (86) at (12.25, 3.5) {$Z$};
					\node [style=none] (87) at (10.75, 2) {};
					\node [style=none] (88) at (12.25, 2) {};
					\node [style=bn] (89) at (11.5, 1) {};
					\node [style=morphism] (90) at (11.5, 0) {$g$};
					\node [style=morphism] (91) at (10.75, 2) {$h_2$};
					\node [style=none] (92) at (10.75, 3.5) {$A$};
					\node [style=none] (93) at (11.5, -3) {};
					\node [style=bn] (94) at (11.5, -1) {};
					\node [style=none] (95) at (13.25, 0.5) {};
					\node [style=none] (96) at (13.25, 3) {};
					\node [style=none] (97) at (13.25, 3.5) {$Y$};
				\end{pgfonlayer}
				\begin{pgfonlayer}{edgelayer}
					\draw [style=morphism] (6.center) to (9);
					\draw [style=morphism, in=-90, out=15] (9) to (8.center);
					\draw [style=morphism, in=-90, out=165] (9) to (7.center);
					\draw [style=morphism] (8.center) to (0.center);
					\draw [style=morphism] (7.center) to (4.center);
					\draw [style=morphism] (16.center) to (15.center);
					\draw [style=morphism, in=-90, out=165] (18) to (17.center);
					\draw [style=morphism, in=-90, out=15] (18) to (16.center);
					\draw [style=morphism] (17.center) to (19.center);
					\draw [style=morphism] (13.center) to (18);
					\draw [style=morphism, in=-90, out=15] (73) to (72.center);
					\draw [style=morphism, in=-90, out=165] (73) to (71.center);
					\draw [style=morphism] (72.center) to (65.center);
					\draw [style=morphism] (71.center) to (69.center);
					\draw [style=morphism] (77.center) to (78);
					\draw [style=morphism, in=-90, out=0] (78) to (79.center);
					\draw (79.center) to (80.center);
					\draw (78) to (74);
					\draw (74) to (73);
					\draw [style=morphism, in=-90, out=15] (89) to (88.center);
					\draw [style=morphism, in=-90, out=165] (89) to (87.center);
					\draw [style=morphism] (88.center) to (82.center);
					\draw [style=morphism] (87.center) to (85.center);
					\draw [style=morphism] (93.center) to (94);
					\draw [style=morphism, in=-90, out=0] (94) to (95.center);
					\draw (95.center) to (96.center);
					\draw (94) to (90);
					\draw (90) to (89);
				\end{pgfonlayer}
			\end{tikzpicture}
		}%
	\end{equation}
	This is exactly \emph{relative causality} in the sense of~\cite[Corollary 2.28]{fritz2022dilations}, which is shown there to be equivalent to causality.
\end{proof}

Before showing that $\suppcomp$ is support complete, we give an explicit description of absolute continuity in this category.
\begin{lemma}\label{lem:abscont_suppcompletion}
	Let $[f]\colon (X,p)\to (Y,q)$ and $[g] \colon (Z,r) \to (Y,q)$ be two generic morphisms in $\suppcomp$. 
	Then $[f] \ll [g]$ holds in $\suppcomp$ if and only if we have $f\comp p \ll g\comp r$ in $\cC$.
\end{lemma}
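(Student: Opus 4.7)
The plan reduces both sides of the biconditional to statements in $\cC$ and bridges them via monotonicity of the tensor product under absolute continuity. The essential bookkeeping is a translation of almost sure equality between the two categories: for morphisms $[g]\colon (Z,r) \to (Y,q)$ and $[\phi], [\psi] \colon (W,w) \otimes (Y,q) \to (V,u)$ in $\suppcomp$ with representatives $\phi, \psi \colon W \otimes Y \to V$ in $\cC$, the equation $[\phi] \ase{[g]} [\psi]$ in $\suppcomp$ is equivalent to $\phi \ase{w \otimes (g \comp r)} \psi$ in $\cC$. This follows by unwinding the defining string diagram of $\as{[g]}$: composition in $\suppcomp$ agrees with composition of representatives in $\cC$, equality of morphisms in $\suppcomp$ is equality modulo $\ase{w \otimes r}$ of representatives in $\cC$, and a routine diagrammatic rewrite absorbs the prefactor $\id_W \otimes g$ into the source so as to produce the stated form.

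For the $(\Leftarrow)$ direction, assume $f\comp p \ll g\comp r$ in $\cC$. Combined with the reflexive inequality $w \ll w$, this gives $w \otimes (f\comp p) \ll w \otimes (g\comp r)$ in $\cC$ by \cref{prop:ac_tensor}. Now for any $(W, w), (V, u)$ and any $[\phi], [\psi] \colon (W,w) \otimes (Y,q) \to (V,u)$ with $[\phi] \ase{[g]} [\psi]$ in $\suppcomp$, the translation yields $\phi \ase{w \otimes (g\comp r)} \psi$ in $\cC$; the upgraded absolute continuity then delivers $\phi \ase{w \otimes (f\comp p)} \psi$, which translates back to $[\phi] \ase{[f]} [\psi]$ in $\suppcomp$. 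Hence $[f] \ll [g]$ in $\suppcomp$.

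For the $(\Rightarrow)$ direction, we exploit that identity morphisms are atomic, so $(W, \id_W)$ and $(V, \id_V)$ are always valid objects of $\suppcomp$. Moreover, every $\phi \colon W \otimes Y \to V$ in $\cC$ promotes to a morphism $[\phi] \colon (W, \id_W) \otimes (Y, q) \to (V, \id_V)$ in $\suppcomp$, since the condition $\phi \comp (\id_W \otimes q) \ll \id_V$ is vacuous ($\id_V$ being a maximum under $\gg$). Instantiating the hypothesis $[f] \ll [g]$ at such identity-atom objects and invoking the translation with $w$ replaced by $\id_W$, together with \cref{lem:as_eq_new}, the conditions $[\phi] \ase{[g]} [\psi]$ and $[\phi] \ase{[f]} [\psi]$ collapse to $\phi \ase{g\comp r} \psi$ and $\phi \ase{f\comp p} \psi$ respectively, which is exactly the statement $f\comp p \ll g\comp r$ in $\cC$.

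The main substantive step is verifying the translation of almost sure equality between $\suppcomp$ and $\cC$, which requires careful tracking of the source atomic morphisms attached to the domain objects. Once that translation is in hand, each direction follows essentially by inspection, with \cref{prop:ac_tensor} doing the only concrete work in $(\Leftarrow)$ and with the choice of trivial identity-atom codomains doing the work in $(\Rightarrow)$.
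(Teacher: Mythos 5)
Your overall architecture matches the paper's: reduce both sides to statements in $\cC$ by translating almost sure equality across the two categories, and handle the quantifier bookkeeping by observing that test morphisms may be taken with identity atoms on the parameter and codomain objects. The weak point is the translation itself. You claim that $[\phi] \ase{[g]} [\psi]$ in $\suppcomp$ is equivalent to $\phi \ase{w \otimes (g \comp r)} \psi$ in $\cC$ by ``a routine diagrammatic rewrite'' that ``absorbs the prefactor $\id_W \otimes g$ into the source.'' Unwinding the definitions, $[\phi] \ase{[g]} [\psi]$ amounts to an equation in $\cC$ in which a copy of the \emph{intermediate} output of $r$ (the $Z$-wire, taken before $g$ is applied) is retained, whereas $\phi \ase{w \otimes (g\comp r)} \psi$ retains only the output of the composite $g \comp r$. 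Passing from the first to the second is indeed routine (marginalize the $Z$-output), but the converse direction is \emph{not} a string-diagram manipulation: reinstating a retained intermediate output from the marginalized equation is precisely the content of the causality axiom, in the form of Implication \eqref{eq:causal_extrawire}, and by \cref{lem:as_shift} this implication actually \emph{characterizes} causality. It fails in general Markov categories.

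The gap is repairable because $\suppcomp$ is only defined for causal $\cC$, and this is exactly how the paper proceeds: it invokes \eqref{eq:causal_extrawire} explicitly to rewrite the antecedent and consequent of the absolute continuity implication. Note that both directions of your biconditional use the hard direction of the translation (in $(\Leftarrow)$ you translate $\phi \ase{w \otimes (f \comp p)} \psi$ back into $\suppcomp$; in $(\Rightarrow)$ you must first lift $\phi \ase{g \comp r} \psi$ into $\suppcomp$ before applying the hypothesis $[f] \ll [g]$), so causality cannot be avoided in either half. You should state where causality enters and cite it; as written, the proof asserts as trivial the one step that carries the mathematical content. The remaining bookkeeping (general atoms $w$ on the parameter object, the identity-atom codomain making the compatibility condition $\phi \comp (\id_W \otimes q) \ll \id_V$ vacuous) is correct and in fact slightly more explicit than the paper's treatment.
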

\begin{proof}
	The absolute continuity relation $[f] \ll [g]$ in $\suppcomp$ reads in $\cC$ as
	\begin{equation}\label{eq:supp_comp_abs_cont}
		{%
			\tikzstyle{every picture}=[tikzfig]%
			\begin{tikzpicture}
				\begin{pgfonlayer}{nodelayer}
					\node [style=none] (13) at (-10.5, 0.75) {};
					\node [style=none] (14) at (-9, 1) {};
					\node [style=none] (19) at (-11, 2.25) {};
					\node [style=bn] (21) at (-9.75, 0) {};
					\node [style=none] (23) at (-9.75, -2.25) {};
					\node [style=none] (25) at (-7.25, 0) {$\ase{r}$};
					\node [style=morphism] (33) at (-11, 1.25) {$\,\;\; h \,\;\;$};
					\node [style=none] (34) at (-9, 2.25) {};
					\node [style=morphism] (35) at (-9.75, -1.25) {$g$};
					\node [style=none] (75) at (-11.5, -2.25) {};
					\node [style=none] (85) at (-11.5, 1.25) {};
					\node [style=none] (195) at (-11.5, -0.75) {};
					\node [style=none] (202) at (0, 0) {$\implies$};
					\node [style=none] (236) at (-4.5, 0.75) {};
					\node [style=none] (237) at (-3, 1) {};
					\node [style=none] (238) at (-5, 2.25) {};
					\node [style=bn] (240) at (-3.75, 0) {};
					\node [style=none] (241) at (-3.75, -2.25) {};
					\node [style=morphism] (244) at (-5, 1.25) {$\,\;\; k \,\;\;$};
					\node [style=none] (245) at (-3, 2.25) {};
					\node [style=morphism] (246) at (-3.75, -1.25) {$g$};
					\node [style=none] (247) at (-5.5, -2.25) {};
					\node [style=none] (249) at (-5.5, 1.25) {};
					\node [style=none] (250) at (-5.5, -0.75) {};
					\node [style=none] (251) at (4, 0.75) {};
					\node [style=none] (252) at (5.5, 1) {};
					\node [style=none] (253) at (3.5, 2.25) {};
					\node [style=bn] (255) at (4.75, 0) {};
					\node [style=none] (256) at (4.75, -2.25) {};
					\node [style=none] (258) at (7.25, 0) {$\ase{ p}$};
					\node [style=morphism] (260) at (3.5, 1.25) {$\,\;\; h \,\;\;$};
					\node [style=none] (261) at (5.5, 2.25) {};
					\node [style=morphism] (262) at (4.75, -1.25) {$f$};
					\node [style=none] (263) at (3, -2.25) {};
					\node [style=none] (265) at (3, 1.25) {};
					\node [style=none] (266) at (3, -0.75) {};
					\node [style=none] (267) at (10, 0.75) {};
					\node [style=none] (268) at (11.5, 1) {};
					\node [style=none] (269) at (9.5, 2.25) {};
					\node [style=bn] (271) at (10.75, 0) {};
					\node [style=none] (272) at (10.75, -2.25) {};
					\node [style=morphism] (275) at (9.5, 1.25) {$\,\;\; k \,\;\;$};
					\node [style=none] (276) at (11.5, 2.25) {};
					\node [style=morphism] (277) at (10.75, -1.25) {$f$};
					\node [style=none] (278) at (9, -2.25) {};
					\node [style=none] (280) at (9, 1.25) {};
					\node [style=none] (281) at (9, -0.75) {};
					\node [style=none] (282) at (-10.5, 1.25) {};
					\node [style=none] (283) at (-4.5, 1.25) {};
					\node [style=none] (284) at (4, 1.25) {};
					\node [style=none] (285) at (10, 1.25) {};
					\node [style=none] (286) at (-11.5, -2.75) {$W$};
					\node [style=none] (287) at (-9.75, -2.75) {$Z$};
					\node [style=none] (288) at (-11, 2.75) {$A$};
					\node [style=none] (289) at (-9, 2.75) {$Y$};
					\node [style=none] (290) at (-5.5, -2.75) {$W$};
					\node [style=none] (291) at (-3.75, -2.75) {$Z$};
					\node [style=none] (292) at (-5, 2.75) {$A$};
					\node [style=none] (293) at (-3, 2.75) {$Y$};
					\node [style=none] (294) at (3, -2.75) {$W$};
					\node [style=none] (295) at (4.75, -2.75) {$X$};
					\node [style=none] (296) at (3.5, 2.75) {$A$};
					\node [style=none] (297) at (5.5, 2.75) {$Y$};
					\node [style=none] (298) at (9, -2.75) {$W$};
					\node [style=none] (299) at (10.75, -2.75) {$X$};
					\node [style=none] (300) at (9.5, 2.75) {$A$};
					\node [style=none] (301) at (11.5, 2.75) {$Y$};
				\end{pgfonlayer}
				\begin{pgfonlayer}{edgelayer}
					\draw [in=-90, out=165] (21) to (13.center);
					\draw [in=-90, out=15] (21) to (14.center);
					\draw (33) to (19.center);
					\draw (14.center) to (34.center);
					\draw (23.center) to (35);
					\draw [style=protected] (75.center) to (195.center);
					\draw [style=protected, in=-90, out=90, looseness=1.25] (195.center) to (85.center);
					\draw (35) to (21);
					\draw [in=-90, out=165] (240) to (236.center);
					\draw [in=-90, out=15] (240) to (237.center);
					\draw (244) to (238.center);
					\draw (237.center) to (245.center);
					\draw (241.center) to (246);
					\draw [style=protected] (247.center) to (250.center);
					\draw [style=protected, in=-90, out=90, looseness=1.25] (250.center) to (249.center);
					\draw (246) to (240);
					\draw [in=-90, out=165] (255) to (251.center);
					\draw [in=-90, out=15] (255) to (252.center);
					\draw (260) to (253.center);
					\draw (252.center) to (261.center);
					\draw (256.center) to (262);
					\draw [style=protected] (263.center) to (266.center);
					\draw [style=protected, in=-90, out=90, looseness=1.25] (266.center) to (265.center);
					\draw [in=270, out=90] (262) to (255);
					\draw [in=-90, out=165] (271) to (267.center);
					\draw [in=-90, out=15] (271) to (268.center);
					\draw (275) to (269.center);
					\draw (268.center) to (276.center);
					\draw (272.center) to (277);
					\draw [style=protected] (278.center) to (281.center);
					\draw [style=protected, in=-90, out=90, looseness=1.25] (281.center) to (280.center);
					\draw (277) to (271);
					\draw (13.center) to (282.center);
					\draw (236.center) to (283.center);
					\draw (251.center) to (284.center);
					\draw (267.center) to (285.center);
				\end{pgfonlayer}
			\end{tikzpicture}
		}%
	\end{equation}
	where the $h$ and $k$ both have domain $(W,\id) \otimes (Y,q)$ in $\suppcomp$.
	Indeed we can set the left input to be $(W,\id)$ instead of a generic object $(W,s)$ without loss of generality.
	This is because, upon absorbing $s$ into the test morphism $h$ and $k$, 
	Property \eqref{eq:supp_comp_abs_cont} implies the same property with $(W,\id)$ replaced by $(W,s)$ and the reverse implication is clear.

	Using the causality of $\cC$, the antecedent of Implication \eqref{eq:supp_comp_abs_cont} can be equivalently written as
	\begin{equation}
		{%
			\tikzstyle{every picture}=[tikzfig]%
			\begin{tikzpicture}
				\begin{pgfonlayer}{nodelayer}
					\node [style=none] (332) at (-3.25, 1.5) {};
					\node [style=none] (333) at (-1.75, 1.75) {};
					\node [style=none] (334) at (-3.75, 3) {};
					\node [style=bn] (335) at (-2.5, 0.75) {};
					\node [style=none] (336) at (-2.5, -2.75) {};
					\node [style=none] (337) at (0, 0) {$=$};
					\node [style=morphism] (338) at (-3.75, 2) {$\,\;\; h \,\;\;$};
					\node [style=none] (339) at (-1.75, 3) {};
					\node [style=morphism] (340) at (-2.5, -0.25) {$g$};
					\node [style=none] (341) at (-4.25, -2.75) {};
					\node [style=none] (342) at (-4.25, 2) {};
					\node [style=none] (343) at (-4.25, 0) {};
					\node [style=none] (344) at (2.75, 1.5) {};
					\node [style=none] (345) at (4.25, 1.75) {};
					\node [style=none] (346) at (2.25, 3) {};
					\node [style=bn] (347) at (3.5, 0.75) {};
					\node [style=none] (348) at (3.5, -2.75) {};
					\node [style=morphism] (349) at (2.25, 2) {$\,\;\; k \,\;\;$};
					\node [style=none] (350) at (4.25, 3) {};
					\node [style=morphism] (351) at (3.5, -0.25) {$g$};
					\node [style=none] (352) at (1.75, -2.75) {};
					\node [style=none] (353) at (1.75, 2) {};
					\node [style=none] (354) at (1.75, 0) {};
					\node [style=none] (355) at (-3.25, 2) {};
					\node [style=none] (356) at (2.75, 2) {};
					\node [style=morphism] (357) at (-2.5, -1.75) {$r$};
					\node [style=morphism] (358) at (3.5, -1.75) {$r$};
					\node [style=none] (359) at (-4.25, -3.25) {$W$};
					\node [style=none] (360) at (-2.5, -3.25) {$Z$};
					\node [style=none] (361) at (-3.75, 3.5) {$A$};
					\node [style=none] (362) at (-1.75, 3.5) {$Y$};
					\node [style=none] (363) at (1.75, -3.25) {$W$};
					\node [style=none] (364) at (3.5, -3.25) {$Z$};
					\node [style=none] (365) at (2.25, 3.5) {$A$};
					\node [style=none] (366) at (4.25, 3.5) {$Y$};
				\end{pgfonlayer}
				\begin{pgfonlayer}{edgelayer}
					\draw [in=-90, out=165] (335) to (332.center);
					\draw [in=-90, out=15] (335) to (333.center);
					\draw (338) to (334.center);
					\draw (333.center) to (339.center);
					\draw (336.center) to (340);
					\draw [style=protected] (341.center) to (343.center);
					\draw [style=protected, in=-90, out=90, looseness=1.25] (343.center) to (342.center);
					\draw (340) to (335);
					\draw [in=-90, out=165] (347) to (344.center);
					\draw [in=-90, out=15] (347) to (345.center);
					\draw (349) to (346.center);
					\draw (345.center) to (350.center);
					\draw (348.center) to (351);
					\draw [style=protected] (352.center) to (354.center);
					\draw [style=protected, in=-90, out=90, looseness=1.25] (354.center) to (353.center);
					\draw (351) to (347);
					\draw (332.center) to (355.center);
					\draw (344.center) to (356.center);
				\end{pgfonlayer}
			\end{tikzpicture}
		}%
	\end{equation}
	where we use Implication \eqref{eq:causal_extrawire}.
	Since the same is true for the consequent of Implication \eqref{eq:supp_comp_abs_cont} with $f$ and $p$ in place of $g$ and $r$, we conclude that $f\comp p \ll g\comp r$ is equivalent to Implication \eqref{eq:supp_comp_abs_cont}.
\end{proof}

\begin{lemma}
	\label{lem:suppcomp_complete}
	A morphism $[f]\colon (X,p) \to (Y,q)$ in $\suppcomp$ is atomic if and only if $f \comp p$ is atomic in $\cC$.
	In this case, $(Y,f\comp p)$ is a support of $[f]$ with support inclusion
	\begin{equation}
		\label{eq:suppinc_suppcomp}
		[\id_Y] \colon (Y,f\comp p) \to (Y,q).
	\end{equation}
\end{lemma}
\begin{proof}
	The first claim is straightforward by \cref{lem:abscont_suppcompletion}.
	Combining this result with \cref{def:atomic} and the definition of copying in $\suppcomp$ via \eqref{eq:copy_suppcomp} shows that atomicity of $[f]$ in $\suppcomp$ can be expressed as 
	\begin{equation}
		\cop \comp f \comp p \ll (f \comp p) \otimes (f \comp p)
	\end{equation}
	in $\cC$, which says exactly that $f \comp p$ is atomic.
	Thus under the assumption that $[f]$ is atomic, $(Y, f\comp p)$ is itself an object of $\suppcomp$.

	For the second claim, we use the conditions of \cref{lem:supp_crit}.
	Since $\id_Y$ is a deterministic monomorphism, it is easy to verify that the proposed support inclusion $\suppinc{}$ given by $[\id_Y] \colon (Y, f \comp p) \to (Y, q)$ is a deterministic monomorphism as well, and likewise for every $\id_{(W,r)} \otimes \suppinc{}$.\footnotemark{}
	\footnotetext{The reader may wonder whether $\suppinc{}$ is also an epimorphism. 
	From the definition, one can check that this is true if and only if $f\comp p \acsim q$ holds.}%

	By \cref{lem:abscont_suppcompletion}, the condition $\suppinc{} \ll [f]$ from \cref{lem:supp_crit} is immediate because it corresponds to $f\comp p \ll f\comp p$ in $\cC$.

	Finally, to show Implication \eqref{eq:supp_crit}, consider a morphism $[g] \colon (Z,r) \to (W,s) \otimes (Y,q)$ satisfying $[g] \ll \id \otimes [f]$.
	Then we have $g\comp r \ll s \otimes f\comp p$ in $\cC$, so that $g$ defines a morphism $(Z,r) \to (W,s) \otimes (Y,f\comp p)$ in $\suppcomp$, which provides the desired factorization of $[g]$ across $\id_{(W, s)} \otimes \suppinc{}$.
\end{proof}

We will show that the free regular support completion is not only support complete, but is so in the following stronger sense.

\begin{definition}
	\label{def:suppreg}
	A Markov category $\cC$ is \newterm{regular support complete} if it is support complete and for all atomic $h \colon X \to Y$ and arbitrary $f : A \to \Supp{h}$ and $g : B \to \Supp{h}$, we have
	\begin{equation}\label{eq:regularity}
		\suppinc{h} \comp f \ll \suppinc{h} \comp g \quad \implies \quad f \ll g.
	\end{equation}
\end{definition}

One might call this kind of support a ``regular support'', but we will not dwell on this concept any further in this paper.

\begin{remark}
	Our terminology lines up with the fact that coregular categories with the extra assumption of \cref{prop:coregular_supports} are regular support complete: by the arguments given there, $\suppinc{h} \comp f \ll \suppinc{h} \comp g$ means that $\suppinc{h} \comp f$ factors across $\suppinc{\suppinc{h} \comp g} = \suppinc{h} \comp \suppinc{g}$, where the latter equation holds because regular monomorphisms are closed under composition.
	But then the desired factorization of $f$ across $\suppinc{g}$ follows simply because $\suppinc{h}$ is a monomorphism.
\end{remark}

Here is now our main result on the free regular support completion.

\begin{theorem}
	\label{thm:supports_suppcompletion}
	For every causal Markov category $\cC$:
	\begin{enumerate}
		\item The free regular support completion $\suppcomp$ is regular support complete.
		\item Any atomic morphism $p \colon A \to X$ in $\cC$ has support $(X,p)$ in the larger category $\suppcomp$.
	\end{enumerate}
\end{theorem}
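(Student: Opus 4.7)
The plan is to observe that essentially all the work has already been done in \cref{lem:suppcomp_complete} and \cref{lem:abscont_suppcompletion}, and that the theorem follows by unpacking definitions and applying these results.

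For part (i), I would argue as follows. Let $[f] \colon (X,p) \to (Y,q)$ be an arbitrary atomic morphism in $\suppcomp$. Then \cref{lem:suppcomp_complete} directly produces a support, namely $(Y, f \comp p)$ with support inclusion $[\id_Y] \colon (Y, f\comp p) \to (Y,q)$. Since this applies to any atomic morphism in $\suppcomp$, the category $\suppcomp$ is support complete by definition. The only sanity check I would flag is that $(Y, f \comp p)$ is in fact a well-defined object of $\suppcomp$, i.e., that $f\comp p$ is atomic in $\cC$; but this is exactly the translation of $[f]$ being atomic in $\suppcomp$ via \cref{lem:abscont_suppcompletion}, and this has already been verified in the proof of \cref{lem:suppcomp_complete}.

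For part (ii), I would start from an atomic morphism $p \colon A \to X$ in $\cC$. Under the identification from \cref{prop:suppcomp} of $\cC$ as a full Markov subcategory of $\suppcomp$, the morphism $p$ corresponds to $[p] \colon (A, \id_A) \to (X, \id_X)$ in $\suppcomp$. Using \cref{lem:abscont_suppcompletion}, atomicity of $[p]$ in $\suppcomp$ is equivalent to $\cop \comp (p \comp \id_A) \ll (p\comp\id_A) \otimes (p\comp\id_A)$ in $\cC$, i.e., to $\cop\comp p \ll p\otimes p$, which is precisely the assumption that $p$ is atomic in $\cC$. Hence \cref{lem:suppcomp_complete} applies to $[p]$ and provides the support $(X, p \comp \id_A) = (X, p)$ together with the support inclusion $[\id_X] \colon (X,p) \to (X, \id_X)$.

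I do not anticipate a significant obstacle here: the content of the theorem is essentially a repackaging of \cref{lem:suppcomp_complete}, with the minor bookkeeping step in part (ii) being the identification of $p$ with a morphism out of $(A, \id_A)$ and the verification that atomicity transfers along this identification via \cref{lem:abscont_suppcompletion}. If anything, the only subtle point worth stating explicitly is that the support object $(X,p)$ lives in $\suppcomp$ but not in the image of $\cC$ unless $p \approx \id_X$, which is what makes the support completion genuinely enlarge $\cC$.
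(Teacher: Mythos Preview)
Your proposal is correct and follows essentially the same approach as the paper: both parts are derived directly from \cref{lem:suppcomp_complete}, with part (ii) using \cref{lem:abscont_suppcompletion} to verify that atomicity of $p$ in $\cC$ translates to atomicity of $[p] \colon (A,\id_A) \to (X,\id_X)$ in $\suppcomp$. The paper's proof is terser but the content is the same.
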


\begin{proof}
	\begin{enumerate}
		\item $\suppcomp$ is support complete by \cref{lem:suppcomp_complete}. 
			For regularity, suppose that we are given morphisms $[h] \colon (X,p) \to (Y,q)$ and $[f] \colon (Z,r) \to (Y, h \comp p)$ and $[g] \colon (W,s) \to (Y, h \comp p)$, where we identify $\Supp{[h]} = (Y, h \comp p)$ by \cref{lem:suppcomp_complete}.
			Then $\suppinc{[h]} \comp [f] \ll \suppinc{[h]} \comp [g]$ is manifestly equivalent to $[f] \ll [g]$, since composing with $\suppinc{[h]}$ just amounts to replacing $h \comp p$ by $q$, and thanks to \cref{lem:abscont_suppcompletion} we know that this plays no role in the absolute continuity relation.
		\item In light of \cref{lem:abscont_suppcompletion}, $[p] \colon (A,\id_A) \to (X,\id_X)$ is still atomic in $\suppcomp$.
			Hence this follows from \cref{lem:suppcomp_complete} as well.
			\qedhere
	\end{enumerate}
\end{proof}

\begin{corollary}\label{cor:supp_iff_atomic}
	Let $\cC$ be a causal Markov category. A morphism admits a support in some Markov category containing $\cC$ as a Markov subcategory if and only if it is atomic.
\end{corollary}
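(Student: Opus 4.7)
The statement is an immediate consequence of the results just proved, so my plan is to split it into the two implications and identify which earlier result supplies each direction.

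\textbf{Forward direction ($p$ atomic $\Rightarrow$ $p$ admits a support in some extension).} Here the plan is simply to invoke \cref{thm:supports_suppcompletion}(ii), which gives the support $(X,p)$ for any atomic $p \colon A \to X$ inside the free support completion $\suppcomp$. By \cref{prop:suppcomp}, $\cC$ sits inside $\suppcomp$ as a full Markov subcategory via $X \mapsto (X,\id_X)$, and $\suppcomp$ is causal (hence in particular a Markov category). Thus $\suppcomp$ is the witnessing extension, and no further work is needed.

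\textbf{Reverse direction ($p$ admits a support in some extension $\Rightarrow$ $p$ is atomic).} Suppose $p$ has a support when viewed in some Markov category $\cD$ that contains $\cC$ as a Markov subcategory. Apply \cref{lem:suppdominance} inside $\cD$ to conclude that $\cop \comp p \ll p \otimes p$ holds in $\cD$. The remaining point{\,\textemdash\,}and the only step that needs a word of justification{\,\textemdash\,}is that this absolute continuity relation descends to $\cC$. This is because the defining implication of $\cop \comp p \ll p \otimes p$ (\cref{def:ac}) quantifies over pairs of parallel morphisms $f, g \colon W \otimes (X \otimes X) \to Y$; since $\cC$ is a Markov subcategory of $\cD$, any such pair in $\cC$ is also such a pair in $\cD$, and the almost-sure-equality equations defining $\ase{p \otimes p}$ and $\ase{\cop \comp p}$ agree in $\cC$ and $\cD$ because the Markov structure (copy, discard, tensor) is preserved. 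Therefore $p$ is atomic as a morphism of $\cC$.

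\textbf{Main point of care.} There is no serious obstacle here, only the bookkeeping observation above: atomicity is formulated via a universally quantified implication, so it is inherited from an ambient Markov category to any Markov subcategory on the same underlying data. Both directions then assemble into the stated equivalence, with $\suppcomp$ serving as the canonical witness for the forward direction.
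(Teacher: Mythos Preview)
Your proposal is correct and takes essentially the same approach as the paper: both directions rely on the same ingredients (\cref{thm:supports_suppcompletion} for the forward direction, \cref{lem:suppdominance} plus the observation that Markov subcategories reflect atomicity for the reverse). The only cosmetic difference is that the paper phrases the reverse direction as a contrapositive---exhibiting a witnessing pair $f,g$ in $\cC$ that persists in the extension---while you argue directly that absolute continuity in $\cD$ restricts to $\cC$; these are the same argument.
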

\begin{proof}
	If $p$ is atomic, then \cref{thm:supports_suppcompletion} shows that $p$ has a support in $\suppcomp$. 
	Conversely, if we assume that $p$ is not atomic, then $\cop \comp p \ll p \otimes p$ fails already in $\cC$, i.e.\ there exist morphisms $f$ and $g$ satisfying both $f \ase{p \otimes p} g$ and $f \not\ase{\cop\, \comp p} g$.
	This remains the case in any Markov category containing $\cC$ as a Markov subcategory.
	The contrapositive of \cref{lem:suppdominance} says that such $p$ cannot have a support. 
\end{proof}

\begin{example}\label{ex:lebesgue_nosupp}
	By applying \cref{cor:supp_iff_atomic} to $\borelstoch$, we conclude that the Lebesgue measure (as well as any other non-atomic morphism) will never admit a support, even if we enlarge our category.
\end{example}

We conclude the discussion of the free regular support completion by proving a universal property for it.
To this aim, we introduce the following properties of functors.
\begin{definition}
	Let $F\colon \cC \to \cD$ be a Markov functor between Markov categories (see \cref{def:markov_functor}). We say that $F$ is 
	\begin{itemize} 
		\item \newterm{ac-monotone} if $f\ll g$ in $\cC$ implies $F(f)\ll F(g)$ in $\cD$.
		\item \newterm{support-preserving} if for every morphism $p$ in $\cC$ which has a support, also $F(p)$ in $\cD$ has a support with support inclusion $F(\suppinc{p})$.
	\end{itemize}
\end{definition}

In particular, every ac-monotone functor maps atomic morphisms to atomic morphisms.
Also the inclusion functor $\cC \to \suppcomp$ is ac-monotone by \cref{lem:abscont_suppcompletion}.

\begin{remark}
	However, the inclusion $\cC \to \suppcomp$ is not support-preserving in general, which we can establish as follows.
	Consider a causal Markov category $\cC$ and a morphism $p\colon A \to X$ in $\cC$ that admits a support $S$ (with support inclusion $\suppinc{}$) and is thus atomic by \cref{lem:suppdominance}.
	The inclusion of $p$'s support in $\cC$ is $(\Supp{},\id_{\Supp{}})$, while the support of $[p]$ in $\suppcomp$ is $(X,p)$.

	If the inclusion functor preserved supports, this would entail that $p$ has a \emph{split support} in the sense of the upcoming \cref{sec:split_supports}, which is not the case in general (\cref{prop:support_not_split}).
	Indeed, the support inclusion $\suppinc{}$ yields a morphism $[\suppinc{}]\colon (\Supp{},\id_{\Supp{}})\to (X,p)$ because $\suppinc{}\ll p$. 
	Moreover, under the assumption that $\cC \to \suppcomp$ is support-preserving, this morphism must be an isomorphism because supports are unique up to isomorphism. 
	Therefore, there exists $[\pi]\colon (X,p)\to (\Supp{},\id_{\Supp{}})$ such that $[\pi]\comp[\suppinc{}]=\id_{(\Supp{},\id_{\Supp{}})}$. 
	Translating this back to $\cC$, we have $\pi\comp \suppinc{}=\id_{\Supp{}}$, thus proving that $\suppinc{}$ is a split support.

	This is in contradiction with the example from \cref{prop:support_not_split}, since $\tychstoch$ is indeed causal by \cref{ex:tychstoch_supp}.
	This suggests that the inclusion functor of $\tychstoch$ into its free regular support completion may not be an equivalence of categories, although $\tychstoch$ already has all supports.

	These phenomena underline again why we speak of the \emph{regular} part of the free regular support completion: 
	By adding new supports, $\suppcomp$ can break the existing ones.
	This is analogous to the free regular completion of a category with finite limits, where the analogous inclusion functor can break existing image factorizations.\footnote{This is clear from how image factorizations in the regular completion are constructed~\cite[p.~27]{johnstone2002elephant}.}
\end{remark}

\begin{theorem}[Universal property of the free regular support completion]
	\label{thm:suppcomp_universal}
	Let $\cC$ and $\cD$ be causal Markov categories, and further assume $\cD$ to be regular support complete.
	Then composition with the inclusion functor $\cC \to \suppcomp$ induces an equivalence of categories between:
	\begin{itemize}
		\item The category of ac-monotone Markov functors $\cC \to \cD$ and monoidal natural transformations, and
		\item The category of support-preserving ac-monotone Markov functors $\suppcomp \to \cD$ and monoidal natural transformations.
	\end{itemize}
\end{theorem}
Informally speaking, $\suppcomp$ is hence the free regular support complete Markov category generated by $\cC$.
\begin{proof}
	By \cref{thm:supports_suppcompletion}, every object $(X, p)$ in $\suppcomp$, where $p \colon A \to X$ is an atomic morphism in $\cC$, is the support of the morphism $[p] \colon (A, \id_A) \to (X, \id_X)$ in $\suppcomp$ with support inclusion given by
	\begin{equation}\label{eq:UP_suppinc}
		\iota_{(X,p)} \coloneqq [\id_X] \colon (X, p) \to (X, \id_X).
	\end{equation}
	Then for any support-preserving Markov functor $\tilde{F} \colon \suppcomp \to \cD$, the support inclusion of $\tilde{F}([p])$ is given by the image of the support inclusion of $[p]$, i.e.\ we have
	\begin{equation}\label{eq:suppinc_F}
		\suppinc{\tilde{F}([p])} = \tilde{F}(\iota_{(X,p)}).
	\end{equation}
	In particular, $\suppinc{\tilde{F}([p])} \colon \tilde{F}(X,p) \to \tilde{F}(X,\id_X)$ is a deterministic monomorphism.

	Let now $\tilde{F}, \tilde{G} \colon \suppcomp \to \cD$ be two support-preserving ac-monotone Markov functors and $F, G \colon \cC \to \cD$ their restrictions to $\cC$.
	Then we prove that whiskering by $\cC \to \suppcomp$ induces a bijection between monoidal natural transformations $\tilde{F} \to \tilde{G}$ and monoidal natural transformations $F \to G$.
	The injectivity holds because the component of a natural transformation at any object $(X,p)$ is determined by its component at $(X,\id_X)$ thanks to the naturality on $\iota_{(X,p)}$ and monicity of $\suppinc{\tilde{G}([p])}$.

	For the surjectivity, suppose that we are given a monoidal natural transformation $\alpha \colon F \to G$.
	Then we define a natural transformation $\tilde{\alpha} \colon \tilde{F} \to \tilde{G}$ by setting its component at an object $(X,p)$ to be the unique morphism making the diagram
	\begin{equation}
		\begin{tikzcd}[row sep=large, column sep=large]
			\tilde{F}(X,p) \ar[r, "{\suppinc{\tilde{F}([p])}}"] \ar[d, "\tilde{\alpha}_{(X,p)}" description] & \tilde{F}(X,\id_X) \ar[d, "\alpha_X"] \\
			\tilde{G}(X,p) \ar[r, "{\suppinc{\tilde{G}([p])}}"'] & \tilde{G}(X,\id_X)
		\end{tikzcd}
	\end{equation}
	commute, whose uniqueness and existence follow from the functoriality of supports (\cref{thm:supp_functorial}),\footnote{Strictly speaking, \cref{thm:supp_functorial} assumes supports to exist for all morphisms, but the same proof as given there shows the relevant functoriality for all supports that exist.}
	since we have the commutative diagram 
	\begin{equation}
		\begin{tikzcd}[row sep=large, column sep=large]
			\tilde{F}(A,\id_A) \ar[r, "{\tilde{F}([p])}"] \ar[d, "\alpha_A"'] & \tilde{F}(X,\id_X) \ar[d, "\alpha_X"] \\
			\tilde{G}(A,\id_A) \ar[r, "{\tilde{G}([p])}"'] & \tilde{G}(X,\id_X)
		\end{tikzcd}
	\end{equation}
	by the naturality of $\alpha$ (and using the inclusion $\cC \to \suppcomp$ implicitly).

	Let us now argue that the thus defined $\tilde{\alpha}$ is indeed natural and monoidal.
	Concerning naturality, consider a morphism $[f]\colon (X,p)\to (Y,q)$ and note that commutativity of the central square in the diagram
	\begin{equation}
		\begin{tikzcd}[row sep=large, column sep=large]
			F(X)\ar[rrr, "F(f)"]\ar[ddd,"\alpha_X" description] &&& F(Y)\ar[ddd,"\alpha_Y" description] \\
									    & \tilde{F} (X,p) \ar[r, "{\tilde{F}([f])}"]\ar[d,"\tilde{\alpha}_{(X,p)}" description] \ar[lu,"\suppinc{\tilde{F}([p])}" description] 
									    & \tilde{F} (Y,q) \ar[d,"\tilde{\alpha}_{(Y,q)}" description]\ar[ru,"\suppinc{\tilde{F}([q])}" description] & \\
									    &  \tilde{G} (X,p) \ar[r, "{\tilde{G}([f])}"]\ar[ld,"\suppinc{\tilde{G}([p])}" description] 
									    & \tilde{G} (Y,q) \ar[rd,"\suppinc{\tilde{G}([q])}" description] & \\
			G(X) \ar[rrr, "G(f)"] &&& G(Y)
		\end{tikzcd}
	\end{equation}
	follows from the commutativity of the outer diagram (because $\alpha \colon F\to G$ is natural) and that of the trapezoids (the left and right trapezoids commute by definition of $\tilde{\alpha}_{(X,p)}$ and $\tilde{\alpha}_{(Y,q)}$, while the upper and lower ones do by functoriality), since $\suppinc{\tilde{G}([q])}$ is a monomorphism.
	Similarly, monoidality of the extension is checked using the diagram 
	\begin{equation*}
		\begin{tikzcd}[row sep=large, column sep=large]
			F(X)\otimes F(Y)\ar[rrr,"\phi"]\ar[ddd,"\alpha_X\otimes \alpha_Y" description] &&& F(X\otimes Y) \ar[ddd,"\alpha_{X\otimes Y}" description] \\
												       & \tilde{F} (X,p)\otimes \tilde{F} (Y,q) \ar[r,"\tilde{\phi}"]\ar[d,"\tilde{\alpha}_{(X,p)} \otimes \tilde{\alpha}_{(Y,q)}" description]\ar[lu,"\suppinc{\tilde{F}([p])} \otimes \suppinc{\tilde{F}([q])}" description] & \tilde{F} (X\otimes Y,p \otimes q) \ar[d,"\tilde{\alpha}_{(X,p) \otimes (Y,q)}" description]\ar[ru,"\suppinc{\tilde{F}([p \otimes q])}" description] & \\
												       &  \tilde{G} (X,p) \otimes \tilde{G} (Y,q) \ar[r,"\tilde{\psi}"]\ar[ld,"\suppinc{\tilde{G}([p])}\otimes \suppinc{\tilde{G} ([q])}" description] & \tilde{G} (X\otimes Y,p \otimes q) \ar[rd,"\suppinc{\tilde{G}([p\otimes q])}" description] & \\
			G(X)\otimes G(Y) \ar[rrr,"\psi"] &&& G(X\otimes Y)
		\end{tikzcd}
	\end{equation*}
	where $\phi$, $\tilde{\phi}$, $\psi$, and $\tilde{\psi}$ denote the components of the strongators of $F$, $\tilde{F}$, $G$, and $\tilde{G}$, respectively.

	Here, the outer square commutes by monoidality of $\alpha$, the left and right trapezoids by definition of $\tilde{\alpha}$ and the upper and lower trapezoids by \eqref{eq:suppinc_F}, so that we have 
	\[
		\phi \comp \bigl(\tilde{F}(\iota_{(X,p)})\otimes \tilde{F}(\iota_{(Y,q)}) \bigr) = \tilde{F}(\iota_{(X,p)}\otimes \iota_{(Y,q)}) \comp \tilde{\phi} = \tilde{F}(\iota_{(X \otimes Y, p \otimes q)})\comp \tilde{\phi},
	\]
	and likewise for $\psi$ and $\tilde{\psi}$, where the second equality is by \eqref{eq:UP_suppinc}.
	Therefore also the inner square commutes because $\suppinc{\tilde{G}([p\otimes q])}$ is a monomorphism, and hence $\tilde{\alpha}$ is compatible with the strongators.
	The compatibility with unitors is more straightforward and omitted here.
	In conclusion, our extended $\tilde{\alpha}$ is a monoidal natural transformation, and we have thus shown that the functor under consideration is fully faithful.

	We now turn to proving essential surjectivity.
	We assume that choices of supports have been made in $\cD$, and are such that the support of each identity morphism is the underlying object itself with support inclusion given by identity.
	Given an ac-monotone Markov functor $F\colon \cC \to \cD$, we define a functor $\tilde{F}\colon \suppcomp \to \cD$ as follows.
	On objects, we set
	\begin{equation}
		\tilde{F}(X,p) \coloneqq \Supp{F(p)}.
	\end{equation}
	For a morphism $[f] \colon (X,p) \to (Y,q)$ in $\suppcomp$, we take $\tilde{F}([f])\colon \Supp{F(p)} \to \Supp{F(q)}$ to be the unique morphism such that
	\begin{equation}\label{eq:supp_comp_functor}
		F(f) \comp \suppinc{F(p)} = \suppinc{F(q)} \comp \tilde{F}([f]).
	\end{equation}
	The existence of such $\tilde{F}([f])$ follows by \cref{lem:supp_crit}~\ref{it:suppinc_fact}, since we have
	\begin{equation}
		F(f) \comp \suppinc{F(p)} \ll F(f) \comp F(p) \ll F(q),
	\end{equation}
	where the first relation is by \cref{lem:ac_mon_comp,lem:inc_sim_p} and the second one by functoriality and ac-monotonicity of $F$.
	Furthermore, $\tilde{F}([f])$ is well-defined with respect to the equivalence relation $\ase{p}$ because the left-hand side of \cref{eq:supp_comp_functor} is invariant on its equivalence classes thanks to \cref{lem:supp_asfaithful}.
	Functoriality of $\tilde{F}\colon \suppcomp \to \cD$ is a straightforward check based on the monicity of support inclusions, and $\tilde{F}$ strictly extends $F$ by the innocuous assumption on supports of identity morphisms.

	We next show that $\tilde{F}$ is a strong monoidal functor.
	By definition, $\tilde{F}(X\otimes Y,p\otimes q) = \Supp{F(p\otimes q)}$.
	Since $F$ is strong monoidal with strongators $\phi$, we have a commutative diagram
	\begin{equation}
		\begin{tikzcd}[column sep=large]
			F(A)\otimes F(B)\ar[r,"F(p)\otimes F(q)"] \ar[d,"\phi"] & F(X)\otimes F(Y) \ar[d,"\phi"] \\
			F(A\otimes B) \ar[r,"F(p\otimes q)"] & F(X\otimes Y)
		\end{tikzcd}
	\end{equation}
	Therefore the universal property of supports gives us the first isomorphism in
	\begin{equation}
		\label{eq:strongator_tildeF}
		\begin{tikzcd}[column sep=large]
			\Supp{F(p\otimes q)} \ar[r, "\cong"] & \Supp{F(p)\otimes F(q)} \ar[r, "\cong"] & \Supp{F(p)} \otimes \Supp{F(q)},
		\end{tikzcd}
	\end{equation}
	where the second isomorphism is by \cref{thm:supp_multiplicativity}.
	We claim that the composite isomorphisms are coherence isomorphisms for $\tilde{F}$.
	Indeed to prove naturality on $f_i \colon (X_i,p_i)\to (Y_i,q_i)$ for $i=1,2$, we use the commuting diagram
	\[
		\begin{tikzcd}[column sep=large]
			\Supp{F(p_1)}\otimes \Supp{F(p_2)} \ar[r,"\cong"]\ar[d,"\suppinc{}\otimes \suppinc{}"] & \Supp{F(p_1 \otimes p_2)} \ar[r,"\tilde{F}(f_1 \otimes f_2)"]\ar[d,"\suppinc{}"] & \Supp{F(q_1 \otimes q_2)} \ar[r, "\cong"]\ar[d,"\suppinc{}"] & \Supp{F(q_1)}\otimes \Supp{F(q_2)} \ar[d,"\suppinc{}\otimes \suppinc{}"]\\
			F(X_1)\otimes F(X_2) \ar[rrr, bend right=10, "F(f_1)\otimes F(f_2)" below]\ar[r,"\phi"] & F(X_1\otimes X_2) \ar[r,"F(f_1 \otimes f_2)"] & F(Y_1 \otimes Y_2) \ar[r,"\phi^{-1}"] & F(Y_1)\otimes F(Y_2)
		\end{tikzcd}
	\] 
	From this we infer that the upper row composes to $\tilde{F}(f_1)\otimes \tilde{F}(f_2)$ because this morphism makes the outer diagram commute and $\suppinc{}\otimes \suppinc{}$ is a monomorphism.
	Similarly, one checks all the other properties for $\tilde{F}$ to be a strong symmetric monoidal functor.

	To see that $\tilde{F}$ is actually a Markov functor, we note that the top horizontal row in the commutative diagram
	\begin{equation}
		\begin{tikzcd}
			\Supp{F(p)} \ar[r,"\tilde{F}(\cop)"]\ar[d,"\suppinc{}"] &\Supp{F(p\otimes p)} \ar[r,"\cong"]\ar[d,"\suppinc{}"] & \Supp{F(p)}\otimes \Supp{F(p)} \ar[d,"\suppinc{}\otimes \suppinc{}"]\\
			F(X)\ar[r,"F(\cop)"]\ar[rr,bend right=20,"\cop" below] & F(X\otimes X) \ar[r,"\phi^{-1}"] & F(X) \otimes F(X)
		\end{tikzcd}
	\end{equation}
	is necessarily the copy morphism because $\suppinc{}$ is deterministic and $\suppinc{}\otimes \suppinc{}$ is a monomorphism.

	For ac-monotonicity, suppose now that we have $[f] \colon (X,p) \to (Y,q)$ and $[g] \colon (Z,r) \to (Y,q)$ with $[f] \ll [g]$.
	By \cref{lem:abscont_suppcompletion}, this means that $f\comp p \ll g\comp r$ holds in $\cC$.
	Since $F$ is ac-monotone, we obtain $F(f) \comp F(p) \ll F(g) \comp F(r)$, which gives
	\begin{equation}
		F(f) \comp \suppinc{F(p)} \acsim F(f) \comp F(p) \ll F(g) \comp F(r) \acsim F(g) \comp \suppinc{F(r)}
	\end{equation}
	via a combination of \cref{lem:ac_mon_comp,lem:inc_sim_p}.
	\Cref{eq:supp_comp_functor} lets us then deduce
	\begin{equation}
		\suppinc{F(q)} \comp \tilde{F}([f]) \ll \suppinc{F(q)} \comp \tilde{F}([g]).
	\end{equation}
	By regularity (Implication \eqref{eq:regularity}), this yields the desired $\tilde{F}([f]) \ll \tilde{F}([g])$.

	Finally, we need to show that $\tilde{F}$ is support-preserving. 
	To this end we consider a generic morphism in $\suppcomp$ with a support, i.e.\ an atomic $[g]\colon (Z,r)\to (Y,q)$, where $r$ is an atomic morphism of type $A \to Z$ in $\cC$. 
	By \cref{lem:suppcomp_complete}, we know that its support is $(Y,g\comp r)$ and support inclusion is $[\id_Y] \colon (Y,g\comp r) \to (Y,q)$.

	Since $[g]$ factors across $(Y,g\comp r)$, by functoriality of $\tilde{F}$ we get that $\tilde{F}([g])$ factors across $\Supp{F(g \comp r)}$ via
	\begin{equation}
		\tilde{F}([\id_Y]) \colon \Supp{F(g\comp r)}\to \Supp{F(q)}.
	\end{equation}
	By \cref{lem:factoring_suppincs} and the definition of $\tilde{F}$ via \cref{eq:supp_comp_functor}, $\tilde{F}([\id_Y])$ has support $\Supp{F(g \comp r)}$ and is its own support inclusion.

	By \cref{lem:inc_sim_p}, we have that $[g]$ is absolutely bicontinuous with respect to its own support inclusion $[\id_Y]$.
	Since we have already established that $\tilde{F}$ is ac-monotone, we get $\tilde{F}([g]) \acsim \tilde{F}([\id_Y])$.
	Using \cref{prop:abs_bicont_same_supp}~\ref{it:abs_bicont}, we obtain that $\tilde{F}([g])$ has support ${\Supp{F(g \comp r)} = \tilde{F}(Y,g \comp r)}$ with support inclusion given by $\tilde{F}([\id_Y])$.
	This means that $\tilde{F}$ is indeed support-preserving.
\end{proof}

\subsection{The input-output relation functor}
\label{sec:rel}

In this and the next subsections, we present several applications of supports and absolute continuity.
These are not otherwise used in the rest of the paper.

For a morphism $p \colon A \to X$ in $\finstoch$, it is sometimes of interest to consider the set of all pairs $(a, x) \in A \times X$ satisfying $p(x|a) > 0$.
This set is a relation from $A$ to $X$ which we call the \emph{input-output relation}, since it assigns to each input $a$ the set of possible outputs $x$.
Mapping every stochastic matrix $p$ to its input-output relation defines an identity-on-objects functor $\finstoch \to \setmulti$.
As we show below, one can associate an input-output relation to morphisms in any suitably well-behaved Markov category $\cC$, and this results in a Markov functor $\cC \to \setmulti$.
Whenever this is possible, one can say that every morphism has a consistent ``possibilistic shadow'' \cite{gonda2023framework}. 

Concretely, we map each object $A$ of $\cC$ to the set of its deterministic states 
\begin{equation}\label{eq:beh_object}
	\detp{A} \coloneqq \cC_\det(I,A),
\end{equation}
which one may also think of as ``points'' of $A$.
Further, we map a morphism $p \colon A \to X$ to the \newterm{input-output relation} $\pbeh{p}$ given by
\begin{equation}\label{eq:beh_morphism}
\pbeh{p} \coloneqq \Set*[\big]{ (a, x) \in \detp{A} \times \detp{X}  \given  p \comp a \gg x }.
\end{equation}
In other words, this relation is defined by
\begin{equation}
	a \pbeh{p} x  \quad \coloniff \quad p\comp a \gg x,
\end{equation}
and we think of this as stating that the output $x$ is possible for $p$ under input $a$.
In order to ensure the functoriality of the assignment $p \mapsto \pbeh{p}$, we need an additional assumption.

\begin{definition}\label{def:point_liftings}
	A Markov category $\cC$ has \newterm{point liftings} if for all $p \colon A \to X$ and all $x \in \detp{X}$ satisfying $p \gg x$, there exists an $a \in \detp{A}$ satisfying $p\comp a \gg x$.
\end{definition}

Intuitively, this says that every output that can occur under $p$ must be able to arise from \emph{some} input.

\begin{remark}
	\Cref{def:point_liftings} can be easier to check if supports exist, which is going to be the case in the assumptions of \cref{thm:Rel_embedding} below.
	For in this case, \cref{lem:supp_crit} implies that for every $a \in \detp{A}$, we have a canonical bijection
	\begin{equation}\label{eq:beh_morphism_supp}
		\Set{ x \in \detp{X} \given a \pbeh{p} x } \cong \detp{\Supp{p \comp a}}
	\end{equation}
	between points absolutely continuous with respect to $p \comp a$ on the left and deterministic states ``within the support of $p\comp a$'' on the right.
	The point lifting condition then says that each point $x \in \detp{X}$ in the support of $p$ is also in the support of the image of some point $a \in \detp{A}$ \mbox{under $p$}.
\end{remark}

\begin{example}
	$\finstoch$ has point liftings because the support of a stochastic matrix is the union of its ``pointwise'' supports (i.e.\ supports of its columns).
\end{example}

\begin{example}
	$\borelstoch$ also has point liftings.
	Indeed, by the characterization of absolute continuity of \cref{ex:abs_cont_stoch} used in the contrapositive, $p \gg x$ means that if $x$ is an element of a measurable set $S \in \Sigma_X$, then $p(S|a) > 0$ holds for some $a \in A$.
	Taking $S = \{x\}$ shows that there is $a \in A$ such that $x$ is an atom of the probability measure $p(\ph|a)$.
	This is why we also have the required $p \comp a \gg x$.
\end{example}

For an object $A \in \cC$, we have $\discard_A \gg \id_I$ as soon as $\cC(I,A) \neq \emptyset$, since then one can witness the relevant instance of the absolute continuity implication~\eqref{eq:abs_cont} by pre-composing with any state on $A$.
In this case, the point lifting property implies that $A$ also must have a deterministic state.

\begin{example}
	Let $G$ be a finite group, and let $\finstoch^G$ be the Markov category of finite $G$-sets together with $G$-equivariant Markov kernels.
	Then $\finstoch^G$ does not have point liftings, unless $G$ is the trivial group.
	Indeed the set $A \coloneqq G$ itself with its canonical $G$-action has a state given by the uniform distribution, but it has no deterministic state unless $G$ is trivial.
\end{example}

\begin{theorem}\label{thm:Rel_embedding}
	Let $\cC$ be a causal Markov category with supports and point liftings.
	Then there is an \newterm{input-output relation functor} $\Upsilon \colon \cC \to \setmulti$ given on objects and morphisms by
	\[
		\Upsilon(A) \coloneqq \detp{A}, \qquad \Upsilon(p) \coloneqq \pbehord{p},
	\]
	as defined in \eqref{eq:beh_object} and \eqref{eq:beh_morphism}.
	Moreover, $\Upsilon$ is a Markov functor.
\end{theorem}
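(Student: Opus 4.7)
The plan is to verify in turn that $\Upsilon$ preserves identities and composition, then the tensor product on objects and morphisms, and finally the copy and discard morphisms. The main technical inputs will be Lemma~\ref{lem:ac_mon_comp} (causality makes post-composition $\gg$-monotone), Lemma~\ref{lem:factor_ac} (factorization implies $\gg$), Proposition~\ref{prop:ac_tensor} ($\otimes$ is $\gg$-monotone), Lemma~\ref{lem:inc_sim_p} ($\suppinc{} \acsim p$), and the point-lifting hypothesis. For preservation of identities, reflexivity of $\gg$ shows that the diagonal sits inside $\pbehord{\id_A}$. Conversely, if $a, x \in \detp{A}$ satisfy $a \gg x$, I would instantiate the defining implication of $\gg$ with the deterministic parallel pair $f \coloneqq \id_A$ and $g \coloneqq a \comp \discard_A$: using $\discard_A \comp a = \id_I = \discard_A \comp x$, we get $f \comp a = a = g \comp a$, so $f \ase{a} g$ and therefore $f \ase{x} g$, which reads $x = a \comp \id_I = a$.

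For composition preservation, the easy direction is immediate: if $a \pbeh{p} x$ and $x \pbeh{q} z$, then Lemma~\ref{lem:ac_mon_comp} and transitivity give $q \comp p \comp a \gg q \comp x \gg z$. The reverse direction is the main obstacle, and is the step that really uses both supports and point liftings. Starting from $q \comp p \comp a \gg z$, I let $\suppinc{} \colon \Supp{p \comp a} \hookrightarrow X$ be the support inclusion; by Lemma~\ref{lem:inc_sim_p} and Lemma~\ref{lem:ac_mon_comp} we obtain $q \comp \suppinc{} \acsim q \comp p \comp a$, so in particular $q \comp \suppinc{} \gg z$. Point lifting applied to $q \comp \suppinc{}$ then produces $\tilde a \in \detp{\Supp{p \comp a}}$ with $q \comp \suppinc{} \comp \tilde a \gg z$. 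Setting $x \coloneqq \suppinc{} \comp \tilde a \in \detp{X}$, I get $q \comp x \gg z$ by construction, and $p \comp a \gg x$ from $\suppinc{} \gg \suppinc{} \comp \tilde a = x$ (Lemma~\ref{lem:factor_ac}) combined with $\suppinc{} \acsim p \comp a$.

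The monoidal structure is handled by more or less routine checks: $\Upsilon(I) = \{\id_I\}$ is the unit of $\setmulti$, and cartesianness of $\cC_\det$ yields $\detp{A \otimes B} \cong \detp{A} \times \detp{B}$. To see $\Upsilon(p \otimes q) = \Upsilon(p) \times \Upsilon(q)$, one direction is Proposition~\ref{prop:ac_tensor}; for the other, I would post-compose $(p \comp a) \otimes (q \comp b) \gg x \otimes y$ with the two canonical projections $\id_X \otimes \discard_Y$ and $\discard_X \otimes \id_Y$ and apply Lemma~\ref{lem:ac_mon_comp} to extract $p \comp a \gg x$ and $q \comp b \gg y$. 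Preservation of copy follows automatically: determinism of $a$ gives $\cop_A \comp a = a \otimes a$, so $a \pbeh{\cop_A} (x, y)$ reduces via the tensor analysis to $a \gg x$ and $a \gg y$, and the identity step then forces $x = y = a$. Preservation of discard is immediate from $\discard_A \comp a = \id_I$. The only genuinely non-routine step is the reverse direction of composition preservation, where the intermediate deterministic point of $X$ witnessing the factorization cannot be produced from $X$ directly and must instead be extracted from the support of $p \comp a$ via point liftings.
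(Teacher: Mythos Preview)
Your argument for functoriality, the monoidal structure, and copy preservation follows essentially the same route as the paper's (your identity-preservation step is a pleasant variant: the paper instead invokes \cref{cor:split_mono_supp} to note that a deterministic state is its own support, so $a \gg b$ forces $b$ to factor across $a$). However, you have skipped one genuine obligation: you never check that $\Upsilon(p)$ is a morphism of $\setmulti$ at all. Recall from \cref{ex:examples} that a morphism in $\setmulti$ is a multivalued function with \emph{non-empty} values, so you must show that for every $a \in \detp{A}$ there exists some $x \in \detp{X}$ with $p \comp a \gg x$. This is not automatic and uses both hypotheses: from the support factorization $p \comp a = \suppinc{p \comp a} \comp \suppfactor{p \comp a}$ one sees that $\Supp{p \comp a}$ carries a state, whence $\discard_{\Supp{p \comp a}} \gg \id_I$, and then point liftings produce a \emph{deterministic} state $\tilde a \in \detp{\Supp{p \comp a}}$; composing with the support inclusion gives the required $x$. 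Without this step, $\Upsilon$ would only land in $\rel$, and the paper's later counterexample (the Lebesgue measure in $\borelstoch$) shows exactly how the construction then fails.
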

\begin{proof}
	We show first that $\Upsilon(p)$ is actually a morphism in $\setmulti$ for every $p \colon A \to X$.
	This amounts to proving that for every $a \in \detp{A}$, there is $x \in \detp{X}$ with $a \pbeh{p} x$, or equivalently $p \comp a \gg x$.
	Indeed the support object $\Supp{p \comp a}$ has a state given by the support factorization $\suppfactor{p \comp a}$, and by what was noted above this implies that $\Supp{p \comp a}$ also has a deterministic state.
	Composing this state with the support inclusion $\suppinc{p \comp a}$ produces $x \in \detp{X}$.
	The desired $p \comp a \gg x$ then follows by $p \comp a \acsim \suppinc{p \comp a}$ and \cref{lem:factor_ac}.

	Next, we show that $\Upsilon$ is a functor.
	Proving that $\Upsilon$ preserves identities amounts to showing that if $a \gg b$ for $a,b \in \detp{A}$, then $a = b$.
	Since every deterministic state is its own support by \cref{cor:split_mono_supp}, the assumption $a \gg b$ implies that $b$ factors across $a$, and is thus equal to $a$.

	Consider now arbitrary morphisms $p \colon A \to X$ and $f \colon X \to Y$.
	We need to show that for every deterministic $a \colon I \to A$ and $y \colon I \to Y$, we have
	\begin{equation}
		a \pbeh{f\comp p} y \quad \iff \quad a \mathbin{\left( \pbeh{f} \circ \pbeh{p} \right)} y,
	\end{equation}
	where $\pbeh{f} \circ \pbeh{p}$ is the composite relation.
	This amounts to showing 
	\begin{equation}\label{eq:shadow_split_iff}
		f \comp p\comp a \gg y   \quad \iff \quad  \exists \, x \in \detp{X} \; \colon \;  p \comp a \gg x \text{ and } f\comp x \gg y.
	\end{equation}
	We consider both implications.
	\begin{itemize}
		\item[$(\Leftarrow)$:] Using the causality assumption and \cref{lem:ac_mon_comp}, we have 
			\begin{equation}
				p\comp a \gg x  \quad \implies \quad  f\comp p\comp a \gg f\comp x.
			\end{equation}
			Together with the transitivity of $\gg$ and the assumed $f\comp x \gg y$, this gives the desired $f\comp p\comp a \gg y$.

		\item[$(\Rightarrow)$:] We now assume that $f\comp p\comp a \gg y$ holds.
			Since $\suppinc{p\comp a} \gg p\comp a$ holds by \cref{lem:inc_sim_p}, we also have $f \comp \suppinc{p\comp a} \gg f\comp p\comp a$ by \cref{lem:ac_mon_comp}, and therefore
			\begin{equation}
				\label{eq:rel_fun_proof}
				f \comp \suppinc{p\comp a} \gg y
			\end{equation}
			holds by the transitivity of $\gg$.
			Hence, by point liftings, there is a deterministic $\tilde{x} \colon I \to \Supp{p\comp a}$ that satisfies
			\begin{equation}
				f \comp \suppinc{p\comp a} \comp \tilde{x} \gg y.
			\end{equation}
			If we define $x \coloneqq \suppinc{p\comp a} \comp \tilde{x}$, then $f x \gg y$ is necessarily true.
			The required $p\comp a \gg x$ then holds by $p\comp a \gg \suppinc{p \comp a} \gg \suppinc{p\comp a} \comp \tilde{x}$, where the first relation is by \cref{lem:inc_sim_p} again and the second by \cref{lem:factor_ac}.
			Since $x$ is deterministic, this concludes the proof of the forward implication \mbox{in \eqref{eq:shadow_split_iff}}. 
	\end{itemize}

	Having shown that $\Upsilon$ is a functor, let us provide the details of its strong monoidal structure.
	First, the monoidal structure morphisms are the relations given by the graphs of the functions
	\begin{equation}
		\begin{split}
			\psi_0 \: \colon \: I & \longrightarrow \detp{I} \\
			\sngltn & \longmapsto \id_I
		\end{split}
	\end{equation}
	where the monoidal unit $I \in \setmulti$ is a singleton set with element $\sngltn$, and
	\begin{equation}
		\begin{split}
			\psi_{A,B} \: \colon \: \detp{A} \otimes \detp{B}  &\longrightarrow \detp{A \otimes B} \\
			(a,b) &\longmapsto a \otimes b.
		\end{split}
	\end{equation}
	The only non-trivial condition to check in order for these to equip $\Upsilon$ with the structure of a strong symmetric monoidal functor is the naturality of $\psi$.
	This amounts to showing that, for all morphisms $p \colon A \to X$ and $q \colon B \to Y$ and all deterministic states
	\[
		a \in \detp{A}, \qquad b \in \detp{B}, \qquad x \in \detp{X}, \qquad y \in \detp{Y},
	\]
	we have
	\begin{equation}
		(a \otimes b) \pbeh{p \otimes q} (x \otimes y) \quad \iff \quad a \pbeh{p} x \: \land \: b \pbeh{q} y,
	\end{equation}
	or equivalently
	\begin{equation}
		p \comp a \otimes q\comp b \gg x \otimes y  \quad \iff \quad  p\comp a \gg x \: \land \: q\comp b \gg y.
	\end{equation}
	The right-to-left implication is clear by \cref{prop:ac_tensor}.
	In the other direction, we can post-compose with marginalization maps in order to obtain the claim from \cref{lem:ac_mon_comp}.

	To conclude the proof that $\Upsilon$ is a Markov functor, it remains to be shown that we have $\Upsilon(\cop_A) = \psi_{A,A} \comp \cop_{\Upsilon(A)}$ for every $A \in \cC$.
	This amounts to proving that for all $a, b, c \in \detp{A}$, we have
	\begin{equation}
		\cop_A {}\comp a \gg b \otimes c  \quad \iff \quad  a = b = c.
	\end{equation}
	Using $\cop_A {}\comp a = a \otimes a$, this follows by the same arguments as in the previous paragraph.
\end{proof}

\begin{example}
	\label{ex:finstoch_relfunctor}
	In $\finstoch$, the Markov functor $\Upsilon \colon \finstoch \to \setmulti$ is precisely the input-output relation functor discussed at the beginning of this subsection.
\end{example}

\begin{example}
	$\setmulti$ also satisfies the assumptions of \cref{thm:Rel_embedding}, and the associated input-output relation functor $\Upsilon \colon \setmulti \to \setmulti$ is the identity functor.
\end{example}

\begin{example}
	One may wonder how relevant the assumption of supports in \cref{thm:Rel_embedding} really is, given that its statement does not refer to supports in any way.
	To see that merely assuming point liftings is not sufficient, consider $\borelstoch$ and let $\lambda \colon I \to [0,1]$ be the Lebesgue measure, or similarly any other non-atomic measure on a standard Borel space.
	Then ${\Upsilon(\lambda) = \pbeh{\lambda}}$ is the empty relation since $\lambda$ has no atoms.
	We could then still hope to get a functor to $\rel$ rather than to $\setmulti$.
	If functoriality held, then also $\Upsilon(\discard_{[0,1]} {}\comp \lambda)$ would have to be the empty relation.
	But this is clearly equal to $\Upsilon(\discard_I) = \Upsilon(\id_I) = \id_{\{ \sngltn \}}$, which is not the empty relation.
	Therefore functoriality does not hold.

	Incidentally, looking at the proof around \eqref{eq:rel_fun_proof} shows that the proof would go through if $\lambda$ had a support.
	We therefore obtain another proof of the fact that $\lambda$ does not have a support, as in \cref{thm:borelstoch_supports}.
\end{example}

In \cite[Definition~IV.4]{stein2021conditioning}, the property of a Markov category with ``precise supports'' was given in the context of conditioning.
Note that this notion refers only to the absolute continuity relation $\gg$ rather than to supports in the sense of this paper.\footnotemark{}
\footnotetext{Also, the definition of absolute continuity used in \cite{stein2021conditioning} was still the ``old'' one without the extra input wire in \cref{def:ac}.}%
More concretely, a Markov category $\cC$ is said to have \newterm{precise supports} if, for all $x \in \detp{X}$ and $y \in \detp{Y}$ and arbitrary morphisms $p \colon I \to X$ and $f \colon X \to Y$, the relation
\begin{equation}\label{eq:precise_support_1}
	{%
		\tikzstyle{every picture}=[tikzfig]%
		\begin{tikzpicture}
			\begin{pgfonlayer}{nodelayer}
				\node [style=none] (0) at (0, 0) {$\gg$};
				\node [style=bn] (1) at (-3.25, -0.25) {};
				\node [style=morphism] (2) at (-2.25, 1) {$f$};
				\node [style=none] (3) at (-2.25, 1) {};
				\node [style=none] (4) at (-4.25, 1) {};
				\node [style=none] (5) at (-2.25, 2) {};
				\node [style=none] (6) at (-2.25, 2.5) {$Y$};
				\node [style=none] (7) at (-4.25, 2) {};
				\node [style=none] (8) at (-4.25, 2.5) {$X$};
				\node [style=state] (9) at (-3.25, -1.25) {$p$};
				\node [style=state] (10) at (2.25, -0.5) {$x$};
				\node [style=none] (11) at (2.25, 1) {};
				\node [style=none] (12) at (2.25, 1.5) {$X$};
				\node [style=state] (13) at (4, -0.5) {$y$};
				\node [style=none] (14) at (4, 1) {};
				\node [style=none] (15) at (4, 1.5) {$Y$};
			\end{pgfonlayer}
			\begin{pgfonlayer}{edgelayer}
				\draw [in=-90, out=15] (1) to (3.center);
				\draw [in=-90, out=165] (1) to (4.center);
				\draw (4.center) to (7.center);
				\draw (9) to (1);
				\draw (2) to (5.center);
				\draw (10) to (11.center);
				\draw (13) to (14.center);
			\end{pgfonlayer}
		\end{tikzpicture}
	}%
\end{equation}
is equivalent to 
\begin{equation}\label{eq:precise_support_2}
	{%
		\tikzstyle{every picture}=[tikzfig]%
		\begin{tikzpicture}
			\begin{pgfonlayer}{nodelayer}
				\node [style=none] (0) at (-5.5, 0) {$\gg$};
				\node [style=none] (1) at (-7.5, 1) {};
				\node [style=none] (2) at (-7.5, 1.5) {$X$};
				\node [style=state] (3) at (-7.5, -0.5) {$p$};
				\node [style=state] (4) at (-3.5, -0.5) {$x$};
				\node [style=none] (5) at (-3.5, 1) {};
				\node [style=none] (6) at (-3.5, 1.5) {$X$};
				\node [style=none] (7) at (0, 0) {and};
				\node [style=none] (8) at (5.5, 0) {$\gg$};
				\node [style=none] (9) at (3.5, 1.25) {};
				\node [style=none] (10) at (3.5, 1.75) {$Y$};
				\node [style=state] (11) at (3.5, -1) {$x$};
				\node [style=state] (12) at (7.5, -0.5) {$y$};
				\node [style=none] (13) at (7.5, 1) {};
				\node [style=none] (14) at (7.5, 1.5) {$Y$};
				\node [style=morphism] (15) at (3.5, 0.25) {$f$};
			\end{pgfonlayer}
			\begin{pgfonlayer}{edgelayer}
				\draw (4) to (5.center);
				\draw (3) to (1.center);
				\draw (12) to (13.center);
				\draw (11) to (9.center);
			\end{pgfonlayer}
		\end{tikzpicture}
	}%
\end{equation}
This property follows from point liftings in a causal Markov category.
\begin{proposition}\label{prop:precise_supports}
	If a causal Markov category $\cC$ has point liftings, then it also has precise supports.
\end{proposition}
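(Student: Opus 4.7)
The plan is to mimic the argument establishing equivalence \eqref{eq:shadow_split_iff} within the proof of \cref{thm:Rel_embedding}, but to bypass the use of supports by applying point liftings to the joint morphism $h \coloneqq (\id_X \otimes f) \comp \cop_X \colon X \to X \otimes Y$ and using the rigidity of deterministic states. Interpreting \eqref{eq:precise_support_1} as the joint absolute continuity $(\id_X \otimes f) \comp \cop_X \comp p \gg x \otimes y$ and \eqref{eq:precise_support_2} as the conjunction ``$p \gg x$ and $f \comp x \gg y$'', the two directions would go as follows.

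For the direction \eqref{eq:precise_support_2} $\Rightarrow$ \eqref{eq:precise_support_1}, I would start from $p \gg x$ and post-compose with $(\id_X \otimes f) \comp \cop_X$ via \cref{lem:ac_mon_comp} (which uses causality), obtaining $(\id_X \otimes f) \comp \cop_X \comp p \gg x \otimes (f \comp x)$, where the simplification on the right uses $\cop_X \comp x = x \otimes x$ since $x$ is deterministic. Then, by \cref{prop:ac_tensor} applied to $\id_X \gg x$ (trivial) and $f \comp x \gg y$, I obtain $x \otimes (f \comp x) \gg x \otimes y$, and transitivity of $\gg$ closes the argument.

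For the converse \eqref{eq:precise_support_1} $\Rightarrow$ \eqref{eq:precise_support_2}: marginalizing the $Y$-output by post-composing $(\id_X \otimes f) \comp \cop_X \comp p \gg x \otimes y$ with $\id_X \otimes \discard_Y$ immediately gives $p \gg x$ via \cref{lem:ac_mon_comp} and $\discard_Y \comp f = \discard_X$. The main task is to extract $f \comp x \gg y$. For this, I would apply \cref{lem:factor_ac} to the factorization $(\id_X \otimes f) \comp \cop_X \comp p = h \comp p$, which yields $h \gg h \comp p$, and hence by transitivity with the assumption one obtains $h \gg x \otimes y$. Point liftings applied to $h$ at the deterministic state $x \otimes y$ then produces $x_0 \in \detp{X}$ with $h \comp x_0 \gg x \otimes y$; using determinism of $x_0$, this simplifies to $x_0 \otimes (f \comp x_0) \gg x \otimes y$. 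Marginalizing out $Y$ gives $x_0 \gg x$, and since both are deterministic states on $X$, \cref{cor:split_mono_supp} together with the fact that $I$ is terminal in a semicartesian category forces $x_0 = x$; marginalizing out $X$ then provides $f \comp x = f \comp x_0 \gg y$, as desired.

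The main obstacle that this plan navigates around is the absence of supports in the hypotheses of the proposition: in the proof of \eqref{eq:shadow_split_iff} within \cref{thm:Rel_embedding}, the support inclusion of $p \comp a$ was used to guarantee that the state produced by point liftings lies ``within the support of $p$''. Here, this role is played instead by applying point liftings to the joint morphism $h$, after which determinism of $x_0$ combined with \cref{cor:split_mono_supp} pins down $x_0$ to be the originally given $x$, so that the resulting $f \comp x_0 \gg y$ is automatically the statement we need.
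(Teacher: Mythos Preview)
Your proof is essentially the same as the paper's: both apply point liftings to $h = (\id_X \otimes f)\comp\cop_X$ after using \cref{lem:factor_ac}, marginalize to get $x_0 \gg x$ and $f\comp x_0 \gg y$, and then invoke \cref{cor:split_mono_supp} to force $x_0 = x$; the converse direction likewise proceeds via \cref{lem:ac_mon_comp} followed by \cref{prop:ac_tensor}. One minor slip: in the direction \eqref{eq:precise_support_2} $\Rightarrow$ \eqref{eq:precise_support_1} you should invoke $x \gg x$ (reflexivity) rather than $\id_X \gg x$ when applying \cref{prop:ac_tensor}, since the latter would only give $\id_X \otimes (f\comp x) \gg x \otimes y$ rather than the $x \otimes (f\comp x) \gg x \otimes y$ you need.
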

\begin{proof}
	First, note that we can use \cref{lem:ac_mon_comp} to marginalize $Y$ in Relation \eqref{eq:precise_support_1} to get $p \gg x$.
	Furthermore, Relation \eqref{eq:precise_support_1} also implies
	\begin{equation}
		{%
			\tikzstyle{every picture}=[tikzfig]%
			\begin{tikzpicture}
				\begin{pgfonlayer}{nodelayer}
					\node [style=none] (0) at (0, 0) {$\gg$};
					\node [style=bn] (1) at (-3.25, -0.25) {};
					\node [style=morphism] (2) at (-2.25, 1) {$f$};
					\node [style=none] (3) at (-2.25, 1) {};
					\node [style=none] (4) at (-4.25, 1) {};
					\node [style=none] (5) at (-2.25, 2) {};
					\node [style=none] (6) at (-2.25, 2.5) {$Y$};
					\node [style=none] (7) at (-4.25, 2) {};
					\node [style=none] (8) at (-4.25, 2.5) {$X$};
					\node [style=state] (10) at (2.25, -0.5) {$x$};
					\node [style=none] (11) at (2.25, 1) {};
					\node [style=none] (12) at (2.25, 1.5) {$X$};
					\node [style=state] (13) at (4, -0.5) {$y$};
					\node [style=none] (14) at (4, 1) {};
					\node [style=none] (15) at (4, 1.5) {$Y$};
					\node [style=none] (16) at (-3.25, -1.5) {$X$};
					\node [style=none] (17) at (-3.25, -1) {};
				\end{pgfonlayer}
				\begin{pgfonlayer}{edgelayer}
					\draw [in=-90, out=15] (1) to (3.center);
					\draw [in=-90, out=165] (1) to (4.center);
					\draw (4.center) to (7.center);
					\draw (2) to (5.center);
					\draw (10) to (11.center);
					\draw (13) to (14.center);
					\draw (17.center) to (1);
				\end{pgfonlayer}
			\end{tikzpicture}
		}%
	\end{equation}
	by \cref{lem:factor_ac}.
	Using point liftings, we get that there exists a deterministic $a \colon I \to X$ such that we have 
	\begin{equation}
		{%
			\tikzstyle{every picture}=[tikzfig]%
			\begin{tikzpicture}
				\begin{pgfonlayer}{nodelayer}
					\node [style=none] (0) at (0, 0) {$\gg$};
					\node [style=morphism] (2) at (-2.5, 0.5) {$f$};
					\node [style=none] (3) at (-2.5, 0.5) {};
					\node [style=none] (4) at (-4.5, 0.5) {};
					\node [style=none] (5) at (-2.5, 1.5) {};
					\node [style=none] (6) at (-2.5, 2) {$Y$};
					\node [style=none] (7) at (-4.5, 1.5) {};
					\node [style=none] (8) at (-4.5, 2) {$X$};
					\node [style=state] (10) at (2.5, -0.5) {$x$};
					\node [style=none] (11) at (2.5, 1) {};
					\node [style=none] (12) at (2.5, 1.5) {$X$};
					\node [style=state] (13) at (4.25, -0.5) {$y$};
					\node [style=none] (14) at (4.25, 1) {};
					\node [style=none] (15) at (4.25, 1.5) {$Y$};
					\node [style=state] (16) at (-4.5, -0.75) {$a$};
					\node [style=state] (17) at (-2.5, -0.75) {$a$};
				\end{pgfonlayer}
				\begin{pgfonlayer}{edgelayer}
					\draw (4.center) to (7.center);
					\draw (2) to (5.center);
					\draw (10) to (11.center);
					\draw (13) to (14.center);
					\draw (17) to (3.center);
					\draw (16) to (4.center);
				\end{pgfonlayer}
			\end{tikzpicture}
		}%
	\end{equation}
	By \cref{lem:ac_mon_comp} again, we get $a \gg x$ and $f\comp a \gg y$ by marginalization.
	Note that $a$ is its own support inclusion (\cref{cor:split_mono_supp}), so that $x$ factors across $a$.
	In other words, we have $x = a$, so that Condition \eqref{eq:precise_support_2} follows.

	Conversely, using \cref{lem:ac_mon_comp}, we can post-compose the relation $p \gg x$ appearing in \eqref{eq:precise_support_2} to get the left-most relation in
	\begin{equation}
		{%
			\tikzstyle{every picture}=[tikzfig]%
			\begin{tikzpicture}
				\begin{pgfonlayer}{nodelayer}
					\node [style=none] (0) at (0, 0) {$\gg$};
					\node [style=bn] (1) at (-3, -0.25) {};
					\node [style=morphism] (2) at (-2, 1) {$f$};
					\node [style=none] (3) at (-2, 1) {};
					\node [style=none] (4) at (-4, 1) {};
					\node [style=none] (5) at (-2, 2) {};
					\node [style=none] (6) at (-2, 2.5) {$Y$};
					\node [style=none] (7) at (-4, 2) {};
					\node [style=none] (8) at (-4, 2.5) {$X$};
					\node [style=state] (9) at (-3, -1.25) {$p$};
					\node [style=bn] (10) at (3, -0.25) {};
					\node [style=morphism] (11) at (4, 1) {$f$};
					\node [style=none] (12) at (4, 1) {};
					\node [style=none] (13) at (2, 1) {};
					\node [style=none] (14) at (4, 2) {};
					\node [style=none] (15) at (4, 2.5) {$Y$};
					\node [style=none] (16) at (2, 2) {};
					\node [style=none] (17) at (2, 2.5) {$X$};
					\node [style=state] (18) at (3, -1.25) {$x$};
					\node [style=none] (19) at (5.75, 0) {$=$};
					\node [style=morphism] (21) at (10.25, 0.75) {$f$};
					\node [style=none] (22) at (10.25, 0.75) {};
					\node [style=none] (23) at (8.25, 0.75) {};
					\node [style=none] (24) at (10.25, 1.75) {};
					\node [style=none] (25) at (10.25, 2.25) {$Y$};
					\node [style=none] (26) at (8.25, 1.75) {};
					\node [style=none] (27) at (8.25, 2.25) {$X$};
					\node [style=state] (28) at (8.25, -0.75) {$x$};
					\node [style=state] (29) at (10.25, -0.75) {$x$};
					\node [style=none] (30) at (12.5, 0) {$\gg$};
					\node [style=state] (31) at (14.75, -0.75) {$x$};
					\node [style=none] (32) at (14.75, 0.75) {};
					\node [style=none] (33) at (14.75, 1.25) {$X$};
					\node [style=state] (34) at (16.5, -0.75) {$y$};
					\node [style=none] (35) at (16.5, 0.75) {};
					\node [style=none] (36) at (16.5, 1.25) {$Y$};
				\end{pgfonlayer}
				\begin{pgfonlayer}{edgelayer}
					\draw [in=-90, out=15] (1) to (3.center);
					\draw [in=-90, out=165] (1) to (4.center);
					\draw (4.center) to (7.center);
					\draw (9) to (1);
					\draw (2) to (5.center);
					\draw [in=-90, out=15] (10) to (12.center);
					\draw [in=-90, out=165] (10) to (13.center);
					\draw (13.center) to (16.center);
					\draw (18) to (10);
					\draw (11) to (14.center);
					\draw (23.center) to (26.center);
					\draw (21) to (24.center);
					\draw (29) to (22.center);
					\draw (28) to (23.center);
					\draw (31) to (32.center);
					\draw (34) to (35.center);
				\end{pgfonlayer}
			\end{tikzpicture}
		}%
	\end{equation}
	where the right-most relation follows from the assumed $f\comp x \gg y$ by \cref{prop:ac_tensor}.
	In conclusion, Condition \eqref{eq:precise_support_2} implies Condition \eqref{eq:precise_support_1}.
\end{proof}

\subsection{Categories of statistical models}\label{sec:statistical_models}

As another application of absolute continuity and supports, we now consider categories of statistical models.
Concrete measure-theoretic versions of such categories have been studied since the pioneering work of \v{C}encov~\cite{cencovcategories,cencovstatistical}.
Recent works in this direction include a paper by Jost, L\^e and Tran~\cite{jostmorphisms} and the thesis of Patterson~\cite{patterson2020models}.

Here, we introduce the category of parametric statistical models internal to any causal and representable Markov category. 
Subsequently, in \cref{prop:parametric_vs_unparametric} we use supports to show that in the measure-theoretic setting of standard Borel spaces, a parametric statistical model consisting of a family of distributions $p = (p_a)_{a \in A}$ is isomorphic to the model given by its image in $PX$, provided that this image is measurable.

We assume familiarity with representable Markov categories~\cite{fritz2023representable}; see \cref{sec:observational} for a brief recap.

\begin{definition}
	\label{def:statC}
	Let $\cC$ be a Markov category that is both causal and representable.
	Then its \newterm{category of statistical models} $\stat(\cC)$ is the symmetric monoidal category where:
	\begin{itemize}
		\item Objects are pairs $(X,p)$ where $X \in \cC$ and $p \colon A \to X$ is any morphism in $\cC$.
		\item Morphisms $(X,p) \to (Y,q)$ are $\as{p}$ equivalence classes of morphisms $f \colon X \to Y$ such that ${P(f) \comp p^\sharp \ll q^\sharp}$, or equivalently $(f\comp p)^\sharp \ll q^\sharp$, holds.
		\item Composition and symmetric monoidal structure are inherited from $\cC$.
	\end{itemize}
\end{definition}

The idea is that a pair $(X,p \colon A \to X)$ can be interpreted as a statistical model with $A$ as parameter space~\cite[Definition~14.1]{fritz2019synthetic}.
The absolute continuity condition on $f \colon X \to Y$ intuitively encodes the requirement that $f$ must map distributions on $X$ compatible with the model $p$ to distributions on $Y$ compatible with the model $q$ (\cref{ex:statistical_stoch}).
If $f, g \colon X \to Y$ satisfy $f \ase{p} g$, then they intuitively behave the same on all distributions that are compatible with $p$, and hence we consider them as representatives of the same morphism in $\stat(\cC)$.
The monoidal structure on objects of $\stat(\cC)$ is given by the monoidal product of morphisms in $\cC$, the idea being that if $(X,p \colon A \to X)$ and $(Y,q \colon B \to Y)$ are statistical models, then $(X\otimes Y, p \otimes q)$ is a statistical model with parameter space $A \otimes B$ such that the factors $X$ and $Y$ are modeled as independent.

For the following proof, note that in a causal Markov category, we have
\begin{equation}\label{eq:ac_from_distributions}
	P(f) \comp p^\sharp = (f \comp p)^\sharp \ll q^\sharp  \quad \implies \quad f\comp p \ll q
\end{equation}
by using post-composition with $\samp_Y$ and \cref{lem:ac_mon_comp}.

\begin{proposition}
	\label{prop:stat_category}
	$\stat(\cC)$ is a symmetric monoidal category.
\end{proposition}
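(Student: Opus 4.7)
The plan is to closely mirror the argument used for $\suppcomp$ in the proof of \cref{prop:suppcomp}, since the well-definedness issues are of the same flavour. There are three things to verify: that composition is well-defined both on representatives and modulo the $\as{p}$-equivalence relation; that the tensor product descends to $\stat(\cC)$; and that the symmetric monoidal coherence morphisms can be inherited from $\cC$.

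For composition of $[f] \colon (X,p) \to (Y,q)$ and $[g] \colon (Y,q) \to (Z,r)$, the main check is that $(g \comp f \comp p)^\sharp \ll r^\sharp$. I would use the naturality of the representation to write $(g \comp f \comp p)^\sharp = P(g) \comp (f \comp p)^\sharp$, then apply \cref{lem:ac_mon_comp} (which needs causality) together with transitivity of $\gg$ to chain the two hypotheses $(f \comp p)^\sharp \ll q^\sharp$ and $(g \comp q)^\sharp \ll r^\sharp$. For well-definedness modulo equivalence, replacing $f$ by $f' \ase{p} f$ preserves the class of the composite via \cref{prop:ase_props}. Replacing $g$ by $g' \ase{q} g$ is more delicate: one first passes from $(f \comp p)^\sharp \ll q^\sharp$ to $f \comp p \ll q$ by post-composition with $\samp_Y$ and causality, as in \eqref{eq:ac_from_distributions}, upgrading $g \ase{q} g'$ to $g \ase{f \comp p} g'$, and then \cref{lem:as_shift} yields $g \comp f \ase{p} g' \comp f$.

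For the monoidal product, I would define $(X,p) \otimes (X',p') \coloneqq (X \otimes X', p \otimes p')$ on objects and $[f] \otimes [f'] \coloneqq [f \otimes f']$ on morphisms. The nontrivial step is to show that $((f \otimes f') \comp (p \otimes p'))^\sharp \ll (q \otimes q')^\sharp$. Using the strong monoidal structure of the representation{\,\textemdash\,}encoded by a natural comparison $\nabla$ satisfying $(u \otimes v)^\sharp = \nabla \comp (u^\sharp \otimes v^\sharp)${\,\textemdash\,}this reduces to $(f \comp p)^\sharp \otimes (f' \comp p')^\sharp \ll q^\sharp \otimes q'^\sharp$, which is immediate from \cref{prop:ac_tensor}, followed by post-composition with $\nabla$ via \cref{lem:ac_mon_comp}. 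Well-definedness modulo equivalence is handled analogously to the composition case.

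Identities are represented by $\id_X$, and the associator, unitors, and symmetry morphisms are the equivalence classes of their counterparts in $\cC$; since they are deterministic and naturality of the tensor product makes their defining absolute continuity conditions hold as equalities, they lift without fuss, and the coherence diagrams commute in $\stat(\cC)$ because they already commute in $\cC$ at the level of representatives. The main obstacle throughout{\,\textemdash\,}though it is of a routine bookkeeping nature{\,\textemdash\,}is keeping track of the two kinds of ``sameness'' at play, namely almost-sure equivalence of representatives and absolute continuity of distributions, and the principal tools for navigating this interplay are the implication \eqref{eq:ac_from_distributions} and \cref{lem:as_shift}.
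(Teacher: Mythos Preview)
Your proposal is correct and follows essentially the same route as the paper's proof: both verify composition via $P(g)\comp (f\comp p)^\sharp \ll P(g)\comp q^\sharp \ll r^\sharp$ using \cref{lem:ac_mon_comp}, handle well-definedness in the second argument via \eqref{eq:ac_from_distributions} and \cref{lem:as_shift}, and treat the tensor product by first applying \cref{prop:ac_tensor} at the level of $(\ph)^\sharp$ and then post-composing with $\nabla$. The paper even remarks that its proof ``bears similarity to the analogous statement for the free support completion,'' exactly as you anticipated.
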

The proof bears similarity to the analogous statement for the free regular support completion, \cref{prop:suppcomp}.
\begin{proof}
	We first consider composition.
	If $[f] \colon (X,p) \to (Y,q)$ and $[g] \colon (Y,q) \to (Z,r)$ are morphisms, then also $[g\comp f] \colon (X,p) \to (Z,r)$ is, since
	\begin{equation}
		P(g\comp f) \comp p^\sharp = P(g) \comp P(f) \comp p^\sharp  \ll P(g) \comp q^\sharp \ll r^\sharp,
	\end{equation}
	where the middle step follows by \cref{lem:ac_mon_comp} upon composing the assumed relation ${P(f) \comp p^\sharp \ll q^\sharp}$ with $P(g)$.
	We also need to show that this composition is well-defined with respect to the equivalence classes. 
	So consider a morphism $f' \colon X \to Y$ with $f' \ase{p} f$.
	Then, by \cref{prop:ase_props} \ref{it:ase_post_comp}, we also have $g\comp f \ase{p} g\comp f'$, which amounts to the desired $[g\comp f] = [g\comp f']$.
	Similarly, if $g' \colon Y \to Z$ satisfies $g' \ase{q} g$, then we obtain $g \ase{f\comp p} g'$ by $f\comp p \ll q$, which holds by Implication \eqref{eq:ac_from_distributions}.
	Hence the required $g\comp f \ase{p} g'\comp f$ follows by \cref{lem:as_shift}.

	It is clear that the symmetric monoidal structure morphisms of $\cC$ also belong to $\stat(\cC)$.
	Moreover, \cref{prop:ac_tensor} shows that the tensor product of two morphisms respects the absolute continuity condition: 
	For $[f] \colon (X,p) \to (Y,q)$ and $[f'] \colon (X',p') \to (Y',q')$, we have
	\begin{equation}
		\label{eq:stat_tensor_ac}
		(f \comp p)^\sharp \otimes (f' \comp p')^\sharp \ll q^\sharp \otimes q'^\sharp
	\end{equation}
	by \cref{prop:ac_tensor} and the assumed $(f\comp p)^\sharp \ll q^\sharp$ and $(f'\comp p')^\sharp \ll q'^\sharp$.
	This is an absolute continuity relation between morphisms with codomain $PY \otimes PY'$.
	Now recall that in terms of the symmetric monoidal structure of $P$ given by
	\begin{equation}
		\nabla \: \colon \: PY \otimes PY' \longrightarrow P(Y \otimes Y'),
	\end{equation}
	we have 
	\begin{equation}
		(q \otimes q')^\sharp = \nabla \comp (q^\sharp \otimes q'^\sharp),
	\end{equation}
	and similarly for the left-hand side of \cref{eq:stat_tensor_ac}.
	Therefore composing this absolute continuity relation with $\nabla$ yields the desired
	\begin{equation}
		(f\comp p \otimes f'\comp p')^\sharp \ll (q \otimes q')^\sharp
	\end{equation}
	by \cref{lem:ac_mon_comp}.
	Finally, the well-definedness of the monoidal product with respect to $\ase{p}$-classes is straightforward: 
	For any $f_2$ satisfying $f_2 \ase{p} f$, then we also have $f_2 \otimes f' \ase{p \otimes p'} f \otimes f'$ by \mbox{\cref{prop:ase_props} \ref{it:ase_tensor}}.
\end{proof}

\begin{proposition}
	\label{prop:parametric_vs_unparametric}
	Let $p \colon A \to X$ be such that $p^\sharp \colon A \to PX$ has a support.
	Then there is an isomorphism
	\begin{equation}
		(X,p) \cong (X,\samp_X \comp \suppinc{p^\sharp})
	\end{equation}
	in $\stat(\cC)$.
\end{proposition}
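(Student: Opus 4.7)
The plan is to exhibit $[\id_X]$ as mutually inverse morphisms between $(X, p)$ and $(X, \samp_X \comp \iota)$ in $\stat(\cC)$, where $\iota \coloneqq \suppinc{p^\sharp}$. The key preparatory observation is that $(\samp_X \comp \iota)^\sharp = \iota$: since $\iota$ is deterministic (being a support inclusion) and trivially satisfies $\samp_X \comp \iota = \samp_X \comp \iota$, it meets the defining universal property of the sharp of $\samp_X \comp \iota$, and hence equals it by uniqueness of the sharp in a representable Markov category.

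Then I would verify that $[\id_X]$ defines a morphism $(X, p) \to (X, \samp_X \comp \iota)$ in $\stat(\cC)$. By the previous step, the required absolute continuity $(\id_X \comp p)^\sharp \ll (\samp_X \comp \iota)^\sharp$ reduces to $p^\sharp \ll \iota$, which holds by \cref{lem:inc_sim_p} applied to $p^\sharp$, giving $\iota \acsim p^\sharp$. The same absolute bicontinuity provides $\iota \ll p^\sharp$, so $[\id_X]$ likewise defines a morphism $(X, \samp_X \comp \iota) \to (X, p)$. Both compositions in $\stat(\cC)$ are represented by $\id_X \comp \id_X = \id_X$, and so are the respective identity morphisms.

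There is no real technical obstacle in this argument; it reduces to unfolding the definition of $\stat(\cC)$ and invoking the absolute bicontinuity $\iota \acsim p^\sharp$ from \cref{lem:inc_sim_p}. The only subtlety worth highlighting is recognizing that the sharp of a morphism of the form $\samp_X \comp g$ with $g$ deterministic is simply $g$, which relies on the uniqueness clause of the sharp operation in representable Markov categories and makes the asymmetry of the two sides of the isomorphism essentially cosmetic.
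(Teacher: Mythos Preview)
Your proof is correct and follows essentially the same approach as the paper: show that $[\id_X]$ is a morphism in both directions by computing $(\samp_X \comp \iota)^\sharp = \iota$ from the determinism of $\iota$, and then invoke $\iota \acsim p^\sharp$ from \cref{lem:inc_sim_p}. The only cosmetic difference is that the paper phrases the absolute continuity condition as $P(\id_X) \comp p^\sharp \ll q^\sharp$ (using $P(\id_X) = \id_{PX}$) rather than $(\id_X \comp p)^\sharp \ll q^\sharp$, but these are equivalent by definition.
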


Intuitively, the support of $p^\sharp$ coincides with its image of $PX$, which is the set of distributions compatible with the statistical model $(X,p)$.
Therefore the proposition states that $(X,p)$ is equivalent as a statistical model to this image, as one would expect.

\begin{proof}
	Let us write $q \colon \Supp{p^\sharp} \to X$ as shorthand for the morphism $\samp_X \comp \suppinc{p^\sharp}$ in $\cC$.
	We show that $\id_X$ gives both a morphism $(X,p) \to (X,q)$ and a morphism $(X,q) \to (X,p)$ in $\stat(\cC)$.
	Since $P(\id_X) = \id_{PX}$, we have to show that $p^\sharp \acsim q^\sharp$ holds.
	Indeed,
	\begin{equation}
		q^\sharp = (\samp_X \comp \suppinc{p^\sharp})^\sharp = \suppinc{p^\sharp}
	\end{equation}
	follows because $\suppinc{p^\sharp}$ is deterministic.
	Then the required $p^\sharp \acsim \suppinc{p^\sharp}$ holds by \cref{lem:inc_sim_p}.
\end{proof}

\begin{example}
	\label{ex:statistical_stoch}
	Consider any statistical model $(X,p \colon A \to X)$ in $\borelstoch$, which we can think of as an $A$-indexed family of distributions $(p_a)_{a \in A}$ on $X$.
	The measurable map ${p^\sharp \colon A \to PX}$ is precisely the one that assigns to every parameter value $a \in A$ the associated distribution $p_a \in PX$.

	If the image of $p^\sharp$, namely the set of distributions
	\begin{equation}
		\Set{ p_a  \given  a \in A } \subseteq PX,
	\end{equation}
	is measurable, then its support is given by this image (\cref{cor:support_det_borelstoch}).
	In this case, \cref{prop:parametric_vs_unparametric} applies, and we find that the parametric statistical model $(X,p \colon A \to X)$ is isomorphic to a model given simply by a subset of $PX$, namely the set of distributions compatible with the model.
\end{example}

\subsection{The equalizer principle}
\label{sec:equalizer_principle}

In this subsection, we introduce an axiom for Markov categories that can sometimes be used as a substitute for supports when these do not exist.
This axiom plays an important role in \cref{sec:idempotents}.

\begin{definition}
	\label{def:equalizer_principle}
	A Markov category $\cC$ satisfies the \newterm{equalizer principle} if:
	\begin{enumerate}
		\item Equalizers in $\cC_\det$ exist.
		\item\label{it:equalizer_as} For every equalizer diagram
			\begin{equation}
				\label{eq:equalizer}
				\begin{tikzcd}
					E \ar[r, "\mathsf{eq}"] & X \ar[r, shift right, "g"'] \ar[r, shift left, "f"] & Y
				\end{tikzcd}
			\end{equation}
			in $\cC_\det$, every $p \colon A \to X$ in $\cC$ satisfying $f \ase{p} g$ factors uniquely across $\mathsf{eq}$.
	\end{enumerate}
\end{definition}

Since finite products exist in $\cC_\det$ by virtue of it being cartesian monoidal, the existence of equalizers implies that $\cC_\det$ is even finitely complete.

The relation between the equalizer principle and supports is as follows.

\begin{remark}
	\label{rem:equalizer_principle}
	Property \ref{it:equalizer_as} holds automatically for every $p$ which has a support.
	Because in this case, the assumption $f \ase{p} g$ is equivalent to $f \comp \suppinc{} = g \comp \suppinc{}$ by \cref{lem:supp_asfaithful}, and therefore $\suppinc{}$ factors across $\mathsf{eq}$ by the universal property.
	Furthermore, since we have $p = \suppinc{} \comp \suppfactor{p}$, it follows that $p$ itself factors across $\mathsf{eq}$.

	In particular, if $\cC_\det$ has equalizers and $\cC$ has supports, then the equalizer principle holds.
	For example:
	\begin{enumerate}
		\item $\finstoch$ satisfies the equalizer principle, since its deterministic subcategory is $\finset$ (which is finitely complete) and every morphism has a support by \cref{ex:support_finstoch}.

		\item $\setmulti$ satisfies the equalizer principle, since its deterministic subcategory is $\set$ (which is complete), and every morphism has a support by \cref{ex:support_setmulti}.

		\item $\tychstoch$ satisfies the equalizer principle, since its deterministic subcategory is the category of Tychonoff spaces and continuous maps (which has equalizers since every subspace of a Tychonoff space is Tychonoff) and every morphism has a support by \cref{ex:top_support}.
	\end{enumerate}
	In the other direction, we apply the equalizer principle in the proof of \cref{thm:balanced_split} to \emph{construct} supports for certain morphisms.
	Such relations to supports illustrate why we believe that the equalizer principle can sometimes be used as a substitute for supports.
\end{remark}

\begin{remark}
	Even when the equalizer principle holds, an equalizer in $\cC_\det$ is usually \emph{not} an equalizer in $\cC$.
	In other words, merely having $f p = g p$ instead of $f \ase{p} g$ is not enough to guarantee that $p$ factors across $\mathsf{eq}$.
	For example in $\finstoch$, consider any finite set $Y$ with $|Y| \ge 2$, and let $f$ and $g$ be given by the two deterministic projections $Y \otimes Y \to Y$.
	Then their equalizer in $\finstoch_\det = \finset$ is given by the inclusion of the diagonal.
	Taking $p$ to be the uniform probability measure on $Y \otimes Y$, we have $f p = g p$, but $p$ does not factor across the inclusion of the diagonal.
\end{remark}

We cannot derive the equalizer principle from supports in measure-theoretic probability, since supports do typically not exist there (\cref{thm:borelstoch_supports}).
Nevertheless, we have the following positive result.

\begin{proposition}\label{prop:borelstoch_eq}
	$\borelstoch$ satisfies the equalizer principle.	
\end{proposition}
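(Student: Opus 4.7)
The plan is to verify the two conditions of \cref{def:equalizer_principle} directly, using the very explicit description of morphisms and almost sure equality in $\borelstoch$.

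For the first condition, I would note that $\borelstoch_\det$ is the category of standard Borel spaces and Borel-measurable maps. Given two such maps $f, g \colon X \to Y$, I set $E \coloneqq \Set{x \in X \given f(x) = g(x)}$. Since $Y$ is standard Borel, its diagonal $\Delta_Y \subseteq Y \times Y$ is Borel (the Borel structure is generated by a countable separating family, which makes $\Delta_Y$ a countable intersection of Borel rectangles). Therefore $E = (f,g)^{-1}(\Delta_Y)$ is a Borel subset of $X$, and by Kuratowski's theorem the induced structure makes $E$ itself standard Borel. The inclusion $\mathsf{eq} \colon E \hookrightarrow X$ then satisfies the usual universal property of an equalizer in $\borelstoch_\det$.

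For the second condition, I would take a Markov kernel $p \colon A \to X$ satisfying $f \ase{p} g$. Since $f$ and $g$ are deterministic, the description of $\ase{p}$ in $\borelstoch$ from \cref{ex:ase_stoch} says precisely that $f(x) = g(x)$ holds for $p(\ph|a)$-almost every $x$, for every $a \in A$; equivalently, $p(E | a) = 1$ for all $a \in A$. I would then define $\tilde{p} \colon A \to E$ by restriction, setting $\tilde{p}(S | a) \coloneqq p(S | a)$ for every Borel $S \subseteq E$ (which is the same as a Borel subset of $X$ contained in $E$). Measurability in $a$ is inherited, and $\tilde{p}(E|a) = 1$ ensures $\tilde{p}$ is a genuine probability kernel. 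The identity $\mathsf{eq} \comp \tilde{p} = p$ follows from $p(X \setminus E | a) = 0$, and uniqueness of the factorization is automatic because $\mathsf{eq}$, being deterministic and injective, is a monomorphism in $\borelstoch$.

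The only mildly subtle point is the Borelness of the diagonal of a standard Borel space, which is classical; the rest is a direct measure-theoretic construction, and I do not anticipate any real obstacle. Notably, the argument does not require $p$ to have a support, which is why the equalizer principle holds in $\borelstoch$ even though most morphisms there do not admit supports (cf.~\cref{thm:borelstoch_supports}).
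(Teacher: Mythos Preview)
Your proposal is correct and follows essentially the same approach as the paper: construct $E = \{x \in X \mid f(x) = g(x)\}$ as a Borel subset of $X$ via measurability of the diagonal, and then use the concrete description of $\ase{p}$ to conclude $p(E|a) = 1$ for all $a$, giving the factorization. The only cosmetic difference is that the paper reduces to $Y = \R$ via Kuratowski's theorem before invoking the standard characterization of almost sure equality, whereas you invoke \cref{ex:ase_stoch} directly; both routes unpack the same fact.
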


\begin{proof}
	Equalizers in $\borelstoch_\det = \borelmeas$ clearly exist, since for a given parallel pair as in \cref{eq:equalizer}, we simply consider the subset
	\begin{equation}
		E \coloneqq \Set{ x \in X \given f(x) = g(x) },
	\end{equation}
	which is measurable in $X$ since $f$ and $g$ are measurable maps and the diagonal of their output is measurable.
	Letting $\mathsf{eq} \colon E \hookrightarrow X$ be the inclusion, the universal property of an equalizer is obvious.

	Property \ref{it:equalizer_as} is more interesting.
	Let $p \colon A \to X$ be any Markov kernel satisfying $f \ase{p} g$.
	By Kuratowski's theorem, we know that $Y$ is either discrete and (at most) countable, or isomorphic to $\R$.
	In each of the cases, we can embed $Y$ into $\R$ while preserving the equalizer as well as the equation $f \ase{p} g$.
	Therefore, we can assume $Y = \R$ without loss of generality.

	In short, $f$ and $g$ are assumed to be real-valued measurable functions which are $p(\ph|a)$-almost surely equal for every $a \in A$.
	That is, we have $p(X \setminus E | a) = 0$ for all $a \in A$, and hence $p$ factors across the inclusion $\mathsf{eq} \colon E \hookrightarrow X$.
	The uniqueness of the factorization is obvious.
\end{proof}

The following stronger version of the equalizer principle may also be of interest, although it does not appear in this paper besides the present \cref{sec:equalizer_principle}.
We prove that it is also satisfied for Markov kernels in $\borelstoch$ and illustrate its power by showing how it can be used to obtain the causality axiom.

\begin{definition}
	\label{def:relative_equalizer}
	A Markov category $\cC$ satisfies the \newterm{relative equalizer principle} if:
	\begin{enumerate}
		\item Equalizers in $\cC_\det$ exist.
		\item\label{it:rel_equalizer_as} For every equalizer diagram
			\begin{equation}
				\label{eq:rel_equalizer}
				\begin{tikzcd}
					E \ar[r, "\mathsf{eq}"] & X \ar[r, shift right, "g"'] \ar[r, shift left, "f"] & Y
				\end{tikzcd}
			\end{equation}
			in $\cC_\det$, every $p \colon A \to X$ satisfying $f \ase{p\comp q} g$ for some $q \colon Z \to A$ factors uniquely across $\mathsf{eq}$ up to $\as{q}$ equality.
	\end{enumerate}
\end{definition}

\begin{proposition}
	$\borelstoch$ satisfies the relative equalizer principle.
\end{proposition}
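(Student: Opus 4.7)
The plan is to extend the argument of \cref{prop:borelstoch_eq} to accommodate the extra morphism $q$. As before, equalizers in $\borelstoch_\det$ exist: for deterministic $f, g \colon X \to Y$, the equalizer is the measurable subset $E = \{x \in X : f(x) = g(x)\} \subseteq X$ (via Kuratowski's theorem we can reduce to $Y = \R$, making measurability of $E$ automatic), equipped with the inclusion $\mathsf{eq} \colon E \hookrightarrow X$. The substantive content of condition \ref{it:rel_equalizer_as} is then to produce a factorization of $p \colon A \to X$ through $E$ in the almost-sure sense dictated by $q$.

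I would first translate the hypothesis $f \ase{p\comp q} g$ into measure theory: since $f$ and $g$ are deterministic, it amounts to $(p\comp q)(E \mid z) = 1$ for every $z \in Z$, i.e.\
\[
	\int_A p(E \mid a)\, q(da \mid z) = 1 \qquad \forall z \in Z.
\]
Because $p(E \mid a) \leq 1$, this forces the measurable set $A' \coloneqq \{a \in A : p(E \mid a) = 1\}$ to satisfy $q(A' \mid z) = 1$ for every $z \in Z$. On $A'$ the measure $p(\ph \mid a)$ is concentrated on $E$, which lets me define the desired $\tilde p \colon A \to E$ by $\tilde p(B \mid a) \coloneqq p(B \mid a)$ for $a \in A'$ and $B \in \Sigma_E$, extending arbitrarily (say by a Dirac measure at a fixed point $e_0 \in E$) on $A \setminus A'$. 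By construction $\mathsf{eq} \comp \tilde p$ agrees with $p$ on all of $A'$, and since $q(\ph\mid z)$ concentrates on $A'$ for every $z$, this yields $\mathsf{eq} \comp \tilde p \ase{q} p$. The trivial edge cases $E = \emptyset$ (which forces $Z = \emptyset$ by the displayed equation) and $A = \emptyset$ can be dispatched separately.

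Uniqueness up to $\as{q}$ equality will follow from the fact that $\mathsf{eq}$ is a deterministic monomorphism: two kernels $\tilde p_1, \tilde p_2 \colon A \to E$ with $\mathsf{eq} \comp \tilde p_1 \ase{q} \mathsf{eq} \comp \tilde p_2$ must themselves be $\as{q}$ equal, because $\Sigma_E$ is generated by preimages $\mathsf{eq}^{-1}(C)$ of measurable $C \subseteq X$. The only mildly delicate step is confirming that the extension of $\tilde p$ across $A \setminus A'$ yields a genuinely measurable kernel $A \to E$; this is immediate because $A'$ is measurable and the restriction of $p(\ph \mid a)$ to $E$-measurable sets is measurable in $a \in A'$. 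Everything else is a direct measure-theoretic unpacking, so I do not anticipate any substantial obstacle beyond correctly bookkeeping the measurability of $A'$ and handling the degenerate cases.
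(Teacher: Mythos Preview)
The proposal is correct and follows essentially the same argument as the paper: both translate the hypothesis into $\int_A p(E\mid a)\,q(da\mid z)=1$, deduce that the set of $a$ with $p(E\mid a)<1$ is $q$-null, define the factorization as $p$ on the good set and arbitrarily elsewhere, and handle the edge case $E=\emptyset$ via emptiness of $Z$. Your additional remark on uniqueness via $\mathsf{eq}$ being a deterministic monomorphism is a welcome elaboration that the paper leaves implicit.
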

\begin{proof}
	Proceeding as in the previous proof, we have that $f,g \colon X \to \R$ are almost surely equal with respect to the probability measure
	\begin{equation}
		\int_{a \in A} p(\ph|a) \, q(da|z)
	\end{equation}
	for every $z \in Z$, so that we get
	\begin{equation}
		\int_{a \in A} p(X \setminus E | a) \, q(da|z) = 0.
	\end{equation}
	This implies that the integrand vanishes almost surely, meaning that with the notation
	\begin{equation}
	N \coloneqq \Set*[\big]{ a \in A \given p(X \setminus E | a) > 0 }
\end{equation}
we have $q(N|z) = 0$ for all $z \in Z$.
This implies the unique factorization as desired by constructing a Markov kernel $A \to E$ to be given by $p$ on $A \setminus N$ and arbitrarily\footnotemark{} on $N$.
\footnotetext{If $N$ is non-empty, this is possible only if $E$ is non-empty. 
	However, we can assume this without loss of generality.
	If $Z$ is the empty set, the overall claim is trivial.
On the other hand, if $Z$ is non-empty, then $f$ and $g$ are almost surely equal with respect to some probability measure, and thus $E$ is also non-empty.}%
\end{proof}

\begin{proposition}
	\label{prop:equalizer_causal}
	Let $\cC$ be an \as{}-compatibly representable\footnotemark{} Markov category satisfying the relative equalizer principle.
	Then $\cC$ is causal.
\end{proposition}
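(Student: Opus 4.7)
The plan is to verify the criterion for causality given by \cref{lem:as_shift}: namely, for $p \colon A \to X$, $g \colon X \to Y$ and $h_1, h_2 \colon Y \to Z$, that the equality $h_1 \ase{g \comp p} h_2$ implies $h_1 \comp g \ase{p} h_2 \comp g$. The argument will mirror the proof of \ref{it:supp_functorial} $\Rightarrow$ \ref{it:causal} in \cref{thm:supp_functorial}, with equalizers in $\cC_\det$ taking over the role played there by supports.

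First, I would invoke $\as{}$-compatibility of the representation to pass to the deterministic $\sharp$-transforms $h_1^\sharp, h_2^\sharp \colon Y \to PZ$, noting that the hypothesis $h_1 \ase{g \comp p} h_2$ is equivalent to $h_1^\sharp \ase{g \comp p} h_2^\sharp$. This is the step that cannot be performed in an arbitrary representable Markov category and is precisely what motivates the $\as{}$-compatibility hypothesis in the statement.

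Second, I would form the equalizer $\mathsf{eq} \colon E \to Y$ of $h_1^\sharp$ and $h_2^\sharp$ in $\cC_\det$, which exists by the first clause of the relative equalizer principle. Applying the second clause of \cref{def:relative_equalizer} with the parallel pair instantiated as $h_1^\sharp, h_2^\sharp$, the morphism ``$p$'' there instantiated as our $g$, and the ``$q$'' there instantiated as our $p$, the hypothesis $h_1^\sharp \ase{g \comp p} h_2^\sharp$ yields a morphism $\tilde{g} \colon X \to E$ with $\mathsf{eq} \comp \tilde{g} \ase{p} g$.

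Finally, post-composition by $h_i$ preserves $\as{p}$ equality by \cref{prop:ase_props}, giving $h_i \comp g \ase{p} h_i \comp \mathsf{eq} \comp \tilde{g}$ for $i = 1, 2$. The equalizer property in $\cC_\det$ gives $h_1^\sharp \comp \mathsf{eq} = h_2^\sharp \comp \mathsf{eq}$, which by representability is equivalent to $h_1 \comp \mathsf{eq} = h_2 \comp \mathsf{eq}$ in $\cC$. Transitivity of $\ase{p}$ then produces the desired $h_1 \comp g \ase{p} h_2 \comp g$. The main subtlety lies in carefully tracking the back-and-forth between $\as{}$ equality in $\cC$ and in $\cC_\det$ via the $\sharp$-transform, as well as verifying that the argument extends correctly to the extra-wire version of \cref{def:as_eq} required by the full formulation of causality; both of these are exactly what $\as{}$-compatible representability is designed to guarantee.
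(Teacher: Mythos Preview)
Your proposal is correct and follows essentially the same approach as the paper: both invoke \cref{lem:as_shift}, use \as{}-compatible representability to reduce to deterministic $h_i$ (the paper phrases this as ``assume $h_1,h_2$ deterministic without loss of generality'' while you keep the $\sharp$-transforms explicit), form the equalizer in $\cC_\det$, apply the relative equalizer principle to factor $g$ through $\mathsf{eq}$ up to $\as{p}$ equality, and conclude by the chain $h_1 \comp g \ase{p} h_1 \comp \mathsf{eq} \comp \tilde{g} = h_2 \comp \mathsf{eq} \comp \tilde{g} \ase{p} h_2 \comp g$. Your closing worry about the extra-wire version is unnecessary, since \cref{lem:as_shift} already reduces causality to the wire-free case.
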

\footnotetext{See \cref{sec:observational} for the definition.}%
\begin{proof}
	We use the reformulation of the causality axiom from \cref{lem:as_shift}.
	Let $h_1 \ase{g\comp f} h_2$ be given, where $f \colon X \to Y$ and $g \colon Y \to Z$ and $h_1, h_2 \colon Z \to A$ are morphisms of the appropriate type.
	By \as{}-compatible representability, we obtain $h_1^\sharp \ase{g\comp f} h_2^\sharp$, and it is enough to show $h_1^\sharp \comp g \ase{f} h_2^\sharp \comp g$.
	To simplify the notation, we therefore just assume that $h_1$ and $h_2$ are deterministic without loss of generality.

	With $\mathsf{eq}$ denoting the equalizer of $h_1$ and $h_2$ in $\cC_\det$, the relative equalizer principle applied to $h_1 \ase{g\comp f} h_2$ gives us the dashed morphism in
	\begin{equation}
		\begin{tikzcd}[row sep=large]
			E \ar[rr, hookrightarrow, "\mathsf{eq}"] & & Z \ar[r, shift right, "h_2"'] \ar[r, shift left, "h_1"] & A \\
			X \ar[rr, "f"'] & & Y \ar[u, "g"] \ar[ull, dashed, "\ell" description]
		\end{tikzcd}
	\end{equation}
	where the triangle commutes up to $\ase{f}$. Therefore we have
	\begin{equation}
		h_1 \comp g \ase{f} h_1 \comp \mathsf{eq} \comp \ell = h_2 \comp \mathsf{eq} \comp \ell \ase{f} h_2 \comp g,
	\end{equation}
	which amounts to the desired $h_1 \comp g \ase{f} h_2 \comp g$.
\end{proof}

\subsection{Split supports}
\label{sec:split_supports}

In this subsection, we study a stronger notion of support, which we call ``split support''.
In contrast to the mapping-in universal property of ``plain'' supports given in \cref{def:support}, this is a mapping-out universal property, extending the original one \cite[Definition 13.20]{fritz2019synthetic}.
This universal property is simpler in the sense that no compatibility with the monoidal structure is required, since it can in fact be derived (\cref{prop:splitsupp_tensor}).

\begin{example}[Split support for a stochastic map]\label{ex:split_sup_finstoch}
	Let us illustrate the basic idea by considering \cref{ex:first} again.
	For another set $Y=\{x,y,z\}$, let functions $f,g \colon X\to Y$ be given by
	\[
		\begin{tikzpicture}[baseline={(current bounding box.center)},
			x={(-20:0.8cm)},y={(90:0.8cm)},z={(10:0.8cm)}]
			\node (a) at (0.55,0,-2) {};
			\draw [fill=fillcolor!50] (a) -- ++(0.25,0,0) -- ++(0,1,0) -- ++(-0.5,0,0) -- ++(0,-1,0) -- (a);
			\node [circle,inner sep=1pt,fill=black,label=below:$a\strut$] at (a) {};
			\node (b) at (1.45,0,-2) {};
			\draw [fill=fillcolor!50] (b) -- ++(0.25,0,0) -- ++(0,1,0) -- ++(-0.5,0,0) -- ++(0,-1,0) -- (b);
			\node [circle,inner sep=1pt,fill=black,label=below:$b\strut$] at (b) {};
			\node (c) at (2.35,0,-2) {};
			\node [circle,inner sep=1pt,fill=black,label=below:$c\strut$] at (c) {};
			\draw (0,0,-2) -- (3,0,-2) ;

			\node (x) at (0.55,0,2) {};
			\node [circle,inner sep=1pt,fill=black,label=above:$x\strut$] at (x) {};
			\node (y) at (1.45,0,2) {};
			\node [circle,inner sep=1pt,fill=black,label=above:$y\strut$] at (y) {};
			\node (z) at (2.35,0,2) {};
			\node [circle,inner sep=1pt,fill=black,label=above:$z\strut$] at (z) {};
			\draw (0,0,2) -- (3,0,2) ;

			\draw [->] (a) to (x) ;
			\draw [->] (b) to (y) ;
			\draw [->] (c) to (z) ;
		\end{tikzpicture}
		\qquad\qquad
		f(a) = x , \qquad f(b) = y , \qquad f(c) = z ;
	\]
	\[
		\begin{tikzpicture}[baseline={(current bounding box.center)},
			x={(-20:0.8cm)},y={(90:0.8cm)},z={(10:0.8cm)}]
			\node (a) at (0.55,0,-2) {};
			\draw [fill=fillcolor!50] (a) -- ++(0.25,0,0) -- ++(0,1,0) -- ++(-0.5,0,0) -- ++(0,-1,0) -- (a);
			\node [circle,inner sep=1pt,fill=black,label=below:$a\strut$] at (a) {};
			\node (b) at (1.45,0,-2) {};
			\draw [fill=fillcolor!50] (b) -- ++(0.25,0,0) -- ++(0,1,0) -- ++(-0.5,0,0) -- ++(0,-1,0) -- (b);
			\node [circle,inner sep=1pt,fill=black,label=below:$b\strut$] at (b) {};
			\node (c) at (2.35,0,-2) {};
			\node [circle,inner sep=1pt,fill=black,label=below:$c\strut$] at (c) {};
			\draw (0,0,-2) -- (3,0,-2) ;

			\node (x) at (0.55,0,2) {};
			\node [circle,inner sep=1pt,fill=black,label=above:$x\strut$] at (x) {};
			\node (y) at (1.45,0,2) {};
			\node [circle,inner sep=1pt,fill=black,label=above:$y\strut$] at (y) {};
			\node (z) at (2.35,0,2) {};
			\node [circle,inner sep=1pt,fill=black,label=above:$z\strut$] at (z) {};
			\draw (0,0,2) -- (3,0,2) ;

			\draw [->] (a) to (x) ;
			\draw [->] (b) to (y) ;
			\draw [->] (c) to[out=10,in=-110,looseness=0.4] (y) ;
		\end{tikzpicture}
		\qquad\qquad
		g(a) = x , \qquad g(b) = y , \qquad g(c) = y .
	\]
	Note that $f$ and $g$ are equal $p$-almost surely.
	The intuitive reason behind this is that restricting them to the support $\Supp{} = \{a,b\}$ of $p$ gives the \emph{same} function of type $\Supp{} \to Y$.
	This is true also for general supports as given by \cref{def:support} (see \cref{lem:supp_asfaithful}).
	However, notice an additional property that is \emph{not captured} by our definition of ``plain'' supports in Markov categories.
	Namely, every morphism $h \colon \Supp{} \to Y$ arises from a restriction as above.
	One can always extend such $h$ to a morphism $X \to Y$ (necessarily \as{$p$} uniquely), so that restricting to $\Supp{}$ produces $h$ itself again.

	Of course, to be able to extend every $h$, it is sufficient for the identity map $\Supp{} \to \Supp{}$ to be extendable, or equivalently for the support inclusion $\suppinc{} \colon \Supp{} \to X$ to have a left inverse ${\suppproj{} \colon X \to \Supp{}}$.
	In our example, we can take it to be any map satisfying 
	\begin{equation}
		\suppproj{}(a) = a , \qquad \suppproj{}(b) = b .
	\end{equation}
	The value of $\suppproj{}(c)$ is arbitrary.
\end{example}

We can express an analogous property in any Markov category.

\begin{definition}\label{def:split_support}
	Let $p \colon A \to X$ be a morphism in a Markov category $\cC$. 
	Then a \newterm{split support} for $p$ is a pre-support such that the pre-support inclusion $\suppinc{} \colon \Supp{} \to X$ has a left inverse ${\suppproj{} \colon X \to \Supp{}}$.
\end{definition}

We also call such a left inverse a \newterm{support projection} for $p$.
We assume a fixed choice of support projection $\suppproj{}$ throughout, while noting that the particular choice plays no role due to the following result.

\begin{lemma}\label{lem:supp_proj}
	The support inclusion and projection of a split support satisfy
	\begin{equation}
		\label{eq:supp_proj_id}
		\suppproj{} \comp \suppinc{} = \id_{\Supp{}}, \qquad 
		\suppinc{} \comp \suppproj{} \ase{p} \id_X.
	\end{equation}
	Moreover, $\suppproj{}$ is \as{$p$} deterministic and uniquely determined by the first equation up to \as{$p$} equality.
\end{lemma}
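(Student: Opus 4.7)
My plan is to prove all four claims by reducing every $\as{p}$ statement to a plain equation via two ingredients. First, by \cref{lem:inc_sim_p} we have $\suppinc{} \acsim p$, so $\as{p}$ equality and $\as{\suppinc{}}$ equality are interchangeable throughout. Second, since $\suppinc{}$ is deterministic, $\as{\suppinc{}}$ equality of parallel morphisms out of $X$ amounts to plain equality after composition with $\suppinc{}$ on the input side, via the appropriate item of \cref{prop:ase_props} (this is the same mechanism underlying \cref{lem:supp_asfaithful}).

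The first equation requires no work, as it is the defining property of a support projection. For $\suppinc{} \comp \suppproj{} \ase{p} \id_X$, I would apply the reduction above to turn the claim into the equation $\suppinc{} \comp \suppproj{} \comp \suppinc{} = \suppinc{}$, which follows immediately from the first equation. For uniqueness, any other left inverse $\suppproj{}'$ satisfies $\suppproj{}' \comp \suppinc{} = \id_{\Supp{}} = \suppproj{} \comp \suppinc{}$, and the reduction then yields $\suppproj{}' \ase{p} \suppproj{}$ at once.

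For the $\as{p}$ determinism of $\suppproj{}$, the statement to verify is $\cop_{\Supp{}} \comp \suppproj{} \ase{p} (\suppproj{} \otimes \suppproj{}) \comp \cop_X$. I would again compose both sides with $\suppinc{}$ on the input side and invoke the naturality of copy for the deterministic morphism $\suppinc{}$, namely $\cop_X \comp \suppinc{} = (\suppinc{} \otimes \suppinc{}) \comp \cop_{\Supp{}}$. Combined with $\suppproj{} \comp \suppinc{} = \id_{\Supp{}}$, both expressions collapse to $\cop_{\Supp{}}$, completing the argument.

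There is no conceptual obstacle; the only mild subtlety is notational. The determinism claim involves morphisms whose codomain is $\Supp{} \otimes \Supp{}$, but this is not a problem because the reduction above detects $\as{\suppinc{}}$ equality via composition with $\suppinc{}$ on the input side irrespective of the codomain.
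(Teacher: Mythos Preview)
Your proposal is correct and takes essentially the same approach as the paper. The paper proves the second equation and uniqueness by invoking \cref{lem:supp_asfaithful} directly (which encapsulates your reduction via $\suppinc{} \acsim p$ and the determinism of $\suppinc{}$), and handles the $\as{p}$ determinism of $\suppproj{}$ via a short string diagram computation that unwinds to exactly your argument: precompose with $\suppinc{}$, use $\cop_X \comp \suppinc{} = (\suppinc{} \otimes \suppinc{}) \comp \cop_{\Supp{}}$, and cancel with $\suppproj{} \comp \suppinc{} = \id_{\Supp{}}$.
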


\begin{proof}
	The first equation is by definition, while the second is a consequence of \cref{lem:supp_asfaithful} and $\suppinc{} \comp \suppproj{} \comp \suppinc{} = \suppinc{}$.
	The uniqueness up to \as{$p$} equality is now immediate.
	Finally, we compute 
	\begin{equation}\label{eq:sp_det}
		{%
			\tikzstyle{every picture}=[tikzfig]%
			\begin{tikzpicture}
				\begin{pgfonlayer}{nodelayer}
					\node [style=none] (15) at (6, 0) {$\ase{p}$};
					\node [style=bn] (27) at (3, 1.25) {};
					\node [style=none] (29) at (4, 3.75) {};
					\node [style=none] (30) at (2, 3.75) {};
					\node [style=none] (32) at (3, -2.5) {};
					\node [style=none] (33) at (3, -3) {$X$};
					\node [style=morphism] (39) at (3, 0) {$\suppinc{}$};
					\node [style=none] (40) at (3, -1.5) {};
					\node [style=morphism] (41) at (3, -1.5) {$\suppproj{}$};
					\node [style=bn] (42) at (9, -0.25) {};
					\node [style=none] (44) at (10, 3.75) {};
					\node [style=none] (45) at (8, 3.75) {};
					\node [style=none] (46) at (10, 2.5) {};
					\node [style=none] (47) at (9, -2.5) {};
					\node [style=none] (48) at (9, -3) {$X$};
					\node [style=none] (51) at (8, 2.5) {};
					\node [style=morphism] (52) at (8, 2.5) {$\suppproj{}$};
					\node [style=morphism] (53) at (10, 2.5) {$\suppproj{}$};
					\node [style=bn] (73) at (-3, -0.25) {};
					\node [style=morphism] (74) at (-4, 1.25) {$\suppinc{}$};
					\node [style=none] (75) at (-2, 3.75) {};
					\node [style=none] (76) at (-4, 3.75) {};
					\node [style=none] (78) at (-3, -2.5) {};
					\node [style=none] (79) at (-3, -3) {$X$};
					\node [style=none] (80) at (-2, 4.25) {$S$};
					\node [style=none] (81) at (-4, 4.25) {$S$};
					\node [style=morphism] (87) at (-3, -1.5) {$\suppproj{}$};
					\node [style=none] (88) at (0, 0) {$=$};
					\node [style=none] (90) at (-4, 1) {};
					\node [style=none] (91) at (-2, 1) {};
					\node [style=morphism] (92) at (-2, 1.25) {$\suppinc{}$};
					\node [style=none] (93) at (2, 2.5) {};
					\node [style=none] (94) at (4, 2.5) {};
					\node [style=bn] (96) at (-9, -0.25) {};
					\node [style=none] (97) at (-8, 3.75) {};
					\node [style=none] (98) at (-10, 3.75) {};
					\node [style=none] (99) at (-9, -2.5) {};
					\node [style=none] (100) at (-9, -3) {$X$};
					\node [style=morphism] (103) at (-9, -1.5) {$\suppproj{}$};
					\node [style=none] (104) at (-10, 1) {};
					\node [style=none] (105) at (-8, 1) {};
					\node [style=morphism] (106) at (-4, 2.75) {$\suppproj{}$};
					\node [style=morphism] (107) at (-2, 2.75) {$\suppproj{}$};
					\node [style=morphism] (108) at (2, 2.75) {$\suppproj{}$};
					\node [style=morphism] (109) at (4, 2.75) {$\suppproj{}$};
					\node [style=none] (110) at (-6, 0) {$=$};
					\node [style=none] (111) at (8, 1) {};
					\node [style=none] (112) at (10, 1) {};
					\node [style=none] (113) at (-8, 4.25) {$S$};
					\node [style=none] (114) at (-10, 4.25) {$S$};
					\node [style=none] (115) at (10, 4.25) {$S$};
					\node [style=none] (116) at (8, 4.25) {$S$};
					\node [style=none] (117) at (4, 4.25) {$S$};
					\node [style=none] (118) at (2, 4.25) {$S$};
				\end{pgfonlayer}
				\begin{pgfonlayer}{edgelayer}
					\draw (41) to (39);
					\draw (39) to (27);
					\draw (32.center) to (41);
					\draw (46.center) to (53);
					\draw (47.center) to (42);
					\draw (78.center) to (87);
					\draw [in=-90, out=165] (73) to (90.center);
					\draw (87) to (73);
					\draw [in=-90, out=15] (73) to (91.center);
					\draw (90.center) to (74);
					\draw (91.center) to (92);
					\draw (94.center) to (29.center);
					\draw (93.center) to (30.center);
					\draw [in=-90, out=165] (27) to (93.center);
					\draw [in=-90, out=15] (27) to (94.center);
					\draw (99.center) to (103);
					\draw [in=-90, out=165] (96) to (104.center);
					\draw (103) to (96);
					\draw [in=-90, out=15] (96) to (105.center);
					\draw (106) to (76.center);
					\draw (107) to (75.center);
					\draw (92) to (107);
					\draw [in=270, out=90] (74) to (106);
					\draw (105.center) to (97.center);
					\draw (104.center) to (98.center);
					\draw [in=-90, out=165] (42) to (111.center);
					\draw [in=-90, out=15] (42) to (112.center);
					\draw (112.center) to (53);
					\draw (53) to (44.center);
					\draw (111.center) to (52);
					\draw (52) to (45.center);
				\end{pgfonlayer}
			\end{tikzpicture}
		}%
	\end{equation}
	to conclude that $\suppproj{}$ is \as{$p$} deterministic.
\end{proof}

While we define split supports in terms of pre-supports, they are actually stable under tensor products, as the next lemma shows.
This also justifies calling $\suppinc{}$ a support inclusion.
\begin{lemma}\label{lem:split_supp_is_supp}
	Every split support is a support.
\end{lemma}	
\begin{proof}
	We can use \cref{lem:supp_crit}, where the only non-trivial criterion to show is \cref{it:suppinc_fact}.
	To this end, consider a morphism $f \colon Z \to W \otimes X$ satisfying $\id_W \otimes p \gg f$.
	Applying \cref{lem:as_eq_new} to the second equality in \eqref{eq:supp_proj_id} gives
	\begin{equation}\label{eq:split_supp_is_supp_1}
		{%
			\tikzstyle{every picture}=[tikzfig]%
			\begin{tikzpicture}
				\begin{pgfonlayer}{nodelayer}
					\node [style=none] (15) at (0, 0) {$\ase{\id_W \otimes p}$};
					\node [style=none] (32) at (-3, -1.75) {};
					\node [style=none] (33) at (-3, -2.25) {$X$};
					\node [style=morphism] (39) at (-3, 0.75) {$\suppinc{}$};
					\node [style=none] (40) at (-3, -0.75) {};
					\node [style=morphism] (41) at (-3, -0.75) {$\suppproj{}$};
					\node [style=none] (44) at (4, 1.75) {};
					\node [style=none] (47) at (4, -1.75) {};
					\node [style=none] (48) at (4, -2.25) {$X$};
					\node [style=none] (115) at (4, 2.25) {$X$};
					\node [style=none] (119) at (-3, 1.75) {};
					\node [style=none] (120) at (-3, 2.25) {$X$};
					\node [style=none] (121) at (2.5, 1.75) {};
					\node [style=none] (122) at (2.5, -1.75) {};
					\node [style=none] (123) at (2.5, -2.25) {$W$};
					\node [style=none] (124) at (2.5, 2.25) {$W$};
					\node [style=none] (125) at (-5, 1.75) {};
					\node [style=none] (126) at (-5, -1.75) {};
					\node [style=none] (127) at (-5, -2.25) {$W$};
					\node [style=none] (128) at (-5, 2.25) {$W$};
				\end{pgfonlayer}
				\begin{pgfonlayer}{edgelayer}
					\draw (41) to (39);
					\draw (32.center) to (41);
					\draw (39) to (119.center);
					\draw (47.center) to (44.center);
					\draw (122.center) to (121.center);
					\draw (126.center) to (125.center);
				\end{pgfonlayer}
			\end{tikzpicture}
		}%
	\end{equation}
	so that we get 
	\begin{equation}\label{eq:split_supp_is_supp_2}
		{%
			\tikzstyle{every picture}=[tikzfig]%
			\begin{tikzpicture}
				\begin{pgfonlayer}{nodelayer}
					\node [style=none] (15) at (0, 0) {$\ase{f}$};
					\node [style=none] (32) at (-3, -1.75) {};
					\node [style=none] (33) at (-3, -2.25) {$X$};
					\node [style=morphism] (39) at (-3, 0.75) {$\suppinc{}$};
					\node [style=none] (40) at (-3, -0.75) {};
					\node [style=morphism] (41) at (-3, -0.75) {$\suppproj{}$};
					\node [style=none] (44) at (4, 1.75) {};
					\node [style=none] (47) at (4, -1.75) {};
					\node [style=none] (48) at (4, -2.25) {$X$};
					\node [style=none] (115) at (4, 2.25) {$X$};
					\node [style=none] (119) at (-3, 1.75) {};
					\node [style=none] (120) at (-3, 2.25) {$X$};
					\node [style=none] (121) at (2.5, 1.75) {};
					\node [style=none] (122) at (2.5, -1.75) {};
					\node [style=none] (123) at (2.5, -2.25) {$W$};
					\node [style=none] (124) at (2.5, 2.25) {$W$};
					\node [style=none] (125) at (-5, 1.75) {};
					\node [style=none] (126) at (-5, -1.75) {};
					\node [style=none] (127) at (-5, -2.25) {$W$};
					\node [style=none] (128) at (-5, 2.25) {$W$};
				\end{pgfonlayer}
				\begin{pgfonlayer}{edgelayer}
					\draw (41) to (39);
					\draw (32.center) to (41);
					\draw (39) to (119.center);
					\draw (47.center) to (44.center);
					\draw (122.center) to (121.center);
					\draw (126.center) to (125.center);
				\end{pgfonlayer}
			\end{tikzpicture}
		}%
	\end{equation}
	by the assumed absolute continuity.
	In particular, this implies
	\begin{equation}\label{eq:split_supp_is_supp_3}
		{%
			\tikzstyle{every picture}=[tikzfig]%
			\begin{tikzpicture}
				\begin{pgfonlayer}{nodelayer}
					\node [style=none] (15) at (0, 0) {$=$};
					\node [style=none] (32) at (-2.25, -1.25) {};
					\node [style=none] (33) at (-3, -3) {$Z$};
					\node [style=morphism] (39) at (-2.25, 1.5) {$\suppinc{}$};
					\node [style=none] (40) at (-2.25, 0) {};
					\node [style=morphism] (41) at (-2.25, 0) {$\suppproj{}$};
					\node [style=none] (44) at (3.75, 2.5) {};
					\node [style=none] (47) at (3.75, 0) {};
					\node [style=none] (115) at (3.75, 3) {$X$};
					\node [style=none] (119) at (-2.25, 2.5) {};
					\node [style=none] (120) at (-2.25, 3) {$X$};
					\node [style=none] (121) at (2.25, 2.5) {};
					\node [style=none] (122) at (2.25, 0) {};
					\node [style=none] (124) at (2.25, 3) {$W$};
					\node [style=none] (125) at (-3.75, 2.5) {};
					\node [style=none] (126) at (-3.75, -1.25) {};
					\node [style=none] (128) at (-3.75, 3) {$W$};
					\node [style=morphism] (129) at (-3, -1.5) {$\quad \; f \quad \; $};
					\node [style=none] (130) at (-3, -2.5) {};
					\node [style=none] (131) at (3, -3) {$Z$};
					\node [style=morphism] (132) at (3, 0) {$\quad \; f \quad \; $};
					\node [style=none] (133) at (3, -2.5) {};
				\end{pgfonlayer}
				\begin{pgfonlayer}{edgelayer}
					\draw (41) to (39);
					\draw (32.center) to (41);
					\draw (39) to (119.center);
					\draw (47.center) to (44.center);
					\draw (122.center) to (121.center);
					\draw (126.center) to (125.center);
					\draw (130.center) to (129);
					\draw (133.center) to (132);
				\end{pgfonlayer}
			\end{tikzpicture}
		}%
	\end{equation}
	so that $f$ factorizes across $\id_W \otimes \suppinc{}$ as required.
\end{proof}

Let us turn to some examples of split supports.
A nontrivial example of a support that is not split will be given later in \cref{prop:support_not_split}.

\begin{example}\label{ex:non-empty_supp_split_fin}
	The support of any morphism $p \colon A \to X$ in $\finstoch$ is a split support, provided that $A$ is non-empty.
	The reason is that its support inclusion $\suppinc{} \colon \Supp{} \to X$ is a split monomorphism with a deterministic left inverse as illustrated in \cref{ex:split_sup_finstoch}.
\end{example}

\begin{example}\label{ex:non-empty_supp_split_borel}
	Similar statements apply to $\borelstoch$.
	In fact, every deterministic monomorphism with non-empty domain in $\borelstoch$ is also a split monomorphism.
	By a classical result, such a map is a measurable isomorphism onto its image \cite[Corollary 15.2]{kechris}, and therefore we can obtain a (deterministic) left inverse by mapping the complement of the image to any element of its domain.
	It follows that any Markov kernel $p \colon A \to X$ in $\borelstoch$ with a support also has a split support (unless $A$ is empty).
\end{example}

The following result is an analogue of \cref{lem:supp_asfaithful} for split supports, and it can be helpful for showing that a given morphism has a split support.

\begin{lemma}[Concrete characterization of split supports]\label{lem:supp_factor}
	To give a split support for a morphism $p \colon A \to X$ is to give an object $\Supp{}$ and morphism $\suppinc{} \colon \Supp{} \to X$ such that the following conditions hold:
	\begin{enumerate}
		\item \label{it:suppinc_det} $\suppinc{}$ is a deterministic split monomorphism.
		\item \label{it:supp_asfaithful_split} for all $f, g \colon W \otimes X \to Y$ with arbitrary $W$ and $Y$, we have
			\begin{equation}\label{eq:supp_asfaithful_split}
				{%
					\tikzstyle{every picture}=[tikzfig]%
					\begin{tikzpicture}
						\begin{pgfonlayer}{nodelayer}
							\node [style=none] (202) at (0, 0) {$\iff$};
							\node [style=none] (251) at (-10, 0.75) {};
							\node [style=none] (252) at (-8.5, 1) {};
							\node [style=none] (253) at (-10.5, 2.25) {};
							\node [style=none] (254) at (-10.5, 2.75) {$Y$};
							\node [style=bn] (255) at (-9.25, 0) {};
							\node [style=none] (256) at (-9.25, -2) {};
							\node [style=none] (257) at (-9.25, -2.5) {$A$};
							\node [style=none] (258) at (-7, 0) {$=$};
							\node [style=none] (259) at (-8.5, 2.75) {$X$};
							\node [style=morphism] (260) at (-10.5, 1.25) {$\,\;\; f \,\;\;$};
							\node [style=none] (261) at (-8.5, 2.25) {};
							\node [style=morphism] (262) at (-9.25, -1) {$p$};
							\node [style=none] (263) at (-11, -2) {};
							\node [style=none] (264) at (-11, -2.5) {$W$};
							\node [style=none] (265) at (-11, 1.25) {};
							\node [style=none] (266) at (-11, -0.5) {};
							\node [style=none] (267) at (-4.5, 0.75) {};
							\node [style=none] (268) at (-3, 1) {};
							\node [style=none] (269) at (-5, 2.25) {};
							\node [style=none] (270) at (-5, 2.75) {$Y$};
							\node [style=bn] (271) at (-3.75, 0) {};
							\node [style=none] (272) at (-3.75, -2) {};
							\node [style=none] (273) at (-3.75, -2.5) {$A$};
							\node [style=none] (274) at (-3, 2.75) {$X$};
							\node [style=morphism] (275) at (-5, 1.25) {$\,\;\; g \,\;\;$};
							\node [style=none] (276) at (-3, 2.25) {};
							\node [style=morphism] (277) at (-3.75, -1) {$p$};
							\node [style=none] (278) at (-5.5, -2) {};
							\node [style=none] (279) at (-5.5, -2.5) {$W$};
							\node [style=none] (280) at (-5.5, 1.25) {};
							\node [style=none] (281) at (-5.5, -0.5) {};
							\node [style=none] (284) at (-10, 1.25) {};
							\node [style=none] (285) at (-4.5, 1.25) {};
							\node [style=none] (288) at (3.5, 2.25) {};
							\node [style=none] (289) at (3.5, 2.75) {$Y$};
							\node [style=none] (290) at (4, -2.5) {$\Supp{}$};
							\node [style=morphism] (292) at (3.5, 1) {$\,\;\; f \,\;\;$};
							\node [style=none] (294) at (3, -2) {};
							\node [style=none] (295) at (3, -2.5) {$W$};
							\node [style=none] (296) at (3, 1) {};
							\node [style=none] (298) at (9, 2.25) {};
							\node [style=none] (299) at (9, 2.75) {$Y$};
							\node [style=none] (300) at (9.5, -2.5) {$\Supp{}$};
							\node [style=morphism] (301) at (9, 1) {$\,\;\; g \,\;\;$};
							\node [style=none] (302) at (8.5, -2) {};
							\node [style=none] (303) at (8.5, -2.5) {$W$};
							\node [style=none] (304) at (8.5, 1) {};
							\node [style=none] (305) at (4, 1) {};
							\node [style=none] (306) at (9.5, 1) {};
							\node [style=morphism] (307) at (4, -0.75) {$\suppinc{}$};
							\node [style=morphism] (308) at (9.5, -0.75) {$\suppinc{}$};
							\node [style=none] (309) at (4, -2) {};
							\node [style=none] (310) at (9.5, -2) {};
							\node [style=none] (311) at (6.25, 0) {$=$};
							\node [style=none] (312) at (-11.75, -2.5) {};
						\end{pgfonlayer}
						\begin{pgfonlayer}{edgelayer}
							\draw [in=-90, out=165] (255) to (251.center);
							\draw [in=-90, out=15] (255) to (252.center);
							\draw (260) to (253.center);
							\draw (252.center) to (261.center);
							\draw (256.center) to (262);
							\draw [style=protected] (263.center) to (266.center);
							\draw [style=protected, in=-90, out=90, looseness=1.25] (266.center) to (265.center);
							\draw [in=270, out=90] (262) to (255);
							\draw [in=-90, out=165] (271) to (267.center);
							\draw [in=-90, out=15] (271) to (268.center);
							\draw (275) to (269.center);
							\draw (268.center) to (276.center);
							\draw (272.center) to (277);
							\draw [style=protected] (278.center) to (281.center);
							\draw [style=protected, in=-90, out=90, looseness=1.25] (281.center) to (280.center);
							\draw (277) to (271);
							\draw (251.center) to (284.center);
							\draw (267.center) to (285.center);
							\draw (292) to (288.center);
							\draw (301) to (298.center);
							\draw (309.center) to (307);
							\draw (307) to (305.center);
							\draw (294.center) to (296.center);
							\draw (302.center) to (304.center);
							\draw (308) to (306.center);
							\draw (310.center) to (308);
						\end{pgfonlayer}
					\end{tikzpicture}
				}%
			\end{equation}
	\end{enumerate}
\end{lemma}

\begin{proof}
	Given a split support as in \cref{def:split_support}, the first property holds by definition, and the second is precisely \cref{lem:supp_asfaithful}.

	Conversely, suppose that we have a morphism $\suppinc{} \colon \Supp{} \to X$ satisfying the two properties, and choose any left inverse $\suppproj{}$.
	Then it is enough to apply \cref{lem:supp_crit} to show that $\suppinc{}$ is a support inclusion.
	The existence of the splitting shows that every $\id_W \otimes \suppinc{}$ is a deterministic monomorphism too.
	The condition $\suppinc{} \ll p$ is straightforward to check directly from~\eqref{eq:supp_asfaithful_split} and an application of \cref{prop:ase_props} \ref{it:ase_copy} to $\suppinc{}$.
	It remains to be shown that if $f \colon Z \to W \otimes X$ satisfies $f \ll \id_W \otimes p$, then it factors across $\id_W \otimes \suppinc{}$.
	To this end, combining
	\[
		(\id_W \otimes \suppinc{} \comp \suppproj{}) \comp (\id_W \otimes \suppinc{}) = \id_W \otimes \suppinc{}
	\]
	and \eqref{eq:supp_asfaithful_split} with $f \ll \id_W \otimes p$ gives
	\begin{equation}
		(\id_W \otimes \suppinc{} \comp \suppproj{}) \ase{f} \id_{W \otimes X},
	\end{equation}
	from which the explicit factorization $f = (\id_W \otimes \suppinc{}) \comp (\id_W \otimes \suppproj{}) \comp f$ follows by marginalization.
\end{proof}

\begin{example}
	\label{ex:suppinc_det_ownsupport}
	Every deterministic split monomorphism $\suppinc{}$ is its own split support inclusion, since \cref{eq:supp_asfaithful_split} holds with $p = \suppinc{}$ by \cref{prop:ase_props} \ref{it:ase_deterministic}.
	This strengthens \cref{cor:split_mono_supp}.

	For example, every deterministic state is trivially a split monomorphism by the terminality of the unit object $I$.
	Hence, deterministic states are their own split support inclusions.
\end{example}

\begin{remark}\label{rem:supp_separable}
	In a locally state-separable Markov category, a deterministic split monomorphism $\suppinc{} \colon \Supp{} \to X$ is a split support for $p$ already if it satisfies condition \ref{it:supp_asfaithful_split} from \cref{lem:supp_factor} without an additional input $W$:
	For all $f, g \colon X \to Y$, we have
	\begin{equation}\label{eq:supp_is_completely}
		{%
			\tikzstyle{every picture}=[tikzfig]%
			\begin{tikzpicture}
				\begin{pgfonlayer}{nodelayer}
					\node [style=none] (13) at (-11, 1) {};
					\node [style=none] (14) at (-9, 1) {};
					\node [style=none] (19) at (-11, 2.25) {};
					\node [style=none] (20) at (-11, 2.75) {$Y$};
					\node [style=bn] (21) at (-10, -0.25) {};
					\node [style=none] (23) at (-10, -2.25) {};
					\node [style=none] (24) at (-10, -2.75) {$A$};
					\node [style=none] (25) at (-7, 0) {$=$};
					\node [style=none] (30) at (-9, 2.75) {$A$};
					\node [style=morphism] (33) at (-11, 1.25) {$f$};
					\node [style=none] (34) at (-9, 2.25) {};
					\node [style=morphism] (35) at (-10, -1.25) {$p$};
					\node [style=none] (202) at (0, 0) {$\iff$};
					\node [style=none] (236) at (-5, 1) {};
					\node [style=none] (237) at (-3, 1) {};
					\node [style=none] (238) at (-5, 2.25) {};
					\node [style=none] (239) at (-5, 2.75) {$Y$};
					\node [style=bn] (240) at (-4, -0.25) {};
					\node [style=none] (241) at (-4, -2.25) {};
					\node [style=none] (242) at (-4, -2.75) {$A$};
					\node [style=none] (243) at (-3, 2.75) {$A$};
					\node [style=morphism] (244) at (-5, 1.25) {$g$};
					\node [style=none] (245) at (-3, 2.25) {};
					\node [style=morphism] (246) at (-4, -1.25) {$p$};
					\node [style=none] (247) at (4, 1) {};
					\node [style=none] (249) at (4, 2.25) {};
					\node [style=none] (250) at (4, 2.75) {$Y$};
					\node [style=none] (252) at (4, -2.25) {};
					\node [style=none] (253) at (4, -2.75) {$\Supp{}$};
					\node [style=none] (254) at (6, 0) {$=$};
					\node [style=morphism] (256) at (4, 1.25) {$f$};
					\node [style=morphism] (258) at (4, -1) {$\suppinc{}$};
					\node [style=none] (259) at (8, 1) {};
					\node [style=none] (261) at (8, 2.25) {};
					\node [style=none] (262) at (8, 2.75) {$Y$};
					\node [style=none] (264) at (8, -2.25) {};
					\node [style=none] (265) at (8, -2.75) {$\Supp{}$};
					\node [style=morphism] (267) at (8, 1.25) {$g$};
					\node [style=morphism] (269) at (8, -1) {$\suppinc{}$};
				\end{pgfonlayer}
				\begin{pgfonlayer}{edgelayer}
					\draw [in=-90, out=165] (21) to (13.center);
					\draw [in=-90, out=15] (21) to (14.center);
					\draw (33) to (19.center);
					\draw (14.center) to (34.center);
					\draw (23.center) to (35);
					\draw (35) to (21);
					\draw [in=-90, out=165] (240) to (236.center);
					\draw [in=-90, out=15] (240) to (237.center);
					\draw (244) to (238.center);
					\draw (237.center) to (245.center);
					\draw (241.center) to (246);
					\draw (246) to (240);
					\draw (256) to (249.center);
					\draw (252.center) to (258);
					\draw (267) to (261.center);
					\draw (264.center) to (269);
					\draw (258) to (247.center);
					\draw (269) to (259.center);
				\end{pgfonlayer}
			\end{tikzpicture}
		}%
	\end{equation}
	This follows by the same argument as the proof of \cref{prop:ac_is_completely}.
\end{remark}

\cref{lem:supp_factor} also lets us prove the announced mapping-out universal property of split supports.
For any morphism $p \colon A \to X$ and arbitrary objects $W$ and $Y$, let us write
\begin{equation}\label{eq:old_supp_def}
	\cC(W \otimes X, Y)_p \,\coloneqq\, \newfaktor{\cC(W \otimes X,Y)}{\ase{p}}
\end{equation}
for the hom-set modulo \as{$p$} equality.

\begin{proposition}
	\label{prop:split_support_universal}
	Let $p \colon A \to X$ be any morphism in a Markov category $\cC$.
	Then a morphism $\suppinc{} \colon \Supp{} \to X$ is a split support inclusion for $p$ if and only if for all objects $Y$ and $W$ in $\cC$, the map
	\begin{equation}\label{eq:split_support_universal}
		\begin{tikzcd}[column sep=1.4pc,row sep=1pt]
			\cC(W \otimes X, Y)_p \ar{r} & \cC(W \otimes \Supp{}, Y) \\
			f \ar[mapsto]{r} & f \comp (\id_W \otimes \suppinc{})
		\end{tikzcd}
	\end{equation}
	is a bijection, and it restricts to a bijection between equivalence classes of $\as{p}$ deterministic morphisms on the left and deterministic morphisms on the right.
	When this is the case, then the inverse map is given by $g \longmapsto g \comp (\id_W \otimes \suppproj{})$ for any support projection $\suppproj{}$.
\end{proposition}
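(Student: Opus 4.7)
I would prove both directions by going through \cref{lem:supp_factor}, which characterises split supports as deterministic split monomorphisms $\suppinc{} \colon \Supp{} \to X$ satisfying the $\ase{p}$-faithfulness condition~\ref{it:supp_asfaithful_split}.

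For the forward direction, assume that $\suppinc{}$ is a split support inclusion.  The map in \eqref{eq:split_support_universal} is well-defined and injective on $\ase{p}$-classes because these are precisely the two implications in condition~\ref{it:supp_asfaithful_split}.  Surjectivity is witnessed by the explicit candidate inverse $h \mapsto h \comp (\id_W \otimes \suppproj{})$: postcomposition with $\id_W \otimes \suppinc{}$ recovers $h$ since $\suppproj{} \comp \suppinc{} = \id_{\Supp{}}$, so we have a right-inverse, and combined with injectivity this makes it the two-sided inverse, also yielding the explicit formula claimed in the proposition.  For the restriction to deterministic morphisms, the fact that $\suppinc{}$ is deterministic lets $\id_W \otimes \suppinc{}$ commute past the copy map on both $X$ and $\Supp{}$; so applying condition~\ref{it:supp_asfaithful_split} to the $\ase{p}$-equation expressing $\as{p}$-determinism of $f$ produces the literal copy equation for $f \comp (\id_W \otimes \suppinc{})$, and the same calculation reversed yields the converse.

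For the backward direction, I suppose the bijection together with its restriction to deterministic parts, and verify the two clauses of \cref{lem:supp_factor}.  Taking $W = I$ and $Y = \Supp{}$, the deterministic morphism $\id_{\Supp{}}$ on the right is the image of an $\ase{p}$-class containing an $\as{p}$-deterministic $\suppproj{} \colon X \to \Supp{}$ with $\suppproj{} \comp \suppinc{} = \id_{\Supp{}}$, so $\suppinc{}$ is a split monomorphism.  Taking instead $W = I$ and $Y = X$, the morphism $\suppinc{}$ is the image of the trivially $\as{p}$-deterministic class $[\id_X]$, and the deterministic restriction then forces $\suppinc{}$ itself to be deterministic; this establishes condition~\ref{it:suppinc_det}.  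Condition~\ref{it:supp_asfaithful_split} is simply a rephrasing of the combined well-definedness and injectivity of the forward map on $\ase{p}$-classes, which is part of the assumed bijection.  The most delicate point I anticipate is the deterministic restriction in the forward direction, but this reduces entirely to the observation that a deterministic $\suppinc{}$ commutes with $\cop$, which is exactly what allows the $\ase{p}$-copy equation to translate, via condition~\ref{it:supp_asfaithful_split}, into a literal copy equation after postcomposition with $\id_W \otimes \suppinc{}$.
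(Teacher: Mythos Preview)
Your proposal is correct and follows essentially the same route as the paper: both directions go through \cref{lem:supp_factor}, extracting $\suppproj{}$ as the preimage of $\id_{\Supp{}}$ and recognising $\suppinc{}$ as the image of $[\id_X]$ in the backward direction. The only minor difference is in how the deterministic restriction is handled in the forward direction: the paper cites \cref{lem:supp_proj} (that $\suppproj{}$ is $\as{p}$-deterministic) to show the inverse map preserves determinism, whereas you argue both directions of the restriction directly via condition~\ref{it:supp_asfaithful_split} and the compatibility of the deterministic $\suppinc{}$ with $\cop$; both arguments are valid and amount to the same computation.
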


\begin{proof}
	Suppose that $\suppinc{}$ is a split support inclusion with support projection $\suppproj{}$.
	Then the fact that the two composition maps $f \mapsto f \comp (\id_W \otimes \suppinc{})$ and $g \mapsto g \comp (\id_W \otimes \suppproj{})$ are well-defined and mutually inverse bijections follows from \cref{lem:supp_factor} and \cref{eq:supp_proj_id}.
	That this restricts to a bijection at the level of ($\as{p}$) deterministic morphisms follows from the fact that $\suppinc{}$ is deterministic and $\suppproj{}$ is $\as{p}$ deterministic (\cref{lem:supp_proj}).

	Conversely, suppose that the composition map is a bijection restricting to a bijection at the ($\as{p}$) deterministic level.
	Then define $\suppinc{}$ as the counterpart on the right of the identity on the left, which we know to be deterministic.
	Furthermore, we have an $\suppproj{} \colon X \to \Supp{}$ which is the counterpart on the left of the identity on the right, and Yoneda lemma shows that for $W = I$, the map from right to left is given by $g \mapsto g \comp \suppproj{}$, and in particular $\suppproj{}$ is a left inverse for $\suppinc{}$.
	It is now clear that both properties of \cref{lem:supp_factor} are satisfied, and hence $\suppinc{}$ is a split support inclusion.
\end{proof}

\begin{remark}
	In particular, taking $W = I$ in \cref{prop:split_support_universal} produces a bijection
	\[
		\cC(X, Y)_p \cong \cC(\Supp{}, Y),
	\]
	which is natural in $Y$.
	This is the promised mapping-out universal property, which also was our original definition of support in \cite[Definition 13.20]{fritz2019synthetic}.
	If we consider the characterization of \cref{prop:split_support_universal} as the definition of split supports, then the present notion strengthens the earlier one in two ways:
	\begin{itemize}
		\item As in our new definition of absolute continuity (\cref{def:ac}), an extra input $W$ is allowed.
		\item We now have an additional correspondence at the level of deterministic morphisms.
	\end{itemize}
	The latter point is analogous to the definition of Kolmogorov products \cite[Definition 4.1]{fritzrischel2019zeroone}, where both general morphisms and deterministic morphisms into a Kolmogorov product are fixed via its universal property.
\end{remark}

As yet another possible characterization of split supports, we have the following elegant statement.

\begin{corollary}\label{cor:split_supp_mono}
	A morphism $p$ has a split support if and only if there is a deterministic split monomorphism $\suppinc{}$ satisfying $p \acsim \suppinc{}$, in which case $\suppinc{}$ provides the split support.
\end{corollary}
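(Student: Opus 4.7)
The plan is to prove both directions by reducing to the concrete characterization of split supports given in \cref{lem:supp_factor}.

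For the forward direction, assume $p$ has a split support with inclusion $\suppinc{}$. Then $\suppinc{}$ is a deterministic split monomorphism by \cref{def:split_support}, and $p \acsim \suppinc{}$ holds by \cref{lem:inc_sim_p}. So the only meaningful statement is the reverse direction.

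For the reverse direction, suppose $\suppinc{} \colon \Supp{} \to X$ is a deterministic split monomorphism satisfying $p \acsim \suppinc{}$. I would verify the two conditions of \cref{lem:supp_factor}. Condition \ref{it:suppinc_det} is exactly the assumption. For condition \ref{it:supp_asfaithful_split}, one needs to show
\begin{equation*}
    f \ase{p} g \quad \iff \quad f \comp (\id_W \otimes \suppinc{}) = g \comp (\id_W \otimes \suppinc{})
\end{equation*}
for all $f, g \colon W \otimes X \to Y$. The absolute bicontinuity $p \acsim \suppinc{}$ gives the equivalence $f \ase{p} g \iff f \ase{\suppinc{}} g$. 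Since $\suppinc{}$ is deterministic, the property that $\as{}$ equality against a deterministic morphism reduces to plain equality after composition (as used repeatedly, e.g.\ in the proof of \cref{lem:supp_asfaithful}) gives the second equivalence $f \ase{\suppinc{}} g \iff f \comp (\id_W \otimes \suppinc{}) = g \comp (\id_W \otimes \suppinc{})$. Composing these equivalences yields condition \ref{it:supp_asfaithful_split}, and \cref{lem:supp_factor} then delivers the split support.

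There is no real obstacle here: the result is essentially a repackaging of \cref{lem:supp_factor,lem:inc_sim_p}, the only small care needed being the observation that for deterministic $\suppinc{}$, almost sure equality against $\suppinc{}$ coincides with equality after precomposition with $\id_W \otimes \suppinc{}$ (which follows by expanding \cref{def:as_eq} and using that copy commutes with deterministic morphisms, then marginalizing the trailing $X$ wire).
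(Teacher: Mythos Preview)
Your proof is correct and follows essentially the same route as the paper. The paper's version is slightly more modular: for the ``if'' direction it invokes \cref{ex:suppinc_det_ownsupport} (every deterministic split monomorphism is its own split support inclusion) together with \cref{prop:abs_bicont_same_supp}\ref{it:abs_bicont} (absolutely bicontinuous morphisms share supports), whereas you inline that content by verifying the conditions of \cref{lem:supp_factor} directly. The key step you isolate---that $f \ase{\suppinc{}} g$ is equivalent to $f \comp (\id_W \otimes \suppinc{}) = g \comp (\id_W \otimes \suppinc{})$ for deterministic $\suppinc{}$---is exactly \cref{prop:ase_props}\ref{it:ase_deterministic}, so you may as well cite it rather than re-derive it.
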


\begin{proof}
	The ``if'' direction follows by \cref{prop:abs_bicont_same_supp} \ref{it:abs_bicont} and the fact that every deterministic split monomorphism is its own split support inclusion (\cref{ex:suppinc_det_ownsupport}).
	The ``only if'' direction is by \cref{lem:inc_sim_p}.
\end{proof}

Split supports enjoy a slightly stronger compatibility with the monoidal product than supports.
Namely, in the following statement, we do not assume that $p \otimes q$ has a support, unlike in \cref{thm:supp_multiplicativity}.

\begin{proposition}[Split supports of monoidal products]\label{prop:splitsupp_tensor}
	If morphisms
	\[
		p \colon A \to X, \qquad q \colon B \to Y
	\]
	have split supports, then also $p \otimes q$ has a split support with support inclusion given by $\suppinc{p} \otimes \suppinc{q}$.
\end{proposition}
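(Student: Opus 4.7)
The plan is to apply \cref{cor:split_supp_mono}, which characterizes split supports precisely in terms of absolute bicontinuity with some deterministic split monomorphism. Thus it suffices to verify two things about the proposed candidate $\suppinc{p} \otimes \suppinc{q}$: that it is a deterministic split monomorphism, and that it is absolutely bicontinuous with respect to $p \otimes q$.

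For the first point, both $\suppinc{p}$ and $\suppinc{q}$ are deterministic split monomorphisms by hypothesis, with chosen left inverses $\suppproj{p}$ and $\suppproj{q}$ respectively. Their tensor product $\suppinc{p} \otimes \suppinc{q}$ is deterministic (as the monoidal product of deterministic morphisms) and admits $\suppproj{p} \otimes \suppproj{q}$ as a left inverse, since
\[
	(\suppproj{p} \otimes \suppproj{q}) \comp (\suppinc{p} \otimes \suppinc{q}) \;=\; (\suppproj{p} \comp \suppinc{p}) \otimes (\suppproj{q} \comp \suppinc{q}) \;=\; \id_{\Supp{p}} \otimes \id_{\Supp{q}}
\]
by the interchange law and \cref{eq:supp_proj_id}.

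For the second point, \cref{lem:inc_sim_p} yields $p \acsim \suppinc{p}$ and $q \acsim \suppinc{q}$. Applying \cref{prop:ac_tensor} in each of the two directions (as already noted in the paragraph following that proposition), these combine to give $p \otimes q \acsim \suppinc{p} \otimes \suppinc{q}$. By \cref{cor:split_supp_mono}, this suffices to conclude that $\suppinc{p} \otimes \suppinc{q}$ is indeed a split support inclusion for $p \otimes q$.

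I do not anticipate any genuine obstacle here: the argument is essentially a matter of assembling \cref{cor:split_supp_mono}, \cref{lem:inc_sim_p}, and \cref{prop:ac_tensor}. The only point worth being careful about is that the tensor of split monomorphisms remains a split monomorphism with the tensor of the left inverses, which is immediate from the interchange law, and that the preservation of $\acsim$ under $\otimes$ follows from applying the monotonicity of $\gg$ under $\otimes$ in both directions.
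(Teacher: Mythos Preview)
Your proof is correct and follows essentially the same approach as the paper: both apply \cref{cor:split_supp_mono}, verify that $\suppinc{p} \otimes \suppinc{q}$ is a deterministic split monomorphism via the tensor of the left inverses, and obtain the absolute bicontinuity from \cref{lem:inc_sim_p} together with \cref{prop:ac_tensor}.
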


\begin{proof}
	We apply \cref{cor:split_supp_mono}. 
	Indeed deterministic split monomorphisms are stable under the monoidal product:
	One can take $\suppproj{p} \otimes \suppproj{q}$ as a left inverse for $\suppinc{p} \otimes \suppinc{q}$.
	Furthermore, $\suppinc{p} \otimes \suppinc{q} \acsim p \otimes q$ follows from $\suppinc{p} \acsim p$ and $\suppinc{q} \acsim q$ by \cref{prop:ac_tensor}.
\end{proof}

We also have an analogue of \cref{prop:support_pull} on how split supports transport along split monomorphisms.

\begin{proposition}\label{prop:split_support_pull}
	Consider an arbitrary morphism $p \colon A \to T$ and a deterministic split monomorphism $\iota \colon T \to X$ with left inverse $\pi \colon X \to T$.
	Then we have the following:
	\begin{enumerate}
		\item If $p$ has a split support, then so does $\iota\comp p$, and its support inclusion is 
			\begin{equation}
				\suppinc{\iota\comp p} = \iota \comp \suppinc{p}.
			\end{equation}
		\item If $\iota\comp p$ has a split support, then so does $p$, and its support inclusion is 
			\begin{equation}
				\suppinc{p} = \pi \comp \suppinc{\iota\comp p}.
			\end{equation}
	\end{enumerate}
\end{proposition}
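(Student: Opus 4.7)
The cleanest route is to leverage the characterization from \cref{cor:split_supp_mono}: a morphism has a split support if and only if it is absolutely bicontinuous with some deterministic split monomorphism, and that monomorphism then provides the split support. Together with \cref{lem:ac_mon_comp_split_mono}, which says that $q \acsim p$ is equivalent to $\iota \comp q \acsim \iota \comp p$ for any deterministic split monomorphism $\iota$, both directions become short exercises. I would not redo the analysis of \cref{prop:support_pull}: here split monomorphisms compose, so the constructions work more smoothly than in the plain-support case.

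For part (i), I would first observe that $\iota \comp \suppinc{p}$ is a deterministic split monomorphism, being a composite of two such (with left inverse $\suppproj{p} \comp \pi$, where $\suppproj{p}$ is any split support projection of $p$). Then the assumption yields $p \acsim \suppinc{p}$ by \cref{lem:inc_sim_p}, so \cref{lem:ac_mon_comp_split_mono} applied to $\iota$ gives $\iota \comp p \acsim \iota \comp \suppinc{p}$. An application of \cref{cor:split_supp_mono} then concludes that $\iota \comp \suppinc{p}$ is a split support inclusion for $\iota \comp p$.

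For part (ii), set $s \coloneqq \pi \comp \suppinc{\iota \comp p}$. To see that $s$ is a split monomorphism, note that $\suppinc{\iota\comp p} \ll \iota \comp p$ together with the identity $\iota \comp \pi \ase{\iota \comp p} \id_X$ (which follows from $\pi \comp \iota = \id_T$ via \cref{lem:supp_asfaithful}-style reasoning, or directly as in the proof of \cref{prop:support_pull} (ii)) yields $\iota \comp s = \suppinc{\iota \comp p}$. Consequently, if $\suppproj{\iota\comp p}$ is a split support projection for $\iota\comp p$, then $\suppproj{\iota\comp p} \comp \iota$ is a left inverse for $s$:
\[
	(\suppproj{\iota\comp p} \comp \iota) \comp s
		= \suppproj{\iota\comp p} \comp (\iota \comp s)
		= \suppproj{\iota\comp p} \comp \suppinc{\iota\comp p}
		= \id_{\Supp{\iota\comp p}}.
\]
The determinism of $s$ follows by the same observation as in \cref{prop:support_pull} (ii): the equation expressing determinism of $s$ can be checked after post-composition with $\iota \otimes \iota$, where it reduces to determinism of $\iota \comp s = \suppinc{\iota\comp p}$ together with determinism of $\iota$.

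It remains to establish $p \acsim s$. From $\suppinc{\iota\comp p} \acsim \iota \comp p$ (\cref{lem:inc_sim_p}) and $\iota \comp s = \suppinc{\iota\comp p}$, we get $\iota \comp s \acsim \iota \comp p$, and hence $s \acsim p$ by another application of \cref{lem:ac_mon_comp_split_mono}. One more appeal to \cref{cor:split_supp_mono} then identifies $s = \pi \comp \suppinc{\iota \comp p}$ as a split support inclusion for $p$. The only mildly delicate step is the determinism of $s$, but since the argument from \cref{prop:support_pull} (ii) carries over verbatim, I do not expect a genuine obstacle.
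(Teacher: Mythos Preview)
Your proof is correct. The route differs from the paper's: the paper invokes \cref{prop:support_pull} as a black box to get the plain-support inclusions and then simply exhibits the projections $\suppproj{\iota\comp p} \coloneqq \suppproj{p}\comp\pi$ and $\suppproj{p} \coloneqq \suppproj{\iota\comp p}\comp\iota$, verifying only the left-inverse equations. You instead go through \cref{cor:split_supp_mono} and \cref{lem:ac_mon_comp_split_mono}, establishing the absolute bicontinuity directly. Your approach is more self-contained and makes the role of absolute bicontinuity explicit, but it is longer because you end up redoing fragments of the proof of \cref{prop:support_pull} anyway (the equation $\iota\comp s = \suppinc{\iota\comp p}$ and the determinism of $s$). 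The paper's version is shorter precisely because it treats those as already done. Either way the content is the same; your claim that you ``would not redo the analysis of \cref{prop:support_pull}'' is slightly optimistic, since part (ii) still leans on that analysis.
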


\begin{proof}
	By \cref{prop:support_pull}, it is enough to establish the existence of a support projection in each case.
	\begin{enumerate}
		\item $\suppinc{\iota\comp p}$ is a split monomorphism as a composite of two split monomorphisms, since
			\begin{equation}
				\suppproj{\iota\comp p} \coloneqq \suppproj{p} \comp \pi
			\end{equation}
			serves as a support projection for $\iota\comp p$.
		\item The support projection
			\begin{equation}
				\suppproj{p} \coloneqq \suppproj{\iota\comp p} \comp \iota
			\end{equation}
			is indeed a left inverse of $\suppinc{p}$, since $\iota\comp \pi \comp \suppinc{\iota\comp p} = \suppinc{\iota \comp p}$ was noted in the proof of \cref{prop:support_pull}.
			\qedhere
	\end{enumerate}
\end{proof}

Finally, we note that not all supports are split, even in the very well-behaved case of supports in $\chausstoch$, which always exist (\cref{ex:top_support}).
The relevant idea that an embedding $\beta\N \hookrightarrow [0,1]^\R$ cannot have a continuous stochastic retraction was suggested to us by Tommaso Russo, who also noted the stronger statement that there is no Banach space embedding of $C(\beta\N)$ into $C([0,1]^\R)$ due to known results in renorming theory~\cite{deville1993renormings}.

\begin{proposition}
	\label{prop:support_not_split}
	There is a probability measure $\nu \colon I \to [0,1]^\R$ in $\chausstoch$ such that $\nu$ has no split support.
\end{proposition}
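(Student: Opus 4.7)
The plan is to obtain $\nu$ as the pushforward of a full-support probability measure on the Stone--Čech compactification $\beta\N$ along a topological embedding ${\iota\colon \beta\N \hookrightarrow [0,1]^\R}$. Such an embedding exists because $\beta\N$ is compact Hausdorff of weight $\mathfrak{c}$, so the evaluation map into $[0,1]^{C(\beta\N,[0,1])}$ exhibits $\beta\N$ as a closed subspace of $[0,1]^{\mathfrak{c}} \cong [0,1]^\R$. For the measure on $\beta\N$ we can take for instance
\[
	\mu \coloneqq \sum_{n \in \N} 2^{-n-1}\, \delta_n,
\]
whose topological support is the closure of $\N$ inside $\beta\N$, namely all of $\beta\N$. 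Set $\nu \coloneqq \iota_* \mu$.

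By \cref{ex:top_support}, the support inclusion of $\nu$ in $\chausstoch$ is then $\iota$ itself, since $\iota(\beta\N)$ is the smallest closed subset of $[0,1]^\R$ of full $\nu$-measure. If $\nu$ admitted a \emph{split} support, then by \cref{def:split_support} there would exist a $\chausstoch$-morphism ${\suppproj{}\colon [0,1]^\R \to \beta\N}$ with $\suppproj{} \comp \iota = \id_{\beta\N}$, that is, a continuous $\tau$-smooth stochastic retraction of $\iota$. The entire task reduces to ruling out any such retraction.

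The next step is to translate the existence of $\suppproj{}$ into Banach-space language. Any continuous Markov kernel $\suppproj{}$ in $\chausstoch$ induces a unital positive bounded linear map
\[
	\suppproj{}^{*}\colon C(\beta\N) \longrightarrow C([0,1]^\R), \qquad f \longmapsto \Bigl( x \mapsto \int f \, d\suppproj{}(\ph|x) \Bigr),
\]
and the retraction identity $\suppproj{} \comp \iota = \id_{\beta\N}$ becomes $\iota^{*} \comp \suppproj{}^{*} = \id_{C(\beta\N)}$, where $\iota^{*}$ denotes the restriction map. Since $\iota^{*}$ has operator norm $1$ and $\suppproj{}^{*}$ is a contraction, this forces $\suppproj{}^{*}$ to be an isometric linear embedding $C(\beta\N) \hookrightarrow C([0,1]^\R)$.

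The main obstacle lies precisely here: I would invoke the known result{\,\textemdash\,}recorded in~\cite{deville1993renormings}{\,\textemdash\,}that $C(\beta\N) \cong \ell^\infty$ does not admit any isomorphic linear embedding into $C([0,1]^\R)$. This Banach-space theoretic input lies outside the framework of categorical probability, so I would use it as a black box rather than attempt to reprove it. The resulting contradiction completes the argument.
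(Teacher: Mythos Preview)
Your argument is correct and structurally identical to the paper's: embed $\beta\N$ into $[0,1]^\R$, push forward a measure with full support on $\beta\N$, and observe that a split support would yield a positive unital linear section $C(\beta\N) \to C([0,1]^\R)$ of the restriction map, hence an isometric embedding. The only divergence is in the Banach-space black box at the end. You invoke the renorming-theoretic fact from~\cite{deville1993renormings} that $C(\beta\N)\cong\ell^\infty$ admits no isomorphic embedding into $C([0,1]^\R)$ whatsoever; the paper instead cites Pe{\l}czy\'nski's result that $\beta\N$ is not an almost Dugundji space, which rules out bounded linear extension operators specifically. Your input is logically stronger (no embedding at all, rather than no extension operator), and indeed the paper mentions this very alternative in the paragraph preceding the proposition, crediting Tommaso Russo. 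Either citation closes the argument; the paper presumably chose Pe{\l}czy\'nski because the statement about extension operators matches exactly what the support projection would provide, without the extra step of deducing isometry.
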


Surprisingly, the $\nu$ that we construct is even atomic (in the measure-theoretic sense of having atoms).

\begin{proof}
	Let $\beta\N$ be the Stone-\v{C}ech compactification of a countably infinite set.
	Then the hom-set
	\begin{equation}
		\chaus \bigl( \beta\N,[0,1] \bigr) \cong \topcat \bigl( \N,[0,1] \bigr)
	\end{equation}
	has the cardinality of $\R$.
	Using this together with a standard double dualization argument gives us a homeomorphic embedding $\beta\N \hookrightarrow [0,1]^{\R}$.
	We identify $\beta\N$ with the image of this embedding and let $\nu$ be any atomic probability measure whose set of atoms is given by $\N \subseteq \beta\N \subseteq [0,1]^{\R}$.

	Then since $\N \subseteq \beta\N$ is dense and $\beta\N \subseteq [0,1]^{\R}$ is closed, the topological support of $\nu$ is $\beta\N$.
	Therefore, by \cref{ex:top_support}, the support inclusion in our sense is $\suppinc{} \colon \beta\N \hookrightarrow [0,1]^\R$.
	Under Gelfand duality, this inclusion corresponds to an algebra homomorphism ${C(\suppinc{}) \colon C([0,1]^\R) \to C(\beta\N)}$ given by the restriction of functions to the subspace.
	Now, if a support projection existed, then by probabilistic Gelfand duality~\cite{furber_jacobs_gelfand}, the homomorphism $C(\suppinc{})$ would have a positive unital linear section $C(\beta\N) \to C([0,1]^\R)$, assigning to every continuous function on $\beta\N$ a continuous extension to $[0,1]^\R$ of the same norm.
	By a result of Pe{\l}czy\'nski \cite[Corollary 8.14]{pelczynski1968extensions}, such a section does not exist.\footnote{Pe{\l}czy\'nski's result applies to non-metrizable compact spaces which are extremally disconnected, and it is well-known that $\beta\N$ has these properties. It follows that $\beta\N$ is not an \emph{almost Dugundji space}, and in particular not a \emph{Dugundji space}, and this implies our claim by his \cite[Proposition 6.2]{pelczynski1968extensions}.}
\end{proof}

Let us finish with a teaser for the next section.

\begin{remark}
	\label{rem:split_supp_idemp}
	For every split support, the composite $\suppinc{} \comp \suppproj{}$ is an idempotent.
	As a consequence of \cref{lem:supp_factor}, we also have
	\begin{equation}\label{eq:support_copy_proof}
		{%
			\tikzstyle{every picture}=[tikzfig]%
			\begin{tikzpicture}
				\begin{pgfonlayer}{nodelayer}
					\node [style=none] (0) at (7, 0) {$\ase{p}$};
					\node [style=none] (1) at (0, 0) {$\ase{p}$};
					\node [style=bn] (2) at (-3.25, -0.25) {};
					\node [style=morphism] (3) at (-4.25, 1.25) {$\suppinc{}$};
					\node [style=none] (4) at (-2.25, 2.25) {};
					\node [style=none] (5) at (-4.25, 2.25) {};
					\node [style=none] (6) at (-2.25, 1) {};
					\node [style=none] (7) at (-3.25, -2.25) {};
					\node [style=none] (8) at (-3.25, -2.75) {$X$};
					\node [style=none] (9) at (-4.25, 2.75) {$X$};
					\node [style=none] (10) at (-2.25, 2.75) {$\Supp{}$};
					\node [style=none] (11) at (-4.25, 1) {};
					\node [style=morphism] (12) at (-3.25, -1.25) {$\suppproj{}$};
					\node [style=bn] (13) at (3.5, -1.5) {};
					\node [style=morphism] (14) at (2.5, 1.25) {$\suppinc{}$};
					\node [style=none] (15) at (4.5, 2.25) {};
					\node [style=none] (16) at (2.5, 2.25) {};
					\node [style=none] (17) at (4.5, -0.25) {};
					\node [style=none] (18) at (3.5, -2.25) {};
					\node [style=none] (19) at (3.5, -2.75) {$X$};
					\node [style=none] (20) at (4.5, 2.75) {$\Supp{}$};
					\node [style=none] (21) at (2.5, 2.75) {$X$};
					\node [style=none] (22) at (2.5, -0.25) {};
					\node [style=morphism] (23) at (2.5, -0.25) {$\suppproj{}$};
					\node [style=morphism] (24) at (4.5, -0.25) {$\suppproj{}$};
					\node [style=bn] (25) at (10.5, -1) {};
					\node [style=none] (26) at (11.5, 2.25) {};
					\node [style=none] (27) at (9.5, 2.25) {};
					\node [style=none] (28) at (11.5, 0.25) {};
					\node [style=none] (29) at (10.5, -2.25) {};
					\node [style=none] (30) at (10.5, -2.75) {$X$};
					\node [style=none] (31) at (11.5, 2.75) {$\Supp{}$};
					\node [style=none] (32) at (9.5, 2.75) {$X$};
					\node [style=none] (33) at (9.5, 0.25) {};
					\node [style=morphism] (34) at (11.5, 0.75) {$\suppproj{}$};
				\end{pgfonlayer}
				\begin{pgfonlayer}{edgelayer}
					\draw [in=-90, out=165] (2) to (11.center);
					\draw [in=-90, out=15] (2) to (6.center);
					\draw (6.center) to (4.center);
					\draw (3) to (5.center);
					\draw (7.center) to (12);
					\draw (12) to (2);
					\draw [in=-90, out=165] (13) to (22.center);
					\draw [in=-90, out=15] (13) to (17.center);
					\draw (14) to (16.center);
					\draw (23) to (14);
					\draw (17.center) to (24);
					\draw (24) to (15.center);
					\draw (18.center) to (13);
					\draw [in=-90, out=165] (25) to (33.center);
					\draw [in=-90, out=15] (25) to (28.center);
					\draw (28.center) to (34);
					\draw (34) to (26.center);
					\draw (29.center) to (25);
					\draw (33.center) to (27.center);
				\end{pgfonlayer}
			\end{tikzpicture}
		}%
	\end{equation}
	This is an instance of the \emph{relative positivity} axiom \cite[Section 2.5]{fritz2022dilations} which automatically holds in this case.
	Since we have $\suppinc{} \acsim p$, we can also write this as
	\begin{equation}\label{eq:support_copy}
		{%
			\tikzstyle{every picture}=[tikzfig]%
			\begin{tikzpicture}
				\begin{pgfonlayer}{nodelayer}
					\node [style=none] (15) at (0, 0) {$\ase{\suppinc{}}$};
					\node [style=bn] (16) at (-3.25, -0.25) {};
					\node [style=morphism] (17) at (-4.25, 1.25) {$\suppinc{}$};
					\node [style=none] (18) at (-2.25, 2.25) {};
					\node [style=none] (19) at (-4.25, 2.25) {};
					\node [style=none] (20) at (-2.25, 1) {};
					\node [style=none] (21) at (-3.25, -2.25) {};
					\node [style=none] (22) at (-3.25, -2.75) {$X$};
					\node [style=none] (23) at (-4.25, 2.75) {$X$};
					\node [style=none] (24) at (-2.25, 2.75) {$\Supp{}$};
					\node [style=none] (25) at (-4.25, 1) {};
					\node [style=morphism] (26) at (-3.25, -1.25) {$\suppproj{}$};
					\node [style=bn] (39) at (3.5, -1) {};
					\node [style=none] (41) at (4.5, 2.25) {};
					\node [style=none] (42) at (2.5, 2.25) {};
					\node [style=none] (43) at (4.5, 0.25) {};
					\node [style=none] (44) at (3.5, -2.25) {};
					\node [style=none] (45) at (3.5, -2.75) {$X$};
					\node [style=none] (46) at (4.5, 2.75) {$\Supp{}$};
					\node [style=none] (47) at (2.5, 2.75) {$X$};
					\node [style=none] (48) at (2.5, 0.25) {};
					\node [style=morphism] (50) at (4.5, 0.75) {$\suppproj{}$};
				\end{pgfonlayer}
				\begin{pgfonlayer}{edgelayer}
					\draw [in=-90, out=165] (16) to (25.center);
					\draw [in=-90, out=15] (16) to (20.center);
					\draw (20.center) to (18.center);
					\draw (17) to (19.center);
					\draw (21.center) to (26);
					\draw (26) to (16);
					\draw [in=-90, out=165] (39) to (48.center);
					\draw [in=-90, out=15] (39) to (43.center);
					\draw (43.center) to (50);
					\draw (50) to (41.center);
					\draw (44.center) to (39);
					\draw (48.center) to (42.center);
				\end{pgfonlayer}
			\end{tikzpicture}
		}%
	\end{equation}
	In the parlance of \cref{def:idempotents}, this implies that the idempotent $\suppinc{} \comp \suppproj{}$ is of a special type, namely a \emph{static} idempotent.
	We study these in more detail in \cref{sec:proj_idempotent_support}.
\end{remark}

\section{Idempotents and their splitting in Markov categories}
\label{sec:idempotents}

In applications of probability, one often deals with certain operations{\,\textemdash\,}such as averaging over a group action or applying a conditional expectation{\,\textemdash\,}which are idempotent:
Applying such an operation twice is the same as applying it only once.

The main goal of this section, which turns out to be closely related to supports, is to prove that all idempotents split in Markov categories satisfying suitable basic axioms.
We achieve this in \cref{thm:balanced_split}, and obtain as a straightforward consequence that idempotents in $\borelstoch$ split.
As we explain with \cref{thm:idemp_borel}, this strengthens an existing theorem of Blackwell on idempotent Markov kernels.

Being able to state and prove these statements requires a good deal of preparation.
First of all, \cref{sec:projective_etc} introduces various ways in which an idempotent can interact with a Markov category structure, resulting in the definition of \emph{static}, \emph{strong} and \emph{balanced} idempotents.
We provide various examples, such as Reynolds operators, as well as reformulations of the definitions involving notions like detailed balance.
In the auxiliary \cref{sec:proj_idempotent_support}, we study splittings of static idempotents and show that the splitting of a static idempotent is the same as a split support.
This is then put to use in \cref{sec:route_split}, which contains our main results.
Our construction of splittings of the more general balanced idempotents in the proof of \cref{thm:balanced_split} is reduced to the splitting of static idempotents from the previous subsection.

The final \cref{sec:blackwell_envelope} offers a general formal construction for the splitting of balanced idempotents which we call the \emph{Blackwell envelope}.
It is analogous to the Karoubi envelope from ordinary category theory, which embeds every category in a larger category in which all idempotents split.

\subsection{Static, strong and balanced idempotents}
\label{sec:projective_etc}

In a generic category, an \newterm{idempotent} is any endomorphism $e \colon X \to X$ satisfying $e^2 = e$, where $e^2$ stands for the sequential composite of two instances of $e$.
As one would expect in a more specific setting like that of Markov categories, relevant notions of idempotents satisfy additional compatibility conditions with the extra structure that is present.

Here, we present three types of idempotents, namely \emph{static}, \emph{strong} and \emph{balanced}. 
These have the following significance:
\begin{itemize}
	\item Static idempotents are closely related to supports, as investigated in \cref{sec:proj_idempotent_support}. 
		Moreover, they play a crucial role in \cref{thm:balanced_split} on splitting of idempotents, which is the main result of this section. Indeed, a preliminary result (\cref{lem:proj_balanced_split}) reduces the proof of the splitting of all idempotents to those that are static.
	\item Strong idempotents have connections with conditional expectations, as we point out in \cref{ex:reynolds_operator}, but they appear only in our present general theory.
	\item The weakest compatibility condition is that of balanced idempotents.
		As shown in \cref{fig:idempotents_venn}, these contain both static and strong idempotents as special cases.
		In a positive Markov category, also all split idempotents are balanced (\cref{thm:split_props}).
		Moreover, assuming balancedness (\cref{def:balanced_cat}), \emph{every} idempotent is balanced (\cref{thm:balanced_idempotent}).
\end{itemize}

\begin{definition}\label{def:idempotents}
	An idempotent $e \colon X \to X$ in a Markov category $\cC$ is:
	\begin{itemize}
		\item \newterm{static} if it satisfies
			\begin{equation}\label{eq:balanced_as_det}
				{%
					\tikzstyle{every picture}=[tikzfig]%
					\begin{tikzpicture}
						\begin{pgfonlayer}{nodelayer}
							\node [style=bn] (0) at (-3, 0) {};
							\node [style=bn] (1) at (3, 0) {};
							\node [style=morphism] (2) at (-3, -1) {$e$};
							\node [style=none] (3) at (2, 1.25) {};
							\node [style=none] (4) at (4, 1.25) {};
							\node [style=morphism] (5) at (-4, 1.25) {$e$};
							\node [style=none] (6) at (-4, 2.25) {};
							\node [style=none] (7) at (-2, 2.25) {};
							\node [style=none] (8) at (-2, 1.25) {};
							\node [style=none] (9) at (-3, -2) {};
							\node [style=none] (10) at (3, -2) {};
							\node [style=none] (11) at (2, 2.25) {};
							\node [style=none] (12) at (4, 2.25) {};
							\node [style=none] (13) at (0, 0) {$=$};
							\node [style=morphism] (14) at (3, -1) {$e$};
							\node [style=none] (15) at (-4, 1.25) {};
						\end{pgfonlayer}
						\begin{pgfonlayer}{edgelayer}
							\draw (11.center) to (3.center);
							\draw [in=165, out=-90] (3.center) to (1);
							\draw (12.center) to (4.center);
							\draw [in=15, out=-90] (4.center) to (1);
							\draw (1) to (10.center);
							\draw (9.center) to (0);
							\draw [in=-90, out=15] (0) to (8.center);
							\draw (8.center) to (7.center);
							\draw (6.center) to (5);
							\draw [in=-90, out=165] (0) to (15.center);
						\end{pgfonlayer}
					\end{tikzpicture}
				}%
			\end{equation}
		\item \newterm{strong} if it satisfies
			\begin{equation}\label{eq:strong_idempotent}
				{%
					\tikzstyle{every picture}=[tikzfig]%
					\begin{tikzpicture}
						\begin{pgfonlayer}{nodelayer}
							\node [style=bn] (0) at (-3, 0) {};
							\node [style=bn] (1) at (3, -0.75) {};
							\node [style=morphism] (2) at (-3, -1) {$e$};
							\node [style=morphism] (3) at (2, 1) {$e$};
							\node [style=morphism] (4) at (4, 1) {$e$};
							\node [style=morphism] (5) at (-4, 1.25) {$e$};
							\node [style=none] (6) at (-4, 2.25) {};
							\node [style=none] (7) at (-2, 2.25) {};
							\node [style=none] (8) at (-2, 1.25) {};
							\node [style=none] (9) at (-3, -2) {};
							\node [style=none] (10) at (3, -2) {};
							\node [style=none] (11) at (2, 2.25) {};
							\node [style=none] (12) at (4, 2.25) {};
							\node [style=none] (13) at (0, 0) {$=$};
							\node [style=none] (14) at (-4, 1.25) {};
							\node [style=none] (15) at (2, 0.5) {};
							\node [style=none] (16) at (4, 0.5) {};
						\end{pgfonlayer}
						\begin{pgfonlayer}{edgelayer}
							\draw (11.center) to (3);
							\draw (12.center) to (4);
							\draw (1) to (10.center);
							\draw (9.center) to (0);
							\draw [in=-90, out=15] (0) to (8.center);
							\draw (8.center) to (7.center);
							\draw (6.center) to (5);
							\draw [in=-90, out=165] (0) to (14.center);
							\draw (16.center) to (4);
							\draw (15.center) to (3);
							\draw [in=-90, out=165] (1) to (15.center);
							\draw [in=-90, out=15] (1) to (16.center);
						\end{pgfonlayer}
					\end{tikzpicture}
				}%
			\end{equation}
		\item \newterm{balanced} if it satisfies
			\begin{equation}\label{eq:balancednew}
				{%
					\tikzstyle{every picture}=[tikzfig]%
					\begin{tikzpicture}
						\begin{pgfonlayer}{nodelayer}
							\node [style=bn] (0) at (-3, 0) {};
							\node [style=bn] (1) at (3, 0) {};
							\node [style=morphism] (2) at (-3, -1) {$e$};
							\node [style=morphism] (5) at (-4, 1.25) {$e$};
							\node [style=none] (6) at (-4, 2.25) {};
							\node [style=none] (7) at (-2, 2.25) {};
							\node [style=none] (8) at (-2, 1.25) {};
							\node [style=none] (9) at (-3, -2) {};
							\node [style=none] (10) at (3, -2) {};
							\node [style=none] (11) at (2, 2.25) {};
							\node [style=none] (13) at (0, 0) {$=$};
							\node [style=morphism] (14) at (3, -1) {$e$};
							\node [style=none] (15) at (-4, 1.25) {};
							\node [style=morphism] (16) at (2, 1.25) {$e$};
							\node [style=none] (17) at (4, 2.25) {};
							\node [style=none] (18) at (4, 1.25) {};
							\node [style=none] (19) at (2, 1.25) {};
							\node [style=morphism] (20) at (4, 1.25) {$e$};
						\end{pgfonlayer}
						\begin{pgfonlayer}{edgelayer}
							\draw (1) to (10.center);
							\draw (9.center) to (0);
							\draw [in=-90, out=15] (0) to (8.center);
							\draw (8.center) to (7.center);
							\draw [in=165, out=-90] (15.center) to (0);
							\draw (6.center) to (15.center);
							\draw (17.center) to (18.center);
							\draw [in=165, out=-90] (19.center) to (1);
							\draw [in=270, out=15] (1) to (18.center);
							\draw (16) to (11.center);
						\end{pgfonlayer}
					\end{tikzpicture}
				}%
			\end{equation} 
	\end{itemize}
\end{definition}

To understand these equations intuitively, note that the left-hand side is the same for all, and it amounts to considering the Markov chain with transition kernel $e$ for two time steps and recording the intermediate value.
Each of these equations implies that $e$ is an idempotent, by marginalizing either on the right (static, strong) or left (balanced).

The reasons behind our choice of names for these idempotents are as follows:
\begin{itemize}
	\item The defining \cref{eq:balanced_as_det} of a static idempotent can also be written as $e \ase{e} \id_X$. This is static in the sense that the Markov chain already stabilizes after one step.

	\item A balanced idempotent satisfies a version of \emph{detailed balance}.
		This is discussed in the upcoming \cref{rem:detailed_balance} and reflected in the more symmetrical characterization from \cref{prop:balancednew}~\ref{it:detailed_balance}. 

	\item The definition of strong idempotents arises as a strengthening of the characterization of balanced idempotents via \cref{prop:balancednew}~\ref{it:as_strong}. 
\end{itemize}
These interpretations are also going to be bolstered by \cref{thm:split_props}, which classifies split idempotents by their types.
For now, let us discuss the relation between the three types of idempotents just introduced and give some examples.

\begin{figure}
	\begin{center}\rm
		{%
			\tikzstyle{every picture}=[tikzfig]%
			\begin{tikzpicture}
				\begin{pgfonlayer}{nodelayer}
					\node [style=none] (0) at (-8.75, 5.75) {};
					\node [style=none] (1) at (-9.75, 4.75) {};
					\node [style=none] (2) at (-9.75, -2.25) {};
					\node [style=none] (3) at (-8.75, -3.25) {};
					\node [style=none] (4) at (8.75, -3.25) {};
					\node [style=none] (5) at (9.75, -2.25) {};
					\node [style=none] (6) at (9.75, 4.75) {};
					\node [style=none] (7) at (8.75, 5.75) {};
					\node [style=none] (8) at (8.75, 5.75) {};
					\node [style=yellow] (9) at (-6.5, 5) {All idempotents};
					\node [style=none] (10) at (-7.75, 4.25) {};
					\node [style=none] (11) at (-8.5, 3.5) {};
					\node [style=none] (12) at (-8.5, -2) {};
					\node [style=none] (13) at (-7.75, -2.75) {};
					\node [style=none] (14) at (8, -2.75) {};
					\node [style=none] (15) at (8.75, -2) {};
					\node [style=none] (16) at (8.75, 3.5) {};
					\node [style=none] (17) at (8, 4.25) {};
					\node [style=orange] (18) at (0, 3.5) {Balanced};
					\node [style=red] (19) at (-4.75, 1.75) {Strong};
					\node [style=red] (20) at (4.75, 1.75) {Static};
					\node [style=none] (21) at (0, -0.75) {Deterministic};
					\node [style=none] (22) at (-6.25, 2.75) {};
					\node [style=none] (24) at (-7, 2) {};
					\node [style=none] (25) at (-7, -1) {};
					\node [style=none] (26) at (-6.25, -1.75) {};
					\node [style=none] (27) at (2.5, -1.75) {};
					\node [style=none] (28) at (3.25, -1) {};
					\node [style=none] (29) at (3.25, -0.5) {};
					\node [style=none] (31) at (-4.25, 2.75) {};
					\node [style=none] (32) at (6.25, 2.75) {};
					\node [style=none] (33) at (7, 2) {};
					\node [style=none] (34) at (7, -1) {};
					\node [style=none] (35) at (6.25, -1.75) {};
					\node [style=none] (36) at (-2.5, -1.75) {};
					\node [style=none] (37) at (-3.25, -1) {};
					\node [style=none] (38) at (-3.25, -0.5) {};
					\node [style=none] (40) at (4.25, 2.75) {};
					\node [style=none] (41) at (0, 1.75) {};
					\node [style=none] (42) at (2.5, -1.75) {};
					\node [style=none] (43) at (3.25, -1) {};
					\node [style=none] (44) at (3.25, -0.5) {};
					\node [style=none] (45) at (-2.5, -1.75) {};
					\node [style=none] (46) at (-3.25, -1) {};
					\node [style=none] (47) at (-3.25, -0.5) {};
					\node [style=none] (48) at (0, 1.75) {};
				\end{pgfonlayer}
				\begin{pgfonlayer}{edgelayer}
					\draw [style=yellow fill] (7.center)
						to (0.center)
						to [in=90, out=-180] (1.center)
						to (2.center)
						to [in=-180, out=-90] (3.center)
						to [in=180, out=0] (4.center)
						to [in=-90, out=0] (5.center)
						to (6.center)
						to [in=0, out=90] (8.center);
					\draw [style=orange fill] (16.center)
						to [in=0, out=90] (17.center)
						to (10.center)
						to [in=90, out=-180] (11.center)
						to (12.center)
						to [in=-180, out=-90] (13.center)
						to [in=180, out=0] (14.center)
						to [in=-90, out=0] (15.center)
						to cycle;
					\draw [style=red fill] (26.center)
						to (36.center)
						to [in=-90, out=180] (37.center)
						to (38.center)
						to [in=-165, out=90, looseness=0.50] (41.center)
						to [in=0, out=165, looseness=0.75] (31.center)
						to (22.center)
						to [in=90, out=-180] (24.center)
						to (25.center)
						to [in=-180, out=-90] cycle;
					\draw [style=red fill] (32.center)
						to [in=90, out=0] (33.center)
						to (34.center)
						to [in=0, out=-90] (35.center)
						to (27.center)
						to [in=-90, out=0] (28.center)
						to (29.center)
						to [in=-15, out=90, looseness=0.50] (41.center)
						to [in=-180, out=15, looseness=0.75] (40.center)
						to cycle;
					\draw [style=red fill] (36.center) to (27.center);
					\draw [style=dark red fill] (43.center)
						to (44.center)
						to [in=-15, out=90, looseness=0.50] (48.center)
						to [in=90, out=-165, looseness=0.50] (47.center)
						to (46.center)
						to [in=180, out=-90] (45.center)
						to (42.center)
						to [in=-90, out=0] cycle;
				\end{pgfonlayer}
			\end{tikzpicture}
		}%
	\end{center}
	\caption{Venn diagram of types of idempotents in a Markov category.}
	\label{fig:idempotents_venn}
\end{figure}
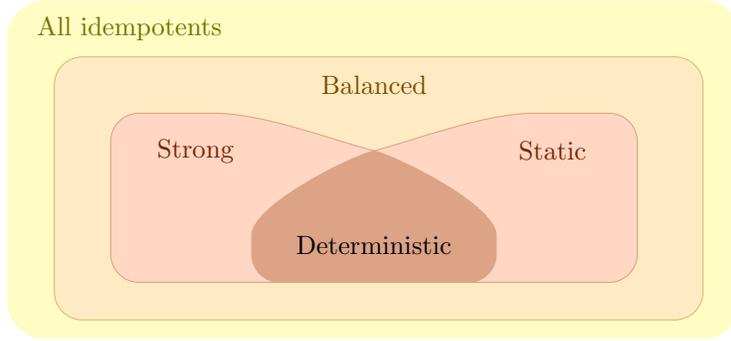

\begin{remark}\label{rem:idempotents}
	We can infer from the definition that both static and strong idempotents are balanced.
	Indeed, in the static case, we can compose \cref{eq:balanced_as_det} with $e$ on the right output to get the mirror image of \cref{eq:balancednew}, which is an equivalent equation by the commutativity of copy.
	In the strong case, we simply pre-compose \cref{eq:strong_idempotent} with $e$ and use $e^2 = e$ to obtain \cref{eq:balancednew}.

	Furthermore, any idempotent that is both static and strong is deterministic, since that is an obvious consequence of chaining \cref{eq:balanced_as_det,eq:strong_idempotent}.
	The Venn diagram of \cref{fig:idempotents_venn} depicts all of these implications.
\end{remark}		

\begin{example}
	Let us consider \emph{group averaging} as a general motivating example in $\finstoch$.
	To this end, let $G$ be a finite group acting on a finite set $X$.
	Then the \emph{$G$-averaging} is the stochastic matrix $X \to X$ with entries
	\begin{equation}
		e(y|x) = \frac{1}{|G|} \sum_{g \in G} \delta_{g \comp x, y},
	\end{equation}
	where $\delta_{a,b}$ stands for $\id(a|b)$.
	Intuitively, this amounts to choosing a uniformly random element $g \in G$ and applying it to $x$, or equivalently choosing a uniformly random element in the orbit $G \comp x$.
	A direct calculation shows that this $e$ is a strong idempotent:
	Looking at the probability for input $x$ and output $(y,z)$, both the left-hand side and the right-hand side of \cref{eq:strong_idempotent} take the value $0$ if these three points are not all in the same orbit, and the value $\linefaktor{|G_x|^2\,}{\,|G|^2} = \linefaktor{1\,}{\,|G \comp x|^2}$ if they are, where $G_x = \Set{g \in G \given g \comp x = x }$ denotes the stabilizer of $x$.
\end{example}

\begin{example}\label{ex:idempotents_finstoch}
	We show, via examples, that in $\finstoch$ all three types of idempotents actually differ.
	More precisely, all regions of the Venn diagram of \cref{fig:idempotents_venn}, besides the outer ``general idempotents'' one, are inhabited by some stochastic matrices. 
	We also give a \newterm{splitting} for each of the examples, i.e.\ a pair $\pi$, $\iota$ of morphisms such that both $\pi\comp \iota = \id$ and $\iota \comp \pi = e$ hold. 
	Computations are omitted for the sake of brevity.
	\begin{itemize}
		\item A strong but not static idempotent is 
			\begin{equation}
				e = \begin{pmatrix}
					1/2 & 1/2 \\
					1/2 & 1/2
				\end{pmatrix}, \qquad 
				\iota = \begin{pmatrix} 1/2 \\1/2\end{pmatrix}, \qquad 
				\pi = \begin{pmatrix} 1 & 1  \end{pmatrix} .
			\end{equation}
			This amounts to a stochastic map which ignores its input and outputs a fair coin flip instead.
		\item A static but not strong idempotent is
			\begin{equation}
				e = \begin{pmatrix}
					1&0&1/2\\
					0&1&1/2\\
					0&0&0
				\end{pmatrix}, \qquad 
				\iota = \begin{pmatrix} 1 &0\\0&1\\0&0\end{pmatrix}, \qquad 
				\pi = \begin{pmatrix} 1 & 0 & 1/2\\ 0&1&1/2 \end{pmatrix}  .
			\end{equation}
			This stochastic map acts as the identity on the first two states and sends the third state to a fair coin flip on the first two.
		\item A balanced idempotent that is neither static nor strong is 
			\begin{equation}
				e = \begin{pmatrix}
					1/2 & 1/2 & 0 & 1/4 \\
					1/2 & 1/2 & 0 & 1/4 \\
					0 & 0 & 1 & 1/2 \\
					0 & 0 & 0 & 0
				\end{pmatrix}, \qquad 
				\iota = \begin{pmatrix} 1/2 & 0 \\ 1/2 & 0 \\ 0 & 1 \\ 0 & 0 \end{pmatrix}, \qquad 
				\pi = \begin{pmatrix} 1 & 1 & 0 & 1/2 \\ 0 & 0 & 1 & 1/2 \end{pmatrix} .
			\end{equation}
			The structure of this map is easier to understand based on \cref{ex:splitting} presented further on.
	\end{itemize}
	Note that $\pi$ is deterministic for the strong idempotent and $\iota$ is deterministic for the static idempotent, while neither of these statements is true for the balanced idempotent. 
	These are general facts, see \cref{thm:split_props}.
\end{example}

\begin{example}
	\label{ex:setmulti_not_balanced}
	In order to populate the remaining region in \cref{fig:idempotents_venn}, let us present two examples of idempotents in $\setmulti$ which are not balanced:
	\begin{itemize}
		\item On a two-element set $X = \{0,1\}$, consider the multivalued function $e \colon X \to X$ given by $e(0) = \{0,1\}$ and $e(1) = \{1\}$.
			Then $e$ is an idempotent, but it is not balanced, as one can see by direct calculation: 
			The right-hand side of \cref{eq:balancednew} relates $0 \in X$ to $(0,1) \in X \times X$, but the left-hand side does not.

		\item Consider $X = \mathbb{R}$ and the strict ordering relation $>$ on real numbers.
			Define $e$ to be the morphism mapping each number to the upset it generates:
			\begin{equation}
				e(x) = \Set{ y \given y > x }.
			\end{equation}
			This is another idempotent that is not balanced: 
			For any ordered triple $z > y > x$, the right-hand side of \cref{eq:balancednew} relates $x$ to $(y,z)$, but the left-hand side does not.
			This example works with any nontrivial dense\footnotemark{} partially ordered set in place of $(\R, >)$.
			\footnotetext{Density means that for every ordered pair $z > x$ there is an element $y$ satisfying $z > y > x$ \cite[Definition 10.5]{harzheim2005ordered}. 
				This is relevant in order to get $e^2 = e$. 
			We need the partial order to be nontrivial only in that there should be \emph{some} nontrivial order relation $y > x$.}%
	\end{itemize}
\end{example}

\begin{example}
	\label{ex:finstochpm_not_balanced}
	Also $\finstoch_\pm$ has idempotents which are not balanced, such as
	\begin{equation}
		e = \begin{pmatrix}
			1 & 0 & 0 \\
			1 & 0 & 0 \\
			-1 & 1 & 1
		\end{pmatrix}.
	\end{equation}
	Indeed upon indexing the rows and columns as $X = \{a,b,c\}$, the right-hand side of \cref{eq:balancednew} takes $a$ to $(a,b)$ with probability $1$, while this transition has probability $0$ on the left-hand side.
\end{example}

\begin{example}\label{ex:reynolds_operator}
	It is instructive to spell out the defining equations of the three types of idempotents in the Markov category $\cring$ from \cref{ex:old_vs_new_ac}.
	This means that we consider a commutative ring $R$ and an additive map $e \colon R \to R$ satisfying $e(1) = 1$.
	Then we have:
	\begin{itemize}
		\item $e$ is a static idempotent if and only if
			\begin{equation}
				e \bigl( e(a) b \bigr) = e(a b) 
			\end{equation}
			holds for all $a, b \in R$.

		\item $e$ is a strong idempotent if and only if
			\begin{equation}\label{eq:strong_cring}
				e \bigl( e(a) b \bigr) = e(a) e(b)
			\end{equation}
			holds for all $a, b \in R$.
			This is precisely the defining equation of an \emph{averaging operator} (also known as a \emph{Reynolds operator})\footnotemark{} on $R$ \cite{billik1967idempotent,mumford1994geometric}.
			\footnotetext{Reynolds operators are frequently defined by a more general equation which does not imply they are idempotent, but which is equivalent to \cref{eq:strong_cring} in the idempotent case \cite{billik1967idempotent}.}%
			Moreover, \cref{eq:strong_cring}, together with linearity and continuity conditions, characterizes \emph{conditional expectations} on the algebra of bounded measurable functions on a probability space \cite{moy1954expectation}.
		\item $e$ is a balanced idempotent if and only if
			\begin{equation}
				e \bigl(e(a) b \bigr) = e \bigl( e(a) e(b) \bigr) 
			\end{equation}
			holds for all $a, b \in R$.
	\end{itemize}
\end{example}

More general examples of idempotents arising in categorical probability can be found after \cref{thm:split_props}.
We conclude this subsection by giving alternative descriptions of balanced and static idempotents. 

\begin{proposition}[Alternative description of balanced idempotents]\label{prop:balancednew}
	For an idempotent $e$, the following are equivalent:
	\begin{enumerate}
		\item\label{it:balancednew} $e$ is balanced.

		\item\label{it:detailed_balance} We have
			\begin{equation}\label{eq:detailed_balance}
				{%
					\tikzstyle{every picture}=[tikzfig]%
					\begin{tikzpicture}
						\begin{pgfonlayer}{nodelayer}
							\node [style=bn] (0) at (-3, 0) {};
							\node [style=morphism] (2) at (-3, -1) {$e$};
							\node [style=none] (6) at (-4, 2.25) {};
							\node [style=none] (7) at (-2, 2.25) {};
							\node [style=none] (8) at (-2, 1.25) {};
							\node [style=none] (9) at (-3, -2) {};
							\node [style=none] (13) at (0, 0) {$=$};
							\node [style=morphism] (15) at (-4, 1.25) {$e$};
							\node [style=none] (16) at (-4, 1.25) {};
							\node [style=bn] (17) at (3, 0) {};
							\node [style=morphism] (18) at (3, -1) {$e$};
							\node [style=none] (19) at (4, 2.25) {};
							\node [style=none] (20) at (2, 2.25) {};
							\node [style=none] (21) at (2, 1.25) {};
							\node [style=none] (22) at (3, -2) {};
							\node [style=morphism] (23) at (4, 1.25) {$e$};
							\node [style=none] (24) at (4, 1.25) {};
						\end{pgfonlayer}
						\begin{pgfonlayer}{edgelayer}
							\draw (9.center) to (0);
							\draw [in=-90, out=15] (0) to (8.center);
							\draw (8.center) to (7.center);
							\draw [in=-90, out=165] (0) to (16.center);
							\draw (16.center) to (6.center);
							\draw (22.center) to (17);
							\draw [in=-90, out=165] (17) to (21.center);
							\draw (21.center) to (20.center);
							\draw [in=-90, out=15] (17) to (24.center);
							\draw (24.center) to (19.center);
						\end{pgfonlayer}
					\end{tikzpicture}
				}%
			\end{equation}

		\item\label{it:as_strong} \cref{eq:strong_idempotent} holds ${e}$-almost surely, i.e.\ we have
			\begin{equation}\label{eq:as_balanced}
				{%
					\tikzstyle{every picture}=[tikzfig]%
					\begin{tikzpicture}
						\begin{pgfonlayer}{nodelayer}
							\node [style=bn] (0) at (-3.25, 0) {};
							\node [style=bn] (1) at (3.25, -0.75) {};
							\node [style=morphism] (2) at (-3.25, -1) {$e$};
							\node [style=morphism] (3) at (2.25, 1) {$e$};
							\node [style=morphism] (4) at (4.25, 1) {$e$};
							\node [style=morphism] (5) at (-4.25, 1.25) {$e$};
							\node [style=none] (6) at (-4.25, 2.25) {};
							\node [style=none] (7) at (-2.25, 2.25) {};
							\node [style=none] (8) at (-2.25, 1.25) {};
							\node [style=none] (9) at (-3.25, -2) {};
							\node [style=none] (10) at (3.25, -2) {};
							\node [style=none] (11) at (2.25, 2.25) {};
							\node [style=none] (12) at (4.25, 2.25) {};
							\node [style=none] (13) at (0, 0) {$\ase{e}$};
							\node [style=none] (14) at (-4.25, 1.25) {};
							\node [style=none] (15) at (2.25, 0.5) {};
							\node [style=none] (16) at (4.25, 0.5) {};
						\end{pgfonlayer}
						\begin{pgfonlayer}{edgelayer}
							\draw (11.center) to (3);
							\draw (12.center) to (4);
							\draw (1) to (10.center);
							\draw (9.center) to (0);
							\draw [in=-90, out=15] (0) to (8.center);
							\draw (8.center) to (7.center);
							\draw (6.center) to (5);
							\draw [in=-90, out=165] (0) to (14.center);
							\draw (16.center) to (4);
							\draw (15.center) to (3);
							\draw [in=-90, out=165] (1) to (15.center);
							\draw [in=-90, out=15] (1) to (16.center);
						\end{pgfonlayer}
					\end{tikzpicture}
				}%
			\end{equation}

		\item\label{it:self_adjoint} For every morphism $p \colon A \to X$ that is invariant, i.e.\ satisfies $e \comp p = p$, we have 
			\begin{equation}\label{eq:self_adjoint}
				{%
					\tikzstyle{every picture}=[tikzfig]%
					\begin{tikzpicture}
						\begin{pgfonlayer}{nodelayer}
							\node [style=none] (13) at (0, 0) {$=$};
							\node [style=bn] (14) at (-3, 0) {};
							\node [style=morphism] (15) at (-3, -1) {$p$};
							\node [style=none] (16) at (-4, 2.25) {};
							\node [style=none] (17) at (-2, 2.25) {};
							\node [style=none] (18) at (-2, 1.25) {};
							\node [style=none] (19) at (-3, -2) {};
							\node [style=morphism] (20) at (-4, 1.25) {$e$};
							\node [style=none] (21) at (-4, 1.25) {};
							\node [style=none] (22) at (-3, -2.5) {$A$};
							\node [style=none] (23) at (-4, 2.75) {$X$};
							\node [style=none] (24) at (-2, 2.75) {$X$};
							\node [style=bn] (25) at (3, 0) {};
							\node [style=morphism] (26) at (3, -1) {$p$};
							\node [style=none] (27) at (4, 2.25) {};
							\node [style=none] (28) at (2, 2.25) {};
							\node [style=none] (29) at (2, 1.25) {};
							\node [style=none] (30) at (3, -2) {};
							\node [style=morphism] (31) at (4, 1.25) {$e$};
							\node [style=none] (32) at (4, 1.25) {};
							\node [style=none] (33) at (3, -2.5) {$A$};
							\node [style=none] (34) at (4, 2.75) {$X$};
							\node [style=none] (35) at (2, 2.75) {$X$};
						\end{pgfonlayer}
						\begin{pgfonlayer}{edgelayer}
							\draw (19.center) to (14);
							\draw [in=-90, out=15] (14) to (18.center);
							\draw (18.center) to (17.center);
							\draw [in=-90, out=165] (14) to (21.center);
							\draw (21.center) to (16.center);
							\draw (30.center) to (25);
							\draw [in=-90, out=165] (25) to (29.center);
							\draw (29.center) to (28.center);
							\draw [in=-90, out=15] (25) to (32.center);
							\draw (32.center) to (27.center);
						\end{pgfonlayer}
					\end{tikzpicture}
				}%
			\end{equation}
	\end{enumerate}
\end{proposition}
\begin{proof} \hfill
	\begin{itemize}
		\item[\ref{it:balancednew} $\Rightarrow$ \ref{it:detailed_balance}:]
			As noted in \cref{rem:idempotents}, applying \cref{eq:balancednew} twice yields the required 
			\begin{equation}\label{eq:balanced_newversion0}
				{%
					\tikzstyle{every picture}=[tikzfig]%
					\begin{tikzpicture}
						\begin{pgfonlayer}{nodelayer}
							\node [style=none] (13) at (-3, 0) {$=$};
							\node [style=none] (21) at (3, 0) {$=$};
							\node [style=bn] (23) at (0, 0) {};
							\node [style=none] (24) at (0, -2) {};
							\node [style=none] (25) at (-1, 2) {};
							\node [style=morphism] (26) at (0, -1) {$e$};
							\node [style=morphism] (27) at (-1, 1) {$e$};
							\node [style=none] (28) at (1, 2) {};
							\node [style=none] (29) at (1, 1) {};
							\node [style=none] (30) at (-1, 1) {};
							\node [style=morphism] (31) at (1, 1) {$e$};
							\node [style=bn] (32) at (-6, 0) {};
							\node [style=none] (33) at (-6, -2) {};
							\node [style=none] (34) at (-7, 2) {};
							\node [style=morphism] (35) at (-6, -1) {$e$};
							\node [style=morphism] (36) at (-7, 1) {$e$};
							\node [style=none] (37) at (-5, 2) {};
							\node [style=none] (38) at (-5, 1) {};
							\node [style=none] (39) at (-7, 1) {};
							\node [style=bn] (40) at (6, 0) {};
							\node [style=none] (41) at (6, -2) {};
							\node [style=none] (42) at (5, 2) {};
							\node [style=morphism] (43) at (6, -1) {$e$};
							\node [style=none] (45) at (7, 2) {};
							\node [style=none] (46) at (7, 1) {};
							\node [style=none] (47) at (5, 1) {};
							\node [style=morphism] (48) at (7, 1) {$e$};
						\end{pgfonlayer}
						\begin{pgfonlayer}{edgelayer}
							\draw (23) to (24.center);
							\draw (28.center) to (29.center);
							\draw (25.center) to (30.center);
							\draw [in=165, out=-90] (30.center) to (23);
							\draw [in=270, out=15] (23) to (29.center);
							\draw (32) to (33.center);
							\draw (37.center) to (38.center);
							\draw (34.center) to (39.center);
							\draw [in=165, out=-90] (39.center) to (32);
							\draw [in=270, out=15] (32) to (38.center);
							\draw (40) to (41.center);
							\draw (45.center) to (46.center);
							\draw (42.center) to (47.center);
							\draw [in=165, out=-90] (47.center) to (40);
							\draw [in=270, out=15] (40) to (46.center);
						\end{pgfonlayer}
					\end{tikzpicture}
				}%
			\end{equation}  

		\item[\ref{it:detailed_balance} $\Rightarrow$ \ref{it:balancednew}:]
			This follows by applying $e$ to the left output in \cref{eq:detailed_balance}.

		\item[\ref{it:balancednew} $\Rightarrow$ \ref{it:as_strong}:]
			Using the associativity of copying as well as \cref{eq:balancednew,eq:balanced_newversion0}, we prove \cref{eq:as_balanced} via the following equalities:
			\begin{equation}
				{%
					\tikzstyle{every picture}=[tikzfig]%
					\begin{tikzpicture}
						\begin{pgfonlayer}{nodelayer}
							\node [style=bn] (0) at (-8, 5) {};
							\node [style=morphism] (2) at (-8, 4) {$e$};
							\node [style=morphism] (5) at (-8.75, 6) {$e$};
							\node [style=none] (6) at (-8.75, 7) {};
							\node [style=none] (7) at (-7.25, 7) {};
							\node [style=none] (8) at (-7.25, 6) {};
							\node [style=bn] (25) at (-7, 3) {};
							\node [style=none] (29) at (-8, 4) {};
							\node [style=morphism] (31) at (-7, 2) {$e$};
							\node [style=none] (32) at (-6, 4) {};
							\node [style=none] (33) at (-7, 1) {};
							\node [style=none] (89) at (-4, 4) {$=$};
							\node [style=none] (104) at (2.5, 4) {$=$};
							\node [style=none] (172) at (8.75, 4) {$=$};
							\node [style=none] (173) at (-6, 7) {};
							\node [style=none] (174) at (-8.75, 6) {};
							\node [style=bn] (175) at (-1.5, 5) {};
							\node [style=morphism] (176) at (-1.5, 4) {$e$};
							\node [style=morphism] (177) at (-2.25, 6) {$e$};
							\node [style=none] (178) at (-2.25, 7) {};
							\node [style=none] (179) at (-0.75, 7) {};
							\node [style=none] (180) at (-0.75, 6) {};
							\node [style=bn] (181) at (-0.5, 3) {};
							\node [style=none] (182) at (-1.5, 4) {};
							\node [style=morphism] (183) at (-0.5, 2) {$e$};
							\node [style=none] (184) at (0.5, 4) {};
							\node [style=none] (185) at (-0.5, 1) {};
							\node [style=none] (186) at (0.5, 7) {};
							\node [style=none] (187) at (-2.25, 6) {};
							\node [style=morphism] (188) at (-0.75, 6) {$e$};
							\node [style=bn] (189) at (4.75, 5) {};
							\node [style=morphism] (190) at (6.75, 6) {$e$};
							\node [style=morphism] (191) at (4, 6) {$e$};
							\node [style=none] (192) at (4, 7) {};
							\node [style=none] (193) at (5.5, 7) {};
							\node [style=none] (194) at (5.5, 6) {};
							\node [style=bn] (195) at (5.75, 3) {};
							\node [style=none] (196) at (4.75, 4) {};
							\node [style=morphism] (197) at (5.75, 2) {$e$};
							\node [style=none] (198) at (6.75, 4) {};
							\node [style=none] (199) at (5.75, 1) {};
							\node [style=none] (200) at (6.75, 7) {};
							\node [style=none] (201) at (4, 6) {};
							\node [style=morphism] (202) at (5.5, 6) {$e$};
							\node [style=none] (203) at (15, 4) {$=$};
							\node [style=none] (204) at (-4, -4) {$=$};
							\node [style=none] (205) at (2.5, -4) {$=$};
							\node [style=bn] (206) at (12.75, 5) {};
							\node [style=morphism] (207) at (10.75, 6) {$e$};
							\node [style=morphism] (208) at (13.5, 6) {$e$};
							\node [style=none] (209) at (13.5, 7) {};
							\node [style=none] (210) at (12, 7) {};
							\node [style=none] (211) at (12, 6) {};
							\node [style=bn] (212) at (11.75, 3) {};
							\node [style=none] (213) at (12.75, 4) {};
							\node [style=morphism] (214) at (11.75, 2) {$e$};
							\node [style=none] (215) at (10.75, 4) {};
							\node [style=none] (216) at (11.75, 1) {};
							\node [style=none] (217) at (10.75, 7) {};
							\node [style=none] (218) at (13.5, 6) {};
							\node [style=morphism] (219) at (12, 6) {$e$};
							\node [style=bn] (220) at (-6.25, -3) {};
							\node [style=morphism] (221) at (-8.25, -2) {$e$};
							\node [style=morphism] (222) at (-5.5, -2) {$e$};
							\node [style=none] (223) at (-5.5, -1) {};
							\node [style=none] (224) at (-7, -1) {};
							\node [style=none] (225) at (-7, -2) {};
							\node [style=bn] (226) at (-7.25, -5) {};
							\node [style=none] (227) at (-6.25, -4) {};
							\node [style=morphism] (228) at (-7.25, -6) {$e$};
							\node [style=none] (229) at (-8.25, -4) {};
							\node [style=none] (230) at (-7.25, -7) {};
							\node [style=none] (231) at (-8.25, -1) {};
							\node [style=none] (232) at (-5.5, -2) {};
							\node [style=morphism] (233) at (-7, -2) {$e$};
							\node [style=morphism] (234) at (-6.25, -4) {$e$};
							\node [style=bn] (235) at (0, -3) {};
							\node [style=morphism] (236) at (-2, -2) {$e$};
							\node [style=none] (237) at (0.75, -1) {};
							\node [style=none] (238) at (-0.75, -1) {};
							\node [style=none] (239) at (-0.75, -2) {};
							\node [style=bn] (240) at (-1, -5) {};
							\node [style=none] (241) at (0, -4) {};
							\node [style=morphism] (242) at (-1, -6) {$e$};
							\node [style=none] (243) at (-2, -4) {};
							\node [style=none] (244) at (-1, -7) {};
							\node [style=none] (245) at (-2, -1) {};
							\node [style=none] (246) at (0.75, -2) {};
							\node [style=morphism] (247) at (-0.75, -2) {$e$};
							\node [style=morphism] (248) at (0, -4) {$e$};
							\node [style=none] (249) at (8.75, -4) {$=$};
							\node [style=bn] (251) at (6.5, -3) {};
							\node [style=morphism] (252) at (4.5, -2) {$e$};
							\node [style=none] (253) at (7.25, -1) {};
							\node [style=none] (254) at (5.75, -1) {};
							\node [style=none] (255) at (5.75, -2) {};
							\node [style=bn] (256) at (5.5, -5) {};
							\node [style=none] (257) at (6.5, -4) {};
							\node [style=morphism] (258) at (5.5, -6) {$e$};
							\node [style=none] (259) at (4.5, -4) {};
							\node [style=none] (260) at (5.5, -7) {};
							\node [style=none] (261) at (4.5, -1) {};
							\node [style=none] (262) at (7.25, -2) {};
							\node [style=morphism] (263) at (5.75, -2) {$e$};
							\node [style=bn] (264) at (11.5, -3) {};
							\node [style=morphism] (265) at (10.75, -2) {$e$};
							\node [style=none] (266) at (10.75, -1) {};
							\node [style=none] (267) at (12.25, -1) {};
							\node [style=none] (268) at (12.25, -2) {};
							\node [style=bn] (269) at (12.5, -5) {};
							\node [style=none] (270) at (11.5, -4) {};
							\node [style=morphism] (271) at (12.5, -6) {$e$};
							\node [style=none] (272) at (13.5, -4) {};
							\node [style=none] (273) at (12.5, -7) {};
							\node [style=none] (274) at (13.5, -1) {};
							\node [style=none] (275) at (10.75, -2) {};
							\node [style=morphism] (276) at (12.25, -2) {$e$};
						\end{pgfonlayer}
						\begin{pgfonlayer}{edgelayer}
							\draw [in=-90, out=15] (0) to (8.center);
							\draw (8.center) to (7.center);
							\draw (6.center) to (5);
							\draw (0) to (29.center);
							\draw [in=165, out=-90] (29.center) to (25);
							\draw [in=-90, out=15] (25) to (32.center);
							\draw [in=-90, out=165] (0) to (174.center);
							\draw (32.center) to (173.center);
							\draw (33.center) to (31);
							\draw (31) to (25);
							\draw [in=-90, out=15] (175) to (180.center);
							\draw (180.center) to (179.center);
							\draw (178.center) to (177);
							\draw (175) to (182.center);
							\draw [in=165, out=-90] (182.center) to (181);
							\draw [in=-90, out=15] (181) to (184.center);
							\draw [in=-90, out=165] (175) to (187.center);
							\draw (184.center) to (186.center);
							\draw (185.center) to (183);
							\draw (183) to (181);
							\draw [in=-90, out=15] (189) to (194.center);
							\draw (194.center) to (193.center);
							\draw (192.center) to (191);
							\draw (189) to (196.center);
							\draw [in=165, out=-90] (196.center) to (195);
							\draw [in=-90, out=15] (195) to (198.center);
							\draw [in=-90, out=165] (189) to (201.center);
							\draw (198.center) to (200.center);
							\draw (199.center) to (197);
							\draw (197) to (195);
							\draw [in=-90, out=165] (206) to (211.center);
							\draw (211.center) to (210.center);
							\draw (209.center) to (208);
							\draw (206) to (213.center);
							\draw [in=15, out=-90] (213.center) to (212);
							\draw [in=-90, out=165] (212) to (215.center);
							\draw [in=-90, out=15] (206) to (218.center);
							\draw (215.center) to (217.center);
							\draw (216.center) to (214);
							\draw (214) to (212);
							\draw [in=-90, out=165] (220) to (225.center);
							\draw (225.center) to (224.center);
							\draw (223.center) to (222);
							\draw (220) to (227.center);
							\draw [in=15, out=-90] (227.center) to (226);
							\draw [in=-90, out=165] (226) to (229.center);
							\draw [in=-90, out=15] (220) to (232.center);
							\draw (229.center) to (231.center);
							\draw (230.center) to (228);
							\draw (228) to (226);
							\draw [in=-90, out=165] (235) to (239.center);
							\draw (239.center) to (238.center);
							\draw (235) to (241.center);
							\draw [in=15, out=-90] (241.center) to (240);
							\draw [in=-90, out=165] (240) to (243.center);
							\draw [in=-90, out=15] (235) to (246.center);
							\draw (243.center) to (245.center);
							\draw (244.center) to (242);
							\draw (242) to (240);
							\draw (246.center) to (237.center);
							\draw [in=-90, out=165] (251) to (255.center);
							\draw (255.center) to (254.center);
							\draw (251) to (257.center);
							\draw [in=15, out=-90] (257.center) to (256);
							\draw [in=-90, out=165] (256) to (259.center);
							\draw [in=-90, out=15] (251) to (262.center);
							\draw (259.center) to (261.center);
							\draw (260.center) to (258);
							\draw (258) to (256);
							\draw (262.center) to (253.center);
							\draw [in=-90, out=15] (264) to (268.center);
							\draw (268.center) to (267.center);
							\draw (264) to (270.center);
							\draw [in=165, out=-90] (270.center) to (269);
							\draw [in=-90, out=15] (269) to (272.center);
							\draw [in=-90, out=165] (264) to (275.center);
							\draw (272.center) to (274.center);
							\draw (273.center) to (271);
							\draw (271) to (269);
							\draw (275.center) to (266.center);
						\end{pgfonlayer}
					\end{tikzpicture}
				}%
			\end{equation}
			This is the desired statement.

		\item[\ref{it:as_strong} $\Rightarrow$ \ref{it:balancednew}:]
			Whenever $e$ satisfies \cref{eq:as_balanced}, \cref{eq:balancednew} defining balanced idempotents follows simply by marginalizing the extra output of that equation and using $e^2 =e$.

		\item[\ref{it:detailed_balance} $\Rightarrow$ \ref{it:self_adjoint}:]
			We can use the invariance of $p$ followed by \cref{eq:detailed_balance} and the invariance of $p$ again:
			\begin{equation}\label{eq:self_adjoint_proof}
				{%
					\tikzstyle{every picture}=[tikzfig]%
					\begin{tikzpicture}
						\begin{pgfonlayer}{nodelayer}
							\node [style=none] (13) at (0, 0) {$=$};
							\node [style=none] (22) at (-6, 0) {$=$};
							\node [style=none] (29) at (6, 0) {$=$};
							\node [style=bn] (33) at (-9, 0) {};
							\node [style=morphism] (34) at (-9, -1) {$p$};
							\node [style=none] (35) at (-10, 2.25) {};
							\node [style=none] (36) at (-8, 2.25) {};
							\node [style=none] (37) at (-8, 1.25) {};
							\node [style=none] (38) at (-9, -2) {};
							\node [style=morphism] (39) at (-10, 1.25) {$e$};
							\node [style=none] (40) at (-10, 1.25) {};
							\node [style=none] (41) at (-9, -2.5) {$A$};
							\node [style=none] (42) at (-10, 2.75) {$X$};
							\node [style=none] (43) at (-8, 2.75) {$X$};
							\node [style=bn] (44) at (9, 0) {};
							\node [style=morphism] (45) at (9, -1) {$p$};
							\node [style=none] (46) at (10, 2.25) {};
							\node [style=none] (47) at (8, 2.25) {};
							\node [style=none] (48) at (8, 1.25) {};
							\node [style=none] (49) at (9, -2) {};
							\node [style=morphism] (50) at (10, 1.25) {$e$};
							\node [style=none] (51) at (10, 1.25) {};
							\node [style=none] (52) at (9, -2.5) {$A$};
							\node [style=none] (53) at (10, 2.75) {$X$};
							\node [style=none] (54) at (8, 2.75) {$X$};
							\node [style=bn] (55) at (-3, 0.5) {};
							\node [style=morphism] (56) at (-3, -1.75) {$p$};
							\node [style=none] (57) at (-4, 2.75) {};
							\node [style=none] (58) at (-2, 2.75) {};
							\node [style=none] (59) at (-2, 1.75) {};
							\node [style=none] (60) at (-3, -2.75) {};
							\node [style=morphism] (61) at (-4, 1.75) {$e$};
							\node [style=none] (62) at (-4, 1.75) {};
							\node [style=none] (63) at (-3, -3.25) {$A$};
							\node [style=none] (64) at (-4, 3.25) {$X$};
							\node [style=none] (65) at (-2, 3.25) {$X$};
							\node [style=morphism] (66) at (-3, -0.5) {$e$};
							\node [style=bn] (67) at (3, 0.5) {};
							\node [style=morphism] (68) at (3, -1.75) {$p$};
							\node [style=none] (69) at (4, 2.75) {};
							\node [style=none] (70) at (2, 2.75) {};
							\node [style=none] (71) at (2, 1.75) {};
							\node [style=none] (72) at (3, -2.75) {};
							\node [style=morphism] (73) at (4, 1.75) {$e$};
							\node [style=none] (74) at (4, 1.75) {};
							\node [style=none] (75) at (3, -3.25) {$A$};
							\node [style=none] (76) at (4, 3.25) {$X$};
							\node [style=none] (77) at (2, 3.25) {$X$};
							\node [style=morphism] (78) at (3, -0.5) {$e$};
						\end{pgfonlayer}
						\begin{pgfonlayer}{edgelayer}
							\draw (38.center) to (33);
							\draw [in=-90, out=15] (33) to (37.center);
							\draw (37.center) to (36.center);
							\draw [in=-90, out=165] (33) to (40.center);
							\draw (40.center) to (35.center);
							\draw (49.center) to (44);
							\draw [in=-90, out=165] (44) to (48.center);
							\draw (48.center) to (47.center);
							\draw [in=-90, out=15] (44) to (51.center);
							\draw (51.center) to (46.center);
							\draw (60.center) to (55);
							\draw [in=-90, out=15] (55) to (59.center);
							\draw (59.center) to (58.center);
							\draw [in=-90, out=165] (55) to (62.center);
							\draw (62.center) to (57.center);
							\draw (72.center) to (67);
							\draw [in=-90, out=165] (67) to (71.center);
							\draw (71.center) to (70.center);
							\draw [in=-90, out=15] (67) to (74.center);
							\draw (74.center) to (69.center);
						\end{pgfonlayer}
					\end{tikzpicture}
				}%
			\end{equation}

		\item[\ref{it:self_adjoint} $\Rightarrow$ \ref{it:detailed_balance}:]
			Since $e$ is assumed to be an idempotent, it is itself invariant, so that we can set $p=e$ in \cref{eq:self_adjoint} to get \cref{eq:detailed_balance}. 
			\qedhere
	\end{itemize}
\end{proof}

\begin{remark}
	\label{rem:detailed_balance}
	In point-separable categories such as $\stoch$, condition \ref{it:self_adjoint} can be equivalently checked just on invariant states, in which case it looks like
	\begin{equation}\label{eq:self_adjoint_states}
		{%
			\tikzstyle{every picture}=[tikzfig]%
			\begin{tikzpicture}
				\begin{pgfonlayer}{nodelayer}
					\node [style=bn] (0) at (-3, -0.5) {};
					\node [style=none] (6) at (-4, 1.75) {};
					\node [style=none] (7) at (-2, 1.75) {};
					\node [style=none] (8) at (-2, 0.75) {};
					\node [style=none] (9) at (-3, -1.5) {};
					\node [style=none] (13) at (0, 0) {$=$};
					\node [style=state] (14) at (-3, -1.5) {$p$};
					\node [style=morphism] (16) at (-4, 0.75) {$e$};
					\node [style=none] (17) at (-4, 0.75) {};
					\node [style=bn] (18) at (3, -0.5) {};
					\node [style=none] (19) at (4, 1.75) {};
					\node [style=none] (20) at (2, 1.75) {};
					\node [style=none] (21) at (2, 0.75) {};
					\node [style=none] (22) at (3, -1.5) {};
					\node [style=state] (23) at (3, -1.5) {$p$};
					\node [style=morphism] (24) at (4, 0.75) {$e$};
					\node [style=none] (25) at (4, 0.75) {};
				\end{pgfonlayer}
				\begin{pgfonlayer}{edgelayer}
					\draw (9.center) to (0);
					\draw [in=-90, out=15] (0) to (8.center);
					\draw (8.center) to (7.center);
					\draw (17.center) to (6.center);
					\draw [in=-90, out=165] (0) to (17.center);
					\draw (22.center) to (18);
					\draw [in=-90, out=165] (18) to (21.center);
					\draw (21.center) to (20.center);
					\draw (25.center) to (19.center);
					\draw [in=-90, out=15] (18) to (25.center);
				\end{pgfonlayer}
			\end{tikzpicture}
		}%
	\end{equation}
	which says that $e$ satisfies \emph{detailed balance} with respect to $p$ in the language of Markov processes \cite[Section 4.10]{laidler1987kinetics}.
	This is also equivalent to saying that the morphism $e$ is its own Bayesian inverse with respect to $p$.
	By \cref{prop:balancednew}, balanced idempotents in $\stoch$ are thus those idempotents satisfying \emph{detailed balance} for every invariant state, i.e.\ ones for which the dynamics is \emph{microscopically reversible}. 

	Alternatively, \cref{eq:self_adjoint_states} can also be interpreted as saying that $e$ is \emph{self-adjoint}. 
	Indeed, if $p$ is an invariant measure for a balanced idempotent Markov kernel $e \colon X\to X$, then the operator ${L^2(X,p)\to L^2(X,p)}$ induced by pre-composition with $e$ is idempotent and self-adjoint, making it an orthogonal projection on the Hilbert space $L^2(X,p)$.
\end{remark}

We have the following weaker formulation of the definition of static idempotent.

\begin{lemma}[Alternative description of static idempotents]\label{lem:balanced_as_det}
	Let $e$ be an idempotent. Then $e$ is static if and only if it is $\as{e}$ deterministic.
\end{lemma}
\begin{proof}
	Let us consider an idempotent $e$ which is $\as{e}$ deterministic. 
	We can then use this assumption together with $e^2 = e$ to get
	\begin{equation}
		{%
			\tikzstyle{every picture}=[tikzfig]%
			\begin{tikzpicture}
				\begin{pgfonlayer}{nodelayer}
					\node [style=bn] (0) at (-8.75, 0.5) {};
					\node [style=none] (6) at (-9.5, 2.75) {};
					\node [style=none] (7) at (-8, 2.75) {};
					\node [style=none] (8) at (-8, 1.5) {};
					\node [style=none] (9) at (-7.75, -0.75) {};
					\node [style=bn] (25) at (-7.75, -0.75) {};
					\node [style=none] (29) at (-8.75, 0.5) {};
					\node [style=bn] (30) at (-7.75, -0.75) {};
					\node [style=morphism] (31) at (-7.75, -1.75) {$e$};
					\node [style=bn] (32) at (-6.75, 1.75) {};
					\node [style=none] (33) at (-7.75, -2.75) {};
					\node [style=bn] (75) at (-2.25, 1.5) {};
					\node [style=morphism] (76) at (-2.25, 0.5) {$e$};
					\node [style=none] (77) at (-3, 2.5) {};
					\node [style=none] (78) at (-3, 2.5) {};
					\node [style=none] (79) at (-1.5, 2.5) {};
					\node [style=none] (80) at (-1.5, 2.5) {};
					\node [style=none] (81) at (-1.25, -0.75) {};
					\node [style=bn] (82) at (-1.25, -0.75) {};
					\node [style=none] (83) at (-2.25, 0.5) {};
					\node [style=bn] (84) at (-1.25, -0.75) {};
					\node [style=morphism] (85) at (-1.25, -1.75) {$e$};
					\node [style=bn] (86) at (-0.25, 1.75) {};
					\node [style=none] (87) at (-1.25, -2.75) {};
					\node [style=none] (89) at (-4.75, 0) {$=$};
					\node [style=none] (104) at (1.75, 0) {$=$};
					\node [style=morphism] (173) at (-8, 1.75) {$e$};
					\node [style=bn] (174) at (-14.5, 0) {};
					\node [style=morphism] (175) at (-14.5, -1.25) {$e$};
					\node [style=none] (177) at (-15.5, 2.75) {};
					\node [style=none] (178) at (-13.5, 2.75) {};
					\node [style=none] (180) at (-14.5, -2.75) {};
					\node [style=none] (181) at (-11.5, 0) {$=$};
					\node [style=bn] (182) at (5, 0) {};
					\node [style=morphism] (183) at (5, -1.25) {$e$};
					\node [style=none] (184) at (4, 1.25) {};
					\node [style=none] (185) at (4, 2.75) {};
					\node [style=none] (186) at (6, 2.75) {};
					\node [style=none] (187) at (6, 1.25) {};
					\node [style=none] (188) at (5, -2.75) {};
					\node [style=none] (189) at (-6.75, 0.5) {};
					\node [style=none] (190) at (-0.25, 0.5) {};
					\node [style=none] (191) at (-15.5, 1.25) {};
					\node [style=none] (192) at (-13.5, 1.25) {};
					\node [style=none] (193) at (-9.5, 1.5) {};
					\node [style=morphism] (194) at (-15.5, 1.75) {$e$};
					\node [style=morphism] (195) at (-13.5, 1.75) {$e$};
					\node [style=morphism] (196) at (-9.5, 1.75) {$e$};
					\node [style=none] (197) at (-3, 2.75) {};
					\node [style=none] (198) at (-1.5, 2.75) {};
				\end{pgfonlayer}
				\begin{pgfonlayer}{edgelayer}
					\draw [in=-90, out=15] (0) to (8.center);
					\draw (8.center) to (7.center);
					\draw (0) to (29.center);
					\draw [in=165, out=-90] (29.center) to (25);
					\draw (33.center) to (30);
					\draw [in=-90, out=15] (75) to (80.center);
					\draw (80.center) to (79.center);
					\draw (78.center) to (77.center);
					\draw [in=165, out=-90] (77.center) to (75);
					\draw (75) to (83.center);
					\draw [in=165, out=-90] (83.center) to (82);
					\draw (87.center) to (84);
					\draw (180.center) to (174);
					\draw (188.center) to (182);
					\draw [in=-90, out=15] (182) to (187.center);
					\draw (187.center) to (186.center);
					\draw (185.center) to (184.center);
					\draw [in=165, out=-90] (184.center) to (182);
					\draw (32) to (189.center);
					\draw (86) to (190.center);
					\draw [in=15, out=-90] (189.center) to (30);
					\draw [in=15, out=-90] (190.center) to (84);
					\draw (177.center) to (191.center);
					\draw (178.center) to (192.center);
					\draw (6.center) to (193.center);
					\draw [in=165, out=-90] (191.center) to (174);
					\draw [in=270, out=15] (174) to (192.center);
					\draw [in=165, out=-90] (193.center) to (29.center);
					\draw (78.center) to (197.center);
					\draw (80.center) to (198.center);
				\end{pgfonlayer}
			\end{tikzpicture}
		}%
	\end{equation}
	The desired \cref{eq:balanced_as_det} can then be derived as follows.
	\begin{equation}\label{eq:balanced_as_det2}
		{%
			\tikzstyle{every picture}=[tikzfig]%
			\begin{tikzpicture}
				\begin{pgfonlayer}{nodelayer}
					\node [style=bn] (0) at (-6, 0) {};
					\node [style=bn] (1) at (0, 0) {};
					\node [style=morphism] (2) at (-6, -1) {$e$};
					\node [style=none] (5) at (-7, 1) {};
					\node [style=none] (6) at (-7, 3) {};
					\node [style=none] (7) at (-5, 3) {};
					\node [style=none] (8) at (-5, 1) {};
					\node [style=none] (9) at (-6, -2) {};
					\node [style=none] (10) at (0, -2) {};
					\node [style=none] (11) at (-1, 3) {};
					\node [style=none] (12) at (1, 3) {};
					\node [style=none] (13) at (-3, 0) {$=$};
					\node [style=morphism] (14) at (0, -1) {$e$};
					\node [style=none] (21) at (3, 0) {$=$};
					\node [style=morphism] (24) at (-7, 2) {$e$};
					\node [style=morphism] (25) at (-1, 2.25) {$e$};
					\node [style=bn] (26) at (6, 0) {};
					\node [style=morphism] (27) at (5, 1.5) {$e$};
					\node [style=morphism] (28) at (7, 1.5) {$e$};
					\node [style=none] (29) at (6, -2) {};
					\node [style=none] (30) at (5, 3) {};
					\node [style=none] (31) at (7, 3) {};
					\node [style=morphism] (32) at (6, -1) {$e$};
					\node [style=bn] (33) at (12, 0) {};
					\node [style=morphism] (34) at (12, -1) {$e$};
					\node [style=none] (35) at (11, 1.5) {};
					\node [style=none] (36) at (11, 3) {};
					\node [style=none] (37) at (13, 3) {};
					\node [style=none] (38) at (13, 1.5) {};
					\node [style=none] (39) at (12, -2) {};
					\node [style=none] (41) at (9, 0) {$=$};
					\node [style=none] (42) at (-1, 1) {};
					\node [style=none] (43) at (1, 1) {};
					\node [style=none] (44) at (-1, 1) {};
					\node [style=morphism] (45) at (-1, 1) {$e$};
					\node [style=morphism] (46) at (1, 1) {$e$};
				\end{pgfonlayer}
				\begin{pgfonlayer}{edgelayer}
					\draw (1) to (10.center);
					\draw (9.center) to (0);
					\draw [in=-90, out=15] (0) to (8.center);
					\draw (8.center) to (7.center);
					\draw (6.center) to (5.center);
					\draw [in=165, out=-90] (5.center) to (0);
					\draw (30.center) to (27);
					\draw [in=165, out=-90] (27) to (26);
					\draw (31.center) to (28);
					\draw [in=15, out=-90] (28) to (26);
					\draw (26) to (29.center);
					\draw (39.center) to (33);
					\draw [in=-90, out=15] (33) to (38.center);
					\draw (38.center) to (37.center);
					\draw (36.center) to (35.center);
					\draw [in=165, out=-90] (35.center) to (33);
					\draw (44.center) to (11.center);
					\draw (12.center) to (43.center);
					\draw [in=165, out=-90] (44.center) to (1);
					\draw [in=15, out=-90] (43.center) to (1);
				\end{pgfonlayer}
			\end{tikzpicture}
		}%
	\end{equation}

	Conversely, a static idempotent $e$ is balanced, as noted in \cref{rem:idempotents}. 
	Therefore, we have
	\begin{equation}
		{%
			\tikzstyle{every picture}=[tikzfig]%
			\begin{tikzpicture}
				\begin{pgfonlayer}{nodelayer}
					\node [style=bn] (0) at (-6, 0) {};
					\node [style=bn] (1) at (6.5, 0) {};
					\node [style=morphism] (2) at (-6, -1) {$e$};
					\node [style=none] (5) at (-7, 1) {};
					\node [style=none] (6) at (-7, 2) {};
					\node [style=none] (7) at (-5, 2) {};
					\node [style=none] (8) at (-5, 1) {};
					\node [style=none] (9) at (-6, -2) {};
					\node [style=none] (10) at (6.5, -2) {};
					\node [style=none] (11) at (5.5, 2) {};
					\node [style=none] (12) at (7.5, 2) {};
					\node [style=none] (13) at (3.25, 0) {$\ase{e}$};
					\node [style=bn] (15) at (0, 0) {};
					\node [style=none] (16) at (-1, 1) {};
					\node [style=none] (17) at (1, 1) {};
					\node [style=none] (18) at (0, -2) {};
					\node [style=none] (19) at (-1, 2) {};
					\node [style=none] (20) at (1, 2) {};
					\node [style=none] (21) at (-3, 0) {$=$};
					\node [style=morphism] (22) at (0, -1) {$e$};
					\node [style=morphism] (23) at (-1, 1) {$e$};
					\node [style=none] (24) at (5.5, 1) {};
					\node [style=none] (25) at (7.5, 1) {};
					\node [style=none] (26) at (7.5, 1) {};
					\node [style=morphism] (27) at (7.5, 1) {$e$};
					\node [style=morphism] (28) at (5.5, 1) {$e$};
				\end{pgfonlayer}
				\begin{pgfonlayer}{edgelayer}
					\draw (1) to (10.center);
					\draw (9.center) to (0);
					\draw [in=-90, out=15] (0) to (8.center);
					\draw (8.center) to (7.center);
					\draw (6.center) to (5.center);
					\draw [in=165, out=-90] (5.center) to (0);
					\draw (19.center) to (16.center);
					\draw [in=165, out=-90] (16.center) to (15);
					\draw (20.center) to (17.center);
					\draw [in=15, out=-90] (17.center) to (15);
					\draw (15) to (18.center);
					\draw (11.center) to (24.center);
					\draw (12.center) to (26.center);
					\draw [in=165, out=-90] (24.center) to (1);
					\draw [in=15, out=-90] (26.center) to (1);
				\end{pgfonlayer}
			\end{tikzpicture}
		}%
	\end{equation}
	by \cref{prop:balancednew}, which says exactly that $e$ is $\as{e}$ deterministic.	
\end{proof}

\subsection{Sufficient condition for idempotents to be balanced}\label{sec:balanced_Markov}

A forthcoming result shows that every split idempotent in a positive Markov category is necessarily balanced (\cref{thm:split_props}). 
Therefore, when studying whether all idempotents in a given Markov category split, it is natural to ask first whether all idempotents are balanced.
To this end, we introduce a new information flow axiom, which is a version of the \emph{multiplication lemma} of Parzygnat~\cite[Lemma~8.10]{parzygnat2020inverses}.

\begin{definition}
	\label{def:balanced_cat}
	A Markov category $\cC$ is \newterm{balanced} if the implication
	\begin{equation}
		\label{eq:balanced_cat}
		{%
			\tikzstyle{every picture}=[tikzfig]%
			\begin{tikzpicture}
				\begin{pgfonlayer}{nodelayer}
					\node [style=bn] (0) at (-11, -0.75) {};
					\node [style=morphism] (1) at (-11, -1.75) {$f$};
					\node [style=morphism] (2) at (-12, 0.5) {$g$};
					\node [style=morphism] (3) at (-10, 0.5) {$g$};
					\node [style=morphism] (4) at (-12, 2) {$h$};
					\node [style=morphism] (5) at (-10, 2) {$h$};
					\node [style=none] (6) at (-12, 2.75) {};
					\node [style=none] (7) at (-12, 3.25) {$Y$};
					\node [style=none] (8) at (-10, 3.25) {$Y$};
					\node [style=none] (9) at (-10, 2.75) {};
					\node [style=none] (10) at (-11, -2.75) {};
					\node [style=none] (11) at (-11, -3.25) {$A$};
					\node [style=none] (12) at (-8, 0) {$=$};
					\node [style=bn] (13) at (-5, 0.75) {};
					\node [style=morphism] (14) at (-5, -1.75) {$f$};
					\node [style=morphism] (17) at (-6, 2) {$h$};
					\node [style=morphism] (18) at (-4, 2) {$h$};
					\node [style=none] (19) at (-6, 2.75) {};
					\node [style=none] (20) at (-6, 3.25) {$Y$};
					\node [style=none] (21) at (-4, 3.25) {$Y$};
					\node [style=none] (22) at (-4, 2.75) {};
					\node [style=none] (23) at (-5, -2.75) {};
					\node [style=none] (24) at (-5, -3.25) {$A$};
					\node [style=morphism] (25) at (-5, -0.25) {$g$};
					\node [style=none] (26) at (0, 0) {$\implies$};
					\node [style=bn] (27) at (5, -0.75) {};
					\node [style=morphism] (28) at (5, -1.75) {$f$};
					\node [style=morphism] (29) at (4, 0.5) {$g$};
					\node [style=morphism] (30) at (6, 0.5) {$g$};
					\node [style=morphism] (31) at (4, 2) {$h$};
					\node [style=none] (33) at (4, 2.75) {};
					\node [style=none] (34) at (4, 3.25) {$Y$};
					\node [style=none] (35) at (6, 3.25) {$X$};
					\node [style=none] (36) at (6, 2.75) {};
					\node [style=none] (37) at (5, -2.75) {};
					\node [style=none] (38) at (5, -3.25) {$A$};
					\node [style=none] (39) at (8, 0) {$=$};
					\node [style=bn] (40) at (11, 0.75) {};
					\node [style=morphism] (41) at (11, -1.75) {$f$};
					\node [style=morphism] (42) at (10, 2) {$h$};
					\node [style=none] (44) at (10, 2.75) {};
					\node [style=none] (45) at (10, 3.25) {$Y$};
					\node [style=none] (46) at (12, 3.25) {$X$};
					\node [style=none] (47) at (12, 2.75) {};
					\node [style=none] (48) at (11, -2.75) {};
					\node [style=none] (49) at (11, -3.25) {$A$};
					\node [style=morphism] (50) at (11, -0.25) {$g$};
					\node [style=none] (51) at (12, 1.75) {};
				\end{pgfonlayer}
				\begin{pgfonlayer}{edgelayer}
					\draw (10.center) to (0);
					\draw [in=-90, out=165] (0) to (2);
					\draw (2) to (6.center);
					\draw [in=-90, out=15] (0) to (3);
					\draw (3) to (9.center);
					\draw (23.center) to (13);
					\draw [in=-90, out=165] (13) to (17);
					\draw [in=-90, out=15] (13) to (18);
					\draw (17) to (19.center);
					\draw (22.center) to (18);
					\draw (37.center) to (27);
					\draw [in=-90, out=165] (27) to (29);
					\draw (29) to (33.center);
					\draw [in=-90, out=15] (27) to (30);
					\draw (30) to (36.center);
					\draw (48.center) to (40);
					\draw [in=-90, out=165] (40) to (42);
					\draw (42) to (44.center);
					\draw (51.center) to (47.center);
					\draw [in=-90, out=15] (40) to (51.center);
				\end{pgfonlayer}
			\end{tikzpicture}
		}%
	\end{equation}
	holds in $\cC$.
\end{definition}
We now show that, as claimed, this property ensures that all idempotents are balanced, a result that motivates its name.
\begin{theorem}
	\label{thm:balanced_idempotent}
	If a Markov category $\cC$ is balanced, then every idempotent in $\cC$ is balanced.
\end{theorem}
\begin{proof} 
	Let $e$ be any idempotent in $\cC$.
	In \cref{eq:balanced_cat}, take $f = g = h = e$.
	Then the antecedent of Implication \eqref{eq:balanced_cat} holds by the assumed $e^2 = e$.
	Its consequent tells us that $e$ is a balanced idempotent.
\end{proof}

\begin{proposition}\label{prop:stoch_balanced_cat}
	The Markov category $\stoch$ is balanced.
\end{proposition}

\begin{proof}
	For $f \colon A \to B$, $g \colon B \to X$, and $h \colon X \to Y$, suppose that the antecedent of Implication \eqref{eq:balanced_cat} holds.
	Let us assume $A = I$, meaning that $f$ is a state.
	By point-separability of $\stoch$ (\cref{def:separability}), this assumption comes without loss of generality.

	Fixing $U \in \Sigma_Y$, let us write $H(x) \coloneqq h(U|x)$ and consider the expectation value
	\begin{equation}
		E(b) \coloneqq \int_{x \in X} H(x) \, g(dx|b)
	\end{equation}
	for every $b \in B$.
	Then, by the strict convexity of $x\mapsto x^2$, we have
	\begin{equation}
		\int_{x \in X} H(x)^2 \, g(dx|b) \ge E(b)^2
	\end{equation}
	with equality if and only if $H(x) = E(b)$ holds for $g(\ph|b)$-almost all $x$.
	Evaluating the antecedent of Implication \eqref{eq:balanced_cat} on the measurable set $U \times U$ gives
	\begin{equation}
		\int_{b \in B} \int_{x_1 \in X} \int_{x_2 \in X} H(x_1) \, H(x_2) \, g(dx_1|b) \, g(dx_2|b) \, f(db) \\
		= \int_{b \in B} \int_{x \in X} H(x)^2 \, g(dx|b) \, f(db),
	\end{equation}
	or equivalently
	\begin{equation}
		\int_{b \in B} \left( \int_{x \in X} H(x)^2 \, g(dx|b) - E(b)^2 \right) \, f(db) = 0.
	\end{equation}
	Together with the established non-negativity of the integrand, it follows that this integrand must vanish almost surely.
	By the above tightness condition, we hence conclude $H(x) = E(b)$ for $g(\ph|b)$-almost all $x$ and $f$-almost all $b$.

	Equipped with this intermediate result, let us now turn towards proving the claim.
	Since it is enough to prove the equality of two kernels with values in a product space on products of measurable sets~\cite[Lemma~4.2]{fritz2019synthetic}, it suffices to show that for all $W \in \Sigma_X$,
	\begin{equation}
		\int_{b \in B} \int_{x \in X} H(x) \, g(dx|b) \, g(W|b) \, f(db)
		= \int_{b \in B} \int_{x \in W} H(x) \, g(dx|b) \, f(db),
	\end{equation}
	or equivalently
	\begin{equation}\label{eq:CS_proof}
		\int_{b \in B} E(b) \, g(W|b) \, f(db)
		= \int_{b \in B} \int_{x \in W} H(x) \, g(dx|b) \, f(db).
	\end{equation}
	Since $H(x)$ equals $E(b)$ for $g(\ph|b)$-almost all $x$ and $f$-almost all $b$, we can evaluate the right-hand side to 
	\begin{equation}
		\int_{b \in B} E(b) \int_{x \in W} g(dx|b) \, f(db),
	\end{equation}
	and this proves \cref{eq:CS_proof}.
\end{proof}

\begin{corollary}\label{cor:balanced_cat} $\stoch, \finstoch, \borelstoch, \topstoch, \tychstoch$ and $\chausstoch$ are balanced.
	Moreover, all idempotents in these categories are balanced.
\end{corollary}
\begin{proof}
	As all of these Markov categories are Markov subcategories of $\stoch$, they are balanced by \cref{prop:stoch_balanced_cat}.
	Using \cref{thm:balanced_idempotent}, we conclude that every idempotent in $\stoch$ is balanced.
\end{proof}

An intriguing feature of balancedness is that this property of probabilistic uncertainty is not shared by possibilistic uncertainty, as we recall in the following.
\begin{example}[Non-balanced categories]
	\label{ex:not_balanced_cat}
	In $\setmulti$ and $\finsetmulti$, letting $e$ be one of the idempotents from \cref{ex:setmulti_not_balanced} violates balancedness with $f = g = h = e$.
	The same holds for the idempotent $e$ from \cref{ex:finstochpm_not_balanced} in $\finstoch_\pm$.
\end{example}

\subsection{Split idempotents in Markov categories}
\label{sec:split}

The goal of this and the following subsection is mainly to study \newterm{splitting} of idempotents in Markov categories and to find criteria which guarantee that all (balanced) idempotents split.
Recall first that a splitting of a generic morphism $e$ in a category is a factorization $e = \iota\comp \pi$ such that $\pi\comp \iota$ is the identity morphism~\cite[Section~6.5]{borceux1994handbook}.
Clearly, if $e$ has such a splitting, then it is an idempotent, and the splitting is unique up to unique isomorphism.
In a Markov category, the types of idempotents from \cref{def:idempotents} are reflected in properties of a splitting as follows.

\begin{theorem}\label{thm:split_props}
	In a positive Markov category $\cC$, let $e \colon X \to X$ be an idempotent that splits as $e = \iota\comp \pi$ for $\pi \colon X \to T$ and $\iota \colon T \to X$.
	\begin{enumerate}
		\item\label{it:split_static} $e$ is static if and only if $\iota$ is deterministic.\footnote{The ``if'' part holds even without assuming that $\cC$ is positive (as our proof shows). \label{fn:without_positive}}
		\item\label{it:split_strong} $e$ is strong if and only if $\pi$ is deterministic.
		\item\label{it:split_balanced} $e$ is balanced (without further assumptions).
	\end{enumerate}
	Moreover, $\pi$ is necessarily $\as{\iota}$ deterministic.
\end{theorem}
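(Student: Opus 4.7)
The plan is to handle (iii), (i), (ii), and the ``moreover'' claim in turn, in each case substituting $e = \iota \comp \pi$ and invoking positivity of $\cC$ applied to the deterministic composite $\pi \comp \iota = \id_T$. Two forms of positivity come up repeatedly: the standard form
\[
	(\pi \otimes \pi) \comp \cop_X \comp \iota = \cop_T,
\]
and the equivalent ``shifted'' form
\[
	(\id_X \otimes \pi) \comp \cop_X \comp \iota = (\iota \otimes \id_T) \comp \cop_T,
\]
both of which are standard consequences of the positivity axiom applied to $\pi \comp \iota = \id_T$.

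The ``moreover'' claim falls out of the first identity: it reads $(\pi \otimes \pi) \comp \cop_X \comp \iota = \cop_T \comp \pi \comp \iota$, which is precisely $\pi$ being $\as{\iota}$ deterministic. For (iii), I would substitute $e = \iota \comp \pi$ into \cref{eq:balancednew}: the right-hand side $(\iota\pi \otimes \iota\pi) \comp \cop_X \comp \iota \comp \pi$ simplifies to $(\iota \otimes \iota) \comp \cop_T \comp \pi$ via the first form of positivity, and the left-hand side $(\id_X \otimes \iota\pi) \comp \cop_X \comp \iota \comp \pi$ simplifies to the same expression via the second form. Hence $e$ is balanced.

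For (i), the ``if'' direction needs no positivity: if $\iota$ is deterministic, then $\cop_X \comp \iota = (\iota \otimes \iota) \comp \cop_T$, and both sides of the static equation \cref{eq:balanced_as_det} reduce to $(\iota \otimes \iota) \comp \cop_T \comp \pi$ on using $e \comp \iota = \iota$. For the converse, \cref{prop:balanced_as_det} translates the static hypothesis into $\as{e}$ determinism of $e$, i.e.\ $\cop_X \comp e = (e \otimes e) \comp \cop_X \comp e$; pre-composing with $\iota$ and using $e \comp \iota = \iota$ gives $\cop_X \comp \iota = (\iota\pi \otimes \iota\pi) \comp \cop_X \comp \iota$, and factoring out $\iota \otimes \iota$ followed by the first form of positivity reduces the right-hand side to $(\iota \otimes \iota) \comp \cop_T$. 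This is exactly the determinism of $\iota$.

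Biconditional (ii) is handled by a symmetric argument. For the ``if'' direction, $\pi$ deterministic lets one push $\pi$ through copy on the right-hand side of \cref{eq:strong_idempotent}, while the second form of positivity handles the left-hand side; both sides reduce again to $(\iota \otimes \iota) \comp \cop_T \comp \pi$. For the converse, post-composing \cref{eq:strong_idempotent} with $\pi \otimes \pi$ and using $\pi \comp e = \pi$ yields $(\pi \otimes \pi) \comp \cop_X \comp e = (\pi \otimes \pi) \comp \cop_X$; substituting $e = \iota \comp \pi$ on the left and applying the first form of positivity then delivers $\cop_T \comp \pi = (\pi \otimes \pi) \comp \cop_X$, the determinism of $\pi$. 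The main conceptual hurdle is recognizing both forms of positivity as valid instances of the axiom; once these are in hand, the remaining derivations are essentially routine string-diagrammatic computations.
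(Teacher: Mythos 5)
Parts \ref{it:split_static}--\ref{it:split_balanced} of your argument match the paper's proof essentially step for step: the paper likewise records the single positivity identity obtained from $\pi \comp \iota = \id_T$ (your two ``forms'' are the same instance of the axiom, up to cocommutativity of $\cop$ and post-composition with $\pi$) and then performs exactly the substitutions you describe, cancelling the split epimorphism $\pi$ in (i) and the monomorphism $\iota$ (by post-composing with $\pi$) in (ii). Those computations are all correct, including your observation that the ``if'' direction of (i) uses no positivity.

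The one genuine issue is your justification of the ``moreover'' clause. The identity $(\pi \otimes \pi) \comp \cop_X \comp \iota = \cop_T = \cop_T \comp \pi \comp \iota$ is \emph{not} ``precisely'' the statement that $\pi$ is $\as{\iota}$ deterministic: by \cref{def:as_eq}, the relation $\cop_T \comp \pi \ase{\iota} (\pi \otimes \pi)\comp \cop_X$ requires the equation to hold with an extra retained copy of the $X$-output of $\iota$, namely
\begin{equation*}
	\bigl(\id_X \otimes (\cop_T \comp \pi)\bigr)\comp\cop_X\comp\iota \;=\; \bigl(\id_X \otimes ((\pi\otimes\pi)\comp\cop_X)\bigr)\comp\cop_X\comp\iota ,
\end{equation*}
and since $\iota$ need not be deterministic (see the third idempotent in \cref{ex:idempotents_finstoch}), \cref{prop:ase_props}~\ref{it:ase_deterministic} does not apply, so plain equality after pre-composition with $\iota$ is strictly weaker than $\ase{\iota}$ equality; your two-output identity is only the marginalization of what is needed. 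The gap is easily closed within your own framework: applying the shifted positivity identity twice while keeping the extra wire (using coassociativity of $\cop$) shows that both sides of the display above equal $(\iota \otimes \cop_T)\comp\cop_T$, so the conclusion does follow from positivity alone. The paper reaches the same conclusion by a different route: it first invokes part \ref{it:split_balanced} together with \cref{lem:balancednew} to show that $\pi$ is $\as{e}$ deterministic, and then transfers this to $\as{\iota}$ determinism via $\iota = e \comp \iota \ll e$.
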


\begin{proof}
	Since $\pi\comp \iota$ is the identity and thus deterministic, the positivity assumption on $\cC$ gives 
	\begin{equation}
		\label{eq:split_positivity}
		{%
			\tikzstyle{every picture}=[tikzfig]%
			\begin{tikzpicture}
				\begin{pgfonlayer}{nodelayer}
					\node [style=bn] (0) at (-3, 0) {};
					\node [style=morphism] (1) at (-3, -1) {$\iota$};
					\node [style=none] (2) at (-4, 1) {};
					\node [style=none] (3) at (-4, 2.25) {};
					\node [style=none] (4) at (-2, 2.25) {};
					\node [style=none] (5) at (-2, 1) {};
					\node [style=none] (6) at (-3, -2) {};
					\node [style=morphism] (9) at (-4, 1.25) {$\pi$};
					\node [style=none] (10) at (0, 0) {$=$};
					\node [style=bn] (11) at (3, 0) {};
					\node [style=none] (13) at (2, 1) {};
					\node [style=none] (14) at (2, 2.25) {};
					\node [style=none] (15) at (4, 2.25) {};
					\node [style=none] (16) at (4, 1) {};
					\node [style=none] (17) at (3, -2) {};
					\node [style=morphism] (18) at (4, 1.25) {$\iota$};
					\node [style=none] (19) at (-4, 2.75) {$T$};
					\node [style=none] (20) at (-2, 2.75) {$X$};
					\node [style=none] (21) at (-3, -2.5) {$T$};
					\node [style=none] (22) at (2, 2.75) {$T$};
					\node [style=none] (23) at (4, 2.75) {$X$};
					\node [style=none] (24) at (3, -2.5) {$T$};
				\end{pgfonlayer}
				\begin{pgfonlayer}{edgelayer}
					\draw (6.center) to (0);
					\draw [in=-90, out=15] (0) to (5.center);
					\draw (5.center) to (4.center);
					\draw (3.center) to (2.center);
					\draw [in=165, out=-90] (2.center) to (0);
					\draw (17.center) to (11);
					\draw [in=-90, out=15] (11) to (16.center);
					\draw (16.center) to (15.center);
					\draw (14.center) to (13.center);
					\draw [in=165, out=-90] (13.center) to (11);
				\end{pgfonlayer}
			\end{tikzpicture}
		}%
	\end{equation}
	\begin{enumerate}
		\item Using the assumption that $\iota$ is deterministic twice, we get
			\begin{equation}
				{%
					\tikzstyle{every picture}=[tikzfig]%
					\begin{tikzpicture}
						\begin{pgfonlayer}{nodelayer}
							\node [style=bn] (16) at (-6, 0) {};
							\node [style=morphism] (17) at (-6, -1) {$\iota$};
							\node [style=none] (18) at (-7, 1) {};
							\node [style=none] (19) at (-7, 3.5) {};
							\node [style=none] (20) at (-5, 3.5) {};
							\node [style=none] (21) at (-5, 1) {};
							\node [style=none] (22) at (-6, -3.25) {};
							\node [style=morphism] (24) at (-6, -2.25) {$\pi$};
							\node [style=morphism] (25) at (-7, 2.5) {$\iota$};
							\node [style=morphism] (26) at (-7, 1.25) {$\pi$};
							\node [style=none] (27) at (-3, 0) {$=$};
							\node [style=bn] (28) at (0, -1.25) {};
							\node [style=morphism] (29) at (1, 0) {$\iota$};
							\node [style=none] (30) at (-1, -0.25) {};
							\node [style=none] (31) at (-1, 3.5) {};
							\node [style=none] (32) at (1, 3.5) {};
							\node [style=none] (33) at (1, -0.25) {};
							\node [style=none] (34) at (0, -3.25) {};
							\node [style=morphism] (35) at (0, -2.25) {$\pi$};
							\node [style=morphism] (36) at (-1, 2.5) {$\iota$};
							\node [style=bn] (38) at (6, -0.25) {};
							\node [style=none] (40) at (5, 1) {};
							\node [style=none] (41) at (5, 3.5) {};
							\node [style=none] (42) at (7, 3.5) {};
							\node [style=none] (43) at (7, 1) {};
							\node [style=none] (44) at (6, -3.25) {};
							\node [style=morphism] (45) at (6, -2) {$\pi$};
							\node [style=morphism] (47) at (5, 1.5) {$\iota$};
							\node [style=none] (50) at (9, 0) {$=$};
							\node [style=bn] (51) at (12, 0.5) {};
							\node [style=none] (53) at (11, 1.75) {};
							\node [style=none] (54) at (11, 3.5) {};
							\node [style=none] (55) at (13, 3.5) {};
							\node [style=none] (56) at (13, 1.75) {};
							\node [style=none] (57) at (12, -3.25) {};
							\node [style=none] (66) at (3, 0) {$=$};
							\node [style=morphism] (67) at (-1, 1.25) {$\pi$};
							\node [style=morphism] (68) at (-1, 0) {$\iota$};
							\node [style=morphism] (69) at (7, 1.5) {$\iota$};
							\node [style=morphism] (70) at (12, -0.75) {$\iota$};
							\node [style=morphism] (71) at (12, -2) {$\pi$};
						\end{pgfonlayer}
						\begin{pgfonlayer}{edgelayer}
							\draw (22.center) to (16);
							\draw [in=-90, out=15] (16) to (21.center);
							\draw (21.center) to (20.center);
							\draw (19.center) to (18.center);
							\draw [in=165, out=-90] (18.center) to (16);
							\draw (34.center) to (28);
							\draw [in=-90, out=15] (28) to (33.center);
							\draw (33.center) to (32.center);
							\draw (31.center) to (30.center);
							\draw [in=165, out=-90] (30.center) to (28);
							\draw (44.center) to (38);
							\draw [in=-90, out=15] (38) to (43.center);
							\draw (43.center) to (42.center);
							\draw (41.center) to (40.center);
							\draw [in=165, out=-90] (40.center) to (38);
							\draw (57.center) to (51);
							\draw [in=-90, out=15] (51) to (56.center);
							\draw (56.center) to (55.center);
							\draw (54.center) to (53.center);
							\draw [in=165, out=-90] (53.center) to (51);
						\end{pgfonlayer}
					\end{tikzpicture}
				}%
			\end{equation}
			and thus obtain that $e$ is static.

			Conversely, using \cref{eq:split_positivity} and the assumption that $e$ is static yields
			\begin{equation}
				{%
					\tikzstyle{every picture}=[tikzfig]%
					\begin{tikzpicture}
						\begin{pgfonlayer}{nodelayer}
							\node [style=bn] (0) at (3, 0) {};
							\node [style=morphism] (1) at (3, -1) {$\iota$};
							\node [style=none] (2) at (2, 1) {};
							\node [style=none] (3) at (2, 3.25) {};
							\node [style=none] (4) at (4, 3.25) {};
							\node [style=none] (5) at (4, 1) {};
							\node [style=none] (6) at (3, -3) {};
							\node [style=morphism] (7) at (3, -2.25) {$\pi$};
							\node [style=morphism] (8) at (2, 2.5) {$\iota$};
							\node [style=morphism] (9) at (2, 1.25) {$\pi$};
							\node [style=none] (10) at (6, 0) {$=$};
							\node [style=bn] (11) at (9, 0.75) {};
							\node [style=morphism] (12) at (9, -0.5) {$\iota$};
							\node [style=none] (13) at (8, 2) {};
							\node [style=none] (14) at (8, 3.25) {};
							\node [style=none] (15) at (10, 3.25) {};
							\node [style=none] (16) at (10, 2) {};
							\node [style=none] (17) at (9, -3) {};
							\node [style=morphism] (18) at (9, -2) {$\pi$};
							\node [style=bn] (19) at (-3, 0) {};
							\node [style=none] (21) at (-4, 1.25) {};
							\node [style=none] (22) at (-4, 3.25) {};
							\node [style=none] (23) at (-2, 3.25) {};
							\node [style=none] (24) at (-2, 1.25) {};
							\node [style=none] (25) at (-3, -3) {};
							\node [style=morphism] (26) at (-3, -2) {$\pi$};
							\node [style=morphism] (27) at (-4, 1.75) {$\iota$};
							\node [style=none] (29) at (0, 0) {$=$};
							\node [style=morphism] (30) at (-2, 1.75) {$\iota$};
						\end{pgfonlayer}
						\begin{pgfonlayer}{edgelayer}
							\draw (6.center) to (0);
							\draw [in=-90, out=15] (0) to (5.center);
							\draw (5.center) to (4.center);
							\draw (3.center) to (2.center);
							\draw [in=165, out=-90] (2.center) to (0);
							\draw (17.center) to (11);
							\draw [in=-90, out=15] (11) to (16.center);
							\draw (16.center) to (15.center);
							\draw (14.center) to (13.center);
							\draw [in=165, out=-90] (13.center) to (11);
							\draw (25.center) to (19);
							\draw [in=-90, out=15] (19) to (24.center);
							\draw (24.center) to (23.center);
							\draw (22.center) to (21.center);
							\draw [in=165, out=-90] (21.center) to (19);
						\end{pgfonlayer}
					\end{tikzpicture}
				}%
			\end{equation}	
			and the claim follows by canceling the epimorphism $\pi$ at the bottom.  

		\item If $\pi$ is deterministic, we calculate
			\begin{equation}\label{eq:split_strong}
				{%
					\tikzstyle{every picture}=[tikzfig]%
					\begin{tikzpicture}
						\begin{pgfonlayer}{nodelayer}
							\node [style=bn] (9) at (-6, 0) {};
							\node [style=morphism] (10) at (-6, -1) {$\iota$};
							\node [style=none] (11) at (-7, 1) {};
							\node [style=none] (12) at (-7, 3.5) {};
							\node [style=none] (13) at (-5, 3.5) {};
							\node [style=none] (14) at (-5, 1) {};
							\node [style=none] (15) at (-6, -3.25) {};
							\node [style=morphism] (16) at (-6, -2.25) {$\pi$};
							\node [style=morphism] (17) at (-7, 2.5) {$\iota$};
							\node [style=morphism] (18) at (-7, 1.25) {$\pi$};
							\node [style=none] (19) at (-3, 0) {$=$};
							\node [style=bn] (29) at (0, 0) {};
							\node [style=none] (30) at (-1, 1) {};
							\node [style=none] (31) at (-1, 3.5) {};
							\node [style=none] (32) at (1, 3.5) {};
							\node [style=none] (33) at (1, 1) {};
							\node [style=none] (34) at (0, -3.25) {};
							\node [style=morphism] (35) at (0, -1.75) {$\pi$};
							\node [style=morphism] (36) at (-1, 1.5) {$\iota$};
							\node [style=none] (37) at (3, 0) {$=$};
							\node [style=bn] (38) at (6, -1) {};
							\node [style=none] (39) at (5, 0) {};
							\node [style=none] (40) at (5, 3.5) {};
							\node [style=none] (41) at (7, 3.5) {};
							\node [style=none] (42) at (7, 0) {};
							\node [style=none] (43) at (6, -3.25) {};
							\node [style=morphism] (48) at (1, 1.5) {$\iota$};
							\node [style=morphism] (49) at (5, 0.5) {$\pi$};
							\node [style=morphism] (50) at (7, 0.5) {$\pi$};
							\node [style=morphism] (51) at (7, 2) {$\iota$};
							\node [style=morphism] (52) at (5, 2) {$\iota$};
						\end{pgfonlayer}
						\begin{pgfonlayer}{edgelayer}
							\draw (15.center) to (9);
							\draw [in=-90, out=15] (9) to (14.center);
							\draw (14.center) to (13.center);
							\draw (12.center) to (11.center);
							\draw [in=165, out=-90] (11.center) to (9);
							\draw (34.center) to (29);
							\draw [in=-90, out=15] (29) to (33.center);
							\draw (33.center) to (32.center);
							\draw (31.center) to (30.center);
							\draw [in=165, out=-90] (30.center) to (29);
							\draw (43.center) to (38);
							\draw [in=-90, out=15] (38) to (42.center);
							\draw (42.center) to (41.center);
							\draw (40.center) to (39.center);
							\draw [in=165, out=-90] (39.center) to (38);
						\end{pgfonlayer}
					\end{tikzpicture}
				}%
			\end{equation}
			where the first step is by \cref{eq:split_positivity} and the second is by assumption.
			Thus, $e$ is a strong idempotent.

			Conversely, if $e$ is strong, then we similarly get
			\begin{equation}
				{%
					\tikzstyle{every picture}=[tikzfig]%
					\begin{tikzpicture}
						\begin{pgfonlayer}{nodelayer}
							\node [style=none] (7) at (6, 0) {$=$};
							\node [style=bn] (9) at (3, 0) {};
							\node [style=morphism] (10) at (3, -1) {$\iota$};
							\node [style=none] (11) at (2, 1) {};
							\node [style=none] (12) at (2, 3.5) {};
							\node [style=none] (13) at (4, 3.5) {};
							\node [style=none] (14) at (4, 1) {};
							\node [style=none] (15) at (3, -3.25) {};
							\node [style=morphism] (16) at (3, -2.25) {$\pi$};
							\node [style=morphism] (17) at (2, 2.5) {$\iota$};
							\node [style=morphism] (18) at (2, 1.25) {$\pi$};
							\node [style=none] (19) at (0, 0) {$=$};
							\node [style=bn] (20) at (-3, 0) {};
							\node [style=none] (21) at (-4, 1.25) {};
							\node [style=none] (22) at (-4, 3.5) {};
							\node [style=none] (23) at (-2, 3.5) {};
							\node [style=none] (24) at (-2, 1.25) {};
							\node [style=none] (25) at (-3, -3.25) {};
							\node [style=morphism] (26) at (-3, -1.75) {$\pi$};
							\node [style=morphism] (27) at (-4, 1.75) {$\iota$};
							\node [style=bn] (29) at (9, -1.25) {};
							\node [style=none] (30) at (8, 0) {};
							\node [style=none] (31) at (8, 3.5) {};
							\node [style=none] (32) at (10, 3.5) {};
							\node [style=none] (33) at (10, 0) {};
							\node [style=none] (34) at (9, -3.25) {};
							\node [style=morphism] (35) at (-2, 1.75) {$\iota$};
							\node [style=morphism] (36) at (8, 0.5) {$\pi$};
							\node [style=morphism] (37) at (10, 0.5) {$\pi$};
							\node [style=morphism] (38) at (10, 2) {$\iota$};
							\node [style=morphism] (39) at (8, 2) {$\iota$};
						\end{pgfonlayer}
						\begin{pgfonlayer}{edgelayer}
							\draw (15.center) to (9);
							\draw [in=-90, out=15] (9) to (14.center);
							\draw (14.center) to (13.center);
							\draw (12.center) to (11.center);
							\draw [in=165, out=-90] (11.center) to (9);
							\draw (25.center) to (20);
							\draw [in=-90, out=15] (20) to (24.center);
							\draw (24.center) to (23.center);
							\draw (22.center) to (21.center);
							\draw [in=165, out=-90] (21.center) to (20);
							\draw (34.center) to (29);
							\draw [in=-90, out=15] (29) to (33.center);
							\draw (33.center) to (32.center);
							\draw (31.center) to (30.center);
							\draw [in=165, out=-90] (30.center) to (29);
						\end{pgfonlayer}
					\end{tikzpicture}
				}%
			\end{equation}
			and the claim follows by post-composing with $\pi$ on both outputs, which cancels both the monomorphisms $\iota$ at the top.

		\item We apply \cref{eq:split_positivity} twice to obtain
			\begin{equation}
				{%
					\tikzstyle{every picture}=[tikzfig]%
					\begin{tikzpicture}
						\begin{pgfonlayer}{nodelayer}
							\node [style=bn] (16) at (-6, 0) {};
							\node [style=morphism] (17) at (-6, -1) {$\iota$};
							\node [style=none] (18) at (-7, 1) {};
							\node [style=none] (19) at (-7, 3.5) {};
							\node [style=none] (20) at (-5, 3.5) {};
							\node [style=none] (21) at (-5, 1) {};
							\node [style=none] (22) at (-6, -3.25) {};
							\node [style=morphism] (24) at (-6, -2.25) {$\pi$};
							\node [style=morphism] (25) at (-7, 2.5) {$\iota$};
							\node [style=morphism] (26) at (-7, 1.25) {$\pi$};
							\node [style=none] (27) at (-3, 0) {$=$};
							\node [style=bn] (28) at (0, 0) {};
							\node [style=morphism] (29) at (1, 1.75) {$\iota$};
							\node [style=none] (30) at (-1, 1.25) {};
							\node [style=none] (31) at (-1, 3.5) {};
							\node [style=none] (32) at (1, 3.5) {};
							\node [style=none] (33) at (1, 1.25) {};
							\node [style=none] (34) at (0, -3.25) {};
							\node [style=morphism] (35) at (0, -1.75) {$\pi$};
							\node [style=morphism] (36) at (-1, 1.75) {$\iota$};
							\node [style=none] (37) at (3, 0) {$=$};
							\node [style=bn] (38) at (6, 0) {};
							\node [style=morphism] (39) at (7, 2.5) {$\iota$};
							\node [style=none] (40) at (5, 1) {};
							\node [style=none] (41) at (5, 3.5) {};
							\node [style=none] (42) at (7, 3.5) {};
							\node [style=none] (43) at (7, 1) {};
							\node [style=none] (44) at (6, -3.25) {};
							\node [style=morphism] (45) at (6, -2.25) {$\pi$};
							\node [style=morphism] (47) at (6, -1) {$\iota$};
							\node [style=morphism] (49) at (7, 1.25) {$\pi$};
						\end{pgfonlayer}
						\begin{pgfonlayer}{edgelayer}
							\draw (22.center) to (16);
							\draw [in=-90, out=15] (16) to (21.center);
							\draw (21.center) to (20.center);
							\draw (19.center) to (18.center);
							\draw [in=165, out=-90] (18.center) to (16);
							\draw (34.center) to (28);
							\draw [in=-90, out=15] (28) to (33.center);
							\draw (33.center) to (32.center);
							\draw (31.center) to (30.center);
							\draw [in=165, out=-90] (30.center) to (28);
							\draw (44.center) to (38);
							\draw [in=-90, out=15] (38) to (43.center);
							\draw (43.center) to (42.center);
							\draw (41.center) to (40.center);
							\draw [in=165, out=-90] (40.center) to (38);
						\end{pgfonlayer}
					\end{tikzpicture}
				}%
			\end{equation}
			so that the claim follows by \cref{prop:balancednew}.
	\end{enumerate}
	In order to show that $\pi$ is $\as{\iota}$ deterministic, we use that $e$ is balanced together with \cref{prop:balancednew} to obtain
	\begin{equation}\label{eq:positivebalanced3}
		{%
			\tikzstyle{every picture}=[tikzfig]%
			\begin{tikzpicture}
				\begin{pgfonlayer}{nodelayer}
					\node [style=morphism] (2) at (8.25, -1.5) {$e$};
					\node [style=morphism] (5) at (7.25, 2) {$e$};
					\node [style=none] (6) at (7.25, 3.5) {};
					\node [style=none] (7) at (9.25, 3.5) {};
					\node [style=none] (29) at (8.25, -3) {};
					\node [style=bn] (34) at (2.75, 0) {};
					\node [style=morphism] (35) at (2.75, -1) {$\iota$};
					\node [style=none] (37) at (1.75, 3.5) {};
					\node [style=none] (38) at (3.75, 3.5) {};
					\node [style=none] (39) at (3.75, 1.25) {};
					\node [style=none] (42) at (2.75, -3) {};
					\node [style=morphism] (49) at (2.75, -2.25) {$\pi$};
					\node [style=none] (50) at (5.5, 0) {$=$};
					\node [style=bn] (51) at (-2.75, 0) {};
					\node [style=none] (53) at (-3.75, 1.25) {};
					\node [style=none] (54) at (-3.75, 3.5) {};
					\node [style=none] (55) at (-1.75, 3.5) {};
					\node [style=none] (56) at (-1.75, 1.25) {};
					\node [style=none] (59) at (-2.75, -3) {};
					\node [style=morphism] (66) at (-2.75, -1.5) {$\pi$};
					\node [style=none] (67) at (0, 0) {$=$};
					\node [style=morphism] (71) at (1.75, 2.5) {$\iota$};
					\node [style=morphism] (73) at (-1.75, 1.75) {$\iota$};
					\node [style=morphism] (74) at (-3.75, 1.75) {$\iota$};
					\node [style=none] (75) at (11.5, 0) {$\ase{e}$};
					\node [style=morphism] (78) at (13.75, 1.75) {$e$};
					\node [style=none] (79) at (13.75, 3.5) {};
					\node [style=none] (80) at (15.75, 3.5) {};
					\node [style=none] (82) at (14.75, -3) {};
					\node [style=morphism] (83) at (15.75, 1.75) {$e$};
					\node [style=none] (87) at (19.25, 3.5) {};
					\node [style=none] (88) at (21.25, 3.5) {};
					\node [style=none] (90) at (20.25, -3) {};
					\node [style=none] (92) at (17.5, 0) {$=$};
					\node [style=morphism] (95) at (19.25, 2.5) {$\iota$};
					\node [style=morphism] (96) at (21.25, 2.5) {$\iota$};
					\node [style=morphism] (97) at (21.25, 1) {$\pi$};
					\node [style=none] (98) at (1.75, 1.25) {};
					\node [style=morphism] (99) at (19.25, 1) {$\pi$};
					\node [style=morphism] (100) at (1.75, 1.25) {$\pi$};
					\node [style=bn] (101) at (8.25, 0) {};
					\node [style=none] (102) at (7.25, 1.25) {};
					\node [style=none] (103) at (9.25, 1.25) {};
					\node [style=bn] (104) at (14.75, -0.75) {};
					\node [style=none] (105) at (13.75, 0.5) {};
					\node [style=none] (106) at (15.75, 0.5) {};
					\node [style=bn] (107) at (20.25, -0.75) {};
					\node [style=none] (108) at (19.25, 0.5) {};
					\node [style=none] (109) at (21.25, 0.5) {};
				\end{pgfonlayer}
				\begin{pgfonlayer}{edgelayer}
					\draw (6.center) to (5);
					\draw [in=-90, out=15] (34) to (39.center);
					\draw (39.center) to (38.center);
					\draw (34) to (42.center);
					\draw [in=-90, out=15] (51) to (56.center);
					\draw (56.center) to (55.center);
					\draw (54.center) to (53.center);
					\draw [in=165, out=-90] (53.center) to (51);
					\draw (51) to (59.center);
					\draw (79.center) to (78);
					\draw (37.center) to (98.center);
					\draw [in=165, out=-90] (98.center) to (34);
					\draw [in=-90, out=15] (101) to (103.center);
					\draw [in=165, out=-90] (102.center) to (101);
					\draw [in=-90, out=15] (104) to (106.center);
					\draw [in=165, out=-90] (105.center) to (104);
					\draw (101) to (29.center);
					\draw (104) to (82.center);
					\draw (102.center) to (5);
					\draw (103.center) to (7.center);
					\draw (105.center) to (78);
					\draw [in=-90, out=15] (107) to (109.center);
					\draw [in=165, out=-90] (108.center) to (107);
					\draw (107) to (90.center);
					\draw (109.center) to (88.center);
					\draw (108.center) to (87.center);
					\draw (106.center) to (80.center);
				\end{pgfonlayer}
			\end{tikzpicture}
		}%
	\end{equation}
	Post-composing with $\pi \otimes \pi$ cancels the two $\iota$ morphisms on top, and therefore $\pi$ is $\as{e}$ deterministic. 
	Since we have $\iota = e\comp \iota$ by assumption and $e\comp \iota \ll e$ by \cref{lem:factor_ac}, we conclude that $\pi$ is also $\as{\iota}$ deterministic.
\end{proof}

\begin{example}
	\label{ex:setmulti_not_split}
	Since the idempotents of \cref{ex:setmulti_not_balanced} are not balanced although $\setmulti$ is positive, it follows that these idempotents do not split.
	Note that the first one of these already lives in $\finsetmulti$.
\end{example}

\begin{example}
	In any Markov category, a morphism of the form
	\begin{equation}
		{%
			\tikzstyle{every picture}=[tikzfig]%
			\begin{tikzpicture}
				\begin{pgfonlayer}{nodelayer}
					\node [style=bn] (0) at (-0.5, 0.25) {};
					\node [style=none] (1) at (-1.5, 1.5) {};
					\node [style=none] (2) at (-1.5, 2.75) {};
					\node [style=none] (3) at (0.5, 2.75) {};
					\node [style=none] (4) at (0.5, 1.5) {};
					\node [style=none] (5) at (-0.5, -1.5) {};
					\node [style=morphism] (6) at (0.5, 1.75) {$f$};
					\node [style=none] (7) at (-1.5, 3.25) {$X$};
					\node [style=none] (8) at (0.5, 3.25) {$Y$};
					\node [style=none] (9) at (-0.5, -2) {$X$};
					\node [style=none] (10) at (0.75, -2) {$Y$};
					\node [style=none] (11) at (0.75, -1.5) {};
					\node [style=bn] (12) at (0.75, -0.75) {};
					\node [style=none] (13) at (-3, -0.25) {};
					\node [style=none] (14) at (2, -0.25) {};
					\node [style=none] (15) at (2, -0.25) {};
				\end{pgfonlayer}
				\begin{pgfonlayer}{edgelayer}
					\draw [in=-90, out=15] (0) to (4.center);
					\draw (4.center) to (3.center);
					\draw (2.center) to (1.center);
					\draw [in=165, out=-90] (1.center) to (0);
					\draw [style=protected] (11.center) to (12);
					\draw [style=dashed box] (15.center) to (13.center);
					\draw [style=protected] (5.center) to (0);
				\end{pgfonlayer}
			\end{tikzpicture}
		}%
	\end{equation}
	is a strong idempotent by direct calculations. Under positivity, this idempotent is immediately strong because it is defined in terms of a splitting across $X$ (dashed line) with a deterministic projection given by $\id_X \otimes \discard_Y$.
\end{example}

\begin{example}\label{ex:resamp}
	Let $\cC$ be a representable positive Markov category. 
	Then the composite morphism ${\delta \comp \samp \colon PX \to PX}$ is a static idempotent for every object $X$ because $\delta$ is deterministic and $\samp$ is its left inverse. 
\end{example}

\begin{example}\label{ex:supp_static}
	Let $\cC$ be a positive\footnotemark{} Markov category and $p$ a morphism with a split support.
	\footnotetext{The attentive reader may notice that the positivity assumption is redundant here by \cref{fn:without_positive}.}%
	Then the composition of its support projection and injection
	\begin{equation}
		e \coloneqq \suppinc{p} \comp \suppproj{p}
	\end{equation}
	is a static idempotent, since $\suppproj{p} \comp \suppinc{p} = \id_{\Supp{}}$ and $\suppinc{p}$ is deterministic.
\end{example}

\subsection{Supports and splittings of static idempotents}
\label{sec:proj_idempotent_support}

We now focus further on the case of static idempotents and show that for them, splitting is closely related to supports.
In the following subsection, we will put these results to use in order to construct splittings of more general idempotents under suitable additional assumptions.

\begin{proposition}[Support and splitting of static idempotents]
	\label{prop:proj_idempotent_support}
	Let $e$ be a static idempotent in a positive Markov category. 
	Then the following are equivalent:
	\begin{enumerate}
		\item\label{it:proj_splits} $e$ is a split idempotent.
		\item\label{it:proj_split_support} $e$ has a split support.
		\item\label{it:proj_support} $e$ has a support.
	\end{enumerate}
	Moreover, this equivalence is such that the split support gives the splitting via $e = \suppinc{e}\comp \suppproj{e}$.
\end{proposition}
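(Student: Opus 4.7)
The plan is to establish the chain \ref{it:proj_support} $\Rightarrow$ \ref{it:proj_splits} $\Rightarrow$ \ref{it:proj_split_support}, with \ref{it:proj_split_support} $\Rightarrow$ \ref{it:proj_support} being immediate from the definitions, since every split support is a support.

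For \ref{it:proj_support} $\Rightarrow$ \ref{it:proj_splits}, I would start from a support $\suppinc{e}$ together with the canonical factorization $e = \suppinc{e} \comp \suppfactor{e}$. The key observation is that \cref{lem:inc_sim_p} gives $\suppinc{e} \ll e$, and then \cref{lem:idempotent_invariant}, which uses the staticness of $e$, yields $e \comp \suppinc{e} = \suppinc{e}$. Composing the factorization identity on the right with $\suppinc{e}$ therefore produces
\[
    \suppinc{e} \comp \suppfactor{e} \comp \suppinc{e} \;=\; e \comp \suppinc{e} \;=\; \suppinc{e},
\]
and cancelling the monomorphism $\suppinc{e}$ on the left yields $\suppfactor{e} \comp \suppinc{e} = \id_{\Supp{e}}$. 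Hence $\suppfactor{e}$ is a left inverse of $\suppinc{e}$, which I can designate as the support projection $\suppproj{e}$. This simultaneously witnesses that the support is split and provides the explicit splitting $e = \suppinc{e} \comp \suppproj{e}$.

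For \ref{it:proj_splits} $\Rightarrow$ \ref{it:proj_split_support}, I would start from a splitting $e = \iota \comp \pi$ with $\pi \comp \iota = \id$. By \cref{thm:split_props}~\ref{it:split_static}, the positivity of $\cC$ combined with the staticness of $e$ implies that $\iota$ is deterministic. I then apply \cref{lem:supp_crit} to show that $\iota$ is a support inclusion: first, $\iota$ is a deterministic monomorphism as a deterministic split mono; second, the computation $e \comp \iota = \iota \comp \pi \comp \iota = \iota$ shows that $\iota$ factors through $e$, so \cref{lem:factor_ac} gives $\iota \ll e$; third, for any $f \colon Z \to X$ with $f \ll e$, \cref{lem:idempotent_invariant} yields $e \comp f = f$, so $f = \iota \comp (\pi \comp f)$ factors through $\iota$. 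Since $\iota$ is then split by $\pi$, this support is already a split support.

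The hard part will be the cancellation step in the first implication: the staticness of $e$ is essential for producing the identity $e \comp \suppinc{e} = \suppinc{e}$ via \cref{lem:idempotent_invariant}, without which the factorization morphism $\suppfactor{e}$ would not automatically provide a one-sided inverse of the support inclusion. Once that identity is in place, all remaining arguments become routine consequences of the universal property of the support and the positivity of $\cC$.
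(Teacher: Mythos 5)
Your proposal is correct and follows essentially the same route as the paper: the paper proves the cycle \ref{it:proj_splits} $\Rightarrow$ \ref{it:proj_split_support} $\Rightarrow$ \ref{it:proj_support} $\Rightarrow$ \ref{it:proj_splits} with exactly the ingredients you use (\cref{thm:split_props}, \cref{lem:supp_crit}, \cref{lem:factor_ac}, \cref{lem:idempotent_invariant}, and cancelling the monomorphism $\suppinc{e}$ to get $\suppfactor{e} \comp \suppinc{e} = \id$). The only cosmetic difference is that you reorder the implications and derive $e \comp \suppinc{e} = \suppinc{e}$ from \cref{lem:idempotent_invariant} where the paper invokes \cref{lem:supp_asfaithful}; these are interchangeable here.
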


\begin{proof}
	We first show that \ref{it:proj_splits} implies \ref{it:proj_split_support}.
	Suppose that $e$ splits as $e=\iota\comp \pi$ for $\pi \colon X \to T$ and $\iota \colon T \to X$ satisfying $\pi\comp \iota = \id_T$. 
	We know from \cref{thm:split_props} that $\iota$ is a deterministic split monomorphism.
	Furthermore, \cref{lem:factor_ac} gives $e \gg e \comp \iota = \iota$ and $\iota \gg \iota \comp \pi = e$, so $e$ has a split support by \cref{cor:split_supp_mono}.

	The implication from \ref{it:proj_split_support} to \ref{it:proj_support} is precisely the statement of \cref{lem:split_supp_is_supp}.

	Finally, assume that the static idempotent $e$ has a support with inclusion $\suppinc{} \colon \Supp{} \to X$ and let us prove that $e$ splits.
	Since $e \ase{e} \id$ holds by definition of static idempotents, we obtain $e\comp\suppinc{} = \suppinc{}$ by \cref{lem:supp_asfaithful}.
	The support factorization of \cref{eq:supp_inc_factor} reads as $e = \suppinc{}\comp \suppfactor{e}$, and therefore also
	\begin{equation}
		\suppinc{}\comp \suppfactor{e} \comp \suppinc{} = e\comp \suppinc{} = \suppinc{}
	\end{equation}
	holds.
	Since $\suppinc{}$ is a monomorphism, we get $\suppfactor{e} \comp \suppinc{}= \id$, so that $e = \suppinc{} \comp \suppfactor{e}$ is indeed a split idempotent.
\end{proof}

\begin{example}
	If a morphism $p$ has a split support, then $\suppinc{p} \comp \suppproj{p}$ is a static idempotent by \cref{ex:supp_static}.
	\cref{prop:proj_idempotent_support} now allows us to conclude that this idempotent has the same split support as $p$ itself.
\end{example}

\begin{corollary}\label{cor:char_suppbalanced}
	Let $\cC$ be a positive Markov category and let $e \colon X \to X$ be an arbitrary morphism. 
	Then the following are equivalent:
	\begin{enumerate}
		\item $e$ is a static idempotent that splits.
		\item We have $e = \suppinc{p}\comp \suppproj{p}$ for some morphism $p$ with a split support.
	\end{enumerate}	
\end{corollary}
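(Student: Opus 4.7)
\begin{proof}
Both implications follow almost immediately from the results just established.

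For the implication $(i) \Rightarrow (ii)$, we apply \cref{prop:proj_idempotent_support}. Since $e$ is a static idempotent that splits, part \ref{it:proj_splits} of the proposition holds, so by the equivalence with \ref{it:proj_split_support}, $e$ itself has a split support, with splitting given by $e = \suppinc{e} \comp \suppproj{e}$. Thus we can simply take $p \coloneqq e$.

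For the converse $(ii) \Rightarrow (i)$, suppose $e = \suppinc{p} \comp \suppproj{p}$ for some morphism $p$ with split support. Then \cref{ex:supp_static} tells us that $e$ is a static idempotent. Moreover, $e$ clearly splits: the pair $(\suppproj{p}, \suppinc{p})$ provides a splitting, since $\suppproj{p} \comp \suppinc{p} = \id_{\Supp{p}}$ by \cref{lem:supp_proj} and $\suppinc{p} \comp \suppproj{p} = e$ by assumption.
\end{proof}

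The only conceivable obstacle would have been if the splitting in $(ii)$ failed to coincide with the support splitting, but this is handled by the definition of a split support (\cref{def:split_support}) together with \cref{lem:supp_proj}. In effect, the corollary is just a packaging of \cref{prop:proj_idempotent_support} together with \cref{ex:supp_static}, making explicit that static splittable idempotents are precisely the composites of a support projection followed by its corresponding support inclusion.
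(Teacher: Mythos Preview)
Your proof is correct and follows the same approach as the paper's own proof: use \cref{prop:proj_idempotent_support} for $(i)\Rightarrow(ii)$ (taking $p\coloneqq e$), and invoke \cref{ex:supp_static} for $(ii)\Rightarrow(i)$. You simply spell out the splitting in $(ii)\Rightarrow(i)$ slightly more explicitly than the paper does, but this is a matter of presentation rather than substance.
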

\begin{proof}
	If $e$ is a static idempotent that splits, then by \cref{prop:proj_idempotent_support} we have $e = \suppinc{e} \comp \suppproj{e}$.
	The other direction is \cref{ex:supp_static}.
\end{proof}

We can apply this to obtain another characterization of split supports similar to \cref{lem:supp_factor}, but where the second condition only requires one direction of implication.

\begin{theorem}[Split supports via static idempotents]\label{thm:proj_idemp_supp}
	Let $\cC$ be a positive Markov category and $p \colon A \to X$ a morphism in $\cC$. 
	Then the following are equivalent: 
	\begin{enumerate}
		\item\label{it:p_split_supp} $p$ admits a split support.
		\item\label{it:p_ase_crit} There exists a split static idempotent $e \colon X \to X$ satisfying $e \comp p = p$, and such that for all $f, g \colon W \otimes X \to Y$ with arbitrary $W$ and $Y$, we have
			\begin{equation}\label{eq:proj_idemp_supp}
				{%
					\tikzstyle{every picture}=[tikzfig]%
					\begin{tikzpicture}
						\begin{pgfonlayer}{nodelayer}
							\node [style=none] (0) at (0, 0) {$\Longrightarrow$};
							\node [style=none] (1) at (-10, 0.75) {};
							\node [style=none] (2) at (-8.5, 1) {};
							\node [style=none] (3) at (-10.5, 2.25) {};
							\node [style=none] (4) at (-10.5, 2.75) {$Y$};
							\node [style=bn] (5) at (-9.25, 0) {};
							\node [style=none] (6) at (-9.25, -2) {};
							\node [style=none] (7) at (-9.25, -2.5) {$A$};
							\node [style=none] (8) at (-7, 0) {$=$};
							\node [style=none] (9) at (-8.5, 2.75) {$X$};
							\node [style=morphism] (10) at (-10.5, 1.25) {$\,\;\; f \,\;\;$};
							\node [style=none] (11) at (-8.5, 2.25) {};
							\node [style=morphism] (12) at (-9.25, -1) {$p$};
							\node [style=none] (13) at (-11, -2) {};
							\node [style=none] (14) at (-11, -2.5) {$W$};
							\node [style=none] (15) at (-11, 1.25) {};
							\node [style=none] (16) at (-11, -0.5) {};
							\node [style=none] (17) at (-4.5, 0.75) {};
							\node [style=none] (18) at (-3, 1) {};
							\node [style=none] (19) at (-5, 2.25) {};
							\node [style=none] (20) at (-5, 2.75) {$Y$};
							\node [style=bn] (21) at (-3.75, 0) {};
							\node [style=none] (22) at (-3.75, -2) {};
							\node [style=none] (23) at (-3.75, -2.5) {$A$};
							\node [style=none] (24) at (-3, 2.75) {$X$};
							\node [style=morphism] (25) at (-5, 1.25) {$\,\;\; g \,\;\;$};
							\node [style=none] (26) at (-3, 2.25) {};
							\node [style=morphism] (27) at (-3.75, -1) {$p$};
							\node [style=none] (28) at (-5.5, -2) {};
							\node [style=none] (29) at (-5.5, -2.5) {$W$};
							\node [style=none] (30) at (-5.5, 1.25) {};
							\node [style=none] (31) at (-5.5, -0.5) {};
							\node [style=none] (32) at (-10, 1.25) {};
							\node [style=none] (33) at (-4.5, 1.25) {};
							\node [style=none] (34) at (3.5, 2.25) {};
							\node [style=none] (35) at (3.5, 2.75) {$Y$};
							\node [style=none] (36) at (4, -2.5) {$X$};
							\node [style=morphism] (37) at (3.5, 1) {$\,\;\; f \,\;\;$};
							\node [style=none] (38) at (3, -2) {};
							\node [style=none] (39) at (3, -2.5) {$W$};
							\node [style=none] (40) at (3, 1) {};
							\node [style=none] (41) at (9, 2.25) {};
							\node [style=none] (42) at (9, 2.75) {$Y$};
							\node [style=none] (43) at (9.5, -2.5) {$X$};
							\node [style=morphism] (44) at (9, 1) {$\,\;\; g \,\;\;$};
							\node [style=none] (45) at (8.5, -2) {};
							\node [style=none] (46) at (8.5, -2.5) {$W$};
							\node [style=none] (47) at (8.5, 1) {};
							\node [style=none] (48) at (4, 1) {};
							\node [style=none] (49) at (9.5, 1) {};
							\node [style=morphism] (50) at (4, -0.75) {$e$};
							\node [style=morphism] (51) at (9.5, -0.75) {$e$};
							\node [style=none] (52) at (4, -2) {};
							\node [style=none] (53) at (9.5, -2) {};
							\node [style=none] (54) at (6.25, 0) {$=$};
						\end{pgfonlayer}
						\begin{pgfonlayer}{edgelayer}
							\draw [in=-90, out=165] (5) to (1.center);
							\draw [in=-90, out=15] (5) to (2.center);
							\draw (10) to (3.center);
							\draw (2.center) to (11.center);
							\draw (6.center) to (12);
							\draw [style=protected] (13.center) to (16.center);
							\draw [style=protected, in=-90, out=90, looseness=1.25] (16.center) to (15.center);
							\draw [in=270, out=90] (12) to (5);
							\draw [in=-90, out=165] (21) to (17.center);
							\draw [in=-90, out=15] (21) to (18.center);
							\draw (25) to (19.center);
							\draw (18.center) to (26.center);
							\draw (22.center) to (27);
							\draw [style=protected] (28.center) to (31.center);
							\draw [style=protected, in=-90, out=90, looseness=1.25] (31.center) to (30.center);
							\draw (27) to (21);
							\draw (1.center) to (32.center);
							\draw (17.center) to (33.center);
							\draw (37) to (34.center);
							\draw (44) to (41.center);
							\draw (52.center) to (50);
							\draw (50) to (48.center);
							\draw (38.center) to (40.center);
							\draw (45.center) to (47.center);
							\draw (51) to (49.center);
							\draw (53.center) to (51);
						\end{pgfonlayer}
					\end{tikzpicture}
				}%
			\end{equation}
	\end{enumerate}
\end{theorem}

\begin{proof}
	Assuming \ref{it:p_split_supp}, take $e \coloneqq \suppinc{p} \comp \suppproj{p}$, which is a split static idempotent as in \cref{ex:supp_static}. 
	Then $e \comp p = p$ holds by the second equality in \eqref{eq:supp_proj_id} and Implication \eqref{eq:proj_idemp_supp} is a direct consequence of \cref{lem:supp_factor}. 

	Conversely, suppose that \ref{it:p_ase_crit} holds with a splitting given by $e = \iota \comp \pi$. 
	By \cref{lem:epi_ac_max}, we have the absolute bicontinuity relation $e = \iota \comp \pi \acsim \iota$.
	So if we can show that its left-hand side $e$ is absolutely bicontinuous to $p$, then $p$ has a split support by \cref{cor:split_supp_mono}, since $\iota$ is indeed a deterministic split monomorphism (see \cref{thm:split_props}).

	To this end, note that $e \comp p \ll e$ holds by \cref{lem:factor_ac}, so that we get $p \ll e$ by the assumed $e \comp p = p$.
	To prove absolute continuity in the other direction, suppose that we have $f \ase{p} g$ and let us show that necessarily also $f \ase{e} g$ holds.
	Using Implication \eqref{eq:proj_idemp_supp} and the fact that $\id_W \otimes \pi$ is an epimorphism, we get
	\begin{equation}\label{eq:proj_idemp_supp_2}
		{%
			\tikzstyle{every picture}=[tikzfig]%
			\begin{tikzpicture}
				\begin{pgfonlayer}{nodelayer}
					\node [style=none] (34) at (-2.75, 1.75) {};
					\node [style=none] (35) at (-2.75, 2.25) {$Y$};
					\node [style=none] (36) at (-2.25, -2.25) {$T$};
					\node [style=morphism] (37) at (-2.75, 0.75) {$\,\;\; f \,\;\;$};
					\node [style=none] (38) at (-3.25, -1.75) {};
					\node [style=none] (39) at (-3.25, -2.25) {$W$};
					\node [style=none] (40) at (-3.25, 0.75) {};
					\node [style=none] (41) at (2.75, 1.75) {};
					\node [style=none] (42) at (2.75, 2.25) {$Y$};
					\node [style=none] (43) at (3.25, -2.25) {$T$};
					\node [style=morphism] (44) at (2.75, 0.75) {$\,\;\; g \,\;\;$};
					\node [style=none] (45) at (2.25, -1.75) {};
					\node [style=none] (46) at (2.25, -2.25) {$W$};
					\node [style=none] (47) at (2.25, 0.75) {};
					\node [style=none] (48) at (-2.25, 0.75) {};
					\node [style=none] (49) at (3.25, 0.75) {};
					\node [style=morphism] (50) at (-2.25, -0.75) {$\iota$};
					\node [style=morphism] (51) at (3.25, -0.75) {$\iota$};
					\node [style=none] (52) at (-2.25, -1.75) {};
					\node [style=none] (53) at (3.25, -1.75) {};
					\node [style=none] (54) at (0, 0) {$=$};
				\end{pgfonlayer}
				\begin{pgfonlayer}{edgelayer}
					\draw (37) to (34.center);
					\draw (44) to (41.center);
					\draw (52.center) to (50);
					\draw (50) to (48.center);
					\draw (38.center) to (40.center);
					\draw (45.center) to (47.center);
					\draw (51) to (49.center);
					\draw (53.center) to (51);
				\end{pgfonlayer}
			\end{tikzpicture}
		}%
	\end{equation}
	By the second part of \cref{lem:ase_detsplitmono}, \cref{eq:proj_idemp_supp_2} is equivalent to $f \ase{\iota} g$, which directly implies the desired $f \ase{e} g$ by pre-composition with $\pi$.
	In conclusion, we thus have both $p \ll e$ and $e \ll p$, so that $p$ has a split support by the above argument.
\end{proof}

\subsection{General conditions for the splitting of idempotents}
\label{sec:route_split}

We now turn to the question of when \emph{general} idempotents split in a Markov category, and we will derive sufficient conditions in order for this to be the case.
The resulting \cref{thm:balanced_split} is the main result of this section.
It implies in particular that all idempotents in $\borelstoch$ split (\cref{cor:borelstoch_idempotents_split}).
As far as we know, this is an entirely new result which has not been proven by measure-theoretic methods before, and we note in \cref{thm:idemp_borel} that it strengthens an existing result of Blackwell on idempotent Markov kernels.

To this end, we assume familiarity with the \emph{equalizer principle} from \cref{sec:equalizer_principle} as well as with the notion of \emph{observationally representable} Markov category from \cref{sec:observational}.
Roughly speaking, the latter postulates that one can distinguish between morphisms by iterated sampling.

\begin{lemma}\label{lem:balanced_sharp}
	Let $e$ be a balanced idempotent in an observationally representable Markov category $\cC$. 
	Then we have $e^{\sharp} \comp e \ase{e} e^{\sharp}$.
\end{lemma}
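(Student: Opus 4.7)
The plan is to exploit observational representability of $\cC$, which reduces $\ase{e}$-equality between morphisms into the distribution object $PX$ to $\ase{e}$-equality of all their $n$-fold iid samples. Concretely, the goal becomes to show
\[
	e^{\otimes n} \comp \cop^{(n)}_X \comp e \ase{e} e^{\otimes n} \comp \cop^{(n)}_X
\]
as morphisms $X \to X^{\otimes n}$ for every $n \ge 1$, where $\cop^{(n)}_X$ denotes the $n$-fold copy. Here I have simplified the $n$-fold sampled versions of $e^\sharp \comp e$ and $e^\sharp$ by using that $e^\sharp$ is deterministic in any representable Markov category (so $\cop^{(n)}_{PX}$ pulls through $e^\sharp$) together with $\samp_X \comp e^\sharp = e$.

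I would prove this family of $\ase{e}$-equalities by induction on $n$. The base case $n = 1$ is immediate from $e^2 = e$. For the inductive step, the key tool is the balanced condition in the form of \cref{lem:balancednew}~\ref{it:as_strong}, i.e.\ \cref{eq:as_balanced}, which states that the strong idempotent identity \cref{eq:strong_idempotent} for $e$ already holds $\ase{e}$. Diagrammatically, this $\ase{e}$-identity is exactly what is needed to insert or remove an extra $e$ at a sampling stage of the iid chain at the cost of passing to $\ase{e}$-equality. The individual $\ase{e}$-equalities thus obtained can then be chained using compatibility of almost sure equality with post-composition from \cref{prop:ase_props}.

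The chief technical hurdle will be the diagrammatic bookkeeping of the iterated copy-and-$e$ structure, so that \cref{eq:as_balanced} can be invoked at the right stage of the iteration without accidentally reintroducing occurrences of $e$ that have already been cancelled. Once this is organized, the argument is essentially mechanical and rests on the two substantive inputs: idempotence $e^2 = e$ and the fact that balance promotes the strong idempotent identity to an $\ase{e}$-identity. Observational representability then delivers the desired conclusion $e^\sharp \comp e \ase{e} e^\sharp$.
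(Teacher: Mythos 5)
Your proposal is correct and follows essentially the same route as the paper: reduce via observational representability (and the fact that $e^\sharp$ is deterministic with $\samp\comp e^\sharp = e$) to an $n$-fold sampled statement, then induct on $n$ with base case $e^2 = e$ and inductive step powered by the characterization of balanced idempotents in \cref{lem:balancednew}~\ref{it:as_strong}. The paper's proof carries out exactly this induction.
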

\begin{proof}
	By the definition of observationally representable Markov categories and \cref{prop:reflect_aseq}, it suffices to show the desired equation for $n$ independent samples of both sides, i.e.\ post-composed with $\samp^{(n)}$ defined in \eqref{eq:samp_N} for every natural number $n$.
	Since $e^{\sharp}$ is deterministic and satisfies $\samp \comp e^\sharp = e$, we only need to show that
	\begin{equation}
		{%
			\tikzstyle{every picture}=[tikzfig]%
			\begin{tikzpicture}
				\begin{pgfonlayer}{nodelayer}
					\node [style=bn] (0) at (-4, 0) {};
					\node [style=morphism] (1) at (-4, -1) {$e$};
					\node [style=none] (4) at (-4, -2) {};
					\node [style=none] (5) at (-5.5, 2.25) {};
					\node [style=none] (6) at (-2.5, 2.25) {};
					\node [style=none] (7) at (-4, 1.25) {$\cdots$};
					\node [style=none] (8) at (0, 0) {$\ase{e}$};
					\node [style=bn] (9) at (4, 0) {};
					\node [style=none] (13) at (4, -2) {};
					\node [style=none] (14) at (2.5, 2.25) {};
					\node [style=none] (15) at (5.5, 2.25) {};
					\node [style=none] (16) at (4, 1.25) {$\cdots$};
					\node [style=none] (17) at (2.5, 1.25) {};
					\node [style=none] (18) at (5.5, 1.25) {};
					\node [style=none] (19) at (-5.5, 1.25) {};
					\node [style=none] (20) at (-2.5, 1.25) {};
					\node [style=morphism] (21) at (-5.5, 1.25) {$e$};
					\node [style=morphism] (22) at (-2.5, 1.25) {$e$};
					\node [style=morphism] (23) at (2.5, 1.25) {$e$};
					\node [style=morphism] (24) at (5.5, 1.25) {$e$};
					\node [style=none] (25) at (-4, 3.25) {$n$ outputs};
					\node [style=none] (26) at (-6, 2.5) {};
					\node [style=none] (27) at (-2, 2.5) {};
					\node [style=none] (28) at (4, 3.25) {$n$ outputs};
					\node [style=none] (29) at (2, 2.5) {};
					\node [style=none] (30) at (6, 2.5) {};
				\end{pgfonlayer}
				\begin{pgfonlayer}{edgelayer}
					\draw (0) to (4.center);
					\draw (9) to (13.center);
					\draw (5.center) to (19.center);
					\draw (6.center) to (20.center);
					\draw (14.center) to (17.center);
					\draw (15.center) to (18.center);
					\draw [in=165, out=-90] (19.center) to (0);
					\draw [in=-90, out=15] (0) to (20.center);
					\draw [in=165, out=-90] (17.center) to (9);
					\draw [in=15, out=-90] (18.center) to (9);
					\draw [style=curly brace] (26.center) to (27.center);
					\draw [style=curly brace] (29.center) to (30.center);
				\end{pgfonlayer}
			\end{tikzpicture}
		}%
	\end{equation}
	holds for any number of outputs $n$.
	This can be proven by induction.
	The case $n=1$ just expresses that $e$ is idempotent. 
	For the induction step, we compute 
	\begin{equation*}
		{%
			\tikzstyle{every picture}=[tikzfig]%
			\begin{tikzpicture}
				\begin{pgfonlayer}{nodelayer}
					\node [style=bn] (10) at (-4.25, -0.5) {};
					\node [style=morphism] (12) at (-5.75, 2) {$e$};
					\node [style=morphism] (13) at (-3.5, 2) {$e$};
					\node [style=none] (14) at (-4.25, -2.5) {};
					\node [style=none] (15) at (-5.75, 3) {};
					\node [style=none] (16) at (-3.5, 3) {};
					\node [style=none] (17) at (-4.5, 2) {$\cdots$};
					\node [style=morphism] (18) at (-2, 2) {$e$};
					\node [style=none] (19) at (-2, 3) {};
					\node [style=bn] (20) at (-2.75, 1) {};
					\node [style=bn] (21) at (3.25, -0.5) {};
					\node [style=morphism] (23) at (1.75, 1) {$e$};
					\node [style=morphism] (24) at (4, 3) {$e$};
					\node [style=none] (25) at (3.25, -2.5) {};
					\node [style=none] (26) at (1.75, 4) {};
					\node [style=none] (27) at (4, 4) {};
					\node [style=none] (28) at (3.25, 1) {$\cdots$};
					\node [style=none] (30) at (5.5, 3) {};
					\node [style=bn] (31) at (4.75, 2) {};
					\node [style=morphism] (32) at (4.75, 1) {$e$};
					\node [style=none] (34) at (0, 0) {$=$};
					\node [style=none] (35) at (-4, 4) {$n+1$ outputs};
					\node [style=none] (36) at (-6.25, 3.25) {};
					\node [style=none] (37) at (-1.5, 3.25) {};
					\node [style=morphism] (51) at (-4.25, -1.5) {$e$};
					\node [style=none] (52) at (-5.75, 1) {};
					\node [style=morphism] (53) at (3.25, -1.5) {$e$};
					\node [style=none] (54) at (1.75, 1) {};
					\node [style=none] (55) at (4.75, 1) {};
					\node [style=bn] (56) at (10.5, -0.5) {};
					\node [style=morphism] (57) at (9, 1) {$e$};
					\node [style=morphism] (58) at (11.25, 3) {$e$};
					\node [style=none] (59) at (10.5, -2.5) {};
					\node [style=none] (60) at (9, 4) {};
					\node [style=none] (61) at (11.25, 4) {};
					\node [style=none] (62) at (10.5, 1) {$\cdots$};
					\node [style=none] (63) at (12.75, 3) {};
					\node [style=bn] (64) at (12, 2) {};
					\node [style=morphism] (65) at (12, 1) {$e$};
					\node [style=none] (66) at (7, 0) {$\ase{e}$};
					\node [style=none] (68) at (9, 1) {};
					\node [style=none] (69) at (12, 1) {};
					\node [style=bn] (70) at (18, -1.5) {};
					\node [style=morphism] (71) at (16.5, 1.5) {$e$};
					\node [style=morphism] (72) at (18.75, 1.5) {$e$};
					\node [style=none] (73) at (18, -2.5) {};
					\node [style=none] (74) at (16.5, 3) {};
					\node [style=none] (75) at (18.75, 3) {};
					\node [style=none] (76) at (17.75, 1.5) {$\cdots$};
					\node [style=morphism] (77) at (20.25, 1.5) {$e$};
					\node [style=none] (78) at (20.25, 3) {};
					\node [style=bn] (79) at (19.5, 0) {};
					\node [style=none] (80) at (18.25, 4) {$n+1$ outputs};
					\node [style=none] (81) at (16, 3.25) {};
					\node [style=none] (82) at (20.75, 3.25) {};
					\node [style=none] (84) at (16.5, 0) {};
					\node [style=none] (85) at (14.25, 0) {$\ase{e}$};
					\node [style=none] (86) at (5.5, 4) {};
					\node [style=none] (87) at (12.75, 4) {};
					\node [style=none] (88) at (18.75, 1) {};
					\node [style=none] (89) at (20.25, 1) {};
				\end{pgfonlayer}
				\begin{pgfonlayer}{edgelayer}
					\draw (16.center) to (13);
					\draw (12) to (15.center);
					\draw (10) to (14.center);
					\draw (18) to (19.center);
					\draw [in=-90, out=15] (10) to (20);
					\draw [in=165, out=-90] (13) to (20);
					\draw [in=-90, out=15] (20) to (18);
					\draw (27.center) to (24);
					\draw (23) to (26.center);
					\draw (21) to (25.center);
					\draw [in=165, out=-90] (24) to (31);
					\draw [in=-90, out=15] (31) to (30.center);
					\draw (32) to (31);
					\draw [style=curly brace] (36.center) to (37.center);
					\draw [in=-90, out=165] (10) to (52.center);
					\draw [in=-90, out=165] (21) to (54.center);
					\draw [in=-90, out=15] (21) to (55.center);
					\draw (61.center) to (58);
					\draw (57) to (60.center);
					\draw (56) to (59.center);
					\draw [in=165, out=-90] (58) to (64);
					\draw [in=-90, out=15] (64) to (63.center);
					\draw (65) to (64);
					\draw [in=-90, out=165] (56) to (68.center);
					\draw [in=-90, out=15] (56) to (69.center);
					\draw (75.center) to (72);
					\draw (71) to (74.center);
					\draw (70) to (73.center);
					\draw (77) to (78.center);
					\draw [in=-90, out=15] (70) to (79);
					\draw [style=curly brace] (81.center) to (82.center);
					\draw [in=-90, out=165] (70) to (84.center);
					\draw (84.center) to (71);
					\draw (52.center) to (12);
					\draw (30.center) to (86.center);
					\draw (63.center) to (87.center);
					\draw [in=-90, out=165] (79) to (88.center);
					\draw (88.center) to (72);
					\draw [in=-90, out=15] (79) to (89.center);
					\draw (89.center) to (77);
				\end{pgfonlayer}
			\end{tikzpicture}
		}%
	\end{equation*} 
	where the second equation uses the induction assumption and the other steps follow from the characterization of balanced idempotents in \cref{prop:balancednew}~\ref{it:as_strong}.
\end{proof}

Splittings of balanced idempotents can be constructed as in the following lemma, which plays a crucial role in the proof of the forthcoming \cref{thm:balanced_split}.

\begin{lemma}
	\label{lem:proj_balanced_split}
	Let $\cC$ be a positive and observationally representable Markov category. 
	If every static idempotent in $\cC$ splits, then also every balanced idempotent in $\cC$ splits.
\end{lemma}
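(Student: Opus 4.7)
The plan is to reduce splitting a balanced idempotent $e \colon X \to X$ to splitting two auxiliary static idempotents, each of which splits by hypothesis. The first static idempotent to consider is $P(e) \colon PX \to PX$. Since $\cC$ is representable, $P(e)$ is a deterministic morphism, and by functoriality of $P$ on the deterministic subcategory together with $e^2 = e$, it is an idempotent. Every deterministic idempotent is static by a direct check. By assumption, we may therefore split $P(e) = \iota' \circ \pi'$ with $\pi' \circ \iota' = \id_T$, where $\iota' \colon T \to PX$ is deterministic by \cref{thm:split_props}.

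First I would build a candidate pair for the splitting of $e$. Set $\iota \coloneqq \samp_X \circ \iota'$ and $\pi \coloneqq \pi' \circ \delta_X$. Using the defining identity $\samp_X \circ P(e) = e \circ \samp_X$ together with $\samp_X \circ \delta_X = \id_X$, one checks
\[
\iota \circ \pi = \samp_X \circ \iota' \circ \pi' \circ \delta_X = \samp_X \circ P(e) \circ \delta_X = \samp_X \circ \delta_X \circ e = e.
\]
However, $\pi \circ \iota$ need not equal $\id_T$, because $T$ parametrizes the image of $P(e)$ (all invariant distributions), which in general is strictly larger than the image of $e^\sharp$ one really wants.

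To close the gap I would observe that $e \circ \iota = \iota$: indeed, $e \circ \iota = e \circ \samp_X \circ \iota' = \samp_X \circ P(e) \circ \iota' = \samp_X \circ \iota' = \iota$, using $\pi' \circ \iota' = \id_T$. Hence $\tau \coloneqq \pi \circ \iota \colon T \to T$ is an idempotent, since $\tau^2 = \pi \circ (\iota \circ \pi) \circ \iota = \pi \circ e \circ \iota = \pi \circ \iota = \tau$. The crucial claim is that $\tau$ is itself static. By \cref{prop:balanced_as_det}, it suffices to prove $\tau$ is $\as{\tau}$-deterministic. This is where balancing and observationality enter: \cref{lem:balanced_sharp} gives $e^\sharp \circ e \ase{e} e^\sharp$, and after rewriting $\iota' \circ \tau = P(e) \circ \delta_X \circ \iota = e^\sharp \circ \iota$ and using $e \circ \iota = \iota$, I can transport this almost-sure identity through the deterministic split mono $\iota'$ and the $\as{\iota'}$-deterministic $\pi'$ (from \cref{thm:split_props}); combined with the \as{}-compatible representability supplied by \cref{prop:reflect_aseq}, this yields the desired $\as{\tau}$-determinism of $\tau$.

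Once $\tau$ is known to be a static idempotent, it splits by hypothesis: write $\tau = a \circ b$ with $b \circ a = \id_{T'}$. Then
\[
e = \iota \circ \tau \circ \pi = (\iota \circ a) \circ (b \circ \pi), \qquad (b \circ \pi) \circ (\iota \circ a) = b \circ \tau \circ a = b \circ a \circ b \circ a = \id_{T'},
\]
exhibiting $e$ as a split idempotent. The main obstacle I anticipate is Step~4: converting the almost-sure identity of \cref{lem:balanced_sharp} into the exact equation defining staticity of $\tau$ requires delicate bookkeeping with the $\sharp$-lifts and careful invocation of observational representability to descend back to $\cC$, with the deterministic character of $\iota'$ and the $\as{\iota'}$-determinism of $\pi'$ doing the key work.
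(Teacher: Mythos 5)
Your proposal is correct and follows essentially the same route as the paper: split the deterministic (hence static and strong) idempotent $Pe$, form the composite $\tau = \pi' \comp \delta \comp \samp \comp \iota'$ on the splitting object, show $\tau$ is a static idempotent using \cref{lem:balanced_sharp}, split it again, and assemble the two splittings exactly as you do in your final display. The only part you leave as a sketch{\,\textemdash\,}the verification that $\tau$ is static{\,\textemdash\,}is precisely the string-diagram computation the paper carries out (using $\samp\comp\iota' = e\comp\samp\comp\iota'$, $\pi'\comp\delta = \pi'\comp e^\sharp$, and \cref{lem:balanced_sharp}), and the ingredients you name for it are the right ones.
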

\begin{proof}
	Consider an arbitrary balanced idempotent $e \colon X \to X$. 
	Applying the distribution functor to it gives $Pe \colon PX \to PX$, which is also an idempotent by the functoriality of $P$ and idempotency of $e$:
	\begin{equation}
		(Pe) \comp (Pe) = P(e \comp e) = Pe.
	\end{equation}
	In particular, since $Pe$ is a deterministic morphism by definition, it is both a static and a strong idempotent.
	By assumption, we thus have a splitting $Pe = \iota \comp \pi$ with $\pi \comp \iota = \id_{T}$.
	Since $P e$ is static and strong, $\iota \colon T \to PX$ and $\pi \colon PX \to T$ are both deterministic by \cref{thm:split_props}.

	Let us now prove the following auxiliary equations:
	\begin{align}
		\label{eq:bal_split_1} 	\samp \comp \iota \comp \pi \comp \delta &= e, \\
		\label{eq:bal_split_2}  \samp \comp \iota &= e \comp \samp \comp \iota, \\
		\label{eq:bal_split_3} 	\pi \comp \delta &= \pi \comp e^\sharp .
	\end{align}
	The first one can be verified as
	\begin{equation}
		\samp \comp \iota \comp \pi \comp \delta = \samp \comp (Pe) \comp \delta = e \comp \samp \comp \delta = e,
	\end{equation}
	where the first equation is by definition of $\iota$ and $\pi$, the second by naturality of $\samp$ and the third by $\samp \comp \delta = \id_X$.
	\cref{eq:bal_split_2} follows via
	\begin{equation}
		\samp \comp \iota = \samp \comp (Pe) \comp \iota = e \comp \samp \comp \iota,
	\end{equation}
	where we first use $\iota = (Pe) \comp \iota$ and then the naturality of $\samp$.
	Lastly, \cref{eq:bal_split_3} can be derived similarly via
	\begin{equation}
		\pi \comp \delta = \pi \comp (P e) \comp \delta = \pi \comp e^\sharp,
	\end{equation}
	where we use $\pi = \pi \comp (P e)$ and then $(P e) \comp \delta = e^\sharp$.

	We now employ these facts to show that the composite morphism
	\begin{equation}
		\begin{tikzcd}
			{T} & PX & X & PX & {T}
			\arrow["\iota", from=1-1, to=1-2]
			\arrow["\samp", from=1-2, to=1-3]
			\arrow["\delta", from=1-3, to=1-4]
			\arrow["\pi", from=1-4, to=1-5]
		\end{tikzcd}
	\end{equation}
	is also a static idempotent.
	To this end, we compute
	\begin{equation}\label{eq:balanced_splitting0}
		{%
			\tikzstyle{every picture}=[tikzfig]%
			\begin{tikzpicture}
				\begin{pgfonlayer}{nodelayer}
					\node [style=bn] (0) at (-3, 0) {};
					\node [style=none] (6) at (-4, 6) {};
					\node [style=none] (7) at (-2, 6) {};
					\node [style=none] (9) at (-3, -5.5) {};
					\node [style=morphism] (42) at (-4, 5) {$\pi$};
					\node [style=morphism] (43) at (-4, 3.75) {$\delta$};
					\node [style=morphism] (44) at (-4, 2.5) {$\samp$};
					\node [style=morphism] (50) at (-3, -4.75) {$\iota$};
					\node [style=morphism] (51) at (-3, -1) {$\pi$};
					\node [style=morphism] (52) at (-3, -2.25) {$\delta$};
					\node [style=morphism] (53) at (-3, -3.5) {$\samp$};
					\node [style=none] (70) at (0, 0) {$=$};
					\node [style=bn] (89) at (3, 0) {};
					\node [style=none] (90) at (2, 1.25) {};
					\node [style=none] (91) at (2, 6) {};
					\node [style=none] (92) at (4, 6) {};
					\node [style=none] (93) at (4, 1.25) {};
					\node [style=none] (94) at (3, -5.5) {};
					\node [style=morphism] (95) at (2, 5) {$\pi$};
					\node [style=morphism] (96) at (2, 3.75) {$\delta$};
					\node [style=morphism] (98) at (4, 3) {$\pi$};
					\node [style=morphism] (99) at (4, 1.5) {$\delta$};
					\node [style=morphism] (101) at (3, -4.5) {$\iota$};
					\node [style=morphism] (104) at (3, -3) {$\samp$};
					\node [style=morphism] (105) at (2, 2) {$e$};
					\node [style=none] (107) at (6, 0) {$=$};
					\node [style=bn] (108) at (9, 0) {};
					\node [style=none] (109) at (8, 1.25) {};
					\node [style=none] (110) at (8, 6) {};
					\node [style=none] (111) at (10, 6) {};
					\node [style=none] (112) at (10, 1.25) {};
					\node [style=none] (113) at (9, -5.5) {};
					\node [style=morphism] (114) at (8, 5) {$\pi$};
					\node [style=morphism] (115) at (8, 3.75) {$e^\sharp$};
					\node [style=morphism] (116) at (10, 3) {$\pi$};
					\node [style=morphism] (117) at (10, 1.5) {$\delta$};
					\node [style=morphism] (118) at (9, -4.5) {$\iota$};
					\node [style=morphism] (119) at (9, -3) {$\samp$};
					\node [style=morphism] (120) at (9, -1.5) {$e$};
					\node [style=morphism] (121) at (8, 2) {$e$};
					\node [style=none] (122) at (9, -5.5) {};
					\node [style=none] (123) at (-2, 1.25) {};
					\node [style=none] (124) at (-4, 1.25) {};
					\node [style=morphism] (125) at (-4, 1.25) {$\iota$};
					\node [style=none] (127) at (-4, 1.25) {};
				\end{pgfonlayer}
				\begin{pgfonlayer}{edgelayer}
					\draw (9.center) to (0);
					\draw (94.center) to (89);
					\draw [in=-90, out=15] (89) to (93.center);
					\draw (93.center) to (92.center);
					\draw (91.center) to (90.center);
					\draw [in=165, out=-90] (90.center) to (89);
					\draw (113.center) to (108);
					\draw [in=-90, out=15] (108) to (112.center);
					\draw (112.center) to (111.center);
					\draw (110.center) to (109.center);
					\draw [in=165, out=-90] (109.center) to (108);
					\draw (6.center) to (124.center);
					\draw (7.center) to (123.center);
					\draw [in=-90, out=15] (0) to (123.center);
					\draw [in=-90, out=165] (0) to (127.center);
				\end{pgfonlayer}
			\end{tikzpicture}
		}%
	\end{equation}
	where the first equality uses that both $\pi$ and $\delta$ are deterministic as well as \cref{eq:bal_split_1}, and the second equality uses \cref{eq:bal_split_2,eq:bal_split_3}.
	We can further reduce the right-hand side to
	\begin{equation}
		{%
			\tikzstyle{every picture}=[tikzfig]%
			\begin{tikzpicture}
				\begin{pgfonlayer}{nodelayer}
					\node [style=none] (125) at (0, 0) {$=$};
					\node [style=bn] (170) at (-3, 0.5) {};
					\node [style=none] (171) at (-4, 1.5) {};
					\node [style=none] (172) at (-4, 4.25) {};
					\node [style=none] (173) at (-2, 4.25) {};
					\node [style=none] (174) at (-2, 1.5) {};
					\node [style=none] (175) at (-3, -4) {};
					\node [style=morphism] (176) at (-4, 3) {$\pi$};
					\node [style=morphism] (178) at (-2, 3) {$\pi$};
					\node [style=morphism] (179) at (-2, 1.75) {$\delta$};
					\node [style=morphism] (181) at (-3, -1.75) {$\samp$};
					\node [style=morphism] (183) at (-3, -0.5) {$e$};
					\node [style=morphism] (185) at (-4, 1.75) {$e^\sharp$};
					\node [style=none] (186) at (-3, -4) {};
					\node [style=morphism] (187) at (-3, -3) {$\iota$};
					\node [style=none] (209) at (6, 0) {$=$};
					\node [style=bn] (225) at (9, 1.75) {};
					\node [style=none] (226) at (8, 3.25) {};
					\node [style=none] (229) at (10, 3.25) {};
					\node [style=none] (230) at (9, -4) {};
					\node [style=morphism] (231) at (9, 0.75) {$\pi$};
					\node [style=morphism] (232) at (9, -0.5) {$\delta$};
					\node [style=morphism] (235) at (9, -3) {$\iota$};
					\node [style=morphism] (236) at (9, -1.75) {$\samp$};
					\node [style=none] (239) at (9, -4) {};
					\node [style=none] (243) at (-6, 0) {$=$};
					\node [style=bn] (244) at (3, 0.5) {};
					\node [style=none] (245) at (2, 1.5) {};
					\node [style=none] (246) at (2, 4.25) {};
					\node [style=none] (247) at (4, 4.25) {};
					\node [style=none] (248) at (4, 1.5) {};
					\node [style=none] (249) at (3, -4) {};
					\node [style=morphism] (250) at (2, 3) {$\pi$};
					\node [style=morphism] (251) at (4, 3) {$\pi$};
					\node [style=morphism] (252) at (4, 1.75) {$\delta$};
					\node [style=morphism] (253) at (3, -1.75) {$\samp$};
					\node [style=morphism] (255) at (2, 1.75) {$\delta$};
					\node [style=none] (256) at (3, -4) {};
					\node [style=morphism] (257) at (3, -3) {$\iota$};
					\node [style=none] (258) at (8, 4) {};
					\node [style=none] (259) at (10, 4) {};
				\end{pgfonlayer}
				\begin{pgfonlayer}{edgelayer}
					\draw (175.center) to (170);
					\draw [in=-90, out=15] (170) to (174.center);
					\draw (174.center) to (173.center);
					\draw (172.center) to (171.center);
					\draw [in=165, out=-90] (171.center) to (170);
					\draw (230.center) to (225);
					\draw [in=-90, out=15] (225) to (229.center);
					\draw [in=165, out=-90] (226.center) to (225);
					\draw (249.center) to (244);
					\draw [in=-90, out=15] (244) to (248.center);
					\draw (248.center) to (247.center);
					\draw (246.center) to (245.center);
					\draw [in=165, out=-90] (245.center) to (244);
					\draw (226.center) to (258.center);
					\draw (229.center) to (259.center);
				\end{pgfonlayer}
			\end{tikzpicture}
		}%
	\end{equation}
	where the first step is by \cref{lem:balanced_sharp}, the second by \cref{eq:bal_split_2,eq:bal_split_3} again, and the third by $\pi$ and $\delta$ being deterministic.
	This proves that the composite morphism is indeed a static idempotent.

	Therefore, by assumption, we have another splitting consisting of morphisms $\kappa$ and $\rho$ with
	\begin{equation}\label{eq:bal_split_4}
		\pi\comp \delta\comp \samp\comp \iota = \kappa \comp \rho, \qquad \rho\comp \kappa = \id.
	\end{equation}
	Using \cref{eq:bal_split_1}, we construct a splitting of the original idempotent $e$ as
	\begin{equation}
		e = e \comp e = \samp\comp \iota\comp \pi\comp \delta\comp \samp\comp \iota \comp \pi\comp \delta = (\samp \comp \iota\comp \kappa)\comp (\rho\comp \pi\comp \delta).
	\end{equation}
	This is indeed a splitting since we have
	\begin{equation}
		(\rho\comp \pi\comp\delta)\comp (\samp\comp \iota\comp \kappa) = \rho \comp (\pi\comp\delta\comp \samp\comp \iota)\comp \kappa = \rho\comp \kappa\comp \rho\comp \kappa = \id,
	\end{equation}
	which concludes the proof.
\end{proof}

\Cref{lem:proj_balanced_split} reduces the problem of splitting balanced idempotents to the problem of splitting static idempotents.
Fortunately, the equalizer principle from \cref{sec:equalizer_principle} is an additional condition which guarantees that all static idempotents split.
We therefore obtain the following main result on idempotent splitting in Markov categories.

\begin{theorem}
	\label{thm:balanced_split}
	Let $\cC$ be a Markov category such that
	\begin{enumerate}
		\item $\cC$ is positive,
		\item $\cC$ is observationally representable, and
		\item $\cC$ satisfies the equalizer principle (\cref{def:equalizer_principle}).
	\end{enumerate}
	Then an idempotent in $\cC$ splits if and only if it is balanced. 
	In particular, if $\cC$ is balanced as well, then all idempotents split.
\end{theorem}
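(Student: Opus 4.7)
The proof naturally splits into two directions. For the easy direction, if an idempotent $e$ splits in a positive Markov category, then $e$ is automatically balanced by \cref{thm:split_props}~\ref{it:split_balanced}. For the converse, I plan to reduce the problem to the static case via \cref{lem:proj_balanced_split} (which requires positivity and observational representability), and then split static idempotents using the equalizer principle applied on the distribution object $PX$.

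More precisely, suppose $e \colon X \to X$ is a static idempotent. By \cref{prop:balanced_as_det}, being static means $e \ase{e} \id_X$. Since $\cC$ is observationally representable, it is \as{}-compatibly representable (\cref{prop:reflect_aseq}), and so $e \ase{e} \id_X$ is equivalent to
\[
    e^\sharp \ase{e} \delta_X
\]
as morphisms of type $X \to PX$. Crucially, both $e^\sharp$ and $\delta_X$ are \emph{deterministic}, so the equalizer principle applies directly to this parallel pair: let $\mathsf{eq} \colon E \to X$ be the equalizer of $e^\sharp$ and $\delta_X$ in $\cC_\det$, and use the equalizer principle to obtain a factorization $e = \mathsf{eq} \comp \tilde{e}$ for a unique $\tilde{e} \colon X \to E$.

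To confirm this is a splitting, I would observe that $e^\sharp \comp \mathsf{eq} = \delta_X \comp \mathsf{eq}$ together with $\samp_X \comp e^\sharp = e$ and $\samp_X \comp \delta_X = \id_X$ yields $e \comp \mathsf{eq} = \mathsf{eq}$. Combined with $e = \mathsf{eq} \comp \tilde{e}$ this gives $\mathsf{eq} \comp \tilde{e} \comp \mathsf{eq} = \mathsf{eq}$, and since equalizers are monomorphisms we conclude $\tilde{e} \comp \mathsf{eq} = \id_E$. Hence $(\mathsf{eq}, \tilde{e})$ splits $e$. Finally, \cref{lem:proj_balanced_split} upgrades this to a splitting of every balanced idempotent. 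The last sentence of the theorem is an immediate consequence of \cref{thm:balanced_idempotent}.

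The main conceptual obstacle is bridging between almost sure equality in $\cC$, where $e$ itself is typically non-deterministic, and an honest equalizer problem that lives inside $\cC_\det$ where the equalizer principle is formulated. This is precisely what representability provides: by passing from $e$ and $\id_X$ (non-deterministic witnesses) to their sharp versions $e^\sharp$ and $\delta_X$ in the distribution object, the $\as{}$-equality becomes an equalizer question about a deterministic parallel pair, and the equalizer principle then delivers the factorization needed to produce the splitting. The rest of the argument is bookkeeping on the universal property plus the reduction lemma \ref{lem:proj_balanced_split}.
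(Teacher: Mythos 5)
Your overall architecture matches the paper's: easy direction via \cref{thm:split_props}, reduction to static idempotents via \cref{lem:proj_balanced_split}, passage to the deterministic parallel pair $e^\sharp, \delta_X \colon X \to PX$ using \as{}-compatible representability, and the equalizer principle to obtain the factorization $e = \mathsf{eq} \comp \tilde{e}$. The computation $e \comp \mathsf{eq} = \mathsf{eq}$ is also correct.

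However, there is a genuine gap at the final cancellation step. From $\mathsf{eq} \comp \tilde{e} \comp \mathsf{eq} = \mathsf{eq}$ you conclude $\tilde{e} \comp \mathsf{eq} = \id_E$ ``since equalizers are monomorphisms.'' But the equalizer is taken in $\cC_\det$, so it is only guaranteed to be a monomorphism \emph{in $\cC_\det$}, and the morphism $\tilde{e} \comp \mathsf{eq}$ you are trying to cancel against $\id_E$ is generally \emph{not} deterministic ($\tilde{e}$ is a factorization of the non-deterministic $e$, so there is no reason for it to lie in $\cC_\det$). Monomorphisms in a subcategory need not remain monic in the ambient category, so this step does not follow. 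The paper flags exactly this issue and closes it by a separate argument: given $f, g$ with $\mathsf{eq} \comp f = \mathsf{eq} \comp g$, one observes $\mathsf{eq} \comp f = e \comp \mathsf{eq} \comp f \ll e$ (using $e \comp \mathsf{eq} = \mathsf{eq}$ and \cref{lem:factor_ac}), hence $e^\sharp \ase{\mathsf{eq} \comp f} \delta_X$, and then the \emph{uniqueness} clause of the equalizer principle applied to $\mathsf{eq} \comp f$ forces $f = g$. This shows $\mathsf{eq}$ is monic in all of $\cC$ and legitimizes the cancellation. Your proof needs this (or an equivalent) argument to be complete; everything else is in order.
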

\begin{proof}
	Every split idempotent in a positive Markov category is balanced by \cref{thm:split_props}, so we focus on showing that all balanced idempotents split.

	By \cref{lem:proj_balanced_split}, it is enough to show that every static idempotent splits.
	To this end, consider a static idempotent $e \colon X \to X$ and the equalizer in $\cC_\det$ given by
	\begin{equation}\label{eq:fork_incl}
		\begin{tikzcd}
			T \ar[r, "\iota"] & X \ar[r, shift right, "\delta"'] \ar[r, shift left, "e^\sharp"] & PX
		\end{tikzcd}
	\end{equation}
	As our notation already suggests, the deterministic morphism $\iota$ will eventually be the monomorphism part of a splitting of $e$.

	Since $e$ is a static idempotent, we have $e \ase{e} \id$ by definition.
	Because $\cC$ is \as{}-compatibly representable by \cref{prop:obs_asrep} and $\id^\sharp = \delta$, we can infer
	\begin{equation}\label{eq:balanced_split_0}
		e^\sharp \ase{e} \delta.
	\end{equation}
	Using the equalizer principle, the $\as{}$ equality of the parallel pair from diagram \eqref{eq:fork_incl} means that we have a factorization $e = \iota \comp \pi$ for some $\pi \colon X \to T$.

	To show that this provides the requisite splitting, we still need to prove $\pi \comp \iota = \id_T$. 
	First note that we have
	\begin{equation}\label{eq:balanced_split_1}
		\iota \comp \pi \comp \iota = e\comp \iota = \samp\comp e^{\sharp} \comp \iota = \samp \comp \delta \comp \iota = \iota.
	\end{equation}
	In order to conclude the proof, it is enough to argue that $\iota$ is a monomorphism in $\cC$.\footnote{It is clear that $\iota$ is a monomorphism in $\cC_\det$ by virtue of being an equalizer, but this does not directly imply that it is a monomorphism in $\cC$ as well.}
	For this purpose, consider two arbitrary morphisms $f$ and $g$ satisfying $\iota \comp f = \iota \comp  g$. 
	By \cref{eq:balanced_split_1,lem:factor_ac}, we have $\iota \comp f = e \comp \iota \comp f \ll e$.
	Combining this absolute continuity relation with \cref{eq:balanced_split_0} yields $e^{\sharp} \ase{\iota\comp f} \delta$. 
	The equalizer principle thus says that the factorization of $\iota \comp  f$ across $\iota$ is unique.
	Therefore $\iota \comp f = \iota \comp g$ implies $f = g$. 
	Consequently, $\iota$ is a monomorphism and thus \cref{eq:balanced_split_1} implies the desired $\pi \comp \iota = \id_T$.

	The last sentence of the theorem statement is now an immediate consequence of \cref{thm:balanced_idempotent}.
\end{proof}

Our main application of \cref{thm:balanced_split} is to $\borelstoch$, which we consider shortly. 
Before that, it is instructive to note an example application in which \emph{not} all idempotents split due to the failure of balancedness.
\begin{corollary}
	Every balanced idempotent in $\finsetmulti$ splits, but not all idempotents split.
\end{corollary}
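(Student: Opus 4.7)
The plan is to handle the two halves of the statement separately. For the positive half (every balanced idempotent splits), the strategy is to invoke \cref{thm:balanced_split}. For the negative half (not every idempotent splits), I would simply cite \cref{ex:setmulti_not_split}: the first idempotent of \cref{ex:setmulti_not_balanced} already lives in $\finsetmulti$ and fails to split, by \cref{thm:split_props}~\ref{it:split_balanced} combined with the positivity of $\setmulti$.

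To apply \cref{thm:balanced_split} to $\finsetmulti$, three hypotheses must be checked. Positivity is standard and is in fact used implicitly in \cref{ex:setmulti_not_split}. The equalizer principle follows from \cref{rem:equalizer_principle}, since $\finsetmulti$ has all supports (by the finite restriction of \cref{ex:support_setmulti}) and its deterministic subcategory $\finset$ is finitely complete. For observational representability, one needs a concrete description of the probability monad on $\setmulti$, which I expect to be the non-empty powerset monad with $\delta_X$ sending $x$ to the singleton $\{x\}$ and $\samp_X$ sending a non-empty subset to all of its elements (with multi-valued output). With this description, iterated sampling from distinct non-empty subsets $S, S' \subseteq X$ already produces distinct possible outcomes after a single sample, so parallel morphisms into $PX$ that agree under all $\samp^{(n)}$ must coincide, which is exactly the content of observational representability in the sense of \cref{sec:observational}.

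Once these three hypotheses are in place, \cref{thm:balanced_split} yields the biconditional that an idempotent in $\finsetmulti$ splits if and only if it is balanced, which is precisely the first half of the corollary; combined with the counterexample from \cref{ex:setmulti_not_split} for the second half, this completes the proof. The main obstacle I foresee is the verification of observational representability against the precise definition in \cref{sec:observational}{\,\textemdash\,}in particular, pinning down the probability monad on $\setmulti$ and confirming that the proposed $\delta$ and $\samp$ genuinely equip the category with an observationally representable structure (rather than merely a representable one). Everything else reduces to short appeals to earlier results in the paper.
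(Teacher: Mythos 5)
Your overall architecture coincides with the paper's: check the three hypotheses of \cref{thm:balanced_split} for $\finsetmulti$, and dispatch the negative half via \cref{ex:setmulti_not_split} together with \cref{thm:split_props}. The equalizer-principle step and the negative half are handled exactly as in the paper; for positivity the paper is slightly more careful (it derives positivity from the existence of conditionals in $\finsetmulti$ rather than declaring it standard), but that is a citation issue, not a mathematical one.

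The genuine gap is in your verification of observational representability, which you yourself flagged as the delicate point. What \cref{prop:reflect_aseq}~(i) demands is that the family $(\samp^{(n)})_{n \in \N}$ be jointly monic \emph{in the Kleisli category}, i.e.\ that it separate arbitrary parallel morphisms into $PX$; in $\finsetmulti$ such a morphism assigns to each input a \emph{set of} nonempty subsets of $X$, and Kleisli composition with $\samp^{(n)}$ takes a union over that set. Your argument only shows that $\samp$ is injective on the \emph{points} of $PX$ (distinct subsets have distinct element sets), which separates deterministic states of $PX$ but says nothing about multivalued ones, and the union genuinely destroys information: for $X = \{0,1\}$ the two states of $PX$ given by $p = \bigl\{\{0,1\}\bigr\}$ and $q = \bigl\{\{0\},\{1\},\{0,1\}\bigr\}$ satisfy $\samp^{(n)} \comp p = \samp^{(n)} \comp q$ (both yield all of $X^n$) for every $n$, yet $p \neq q$. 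So the inference ``distinct subsets are distinguished by one sample, hence the $\samp^{(n)}$ are jointly monic'' does not go through. The paper does not attempt a direct argument here: it cites \cref{ex:finsetmulti_obs}, which in turn defers to an external observationality theorem of Moss and Perrone for the finitary nonempty powerset monad. You should at minimum replace your sketch by that citation; note, moreover, that the example above shows the verification is more subtle than it looks, since any correct argument must explain how multivalued morphisms into $PX$ are separated (injectivity of the underlying function $S \mapsto S^n$ on $PX$ is not the same thing as monicity of $\samp^{(n)}$ in the Kleisli category), and this tension deserves to be resolved explicitly rather than waved through.
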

\begin{proof}
	$\finsetmulti$ satisfies the assumptions of \cref{thm:balanced_split}:
	\begin{enumerate}
		\item It has conditionals~\cite[Proposition~16]{fritz2022dseparation}, and this implies positivity~\cite[Lemma~11.24]{fritz2019synthetic}.
		\item It is observationally representable by \cref{ex:finsetmulti_obs}.
		\item It satisfies the equalizer principle by \cref{rem:equalizer_principle}.
	\end{enumerate}
	Hence the split idempotents in $\finsetmulti$ are exactly the balanced idempotents.
	However, by \cref{ex:setmulti_not_split}, there are non-balanced idempotents in $\finsetmulti$, so not all idempotents split by \cref{thm:split_props}.
\end{proof}

We also already know that $\borelstoch$ satisfies all of the relevant assumptions, so here is our previously announced key result.

\begin{corollary}
	\label{cor:borelstoch_idempotents_split}
	Every idempotent in $\borelstoch$ is balanced and splits.
\end{corollary}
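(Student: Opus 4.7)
The plan is to deduce \cref{cor:borelstoch_idempotents_split} directly from \cref{thm:balanced_split}, which reduces the work to verifying that $\borelstoch$ satisfies each of the four hypotheses: positivity, observational representability, the equalizer principle, and the Cauchy--Schwarz Property.

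First, I would recall that $\borelstoch$ is positive, which is a standard fact about this Markov category and is implied by the existence of conditionals. Second, the equalizer principle for $\borelstoch$ is already established in \cref{prop:borelstoch_eq}, so I would simply cite it. Third, the Cauchy--Schwarz Property holds in $\borelstoch$ by the result referenced as \cref{prop:stoch_cauchy_schwarz} (and its corollary \cref{cor:cauchy_schwarz}), which in particular was used to prove \cref{cor:balanced_idempotent} stating that every idempotent in $\borelstoch$ is balanced. Fourth, observational representability of $\borelstoch$ is the content of results from \cref{sec:observational} in the appendix, which I would cite in the form used there.

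With all four hypotheses verified, \cref{thm:balanced_split} immediately yields that an idempotent in $\borelstoch$ splits if and only if it is balanced, and the ``in particular'' clause of that theorem (combined with the Cauchy--Schwarz Property) gives that every idempotent splits. Combined with \cref{cor:balanced_idempotent}, which already tells us that every idempotent in $\borelstoch$ is balanced, we obtain both halves of the statement.

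The main obstacle is essentially bookkeeping: there is no new mathematical content beyond checking the hypotheses. The only mildly nontrivial verification is observational representability, but this is already handled in the appendix, so the proof is really just a short citation of \cref{thm:balanced_split} together with references to \cref{prop:borelstoch_eq}, the Cauchy--Schwarz Property of $\borelstoch$, and the observational representability from \cref{sec:observational}. I would write the proof in two or three sentences, enumerating the four hypotheses and then applying \cref{thm:balanced_split}.
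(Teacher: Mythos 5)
Your proposal is correct and follows exactly the paper's own argument: the proof is a direct application of \cref{thm:balanced_split} after citing positivity, observational representability (\cref{ex:borelastoch_obs}), the equalizer principle (\cref{prop:borelstoch_eq}), and the Cauchy--Schwarz Property (\cref{prop:stoch_cauchy_schwarz}) for $\borelstoch$. No gaps.
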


\begin{proof}
	$\borelstoch$ is balanced (\cref{prop:stoch_balanced_cat}) and also satisfies the other hypotheses of \cref{thm:balanced_split}, since:
	\begin{enumerate}
		\item It is positive (\cite[Example~11.25]{fritz2019synthetic}).
		\item It is observationally representable (\cref{ex:borelastoch_obs}).
		\item It satisfies the equalizer principle (\cref{prop:borelstoch_eq}).
			\qedhere
	\end{enumerate}
\end{proof}

In order to put this result into context, let us note that it strengthens the following theorem due to Blackwell.

\begin{theorem}[{\cite[Theorem 7]{blackwell1942idempotent}}]
	\label{thm:idemp_borel}
	Let $e \colon X\to X$ be an idempotent Markov kernel on a standard Borel space $X$.
	Then there exists a partition 
	\begin{equation}
		\label{eq:blackwell_partition}
		X = N \sqcup \bigsqcup_{t \in T} X_t
	\end{equation}
	of $X$ into measurable sets\footnotemark{} such that:
	\begin{enumerate}
		\item\label{it:blackwell1} $e(X_t|x) = 1$ for all $x \in X_t$ and all $t \in T$.
		\item\label{it:blackwell2} The probability measure $e(\ph|x)$ is independent of $x$ within each $X_t$.
		\item\label{it:blackwell3} $e(N|x) = 0$ for all $x \in X$.
	\end{enumerate}
\end{theorem}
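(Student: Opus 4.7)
The plan is to derive the theorem directly from \cref{cor:borelstoch_idempotents_split}, which guarantees that the idempotent Markov kernel $e \colon X \to X$ admits a splitting $e = \iota \comp \pi$ in $\borelstoch$. Here $T$ is a standard Borel space, $\iota \colon T \to X$ and $\pi \colon X \to T$ are Markov kernels, and $\pi \comp \iota = \id_T$. The proof then amounts to a measure-theoretic rereading of these two splitting equations.

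Concretely, I would use the representability of $\borelstoch$ to form the adjunct $\pi^\sharp \colon X \to PT$, a measurable map into the standard Borel space of probability measures on $T$. Since $\delta_T \colon T \to PT$ is a Borel injection between standard Borel spaces, its image $\delta_T(T) \subseteq PT$ is Borel and $\delta_T$ is a Borel isomorphism onto it by \cite[Corollary~15.2]{kechris}. Setting $D \coloneqq (\pi^\sharp)^{-1}(\delta_T(T))$, the restriction of $\samp_T \comp \pi^\sharp$ to $D$ defines a measurable map $\phi \colon D \to T$ characterised by $\pi(\ph|x) = \delta_{\phi(x)}$ for $x \in D$. I would then define the partition by
\[
	X_t \,\coloneqq\, \phi^{-1}(t) \quad (t \in T), \qquad N \,\coloneqq\, X \setminus D,
\]
all of which are measurable by construction.

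To verify the conclusions, write $\mu_t \coloneqq \iota(\ph|t) \in PX$. For $x \in X_t$ the kernel $\pi(\ph|x)$ equals $\delta_t$, so
\[
	e(\ph|x) \,=\, \int_T \iota(\ph|s) \, \pi(ds|x) \,=\, \mu_t,
\]
which proves property \ref{it:blackwell2}. Applying $\pi \comp \iota = \id_T$ to the singleton $\{t\} \subseteq T$ gives $\int_X \pi(\{t\}|x) \, \mu_t(dx) = 1$; since $\pi(\ph|x)$ is a probability measure, this forces $\pi(\ph|x) = \delta_t$ for $\mu_t$-almost every $x$, i.e.\ $\mu_t(X_t) = 1$, which is \ref{it:blackwell1}. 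Because $X_t \cap N = \emptyset$, we deduce $\mu_t(N) = 0$ for every $t \in T$, and therefore $e(N|x) = \int_T \mu_t(N) \, \pi(dt|x) = 0$ for every $x \in X$, proving \ref{it:blackwell3}.

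The main obstacle is the measurability bookkeeping: one must confirm that $\delta_T(T)$ is Borel in $PT$, and then propagate this through to the measurability of $D$, $\phi$, the blocks $X_t$, and the exceptional set $N$. All of these follow from standard descriptive set theory applied to the standard Borel spaces $T$, $X$ and $PT$. Once they are in place, the probabilistic content of Blackwell's theorem reduces to a direct translation of the categorical splitting equations $\pi \comp \iota = \id_T$ and $\iota \comp \pi = e$, with the measure $\mu_t = \iota(\ph|t)$ playing the role of the common conditional distribution $e(\ph|x)$ on the block $X_t$.
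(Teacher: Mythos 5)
Your proposal is correct and follows essentially the same route as the paper: split $e = \iota \comp \pi$ via \cref{cor:borelstoch_idempotents_split}, take $N$ to be the measurable set of points $x$ where $\pi(\ph|x)$ fails to be a Dirac measure, and let $X_t$ collect the $x$ with $\pi(\ph|x) = \delta_t$. The only differences are cosmetic: the paper certifies the measurability of $N$ using a countably generated $\sigma$-algebra rather than the Borel image of $\delta_T$, and it obtains $\iota(N|t) = 0$ from the $\as{\iota}$ determinism of $\pi$ (\cref{thm:split_props}) instead of your direct computation from $\pi \comp \iota = \id_T$.
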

\footnotetext{Note that ``partition'' here only means decomposition into a disjoint union of measurable sets, not decomposition into a coproduct of measurable spaces. 
For example for $e = \id_{[0,1]}$, this partition (unique in this case) consists of all the singleton sets in $[0,1]$.}%

The following reduction of this result to \cref{cor:borelstoch_idempotents_split} amounts to translating the existence of a splitting into measure-theoretic language.

\begin{proof}
	By \cref{cor:borelstoch_idempotents_split}, we know that $e$ can be split as $e = \iota\comp \pi$ for some $\pi \colon X \to T$ and $\iota \colon T \to X$ with $\pi\comp \iota = \id$.
	Consider the set of all $x \in X$ for which $\pi(\ph|x)$ is a Dirac delta measure, or equivalently for which
	\begin{equation}
		\pi(U|x)^2 = \pi(U|x)
	\end{equation}
	holds for all $U \in \Sigma_T$.
	Since $\Sigma_T$ is countably generated by $T$ being standard Borel, this set is itself measurable, so that we can define $N$ to be its complement.
	Then because $\pi$ is $\as{\iota}$ deterministic by \cref{thm:split_props}, we have $\iota(N|t) = 0$ for all $t \in T$.

	If we now define
	\begin{equation}
		\label{eq:Xlambda}
	X_t \coloneqq \Set*[\big]{x \in X \setminus N  \given  \pi(\ph|x) = \delta_t }
\end{equation}
for each $t \in T$, then it is straightforward to check that the required properties hold.
Indeed, for \ref{it:blackwell1}, we have for every $x \in X_t$,
\begin{equation}
	e(X_t|x) = \int_{t' \in T} \iota(X_t|t') \, \pi(dt'|x) = \int_{t' \in T} \iota(X_t|t') \, \delta_t(dt') = \iota(X_t|t) = 1,
\end{equation}
where the final step is due to $\pi\comp \iota = \id_T$.
Property \ref{it:blackwell2} holds simply by the definition of $X_t$.
Finally, \ref{it:blackwell3} is immediate from $\iota(N|t) = 0$ for all $t$.
\end{proof}

\begin{remark}
	It is worth emphasizing that Blackwell's \cref{thm:idemp_borel} does not immediately imply that every idempotent splits, since $T$ is merely an index set rather than a standard Borel measurable space itself.
	It is in this sense that our \cref{cor:borelstoch_idempotents_split} is stronger than Blackwell's theorem.
\end{remark}

In the discrete case, however, Blackwell's theorem can be interpreted as an idempotent splitting:

\begin{example}[Blackwell's decomposition for stochastic maps]
	\label{ex:splitting}
	To gain better intuition for the decomposition of \cref{eq:blackwell_partition}, as well as for the type classification of idempotents in general, it may help to illustrate it for $\finstoch$ using Markov chain terminology.
	In these terms, the set $T$ can be understood as the set of \emph{recurrent communication classes} of the transition kernel $e$.
	These classes are the non-empty sets $C \subseteq X$ satisfying $e(C|x) = 1$ for all $x \in C$, and such that $C$ is minimal with respect to this property.
	The recurrent communication classes are mutually disjoint, and the complement of their union corresponds to $N$ in \cref{eq:blackwell_partition}, which is exactly the set of transient states.

	Then the projection $\pi$ of a splitting $e = \iota \comp \pi$ acts on a state $x$ as follows:
	\begin{itemize}
		\item If $x \not\in N${\,\textemdash\,}meaning that $x$ is recurrent{\,\textemdash\,}then $\pi$ maps $x$ deterministically to the class containing $x$.
		\item If $x \in N${\,\textemdash\,}meaning that $x$ is transient{\,\textemdash\,}then $\pi(\ph|x)$ is arbitrary.
	\end{itemize}
	Given a class $C$ as input, the inclusion $\iota$ samples an element from $C$ according to a full-support distribution which only depends on $C$ itself.
	In particular, the restriction of $e$ to $C$ is a stochastic matrix of rank one and full support.

	These observations elucidate the first two items of \cref{thm:split_props} yet again differently:
	\begin{itemize}
		\item $e$ is static if and only if $\iota$ is deterministic, which means that all recurrent communication classes are singletons.
		\item $e$ is strong if and only if $\pi$ is deterministic, which means that all states are recurrent (or equivalently $N = \emptyset$).
	\end{itemize}			
\end{example}

Once again, however, in the general non-discrete case our result is stronger than Blackwell's. 

\begin{remark}
	Given that we have proven idempotent splitting for $\borelstoch$, the reader may wonder whether idempotents split in $\stoch$ in general.
	Here is what we know about this at present.

	First of all, it is unlikely that $\stoch$ is representable: $\stoch_\det$ does \emph{not} correspond to $\meas$ since e.g.~the deterministic states $I \to X$ on a measurable space $X$ are exactly the $\{0,1\}$-valued probability measures, and these do not coincide with the points of $X$ in general.
	However, $\stoch$ is still the Kleisli category of the Giry monad on $\meas$.
	This is a weaker form of representability which has been studied in \cite{moss2022probability}, where it was also shown that a suitable notion of observationality still applies in this case~\cite[Theorem 9.1]{moss2022probability}.
	The question is thus whether the proof of \cref{thm:balanced_split} still goes through in such weaker setting.

	For \cref{lem:proj_balanced_split}, this does indeed seem to be the case. 
	However, we do not know whether a suitable formulation of the equalizer principle still holds, and therefore we do not currently know whether the proof of \cref{thm:balanced_split} can be adapted accordingly.
\end{remark}

In order to show how our \Cref{thm:balanced_split} can also be applied fruitfully to other Markov categories, let us derive a similar conclusion for another Markov category of interest.

\begin{corollary}
	\label{cor:chausstoch_idempotents_split}
	Every idempotent in $\chausstoch$ is balanced and splits.
\end{corollary}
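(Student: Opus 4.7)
The plan is to apply \cref{thm:balanced_split} directly, following the same template as the proof of \cref{cor:borelstoch_idempotents_split}. All that is needed is to verify the four relevant hypotheses{\,\textemdash\,}positivity, observational representability, the equalizer principle, and the Cauchy--Schwarz Property{\,\textemdash\,}for the Markov category $\chausstoch$.

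First I would dispose of the ``balanced'' part of the statement: \cref{cor:cauchy_schwarz} records that $\chausstoch$ has the Cauchy--Schwarz Property, so by \cref{thm:balanced_idempotent} every idempotent in $\chausstoch$ is automatically balanced. This also reduces the last sentence of \cref{thm:balanced_split} to an ``if and only if'' that we only need to use in one direction. Next, positivity of $\chausstoch$ is standard{\,\textemdash\,}for instance, it follows once one notes that $\chausstoch$ admits conditionals, then invokes the usual implication from conditionals to positivity.

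The equalizer principle is handled via \cref{rem:equalizer_principle}. Its deterministic subcategory $\chaus$ has equalizers, since the locus $\{x \in X \suchthat f(x) = g(x)\}$ on which two continuous maps $f, g \colon X \to Y$ between compact Hausdorff spaces agree is a closed, hence compact Hausdorff, subspace, and this provides the equalizer in the obvious way. Moreover, every morphism in $\chausstoch$ has a (topological) support: this is \cref{ex:top_support}, whose argument applies to compact Hausdorff spaces as a special case of Tychonoff spaces. Together these two facts yield the equalizer principle via \cref{rem:equalizer_principle}.

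The main obstacle{\,\textemdash\,}and the only hypothesis not spelled out for $\chausstoch$ in the body of the paper{\,\textemdash\,}is observational representability. I expect this to follow along the same lines as the $\borelstoch$ case (\cref{ex:borelastoch_obs}), using that morphisms into the Radon probability monad on a compact Hausdorff space are determined by their iterated sampling behaviour; this is standard, since continuous functionals separate Radon probability measures and iterated sampling recovers all moments in the relevant sense. Once observational representability is in place, \cref{thm:balanced_split} applies verbatim, giving the splitting of every (balanced) idempotent, which combined with the Cauchy--Schwarz argument above yields the corollary.
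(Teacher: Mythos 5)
Your overall strategy is the same as the paper's: check the hypotheses of \cref{thm:balanced_split} for $\chausstoch$ and combine with the Cauchy--Schwarz Property via \cref{cor:cauchy_schwarz} and \cref{thm:balanced_idempotent}. The treatments of balancedness and of the equalizer principle (equalizers in $\chaus$ as closed agreement loci, supports from \cref{ex:top_support}, then \cref{rem:equalizer_principle}) are fine and match the paper's, which simply inherits the equalizer principle from $\tychstoch$ as a full Markov subcategory.

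There is, however, a genuine gap in your positivity step. You assert that $\chausstoch$ ``admits conditionals'' and call this standard, but it is neither established anywhere in the paper nor a known fact: conditionals in $\chausstoch$ would amount to \emph{continuous} disintegrations of Radon measures on products of compact Hausdorff spaces, and such continuous disintegrations do not exist in general (this is precisely why the measure-theoretic and topological settings are treated so differently throughout the paper). The paper avoids this entirely: it proves that $\tychstoch$ is causal via the functoriality of supports (\cref{ex:tychstoch_supp} together with \cref{thm:supp_functorial}), invokes the general implication that causality implies positivity, and then restricts to the full Markov subcategory $\chausstoch$. You should replace your conditionals argument with this route (or some other justified one). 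Separately, your treatment of observational representability is only a sketch; the actual verification is nontrivial and is carried out in \cref{prop:chausstoch_obs_rep} via a Stone--Weierstrass density argument for the subalgebra generated by the integration functionals $\e_f$ on $PX$, followed by the Riesz--Markov theorem. Your intuition (``continuous functionals separate Radon measures and iterated sampling recovers the moments'') points at the right mechanism, but as written it does not constitute a proof.
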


\begin{proof}
	This follows by an application of \Cref{thm:balanced_split}, since: 
	\begin{enumerate}
		\item $\chausstoch$ is positive, since even $\tychstoch$ is positive, of which $\chausstoch$ is a full Markov subcategory.
			The positivity of $\tychstoch$ follows because it is causal (\Cref{ex:tychstoch_supp}) and causality implies positivity~\cite[Theorem~2.24]{fritz2022dilations}.
		\item $\chausstoch$ is observationally representable (\Cref{prop:chausstoch_obs_rep}).
		\item It satisfies the equalizer principle since even $\tychstoch$ does (\Cref{rem:equalizer_principle}), of which $\chausstoch$ is a full Markov subcategory.
	\end{enumerate}
	Finally, $\chausstoch$ is also balanced (\Cref{cor:balanced_cat}).
\end{proof}

It is worth noting that \Cref{cor:chausstoch_idempotents_split} is essentially known: Hamana proved in~\cite[Theorem~2.3]{hamana1979envelopes} that every idempotent and Schwarz-positive unital map between unital C*-algebras splits across another unital C*-algebra.
In the special case where the C*-algebra on which the idempotent is defined is commutative, this recovers~\Cref{cor:chausstoch_idempotents_split} thanks to probabilistic Gelfand duality~\cite{furber_jacobs_gelfand}.

\begin{question}
	Do idempotents still split in $\tychstoch$?
\end{question}

A quick look at the previous proof shows that the only missing ingredient here is to prove the observational representability of $\tychstoch$.

\subsection{The Blackwell envelope}
\label{sec:blackwell_envelope}

When not all idempotents split in a generic category $\cC$, it is natural to consider the \emph{Karoubi envelope} $\cC^{\mathcal{K}}$, which is the new category obtained by formally splitting all idempotents. 
The Karoubi envelope of a symmetric monoidal category can also be made symmetric monoidal in a canonical way.
However, if $\cC$ is a Markov category, there does \emph{not} seem to be any obvious way to turn $\cC^{\mathcal{K}}$ into a Markov category as well.
This works only if one restricts idempotents used to construct $\cC^{\mathcal{K}}$ to the balanced idempotents.
In this way, we indeed get a new Markov category which we call the \emph{Blackwell envelope} of $\cC$.

Let us recall the definition of the Karoubi envelope first, also known as \emph{idempotent completion}.

\begin{definition}[{e.g.\ \cite[Section 6.5]{borceux1994handbook}}]
	For any category $\cC$, its \newterm{Karoubi envelope} $\cC^{\mathcal{K}}$ is the category where:
	\begin{itemize}
		\item Objects are pairs $(X,e_X)$ with $X \in \cC$ and $e_X \colon X \to X$ an idempotent.
		\item Morphisms $f \colon (X,e_X)\to (Y,e_Y)$ are morphisms $X \to Y$ in $\cC$ satisfying $f \comp e_X = f = e_Y \comp f$. 
	\end{itemize}
\end{definition}

In order to move closer to Markov categories, let us consider symmetric monoidal structure next.

\begin{proposition}[{e.g.\ \cite[Proposition 6.5.9]{borceux1994handbook}}]
	\label{prop:karoubi_symm_monoidal}
	Let $\cC$ be a symmetric monoidal category. 
	Then its Karoubi envelope $\cC^{\mathcal{K}}$ inherits a canonical symmetric monoidal structure.
\end{proposition}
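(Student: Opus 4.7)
The plan is to exhibit explicit monoidal data on $\cC^{\mathcal{K}}$, guided by the principle that in the Karoubi envelope the identity on $(X, e_X)$ is $e_X$ itself (not $\id_X$), so every structural morphism coming from $\cC$ must be ``compressed'' by the relevant idempotents before it can be used. Concretely, I would set $(X, e_X) \otimes (Y, e_Y) \coloneqq (X \otimes Y,\, e_X \otimes e_Y)$, take $(I, \id_I)$ as monoidal unit, and on morphisms reuse the tensor product of $\cC$.

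First I would check well-definedness on objects and morphisms, which is immediate from the interchange law in $\cC$:
\[
(e_X \otimes e_Y)\comp (e_X \otimes e_Y) = (e_X\comp e_X)\otimes (e_Y\comp e_Y) = e_X \otimes e_Y,
\]
and similarly, if $f$ and $g$ satisfy the sandwich conditions $e_Y\comp f\comp e_X = f$ and $e_{Y'}\comp g\comp e_{X'} = g$, then $f\otimes g$ satisfies the analogous condition with respect to $e_X \otimes e_{X'}$ and $e_Y \otimes e_{Y'}$. For the associator I would take the compressed version
\[
\widetilde{\alpha}_{(X,e_X),(Y,e_Y),(Z,e_Z)} \coloneqq \alpha_{X,Y,Z}\comp \bigl((e_X \otimes e_Y) \otimes e_Z\bigr),
\]
which by naturality of $\alpha$ in $\cC$ coincides with $(e_X \otimes (e_Y \otimes e_Z))\comp \alpha_{X,Y,Z}$; a routine calculation then shows that $\widetilde{\alpha}$ satisfies the sandwich condition at both source and target, and is inverted in $\cC^{\mathcal{K}}$ by the analogous compression of $\alpha^{-1}$. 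Unitors and symmetry are defined and verified in exactly the same way.

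Finally, to verify the coherence axioms (pentagon, triangle, hexagon) in $\cC^{\mathcal{K}}$, the idea is that because morphism equality in the Karoubi envelope is just morphism equality in $\cC$, each coherence diagram in $\cC^{\mathcal{K}}$ is obtained from the corresponding diagram in $\cC$ by pre- and post-composing with idempotents; naturality lets one move these idempotents freely, and the resulting equality then reduces to the original coherence equation in $\cC$. The whole proof is bookkeeping rather than substance; the only thing one must be careful about is to consistently compress by idempotents everywhere. Notably no hypothesis beyond ``symmetric monoidal'' is needed, which stands in sharp contrast with the Markov category setting addressed immediately afterward (where one must restrict to balanced idempotents, as the copy map need not commute past an arbitrary idempotent).
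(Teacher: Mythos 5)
Your proposal is correct and is essentially the paper's argument: the paper packages the same construction as the general observation that functors and natural transformations extend to Karoubi envelopes (with components $G(e_X)\comp\eta_X\comp F(e_X)$, which by naturality and idempotency equal your compressed $\eta_X\comp F(e_X)$), and your explicit formulas for the tensor, associator, unitors and symmetry are exactly the resulting data. The concluding remark contrasting this with the copy maps in the Markov setting also matches the paper's subsequent discussion.
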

\begin{proof}[Sketch of proof]
	Every functor between categories $F\colon \cC \to \cD$ has a natural extension to the Karoubi envelopes $F^{\mathcal{K}} \colon \cC^{\mathcal{K}} \to \cD^{\mathcal{K}}$ given by
	\begin{equation}
		F^{\mathcal{K}}(X, e_X) \coloneqq \bigl( F(X), F(e_X) \bigr), \qquad F^{\mathcal{K}}(f) \coloneqq F(f).
	\end{equation}
	Likewise, any natural transformation $\eta \colon F \Rightarrow G$ induces a natural transformation ${\eta^{\mathcal{K}} \colon F^{\mathcal{K}} \Rightarrow G^{\mathcal{K}}}$ with components\footnotemark{}
	\footnotetext{Note that, since $\eta$ is natural, we have $\eta_{(X,e)}^{\mathcal{K}}= \eta_X \comp F(e)=G(e) \comp \eta_X$.}%
	\begin{equation}
		\eta_{(X, e_X)}^{\mathcal{K}} \coloneqq G(e_X)\comp \eta_X\comp F(e_X).
	\end{equation}
	Therefore, via the obvious definition $(X, e_X) \otimes (Y, e_Y) \coloneqq (X \otimes Y, e_X \otimes e_Y)$, we can choose the expected associators, unitors and swap morphisms, and get the desired result.
\end{proof}

Interestingly, there does not seem to be a canonical way to make the Karoubi envelope of a Markov category into a Markov category. 
However, the following modified construction works.
\begin{definition}
	If $\cC$ is a Markov category, its \newterm{Blackwell envelope} $\cC^{\mathcal{B}}$ is the full subcategory of $\cC^{\mathcal{K}}$ whose objects are given by pairs $(X, e_X)$ where $e_X$ is a balanced idempotent.
\end{definition}

We name this construction after Blackwell in honor of his \cref{thm:idemp_borel}, which was arguably the first result in the direction of idempotent splitting in measure-theoretic probability.

\begin{proposition}\label{prop:blackwellenv_markov}
	Let $\cC$ be a Markov category. 
	Then its Blackwell envelope $\cC^{\mathcal{B}}$ inherits a canonical Markov category structure.
\end{proposition}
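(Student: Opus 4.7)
The plan is to lift the symmetric monoidal structure on $\cC^{\mathcal{K}}$ provided by \cref{prop:karoubi_symm_monoidal} to $\cC^{\mathcal{B}}$, and then to equip each object with a commutative comonoid making it into a Markov category. The bulk of the work lies in choosing the right copy morphism and using the balanced-idempotent axiom to verify its comonoid axioms.

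First I would verify that balanced idempotents are closed under the monoidal product, so that $\cC^{\mathcal{B}}$ is indeed a full symmetric monoidal subcategory of $\cC^{\mathcal{K}}$. Given balanced $e_X$ and $e_Y$, the copy morphism $\cop_{X\otimes Y}$ decomposes (up to swap) as $\cop_X \otimes \cop_Y$, and applying the defining equation \eqref{eq:balancednew} to $e_X$ and $e_Y$ in their respective factors yields the balancedness of $e_X \otimes e_Y$. Next, for each object $(X, e_X)$ I would define
\[
	\cop_{(X, e_X)} \coloneqq (e_X \otimes e_X) \comp \cop_X \comp e_X, \qquad \discard_{(X, e_X)} \coloneqq \discard_X.
\]
That these are morphisms in $\cC^{\mathcal{K}}$ (i.e.\ that they absorb $e_X$ on the appropriate sides) is immediate from $e_X^2 = e_X$ and the terminality of $I$. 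The unit object is $(I, \id_I)$.

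The comonoid axioms then need to be checked bearing in mind that the identity on $(X, e_X)$ in $\cC^{\mathcal{K}}$ is $e_X$ itself. Counitality reduces to $(\discard_X \otimes e_X) \comp \cop_X \comp e_X = e_X$, which follows from counitality in $\cC$ together with $\discard_X \comp e_X = \discard_X$. Cocommutativity follows from the cocommutativity of $\cop_X$ together with naturality of the swap. Naturality of $\discard$ on $\cC^{\mathcal{B}}$ is automatic from naturality of $\discard$ on $\cC$, and compatibility of $\cop_{(X,e_X)\otimes(Y,e_Y)}$ with the tensor of $\cop_{(X,e_X)}$ and $\cop_{(Y,e_Y)}$ reduces after simplification to the corresponding identity in $\cC$.

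The main obstacle, and the only place where balancedness is genuinely used, is coassociativity. Here I would rewrite $\cop_{(X,e_X)}$ in all three forms $(e_X \otimes e_X)\comp\cop_X\comp e_X = (\id \otimes e_X)\comp \cop_X \comp e_X = (e_X \otimes \id)\comp \cop_X\comp e_X$ granted by \cref{lem:balancednew} and its detailed-balance reformulation~\eqref{eq:detailed_balance}, and use them to transfer $e_X$ freely between the two branches of a copy. Combined with $e_X^2 = e_X$ and coassociativity of $\cop_X$ in $\cC$, this lets both $(\cop_{(X,e_X)} \otimes e_X) \comp \cop_{(X,e_X)}$ and $(e_X \otimes \cop_{(X,e_X)}) \comp \cop_{(X,e_X)}$ be reduced to the common expression $(e_X \otimes e_X \otimes e_X) \comp (\cop_X \otimes \id_X) \comp \cop_X \comp e_X$, up to a final application of coassociativity of $\cop_X$. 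In other words, the whole proof boils down to the observation that being balanced is precisely the compatibility condition between an idempotent and the comultiplication making the formula for $\cop_{(X,e_X)}$ coassociative; this is also implicit in the identification $\cop_T = (\pi \otimes \pi) \comp \cop_X \comp \iota$ for a splitting $e_X = \iota \comp \pi$ used in \cref{thm:split_props}.
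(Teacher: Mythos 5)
Your proof is correct and takes essentially the same route as the paper's: the paper likewise defines $\cop_{(X,e_X)}$ as the $e_X$-saturation of $\cop_X$, observes that all comonoid and compatibility axioms are trivial except coassociativity, and derives that one from the balanced condition. You simply spell out the details (counitality, cocommutativity, multiplicativity, and the detailed-balance manipulations for coassociativity) that the paper leaves implicit.
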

\begin{proof}
	It is straightforward to see that if $e_X$ and $e_Y$ are balanced idempotents, then so is $e_X \otimes e_Y$.
	Also $\id_I$ is trivially a balanced idempotent.
	Through \cref{prop:karoubi_symm_monoidal}, $\cC^{\mathcal{B}}$ therefore becomes a symmetric monoidal category.
	It is obvious that it is again semicartesian.

	Define now the copy morphism on an object $(X, e_X)$ to be given by
	\begin{equation}\label{eq:blackwell_env_def}
		{%
			\tikzstyle{every picture}=[tikzfig]%
			\begin{tikzpicture}
				\begin{pgfonlayer}{nodelayer}
					\node [style=bn] (20) at (3, 0) {};
					\node [style=none] (29) at (3, -1.75) {};
					\node [style=none] (30) at (2, 2) {};
					\node [style=none] (31) at (4, 2) {};
					\node [style=none] (32) at (0, 0) {$\coloneqq$};
					\node [style=morphism] (33) at (3, -1) {$e_X$};
					\node [style=none] (42) at (2, 1) {};
					\node [style=none] (43) at (4, 1) {};
					\node [style=morphism] (48) at (2, 1.25) {$e_X$};
					\node [style=morphism] (49) at (4, 1.25) {$e_X$};
					\node [style=none] (51) at (-2.5, 0) {$\cop_{(X,e_X)}$};
					\node [style=none] (52) at (3, -2.25) {$X$};
					\node [style=none] (53) at (2, 2.5) {$X$};
					\node [style=none] (54) at (4, 2.5) {$X$};
				\end{pgfonlayer}
				\begin{pgfonlayer}{edgelayer}
					\draw (20) to (29.center);
					\draw (30.center) to (42.center);
					\draw (31.center) to (43.center);
					\draw [in=165, out=-90] (42.center) to (20);
					\draw [in=15, out=-90] (43.center) to (20);
				\end{pgfonlayer}
			\end{tikzpicture}
		}%
	\end{equation}
	Then all relevant properties are trivial except for the associativity of copy, which follows from the assumption that $e_X$ is balanced. 
\end{proof}

In particular, if every idempotent in $\cC$ is balanced, then the Karoubi envelope coincides with the Blackwell envelope and is thus a Markov category.
For example, this happens by \cref{thm:balanced_idempotent} as soon as $\cC$ is balanced, which we already noted to be the case in standard probabilistic settings.

\begin{remark}
	It is difficult to argue that $\cC^{\mathcal{K}}$ \emph{cannot be} a Markov category in general, since there could be many different ways of defining copy morphisms in general, even if one additionally requires the canonical inclusion $\cC \to \cC^{\mathcal{K}}$ to be a Markov functor.
	However, what we can say is that there is no way to achieve this for $\cC = \setmulti$ if one also wants $\cC^{\mathcal{K}}$ to be a \emph{positive} Markov category.

	The reason is that if there was a positive Markov category structure on $\cC^{\mathcal{K}}$, then by \cref{thm:split_props} every idempotent in $\cC^{\mathcal{K}}$ would be balanced.
	But with the inclusion $\cC \to \cC^{\mathcal{K}}$ being assumed a Markov functor, this would imply that every idempotent in $\cC$ is balanced.
	But this is not the case for $\cC = \setmulti$ by \cref{ex:setmulti_not_balanced}.
\end{remark}

Finally, we study the relationship between the Blackwell envelope and the causality axiom.

\begin{lemma}
	\label{lem:blackwell_envelope_ase}
	Let $\cC$ be a Markov category, and consider morphisms
	\[
		p \colon (X,e_X) \to (Y,e_Y), \qquad f, g\colon (Y,e_Y) \to (Z,e_Z)
	\]
	in its Blackwell envelope $\cC^{\mathcal{B}}$.
	Then $f \ase{p} g$ holds if and only if the same is true for the corresponding morphisms in $\cC$. 
\end{lemma}
\begin{proof}
	In $\cC$, we have
	\begin{equation}
		{%
			\tikzstyle{every picture}=[tikzfig]%
			\begin{tikzpicture}
				\begin{pgfonlayer}{nodelayer}
					\node [style=bn] (0) at (3, 0) {};
					\node [style=bn] (1) at (9, 0) {};
					\node [style=morphism] (2) at (3, -1) {$e_Y$};
					\node [style=none] (6) at (2, 3.75) {};
					\node [style=none] (7) at (4, 3.75) {};
					\node [style=none] (8) at (4, 1) {};
					\node [style=none] (9) at (3, -3.5) {};
					\node [style=none] (10) at (9, -3.5) {};
					\node [style=none] (11) at (8, 3.75) {};
					\node [style=none] (12) at (10, 3.75) {};
					\node [style=none] (13) at (6, 0) {$=$};
					\node [style=morphism] (14) at (9, -1) {$e_Y$};
					\node [style=morphism] (15) at (3, -2.5) {$p$};
					\node [style=morphism] (16) at (2, 2.75) {$f$};
					\node [style=morphism] (17) at (9, -2.5) {$p$};
					\node [style=morphism] (18) at (8, 2.75) {$f$};
					\node [style=none] (39) at (2, 1) {};
					\node [style=none] (40) at (8, 1) {};
					\node [style=none] (41) at (10, 1) {};
					\node [style=morphism] (45) at (2, 1.25) {$e_Y$};
					\node [style=morphism] (46) at (8, 1.25) {$e_Y$};
					\node [style=morphism] (47) at (10, 1.25) {$e_Y$};
					\node [style=none] (51) at (3, -4) {$X$};
					\node [style=none] (52) at (2, 4.25) {$Z$};
					\node [style=none] (53) at (4, 4.25) {$Y$};
					\node [style=none] (54) at (9, -4) {$X$};
					\node [style=none] (55) at (8, 4.25) {$Z$};
					\node [style=none] (56) at (10, 4.25) {$Y$};
					\node [style=bn] (57) at (-3, 0) {};
					\node [style=none] (59) at (-4, 3.75) {};
					\node [style=none] (60) at (-2, 3.75) {};
					\node [style=none] (61) at (-2, 1) {};
					\node [style=none] (62) at (-3, -3.5) {};
					\node [style=none] (63) at (0, 0) {$=$};
					\node [style=morphism] (64) at (-3, -2) {$p$};
					\node [style=morphism] (65) at (-4, 2.25) {$f$};
					\node [style=none] (66) at (-4, 1) {};
					\node [style=none] (68) at (-3, -4) {$X$};
					\node [style=none] (69) at (-4, 4.25) {$Z$};
					\node [style=none] (70) at (-2, 4.25) {$Y$};
				\end{pgfonlayer}
				\begin{pgfonlayer}{edgelayer}
					\draw (1) to (10.center);
					\draw (9.center) to (0);
					\draw [in=-90, out=15] (0) to (8.center);
					\draw (8.center) to (7.center);
					\draw (6.center) to (39.center);
					\draw (11.center) to (40.center);
					\draw [in=165, out=-90] (39.center) to (0);
					\draw [in=165, out=-90] (40.center) to (1);
					\draw (12.center) to (41.center);
					\draw [in=15, out=-90] (41.center) to (1);
					\draw (62.center) to (57);
					\draw [in=-90, out=15] (57) to (61.center);
					\draw (61.center) to (60.center);
					\draw (59.center) to (66.center);
					\draw [in=165, out=-90] (66.center) to (57);
				\end{pgfonlayer}
			\end{tikzpicture}
		}%
	\end{equation}
	because $e_Y$ is a balanced idempotent by assumption.
	The analogous equations hold for $g$ in place of $f$.
	Recalling the definition of the copy morphisms in $\cC^{\mathcal{B}}$ as given in \cref{eq:blackwell_env_def}, the result follows.
\end{proof}

\begin{proposition}\label{prop:blackwell_causal}
	A Markov category $\cC$ is causal if and only if its Blackwell envelope $\cC^{\mathcal{B}}$ is causal.
\end{proposition}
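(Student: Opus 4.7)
The plan is to split the biconditional and reduce both directions to structural facts already established.

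For the ``if'' direction (causality of $\cC^{\mathcal{B}}$ implies causality of $\cC$), I would observe that the assignment $X \mapsto (X,\id_X)$, $f \mapsto f$ exhibits $\cC$ as a full Markov subcategory of $\cC^{\mathcal{B}}$. The key point is that for objects of the form $(X,\id_X)$, the copy morphism defined by \cref{eq:blackwell_env_def} collapses to $\cop_X$, since $\id_X$ is trivially a balanced idempotent. Since the causality axiom is purely an equational implication between string diagrams, any causality instance for morphisms in $\cC$ is the image under this strict Markov embedding of a causality instance in $\cC^{\mathcal{B}}$, and thus holds.

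For the ``only if'' direction, I would use the almost-sure reformulation of causality provided by \cref{lem:as_shift}: a Markov category is causal if and only if, for all $p \colon A \to X$, $g \colon X \to Y$, and $h_1, h_2 \colon Y \to Z$, the equality $h_1 \ase{g \comp p} h_2$ implies $h_1 \comp g \ase{p} h_2 \comp g$. Suppose $\cC$ is causal and take such data in $\cC^{\mathcal{B}}$, say $p \colon (A,e_A)\to(X,e_X)$, $g \colon (X,e_X)\to (Y,e_Y)$, and $h_1,h_2 \colon (Y,e_Y)\to(Z,e_Z)$ with $h_1 \ase{g\comp p} h_2$ in $\cC^{\mathcal{B}}$. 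By \cref{prop:blackwell_envelope_ase}, this almost-sure equation holds verbatim in $\cC$. Since composition in $\cC^{\mathcal{B}}$ agrees with that in $\cC$, causality of $\cC$ yields $h_1 \comp g \ase{p} h_2 \comp g$ in $\cC$, and a second application of \cref{prop:blackwell_envelope_ase} transports this back to $\cC^{\mathcal{B}}$.

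The main obstacle to this approach is making sure \cref{prop:blackwell_envelope_ase} really applies to all three almost-sure statements involved: the hypothesis $h_1 \ase{g\comp p} h_2$, the conclusion $h_1 \comp g \ase{p} h_2 \comp g$, and implicitly the identifications used when passing between the two categories. This requires checking that the ``source'' morphisms $g\comp p$ and $p$ are genuinely morphisms in $\cC^{\mathcal{B}}$ (which they are by closure of the balanced-idempotent condition under composition, already used in \cref{prop:blackwellenv_markov}) and that the copy morphism used in forming $\ase{p}$ on the Blackwell side really is \cref{eq:blackwell_env_def}. Once this bookkeeping is in place, the two directions together give the equivalence; no further hypotheses beyond what is needed to define $\cC^{\mathcal{B}}$ itself are required.
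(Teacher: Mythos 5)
Your proposal is correct and follows essentially the same route as the paper: the easy direction via the Markov subcategory embedding $X \mapsto (X,\id_X)$, and the converse via the almost-sure reformulation of causality in \cref{lem:as_shift} combined with two applications of \cref{prop:blackwell_envelope_ase} to transport the almost-sure equalities between $\cC$ and $\cC^{\mathcal{B}}$. The bookkeeping points you raise (closure of balanced idempotents under composition, the form of the copy morphisms) are exactly the facts the paper relies on implicitly, so no gap remains.
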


\begin{proof}
	Since $\cC$ is a Markov subcategory of $\cC^{\mathcal{B}}$, it is obvious that whenever $\cC^{\mathcal{B}}$ is causal, then the same is true for $\cC$. 
	To prove the converse, we use the characterization of causality from \cref{lem:as_shift}. 

	Consider morphisms 
	\begin{equation}
		\begin{split}
			f &\colon (A, e_A) \to (X, e_X), \\
			g &\colon (X, e_X) \to (Y, e_Y), \\
			h_1, h_2 &\colon (Y, e_Y) \to (Z, e_Z),
		\end{split}
	\end{equation}
	all in the Markov category $\cC^{\mathcal{B}}$. 
	By \cref{lem:blackwell_envelope_ase}, if $h_1 \ase{g\comp f} h_2$ holds, then the same is true in $\cC$, so that we obtain $h_1 \comp g \ase{f} h_2\comp g$ in $\cC$ by \cref{lem:as_shift}. 
	Another application of \cref{lem:blackwell_envelope_ase} ensures that this holds also in $\cC^{\mathcal{B}}$.
	In conclusion, $\cC^{\mathcal{B}}$ is thus causal by \cref{lem:as_shift}.
\end{proof}

\appendix
\section{Background on Markov categories}
\label{appendix}

\subsection{Some basics}
\label{sec:basics}

\begin{definition}
	Consider a symmetric monoidal category $\cC$ in which every object $X \in \cC$ comes equipped with a \newterm{copy morphism} $\cop_X \colon X \to X\otimes X$ and a \newterm{discard morphism} $\discard_X \colon X \to I$, where $I$ is the monoidal unit, written in string diagram form as
	\begin{equation}
		{%
			\tikzstyle{every picture}=[tikzfig]%
			\begin{tikzpicture}
				\begin{pgfonlayer}{nodelayer}
					\node [style=bn] (0) at (-4, 0) {};
					\node [style=none] (1) at (-5, 1.25) {};
					\node [style=none] (2) at (-4, -1) {};
					\node [style=none] (3) at (-3, 1.25) {};
					\node [style=none] (4) at (-5, 1.75) {$X$};
					\node [style=none] (6) at (-3, 1.75) {$X$};
					\node [style=none] (7) at (-4, -1.5) {$X$};
					\node [style=none] (8) at (-9.5, 0) {$\cop_X$};
					\node [style=bn] (9) at (7, 0.5) {};
					\node [style=none] (10) at (7, -1) {};
					\node [style=none] (11) at (7, -1.5) {$X$};
					\node [style=none] (12) at (3, 0) {$\discard_X$};
					\node [style=none] (13) at (-7, 0) {$=$};
					\node [style=none] (14) at (5, 0) {$=$};
				\end{pgfonlayer}
				\begin{pgfonlayer}{edgelayer}
					\draw [in=165, out=-90] (1.center) to (0);
					\draw [in=270, out=15] (0) to (3.center);
					\draw (0) to (2.center);
					\draw (9) to (10.center);
				\end{pgfonlayer}
			\end{tikzpicture}
		}%
	\end{equation}
	Then $\cC$ is a \newterm{Markov category} if:
	\begin{enumerate}
		\item The triple $(X, \cop_X , \discard_X)$ is a commutative comonoid for every object $X$.
		\item Every morphism $f \colon X \to Y$ satisfies $\discard_Y\comp f = \discard_X$.
		\item The copying of $X \otimes Y$ is obtained by the copying of $X$ and $Y$ in a natural way:
			\begin{equation}
				{%
					\tikzstyle{every picture}=[tikzfig]%
					\begin{tikzpicture}
						\begin{pgfonlayer}{nodelayer}
							\node [style=none] (0) at (-5, 2.5) {$X\otimes Y$};
							\node [style=none] (1) at (3, -2) {};
							\node [style=bn] (2) at (3, -1) {};
							\node [style=none] (3) at (4, 0) {};
							\node [style=none] (4) at (4, 0) {};
							\node [style=none] (5) at (2, 0) {};
							\node [style=none] (6) at (7, -2) {};
							\node [style=bn] (7) at (7, -1) {};
							\node [style=none] (8) at (8, 0) {};
							\node [style=none] (9) at (8, 0) {};
							\node [style=none] (10) at (6, 0) {};
							\node [style=none] (11) at (2, 2) {};
							\node [style=none] (12) at (4, 2) {};
							\node [style=none] (13) at (6, 2) {};
							\node [style=none] (14) at (8, 2) {};
							\node [style=none] (15) at (0, 0) {$=$};
							\node [style=bn] (16) at (-3.5, -0.5) {};
							\node [style=none] (17) at (-3.5, -2) {};
							\node [style=none] (18) at (-5, 1) {};
							\node [style=none] (19) at (-2, 1) {};
							\node [style=none] (20) at (-2, 2.5) {$X\otimes Y$};
							\node [style=none] (21) at (-3.5, -2.5) {$X\otimes Y$};
							\node [style=none] (22) at (-5, 2) {};
							\node [style=none] (23) at (-2, 2) {};
							\node [style=none] (24) at (2, 2.5) {$X$};
							\node [style=none] (25) at (4, 2.5) {$Y$};
							\node [style=none] (26) at (6, 2.5) {$X$};
							\node [style=none] (27) at (8, 2.5) {$Y$};
							\node [style=none] (28) at (3, -2.5) {$X$};
							\node [style=none] (29) at (7, -2.5) {$Y$};
						\end{pgfonlayer}
						\begin{pgfonlayer}{edgelayer}
							\draw [style=none] (1.center) to (2);
							\draw [style=none, bend left=45] (2) to (5.center);
							\draw [style=none, bend right=45] (2) to (4.center);
							\draw [style=none] (6.center) to (7);
							\draw [style=none, bend left=45] (7) to (10.center);
							\draw [style=none, bend right=45] (7) to (9.center);
							\draw [style=none] (11.center) to (5.center);
							\draw [style=none] (8.center) to (14.center);
							\draw [style=none, in=-90, out=90] (3.center) to (13.center);
							\draw (17.center) to (16);
							\draw [bend right=315] (16) to (18.center);
							\draw [bend right=45] (16) to (19.center);
							\draw (23.center) to (19.center);
							\draw (18.center) to (22.center);
							\draw [style=protected, in=90, out=-90] (12.center) to (10.center);
						\end{pgfonlayer}
					\end{tikzpicture}
				}%
			\end{equation}
	\end{enumerate}
\end{definition}

It follows that the monoidal unit $I$ is terminal, meaning that $\cC$ is a \newterm{semicartesian} monoidal category \cite[Remark 2.3]{fritz2019synthetic}.

\begin{definition}
	Let $\cC$ be a Markov category. 
	A \newterm{Markov subcategory} $\cD \subseteq \cC$ is a symmetric monoidal subcategory that is a Markov category with respect to the same copy and discard morphisms as $\cC$.
\end{definition}

\begin{example}\label{ex:examples}
	Here we list some important examples of Markov categories.
	More details can be found in \cite{fritz2019synthetic}.
	\begin{enumerate}
		\item $\finstoch$, the category with finite sets as objects and stochastic matrices as morphisms.
			Its variant $\finstoch_\pm$ is defined the same way, but allows negative entries in the matrices.

		\item $\stoch$, the category of all measurable spaces with measurable Markov kernels.
		\item $\borelstoch$, the category of standard Borel measurable spaces with measurable Markov kernels.

		\item $\setmulti$, the category of sets and multivalued functions, where a morphism $X \to Y$ is a function from $X$ to the power set $2^Y$ such that each $f(x)$ is non-empty.
			We write $\finsetmulti$ for the full subcategory whose objects are finite sets.

		\item\label{it:monad_examples} For a cartesian category $\cD$ and an affine symmetric monoidal monad $P$ on $\cD$, the associated Kleisli category $\Kl(P)$ is a Markov category. 

			For instance, the Giry monad on the category of measurable spaces $\meas$ gives rise to $\stoch$, the Giry monad on the category of standard Borel spaces $\borelmeas$ to $\borelstoch$, and the non-empty power set monad on $\set$ produces $\setmulti$. 
			See \cite[Section 3]{fritz2023representable} for more details.
	\end{enumerate}
\end{example} 

\begin{example}\label{ex:continuous_kernels}
	Expanding on \ref{it:monad_examples}, another example of an affine symmetric monoidal monad on a cartesian category is the probability monad on $\cat{Top}$.
	It is given by taking $\tau$-smooth Borel measures on topological spaces \cite[Section 4]{fritz2019support}. 
	We denote the resulting Kleisli category by $\cat{TopStoch}$, which is equivalently the category of topological spaces and \emph{continuous} Markov kernels, where the continuity is with respect to the \emph{$A$-topology} on measures.
	In particular, a Markov kernel $p \colon A \to X$ is continuous if and only if the map $a \mapsto p(U|a)$ is lower semi-continuous for every open $U \subseteq X$.

	The resulting monad can be restricted to Tychonoff (meaning $T_{3\nicefrac{1}{2}}$) spaces, which include all metric spaces and compact Hausdorff spaces.
	In this case, the topology on each space of probability measures can be equivalently taken to be the weak topology induced by the integration maps against continuous functions \cite{banakh}. 
	We call the resulting Kleisli category $\tychstoch$.

	If we restrict the monad further to compact Hausdorff spaces, we obtain the so-called \emph{Radon monad} \cite{swirszcz}.
	Its Kleisli category, which we denote by $\chausstoch$, can equivalently be seen as the opposite of the category of commutative unital C*-algebras and positive unital maps, a generalization of the usual Gelfand duality to the stochastic case (see \cite{furber_jacobs_gelfand} and \cite{parzygnat_gelfand} for details).
\end{example}

Another central definition is the one of deterministic morphism.

\begin{definition}
	A morphism $f \colon X\to Y$ is \newterm{deterministic} if it satisfies
	\begin{equation}
		{%
			\tikzstyle{every picture}=[tikzfig]%
			\begin{tikzpicture}
				\begin{pgfonlayer}{nodelayer}
					\node [style=bn] (0) at (-3, 0.5) {};
					\node [style=bn] (1) at (3, -0.5) {};
					\node [style=morphism] (2) at (-3, -0.75) {$f$};
					\node [style=morphism] (3) at (2, 1) {$f$};
					\node [style=morphism] (4) at (4, 1) {$f$};
					\node [style=none] (5) at (-4, 1.5) {};
					\node [style=none] (6) at (-4, 2) {};
					\node [style=none] (7) at (-2, 2) {};
					\node [style=none] (8) at (-2, 1.5) {};
					\node [style=none] (9) at (-3, -2) {};
					\node [style=none] (10) at (3, -2) {};
					\node [style=none] (11) at (2, 2) {};
					\node [style=none] (12) at (4, 2) {};
					\node [style=none] (13) at (0, 0) {$=$};
					\node [style=none] (16) at (-3, -2.75) {$X$};
					\node [style=none] (17) at (3, -2.75) {$X$};
					\node [style=none] (18) at (-4, 2.75) {$Y$};
					\node [style=none] (19) at (-2, 2.75) {$Y$};
					\node [style=none] (20) at (2, 2.75) {$Y$};
					\node [style=none] (21) at (4, 2.75) {$Y$};
				\end{pgfonlayer}
				\begin{pgfonlayer}{edgelayer}
					\draw (11.center) to (3);
					\draw [in=165, out=-90] (3) to (1);
					\draw (12.center) to (4);
					\draw [in=15, out=-90] (4) to (1);
					\draw (1) to (10.center);
					\draw (9.center) to (0);
					\draw [in=-90, out=15] (0) to (8.center);
					\draw (8.center) to (7.center);
					\draw (6.center) to (5.center);
					\draw [in=165, out=-90] (5.center) to (0);
				\end{pgfonlayer}
			\end{tikzpicture}
		}%
	\end{equation}
\end{definition}
The deterministic morphisms form a cartesian monoidal subcategory $\cC_\det \subseteq \cC$ \cite[Remark 10.13]{fritz2019synthetic}.

\begin{example}
	The deterministic morphisms in $\tychstoch$ are precisely the ones coming from continuous functions.
	They are continuous Markov kernels taking values in $\{0,1\}$-valued $\tau$-smooth probability measures, and these correspond to the points by sobriety (see \cite[Appendix~A]{moss2022ergodic} and \cite[Section~6]{moss2022probability} for additional context).
\end{example}

Throughout this work, we use several properties of Markov categories that have been studied extensively in other works and so we do not go into much detail here.
Specifically, these are \newterm{positivity} \cite[Definition 11.22]{fritz2019synthetic}, \newterm{representability} \cite[Definition 3.10]{fritz2023representable} and \newterm{causality} \cite[Definition 11.31]{fritz2019synthetic}.
The causality axiom states that
\begin{equation}\label{eq:causality}
	{%
		\tikzstyle{every picture}=[tikzfig]%
		\begin{tikzpicture}
			\begin{pgfonlayer}{nodelayer}
				\node [style=none] (251) at (-10.5, -0.25) {};
				\node [style=none] (253) at (-10.5, 1.5) {};
				\node [style=none] (256) at (-10.5, -1.5) {};
				\node [style=none] (258) at (-8, 0) {$\ase{gf}$};
				\node [style=morphism] (260) at (-10.5, 0) {$ h_1 $};
				\node [style=none] (321) at (-10.5, -2) {$Y$};
				\node [style=none] (323) at (-10.5, 2) {$Z$};
				\node [style=none] (324) at (-5.5, -0.25) {};
				\node [style=none] (325) at (-5.5, 1.5) {};
				\node [style=none] (326) at (-5.5, -1.5) {};
				\node [style=morphism] (327) at (-5.5, 0) {$ h_2 $};
				\node [style=none] (330) at (-5.5, -2) {$Y$};
				\node [style=none] (331) at (0, 0) {$\implies$};
				\node [style=none] (332) at (7.75, 0) {$\ase{f}$};
				\node [style=none] (333) at (3.75, 1) {};
				\node [style=none] (334) at (5.75, 1) {};
				\node [style=none] (335) at (3.75, 2.25) {};
				\node [style=none] (336) at (3.75, 2.75) {$Z$};
				\node [style=bn] (337) at (4.75, -0.25) {};
				\node [style=morphism] (338) at (3.75, 1.25) {$ h_1 $};
				\node [style=none] (339) at (5.75, 2.25) {};
				\node [style=morphism] (340) at (4.75, -1.25) {$g$};
				\node [style=none] (341) at (4.75, -2.25) {};
				\node [style=none] (342) at (4.75, -2.75) {$X$};
				\node [style=none] (343) at (9.75, 1) {};
				\node [style=none] (344) at (11.75, 1) {};
				\node [style=none] (345) at (9.75, 2.25) {};
				\node [style=bn] (346) at (10.75, -0.25) {};
				\node [style=morphism] (347) at (9.75, 1.25) {$ h_2 $};
				\node [style=none] (348) at (11.75, 2.25) {};
				\node [style=morphism] (349) at (10.75, -1.25) {$g$};
				\node [style=none] (350) at (10.75, -2.25) {};
				\node [style=none] (351) at (10.75, -2.75) {$X$};
				\node [style=none] (352) at (5.75, 2.75) {$Y$};
				\node [style=none] (353) at (11.75, 2.75) {$Y$};
				\node [style=none] (354) at (9.75, 2.75) {$Z$};
				\node [style=none] (355) at (-5.5, 2) {$Z$};
			\end{pgfonlayer}
			\begin{pgfonlayer}{edgelayer}
				\draw (260) to (253.center);
				\draw (327) to (325.center);
				\draw (256.center) to (251.center);
				\draw (326.center) to (324.center);
				\draw [in=-90, out=165] (337) to (333.center);
				\draw [in=-90, out=15] (337) to (334.center);
				\draw (338) to (335.center);
				\draw (334.center) to (339.center);
				\draw (340) to (337);
				\draw (341.center) to (340);
				\draw [in=-90, out=165] (346) to (343.center);
				\draw [in=-90, out=15] (346) to (344.center);
				\draw (347) to (345.center);
				\draw (344.center) to (348.center);
				\draw (349) to (346);
				\draw (350.center) to (349);
			\end{pgfonlayer}
		\end{tikzpicture}
	}%
\end{equation}
holds for any morphisms $f \colon A \to X$, $g \colon X \to Y$ and $h_1, h_2 \colon Y \to Z$, where we use $\as{}$ equality (\cref{def:as_eq}). 
If $h_1, h_2 \colon W \otimes Y \to Z$ with an extra input $W$ instead, then in a causal Markov category, we also have
\begin{equation}\label{eq:causality2}
	{%
		\tikzstyle{every picture}=[tikzfig]%
		\begin{tikzpicture}
			\begin{pgfonlayer}{nodelayer}
				\node [style=none] (251) at (-10.5, -0.25) {};
				\node [style=none] (256) at (-10.5, -1.5) {};
				\node [style=none] (258) at (-8, 0) {$\ase{gf}$};
				\node [style=none] (321) at (-10.5, -2) {$Y$};
				\node [style=none] (331) at (0, 0) {$\implies$};
				\node [style=none] (332) at (8.25, 0) {$\ase{f}$};
				\node [style=none] (333) at (4.25, 1) {};
				\node [style=none] (334) at (6.25, 1) {};
				\node [style=none] (335) at (3.75, 2.25) {};
				\node [style=none] (336) at (3.75, 2.75) {$Z$};
				\node [style=bn] (337) at (5.25, -0.25) {};
				\node [style=none] (339) at (6.25, 2.25) {};
				\node [style=morphism] (340) at (5.25, -1.25) {$g$};
				\node [style=none] (341) at (5.25, -2.25) {};
				\node [style=none] (342) at (5.25, -2.75) {$X$};
				\node [style=none] (352) at (6.25, 2.75) {$Y$};
				\node [style=none] (356) at (-11.5, -0.25) {};
				\node [style=none] (357) at (-11.5, -1.5) {};
				\node [style=none] (358) at (-11.5, -2) {$W$};
				\node [style=none] (359) at (-11, 1.5) {};
				\node [style=morphism] (360) at (-11, 0) {$\;\; h_1 \;\;$};
				\node [style=none] (361) at (-11, 2) {$Z$};
				\node [style=none] (362) at (-5, -0.25) {};
				\node [style=none] (363) at (-5, -1.5) {};
				\node [style=none] (364) at (-5, -2) {$Y$};
				\node [style=none] (365) at (-6, -0.25) {};
				\node [style=none] (366) at (-6, -1.5) {};
				\node [style=none] (367) at (-6, -2) {$W$};
				\node [style=none] (368) at (-5.5, 1.5) {};
				\node [style=morphism] (369) at (-5.5, 0) {$\;\; h_2 \;\;$};
				\node [style=none] (370) at (-5.5, 2) {$Z$};
				\node [style=morphism] (371) at (3.75, 1.25) {$\;\; h_1 \;\;$};
				\node [style=none] (372) at (3.25, 1) {};
				\node [style=none] (373) at (3.25, -2.25) {};
				\node [style=none] (374) at (3.25, -2.75) {$W$};
				\node [style=none] (375) at (11.25, 1) {};
				\node [style=none] (376) at (13.25, 1) {};
				\node [style=none] (377) at (10.75, 2.25) {};
				\node [style=none] (378) at (10.75, 2.75) {$Z$};
				\node [style=bn] (379) at (12.25, -0.25) {};
				\node [style=none] (380) at (13.25, 2.25) {};
				\node [style=morphism] (381) at (12.25, -1.25) {$g$};
				\node [style=none] (382) at (12.25, -2.25) {};
				\node [style=none] (383) at (12.25, -2.75) {$X$};
				\node [style=none] (384) at (13.25, 2.75) {$Y$};
				\node [style=morphism] (385) at (10.75, 1.25) {$\;\; h_2 \;\;$};
				\node [style=none] (386) at (10.25, 1) {};
				\node [style=none] (387) at (10.25, -2.25) {};
				\node [style=none] (388) at (10.25, -2.75) {$W$};
			\end{pgfonlayer}
			\begin{pgfonlayer}{edgelayer}
				\draw (256.center) to (251.center);
				\draw [in=-90, out=165] (337) to (333.center);
				\draw [in=-90, out=15] (337) to (334.center);
				\draw (334.center) to (339.center);
				\draw (340) to (337);
				\draw (341.center) to (340);
				\draw (357.center) to (356.center);
				\draw (360) to (359.center);
				\draw (363.center) to (362.center);
				\draw (366.center) to (365.center);
				\draw (369) to (368.center);
				\draw (371) to (335.center);
				\draw (373.center) to (372.center);
				\draw [in=-90, out=165] (379) to (375.center);
				\draw [in=-90, out=15] (379) to (376.center);
				\draw (376.center) to (380.center);
				\draw (381) to (379);
				\draw (382.center) to (381);
				\draw (385) to (377.center);
				\draw (387.center) to (386.center);
			\end{pgfonlayer}
		\end{tikzpicture}
	}%
\end{equation}
Indeed by \cref{lem:as_eq_new}, this is an instance of causality with $\id_W \otimes f$ and $\id_W\otimes g$ in place of $f$ and $g$.
By expanding the $\as{}$ equalities, this implication can be expressed as 
\begin{equation}\label{eq:causal_extrawire}
	{%
		\tikzstyle{every picture}=[tikzfig]%
		\begin{tikzpicture}
			\begin{pgfonlayer}{nodelayer}
				\node [style=none] (17) at (-7, 0) {$=$};
				\node [style=none] (35) at (-10.25, 1.75) {};
				\node [style=none] (36) at (-8.75, 1.75) {};
				\node [style=none] (37) at (-10.75, 3) {};
				\node [style=none] (38) at (-8.75, 3.5) {$Y$};
				\node [style=bn] (39) at (-9.5, 0.5) {};
				\node [style=none] (40) at (-9.5, -3.5) {};
				\node [style=none] (41) at (-9.5, -4) {$A$};
				\node [style=morphism] (43) at (-10.75, 2) {$\;\; h_1 \;\;$};
				\node [style=none] (44) at (-8.75, 3) {};
				\node [style=morphism] (45) at (-9.5, -2) {$f$};
				\node [style=none] (46) at (-11.25, -3.5) {};
				\node [style=none] (47) at (-11.25, -4) {$W$};
				\node [style=none] (48) at (-11.25, 1.75) {};
				\node [style=none] (49) at (-11.25, 0) {};
				\node [style=morphism] (51) at (-9.5, -0.5) {$g$};
				\node [style=none] (52) at (-4.25, 1.75) {};
				\node [style=none] (53) at (-2.75, 1.75) {};
				\node [style=none] (54) at (-4.75, 3) {};
				\node [style=none] (55) at (-2.75, 3.5) {$Y$};
				\node [style=bn] (56) at (-3.5, 0.5) {};
				\node [style=none] (57) at (-3.5, -3.5) {};
				\node [style=none] (58) at (-3.5, -4) {$A$};
				\node [style=morphism] (60) at (-4.75, 2) {$\;\; h_2 \;\;$};
				\node [style=none] (61) at (-2.75, 3) {};
				\node [style=morphism] (62) at (-3.5, -2) {$f$};
				\node [style=none] (63) at (-5.25, -3.5) {};
				\node [style=none] (64) at (-5.25, -4) {$W$};
				\node [style=none] (65) at (-5.25, 1.75) {};
				\node [style=none] (66) at (-5.25, 0) {};
				\node [style=morphism] (68) at (-3.5, -0.5) {$g$};
				\node [style=none] (69) at (8, 0) {$=$};
				\node [style=none] (70) at (3.75, 2) {};
				\node [style=none] (71) at (5.25, 2) {};
				\node [style=none] (72) at (3.25, 3.25) {};
				\node [style=none] (73) at (5.25, 3.75) {$Y$};
				\node [style=bn] (74) at (4.5, 0.75) {};
				\node [style=none] (75) at (5.25, -3.5) {};
				\node [style=none] (76) at (5.25, -4) {$A$};
				\node [style=none] (77) at (6.25, 3.75) {$X$};
				\node [style=morphism] (78) at (3.25, 2.25) {$\;\; h_1 \;\;$};
				\node [style=none] (79) at (5.25, 3.25) {};
				\node [style=morphism] (80) at (5.25, -2.5) {$f$};
				\node [style=none] (81) at (2.75, -0.25) {};
				\node [style=none] (82) at (2.75, -4) {$W$};
				\node [style=none] (83) at (2.75, 1.75) {};
				\node [style=none] (84) at (2.75, 0.25) {};
				\node [style=none] (85) at (2.75, 2) {};
				\node [style=morphism] (86) at (4.5, -0.25) {$g$};
				\node [style=bn] (87) at (5.25, -1.5) {};
				\node [style=none] (88) at (6.25, 0) {};
				\node [style=none] (89) at (6.25, 3.25) {};
				\node [style=none] (91) at (2.75, -3.5) {};
				\node [style=none] (92) at (2.75, -1.75) {};
				\node [style=none] (93) at (4.5, -0.5) {};
				\node [style=none] (118) at (0, 0) {$\implies$};
				\node [style=none] (119) at (10.75, 2) {};
				\node [style=none] (120) at (12.25, 2) {};
				\node [style=none] (121) at (10.25, 3.25) {};
				\node [style=none] (122) at (12.25, 3.75) {$Y$};
				\node [style=bn] (123) at (11.5, 0.75) {};
				\node [style=none] (124) at (12.25, -3.5) {};
				\node [style=none] (125) at (12.25, -4) {$A$};
				\node [style=none] (126) at (13.25, 3.75) {$X$};
				\node [style=morphism] (127) at (10.25, 2.25) {$\;\; h_2 \;\;$};
				\node [style=none] (128) at (12.25, 3.25) {};
				\node [style=morphism] (129) at (12.25, -2.5) {$f$};
				\node [style=none] (130) at (9.75, -0.25) {};
				\node [style=none] (131) at (9.75, -4) {$W$};
				\node [style=none] (132) at (9.75, 1.75) {};
				\node [style=none] (133) at (9.75, 0.25) {};
				\node [style=none] (134) at (9.75, 2) {};
				\node [style=morphism] (135) at (11.5, -0.25) {$g$};
				\node [style=bn] (136) at (12.25, -1.5) {};
				\node [style=none] (137) at (13.25, 0) {};
				\node [style=none] (138) at (13.25, 3.25) {};
				\node [style=none] (140) at (9.75, -3.5) {};
				\node [style=none] (141) at (9.75, -1.75) {};
				\node [style=none] (142) at (11.5, -0.5) {};
				\node [style=none] (143) at (-10.75, 3.5) {$Z$};
				\node [style=none] (144) at (-4.75, 3.5) {$Z$};
				\node [style=none] (145) at (3.25, 3.75) {$Z$};
				\node [style=none] (146) at (10.25, 3.75) {$Z$};
			\end{pgfonlayer}
			\begin{pgfonlayer}{edgelayer}
				\draw [in=-90, out=165] (39) to (35.center);
				\draw [in=-90, out=15] (39) to (36.center);
				\draw (43) to (37.center);
				\draw (36.center) to (44.center);
				\draw (40.center) to (45);
				\draw [style=protected] (46.center) to (49.center);
				\draw [style=protected, in=-90, out=90, looseness=1.25] (49.center) to (48.center);
				\draw (51) to (39);
				\draw (45) to (51);
				\draw [in=-90, out=165] (56) to (52.center);
				\draw [in=-90, out=15] (56) to (53.center);
				\draw (60) to (54.center);
				\draw (53.center) to (61.center);
				\draw (57.center) to (62);
				\draw [style=protected] (63.center) to (66.center);
				\draw [style=protected, in=-90, out=90, looseness=1.25] (66.center) to (65.center);
				\draw (68) to (56);
				\draw (62) to (68);
				\draw [in=-90, out=165] (74) to (70.center);
				\draw [in=-90, out=15] (74) to (71.center);
				\draw (78) to (72.center);
				\draw (71.center) to (79.center);
				\draw (75.center) to (80);
				\draw [style=protected] (81.center) to (84.center);
				\draw [style=protected, in=-90, out=90] (84.center) to (83.center);
				\draw [style=protected] (83.center) to (85.center);
				\draw (86) to (74);
				\draw (80) to (87);
				\draw [style=protected] (88.center) to (89.center);
				\draw [style=protected, in=-90, out=15] (87) to (88.center);
				\draw [style=protected, in=-90, out=90, looseness=0.75] (92.center) to (81.center);
				\draw [style=protected] (91.center) to (92.center);
				\draw [style=protected, in=-90, out=165] (87) to (93.center);
				\draw [in=-90, out=165] (123) to (119.center);
				\draw [in=-90, out=15] (123) to (120.center);
				\draw (127) to (121.center);
				\draw (120.center) to (128.center);
				\draw (124.center) to (129);
				\draw [style=protected] (130.center) to (133.center);
				\draw [style=protected, in=-90, out=90] (133.center) to (132.center);
				\draw [style=protected] (132.center) to (134.center);
				\draw (135) to (123);
				\draw (129) to (136);
				\draw [style=protected] (137.center) to (138.center);
				\draw [style=protected, in=-90, out=15] (136) to (137.center);
				\draw [style=protected, in=-90, out=90, looseness=0.75] (141.center) to (130.center);
				\draw [style=protected] (140.center) to (141.center);
				\draw [style=protected, in=-90, out=165] (136) to (142.center);
			\end{pgfonlayer}
		\end{tikzpicture}
	}%
\end{equation}

We also use a terminology for functors that play nicely with the Markov category structure.

\begin{definition}[{\cite[Definition~10.14]{fritz2019synthetic}}]
	\label{def:markov_functor}
	Let $\cC$ and $\cD$ be Markov categories. Then a \newterm{Markov functor} $F \colon \cC \to \cD$ is a strong symmetric monoidal functor making the diagram
	\begin{equation}
		\begin{tikzcd}
						& FX \ar[swap]{dl}{\cop_{FX}} \ar{dr}{F(\cop_X)} \\
			FX \otimes_{\cD} FX \ar{rr}{\cong} && F(X \otimes_{\cC} X)
		\end{tikzcd}
	\end{equation}
	commute for every $X \in \cC$, where the horizontal arrow is the coherence isomorphism of $F$.
	A \newterm{strict Markov functor} is a Markov functor that is strict as a monoidal functor.
\end{definition}

Hence a strict Markov functor preserves the copy morphisms on the nose, while a general Markov functor need not.
Note also that the preservation of the discard morphisms is automatic as the monoidal units are terminal.

\subsection{Some auxiliary results on almost sure equality}
\label{sec:preliminaries_aseq}

Here, we prove several basic properties of almost sure equality as defined in \cref{def:as_eq}. 
One should keep in mind that this definition applies more broadly than the one from previous papers \cite[Definition 13.1]{fritz2019synthetic} due to the extra input wire.

\begin{proposition}\label{prop:ase_props}
	With $p$, $f$ and $g$ as in \cref{def:as_eq}, we have the following properties of almost sure equality:
	\begin{enumerate}[label=(\roman*)]
		\item \label{it:ase_post_comp} If $f \ase{p} g$ holds, then also $h \comp f \ase{p} h\comp g$ holds for any $h$, i.e.~we have
			\begin{equation}
				{%
					\tikzstyle{every picture}=[tikzfig]%
					\begin{tikzpicture}
						\begin{pgfonlayer}{nodelayer}
							\node [style=none] (251) at (-10.5, 1) {};
							\node [style=none] (252) at (-9, 1) {};
							\node [style=none] (253) at (-11, 3) {};
							\node [style=bn] (255) at (-9.75, 0) {};
							\node [style=none] (256) at (-9.75, -2.5) {};
							\node [style=none] (258) at (-7.5, 0) {$=$};
							\node [style=morphism] (260) at (-11, 1.5) {$\,\;\; f \,\;\;$};
							\node [style=none] (261) at (-9, 3) {};
							\node [style=morphism] (262) at (-9.75, -1.25) {$p$};
							\node [style=none] (263) at (-11.5, -2.5) {};
							\node [style=none] (265) at (-11.5, 1.5) {};
							\node [style=none] (266) at (-11.5, -1) {};
							\node [style=none] (267) at (-5, 1) {};
							\node [style=none] (268) at (-3.5, 1) {};
							\node [style=none] (269) at (-5.5, 3) {};
							\node [style=bn] (271) at (-4.25, 0) {};
							\node [style=none] (272) at (-4.25, -2.5) {};
							\node [style=morphism] (275) at (-5.5, 1.5) {$\,\;\; g \,\;\;$};
							\node [style=none] (276) at (-3.5, 3) {};
							\node [style=morphism] (277) at (-4.25, -1.25) {$p$};
							\node [style=none] (278) at (-6, -2.5) {};
							\node [style=none] (280) at (-6, 1.5) {};
							\node [style=none] (281) at (-6, -1) {};
							\node [style=none] (284) at (-10.5, 1.5) {};
							\node [style=none] (285) at (-5, 1.5) {};
							\node [style=none] (286) at (4.5, 0.5) {};
							\node [style=none] (287) at (6, 0.5) {};
							\node [style=bn] (289) at (5.25, -0.5) {};
							\node [style=none] (290) at (5.25, -2.5) {};
							\node [style=none] (291) at (7.5, 0) {$=$};
							\node [style=morphism] (292) at (4, 0.75) {$\,\;\; f \,\;\;$};
							\node [style=none] (293) at (6, 3) {};
							\node [style=morphism] (294) at (5.25, -1.5) {$p$};
							\node [style=none] (295) at (3.5, -2.5) {};
							\node [style=none] (296) at (3.5, 0.75) {};
							\node [style=none] (297) at (3.5, -1.25) {};
							\node [style=none] (298) at (10, 0.5) {};
							\node [style=none] (299) at (11.5, 0.5) {};
							\node [style=bn] (301) at (10.75, -0.5) {};
							\node [style=none] (302) at (10.75, -2.5) {};
							\node [style=morphism] (303) at (9.5, 0.75) {$\,\;\; g \;\;\,$};
							\node [style=none] (304) at (11.5, 3) {};
							\node [style=morphism] (305) at (10.75, -1.5) {$p$};
							\node [style=none] (306) at (9, -2.5) {};
							\node [style=none] (307) at (9, 0.75) {};
							\node [style=none] (308) at (9, -1.25) {};
							\node [style=none] (309) at (4.5, 0.75) {};
							\node [style=none] (310) at (10, 0.75) {};
							\node [style=none] (311) at (0, 0) {$\implies$};
							\node [style=morphism] (312) at (4, 2) {$h$};
							\node [style=morphism] (313) at (9.5, 2) {$h$};
							\node [style=none] (314) at (4, 3) {};
							\node [style=none] (315) at (9.5, 3) {};
							\node [style=none] (316) at (-11, 3.5) {$Y$};
							\node [style=none] (317) at (-9, 3.5) {$X$};
							\node [style=none] (318) at (-11.5, -3) {$W$};
							\node [style=none] (319) at (-9.75, -3) {$A$};
							\node [style=none] (320) at (-5.5, 3.5) {$Y$};
							\node [style=none] (321) at (-3.5, 3.5) {$X$};
							\node [style=none] (322) at (-6, -3) {$W$};
							\node [style=none] (323) at (-4.25, -3) {$A$};
							\node [style=none] (324) at (4, 3.5) {$Y$};
							\node [style=none] (325) at (6, 3.5) {$X$};
							\node [style=none] (326) at (3.5, -3) {$W$};
							\node [style=none] (327) at (5.25, -3) {$A$};
							\node [style=none] (328) at (9.5, 3.5) {$Y$};
							\node [style=none] (329) at (11.5, 3.5) {$X$};
							\node [style=none] (330) at (9, -3) {$W$};
							\node [style=none] (331) at (10.75, -3) {$A$};
						\end{pgfonlayer}
						\begin{pgfonlayer}{edgelayer}
							\draw [in=-90, out=165] (255) to (251.center);
							\draw [in=-90, out=15] (255) to (252.center);
							\draw (260) to (253.center);
							\draw (252.center) to (261.center);
							\draw (256.center) to (262);
							\draw [style=protected] (263.center) to (266.center);
							\draw [style=protected, in=-90, out=90, looseness=1.25] (266.center) to (265.center);
							\draw [in=270, out=90] (262) to (255);
							\draw [in=-90, out=165] (271) to (267.center);
							\draw [in=-90, out=15] (271) to (268.center);
							\draw (275) to (269.center);
							\draw (268.center) to (276.center);
							\draw (272.center) to (277);
							\draw [style=protected] (278.center) to (281.center);
							\draw [style=protected, in=-90, out=90, looseness=1.25] (281.center) to (280.center);
							\draw (277) to (271);
							\draw (251.center) to (284.center);
							\draw (267.center) to (285.center);
							\draw [in=-90, out=165] (289) to (286.center);
							\draw [in=-90, out=15] (289) to (287.center);
							\draw (287.center) to (293.center);
							\draw (290.center) to (294);
							\draw [style=protected] (295.center) to (297.center);
							\draw [style=protected, in=-90, out=90, looseness=1.25] (297.center) to (296.center);
							\draw [in=270, out=90] (294) to (289);
							\draw [in=-90, out=165] (301) to (298.center);
							\draw [in=-90, out=15] (301) to (299.center);
							\draw (299.center) to (304.center);
							\draw (302.center) to (305);
							\draw [style=protected] (306.center) to (308.center);
							\draw [style=protected, in=-90, out=90, looseness=1.25] (308.center) to (307.center);
							\draw (305) to (301);
							\draw (286.center) to (309.center);
							\draw (298.center) to (310.center);
							\draw (292) to (312);
							\draw (303) to (313);
							\draw (313) to (315.center);
							\draw (312) to (314.center);
						\end{pgfonlayer}
					\end{tikzpicture}
				}%
			\end{equation}

		\item \label{it:ase_post_comp_2} If $h\comp f \ase{p} k\comp f$ and $f \ase{p} g$ hold, then also $h\comp g \ase{p} k\comp g$ holds.

		\item \label{it:ase_tensor} If $f \ase{p} f'$ and $g \ase{q} g'$ hold, then also $f \otimes g \ase{p \otimes q} f' \otimes g'$ holds.

		\item \label{it:ase_copy} $f \ase{p} g$ holds for $p \colon A \to X$ if and only if 
			\begin{equation}
				{%
					\tikzstyle{every picture}=[tikzfig]%
					\begin{tikzpicture}
						\begin{pgfonlayer}{nodelayer}
							\node [style=none] (251) at (-4, 0.25) {};
							\node [style=none] (252) at (-2, 0.25) {};
							\node [style=none] (253) at (-4.5, 1.5) {};
							\node [style=bn] (255) at (-3, -1) {};
							\node [style=none] (256) at (-3, -2) {};
							\node [style=none] (258) at (0, 0) {$\ase{p}$};
							\node [style=morphism] (260) at (-4.5, 0.5) {$\,\;\; f \,\;\;$};
							\node [style=none] (261) at (-2, 1.5) {};
							\node [style=none] (265) at (-5, 0.25) {};
							\node [style=none] (284) at (-4, 0.5) {};
							\node [style=none] (286) at (-3, 0.25) {};
							\node [style=bn] (287) at (-4, -1) {};
							\node [style=none] (288) at (-4, -2) {};
							\node [style=none] (289) at (-3, 1.5) {};
							\node [style=none] (290) at (-5, 0.5) {};
							\node [style=none] (291) at (3, 0.25) {};
							\node [style=none] (292) at (5, 0.25) {};
							\node [style=none] (293) at (2.5, 1.5) {};
							\node [style=bn] (294) at (4, -1) {};
							\node [style=none] (295) at (4, -2) {};
							\node [style=morphism] (296) at (2.5, 0.5) {$\,\;\; g \,\;\;$};
							\node [style=none] (297) at (5, 1.5) {};
							\node [style=none] (298) at (2, 0.25) {};
							\node [style=none] (299) at (3, 0.5) {};
							\node [style=none] (300) at (4, 0.25) {};
							\node [style=bn] (301) at (3, -1) {};
							\node [style=none] (302) at (3, -2) {};
							\node [style=none] (303) at (4, 1.5) {};
							\node [style=none] (304) at (2, 0.5) {};
							\node [style=none] (305) at (-4, -2.5) {$W$};
							\node [style=none] (306) at (-3, -2.5) {$X$};
							\node [style=none] (307) at (-4.5, 2) {$Y$};
							\node [style=none] (308) at (-3, 2) {$W$};
							\node [style=none] (309) at (-2, 2) {$X$};
							\node [style=none] (310) at (3, -2.5) {$W$};
							\node [style=none] (311) at (4, -2.5) {$X$};
							\node [style=none] (312) at (2.5, 2) {$Y$};
							\node [style=none] (313) at (4, 2) {$W$};
							\node [style=none] (314) at (5, 2) {$X$};
						\end{pgfonlayer}
						\begin{pgfonlayer}{edgelayer}
							\draw [in=-90, out=165] (255) to (251.center);
							\draw [in=-90, out=15] (255) to (252.center);
							\draw (260) to (253.center);
							\draw (252.center) to (261.center);
							\draw (251.center) to (284.center);
							\draw (256.center) to (255);
							\draw (286.center) to (289.center);
							\draw (288.center) to (287);
							\draw [in=-90, out=165] (287) to (265.center);
							\draw (265.center) to (290.center);
							\draw [style=protected, in=-90, out=15] (287) to (286.center);
							\draw [in=-90, out=165] (294) to (291.center);
							\draw [in=-90, out=15] (294) to (292.center);
							\draw (296) to (293.center);
							\draw (292.center) to (297.center);
							\draw (291.center) to (299.center);
							\draw (295.center) to (294);
							\draw (300.center) to (303.center);
							\draw (302.center) to (301);
							\draw [in=-90, out=165] (301) to (298.center);
							\draw (298.center) to (304.center);
							\draw [style=protected, in=-90, out=15] (301) to (300.center);
						\end{pgfonlayer}
					\end{tikzpicture}
				}%
			\end{equation}
			does.

		\item \label{it:ase_deterministic}
			If $p$ is deterministic, then $f \ase{p} g$ is equivalent to
			\begin{equation}
				{%
					\tikzstyle{every picture}=[tikzfig]%
					\begin{tikzpicture}
						\begin{pgfonlayer}{nodelayer}
							\node [style=morphism] (0) at (-2.5, 0.75) {$\;\; f \;\;$};
							\node [style=morphism] (1) at (-2, -0.75) {$p$};
							\node [style=none] (2) at (-2.5, 1.75) {};
							\node [style=none] (3) at (-2.5, 2.25) {$Y$};
							\node [style=none] (4) at (-3, -1.75) {};
							\node [style=none] (5) at (-3, -2.25) {$W$};
							\node [style=none] (6) at (-2, -1.75) {};
							\node [style=none] (7) at (-2, -2.25) {$A$};
							\node [style=none] (8) at (-3, 0.75) {};
							\node [style=none] (9) at (-2, 0.75) {};
							\node [style=none] (10) at (0, 0) {$=$};
							\node [style=morphism] (11) at (2.5, 0.75) {$\;\; g \;\;$};
							\node [style=morphism] (12) at (3, -0.75) {$p$};
							\node [style=none] (13) at (2.5, 1.75) {};
							\node [style=none] (14) at (2.5, 2.25) {$Y$};
							\node [style=none] (15) at (2, -1.75) {};
							\node [style=none] (16) at (2, -2.25) {$W$};
							\node [style=none] (17) at (3, -1.75) {};
							\node [style=none] (18) at (3, -2.25) {$A$};
							\node [style=none] (19) at (2, 0.75) {};
							\node [style=none] (20) at (3, 0.75) {};
						\end{pgfonlayer}
						\begin{pgfonlayer}{edgelayer}
							\draw (8.center) to (4.center);
							\draw (9.center) to (6.center);
							\draw (2.center) to (0);
							\draw (19.center) to (15.center);
							\draw (20.center) to (17.center);
							\draw (13.center) to (11);
						\end{pgfonlayer}
					\end{tikzpicture}
				}%
			\end{equation}
	\end{enumerate}
\end{proposition}

\begin{proof}
	\begin{itemize}
		\item[\ref{it:ase_post_comp}:] This follows directly from the definition.

		\item[\ref{it:ase_post_comp_2}:] This follows by two applications of property \ref{it:ase_post_comp}: $h\comp g \ase{p} h\comp f \ase{p} k\comp f \ase{p} k\comp g$.

		\item[\ref{it:ase_tensor}:] 
			By \cref{lem:as_eq_new}, we can restrict to morphisms $f$ and $g$ with $W$ equal to the unit $I$.
			Then the statement follows via:
			\begin{equation}
				{%
					\tikzstyle{every picture}=[tikzfig]%
					\begin{tikzpicture}
						\begin{pgfonlayer}{nodelayer}
							\node [style=none] (35) at (-3.5, 1.5) {};
							\node [style=none] (36) at (-1.5, 1) {};
							\node [style=none] (37) at (-4.5, 2.75) {};
							\node [style=bn] (39) at (-2.5, -0.25) {};
							\node [style=none] (40) at (-2.5, -2.75) {};
							\node [style=none] (44) at (-1.5, 2.75) {};
							\node [style=morphism] (45) at (-2.5, -1.5) {$q$};
							\node [style=none] (46) at (-3.5, -2.75) {};
							\node [style=none] (48) at (-4.5, 1.5) {};
							\node [style=bn] (52) at (-3.5, -0.25) {};
							\node [style=none] (53) at (-2.5, 1) {};
							\node [style=none] (54) at (-2.5, 2.75) {};
							\node [style=none] (74) at (0, 0) {$=$};
							\node [style=morphism] (102) at (-3.5, -1.5) {$p$};
							\node [style=none] (103) at (-3.5, 2.75) {};
							\node [style=morphism] (104) at (-4.5, 1.5) {$f$};
							\node [style=morphism] (105) at (-3.5, 1.5) {$\vphantom{f}g$};
							\node [style=none] (124) at (-4.5, 1) {};
							\node [style=none] (125) at (-3.5, 1) {};
							\node [style=none] (126) at (4.25, 0.75) {};
							\node [style=none] (127) at (5.75, 0.5) {};
							\node [style=none] (128) at (1.75, 2.75) {};
							\node [style=bn] (129) at (5, -0.75) {};
							\node [style=none] (130) at (5, -2.75) {};
							\node [style=none] (131) at (5.75, 2.75) {};
							\node [style=morphism] (132) at (5, -1.75) {$q$};
							\node [style=none] (133) at (2.5, -2.75) {};
							\node [style=none] (134) at (1.75, 0.75) {};
							\node [style=bn] (135) at (2.5, -0.75) {};
							\node [style=none] (136) at (3.25, 0.5) {};
							\node [style=none] (137) at (3.25, 1.5) {};
							\node [style=morphism] (138) at (2.5, -1.75) {$p$};
							\node [style=none] (139) at (4.25, 1.5) {};
							\node [style=morphism] (140) at (1.75, 0.75) {$f$};
							\node [style=morphism] (141) at (4.25, 0.75) {$\vphantom{f}g$};
							\node [style=none] (142) at (1.75, 0.5) {};
							\node [style=none] (143) at (4.25, 0.5) {};
							\node [style=none] (144) at (3.25, 2.5) {};
							\node [style=none] (145) at (4.25, 2.5) {};
							\node [style=none] (146) at (3.25, 2.75) {};
							\node [style=none] (147) at (4.25, 2.75) {};
							\node [style=none] (148) at (7.25, 0) {$=$};
							\node [style=none] (149) at (11.5, 0.75) {};
							\node [style=none] (150) at (13, 0.5) {};
							\node [style=none] (151) at (9, 2.75) {};
							\node [style=bn] (152) at (12.25, -0.75) {};
							\node [style=none] (153) at (12.25, -2.75) {};
							\node [style=none] (154) at (13, 2.75) {};
							\node [style=morphism] (155) at (12.25, -1.75) {$q$};
							\node [style=none] (156) at (9.75, -2.75) {};
							\node [style=none] (157) at (9, 0.75) {};
							\node [style=bn] (158) at (9.75, -0.75) {};
							\node [style=none] (159) at (10.5, 0.5) {};
							\node [style=none] (160) at (10.5, 1.5) {};
							\node [style=morphism] (161) at (9.75, -1.75) {$p$};
							\node [style=none] (162) at (11.5, 1.5) {};
							\node [style=morphism] (163) at (9, 0.75) {$f'$};
							\node [style=morphism] (164) at (11.5, 0.75) {$\vphantom{f'}g'$};
							\node [style=none] (165) at (9, 0.5) {};
							\node [style=none] (166) at (11.5, 0.5) {};
							\node [style=none] (167) at (10.5, 2.5) {};
							\node [style=none] (168) at (11.5, 2.5) {};
							\node [style=none] (169) at (10.5, 2.75) {};
							\node [style=none] (170) at (11.5, 2.75) {};
							\node [style=none] (172) at (19.25, 1) {};
							\node [style=none] (173) at (16, 2.75) {};
							\node [style=bn] (174) at (18.25, -0.25) {};
							\node [style=none] (175) at (18.25, -2.75) {};
							\node [style=none] (176) at (19.25, 2.75) {};
							\node [style=morphism] (177) at (18.25, -1.5) {$q$};
							\node [style=none] (178) at (17, -2.75) {};
							\node [style=none] (179) at (16, 1.5) {};
							\node [style=bn] (180) at (17, -0.25) {};
							\node [style=none] (181) at (18.25, 1) {};
							\node [style=none] (182) at (18.25, 2.75) {};
							\node [style=morphism] (183) at (17, -1.5) {$p$};
							\node [style=none] (184) at (17.25, 2.75) {};
							\node [style=morphism] (185) at (16, 1.5) {$f'$};
							\node [style=morphism] (186) at (17.25, 1.5) {$\vphantom{f'}g'$};
							\node [style=none] (187) at (16, 1) {};
							\node [style=none] (188) at (17.25, 1) {};
							\node [style=none] (189) at (14.5, 0) {$=$};
							\node [style=none] (190) at (-4.5, 3.25) {$Y$};
							\node [style=none] (191) at (-2.5, 3.25) {$X$};
							\node [style=none] (192) at (-2.5, -3.25) {$B$};
							\node [style=none] (193) at (-3.5, -3.25) {$A$};
							\node [style=none] (194) at (-3.5, 3.25) {$Z$};
							\node [style=none] (195) at (-1.5, 3.25) {V};
							\node [style=none] (196) at (1.75, 3.25) {$Y$};
							\node [style=none] (197) at (4.25, 3.25) {$X$};
							\node [style=none] (198) at (3.75, -3.25) {$B$};
							\node [style=none] (199) at (2.75, -3.25) {$A$};
							\node [style=none] (200) at (3.25, 3.25) {$Z$};
							\node [style=none] (201) at (5.75, 3.25) {V};
							\node [style=none] (202) at (9, 3.25) {$Y$};
							\node [style=none] (203) at (11.5, 3.25) {$X$};
							\node [style=none] (204) at (12.25, -3.25) {$B$};
							\node [style=none] (205) at (9.75, -3.25) {$A$};
							\node [style=none] (206) at (10.5, 3.25) {$Z$};
							\node [style=none] (207) at (13, 3.25) {V};
							\node [style=none] (208) at (16, 3.25) {$Y$};
							\node [style=none] (209) at (18.25, 3.25) {$X$};
							\node [style=none] (210) at (18.25, -3.25) {$B$};
							\node [style=none] (211) at (17, -3.25) {$A$};
							\node [style=none] (212) at (17.25, 3.25) {$Z$};
							\node [style=none] (213) at (19.25, 3.25) {V};
						\end{pgfonlayer}
						\begin{pgfonlayer}{edgelayer}
							\draw [in=-90, out=15] (39) to (36.center);
							\draw (36.center) to (44.center);
							\draw (40.center) to (45);
							\draw (53.center) to (54.center);
							\draw (103.center) to (35.center);
							\draw (37.center) to (48.center);
							\draw (45) to (39);
							\draw (46.center) to (102);
							\draw (102) to (52);
							\draw [style=protected, in=-90, out=165] (39) to (125.center);
							\draw [style=protected, in=-90, out=15] (52) to (53.center);
							\draw [style=protected, in=-90, out=165] (52) to (124.center);
							\draw [in=-90, out=15] (129) to (127.center);
							\draw (127.center) to (131.center);
							\draw (130.center) to (132);
							\draw (136.center) to (137.center);
							\draw (139.center) to (126.center);
							\draw (128.center) to (134.center);
							\draw (132) to (129);
							\draw (133.center) to (138);
							\draw (138) to (135);
							\draw [style=protected, in=-90, out=165] (129) to (143.center);
							\draw [style=protected, in=-90, out=15] (135) to (136.center);
							\draw [style=protected, in=-90, out=165] (135) to (142.center);
							\draw [style=protected, in=-90, out=90, looseness=0.75] (137.center) to (145.center);
							\draw [style=protected, in=-90, out=90, looseness=0.75] (139.center) to (144.center);
							\draw [style=protected] (144.center) to (146.center);
							\draw [style=protected] (145.center) to (147.center);
							\draw [style=protected] (124.center) to (104);
							\draw [style=protected] (125.center) to (105);
							\draw [in=-90, out=15] (152) to (150.center);
							\draw (150.center) to (154.center);
							\draw (153.center) to (155);
							\draw (159.center) to (160.center);
							\draw (162.center) to (149.center);
							\draw (151.center) to (157.center);
							\draw (155) to (152);
							\draw (156.center) to (161);
							\draw (161) to (158);
							\draw [style=protected, in=-90, out=165] (152) to (166.center);
							\draw [style=protected, in=-90, out=15] (158) to (159.center);
							\draw [style=protected, in=-90, out=165] (158) to (165.center);
							\draw [style=protected, in=-90, out=90, looseness=0.75] (160.center) to (168.center);
							\draw [style=protected, in=-90, out=90, looseness=0.75] (162.center) to (167.center);
							\draw [style=protected] (167.center) to (169.center);
							\draw [style=protected] (168.center) to (170.center);
							\draw [in=-90, out=15] (174) to (172.center);
							\draw (172.center) to (176.center);
							\draw (175.center) to (177);
							\draw (181.center) to (182.center);
							\draw (173.center) to (179.center);
							\draw (177) to (174);
							\draw (178.center) to (183);
							\draw (183) to (180);
							\draw [style=protected, in=-90, out=165] (174) to (188.center);
							\draw [style=protected, in=-90, out=15] (180) to (181.center);
							\draw [style=protected, in=-90, out=165] (180) to (187.center);
							\draw [style=protected] (187.center) to (185);
							\draw [style=protected] (188.center) to (186);
							\draw [style=protected] (186) to (184.center);
						\end{pgfonlayer}
					\end{tikzpicture}
				}%
			\end{equation}

		\item[\ref{it:ase_copy}:] Once again by \cref{lem:as_eq_new}, we can restrict to morphisms without extra inputs.
			In such case we can use the manipulations 
			\begin{equation}
				{%
					\tikzstyle{every picture}=[tikzfig]%
					\begin{tikzpicture}
						\begin{pgfonlayer}{nodelayer}
							\node [style=none] (96) at (-3.75, 0.75) {};
							\node [style=none] (97) at (-1.75, 0.75) {};
							\node [style=none] (98) at (-1.75, 3) {};
							\node [style=bn] (100) at (-2.75, -0.5) {};
							\node [style=none] (101) at (-2.75, -2.75) {};
							\node [style=morphism] (103) at (-4.5, 2) {$f$};
							\node [style=morphism] (104) at (-2.75, -1.75) {$p$};
							\node [style=none] (106) at (-1.75, 0.75) {};
							\node [style=none] (108) at (-4.5, 2) {};
							\node [style=none] (109) at (-3, 2) {};
							\node [style=bn] (110) at (-3.75, 0.75) {};
							\node [style=none] (112) at (0, 0) {$=$};
							\node [style=none] (113) at (-4.5, 3) {};
							\node [style=none] (114) at (-3, 3) {};
							\node [style=none] (115) at (3.75, 0.75) {};
							\node [style=none] (116) at (1.75, 0.75) {};
							\node [style=none] (117) at (1.75, 3) {};
							\node [style=bn] (118) at (2.75, -0.5) {};
							\node [style=none] (119) at (2.75, -2.75) {};
							\node [style=morphism] (120) at (1.75, 2) {$f$};
							\node [style=morphism] (121) at (2.75, -1.75) {$p$};
							\node [style=none] (122) at (1.75, 0.75) {};
							\node [style=none] (123) at (4.5, 1.75) {};
							\node [style=none] (124) at (3, 2) {};
							\node [style=bn] (125) at (3.75, 0.75) {};
							\node [style=none] (126) at (4.5, 3) {};
							\node [style=none] (127) at (3, 3) {};
							\node [style=none] (128) at (6, 0) {$=$};
							\node [style=none] (129) at (9.75, 0.75) {};
							\node [style=none] (130) at (7.75, 0.75) {};
							\node [style=none] (131) at (7.75, 3) {};
							\node [style=bn] (132) at (8.75, -0.5) {};
							\node [style=none] (133) at (8.75, -2.75) {};
							\node [style=morphism] (134) at (7.75, 2) {$g$};
							\node [style=morphism] (135) at (8.75, -1.75) {$p$};
							\node [style=none] (136) at (7.75, 0.75) {};
							\node [style=none] (137) at (10.5, 1.75) {};
							\node [style=none] (138) at (9, 2) {};
							\node [style=bn] (139) at (9.75, 0.75) {};
							\node [style=none] (140) at (10.5, 3) {};
							\node [style=none] (141) at (9, 3) {};
							\node [style=none] (142) at (13.5, 0.75) {};
							\node [style=none] (143) at (15.5, 0.75) {};
							\node [style=none] (144) at (15.5, 3) {};
							\node [style=bn] (145) at (14.5, -0.5) {};
							\node [style=none] (146) at (14.5, -2.75) {};
							\node [style=morphism] (147) at (12.75, 2) {$g$};
							\node [style=morphism] (148) at (14.5, -1.75) {$p$};
							\node [style=none] (149) at (15.5, 0.75) {};
							\node [style=none] (150) at (12.75, 2) {};
							\node [style=none] (151) at (14.25, 2) {};
							\node [style=bn] (152) at (13.5, 0.75) {};
							\node [style=none] (153) at (12.75, 3) {};
							\node [style=none] (154) at (14.25, 3) {};
							\node [style=none] (155) at (11.5, 0) {$=$};
							\node [style=none] (156) at (-2.75, -3.25) {$X$};
							\node [style=none] (157) at (2.75, -3.25) {$X$};
							\node [style=none] (158) at (8.75, -3.25) {$X$};
							\node [style=none] (159) at (14.5, -3.25) {$X$};
							\node [style=none] (160) at (-1.75, 3.5) {$X$};
							\node [style=none] (161) at (4.5, 3.5) {$X$};
							\node [style=none] (162) at (10.5, 3.5) {$X$};
							\node [style=none] (163) at (15.5, 3.5) {$X$};
							\node [style=none] (164) at (-3, 3.5) {$X$};
							\node [style=none] (165) at (3, 3.5) {$X$};
							\node [style=none] (166) at (9, 3.5) {$X$};
							\node [style=none] (167) at (14.25, 3.5) {$X$};
							\node [style=none] (168) at (-4.5, 3.5) {$Y$};
							\node [style=none] (169) at (1.75, 3.5) {$Y$};
							\node [style=none] (170) at (7.75, 3.5) {$Y$};
							\node [style=none] (171) at (12.75, 3.5) {$Y$};
						\end{pgfonlayer}
						\begin{pgfonlayer}{edgelayer}
							\draw [in=-90, out=165] (100) to (96.center);
							\draw [in=-90, out=15] (100) to (97.center);
							\draw (101.center) to (104);
							\draw [in=-90, out=165] (110) to (108.center);
							\draw [in=-90, out=15] (110) to (109.center);
							\draw (106.center) to (98.center);
							\draw (113.center) to (108.center);
							\draw (114.center) to (109.center);
							\draw (104) to (100);
							\draw [in=-90, out=15] (118) to (115.center);
							\draw [in=-90, out=165] (118) to (116.center);
							\draw (119.center) to (121);
							\draw [in=-90, out=15] (125) to (123.center);
							\draw [in=-90, out=165] (125) to (124.center);
							\draw (122.center) to (117.center);
							\draw (126.center) to (123.center);
							\draw (127.center) to (124.center);
							\draw (121) to (118);
							\draw [in=-90, out=15] (132) to (129.center);
							\draw [in=-90, out=165] (132) to (130.center);
							\draw (133.center) to (135);
							\draw [in=-90, out=15] (139) to (137.center);
							\draw [in=-90, out=165] (139) to (138.center);
							\draw (136.center) to (131.center);
							\draw (140.center) to (137.center);
							\draw (141.center) to (138.center);
							\draw (135) to (132);
							\draw [in=-90, out=165] (145) to (142.center);
							\draw [in=-90, out=15] (145) to (143.center);
							\draw (146.center) to (148);
							\draw [in=-90, out=165] (152) to (150.center);
							\draw [in=-90, out=15] (152) to (151.center);
							\draw (149.center) to (144.center);
							\draw (153.center) to (150.center);
							\draw (154.center) to (151.center);
							\draw (148) to (145);
						\end{pgfonlayer}
					\end{tikzpicture}
				}%
			\end{equation}
			to prove the claim.

		\item[\ref{it:ase_deterministic}:] This is analogous to \cite[Lemma 15.5]{fritz2019synthetic}.
			\qedhere
	\end{itemize}
\end{proof}

\begin{lemma}\label{lem:as_shift}
	Let $\cC$ be a Markov category. 
	Then $\cC$ is causal if and only if for all $f \colon A \to X$ and $g \colon X \to Y$ and $h_1, h_2 \colon Y \to Z$, we have
	\begin{equation}\label{eq:as_shift}
		{%
			\tikzstyle{every picture}=[tikzfig]%
			\begin{tikzpicture}
				\begin{pgfonlayer}{nodelayer}
					\node [style=none] (17) at (-7.5, 0) {$=$};
					\node [style=none] (35) at (-11.25, 1.75) {};
					\node [style=none] (36) at (-9.25, 1.75) {};
					\node [style=none] (37) at (-11.25, 3) {};
					\node [style=none] (38) at (-11.25, 3.5) {$Z$};
					\node [style=bn] (39) at (-10.25, 0.5) {};
					\node [style=none] (40) at (-10.25, -3) {};
					\node [style=none] (41) at (-10.25, -3.5) {$A$};
					\node [style=none] (42) at (-9.25, 3.5) {$Y$};
					\node [style=morphism] (43) at (-11.25, 2) {$ h_1 $};
					\node [style=none] (44) at (-9.25, 3) {};
					\node [style=morphism] (45) at (-10.25, -2) {$f$};
					\node [style=morphism] (51) at (-10.25, -0.5) {$g$};
					\node [style=none] (69) at (7.75, 0) {$=$};
					\node [style=none] (72) at (3.75, 3) {};
					\node [style=none] (73) at (3.75, 3.5) {$Z$};
					\node [style=none] (75) at (4.75, -3) {};
					\node [style=none] (76) at (4.75, -3.5) {$A$};
					\node [style=morphism] (78) at (3.75, 2) {$ h_1 $};
					\node [style=morphism] (80) at (4.75, -2) {$f$};
					\node [style=morphism] (86) at (3.75, 0.5) {$g$};
					\node [style=bn] (87) at (4.75, -1) {};
					\node [style=none] (88) at (5.75, 0.25) {};
					\node [style=none] (89) at (5.75, 3) {};
					\node [style=none] (90) at (5.75, 3.5) {$X$};
					\node [style=none] (93) at (3.75, 0.25) {};
					\node [style=none] (118) at (0, 0) {$\implies$};
					\node [style=none] (143) at (-5.75, 1.75) {};
					\node [style=none] (144) at (-3.75, 1.75) {};
					\node [style=none] (145) at (-5.75, 3) {};
					\node [style=none] (146) at (-5.75, 3.5) {$Z$};
					\node [style=bn] (147) at (-4.75, 0.5) {};
					\node [style=none] (148) at (-4.75, -3) {};
					\node [style=none] (149) at (-4.75, -3.5) {$A$};
					\node [style=none] (150) at (-3.75, 3.5) {$Y$};
					\node [style=morphism] (151) at (-5.75, 2) {$ h_2 $};
					\node [style=none] (152) at (-3.75, 3) {};
					\node [style=morphism] (153) at (-4.75, -2) {$f$};
					\node [style=morphism] (154) at (-4.75, -0.5) {$g$};
					\node [style=none] (155) at (9.75, 3) {};
					\node [style=none] (156) at (9.75, 3.5) {$Z$};
					\node [style=none] (157) at (10.75, -3) {};
					\node [style=none] (158) at (10.75, -3.5) {$A$};
					\node [style=morphism] (159) at (9.75, 2) {$ h_2 $};
					\node [style=morphism] (160) at (10.75, -2) {$f$};
					\node [style=morphism] (161) at (9.75, 0.5) {$g$};
					\node [style=bn] (162) at (10.75, -1) {};
					\node [style=none] (163) at (11.75, 0.25) {};
					\node [style=none] (164) at (11.75, 3) {};
					\node [style=none] (165) at (11.75, 3.5) {$X$};
					\node [style=none] (166) at (9.75, 0.25) {};
				\end{pgfonlayer}
				\begin{pgfonlayer}{edgelayer}
					\draw [in=-90, out=165] (39) to (35.center);
					\draw [in=-90, out=15] (39) to (36.center);
					\draw (43) to (37.center);
					\draw (36.center) to (44.center);
					\draw (40.center) to (45);
					\draw (51) to (39);
					\draw (45) to (51);
					\draw (78) to (72.center);
					\draw (75.center) to (80);
					\draw (80) to (87);
					\draw [style=protected] (88.center) to (89.center);
					\draw [style=protected, in=-90, out=15] (87) to (88.center);
					\draw [style=protected, in=-90, out=165] (87) to (93.center);
					\draw (86) to (78);
					\draw [in=-90, out=165] (147) to (143.center);
					\draw [in=-90, out=15] (147) to (144.center);
					\draw (151) to (145.center);
					\draw (144.center) to (152.center);
					\draw (148.center) to (153);
					\draw (154) to (147);
					\draw (153) to (154);
					\draw (159) to (155.center);
					\draw (157.center) to (160);
					\draw (160) to (162);
					\draw [style=protected] (163.center) to (164.center);
					\draw [style=protected, in=-90, out=15] (162) to (163.center);
					\draw [style=protected, in=-90, out=165] (162) to (166.center);
					\draw (161) to (159);
				\end{pgfonlayer}
			\end{tikzpicture}
		}%
	\end{equation}
\end{lemma}

\begin{proof}
	If $\cC$ is causal, we conclude by discarding the middle output in the consequent of causality, as on the right of Implication~\eqref{eq:causal_extrawire} with $W = I$.

	Conversely, assume that Implication \eqref{eq:as_shift} holds.
	The antecedent of the defining implication of causality, $h_1 =_{g\comp f} h_2$, is equivalent to
	\begin{equation}
		{%
			\tikzstyle{every picture}=[tikzfig]%
			\begin{tikzpicture}
				\begin{pgfonlayer}{nodelayer}
					\node [style=none] (251) at (-4, 0.25) {};
					\node [style=none] (252) at (-2, 0.25) {};
					\node [style=none] (253) at (-4, 1.5) {};
					\node [style=bn] (255) at (-3, -1) {};
					\node [style=none] (256) at (-3, -2) {};
					\node [style=none] (258) at (0, 0) {$\ase{gf}$};
					\node [style=morphism] (260) at (-4, 0.5) {$ h_1 $};
					\node [style=none] (261) at (-2, 1.5) {};
					\node [style=none] (284) at (-4, 0.5) {};
					\node [style=none] (307) at (-4, 2) {$Z$};
					\node [style=none] (310) at (2, 0.25) {};
					\node [style=none] (311) at (4, 0.25) {};
					\node [style=none] (312) at (2, 1.5) {};
					\node [style=bn] (313) at (3, -1) {};
					\node [style=none] (314) at (3, -2) {};
					\node [style=morphism] (315) at (2, 0.5) {$ h_2 $};
					\node [style=none] (316) at (4, 1.5) {};
					\node [style=none] (321) at (-3, -2.5) {$Y$};
					\node [style=none] (322) at (3, -2.5) {$Y$};
					\node [style=none] (323) at (-2, 2) {$Y$};
					\node [style=none] (324) at (4, 2) {$Y$};
					\node [style=none] (325) at (2, 2) {$Z$};
				\end{pgfonlayer}
				\begin{pgfonlayer}{edgelayer}
					\draw [in=-90, out=165] (255) to (251.center);
					\draw [in=-90, out=15] (255) to (252.center);
					\draw (260) to (253.center);
					\draw (252.center) to (261.center);
					\draw (251.center) to (284.center);
					\draw (256.center) to (255);
					\draw [in=-90, out=165] (313) to (310.center);
					\draw [in=-90, out=15] (313) to (311.center);
					\draw (315) to (312.center);
					\draw (311.center) to (316.center);
					\draw (314.center) to (313);
				\end{pgfonlayer}
			\end{tikzpicture}
		}%
	\end{equation}
	by \cref{prop:ase_props} \ref{it:ase_copy}, so that applying Implication \eqref{eq:as_shift} yields the required 
	\begin{equation}
		{%
			\tikzstyle{every picture}=[tikzfig]%
			\begin{tikzpicture}
				\begin{pgfonlayer}{nodelayer}
					\node [style=none] (69) at (0, 0) {$\ase{f}$};
					\node [style=none] (70) at (-4, 1) {};
					\node [style=none] (71) at (-2, 1) {};
					\node [style=none] (72) at (-4, 2.25) {};
					\node [style=none] (73) at (-4, 2.75) {$Z$};
					\node [style=bn] (74) at (-3, -0.25) {};
					\node [style=morphism] (78) at (-4, 1.25) {$ h_1 $};
					\node [style=none] (79) at (-2, 2.25) {};
					\node [style=morphism] (86) at (-3, -1.25) {$g$};
					\node [style=none] (93) at (-3, -2.25) {};
					\node [style=none] (94) at (-3, -2.75) {$X$};
					\node [style=none] (95) at (2, 1) {};
					\node [style=none] (96) at (4, 1) {};
					\node [style=none] (97) at (2, 2.25) {};
					\node [style=bn] (99) at (3, -0.25) {};
					\node [style=morphism] (101) at (2, 1.25) {$ h_2 $};
					\node [style=none] (102) at (4, 2.25) {};
					\node [style=morphism] (103) at (3, -1.25) {$g$};
					\node [style=none] (104) at (3, -2.25) {};
					\node [style=none] (105) at (3, -2.75) {$X$};
					\node [style=none] (106) at (-2, 2.75) {$Y$};
					\node [style=none] (107) at (4, 2.75) {$Y$};
					\node [style=none] (108) at (2, 2.75) {$Z$};
				\end{pgfonlayer}
				\begin{pgfonlayer}{edgelayer}
					\draw [in=-90, out=165] (74) to (70.center);
					\draw [in=-90, out=15] (74) to (71.center);
					\draw (78) to (72.center);
					\draw (71.center) to (79.center);
					\draw (86) to (74);
					\draw (93.center) to (86);
					\draw [in=-90, out=165] (99) to (95.center);
					\draw [in=-90, out=15] (99) to (96.center);
					\draw (101) to (97.center);
					\draw (96.center) to (102.center);
					\draw (103) to (99);
					\draw (104.center) to (103);
				\end{pgfonlayer}
			\end{tikzpicture}
		}%
	\end{equation}
	which entails causality of $\cC$.
\end{proof}

\begin{lemma}\label{lem:ase_detsplitmono}
	In an arbitrary Markov category, let us consider a morphism $p\colon A \to T$ and a deterministic split monomorphism $\iota\colon T \to X$ with left inverse $\pi$. 
	Then we have
	\begin{equation}\label{eq:p_iff_mp}
		{%
			\tikzstyle{every picture}=[tikzfig]%
			\begin{tikzpicture}
				\begin{pgfonlayer}{nodelayer}
					\node [style=none] (0) at (7, 0) {$\ase{\iota \comp p}$};
					\node [style=none] (1) at (4.5, 0.75) {};
					\node [style=none] (3) at (4, 1.75) {};
					\node [style=none] (4) at (4, 2.25) {$Y$};
					\node [style=none] (6) at (4.5, -1.75) {};
					\node [style=none] (7) at (4.5, -2.25) {$X$};
					\node [style=morphism] (9) at (4, 0.75) {$\;\; f_1 \;\;$};
					\node [style=none] (12) at (3.5, -1.75) {};
					\node [style=none] (13) at (3.5, -2.25) {$W$};
					\node [style=none] (14) at (3.5, 0.75) {};
					\node [style=none] (15) at (3.5, 0.5) {};
					\node [style=morphism] (34) at (4.5, -0.75) {$\pi$};
					\node [style=none] (35) at (10.5, 0.75) {};
					\node [style=none] (36) at (10, 1.75) {};
					\node [style=none] (37) at (10, 2.25) {$Y$};
					\node [style=none] (38) at (10.5, -1.75) {};
					\node [style=none] (39) at (10.5, -2.25) {$X$};
					\node [style=morphism] (40) at (10, 0.75) {$\;\; g_1 \;\;$};
					\node [style=none] (41) at (9.5, -1.75) {};
					\node [style=none] (42) at (9.5, -2.25) {$W$};
					\node [style=none] (43) at (9.5, 0.75) {};
					\node [style=none] (44) at (9.5, 0.5) {};
					\node [style=morphism] (45) at (10.5, -0.75) {$\pi$};
					\node [style=none] (46) at (0, 0) {$\iff$};
					\node [style=none] (47) at (-8.5, 0) {};
					\node [style=none] (48) at (-9, 1.75) {};
					\node [style=none] (49) at (-9, 2.25) {$Y$};
					\node [style=morphism] (50) at (-9, 0) {$\;\; f_1 \;\;$};
					\node [style=none] (51) at (-9.5, -1.75) {};
					\node [style=none] (52) at (-9.5, -2.25) {$W$};
					\node [style=none] (53) at (-9.5, 0) {};
					\node [style=none] (54) at (-9.5, -0.25) {};
					\node [style=none] (55) at (-6.5, 0) {$\ase{p}$};
					\node [style=none] (56) at (-8.5, -1.75) {};
					\node [style=none] (57) at (-8.5, -2.25) {$T$};
					\node [style=none] (58) at (-3.5, 0) {};
					\node [style=none] (59) at (-4, 1.75) {};
					\node [style=none] (60) at (-4, 2.25) {$Y$};
					\node [style=morphism] (61) at (-4, 0) {$\;\; g_1 \;\;$};
					\node [style=none] (62) at (-4.5, -1.75) {};
					\node [style=none] (63) at (-4.5, -2.25) {$W$};
					\node [style=none] (64) at (-4.5, 0) {};
					\node [style=none] (65) at (-4.5, -0.25) {};
					\node [style=none] (66) at (-3.5, -1.75) {};
					\node [style=none] (67) at (-3.5, -2.25) {$T$};
				\end{pgfonlayer}
				\begin{pgfonlayer}{edgelayer}
					\draw (9) to (3.center);
					\draw [style=protected] (12.center) to (15.center);
					\draw [style=protected, in=-90, out=90, looseness=1.25] (15.center) to (14.center);
					\draw (6.center) to (34);
					\draw (34) to (1.center);
					\draw (40) to (36.center);
					\draw [style=protected] (41.center) to (44.center);
					\draw [style=protected, in=-90, out=90, looseness=1.25] (44.center) to (43.center);
					\draw (38.center) to (45);
					\draw (45) to (35.center);
					\draw (50) to (48.center);
					\draw [style=protected] (51.center) to (54.center);
					\draw [style=protected, in=-90, out=90, looseness=1.25] (54.center) to (53.center);
					\draw (56.center) to (47.center);
					\draw (61) to (59.center);
					\draw [style=protected] (62.center) to (65.center);
					\draw [style=protected, in=-90, out=90, looseness=1.25] (65.center) to (64.center);
					\draw (66.center) to (58.center);
				\end{pgfonlayer}
			\end{tikzpicture}
		}%
	\end{equation}
	and
	\begin{equation}\label{eq:mp_iff_p}
		{%
			\tikzstyle{every picture}=[tikzfig]%
			\begin{tikzpicture}
				\begin{pgfonlayer}{nodelayer}
					\node [style=none] (0) at (-7, 0) {$\ase{p}$};
					\node [style=none] (1) at (-9.5, 0.75) {};
					\node [style=none] (3) at (-10, 1.75) {};
					\node [style=none] (4) at (-10, 2.25) {$Y$};
					\node [style=none] (6) at (-9.5, -1.75) {};
					\node [style=none] (7) at (-9.5, -2.25) {$T$};
					\node [style=morphism] (9) at (-10, 0.75) {$\;\; f_2 \;\;$};
					\node [style=none] (12) at (-10.5, -1.75) {};
					\node [style=none] (13) at (-10.5, -2.25) {$W$};
					\node [style=none] (14) at (-10.5, 0.75) {};
					\node [style=none] (15) at (-10.5, 0.5) {};
					\node [style=morphism] (34) at (-9.5, -0.75) {$\iota$};
					\node [style=none] (35) at (-3.5, 0.75) {};
					\node [style=none] (36) at (-4, 1.75) {};
					\node [style=none] (37) at (-4, 2.25) {$Y$};
					\node [style=none] (38) at (-3.5, -1.75) {};
					\node [style=none] (39) at (-3.5, -2.25) {$T$};
					\node [style=morphism] (40) at (-4, 0.75) {$\;\; g_2 \;\;$};
					\node [style=none] (41) at (-4.5, -1.75) {};
					\node [style=none] (42) at (-4.5, -2.25) {$W$};
					\node [style=none] (43) at (-4.5, 0.75) {};
					\node [style=none] (44) at (-4.5, 0.5) {};
					\node [style=morphism] (45) at (-3.5, -0.75) {$\iota$};
					\node [style=none] (46) at (0, 0) {$\iff$};
					\node [style=none] (47) at (4.5, 0) {};
					\node [style=none] (48) at (4, 1.75) {};
					\node [style=none] (49) at (4, 2.25) {$Y$};
					\node [style=morphism] (50) at (4, 0) {$\;\; f_2 \;\;$};
					\node [style=none] (51) at (3.5, -1.75) {};
					\node [style=none] (52) at (3.5, -2.25) {$W$};
					\node [style=none] (53) at (3.5, 0) {};
					\node [style=none] (54) at (3.5, -0.25) {};
					\node [style=none] (55) at (6.5, 0) {$\ase{\iota \comp p}$};
					\node [style=none] (56) at (4.5, -1.75) {};
					\node [style=none] (57) at (4.5, -2.25) {$X$};
					\node [style=none] (58) at (9.5, 0) {};
					\node [style=none] (59) at (9, 1.75) {};
					\node [style=none] (60) at (9, 2.25) {$Y$};
					\node [style=morphism] (61) at (9, 0) {$\;\; g_2 \;\;$};
					\node [style=none] (62) at (8.5, -1.75) {};
					\node [style=none] (63) at (8.5, -2.25) {$W$};
					\node [style=none] (64) at (8.5, 0) {};
					\node [style=none] (65) at (8.5, -0.25) {};
					\node [style=none] (66) at (9.5, -1.75) {};
					\node [style=none] (67) at (9.5, -2.25) {$X$};
				\end{pgfonlayer}
				\begin{pgfonlayer}{edgelayer}
					\draw (9) to (3.center);
					\draw [style=protected] (12.center) to (15.center);
					\draw [style=protected, in=-90, out=90, looseness=1.25] (15.center) to (14.center);
					\draw (6.center) to (34);
					\draw (34) to (1.center);
					\draw (40) to (36.center);
					\draw [style=protected] (41.center) to (44.center);
					\draw [style=protected, in=-90, out=90, looseness=1.25] (44.center) to (43.center);
					\draw (38.center) to (45);
					\draw (45) to (35.center);
					\draw (50) to (48.center);
					\draw [style=protected] (51.center) to (54.center);
					\draw [style=protected, in=-90, out=90, looseness=1.25] (54.center) to (53.center);
					\draw (56.center) to (47.center);
					\draw (61) to (59.center);
					\draw [style=protected] (62.center) to (65.center);
					\draw [style=protected, in=-90, out=90, looseness=1.25] (65.center) to (64.center);
					\draw (66.center) to (58.center);
				\end{pgfonlayer}
			\end{tikzpicture}
		}%
	\end{equation}
	for any choice of morphisms $f_1, g_1 \colon W \otimes T \to Y$ and $f_2 , g_2 \colon W \otimes X \to Y$.
\end{lemma}
\begin{proof}
	We prove the equivalence \eqref{eq:p_iff_mp} by rewriting the right-hand side.
	Specifically, by using the fact that $\iota$ is deterministic and satisfies $\pi\comp \iota = \id$, it is equivalent to
	\begin{equation}
		{%
			\tikzstyle{every picture}=[tikzfig]%
			\begin{tikzpicture}
				\begin{pgfonlayer}{nodelayer}
					\node [style=none] (0) at (0, 0) {$=$};
					\node [style=none] (1) at (-4, 1.25) {};
					\node [style=none] (2) at (-2, 1.25) {};
					\node [style=none] (3) at (-4.5, 2.5) {};
					\node [style=none] (4) at (-4.5, 3) {$Y$};
					\node [style=bn] (5) at (-3, 0) {};
					\node [style=none] (6) at (-3, -2) {};
					\node [style=none] (7) at (-3, -2.5) {$A$};
					\node [style=none] (8) at (-2, 3) {$X$};
					\node [style=morphism] (9) at (-4.5, 1.5) {$\;\; f_1 \;\;$};
					\node [style=none] (10) at (-2, 2.5) {};
					\node [style=morphism] (11) at (-3, -1) {$p$};
					\node [style=none] (12) at (-5, -2) {};
					\node [style=none] (13) at (-5, -2.5) {$W$};
					\node [style=none] (14) at (-5, 1.25) {};
					\node [style=none] (15) at (-5, 1) {};
					\node [style=morphism] (17) at (-2, 1.5) {$\iota$};
					\node [style=none] (18) at (3, 1.25) {};
					\node [style=none] (19) at (5, 1.25) {};
					\node [style=none] (20) at (2.5, 2.5) {};
					\node [style=none] (21) at (2.5, 3) {$Y$};
					\node [style=bn] (22) at (4, 0) {};
					\node [style=none] (23) at (4, -2) {};
					\node [style=none] (24) at (4, -2.5) {$A$};
					\node [style=none] (25) at (5, 3) {$X$};
					\node [style=morphism] (26) at (2.5, 1.5) {$\;\; g_1 \;\;$};
					\node [style=none] (27) at (5, 2.5) {};
					\node [style=morphism] (28) at (4, -1) {$p$};
					\node [style=none] (29) at (2, -2) {};
					\node [style=none] (30) at (2, -2.5) {$W$};
					\node [style=none] (31) at (2, 1.25) {};
					\node [style=none] (32) at (2, 1) {};
					\node [style=morphism] (33) at (5, 1.5) {$\iota$};
				\end{pgfonlayer}
				\begin{pgfonlayer}{edgelayer}
					\draw [in=-90, out=165] (5) to (1.center);
					\draw [in=-90, out=15] (5) to (2.center);
					\draw (9) to (3.center);
					\draw (2.center) to (10.center);
					\draw (6.center) to (11);
					\draw [style=protected] (12.center) to (15.center);
					\draw [style=protected, in=-90, out=90, looseness=1.25] (15.center) to (14.center);
					\draw (11) to (5);
					\draw [in=-90, out=165] (22) to (18.center);
					\draw [in=-90, out=15] (22) to (19.center);
					\draw (26) to (20.center);
					\draw (19.center) to (27.center);
					\draw (23.center) to (28);
					\draw [style=protected] (29.center) to (32.center);
					\draw [style=protected, in=-90, out=90, looseness=1.25] (32.center) to (31.center);
					\draw (28) to (22);
				\end{pgfonlayer}
			\end{tikzpicture}
		}%
	\end{equation}
	This equality follows from $f_1 \ase{p} g_1$ by post-composing with $\iota$ on the right.
	For the converse implication, we post-compose with $\pi$ on the right. 

	Showing the equivalence \eqref{eq:mp_iff_p} works similarly.
	Post-composing the left-hand side with $\iota$ on the right gives the right-hand side since $\iota$ is deterministic.
	The converse implication is obtained by post-composing with $\pi$ on the right.
\end{proof}

Let $f \colon A \to X \otimes Y$ be a morphism in a Markov category $\cC$. 
Then a \emph{conditional of $f$ given $X$} is a morphism $f_{|X} \colon X \otimes A \to Y$ which satisfies \cite[Definition~11.5]{fritz2019synthetic}
\begin{equation}\label{eq:conditional}
	{%
		\tikzstyle{every picture}=[tikzfig]%
		\begin{tikzpicture}
			\begin{pgfonlayer}{nodelayer}
				\node [style=none] (22) at (0, 0) {$=$};
				\node [style=morphism] (54) at (-3, 0) {$\;\; f \;\; $};
				\node [style=none] (55) at (-3, -1.5) {};
				\node [style=none] (56) at (-2.5, 0) {};
				\node [style=none] (57) at (-2.5, 1.5) {};
				\node [style=none] (58) at (-3.5, 1.5) {};
				\node [style=none] (59) at (-3.5, 0) {};
				\node [style=none] (60) at (-3.5, 2) {$X$};
				\node [style=none] (61) at (-2.5, 2) {$Y$};
				\node [style=none] (62) at (-3, -2) {$A$};
				\node [style=morphism] (63) at (3, -0.5) {$\;f\;$};
				\node [style=none] (64) at (3.25, -0.5) {};
				\node [style=bn] (65) at (3.25, 0.5) {};
				\node [style=none] (66) at (2.75, -0.5) {};
				\node [style=bn] (67) at (2.75, 1.25) {};
				\node [style=bn] (68) at (3.75, -1.75) {};
				\node [style=none] (69) at (3.75, -2.75) {};
				\node [style=none] (70) at (4.5, -0.5) {};
				\node [style=none] (71) at (3.5, 2.5) {};
				\node [style=morphism] (72) at (4, 2.5) {$\;f_{|X}\;$};
				\node [style=none] (74) at (4, 3.5) {};
				\node [style=none] (75) at (4, 4) {$Y$};
				\node [style=none] (76) at (2, 2.5) {};
				\node [style=none] (77) at (2, 3.5) {};
				\node [style=none] (78) at (2, 4) {$X$};
				\node [style=none] (79) at (3.75, -3.25) {$A$};
				\node [style=none] (80) at (4.5, 0) {};
				\node [style=none] (81) at (4.5, 2.5) {};
				\node [style=none] (82) at (5.25, 0.75) {};
				\node [style=none] (83) at (2, 0.75) {};
				\node [style=none] (84) at (2, -2.25) {};
				\node [style=none] (85) at (5.25, -2.25) {};
				\node [style=none] (86) at (3, -0.5) {};
			\end{pgfonlayer}
			\begin{pgfonlayer}{edgelayer}
				\draw (55.center) to (54);
				\draw (59.center) to (58.center);
				\draw (56.center) to (57.center);
				\draw (64.center) to (65);
				\draw [in=-90, out=30] (68) to (70.center);
				\draw [in=-90, out=30] (67) to (71.center);
				\draw (72) to (74.center);
				\draw (76.center) to (77.center);
				\draw [in=-90, out=150] (67) to (76.center);
				\draw (70.center) to (80.center);
				\draw [style=dashed box] (84.center)
				to (85.center)
				to (82.center)
				to (83.center)
				to cycle;
				\draw [style=protected] (80.center) to (81.center);
				\draw [style=protected] (66.center) to (67);
				\draw [style=protected, in=-90, out=150] (68) to (86.center);
				\draw [style=protected] (69.center) to (68);
			\end{pgfonlayer}
		\end{tikzpicture}
	}%
\end{equation}
The following observation generalizes \cite[Proposition~13.7]{fritz2019synthetic}.
\begin{lemma}[Almost sure uniqueness of conditionals]\label{lem:cond_unique}
	If $f^{(1)}_{|X}$ and $f^{(2)}_{|X}$ are two morphisms satisfying \cref{eq:conditional}, then we have the almost sure equality 
	\begin{equation}\label{eq:cond_unique}
		f^{(1)}_{|X} \ase{b} f^{(2)}_{|X},
	\end{equation}
	where $b \colon A \to X \otimes A$ denotes the dashed box in \cref{eq:conditional}.
\end{lemma}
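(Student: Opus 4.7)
The task is to prove that any two conditionals $f^{(1)}_{|X}$ and $f^{(2)}_{|X}$ are $\as{b}$ equal, where by inspection of \eqref{eq:conditional} the morphism $b \colon A \to X \otimes A$ equals $(f_X \otimes \id_A) \comp \cop_A$, with $f_X \coloneqq (\id_X \otimes \discard_Y) \comp f$ the $X$-marginal of $f$. The plan is a direct string-diagram calculation showing that both sides of the desired $\as{b}$ equation reduce to a common expression involving only $f$.

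First I would unpack the goal $f^{(1)}_{|X} \ase{b} f^{(2)}_{|X}$ using \cref{prop:ase_props}\ref{it:ase_copy}, which makes it explicit as
\[
(f^{(1)}_{|X} \otimes \id_{X \otimes A}) \comp \cop_{X \otimes A} \comp b \;=\; (f^{(2)}_{|X} \otimes \id_{X \otimes A}) \comp \cop_{X \otimes A} \comp b.
\]
The assumption is
\[
(\id_X \otimes f^{(i)}_{|X}) \comp (\cop_X \otimes \id_A) \comp b \;=\; f \qquad (i = 1, 2),
\]
so that after tensoring with $\id_A$ and pre-composing with $\cop_A$, both expressions equal $(f \otimes \id_A) \comp \cop_A$ independently of $i$, and similarly after a further post-composition with $\mathrm{swap}_{X,Y} \otimes \id_A$. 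The remaining task is to match this common form with $(f^{(i)}_{|X} \otimes \id_{X \otimes A}) \comp \cop_{X \otimes A} \comp b$ up to a harmless rearrangement of output wires.

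The key diagrammatic identities that accomplish this matching are the multiplicativity axiom $\cop_{X \otimes A} = (\id_X \otimes \mathrm{swap}_{X,A} \otimes \id_A) \comp (\cop_X \otimes \cop_A)$, and the auxiliary identity $(\id_X \otimes \cop_A) \comp b = (b \otimes \id_A) \comp \cop_A$, which follows from coassociativity of $\cop_A$ because the $A$-output of $b$ is literally $\id_A$ on one of the $\cop_A$ branches. Combined, these rewrite $\cop_{X \otimes A} \comp b$ in terms of the separate copies $\cop_X$ and $\cop_A$. A final appeal to the cocommutativity of $\cop_X$ then lets us exchange the two $X$-copies produced by $\cop_X \comp f_X$, converting ``apply $f^{(i)}_{|X}$ to the first copy and keep the second'' into ``keep the first copy and apply $f^{(i)}_{|X}$ to the second'', which is exactly the configuration appearing in the defining equation for $f^{(i)}_{|X}$.

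The only genuine obstacle is the bookkeeping of this diagram manipulation, where the two copy identities and the swap must be applied in the correct order. Conceptually, nothing beyond the Markov category axioms is used; in particular, the argument requires neither positivity, causality, nor the existence of supports, which is why this basic uniqueness statement holds in full generality.
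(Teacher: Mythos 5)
Your proof is correct and follows essentially the same route as the paper: rewrite $(f^{(i)}_{|X} \otimes \id_{X\otimes A})\comp\cop_{X\otimes A}\comp b$ using the definition of $b$ and the comonoid axioms until the defining equation of the conditional applies, showing both sides equal a common expression built from $f$ alone. The only quibble is that the displayed formula you attribute to \cref{prop:ase_props}\ref{it:ase_copy} is really just \cref{def:as_eq} with $W = I$, which is harmless.
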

Thus we can think of the conditional $f_{|X}$ as being defined uniquely up to $\as{b}$ equality. 
In particular, if $A$ is the monoidal unit $I$, then the lemma says that the conditional distribution $f_{|X}$ is $f_X$-almost surely unique.
\begin{proof}
	Let us use first the definition of $b$, then properties of copying, and finally the definition of conditionals to write
	\begin{equation}\label{eq:cond_unique_proof}
		{%
			\tikzstyle{every picture}=[tikzfig]%
			\begin{tikzpicture}
				\begin{pgfonlayer}{nodelayer}
					\node [style=morphism] (124) at (-3.5, -1.5) {$\;\; b \;\;$};
					\node [style=bn] (125) at (-4, 0) {};
					\node [style=morphism] (126) at (-4.5, 2) {$f^{(1)}_{|X} $};
					\node [style=none] (128) at (-3, 3.5) {};
					\node [style=none] (129) at (-4.5, 3.5) {};
					\node [style=none] (130) at (-3.5, -3.5) {};
					\node [style=none] (131) at (0, 0) {$=$};
					\node [style=none] (132) at (4.25, -4.25) {};
					\node [style=none] (133) at (4.25, -4.75) {$A$};
					\node [style=bn] (134) at (4.25, -3.25) {};
					\node [style=none] (135) at (3.25, -1.5) {};
					\node [style=none] (136) at (5, -2) {};
					\node [style=morphism] (137) at (3.5, -1.5) {$\;\; f \;\;$};
					\node [style=none] (138) at (3, -1.5) {};
					\node [style=none] (139) at (3.75, -1.5) {};
					\node [style=bn] (140) at (3, 0.25) {};
					\node [style=bn] (141) at (3.75, -0.5) {};
					\node [style=none] (143) at (2, 1.5) {};
					\node [style=none] (144) at (4.25, 1.5) {};
					\node [style=none] (145) at (3, 1.5) {};
					\node [style=none] (146) at (5.75, 1.5) {};
					\node [style=none] (147) at (4.25, 3.5) {};
					\node [style=none] (148) at (5.75, 3.5) {};
					\node [style=morphism] (149) at (2.5, 2) {$\; f^{(1)}_{|X} \;$};
					\node [style=none] (150) at (2.5, 3.5) {};
					\node [style=none] (151) at (7.75, 0) {$=$};
					\node [style=morphism] (183) at (18, -0.25) {$\;\; f \;\;$};
					\node [style=bn] (188) at (19, -2.25) {};
					\node [style=none] (189) at (20, -1) {};
					\node [style=none] (190) at (20, 3.5) {};
					\node [style=none] (191) at (19, -4.25) {};
					\node [style=none] (192) at (17.5, -0.25) {};
					\node [style=none] (193) at (18.5, -0.25) {};
					\node [style=none] (194) at (17.5, 1) {};
					\node [style=none] (195) at (18.5, 1) {};
					\node [style=none] (196) at (17.5, 2.75) {};
					\node [style=none] (197) at (18.5, 2.75) {};
					\node [style=none] (198) at (17.5, 3.5) {};
					\node [style=none] (199) at (18.5, 3.5) {};
					\node [style=none] (200) at (17.5, 4) {$Y$};
					\node [style=none] (201) at (18.5, 4) {$X$};
					\node [style=none] (202) at (20, 4) {$A$};
					\node [style=none] (203) at (19, -4.75) {$A$};
					\node [style=none] (204) at (15.25, 0) {$=$};
					\node [style=none] (212) at (2.5, 4) {$Y$};
					\node [style=none] (213) at (4.25, 4) {$X$};
					\node [style=none] (214) at (5.75, 4) {$A$};
					\node [style=none] (215) at (-4.5, 4) {$Y$};
					\node [style=none] (217) at (-3, 4) {$X$};
					\node [style=none] (219) at (-3.5, -4) {$A$};
					\node [style=none] (221) at (3, 2) {};
					\node [style=none] (222) at (2, 2) {};
					\node [style=none] (223) at (-5, 1.25) {};
					\node [style=none] (224) at (-3, 1.25) {};
					\node [style=none] (225) at (3.5, -2) {};
					\node [style=none] (228) at (12.5, -4.25) {};
					\node [style=none] (229) at (12.5, -4.75) {$A$};
					\node [style=bn] (230) at (11.75, -2.5) {};
					\node [style=none] (231) at (10.75, -1.25) {};
					\node [style=none] (232) at (12.5, -1.25) {};
					\node [style=morphism] (233) at (11, -1.25) {$\;\; f \;\;$};
					\node [style=none] (234) at (10.5, -1.25) {};
					\node [style=none] (235) at (11.25, -1.25) {};
					\node [style=bn] (236) at (10.5, 0.25) {};
					\node [style=bn] (237) at (11.25, -0.25) {};
					\node [style=none] (238) at (11, 1.25) {};
					\node [style=none] (239) at (10, 1.25) {};
					\node [style=none] (240) at (12, 0.75) {};
					\node [style=none] (241) at (13.25, 0.75) {};
					\node [style=none] (242) at (10, 2.25) {};
					\node [style=none] (243) at (13.25, 3.5) {};
					\node [style=morphism] (244) at (11.5, 1.5) {$\; f^{(1)}_{|X} \;$};
					\node [style=none] (245) at (11.5, 2.25) {};
					\node [style=none] (246) at (10, 4) {$Y$};
					\node [style=none] (247) at (11.5, 4) {$X$};
					\node [style=none] (248) at (13.25, 4) {$A$};
					\node [style=none] (249) at (12.5, -0.75) {};
					\node [style=none] (250) at (12, 1.5) {};
					\node [style=none] (251) at (11, 1.5) {};
					\node [style=none] (252) at (11, -1.5) {};
					\node [style=bn] (253) at (12.5, -3.5) {};
					\node [style=none] (254) at (13.25, -2.5) {};
					\node [style=bn] (255) at (5, 0.25) {};
					\node [style=none] (256) at (10, 3.5) {};
					\node [style=none] (257) at (11.5, 3.5) {};
					\node [style=none] (258) at (18, -1) {};
					\node [style=bn] (259) at (-3, 0) {};
					\node [style=none] (260) at (-2, 3.5) {};
					\node [style=none] (261) at (-2, 1.25) {};
					\node [style=none] (262) at (-2, 4) {$A$};
					\node [style=none] (263) at (-4, -1.25) {};
					\node [style=none] (264) at (-3, -1.25) {};
					\node [style=none] (265) at (-5, 1.75) {};
					\node [style=none] (266) at (-4, 1.75) {};
					\node [style=none] (267) at (-4, 1.25) {};
				\end{pgfonlayer}
				\begin{pgfonlayer}{edgelayer}
					\draw (130.center) to (124);
					\draw (126) to (129.center);
					\draw [in=-90, out=15] (134) to (136.center);
					\draw (140) to (138.center);
					\draw (141) to (139.center);
					\draw [in=-90, out=165] (140) to (143.center);
					\draw [in=-90, out=15] (140) to (144.center);
					\draw (146.center) to (148.center);
					\draw (144.center) to (147.center);
					\draw (150.center) to (149);
					\draw [in=15, out=-90] (189.center) to (188);
					\draw (190.center) to (189.center);
					\draw (191.center) to (188);
					\draw (193.center) to (195.center);
					\draw [in=-90, out=90] (195.center) to (196.center);
					\draw (192.center) to (194.center);
					\draw (197.center) to (199.center);
					\draw (196.center) to (198.center);
					\draw [style=protected] (145.center) to (221.center);
					\draw [in=-90, out=165] (125) to (223.center);
					\draw [in=-90, out=15] (125) to (224.center);
					\draw (224.center) to (128.center);
					\draw (143.center) to (222.center);
					\draw (225.center) to (137);
					\draw [in=-90, out=165] (134) to (225.center);
					\draw [in=-90, out=15] (230) to (232.center);
					\draw (236) to (234.center);
					\draw (237) to (235.center);
					\draw [in=-90, out=30] (236) to (238.center);
					\draw [in=-90, out=150] (236) to (239.center);
					\draw (241.center) to (243.center);
					\draw (239.center) to (242.center);
					\draw (245.center) to (244);
					\draw (232.center) to (249.center);
					\draw [style=protected, in=-90, out=90] (249.center) to (240.center);
					\draw [style=protected] (240.center) to (250.center);
					\draw (238.center) to (251.center);
					\draw (252.center) to (233);
					\draw [in=-90, out=165] (230) to (252.center);
					\draw [in=-90, out=165] (253) to (230);
					\draw (254.center) to (241.center);
					\draw [in=-90, out=15] (253) to (254.center);
					\draw (228.center) to (253);
					\draw (132.center) to (134);
					\draw (136.center) to (255);
					\draw [in=-90, out=30] (255) to (146.center);
					\draw [style=protected, in=-90, out=165, looseness=0.75] (255) to (145.center);
					\draw [style=protected, in=-90, out=90, looseness=0.75] (242.center) to (257.center);
					\draw [style=protected, in=-90, out=90, looseness=0.75] (245.center) to (256.center);
					\draw [style=protected, in=-90, out=165] (188) to (258.center);
					\draw [style=protected, in=-90, out=90] (194.center) to (197.center);
					\draw [style=protected] (258.center) to (183);
					\draw (261.center) to (260.center);
					\draw [in=15, out=-90] (261.center) to (259);
					\draw (264.center) to (259);
					\draw (263.center) to (125);
					\draw (266.center) to (267.center);
					\draw (223.center) to (265.center);
					\draw [in=165, out=-90] (267.center) to (259);
				\end{pgfonlayer}
			\end{tikzpicture}
		}%
	\end{equation}
	which notably only involves $f$ and not the choice of its conditional $f^{(1)}_{|X}$.
	Since we can do the same for $f^{(2)}_{|X}$, the desired \cref{eq:cond_unique} follows.
\end{proof}

\subsection{Observationally representable Markov categories}
\label{sec:observational}

In a representable Markov category $\cC$ \cite{fritz2023representable}, every object $X$ comes equipped with a sampling morphism $\samp_X \colon PX \to X$, and for each morphism $p \colon A \to X$ there is a deterministic morphism $p^{\sharp}\colon A \to PX$ such that we have $\samp \comp p^{\sharp} = p$. 
These establish a natural bijection
\begin{equation}
	\label{eq:representable}
	\cC(A,X) \cong \cC_\det(A, PX)
\end{equation}
given by $p \mapsto p^{\sharp}$ from left to right and by post-composition with $\samp_X$ (often denoted by $\samp$ further on) from right to left.

In particular, the sampling morphism is a split epi with right inverse given by $(\id_X)^{\sharp}$, which is nothing but $\delta_X \colon X \to PX$.
Moreover, $\samp$ is not monic, unless $\cC$ is cartesian, in which case $P$ is isomorphic to the identity functor.
Nevertheless, in standard probability theory it is the case that any two distinct probability measures can be distinguished by \emph{iterated} sampling.\footnote{For the intuition behind this statement, note that by the strong law of large numbers, the relative frequency of an event in a sequence of samples approaches its actual probability almost surely.}
In a Markov category, collecting $n$ independent samples amounts to composition with the morphism
\begin{equation}\label{eq:samp_N}
	{%
		\tikzstyle{every picture}=[tikzfig]%
		\begin{tikzpicture}
			\begin{pgfonlayer}{nodelayer}
				\node [style=bn] (0) at (4.25, -1) {};
				\node [style=morphism] (2) at (2.5, 0.75) {$\samp$};
				\node [style=morphism] (3) at (6, 0.75) {$\samp$};
				\node [style=none] (4) at (4.25, -2) {};
				\node [style=none] (5) at (2.5, 1.75) {};
				\node [style=none] (6) at (6, 1.75) {};
				\node [style=none] (8) at (4.25, -2.5) {$PX$};
				\node [style=none] (9) at (2.5, 2.25) {$X$};
				\node [style=none] (10) at (6, 2.25) {$X$};
				\node [style=none] (12) at (4.25, 0.75) {$\cdots$};
				\node [style=none] (13) at (4.25, 3.75) {$n$ outputs};
				\node [style=none] (14) at (2.25, 3) {};
				\node [style=none] (15) at (6.25, 3) {};
				\node [style=none] (16) at (6, 0.5) {};
				\node [style=none] (17) at (2.5, 0.5) {};
				\node [style=morphism] (18) at (-3, 0) {$\samp^{(n)}$};
				\node [style=none] (19) at (-3, -2) {};
				\node [style=none] (20) at (-3, 1.75) {};
				\node [style=none] (21) at (-3, -2.5) {$PX$};
				\node [style=none] (22) at (-3, 2.25) {$X^n$};
				\node [style=none] (24) at (0, 0) {$\coloneqq$};
			\end{pgfonlayer}
			\begin{pgfonlayer}{edgelayer}
				\draw (6.center) to (3);
				\draw (2) to (5.center);
				\draw (0) to (4.center);
				\draw [style=curly brace] (14.center) to (15.center);
				\draw [in=-90, out=15] (0) to (16.center);
				\draw [in=-90, out=165] (0) to (17.center);
				\draw (18) to (20.center);
				\draw (19.center) to (18);
			\end{pgfonlayer}
		\end{tikzpicture}
	}%
\end{equation}
where $X^n$ denotes the $n$-fold tensor product of objects identical to $X$.
In this section, we investigate what happens when these morphisms are jointly monic.
This means that if two morphisms $f$, $g$ satisfy $\samp^{(n)} f = \samp^{(n)} g$ for all $n \in \N$, then $f=g$ holds.
Our claim is that this cancellation property extends automatically to almost sure equality classes.
This is a consequence of the following observation.
\begin{lemma}\label{lem:nabla_mono}
	Let $\cC$ be a representable Markov category. Then the multiplication map 
	\begin{equation*}
		\begin{tikzcd}[column sep=17ex]
			\nabla \,\colon\, PX \otimes PY \ar[r,"\delta_{PX \otimes PY}"] & P (PX \otimes PY) \ar[r,"P(\samp_X \otimes \samp_Y)"] & P(X \otimes Y) 
		\end{tikzcd}
	\end{equation*} 
	is a split monomorphism, with left inverse given by 
	\begin{equation*}
		\begin{tikzcd}[column sep=15ex]
			\Delta \,\colon\,  P(X \otimes Y) \ar[r,"\cop_{P(X \otimes Y)}"] & P(X \otimes Y) \otimes P(X \otimes Y) \ar[r,"P(\pi_1)\otimes P(\pi_2)"] & PX \otimes PY,
		\end{tikzcd}
	\end{equation*} 
	where
	\[
		\pi_1 \coloneqq \id_X \otimes \discard_Y , \qquad \pi_2 \coloneqq \discard_X {}\otimes \id_Y 
	\]
	are the two projections.
\end{lemma}
See also \cite[Proposition 3.4]{fritzperrone2018bimonoidal} for more general context.
\begin{proof}
	Since both the morphisms making up $\nabla$ are deterministic, we have
	\begin{equation}
		\Delta \comp \nabla = (f_1 \otimes f_2) \comp \cop_{PX \otimes PY},
	\end{equation}
	where $f_1$ is given by the topmost composite in
	\begin{equation}
		\begin{tikzcd}
				&  &                                                                                               &  & P(X \otimes Y) \arrow[rrd, "P(\pi_1)"] &  &    \\
				&  & P(PX \otimes PY) \arrow[rru, "P(\samp_X \otimes\, \samp_Y)"] \arrow[rrd, "P(\id_{PX} \otimes\, \discard_{PY})"] &  &                                        &  & PX \\
			PX \otimes PY \arrow[rru, "\delta_{PX \otimes PY}"] \arrow[rrd, "\id_{PX} \otimes\, \discard_{PY}"'] &  &                                                                                               &  & PPX \arrow[rru, "P(\samp_X) \," description]          &  &    \\
															     &  & PX \arrow[rru, "\delta_{PX}"] \arrow[rrrruu, "\id_{PX}"', bend right]                         &  &                                        &  &   
		\end{tikzcd}
	\end{equation}
	and similarly for $f_2$.
	The top parallelogram commutes by the definition of $\pi_1$, functoriality of $P$ and terminality of the monoidal unit $I$.
	The left parallelogram commutes by the naturality of $\delta$ against deterministic morphisms~\cite[Proposition 3.14]{fritz2023representable}, while the bottom right triangle by the fact that $(P,P \samp,\delta)$ forms a monad on $\cC_\det$.
	Hence the commutativity of the diagram shows $f_1 = \id_{PX} \otimes \discard_{PY}$.

	Similarly we can show $f_2 = \discard_{PX} \otimes \id_{PY}$, so that we indeed have $\Delta \comp \nabla = \id_{PX \otimes PY}$ as required.
\end{proof}
Note that the left inverse $P(X \otimes Y) \to PX \otimes PY$ acts on Kleisli morphisms out of $I$ as the map which takes a joint state to the product of its marginals:
\begin{equation}
	{%
		\tikzstyle{every picture}=[tikzfig]%
		\begin{tikzpicture}
			\begin{pgfonlayer}{nodelayer}
				\node [style=wide state] (0) at (-2.75, -0.75) {$p$};
				\node [style=none] (1) at (-3.25, 0.75) {};
				\node [style=none] (2) at (-3.25, 1.25) {$X$};
				\node [style=none] (3) at (-2.25, 0.75) {};
				\node [style=none] (4) at (-2.25, 1.25) {$Y$};
				\node [style=none] (5) at (-3.25, -0.5) {};
				\node [style=none] (6) at (-2.25, -0.5) {};
				\node [style=none] (7) at (0, 0) {$\mapsto$};
				\node [style=wide state] (8) at (2.75, -0.75) {$p$};
				\node [style=none] (9) at (2.25, 0.75) {};
				\node [style=none] (10) at (2.25, 1.25) {$X$};
				\node [style=none] (13) at (2.25, -0.5) {};
				\node [style=none] (14) at (3.25, -0.5) {};
				\node [style=wide state] (15) at (4.75, -0.75) {$p$};
				\node [style=none] (18) at (5.25, 0.75) {};
				\node [style=none] (19) at (5.25, 1.25) {$Y$};
				\node [style=none] (20) at (4.25, -0.5) {};
				\node [style=none] (21) at (5.25, -0.5) {};
				\node [style=bn] (22) at (3.25, 0.25) {};
				\node [style=bn] (23) at (4.25, 0.25) {};
			\end{pgfonlayer}
			\begin{pgfonlayer}{edgelayer}
				\draw (5.center) to (1.center);
				\draw (6.center) to (3.center);
				\draw (13.center) to (9.center);
				\draw (21.center) to (18.center);
				\draw (20.center) to (23);
				\draw (14.center) to (22);
			\end{pgfonlayer}
		\end{tikzpicture}
	}%
\end{equation}

\begin{proposition}\label{prop:reflect_aseq}
	Let $\cC$ be a representable Markov category. Then the following are equivalent:
	\begin{enumerate}
		\item\label{it:observational} For every object $X$, the morphisms $(\samp^{(n)}_X)_{n \in \N}$ are jointly monic.
		\item\label{it:observational_monicity} For all objects $X$ and $Y$, the morphisms $(\samp^{(n)}_X)_{n \in \N} \otimes \id_Y$ are jointly monic.
		\item\label{it:relatively_observational} For every object $X$, the morphisms $(\samp^{(n)}_X)_{n \in \N}$ are jointly monic modulo \as{} equality: for any morphisms $p \colon A \to B$ and $f,g \colon B \to PX$, we have
			\begin{equation}\label{eq:reflect_aseq}
				\samp^{(n)}_X \comp f \ase{p} \samp^{(n)}_X\comp g \quad\; \forall \, n \in \N \qquad \implies \qquad f\ase{p} g.
			\end{equation}
	\end{enumerate}
\end{proposition}

\begin{proof}
	The implications from \ref{it:observational_monicity} to \ref{it:relatively_observational} and from \ref{it:relatively_observational} to \ref{it:observational} are immediate.

	We therefore focus on proving that \ref{it:observational} implies \ref{it:observational_monicity}.
	For ease of notation, we omit the subscript $X$ from $\samp_X$.
	We thus assume that the $(\samp^{(n)})_{n \in \N}$ are jointly monic, and consider two parallel morphisms $f,g\colon A \to PX \tensor Y$ satisfying 
	\begin{equation}\label{eq:obs_monicity}
		{%
			\tikzstyle{every picture}=[tikzfig]%
			\begin{tikzpicture}
				\begin{pgfonlayer}{nodelayer}
					\node [style=none] (0) at (-5, 0.25) {};
					\node [style=morphism] (2) at (-6.25, 1.5) {$\samp$};
					\node [style=none] (3) at (-5, 2.25) {$\cdots$};
					\node [style=bn] (4) at (-5, 0.25) {};
					\node [style=morphism] (7) at (-3.75, 1.5) {$\samp$};
					\node [style=none] (8) at (-3.75, 3) {};
					\node [style=none] (10) at (-2, 1.75) {};
					\node [style=none] (11) at (-6.25, 1.5) {};
					\node [style=none] (12) at (-3.75, 1.5) {};
					\node [style=none] (13) at (-5, 0.25) {};
					\node [style=none] (14) at (-5, -1.25) {};
					\node [style=none] (15) at (-3, -1.25) {};
					\node [style=none] (16) at (-4, -2.75) {};
					\node [style=none] (17) at (-4, -2.75) {};
					\node [style=none] (18) at (-3, -0.75) {};
					\node [style=none] (19) at (-4, -3.25) {$A$};
					\node [style=morphism] (20) at (-4, -1.25) {$\;\quad f\quad\;$};
					\node [style=none] (21) at (-6.25, 3) {};
					\node [style=none] (23) at (-2, 3.5) {$Y$};
					\node [style=none] (24) at (-3.75, 3.5) {$X$};
					\node [style=none] (25) at (-2, 3) {};
					\node [style=none] (26) at (3.25, 0.25) {};
					\node [style=morphism] (27) at (2, 1.5) {$\samp$};
					\node [style=none] (28) at (3.25, 2.25) {$\cdots$};
					\node [style=bn] (29) at (3.25, 0.25) {};
					\node [style=morphism] (30) at (4.5, 1.5) {$\samp$};
					\node [style=none] (31) at (4.5, 3) {};
					\node [style=none] (32) at (6.25, 1.75) {};
					\node [style=none] (33) at (2, 1.5) {};
					\node [style=none] (34) at (4.5, 1.5) {};
					\node [style=none] (35) at (3.25, 0.25) {};
					\node [style=none] (36) at (3.25, -1.25) {};
					\node [style=none] (37) at (5.25, -1.25) {};
					\node [style=none] (38) at (4.25, -2.75) {};
					\node [style=none] (39) at (4.25, -2.75) {};
					\node [style=none] (40) at (5.25, -0.75) {};
					\node [style=none] (41) at (4.25, -3.25) {$A$};
					\node [style=morphism] (42) at (4.25, -1.25) {$\;\quad g\quad\;$};
					\node [style=none] (43) at (2, 3) {};
					\node [style=none] (47) at (6.25, 3) {};
					\node [style=none] (49) at (0, 0) {$=$};
					\node [style=none] (50) at (-6.25, 3.5) {$X$};
					\node [style=none] (51) at (6.25, 3.5) {$Y$};
					\node [style=none] (52) at (4.5, 3.5) {$X$};
					\node [style=none] (53) at (2, 3.5) {$X$};
				\end{pgfonlayer}
				\begin{pgfonlayer}{edgelayer}
					\draw [style=protected] (7) to (8.center);
					\draw [style=protected, in=-90, out=165] (4) to (11.center);
					\draw [style=protected, in=-90, out=15] (4) to (12.center);
					\draw (13.center) to (14.center);
					\draw (18.center) to (15.center);
					\draw (20) to (17.center);
					\draw [in=90, out=-90, looseness=1.50] (10.center) to (18.center);
					\draw (21.center) to (11.center);
					\draw (10.center) to (25.center);
					\draw [style=protected] (30) to (31.center);
					\draw [style=protected, in=-90, out=165] (29) to (33.center);
					\draw [style=protected, in=-90, out=15] (29) to (34.center);
					\draw (35.center) to (36.center);
					\draw (40.center) to (37.center);
					\draw (42) to (39.center);
					\draw [in=90, out=-90, looseness=1.50] (32.center) to (40.center);
					\draw (43.center) to (33.center);
					\draw (32.center) to (47.center);
				\end{pgfonlayer}
			\end{tikzpicture}
		}%
	\end{equation}
	We consider the following equalities:
	\begin{equation}\label{eq:reflect_aseq1}
		{%
			\tikzstyle{every picture}=[tikzfig]%
			\begin{tikzpicture}
				\begin{pgfonlayer}{nodelayer}
					\node [style=none] (0) at (-5.5, 0.25) {};
					\node [style=none] (4) at (-5.5, -2.75) {};
					\node [style=none] (36) at (-3.5, -2.75) {};
					\node [style=none] (37) at (-4.5, -4.25) {};
					\node [style=none] (38) at (-4.5, -4.25) {};
					\node [style=none] (42) at (-3.5, 0.25) {};
					\node [style=morphism] (43) at (-4.5, 0.25) {$\quad \nabla \quad $};
					\node [style=bn] (44) at (-4.5, 1.5) {};
					\node [style=morphism] (45) at (-6.25, 3) {$\samp$};
					\node [style=none] (47) at (-6.25, 4.25) {};
					\node [style=none] (49) at (-4.5, 3) {$\cdots$};
					\node [style=none] (51) at (-2.75, 4.25) {};
					\node [style=morphism] (52) at (-2.75, 3) {$\samp$};
					\node [style=none] (71) at (0, 0) {$=$};
					\node [style=none] (189) at (3.5, 0) {};
					\node [style=none] (198) at (5.5, 0) {};
					\node [style=none] (199) at (2, 1.75) {};
					\node [style=none] (200) at (4.5, 1.75) {$\cdots$};
					\node [style=none] (201) at (7, 1.75) {};
					\node [style=bn] (205) at (3.5, 0) {};
					\node [style=bn] (206) at (5.5, 0) {};
					\node [style=none] (207) at (3, 1.75) {};
					\node [style=none] (208) at (6, 1.75) {};
					\node [style=none] (210) at (2.5, 4.25) {};
					\node [style=none] (211) at (6.5, 4.25) {};
					\node [style=morphism] (213) at (2.5, 1.75) {$\;  \nabla \;  $};
					\node [style=morphism] (215) at (6.5, 1.75) {$\;  \nabla \;  $};
					\node [style=morphism] (218) at (2.5, 3.25) {$\samp$};
					\node [style=morphism] (219) at (6.5, 3.25) {$\samp$};
					\node [style=none] (221) at (-4.5, -4.75) {$A$};
					\node [style=none] (223) at (-6.25, 4.75) {$X \otimes Y$};
					\node [style=none] (230) at (-2.75, 2.75) {};
					\node [style=none] (231) at (-6.25, 2.75) {};
					\node [style=none] (232) at (9, 0) {$=$};
					\node [style=none] (233) at (12.75, 0) {};
					\node [style=none] (240) at (14.75, 0) {};
					\node [style=none] (241) at (10.75, 1.75) {};
					\node [style=none] (242) at (14, 1.75) {$\cdots$};
					\node [style=none] (243) at (16.75, 1.75) {};
					\node [style=bn] (244) at (12.75, 0) {};
					\node [style=bn] (245) at (14.75, 0) {};
					\node [style=none] (246) at (12.25, 1.75) {};
					\node [style=none] (247) at (15.25, 1.75) {};
					\node [style=none] (248) at (10.75, 4.25) {};
					\node [style=none] (249) at (12.25, 4.25) {};
					\node [style=morphism] (254) at (10.75, 3.25) {$\samp$};
					\node [style=morphism] (255) at (12.25, 2) {$\samp$};
					\node [style=none] (257) at (10.75, 4.75) {$X$};
					\node [style=none] (258) at (12.25, 4.75) {$Y$};
					\node [style=none] (259) at (15.25, 4.25) {};
					\node [style=none] (260) at (16.75, 4.25) {};
					\node [style=morphism] (261) at (15.25, 3.25) {$\samp$};
					\node [style=morphism] (262) at (16.75, 2) {$\samp$};
					\node [style=none] (265) at (-2.75, 4.75) {$X \otimes Y$};
					\node [style=none] (266) at (2.5, 4.75) {$X \otimes Y$};
					\node [style=none] (267) at (6.5, 4.75) {$X \otimes Y$};
					\node [style=none] (268) at (15.25, 4.75) {$X$};
					\node [style=none] (269) at (16.75, 4.75) {$Y$};
					\node [style=morphism] (272) at (-3.5, -1.25) {$\delta$};
					\node [style=morphism] (273) at (-4.5, -2.75) {$\;\quad f\quad\;$};
					\node [style=none] (274) at (3.5, 0) {};
					\node [style=none] (275) at (3.5, -2.75) {};
					\node [style=none] (276) at (5.5, -2.75) {};
					\node [style=none] (277) at (4.5, -4.25) {};
					\node [style=none] (278) at (4.5, -4.25) {};
					\node [style=none] (279) at (5.5, 0) {};
					\node [style=none] (280) at (4.5, -4.75) {$A$};
					\node [style=morphism] (281) at (5.5, -1.25) {$\delta$};
					\node [style=morphism] (282) at (4.5, -2.75) {$\;\quad f\quad\;$};
					\node [style=none] (287) at (12.75, 0) {};
					\node [style=none] (288) at (12.75, -2.75) {};
					\node [style=none] (289) at (14.75, -2.75) {};
					\node [style=none] (290) at (13.75, -4.25) {};
					\node [style=none] (291) at (13.75, -4.25) {};
					\node [style=none] (292) at (14.75, 0) {};
					\node [style=none] (293) at (13.75, -4.75) {$A$};
					\node [style=morphism] (294) at (14.75, -1.25) {$\delta$};
					\node [style=morphism] (295) at (13.75, -2.75) {$\;\quad f\quad\;$};
				\end{pgfonlayer}
				\begin{pgfonlayer}{edgelayer}
					\draw (0.center) to (4.center);
					\draw (45) to (47.center);
					\draw (51.center) to (52);
					\draw (44) to (43);
					\draw [in=165, out=-90, looseness=0.75] (207.center) to (206);
					\draw [in=165, out=-90] (199.center) to (205);
					\draw [in=-90, out=15] (206) to (201.center);
					\draw (211.center) to (215);
					\draw (210.center) to (213);
					\draw [style=protected, in=-90, out=15, looseness=0.75] (205) to (208.center);
					\draw [style=protected, in=-90, out=15] (44) to (230.center);
					\draw [style=protected, in=-90, out=165] (44) to (231.center);
					\draw [in=165, out=-90, looseness=0.75] (246.center) to (245);
					\draw [in=165, out=-90] (241.center) to (244);
					\draw [in=-90, out=15] (245) to (243.center);
					\draw [style=protected, in=-90, out=15, looseness=0.75] (244) to (247.center);
					\draw (241.center) to (254);
					\draw (254) to (248.center);
					\draw (246.center) to (255);
					\draw (255) to (249.center);
					\draw (247.center) to (261);
					\draw (243.center) to (262);
					\draw (261) to (259.center);
					\draw (262) to (260.center);
					\draw (42.center) to (36.center);
					\draw (273) to (38.center);
					\draw (274.center) to (275.center);
					\draw (279.center) to (276.center);
					\draw (282) to (278.center);
					\draw (287.center) to (288.center);
					\draw (292.center) to (289.center);
					\draw (295) to (291.center);
				\end{pgfonlayer}
			\end{tikzpicture}
		}%
	\end{equation}
	The first step holds because the multiplication map $\nabla$ is deterministic.
	The second step uses an instance of 
	\begin{equation}\label{eq:sharp}
		h = \samp \comp h^\sharp  \qquad \text{where}  \qquad  h^\sharp \coloneqq (Ph) \comp \delta_V,
	\end{equation}
	which holds for any morphism $h \colon V \to W$ in a representable Markov category.
	In particular, we use it for $h$ given by $\samp_X \otimes \samp_Y \colon PX \otimes PY \to X \otimes Y$, since $\nabla$ is equal to ${(\samp_X \otimes \samp_Y)^\sharp}$ by definition.

	Since $\delta_Y$ is deterministic and has $\samp_Y$ as left inverse, the right-hand side is equal to		
	\begin{equation}\label{eq:reflect_aseq3}
		{%
			\tikzstyle{every picture}=[tikzfig]%
			\begin{tikzpicture}
				\begin{pgfonlayer}{nodelayer}
					\node [style=none] (254) at (-1, 0) {};
					\node [style=none] (263) at (2, 2.75) {};
					\node [style=morphism] (264) at (-2.25, 1.25) {$\samp$};
					\node [style=none] (265) at (-1, 2.75) {$\cdots$};
					\node [style=none] (266) at (3, 4) {};
					\node [style=bn] (270) at (-1, 0) {};
					\node [style=bn] (271) at (2, 2.75) {};
					\node [style=none] (272) at (1, 3.5) {};
					\node [style=morphism] (273) at (0.25, 1.25) {$\samp$};
					\node [style=none] (274) at (-2.25, 5.25) {};
					\node [style=none] (275) at (-1.25, 5) {};
					\node [style=none] (276) at (0.25, 3) {};
					\node [style=none] (277) at (3, 5.25) {};
					\node [style=none] (278) at (2, 3.5) {$\cdots$};
					\node [style=none] (280) at (1.75, 4.75) {};
					\node [style=none] (281) at (0.5, 5) {$\cdots$};
					\node [style=none] (282) at (-4, 2.25) {};
					\node [style=none] (283) at (3, 2.25) {};
					\node [style=none] (284) at (3, -2.75) {};
					\node [style=none] (285) at (-4, -2.75) {};
					\node [style=none] (287) at (-1.25, 5.75) {$Y$};
					\node [style=none] (288) at (-2.25, 5.75) {$X$};
					\node [style=none] (289) at (3, 5.75) {$Y$};
					\node [style=none] (290) at (1.75, 5.75) {$X$};
					\node [style=none] (291) at (-1.25, 5.25) {};
					\node [style=none] (292) at (-1.25, 5.25) {};
					\node [style=none] (293) at (1.75, 5.25) {};
					\node [style=none] (295) at (2, 1.5) {};
					\node [style=none] (296) at (-2.25, 1.25) {};
					\node [style=none] (297) at (0.25, 1.25) {};
					\node [style=none] (302) at (-1, 0) {};
					\node [style=none] (303) at (-1, -1.5) {};
					\node [style=none] (304) at (1, -1.5) {};
					\node [style=none] (305) at (0, -3.75) {};
					\node [style=none] (306) at (0, -3.25) {};
					\node [style=none] (307) at (1, -1) {};
					\node [style=none] (308) at (0, -3.75) {$A$};
					\node [style=morphism] (310) at (0, -1.5) {$\;\quad f\quad\;$};
				\end{pgfonlayer}
				\begin{pgfonlayer}{edgelayer}
					\draw [in=165, out=-45] (272.center) to (271);
					\draw [in=-90, out=15] (271) to (266.center);
					\draw [in=135, out=-90] (275.center) to (272.center);
					\draw (277.center) to (266.center);
					\draw [style=dashed box] (283.center)
					to (282.center)
					to (285.center)
					to (284.center)
					to cycle;
					\draw [style=protected] (273) to (276.center);
					\draw [style=protected] (264) to (274.center);
					\draw [style=protected, in=-90, out=90, looseness=0.75] (276.center) to (280.center);
					\draw [style=protected] (275.center) to (292.center);
					\draw [style=protected] (280.center) to (293.center);
					\draw [style=protected] (295.center) to (271);
					\draw [style=protected, in=-90, out=165] (270) to (296.center);
					\draw [style=protected, in=-90, out=15] (270) to (297.center);
					\draw (302.center) to (303.center);
					\draw (307.center) to (304.center);
					\draw (310) to (306.center);
					\draw [in=90, out=-90, looseness=1.50] (295.center) to (307.center);
				\end{pgfonlayer}
			\end{tikzpicture}
		}%
	\end{equation}
	Note that using the assumed \cref{eq:obs_monicity}, we can replace $f$ by $g$ in Diagram \eqref{eq:reflect_aseq3}.
	Following the same steps backwards then shows that the left-hand side of \cref{eq:reflect_aseq1} coincides with the same expression with $g$ instead of $f$.
	By assumption, the morphisms in $(\samp^{(n)})_{n \in \N}$ are jointly monic, and by \cref{lem:nabla_mono}, $\nabla$ is also a monomorphism, so that we get
	\begin{equation}
		{%
			\tikzstyle{every picture}=[tikzfig]%
			\begin{tikzpicture}
				\begin{pgfonlayer}{nodelayer}
					\node [style=none] (263) at (-4, 2.5) {$PX$};
					\node [style=none] (264) at (-2, 2.5) {$PY$};
					\node [style=none] (276) at (4, 2.5) {$PY$};
					\node [style=none] (277) at (2, 2.5) {$PX$};
					\node [style=none] (280) at (0, 0) {$=$};
					\node [style=none] (281) at (-4, 2) {};
					\node [style=none] (282) at (-4, -0.5) {};
					\node [style=none] (283) at (-2, -0.5) {};
					\node [style=none] (284) at (-3, -2) {};
					\node [style=none] (285) at (-3, -2) {};
					\node [style=none] (286) at (-2, 2) {};
					\node [style=none] (287) at (-3, -2.5) {$A$};
					\node [style=morphism] (288) at (-2, 1) {$\delta$};
					\node [style=morphism] (289) at (-3, -0.5) {$\;\quad f\quad\;$};
					\node [style=none] (290) at (2, 2) {};
					\node [style=none] (291) at (2, -0.5) {};
					\node [style=none] (292) at (4, -0.5) {};
					\node [style=none] (293) at (3, -2) {};
					\node [style=none] (294) at (3, -2) {};
					\node [style=none] (295) at (4, 2) {};
					\node [style=none] (296) at (3, -2.5) {$A$};
					\node [style=morphism] (297) at (4, 1) {$\delta$};
					\node [style=morphism] (298) at (3, -0.5) {$\;\quad g\quad\;$};
				\end{pgfonlayer}
				\begin{pgfonlayer}{edgelayer}
					\draw (281.center) to (282.center);
					\draw (286.center) to (283.center);
					\draw (289) to (285.center);
					\draw (290.center) to (291.center);
					\draw (295.center) to (292.center);
					\draw (298) to (294.center);
				\end{pgfonlayer}
			\end{tikzpicture}
		}%
	\end{equation}
	and applying $\samp_Y$ to the right output gives $f = g$.
	This concludes the proof of Property \ref{it:observational_monicity} provided that Property \ref{it:observational} holds.
\end{proof}

\begin{definition}
	\label{def:observational}
	A representable Markov category is \newterm{observationally representable} if it satisfies the equivalent conditions of \cref{prop:reflect_aseq}.
\end{definition}
The terminology follows that of \cite[Definition 6.1]{moss2022probability}, which is based on the idea that repeated sampling amounts to a sequence of observations.
From \cite{fritz2023representable}, recall that we call a representable Markov category \newterm{\as{}-compatibly representable} if the bijections \eqref{eq:representable} respect almost sure equality.
\begin{proposition}\label{prop:obs_asrep}
	Every observationally representable Markov category is \as{}-compatibly representable.
\end{proposition}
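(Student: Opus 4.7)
The plan is to establish both directions of the equivalence $f \ase{p} g \iff f^\sharp \ase{p} g^\sharp$ for $p \colon B \to A$ and $f, g \colon A \to X$. The reverse direction follows immediately by post-composing with $\samp_X$ via \cref{prop:ase_props}\ref{it:ase_post_comp}, using that $\samp_X \comp h^\sharp = h$ for every $h$.

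For the forward direction, I would invoke the observational hypothesis in the form of \cref{prop:reflect_aseq}\ref{it:relatively_observational}, reducing the goal $f^\sharp \ase{p} g^\sharp$ to checking $\samp^{(n)}_X \comp f^\sharp \ase{p} \samp^{(n)}_X \comp g^\sharp$ for every $n \in \N$. Since $f^\sharp$ is deterministic and thus commutes past copy morphisms, a direct computation gives
\[
	\samp^{(n)}_X \comp f^\sharp = \samp_X^{\otimes n} \comp \cop^{(n)}_{PX} \comp f^\sharp = \samp_X^{\otimes n} \comp (f^\sharp)^{\otimes n} \comp \cop^{(n)}_A = f^{\otimes n} \comp \cop^{(n)}_A,
\]
and likewise for $g$. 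The task therefore reduces to proving $f^{\otimes n} \comp \cop^{(n)}_A \ase{p} g^{\otimes n} \comp \cop^{(n)}_A$, which after unfolding \cref{def:as_eq} amounts to the equation
\[
	(\id_A \otimes f^{\otimes n}) \comp \cop^{(n+1)}_A \comp p = (\id_A \otimes g^{\otimes n}) \comp \cop^{(n+1)}_A \comp p.
\]

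I would prove this by induction on $n$, with the base case $n=1$ being exactly the hypothesis $f \ase{p} g$. Writing $h_n := (\id_A \otimes f^{\otimes n}) \comp \cop^{(n+1)}_A$ and $k_n$ for its $g$-analogue, the inductive step combines three ingredients. First, decomposing $\cop^{(n+2)}_A = (\cop^{(n+1)}_A \otimes \id_A) \comp \cop_A$ and using functoriality yields $h_{n+1} \comp p = (h_n \otimes f) \comp \cop_A \comp p$. Second, a \emph{single-swap lemma}{\,\textemdash\,}namely $(h \otimes f) \comp \cop_A \comp p = (h \otimes g) \comp \cop_A \comp p$ for any $h \colon A \to Y${\,\textemdash\,}obtained from $f \ase{p} g$ by post-composing with $h \otimes \id_X$ and using the interchange law $(h \otimes \id_X) \comp (\id_A \otimes f) = h \otimes f$, converts this to $(h_n \otimes g) \comp \cop_A \comp p$. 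Third, because $h_n$ keeps its input $a$ as the first output factor, the induction hypothesis $h_n \comp p = k_n \comp p$ can be promoted, by post-composing with the morphism $(\id_A \otimes g \otimes \id_{X^n}) \comp (\cop_A \otimes \id_{X^n})$ that copies the preserved $A$ and applies $g$ to one copy, to the equality $(h_n \otimes g) \comp \cop_A \comp p = (k_n \otimes g) \comp \cop_A \comp p$. A symmetric decomposition of $k_{n+1}$ then gives $h_{n+1} \comp p = k_{n+1} \comp p$, completing the induction.

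The main subtlety is precisely the promotion step above: the $n$ copies of $f$ in $f^{\otimes n} \comp \cop^{(n)}_A$ are fed the \emph{same} sample of $p$ via the diagonal $\cop^{(n)}_A$, so a naive application of \cref{prop:ase_props}\ref{it:ase_tensor} would only yield $\ase{p \otimes \cdots \otimes p}$ equality (with independent copies of $p$) rather than the desired $\ase{p}$. The trick of exploiting that $h_n$ preserves its input as a leading $A$-wire{\,\textemdash\,}so that one can reinject it through $\cop_A$ and feed the extra $g$ off of it{\,\textemdash\,}is what lets the diagonal structure survive throughout the induction, and this is the technical heart of the proof.
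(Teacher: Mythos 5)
Your proof is correct and follows essentially the same route as the paper: reduce via observational representability to checking equality after each $\samp^{(n)}$, use determinism of $f^\sharp$ to rewrite $\samp^{(n)} \comp f^\sharp$ as $f^{\otimes n} \comp \cop^{(n)}_A$, and then propagate $f \ase{p} g$ to the $n$-fold copied version. The only difference is that the paper cites \cite[Lemma~13.5]{fritz2019synthetic} for this last propagation step, whereas you reprove it by an explicit (and correct) induction that properly handles the shared-diagonal subtlety you point out.
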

\begin{proof}
	Consider $f, g \colon X \to Y$ with $f \ase{p} g$.
	Then we have 
	\begin{equation}\label{eq:obs_asrep}
		{%
			\tikzstyle{every picture}=[tikzfig]%
			\begin{tikzpicture}
				\begin{pgfonlayer}{nodelayer}
					\node [style=morphism] (264) at (-7.5, 1.25) {$f$};
					\node [style=none] (265) at (-6, 1.25) {$\cdots$};
					\node [style=none] (267) at (-9.5, 0) {$=$};
					\node [style=bn] (270) at (-6, -0.75) {};
					\node [style=morphism] (273) at (-4.5, 1.25) {$f$};
					\node [style=none] (274) at (-7.5, 2.5) {};
					\node [style=none] (276) at (-4.5, 2.5) {};
					\node [style=bn] (291) at (-12.75, 0) {};
					\node [style=morphism] (292) at (-14.5, 1.5) {$\samp$};
					\node [style=morphism] (293) at (-11, 1.5) {$\samp$};
					\node [style=none] (295) at (-14.5, 2.5) {};
					\node [style=none] (296) at (-11, 2.5) {};
					\node [style=none] (298) at (-14.5, 3) {$Y$};
					\node [style=none] (299) at (-11, 3) {$Y$};
					\node [style=none] (300) at (-12.75, 1.5) {$\cdots$};
					\node [style=none] (301) at (-11, 1.5) {};
					\node [style=none] (302) at (-14.5, 1.5) {};
					\node [style=morphism] (303) at (-12.75, -1.25) {$f^\sharp$};
					\node [style=none] (304) at (-12.75, -2.5) {};
					\node [style=none] (305) at (-12.75, -3) {$X$};
					\node [style=none] (306) at (-6, -2.5) {};
					\node [style=none] (307) at (-6, -3) {$X$};
					\node [style=none] (308) at (-7.5, 3) {$Y$};
					\node [style=none] (309) at (-4.5, 3) {$Y$};
					\node [style=none] (310) at (-7.5, 0.75) {};
					\node [style=none] (311) at (-4.5, 0.75) {};
					\node [style=none] (312) at (-2.75, 0) {$\ase{p}$};
					\node [style=morphism] (313) at (-0.75, 1.25) {$g$};
					\node [style=none] (314) at (0.75, 1.25) {$\cdots$};
					\node [style=bn] (315) at (0.75, -0.75) {};
					\node [style=morphism] (316) at (2.25, 1.25) {$g$};
					\node [style=none] (317) at (-0.75, 2.5) {};
					\node [style=none] (318) at (2.25, 2.5) {};
					\node [style=none] (319) at (0.75, -2.5) {};
					\node [style=none] (320) at (0.75, -3) {$X$};
					\node [style=none] (321) at (-0.75, 3) {$Y$};
					\node [style=none] (322) at (2.25, 3) {$Y$};
					\node [style=none] (323) at (-0.75, 0.75) {};
					\node [style=none] (324) at (2.25, 0.75) {};
					\node [style=none] (325) at (3.75, 0) {$=$};
					\node [style=bn] (326) at (7.25, 0) {};
					\node [style=morphism] (327) at (5.5, 1.5) {$\samp$};
					\node [style=morphism] (328) at (9, 1.5) {$\samp$};
					\node [style=none] (329) at (5.5, 2.5) {};
					\node [style=none] (330) at (9, 2.5) {};
					\node [style=none] (331) at (5.5, 3) {$Y$};
					\node [style=none] (332) at (9, 3) {$Y$};
					\node [style=none] (333) at (7.25, 1.5) {$\cdots$};
					\node [style=none] (334) at (9, 1.5) {};
					\node [style=none] (335) at (5.5, 1.5) {};
					\node [style=morphism] (336) at (7.25, -1.25) {$g^\sharp$};
					\node [style=none] (337) at (7.25, -2.5) {};
					\node [style=none] (338) at (7.25, -3) {$X$};
				\end{pgfonlayer}
				\begin{pgfonlayer}{edgelayer}
					\draw [in=90, out=-90] (276.center) to (273);
					\draw (274.center) to (264);
					\draw (296.center) to (293);
					\draw (292) to (295.center);
					\draw [in=-90, out=15] (291) to (301.center);
					\draw [in=-90, out=165] (291) to (302.center);
					\draw (304.center) to (303);
					\draw (303) to (291);
					\draw (306.center) to (270);
					\draw [in=-90, out=165] (270) to (310.center);
					\draw [in=-90, out=15] (270) to (311.center);
					\draw (310.center) to (264);
					\draw (311.center) to (273);
					\draw [in=90, out=-90] (318.center) to (316);
					\draw (317.center) to (313);
					\draw (319.center) to (315);
					\draw [in=-90, out=165] (315) to (323.center);
					\draw [in=-90, out=15] (315) to (324.center);
					\draw (323.center) to (313);
					\draw (324.center) to (316);
					\draw (330.center) to (328);
					\draw (327) to (329.center);
					\draw [in=-90, out=15] (326) to (334.center);
					\draw [in=-90, out=165] (326) to (335.center);
					\draw (337.center) to (336);
					\draw (336) to (326);
				\end{pgfonlayer}
			\end{tikzpicture}
		}%
	\end{equation}
	where the middle equality follows directly from the assumed $f \ase{p} g$ via \cite[Lemma 13.5]{fritz2019synthetic}.
	The other two equations hold because $f^\sharp$ is deterministic and satisfies \cref{eq:sharp} by representability.
	By observational representability, we obtain $f^{\sharp} \ase{p} g^{\sharp}$, which concludes the proof of \as{}-compatible representability.
\end{proof}

\begin{example}\label{ex:borelastoch_obs}
	$\borelstoch$ is observationally representable.
	Indeed, \cite[Theorem 9.1]{moss2022probability} shows that (even on all of $\cat{Stoch}$) the family $(\samp^{(n)})_{n \in \N}$ is jointly monic.
\end{example}

\begin{example}
	\label{ex:finsetmulti_obs}
	$\finsetmulti$ is observationally representable.
	Indeed, $\finsetmulti$ is representable since it is the Kleisli category of the non-empty powerset monad on $\finset$, which is its deterministic subcategory.
	The sampling morphism $\samp \colon PX \to X$ is given by the multivalued function mapping every non-empty subset of $X$ to its elements.
	The observationality is now the special case of \cite[Theorem 9.4]{moss2022probability} for finite discrete spaces.
\end{example}

\begin{example}
	$\setmulti$ is not observationally representable.
	The functor $X\mapsto PX$ forms the set of all nonempty subsets of $X$, including infinite ones.
	A $\setmulti$ state $I\to PX$ is then a set of subsets of $X$.
	We now define two distinct states $p,q\colon I\to PX$ which will be the same after composing with $\samp^{(n)}$ for any $n \in \N$.
	To this end, let $p \coloneqq PX$ be the set of \emph{all} nonempty subsets of $X$ itself, and let $q \subseteq PX$ be the set of \emph{all finite} nonempty subsets of $X$.
	Assuming that $X$ is infinite, these two states are trivially distinct.
	Composing any state with $\samp^{(n)}\colon PX\to X^n$ produces the set of sequences $(x_1,\dots,x_n)$ for which there is a set that contains all the $x_i$ and belongs to the state.
	Therefore both $\samp^{(n)} \comp p$ and $\samp^{(n)} \comp q$ are the set of all $n$-element sequences from $X$.
	It follows that the maps $\samp^{(n)}$ cannot distinguish between $p$ and $q$.
	Intuitively, the part where they differ{\,\textemdash\,}namely the infinite subsets of $X${\,\textemdash\,}is not detectable by finitary sampling.
\end{example}

\begin{proposition}
	\label{prop:chausstoch_obs_rep}
	$\cat{CHausStoch}$ is observationally representable.
\end{proposition}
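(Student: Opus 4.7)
The plan is to verify condition \ref{it:observational} of \cref{prop:reflect_aseq}: for every compact Hausdorff space $X$, the family $(\samp^{(n)})_{n\in\N}$ is jointly monic in $\chausstoch$. Since morphisms $A \to PX$ in $\chausstoch$ are continuous maps $A \to P(PX)$, it suffices to show that if $\mu, \nu \in P(PX)$ are two Radon probability measures on the compact Hausdorff space $PX$ whose pushforwards $(\samp^{(n)})_*\mu$ and $(\samp^{(n)})_*\nu$ on $X^n$ agree for all $n \in \N$, then $\mu = \nu$; then one applies this pointwise in $a \in A$.

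The key observation is that for each $f \in C(X)$, the evaluation functional $\mathrm{ev}_f \colon PX \to \R$ defined by $p \mapsto \int_X f\, dp$ is continuous by the definition of the topology on $PX$ (see \cref{ex:continuous_kernels}). Moreover, by Fubini / the definition of the product Radon measure, testing the pushforward of $\mu$ under $\samp^{(n)}$ against $f_1 \otimes \cdots \otimes f_n \in C(X^n)$ computes
\[
    \int_{X^n} (f_1 \otimes \cdots \otimes f_n) \, d (\samp^{(n)})_*\mu
    \,=\, \int_{PX} \mathrm{ev}_{f_1}(p) \cdots \mathrm{ev}_{f_n}(p) \, d\mu(p),
\]
and likewise for $\nu$. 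Hence the hypothesis that the pushforwards agree for all $n$ is precisely the statement that $\mu$ and $\nu$ have the same integral against every element of the unital $\R$-subalgebra $\mathcal{A} \subseteq C(PX)$ generated by the evaluation functionals $\{\mathrm{ev}_f : f \in C(X)\}$.

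To conclude, I would invoke Stone--Weierstrass. The subalgebra $\mathcal{A}$ contains the constants and separates points of $PX$: if $p \neq p'$ in $PX$, then by Riesz representation there is some $f \in C(X)$ with $\mathrm{ev}_f(p) \neq \mathrm{ev}_f(p')$. Since $PX$ is compact Hausdorff (as a closed subspace of the unit ball of $C(X)^*$ in the weak-$*$ topology), Stone--Weierstrass gives that $\mathcal{A}$ is uniformly dense in $C(PX)$. Since $\mu$ and $\nu$ are Radon probability measures that agree on the dense subalgebra $\mathcal{A}$, they agree on all of $C(PX)$ and hence $\mu = \nu$. This finishes the proof.

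The main conceptual content is thus the moment-problem observation that Radon measures on $PX$ are determined by the joint distributions of any finite number of independent samples, and the main technical input is Stone--Weierstrass together with the compactness of $PX$. I do not anticipate any serious obstacle beyond being careful with the Fubini-type identity linking iterated sampling to polynomials in evaluation functionals.
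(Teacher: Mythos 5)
Your proposal is correct and follows essentially the same route as the paper: reduce to states (point separability), identify the integrals of $\samp^{(n)}$-pushforwards against product test functions with moments of the evaluation functionals $\mathrm{ev}_f$ on $PX$, apply Stone--Weierstrass to the unital subalgebra they generate on the compact Hausdorff space $PX$, and conclude by Riesz representation. The only cosmetic difference is that you derive the moment identity directly via Fubini where the paper invokes a lemma of Moss--Perrone to the same effect.
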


\begin{proof}
	By point separability, it is enough to test the conditions of \Cref{prop:reflect_aseq} on states.
	We will apply \cite[Lemma~6.6]{moss2022probability} (see also the proof of Theorem 9.1 therein for context).
	For $X$ a compact Hausdorff space, $PX$ is the space of Radon probability measures on $X$ equipped with the topology of weak convergence, which is the weakest topology making the integration map
	\[
		\begin{tikzcd}[row sep=0]
			\e_f \: \colon \: PX \ar{r}{\e_f} & {[0,1]} \\
			p \ar[mapsto]{r} & \int f\,dp
		\end{tikzcd}
	\]
	continuous for every continuous map $f \colon X\to[0,1]$.
	In order to prove observationality, by the mentioned~\cite[Lemma~6.6]{moss2022probability} it suffices to show that two probability measures $\mu$ and $\nu$ on $PX$ (i.e.\ \emph{in} $PPX$) are equal whenever they give the same integral on finite products of the integration maps,
	\begin{equation}\label{eq:onproducts}
		\int_{PX} \e_{f_1}\cdots\e_{f_n} d\mu = \int_{PX} \e_{f_1}\cdots\e_{f_n} d\nu,
	\end{equation}
	for $f_1,\dots,f_n \colon X\to[0,1]$ continuous.
	Denote now by $C(PX,\R)$ the algebra of bounded continuous functions $PX\to\R$ and by $\mathcal{S}$ the unital subalgebra generated by the functions $\e_f$ for all continuous $f \colon X\to[0,1]$.
	Given that the integrals \eqref{eq:onproducts} are equal for all continuous $f_i \colon X\to[0,1]$, and that the finite products of integration maps $\e_f$ span $\mathcal{S}$, we can conclude that
	\begin{equation}
		\int_{PX} h\,d\mu = \int_{PX} h\,d\nu 
	\end{equation}
	for all $h \in \mathcal{S}$.

	With $C(PX, \R)$ carrying the usual topology of uniform convergence, we can apply the Stone--Weierstrass theorem (note that $PX$ is compact Hausdorff too) to obtain that $\mathcal{S} \subseteq C(PX, \R)$ is dense.
	Indeed the subalgebra $\mathcal{S}$ separates the points of $PX$ because the $\e_f$ generate the topology of $PX$, which is Hausdorff.

	Finally, let $g \in C(PX, \R)$ and $\eps > 0$ be arbitrary.
	Then by the density of $\mathcal{S}$ there is $h \in \mathcal{S}$ with $\| g - h \|_\infty < \eps$, which implies
	\begin{equation}
		\begin{split}
			\left| \int g\,d\mu - \int g\,d\nu \right| &= \left| \int g\,d\mu - \int h\,d\mu + \int h\,d\nu- \int g\,d\nu \right| \\
								   &\le \int |g(x)-h(x)|\,\mu(dx) + \int |g(x)-h(x)|\,\nu(dx) \\
								   &\le 2\,\e .
		\end{split}
	\end{equation}
	Therefore $\mu$ and $\nu$ assign the same integral to each continuous function $g \colon PX\to\R$, and so $\mu = \nu$ by the Riesz--Markov representation theorem.
\end{proof}

\section{An isomorphism of measure spaces which does not preserve supports}
\label{not_support}

\begin{theorem}\label{thm:not_support}
	Let us denote the Lebesgue measure on the unit interval $[0,1]$ by $\lambda_{[0,1]}$ and the normalized Lebesgue measure on the interval $[0, \linefaktor{1}{2}]$ by $2\lambda_{[0,1/2]}$.
	\begin{equation*}
		\begin{tikzpicture}[xscale=1.5,yscale=0.5,baseline]
			\draw [fill=fillcolor!25, domain=0:1, variable=\x]
			(0, 0)
			-- plot ({\x}, {1})
			-- (1, 0)
			-- cycle;
			\draw[->] (-0.5, 0) -- (1.5, 0) node[right] {$x$};
			\draw[->] (0, -0.5) -- (0, 2.5) node[above] {$\mathrm{pdf}(x)$};
			\node [circle,inner sep=1pt,fill=black,label=below left:$0$] at (0,0) {};
			\node [circle,inner sep=1pt,fill=black,label=below:$1$] at (1,0) {};
		\end{tikzpicture}
		\qquad\qquad
		\begin{tikzpicture}[xscale=1.5,yscale=0.5,baseline]
			\draw [fill=fillcolor!25, domain=0:0.5, variable=\x]
			(0, 0)
			-- plot ({\x}, {2})
			-- (0.5, 0)
			-- cycle;
			\draw[->] (-0.5, 0) -- (1.5, 0) node[right] {$x$};
			\draw[->] (0, -0.5) -- (0, 2.5) node[above] {$\mathrm{pdf}(x)$};
			\node [circle,inner sep=1pt,fill=black,label=below left:$0$] at (0,0) {};
			\node [circle,inner sep=1pt,fill=black,label=below:$1$] at (1,0) {};
		\end{tikzpicture}
	\end{equation*}
	Then there exists an isomorphism of measure spaces of type $\bigl( [0,1],\lambda_{[0,1]} \bigr) \to \bigl( [0,1],2\lambda_{[0, 1/2]} \bigr)$.
\end{theorem}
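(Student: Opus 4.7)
The plan is to exhibit a Borel-measurable bijection $\phi\colon [0,1] \to [0,1]$ with Borel inverse satisfying $\phi_\ast \lambda_{[0,1]} = 2\lambda_{[0,1/2]}$. The natural starting point is the dilation $\sigma(x) \coloneqq x/2$, which is a measure-preserving Borel bijection from $([0,1],\lambda_{[0,1]})$ onto $([0,1/2], 2\lambda_{[0,1/2]}|_{[0,1/2]})$. The obstruction is that $\sigma$ fails to be surjective onto $[0,1]$: its image misses $(1/2,1]$, a set that carries no mass under $2\lambda_{[0,1/2]}$ but must still be covered to get an honest bijection of $[0,1]$ with itself. The idea is therefore to modify $\sigma$ on a $\lambda_{[0,1]}$-null set so as to absorb this missing piece.

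Concretely, I would first fix an uncountable Borel null set $N \subseteq [0,1]$, for instance a Cantor set of Lebesgue measure zero. Then $N/2 \coloneqq \{x/2 : x \in N\} \subseteq [0,1/2]$ is also uncountable Borel null, and the set
\[
M \coloneqq (1/2,1] \cup (N/2)
\]
is an uncountable standard Borel subset of $[0,1]$. By Kuratowski's isomorphism theorem, there exists a Borel isomorphism $\psi\colon N \to M$. I would then define
\[
\phi(x) \coloneqq
\begin{cases}
x/2 & \text{if } x \in [0,1] \setminus N, \\
\psi(x) & \text{if } x \in N.
\end{cases}
\]
Since $\sigma$ maps $[0,1] \setminus N$ bijectively onto $[0,1/2] \setminus (N/2)$, while $\psi$ maps $N$ bijectively onto $M = (1/2,1] \cup (N/2)$, the two images are disjoint and together cover $[0,1]$. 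Hence $\phi$ is a Borel bijection with Borel inverse (the inverse being defined analogously using $\sigma^{-1}$ on $[0,1/2]\setminus(N/2)$ and $\psi^{-1}$ on $M$).

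For the pushforward, for any Borel $A \subseteq [0,1]$ one has
\[
\phi^{-1}(A) = \bigl(2(A \cap [0,1/2]) \setminus N\bigr) \cup \psi^{-1}(A \cap M),
\]
where the second piece lies in $N$ and is therefore $\lambda_{[0,1]}$-null. Consequently $\lambda_{[0,1]}(\phi^{-1}(A)) = \lambda\bigl(2(A \cap [0,1/2])\bigr) = 2\lambda(A \cap [0,1/2]) = 2\lambda_{[0,1/2]}(A)$, as required. The only nontrivial input is Kuratowski's theorem producing $\psi$; the remaining difficulty is just the bookkeeping ensuring that swapping the null set $(1/2,1]$ on the codomain with a suitable null set inside $[0,1/2]$ preserves bijectivity without disturbing the pushforward.
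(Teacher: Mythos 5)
Your proof is correct, and it shares the paper's key ingredient{\,\textemdash\,}a Kuratowski isomorphism between an uncountable Lebesgue-null Cantor-type set and the ``superfluous'' piece $(\linefaktor{1}{2},1]$ of the codomain{\,\textemdash\,}but it handles the measure-carrying part differently. The paper's proof takes the complement of the Cantor set $C$, decomposes it into its countably many open intervals, and reassembles these into all of $[0,1]$ via a measure-preserving rearrangement (which requires the auxiliary lemma that $[0,1]\cong[0,1)$ as measure spaces), then scales by $\linefaktor{1}{2}$ and sends $C$ onto $(\linefaktor{1}{2},1]$ by Kuratowski. You instead start from the dilation $x\mapsto x/2$, which already pushes $\lambda_{[0,1]}$ forward to $2\lambda_{[0,1/2]}$, and repair its failure of surjectivity by enlarging the target of the Kuratowski isomorphism from $(\linefaktor{1}{2},1]$ to $(\linefaktor{1}{2},1]\cup(N/2)$, so that the image of the null set $N$ exactly absorbs both the uncovered interval and the hole $N/2$ punched into $[0,\linefaktor{1}{2}]$. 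This avoids the interval-rearrangement lemma entirely and makes the pushforward computation a one-liner, at the cost of a slightly more careful bookkeeping of which null sets go where; both arguments are complete, and yours is arguably the more economical of the two.
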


In order to prove the theorem, we make use of the following lemma.

\begin{lemma}\label{lem:intervals}
	There is an isomorphism of measure spaces $[0,1]\to[0,1)$, where both measurable spaces are equipped with the Lebesgue measure.
\end{lemma}

\begin{proof}[Proof of \Cref{lem:intervals}]
	Denote by $K\subseteq[0,1]$ the countable subset
	$$
	\Set{ \frac{1}{n} \given n\in\N_{>0} } = \left\{ 1, \frac{1}{2}, \frac{1}{3}, \frac{1}{4}, \dots \right\} .
	$$
	Consider now the bijection $b \colon K \to K \setminus \{1\}$ given by
	\begin{equation}
		b \left( \frac{1}{n} \right)  \coloneqq  \frac{1}{n+1}.
	\end{equation}
	The map
	\begin{equation}
		x\mapsto
		\begin{cases}
			b(x) & \textrm{if } x\in K ,\\
			x & \textrm{if } x\notin K
		\end{cases}
	\end{equation}
	is now a measure-preserving isomorphism $[0,1]\to[0,1)$, since both measurability and measure preservation follow from the fact that it differs from the identity at only countably many points.
\end{proof}

\begin{proof}[Proof of \Cref{thm:not_support}]
	Let $C\subset[0,1]$ be the Cantor set, which is a closed but uncountable subset of Lebesgue measure zero.
	Since every measurable subset of a standard Borel space is itself standard Borel, and since by Kuratowski's theorem all uncountable standard Borel spaces are isomorphic, there exists a measurable isomorphism $\Phi_0 \colon C\to(\linefaktor{1}{2},1]$. 
	This becomes an isomorphism of measure spaces if we equip both $C$ and $(\linefaktor{1}{2},1]$ with the zero measure, or equivalently, if we consider $C$ as a null set in $([0,1],\lambda_{[0,1]})$ and $(\linefaktor{1}{2},1]$ as a null set in $\bigl( [0,1],2\lambda_{[0, 1/2]} \bigr)$.

	Consider now the complement $[0,1]\setminus C$, which has Lebesgue measure $1$.
	By construction of the Cantor set, its complement is a countable disjoint union of open intervals $\coprod_{n=0}^\infty(a_n,a_n+\ell_n)$, with $0\le a_n<a_n+\ell_n\le 1$, and such that $\sum_{n=0}^\infty\ell_n=1$.
	Now by \Cref{lem:intervals}, there is a measure-preserving isomorphism $\psi_0 \colon (a_0,a_0+\ell_0)\to [0,\ell_0]$, as well as a measure-preserving isomorphism
	\begin{equation*}
		\psi_n  \:\colon\: (a_n,a_n+\ell_n) \longrightarrow \big( \ell_0+\dots+\ell_{n-1}, \ell_0+\dots+\ell_{n} \big]
	\end{equation*}
	for each $n>0$.
	Since we have
	\begin{equation}
		[0,1] = [0,\ell_0] \sqcup \coprod_{n=1}^\infty \big( \ell_0+\dots+\ell_{n-1}, \ell_0+\dots+\ell_{n} \big],
	\end{equation}
	we obtain a measure-preserving isomorphism $\Phi_1 \colon [0,1]\setminus C\to [0,1]$ given by $x\mapsto \psi_n(x)$ for all $x\in (a_n,a_n+\ell_n)$ and $n \in \N$.

	The map
	\begin{equation}
		\Phi_0 \sqcup \Phi_1 \colon [0,1] = ([0,1] \setminus C) \sqcup C \longrightarrow \left[ 0, \linefaktor{1}{2} \right] \sqcup \left( \linefaktor{1}{2}, 1 \right] = [0,1]
	\end{equation}
	given by
	\begin{equation}
		x\mapsto\begin{cases}
			\Phi_0(x) & \textrm{if } x\in C ,\\
			\tfrac{1}{2}\,\Phi_1(x) & \textrm{if } x\notin C
		\end{cases}
	\end{equation}
	is now an isomorphism between measure spaces $\bigl( [0,1],\lambda_{[0,1]} \bigr)$ and $\bigl([0,1],2\lambda_{[0, 1/2]} \bigr)$.
\end{proof}

\newpage
\bibliographystyle{abbrvnat}
\bibliography{markov}

\begin{thebibliography}{44}
\providecommand{\natexlab}[1]{#1}
\providecommand{\url}[1]{\texttt{#1}}
\expandafter\ifx\csname urlstyle\endcsname\relax
  \providecommand{\doi}[1]{doi: #1}\else
  \providecommand{\doi}{doi: \begingroup \urlstyle{rm}\Url}\fi

\bibitem[Banakh(1995)]{banakh}
T.~Banakh.
\newblock Topology of spaces of probability measures. {I}: {The} functors
  {{\(P_{\tau}\)}} and {{\(\hat P\)}}.
\newblock \emph{Mat. Stud.}, 5\penalty0 (1-2):\penalty0 65--87, 1995.
\newblock ISSN 1027-4634.
\newblock \href{https://arxiv.org/abs/1112.6161}{arXiv:1112.6161}.

\bibitem[Billik(1967)]{billik1967idempotent}
M.~Billik.
\newblock Idempotent {R}eynolds operators.
\newblock \emph{J. Math. Anal. Appl.}, 18:\penalty0 135--144, 1967.
\newblock \doi{10.1016/0022-247X(67)90188-6}.

\bibitem[Blackwell(1942)]{blackwell1942idempotent}
D.~Blackwell.
\newblock Idempotent {M}arkoff chains.
\newblock \emph{Ann. of Math. (2)}, 43:\penalty0 560--567, 1942.
\newblock \doi{10.2307/1968811}.

\bibitem[Borceux(1994)]{borceux1994handbook}
F.~Borceux.
\newblock \emph{Handbook of Categorical Algebra I}.
\newblock Cambridge University Press, 1994.
\newblock \doi{10.1017/CBO9780511525858}.

\bibitem[Borceux and Dejean(1986)]{borceux_idempotents}
F.~Borceux and D.~Dejean.
\newblock Cauchy completion in category theory.
\newblock \emph{Cahiers de Topologie et Géométrie Différentielle
  Catégoriques}, 27\penalty0 (2):\penalty0 133--146, 1986.

\bibitem[Braithwaite and Hedges(2022)]{dependent_bayesian_lenses}
D.~Braithwaite and J.~Hedges.
\newblock Dependent {B}ayesian lenses: Categories of bidirectional {M}arkov
  kernels with canonical {B}ayesian inversion, 2022.
\newblock \href{https://arxiv.org/abs/2209.14728}{arXiv:2209.14728}.

\bibitem[Braithwaite et~al.(2023)Braithwaite, Hedges, and
  Smithe]{braithwaite2023inference}
D.~Braithwaite, J.~Hedges, and T.~S.~C. Smithe.
\newblock The compositional structure of {B}ayesian inference, 2023.
\newblock \href{https://arxiv.org/abs/2305.06112}{arXiv:2305.06112}.

\bibitem[Chen et~al.(2025)Chen, Fritz, Gonda, Klingler, and
  Lorenzin]{chen2025aldoushoover}
L.~Chen, T.~Fritz, T.~Gonda, A.~Klingler, and A.~Lorenzin.
\newblock The aldous--hoover theorem in categorical probability.
\newblock \emph{Alg. Stat.}, 16:\penalty0 131--174, 2025.
\newblock \href{https://arxiv.org/abs/2411.12840}{arXiv:2411.12840}.

\bibitem[Cho and Jacobs(2019)]{chojacobs2019strings}
K.~Cho and B.~Jacobs.
\newblock Disintegration and {B}ayesian inversion via string diagrams.
\newblock \emph{Math. Structures Comput. Sci.}, 29:\penalty0 938--971, 2019.
\newblock \doi{10.1017/s0960129518000488}.
\newblock \href{https://arxiv.org/abs/1709.00322}{arXiv:1709.00322}.

\bibitem[Deville et~al.(1993)Deville, Godefroy, and
  Zizler]{deville1993renormings}
R.~Deville, G.~Godefroy, and V.~Zizler.
\newblock \emph{Smoothness and renormings in {B}anach spaces}, volume~64 of
  \emph{Pitman Monogr. Surv. Pure Appl. Math.}
\newblock Longman Publishing Group, 1993.

\bibitem[Di~Lavore et~al.(2022)Di~Lavore, de~Felice, and
  Rom{\'a}n]{di2022monoidal}
E.~Di~Lavore, G.~de~Felice, and M.~Rom{\'a}n.
\newblock Monoidal streams for dataflow programming.
\newblock In \emph{Proceedings of the 37th Annual ACM/IEEE Symposium on Logic
  in Computer Science}, pages 1--14, 2022.
\newblock \doi{10.1145/3531130.3533365}.

\bibitem[Fritz(2020)]{fritz2019synthetic}
T.~Fritz.
\newblock A synthetic approach to {M}arkov kernels, conditional independence
  and theorems on sufficient statistics.
\newblock \emph{Adv. Math.}, 370:\penalty0 107239, 2020.
\newblock \doi{10.1016/j.aim.2020.107239}.
\newblock
  \href{https://arxiv.org/abs/https://arxiv.org/abs/1908.07021}{arXiv:1908.07021}.

\bibitem[Fritz and Klingler(2023)]{fritz2022dseparation}
T.~Fritz and A.~Klingler.
\newblock The $d$-separation criterion in categorical probability.
\newblock \emph{J. Mach. Learn. Res.}, 24\penalty0 (46):\penalty0 1--49, 2023.
\newblock \href{https://arxiv.org/abs/2207.05740}{arXiv:2207.05740}.

\bibitem[Fritz and Perrone(2018)]{fritzperrone2018bimonoidal}
T.~Fritz and P.~Perrone.
\newblock Bimonoidal structure of probability monads.
\newblock In \emph{Proceedings of the {T}hirty-{F}ourth {C}onference on the
  {M}athematical {F}oundations of {P}rogramming {S}emantics ({MFPS} {XXXIV})},
  volume 341 of \emph{Electron. Notes Theor. Comput. Sci.}, pages 121--149.
  Elsevier Sci. B. V., Amsterdam, 2018.
\newblock \doi{10.1016/j.entcs.2018.11.007}.
\newblock \href{https://arxiv.org/abs/1804.03527}{arXiv:1804.03527}.

\bibitem[Fritz and Rischel(2020)]{fritzrischel2019zeroone}
T.~Fritz and E.~F. Rischel.
\newblock Infinite products and zero-one laws in categorical probability.
\newblock \emph{Compositionality}, 2:\penalty0 3, 2020.
\newblock \doi{10.32408/compositionality-2-3}.
\newblock \href{https://arxiv.org/abs/1912.02769}{arXiv:1912.02769}.

\bibitem[Fritz et~al.(2021{\natexlab{a}})Fritz, Gonda, and
  Perrone]{fritz2021definetti}
T.~Fritz, T.~Gonda, and P.~Perrone.
\newblock de {F}inetti's theorem in categorical probability.
\newblock \emph{J. Stoch. Anal.}, 2\penalty0 (4), 2021{\natexlab{a}}.
\newblock \doi{10.31390.josa.2.4.06}.
\newblock \href{https://arxiv.org/abs/2105.02639}{arXiv:2105.02639}.

\bibitem[Fritz et~al.(2021{\natexlab{b}})Fritz, Perrone, and
  Rezagholi]{fritz2019support}
T.~Fritz, P.~Perrone, and S.~Rezagholi.
\newblock Probability, valuations, hyperspace: Three monads on {T}op and the
  support as a morphism.
\newblock \emph{Math. Structures Comput. Sci.}, 31\penalty0 (8):\penalty0
  850--897, 2021{\natexlab{b}}.
\newblock \doi{10.1017/S0960129521000414}.
\newblock \href{https://arxiv.org/abs/1910.03752}{arXiv:1910.03752}.

\bibitem[Fritz et~al.(2022)Fritz, Gonda, Houghton-Larsen, Lorenzin, Perrone,
  and Stein]{fritz2022dilations}
T.~Fritz, T.~Gonda, N.~G. Houghton-Larsen, A.~Lorenzin, P.~Perrone, and
  D.~Stein.
\newblock Dilations and information flow axioms in categorical probability,
  2022.
\newblock \href{https://arxiv.org/abs/2211.02507}{arXiv:2211.02507}.

\bibitem[Fritz et~al.(2023)Fritz, Gonda, Perrone, and
  Rischel]{fritz2023representable}
T.~Fritz, T.~Gonda, P.~Perrone, and E.~F. Rischel.
\newblock Representable {M}arkov categories and comparison of statistical
  experiments in categorical probability.
\newblock \emph{Theoretical Computer Science}, 961:\penalty0 113896, 2023.
\newblock \doi{10.1016/j.tcs.2023.113896}.
\newblock \href{https://arxiv.org/abs/2010.07416}{arXiv:2010.07416}.

\bibitem[Furber and Jacobs(2015)]{furber_jacobs_gelfand}
R.~W.~J. Furber and B.~P.~F. Jacobs.
\newblock From {K}leisli categories to commutative {C}*-algebras: Probabilistic
  {G}elfand duality.
\newblock \emph{Logical Methods in Computer Science}, 11, 2015.
\newblock \doi{10.2168/LMCS-11(2:5)2015}.
\newblock \href{https://arxiv.org/abs/1303.1115}{arXiv:1303.1115}.

\bibitem[Gadducci(1996)]{gadducci1996}
F.~Gadducci.
\newblock \emph{On The Algebraic Approach To Concurrent Term Rewriting}.
\newblock PhD thesis, University of Pisa, 1996.

\bibitem[Gonda et~al.(2023)Gonda, Reinhart, Stengele, and
  Coves]{gonda2023framework}
T.~Gonda, T.~Reinhart, S.~Stengele, and G.~D.~l. Coves.
\newblock A framework for universality in physics, computer science, and
  beyond, 2023.
\newblock \href{https://arxiv.org/abs/2307.06851}{arXiv:2307.06851}.

\bibitem[Hamana(1979)]{hamana1979envelopes}
M.~Hamana.
\newblock Injective envelopes of $c^*$-algebras.
\newblock \emph{J. Math. Soc. Japan}, 31\penalty0 (1):\penalty0 181--197, 1979.

\bibitem[Harzheim(2005)]{harzheim2005ordered}
E.~Harzheim.
\newblock \emph{Ordered sets}, volume~7 of \emph{Advances in Mathematics
  (Springer)}.
\newblock Springer, New York, 2005.

\bibitem[Hu and Tholen(1996)]{hu1996free_regular}
H.~Hu and W.~Tholen.
\newblock A note on free regular and exact completions and their infinitary
  generalizations.
\newblock \emph{Theory Appl. Categ.}, 2\penalty0 (10):\penalty0 113--132, 1996.
\newblock URL \url{https://eudml.org/doc/119140}.

\bibitem[Johnstone(2002)]{johnstone2002elephant}
P.~T. Johnstone.
\newblock \emph{Sketches of an Elephant: A Topos Theory Compendium}, volume~1.
\newblock Oxford University Press, 2002.

\bibitem[Jost et~al.(2021)Jost, L\^e, and Tran]{jostmorphisms}
J.~Jost, H.~V. L\^e, and T.~D. Tran.
\newblock Probabilistic morphisms and {B}ayesian nonparametrics.
\newblock \emph{Eur. Phys. J. Plus}, 136:\penalty0 441, 2021.
\newblock \doi{10.1140/epjp/s13360-021-01427-7}.
\newblock \href{https://arxiv.org/abs/1905.11448}{arXiv:1905.11448}.

\bibitem[Karatzas and Shreve(1991)]{karatzas1991brownian}
I.~Karatzas and S.~E. Shreve.
\newblock \emph{Brownian motion and stochastic calculus.}, volume 113 of
  \emph{Grad. Texts Math.}
\newblock Springer, 2nd ed. edition, 1991.

\bibitem[Kechris(1995)]{kechris}
A.~S. Kechris.
\newblock \emph{Classical descriptive set theory}, volume 156 of \emph{Graduate
  Texts in Mathematics}.
\newblock Springer-Verlag, New York, 1995.

\bibitem[Laidler(1987)]{laidler1987kinetics}
K.~J. Laidler.
\newblock \emph{Chemical Kinetics}.
\newblock Pearson, 3rd edition, 1987.

\bibitem[Moss and Perrone(2022)]{moss2022probability}
S.~Moss and P.~Perrone.
\newblock Probability monads with submonads of deterministic states.
\newblock In \emph{Proceedings of the 37th Annual ACM/IEEE Symposium on Logic
  in Computer Science}, pages 1--13, 2022.
\newblock \doi{10.1145/3531130.3533355}.
\newblock \href{https://arxiv.org/abs/2204.07003}{arXiv:2204.07003}.

\bibitem[Moss and Perrone(2023)]{moss2022ergodic}
S.~Moss and P.~Perrone.
\newblock A category-theoretic proof of the ergodic decomposition theorem.
\newblock \emph{Ergodic Theory Dynam. Systems}, pages 1--27, 2023.
\newblock \doi{10.1017/etds.2023.6}.
\newblock \href{https://arxiv.org/abs/2207.07353}{arXiv:2207.07353}.

\bibitem[Moy(1954)]{moy1954expectation}
S.-T.~C. Moy.
\newblock Characterizations of conditional expectation as a transformation on
  function spaces.
\newblock \emph{Pacific J. Math.}, 4:\penalty0 47--63, 1954.

\bibitem[Mumford et~al.(1994)Mumford, Fogarty, and
  Kirwan]{mumford1994geometric}
D.~Mumford, J.~Fogarty, and F.~Kirwan.
\newblock \emph{Geometric invariant theory}, volume~34 of \emph{Ergebnisse der
  Mathematik und ihrer Grenzgebiete (2) [Results in Mathematics and Related
  Areas (2)]}.
\newblock Springer-Verlag, Berlin, third edition, 1994.

\bibitem[Parzygnat(2017)]{parzygnat_gelfand}
A.~J. Parzygnat.
\newblock Discrete probabilistic and algebraic dynamics: a stochastic
  commutative {G}elfand-{N}aimark theorem, 2017.
\newblock \href{https://arxiv.org/abs/1708.00091}{arXiv:1708.00091}.

\bibitem[Parzygnat(2020)]{parzygnat2020inverses}
A.~J. Parzygnat.
\newblock Inverses, disintegrations, and {B}ayesian inversion in quantum
  {M}arkov categories, 2020.
\newblock \href{https://arxiv.org/abs/2001.08375}{arXiv:2001.08375}.

\bibitem[Patterson(2020)]{patterson2020models}
E.~Patterson.
\newblock \emph{The algebra and machine representation of statistical models}.
\newblock PhD thesis, Stanford University, 2020.
\newblock \href{https://arxiv.org/abs/2006.08945}{arXiv:2006.08945}.

\bibitem[Pe{\l}czy\'{n}ski(1968)]{pelczynski1968extensions}
A.~Pe{\l}czy\'{n}ski.
\newblock Linear extensions, linear averagings, and their applications to
  linear topological classification of spaces of continuous functions.
\newblock \emph{Dissertationes Math. (Rozprawy Mat.)}, 58:\penalty0 92, 1968.
\newblock \href{https://eudml.org/doc/268522}{eudml.org/doc/268522}.

\bibitem[Perrone(2022)]{ours_entropy}
P.~Perrone.
\newblock Markov categories and entropy, 2022.
\newblock \href{https://arxiv.org/abs/2212.11719}{arXiv:2212.11719}.

\bibitem[Stein and Staton(2021)]{stein2021conditioning}
D.~Stein and S.~Staton.
\newblock Compositional semantics for probabilistic programs with exact
  conditioning.
\newblock In \emph{Logic in Computer Science}. IEEE, 2021.
\newblock \doi{10.1109/LICS52264.2021.9470552}.
\newblock \href{https://arxiv.org/abs/2101.11351}{arXiv:2101.11351}.

\bibitem[\v{C}encov(1965)]{cencovcategories}
N.~N. \v{C}encov.
\newblock The categories of mathematical statistics.
\newblock \emph{Dokl. Akad. Nauk SSSR}, 164:\penalty0 511--514, 1965.
\newblock \href{https://www.mathnet.ru/rus/dan31602}{mathnet.ru/rus/dan31602}.

\bibitem[\v{C}encov(1982)]{cencovstatistical}
N.~N. \v{C}encov.
\newblock \emph{Statistical decision rules and optimal inference}, volume~53 of
  \emph{Translations of Mathematical Monographs}.
\newblock American Mathematical Society, Providence, R.I., 1982.
\newblock Translation from the Russian edited by Lev J. Leifman.

\bibitem[Zhang and Sugiyama(2023)]{zhang}
Y.~Zhang and M.~Sugiyama.
\newblock A category-theoretical meta-analysis of definitions of
  disentanglement, 2023.
\newblock \href{https://arxiv.org/abs/2305.06886}{arXiv:2305.06886}.

\bibitem[Świrszcz(1974)]{swirszcz}
T.~Świrszcz.
\newblock Monadic functors and convexity.
\newblock \emph{Bull. Acad. Polon. Sci. Sér. Sci. Math. Astronom. Phys.}, 22,
  1974.

\end{thebibliography}

\end{document}